\newcounter{dummy} \numberwithin{dummy}{section}
\newcounter{claimcounter}
\newcounter{casecounter}
\newtheorem{thm}[dummy]{Theorem}
\newtheorem{prop}[dummy]{Proposition}
\newtheorem{lemma}[dummy]{Lemma}
\newtheorem{cor}[dummy]{Corollary}
\newtheorem*{ldplowerboundthm}{Theorem \ref{ldplowerboundthm}}
\newtheorem*{highlowcor2}{Corollary \ref{highlowcor2}}
\newtheorem{problem}[dummy]{Problem}
\theoremstyle{definition}
\newtheorem{definition}[dummy]{Definition}
\newtheorem{setup}[dummy]{Setup}
\newtheorem{conditions}[dummy]{Conditions}
\newtheorem{example}[dummy]{Example}
\theoremstyle{remark}
\newtheorem{claim}[claimcounter]{Claim}
\newtheorem{step}[claimcounter]{Step}
\newtheorem{case}[casecounter]{Case}
\newtheorem{fact}{Fact}
\newtheorem*{remark}{Remark}
\newtheorem*{conventions}{Conventions}
\DeclareMathOperator*{\esssup}{ess\,sup}
\DeclareMathOperator{\Hom}{Hom}
\title{Upper Tails of Subgraph Counts in Sparse Regular Graphs}
\author{Benjamin Gunby}
\email{bgunby@g.harvard.edu}
\thanks{The author is supported by an NDSEG Graduate Fellowship}
\begin{document}
\begin{abstract}
What is the probability that a sparse $n$-vertex random $d$-regular graph $G_n^d$, $n^{1-c}<d=o(n)$ contains many more copies of a fixed graph $K$ than expected? We determine the behavior of this upper tail to within a logarithmic gap in the exponent. For most graphs $K$ (for instance, for any $K$ of average degree greater than $4$) we determine the upper tail up to a $1+o(1)$ factor in the exponent. However, we also provide an example of a graph, given by adding an edge to $K_{2,4}$, where the upper tail probability behaves differently from previously studied behavior in both the sparse random regular and sparse Erd\H{o}s-R\'{e}nyi models in this sparsity regime.
\end{abstract}
\maketitle
\section{Introduction}\label{introductionsection}
Suppose we have a random $d$-regular graph $G_n^d$ on $n$ vertices, such that each such graph is chosen with equal probability. What is the probability that the number of triangles in $G_n^d$ exceeds its expectation by a constant factor? What if triangles are replaced by, for example, copies of $K_{2,3}$?

\subsection{History of the Upper Tail Problem for $G(n,p)$}
When instead of $G_n^d$ we take the the Erd\H{o}s-R\'{e}nyi random graph $G(n,p)$, this upper tail question is well-studied. Arguments bounding the upper tail generally consist of two components. One component is to formulate a \emph{large deviation principle}, bounding the upper tail in terms of the solution to a certain \emph{variational problem}. The other component is obtaining good bounds on the solution to the variational problem. Generally, better bounds on the variational problem translate into better upper tail estimates, and improvements in the large deviation principle translate into larger ranges of parameters in which these estimates hold.

The work of Chatterjee and Varadhan \cite{CV} first introduced such a large deviation principle. This enabled them to address the case of $G(n,p)$ where $p$ is fixed and $n\to\infty$, showing for example that if $T_{n,p}$ is the number of triangles in $G(n,p)$, that
\[\displaystyle\lim_{n\to\infty}\frac{1}{n^2}\log\Pr\left[T_{n,p}\geq t\mathbb{E}(T_{n,p})\right]=-\phi(p,t),\]
for some function $\phi(p,t)$, given by the solution to a particular variational problem, that is nonzero as long as $t>1$.

More recently, there has been additional focus on the case of the sparse Erd\H{o}s-R\'{e}nyi random graph, where instead of a fixed $p$, $p=p(n)$ tends to $0$ as $n\to\infty$. An early result of Kim and Vu \cite{KV} shows that if $p\geq n^{-1}\log n$,
\[\exp\left(-\Theta\left(p^2n^2\log\frac{1}{p}\right)\right)\leq\Pr[T_{n,p}\geq (1+\delta)\mathbb{E}(T_{n,p})]\leq\exp(-\Theta(p^2n^2))\]
for any fixed $\delta>0$. Chatterjee \cite{C} and DeMarco and Kahn \cite{DK} independently eliminated this log gap, showing that
\[\Pr[T_{n,p}\geq (1+\delta)\mathbb{E}(T_{n,p})]=\exp\left(-\Theta\left(p^2n^2\log\frac{1}{p}\right)\right)\]
for all fixed $\delta>0$ and $p\geq C(\delta)n^{-1}\log n$. (DeMarco-Kahn actually showed the stronger result that one can take $C(\delta)=1$ in the earlier expression.)

Chatterjee and Dembo \cite{CDembo} managed to improve the large deviation principle so that it applies when $p\to 0$ polynomially with $n$. Lubetzky and Zhao \cite{LZ} were able to solve the resulting variational problem in the case of triangles, and were able to use the Chatterjee-Dembo result to show that
\[\Pr[T_{n,p}\geq (1+\delta)\mathbb{E}(T_{n,p})]=\exp\left(-(1+o(1))c(\delta)p^2n^2\log\frac{1}{p}\right),\]
where $c(\delta)=\min\left(\frac{1}{2}\delta^{\frac{2}{3}},\frac{1}{3}\delta\right)$, as long as $n^{-\frac{1}{42}}\log n\leq p\ll 1$.

Bhattacharya, Ganguly, Lubetzky, and Zhao \cite{BGLZ} generalized this result and were able to compute the upper tail probability $\Pr[\Hom(K,G(n,p))\geq (1+\delta)\mathbb{E}(\Hom(K,G(n,p)))]$ up to a factor of $1+o(1)$ in the exponent for any fixed graph $K$. With appropriate bounds on $p$, they were able to compute a constant $c(K,\delta)$ such that
\[\Pr[\Hom(K,G(n,p))\geq (1+\delta)\mathbb{E}(\Hom(K,G(n,p)))]=\exp\left(-(c(K,\delta)+o(1))p^{\Delta(K)}n^2\log\frac{1}{p}\right),\]
where $\Delta(K)$ is the maximum degree of $K$.

At the same time, progress has been made on the large deviations principle side, proving the upper tails results for larger ranges of $p$. Eldan \cite{E} was able to improve on the Chatterjee-Dembo large deviations argument, improving the $n^{-\frac{1}{42}}\log n\ll p\ll 1$ range for the triangle upper tail result to $n^{-\frac{1}{18}}\log n\ll p\ll 1$. More recently, Cook and Dembo \cite{CD} proved a stronger large deviations principle for all graphs $K$, which in the case of triangles extended the range further to $n^{-\frac{1}{3}}\ll p\ll 1$. Augeri \cite{Aug} was independently able to prove a more specific but stronger large deviations result, which in the case of triangles extended the range to $n^{-\frac{1}{2}}\ll p\ll 1$. Harel, Mousset, and Samotij \cite{HMS} were able to prove a slightly different type of large deviations principle, resulting in a different variational problem. Their work extended the valid parameter range for triangles further to $n^{-1}\log n\ll p\ll 1$, and successfully accounted for a transition that happens at approximately $p=n^{-\frac{1}{2}}$.  Basak and Basu \cite{BB} later extended the results of \cite{HMS} to all regular graphs $K$. However, unlike \cite{CD}, the results of \cite{Aug,HMS,BB} do not generalize to all graphs $K$.

\subsection{History of the Upper Tail Problem for $G_n^d$}
We now turn our attention to the upper tail problem on the random $d$-regular graph $G_n^d$. This problem is more delicate in many ways, as edges no longer appear independently. We will be considering the case where our graph is sparse; in general, we would like to solve the following problem.

\begin{problem}
Let $n\to\infty$ and $d=d(n)\ll n$. Let $G_n^d$ be a random $d$-regular graph on $[n]$. Compute
\[\Pr[\Hom(K,G_n^d)\geq (1+\delta)\mathbb{E}(\Hom(K,G_n^d))].\]
\end{problem}
A common construction for $G(n,p)$, that of giving a subset of vertices having high degree, breaks in this case due to the regularity of $G_n^d$. Because of this, the answers are often quite different in the $G(n,p)$ and $G_n^d$ setups.

Bhattacharya and Dembo \cite{BD} were able to compute the correct log-asymptotic of the probability in the Problem, for graphs $K$ such that the $2$-core of $K$ is regular, in a sparse range roughly of the form $n^{1-\epsilon(K)}\ll d\ll n$. (The $2$-core of $K$ is given by succesively removing all leaves from $K$ until the minimum degree of $K$ is at least $2$, and replacing $K$ by its $2$-core does not change the probability in the Problem.) In particular, \cite{BD} showed that if the $2$-core of $K$ is $\Delta$-regular, then
\[\Pr[\Hom(K,G_n^d)\geq (1+\delta)\mathbb{E}(\Hom(K,G_n^d))]=\exp\left(-(c(K,\delta)+o(1))p^{\Delta}n^2\log\frac{1}{p}\right)\]
for some nonzero constant $c(K,\delta)$ that they were able to compute.

However, \cite{BD} left open the question of what happens when the $2$-core of $K$ is not regular, only proving that $\exp\left(-\Theta\left(p^{\Delta}n^2\log\frac{1}{p}\right)\right)$ (where $p=\frac{d}{n}$ and $\Delta$ is the maximum degree of the $2$-core of $K$) is \emph{never} in fact the correct growth rate.

\subsection{New Results}
We find the correct growth rate for all graphs $K$ to within a `log gap', for $d$ in an appropriate sparse regime. In particular, we will show that with $p=\frac{d}{n}$,
\[\exp\left(-\Theta\left(n^2p^{2+\gamma(K)}\log\frac{1}{p}\right)\right)\leq\Pr\left[\Hom(K,G_n^d)\geq (1+\delta)\mathbb{E}(\Hom(K,G_n^d))\right]\leq\exp\left(-\Theta\left(n^2p^{2+\gamma(K)}\right)\right)\]
for $n^{-\epsilon(K)}\ll p\ll 1$, where $\gamma(K)$ is a certain invariant of $K$, as long as $K$ is not a forest. (See Corollary \ref{correctexponentcor}.) (If $K$ is a forest then $\Hom(K,G_n^d)$ is constant, so the probability is $0$.)

For `most' graphs $K$ (for example, for any graph $K$ of average degree greater than $4$), we will be able to do better, obtaining the correct exponent to within a $1+o(1)$ factor. For example, in the case of $K=K_{2,3}$ mentioned earlier, we will be able to prove that
\[\Pr\left[\Hom(K_{2,3},G_n^d)\geq (1+\delta)\mathbb{E}(\Hom(K_{2,3},G_n^d))\right]=\exp\left(-(\sqrt{\delta}+o(1))n^2p^{\frac{5}{2}}\log\frac{1}{p}\right),\]
as long as $p:=\frac{d}{n}$ satisfies $(n^{-1}\log n)^{\frac{2}{19}}\ll p\ll 1$.

Given these examples, and the precedent for $G(n,p)$, one might expect that $\exp\left(-\Theta\left(n^2p^{2+\gamma(K)}\log(1/p)\right)\right)$ is the correct order of growth for all $K$. Indeed, an earlier version of \cite{BD} conjectured a similar statement. However, we will show that the formula above does not generalize to all graphs $K$, by exhibiting a graph $K_0$ such that
\[\Pr\left[\Hom(K_0,G_n^d)\geq (1+\delta)\mathbb{E}(\Hom(K_0,G_n^d))\right]\neq\exp\left(-\Theta\left(n^2p^{2+\gamma(K_0)}\log\frac{1}{p}\right)\right).\]
This example differs from all known examples in either the $G(n,p)$ or the $G_n^d$ case. Generally, the construction for the lower bound of the upper tail probability is given by `planting' some specific subgraph; that is, conditioning on our random graph containing that subgraph. For example, when considering the upper tail problem for the triangle count in $G(n,p)$, one may obtain a lower bound of the correct order $\exp\left(-\Theta\left(n^2p^2\log\frac{1}{p}\right)\right)$ by noting that if $G(n,p)$ contains a clique of size $\Theta(np)$, it should on average have $\Theta(n^3p^3)$ more triangles than expected. However, in the case of $K_0$ above, the constructions do not simply arise from planting a subgraph, and we additionally must condition on a subgraph having high (but not 1) density. This demonstrates the difficulty of solving the sparse regular upper tail problem for general $K$.

\subsection{Ideas}
The solution technique for such upper tail problems generally has two major steps. First, one applies a large deviation framework, showing that the behavior of the upper tail is given by a solution to a certain variational problem. Second, one must solve this variational problem. We follow this outline in reverse order: Sections \ref{constructionssection} through \ref{K24section} will cover the appropriate variational problem's solution, and Sections \ref{ldpupperboundsection} through \ref{secondtermsection} will be dedicated to the reduction to the variational problem.

\subsubsection{Variational Problem}
Our variational problem will be, in essence, to minimize entropy over all $p$-regular graphons having enough homomorphisms from $K$. Upper bounds on the variational problem (given by constructions) will generally translate to lower bounds on the upper tail, and vice versa.

In proving our lower bound on the variational problem, we incorporate several techniques from previous works. In particular, the adaptive thresholding technique demonstrated in Section 5 of \cite{BGLZ} will be vital. Similarly to previous works such as \cite{BGLZ,BD,LZ}, we will apply a generalized form of H\"older's inequality.

However, the form of H\"older's inequality used in those works is not strong enough for our purposes, in the sense that it is not responsive to the restriction that our graphon must be regular. Our main new ideas as regards the variational problem will be a stronger generalized H\"older's inequality that is responsive to the regularity condition (Theorem \ref{graphholderthm}), and the systematic application of that inequality via edge weightings.

Another key idea in our solution to the variational problem is the use of the minimum fractional vertex cover linear program and its dual, maximum fractional matching. One principle throughout is that the upper bounds on the variational problem (given by constructions) use minimum fractional vertex cover, whereas the lower bounds use maximum fractional matching (largely as weights to use in our generalized H\"older's inequality).

\subsubsection{Bounding the Upper Tail}
The second half of our paper shows that the upper tail probability is given by the solution to the variational problem. This section has two main results (Theorems \ref{ldpupperboundthm} and \ref{ldplowerboundthm}), essentially providing an upper and lower bound on the upper tail probability based on the solution to the variational problem. The upper bound is essentially given by the large deviations argument of Cook and Dembo in \cite{CD}.

The lower bound, i.e. constructing many $d$-regular graphs with many homomorphisms from $K$, requires significantly more innovation than the upper bound, and is much more delicate than in previous work. We loosely follow Section 2.3 of \cite{BD}. However, there are several substantial complications, related to the fact that our constructions are no longer in general given by planting a certain subgraph, or equivalently the fact that the solution to the variational problem takes values substantially greater than $p$ but less than $1$. As such, we must prove a result that holds for more general graphons than those considered in \cite{BD}.

The argument of \cite{BD} roughly involves conditioning on our random graph containing the particular subgraph we are planting, and proving that the upper tail event is then almost certain. In particular, if $\mathcal{H}$ is the desired upper tail event, the argument of \cite{BD} goes by first choosing some event $\mathcal{B}$ (which in essence states that a graph contains the planted subgraph) and proving that $\mathcal{B}$ is fairly probable whereas $\mathcal{B}\cap\neg\mathcal{H}$ is highly unlikely.

As we are no longer simply planting a subgraph, finding the correct auxiliary event $\mathcal{B}$ to use is difficult. In Section \ref{ldplowerboundsection1}, we will define the event $\mathcal{A}_n^{\deg}$, which we will use for this purpose. A second difficulty will come in bounding $\Pr[\mathcal{A}_n^{\deg}\cap\neg\mathcal{H}]$, the subject of Section \ref{secondtermsection}. In \cite{BD}, after using the auxiliary event to assist in changing measures to a certain measure $\mathbb{P}_{\star}$, it is dropped entirely, but in our case, this will not be possible as the desired bound on $\mathbb{P}_{\star}(\neg\mathcal{H})$ (c.f. (2.46) of \cite{BD}) is not even true. Thus we must extract additional use out of our event $\mathcal{A}_n^{\deg}$ before dropping it.

We state some conventions that we will use throughout this paper.
\begin{conventions}
We use big-$O$ notation (including $O(\cdot)$, $o(\cdot)$, $\Omega(\cdot)$, $\omega(\cdot)$, $\Theta(\cdot)$) in the usual way. All uses of this notation will apply as $n\to\infty$, or if there is no $n$ appearing (as in nearly all of Sections \ref{constructionssection} through \ref{K24section}) as $p\to 0$. We will also use $f\lesssim g$ to mean $f=O(g)$ and $f\ll g$ to mean $f=o(g)$.

We will consider only graphs with no isolated vertices. As such, subgraphs of a graph $K$ correspond to subsets of the edge set $E(K)$, and we will use these interchangably throughout.

Whenever we consider the random $d$-regular graph $G_n^d$, we will assume $dn$ is even.
\end{conventions}
\section{Main Theorems}\label{mainresultssection}
Before stating our main result, we make several definitions.

For $d<n\in\mathbb{Z}^+$ with $dn$ even, let $G_n^d$ be a random graph given by selecting each $d$-regular-graph on $[n]$ with equal probability.

For a graph $K$, let $E(K)$ be the set of edges of $K$, and $V(K)$ be the set of non-isolated vertices of $K$. Let $e(K):=|E(K)|$ and $v(K):=|V(K)|$.

Call a tuple $(c_v)_{v\in V(K)}$ a \emph{fractional vertex cover} if $c_v\geq 0$ for all $v\in V(K)$ and $c_v+c_w\geq 1$ for all $vw\in E(K)$. Let $c(K)$ be the minimum value of $\sum_{v\in V(K)}c_v$ over all fractional vertex covers $(c_v)$, and call a fractional vertex cover \emph{minimum} if $\sum_{v\in V(K)}c_v=c(K)$.

\begin{definition}
Let
\[\gamma(K)=\displaystyle\max_{H\subseteq K}\frac{e(H)-v(H)}{c(H)}.\]
Call a subgraph $H\subseteq K$ \emph{contributing} if $H$ has minimum degree at least $2$ and $e(H)-v(H)=c(H)\gamma(K)$.
\end{definition}
For two graphs $K,G$, we define the homomorphism count $\Hom(K,G)$ to be the number of functions $V(K)\to V(G)$ such that every edge of $K$ is mapped into an edge of $G$.
\begin{remark}
Homomorphism count is closely related to subgraph count; notice that if $G$ is a graph on $n$ vertices, $\Hom(K,G)=|\text{Aut}(K)|\cdot(\#\text{ copies of }K\text{ in }G)+O(n^{v(K)-1})$, as there are $|\text{Aut}(K)|$ homomorphisms from $K$ to $G$ for every time $K$ appears as a subgraph of $G$, and this counts all homomorphisms except the $O(n^{v(K)-1})$ that are not injective on the vertices of $K$.
\end{remark}
\begin{definition}\label{validdef}
For a graph $H$, call a subset $A\subseteq V(H)$ \emph{valid} if there is a minimum fractional vertex cover $(c_v)_{v\in V(H)}$ such that $c_v=1$ if $v\in A$ and $c_v\in\left\{0,\frac{1}{2}\right\}$ if $v\in V(H)\backslash A$.

For a graph $K$, we also define a bivariate polynomial $P_K$, given by
\[P_K(z,w):=\displaystyle\sum_{H\subseteq K\text{ contributing}}\displaystyle\sum_{A\subseteq V(H)\text{ valid}}z^{|A|}w^{c(H)-|A|}.\]

We further define
\[\rho(K,\delta):=\displaystyle\min_{\substack{z,w\geq 0 \\ P_K(z,w)\geq 1+\delta}}\left(z+\frac{w}{2}\right).\]
\end{definition}
Our main results are Theorems \ref{correctexponent}, \ref{correctconstant}, and \ref{K24thm}. The first of these theorems bounds the desired upper tail probability to within a logarithmic factor in the exponent.
\begin{thm}\label{correctexponent}
Let $K$ be any nonforest graph whose $2$-core is not a disjoint union of cycles, and fix $\delta>0$. If $d=d(n)$ with $dn$ even and $p:=\frac{d}{n}$ satisfies $(n^{-1}\log n)^{\frac{1}{2e(K)-2-\gamma(K)}}\ll p\ll 1$, then
\begin{align*}
n^2p^{2+\gamma(K)} & \lesssim-\log\Pr\left[\Hom(K,G_n^d)\geq (1+\delta)p^{e(K)}n^{v(K)}\right] \\ & \leq (1+o(1))\rho(K,\delta)n^2p^{2+\gamma(K)}\log\frac{1}{p}
\end{align*}
as $n\to\infty$.
\end{thm}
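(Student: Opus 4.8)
The plan is to split the problem into a probabilistic reduction and an analytic estimate, and to note at the outset that the $\log$-gap in the statement lives entirely in the analytic part. For the reduction, pass to the variational problem of minimizing entropy over $p$-regular graphons with enough copies of $K$: writing $\mathcal H=\{\Hom(K,G_n^d)\ge (1+\delta)p^{e(K)}n^{v(K)}\}$, set
\[
\Psi_p(K,\delta):=\inf\left\{\frac12\iint I_p\bigl(W(x,y)\bigr)\,dx\,dy \ :\ \int_0^1 W(x,y)\,dy=p\ \text{for almost every }x,\ t(K,W)\ge(1+\delta)p^{e(K)}\right\},
\]
where $I_p$ is the relative-entropy density at base density $p$ and $t(K,W)$ is the homomorphism density of $K$ in $W$. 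The two probabilistic ingredients are the later theorems: Theorem~\ref{ldpupperboundthm}, obtained by porting the Cook--Dembo nonlinear large-deviations machinery to the $d$-regular model, giving $\Pr[\mathcal H]\le\exp\bigl(-(1-o(1))\,n^2\Psi_p(K,\delta)\bigr)$ (after an inconsequential relaxation of $\delta$); and Theorem~\ref{ldplowerboundthm}, giving $\Pr[\mathcal H]\ge\exp\bigl(-(1+o(1))\,n^2 E\bigr)$ whenever one exhibits an explicit family of $d$-regular graphs of ``entropy cost'' $E$ on which $\mathcal H$ is typical. These reductions hold with the stated precision exactly in the window $(n^{-1}\log n)^{1/(2e(K)-2-\gamma(K))}\ll p\ll 1$ --- coming from the gradient-complexity bounds in the Cook--Dembo step and from concentration of the planted structure --- and the excluded cases (forests, disjoint unions of cycles) are exactly those where $\Hom(K,\cdot)$ is constant, respectively where the $2$-core is regular and the answer is already known from \cite{BD}.

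\textbf{Upper bound on $\Pr[\mathcal H]$ (the term without the $\log$).} By Theorem~\ref{ldpupperboundthm} it suffices to prove the variational lower bound $\Psi_p(K,\delta')\gtrsim p^{2+\gamma(K)}$ for each fixed $\delta'>0$. I would first run an adaptive-thresholding and pruning step in the spirit of Section~5 of \cite{BGLZ} to replace $W$ by a cleaner graphon and to certify that the excess $t(K,W)-p^{e(K)}\gtrsim p^{e(K)}$ is carried by some contributing subgraph $H\subseteq K$, and then apply the regularity-responsive generalized H\"older inequality of Theorem~\ref{graphholderthm} --- with edge weights drawn from a \emph{maximum fractional matching} of $H$, the LP-dual of the minimum fractional vertex cover --- to bound that excess by a product of edge-entropy integrals; the exponent $2+\gamma(K)$ then drops out of $c(H)$ together with the defining identity $e(H)-v(H)=c(H)\gamma(K)$ for contributing $H$. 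This route yields only $p^{2+\gamma(K)}$ and not $p^{2+\gamma(K)}\log(1/p)$; that missing logarithm is precisely the gap in the theorem, closed for most $K$ only in Theorem~\ref{correctconstant}.

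\textbf{Lower bound on $\Pr[\mathcal H]$ (the term with the $\log$).} Via Theorem~\ref{ldplowerboundthm}, I would construct a near-extremal $d$-regular configuration of entropy cost $(\rho(K,\delta)+o(1))\,p^{2+\gamma(K)}\log(1/p)$. By Definition~\ref{validdef} each monomial $z^{|A|}w^{c(H)-|A|}$ of $P_K$ corresponds to a contributing $H\subseteq K$ and a minimum fractional cover of $H$ with entries in $\{0,\tfrac12,1\}$; the associated construction plants, inside an otherwise $p$-regular graph, denser blocks realizing $H$ --- the value-$1$ vertices ($A$) sent into small sets across which a block has density $1$, the value-$\tfrac12$ vertices into intermediate sets at density $\approx p^{1/2}$, the value-$0$ vertices ranging over comparatively large sets, all scales being powers of $p$ times $n$ fixed by trading entropy against homomorphism yield. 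Bookkeeping total entropy cost and total homomorphism excess of such a family through the variables $z,w$ of Definition~\ref{validdef}, the cheapest choice meeting $P_K(z,w)\ge1+\delta$ costs $(z+\tfrac12 w)\,p^{2+\gamma(K)}\log(1/p)=\rho(K,\delta)\,p^{2+\gamma(K)}\log(1/p)$. The delicate point is realizing this as genuine $d$-regular graphs: one must pick the auxiliary event $\mathcal{A}_n^{\deg}$ so that it is reasonably likely yet forces the planted blocks, fill in the remaining degrees to exactly $d$ without destroying the homomorphism surplus, and --- unlike \cite{BD}, where the auxiliary event is discarded after the change of measure --- keep exploiting $\mathcal{A}_n^{\deg}$ when bounding $\Pr[\mathcal{A}_n^{\deg}\cap\neg\mathcal H]$, since the planted density here lies strictly between $p$ and $1$ and that bound would otherwise fail.

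\textbf{Main obstacle.} Essentially all the difficulty is in the lower-bound half. Because $K$ is neither a forest nor has regular $2$-core, the extremal graphon is not an indicator of a planted subgraph but has density strictly between $p$ and $1$, so both the design of $\mathcal{A}_n^{\deg}$ and the control of $\Pr[\mathcal{A}_n^{\deg}\cap\neg\mathcal H]$ must be rebuilt relative to \cite{BD}, and making the degree-repair leave the homomorphism surplus intact is delicate. The secondary obstacle is the regularity-sensitive H\"older inequality itself, together with the bookkeeping that forces the minimum-fractional-cover quantities governing the construction and the maximum-fractional-matching quantities governing the H\"older bound to meet at the single exponent $2+\gamma(K)$ --- which is what makes the two sides of the theorem line up.
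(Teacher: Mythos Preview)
Your high-level architecture---reduce to the variational problem via Theorems~\ref{ldpupperboundthm} and~\ref{ldplowerboundthm}, bound $\Psi_p$ below by the regularity-sensitive H\"older inequality with fractional-matching weights, and bound it above by an explicit hub-plus-clique construction indexed by a minimum fractional cover---is exactly the paper's route. Two points of confusion to flag, though neither is fatal to the strategy.

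First, the construction is misdescribed. The value-$\tfrac12$ vertices are \emph{not} sent into sets at density $\approx p^{1/2}$; they go into a clique of size $\sqrt{w}\,p^{1+\gamma/2}$ on which $W=1$, while the value-$1$ vertices go into a hub of size $z\,p^{1+\gamma}$ connected with density $1$ to $[0,p]$ (see Figure~\ref{constantgraphonfigure} and Lemma~\ref{constructionenoughhoms}). Relatedly, your ``main obstacle'' paragraph imports the wrong difficulty: for Theorem~\ref{correctexponent} the optimizing graphon takes only the values $0$, $1$, and $p+o(p)$---it is essentially a planted subgraph. The phenomenon of densities strictly between $p$ and $1$ is specific to $K_0$ (Theorem~\ref{K24thm}), and while the paper builds Theorem~\ref{ldplowerboundthm} in enough generality to cover that case, the obstacle you describe is not what makes the present theorem hard.

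Second, adaptive thresholding is overkill here. For the variational lower bound in Theorem~\ref{varcorrectexponent} the paper uses only the simple cleaning step of Lemma~\ref{replacebyplemma} to pass to $\Gamma_\epsilon(K,\delta)$, then Corollary~\ref{simpleholdercor} (a direct consequence of Theorem~\ref{graphholderthm}) to get $\Hom(H,|U|)\lesssim p^{v(H)-2c(H)}I_p(W)^{c(H)}$; the multi-threshold argument of Proposition~\ref{multiplethresholdsprop} is reserved for the sharper Theorem~\ref{varcorrectconstant}. Finally, one detail you omit: to apply Theorem~\ref{ldpupperboundthm} in the stated $p$-window you need the inequality $2\Delta_*(K)\le 2e(K)-2-\gamma(K)$, which the paper verifies in Section~\ref{maintheoremsection}.
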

\begin{remark}
The expression
\[\displaystyle\min_{\substack{z,w\geq 0 \\ P_K(z,w)\geq 1+\delta}}\left(z+\frac{w}{2}\right)\]
in the definition of $\rho(K,\delta)$ will come from our construction in Section \ref{constructionssection}. In essence, we will plant a subgraph consisting of a `hub' $S\subseteq [n]$ of vertices of $G_n^d$ all of which are connected to all the vertices in $[d]$, as well as a clique on some vertex set $T$. The size of the hub $S$ will be governed by the parameter $z$, and the size of the clique $T$ will be governed by the parameter $w$, normalized so that the clique and the hub are of the appropriate order in size.

The probability that $G_n^d$ contains this graph will be exponential in a quantity proportional to $z+\frac{w}{2}$. Generally, if $G_n^d$ contains this subgraph, we will be able to find $P_K(z,w)n^{v(K)}p^{e(K)}$ homomorphisms from $K$. So we should have enough homomorphisms with at least probability proportional to
\[\min_{\substack{z,w\geq 0 \\ P_K(z,w)\geq 1+\delta}}\left(z+\frac{w}{2}\right).\]

The valid subsets in the definition of $P_K$ will correspond to the sets of vertices that we send into the hub when counting homomorphisms from $K$ to $G_n^d$.
\end{remark}
In the case where the $2$-core of $K$ \emph{is} a disjoint union of cycles, the method of Bhattacharya-Dembo \cite{BD} easily extends to the following result.
\begin{thm}[Essentially as in \cite{BD}]\label{cycleunionthm}
Let $K$ be any nonforest graph whose $2$-core is a disjoint union of cycles of length $i_1,\ldots,i_{\ell}$, and fix $\delta>0$. If $d=d(n)$ with $dn$ even and $p:=\frac{d}{n}$ satisfies $n^{-\frac{1}{3}}\ll p\ll 1$, then
\[-\log\Pr\left[\Hom(K,G_n^d)\geq (1+\delta)p^{e(K)}n^{v(K)}\right]=\left(\frac{c(i_1,\ldots,i_{\ell};\delta)}{2}+o(1)\right)n^2p^2\log\frac{1}{p},\]
where $c(i_1,\ldots,i_{\ell};\delta)$ is the unique positive value of $c$ such that $\displaystyle\prod_{j=1}^{\ell}(1+\lfloor c\rfloor+\{c\}^{i_j/2})=1+\delta$. (Here $\lfloor c\rfloor$ and $\{c\}$ denote the integer part and fractional part of $c$, respectively.)
\end{thm}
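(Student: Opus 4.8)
The plan is to pass to the $2$-core, use that for a disjoint union of cycles the homomorphism count is a product of traces of powers of the adjacency matrix, and then run both halves of the argument of \cite{BD}. First, replace $K$ by its $2$-core $K'=C_{i_1}\sqcup\cdots\sqcup C_{i_\ell}$: with $F=K\setminus K'$ the deleted forest, every $d$-regular $G$ satisfies $\Hom(K,G)=d^{e(F)}\Hom(K',G)$ and $p^{e(K)}n^{v(K)}=d^{e(F)}p^{e(K')}n^{v(K')}$, so the two upper-tail events coincide and we may take $K=K'$. Then $\Hom(K,G)=\prod_j\Hom(C_{i_j},G)=\prod_j\operatorname{tr}(A_G^{i_j})=\prod_j\bigl(d^{i_j}+\sum_{k\ge2}\lambda_k^{i_j}\bigr)$, where $d=\lambda_1\ge\lambda_2\ge\cdots$ are the eigenvalues of $A_G$; since $p\gg n^{-1/3}$, standard control of the spectrum of $G_n^d$ gives $\sum_{k\ge2}\lambda_k^{i_j}=O(n\,d^{i_j/2})=o(d^{i_j})$ with high probability, so the event $\Hom(K,G_n^d)\ge(1+\delta)p^{e(K)}n^{v(K)}$ is a genuine large-deviation event, driven by abnormally large non-Perron eigenvalues.

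For the lower bound on the probability I would use the construction of \cite{BD}. Let $c=c(i_1,\dots,i_\ell;\delta)$ and condition on $G_n^d$ containing $\lfloor c\rfloor$ vertex-disjoint cliques on $\lceil pn\rceil$ vertices together with one further clique on $\lceil\sqrt{\{c\}}\,pn\rceil$ vertices (these fit since $(\lfloor c\rfloor+\sqrt{\{c\}})p\ll1$, and, as in \cite{BD}, the conditional host graph stays near-$d$-regular and quasirandom with conditional probability $1-o(1)$). Splitting $A_{G_n^d}$ into clique-edges and host-edges, the closed $i$-walks confined to a planted clique on $\theta pn$ vertices contribute $(1-o(1))(\theta pn)^i=(1-o(1))(\theta d)^i$ to $\operatorname{tr}(A^i)$, those confined to the host contribute $(1-o(1))d^i$, and walks using a cross-edge only add, so $\Hom(C_{i_j},G_n^d)\ge(1-o(1))d^{i_j}\bigl(1+\lfloor c\rfloor+\{c\}^{i_j/2}\bigr)$ for every $j$; by the defining identity $\prod_j(1+\lfloor c\rfloor+\{c\}^{i_j/2})=1+\delta$ this gives $\Hom(K,G_n^d)\ge(1+\delta-o(1))p^{e(K)}n^{v(K)}$. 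By the clique-probability estimate for $G_n^d$ in \cite{BD} the conditioning has probability $\exp\bigl(-(1+o(1))\tfrac12\log\tfrac1p\cdot(pn)^2(\lfloor c\rfloor+\{c\})\bigr)=\exp\bigl(-(\tfrac c2+o(1))n^2p^2\log\tfrac1p\bigr)$; since $c$ depends continuously on $\delta$, the $-o(1)$ in the homomorphism count is harmless, and we obtain the desired lower bound on the probability.

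For the upper bound on the probability I would invoke the large-deviations upper bound (Theorem~\ref{ldpupperboundthm}, essentially the Cook--Dembo argument), which up to a $1+o(1)$ factor in the exponent equates $-\log\Pr[\Hom(K,G_n^d)\ge(1+\delta)p^{e(K)}n^{v(K)}]$ with $n^2\log\tfrac1p$ times the infimum of the sparse-regime rate functional $I_p(W)$, asymptotic to $\tfrac12\int\int W\log\tfrac Wp$, over $p$-regular graphons $W$ with $\prod_j t(C_{i_j},W)=\prod_j\sum_k\lambda_k(W)^{i_j}\ge(1+\delta)p^{\sum i_j}$. The key structural input is that $p$-regularity forces $\|W\|_{\mathrm{op}}\le p$ (the kernel operator has $L^1\!\to\!L^1$ and $L^\infty\!\to\!L^\infty$ norms equal to $p$, hence $L^2\!\to\!L^2$ norm at most $p$ by interpolation), so $\lambda_1(W)=p$ and all other eigenvalues lie in $[-p,p]$; together with the fact that realizing a given eigenvalue by a block on which $W\equiv1$ costs less entropy than by a larger block on which $W\equiv\theta p$, an adaptive-thresholding argument in the style of Section~5 of \cite{BGLZ} shows the infimum is asymptotic to $\tfrac12\log\tfrac1p\cdot\inf\{\sum_r s_r^2:0<s_r\le p,\ \prod_j(1+p^{-i_j}\sum_r s_r^{i_j})\ge1+\delta\}$. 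Substituting $s_r=\theta_r p$ and using that $\theta\mapsto\theta^{i_j-2}$ is increasing (since $i_j\ge3$), a short convexity argument shows the optimal configuration has $\lfloor c\rfloor$ of the $\theta_r$ equal to $1$ and one equal to $\sqrt{\{c\}}$, whence $\sum_r\theta_r^2=c$ and the infimum equals $(1+o(1))\tfrac c2 p^2\log\tfrac1p$, matching the construction.

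I expect this last step, the variational lower bound, to be the main obstacle: getting the \emph{sharp} constant $c/2$ rather than merely $\Theta(1)$ requires showing both that the excess in $\prod_j t(C_{i_j},W)$ can only be generated by near-clique blocks of total area at least $(c-o(1))p^2$ — which forces one to combine the cap $\|W\|_{\mathrm{op}}\le p$ with the thresholding bookkeeping tightly enough to charge a full $p^2$ of support to each unit of ``second-through-Perron'' eigenvalue mass — and that densities strictly between $p$ and $1$ are never advantageous here, in contrast with the graph $K_0$ analysed later in the paper. The remaining ingredients — spectral control of $G_n^d$ for $p\gg n^{-1/3}$, the clique-probability estimates, and the large-deviations reduction — are available from \cite{BD} and Sections~\ref{ldpupperboundsection}--\ref{secondtermsection}, which is why the result is ``essentially as in \cite{BD}.''
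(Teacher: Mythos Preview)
Your lower-bound construction (planting $\lfloor c\rfloor$ cliques of size $\sim d$ and one of size $\sim\{c\}^{1/2}d$, then checking the lower-tail event is negligible under the tilted measure) is exactly what the paper does, and your outline of that half is correct.

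For the upper bound, however, you take a genuinely different and much harder route than the paper. You propose to invoke Theorem~\ref{ldpupperboundthm} and then solve the multi-cycle variational problem via the spectral identity $t(C_i,W)=\sum_k\lambda_k(W)^i$, the cap $\|W\|_{\mathrm{op}}\le p$, and an adaptive-thresholding argument to force the optimizer to be a union of near-cliques. You correctly flag this last step as the main obstacle, and indeed it would amount to re-proving, for unions of cycles, exactly what \cite{BD} already established for a single cycle. The paper sidesteps this entirely with a one-line union bound: since $\Hom(K,G)=\prod_j\Hom(C_{i_j},G)$ and $1+\delta=\prod_j(1+\lfloor c\rfloor+\{c\}^{i_j/2})$, the upper-tail event for $K$ forces $\Hom(C_{i_j},G_n^d)\ge(1+\lfloor c\rfloor+\{c\}^{i_j/2})n^{i_j}p^{i_j}$ for some $j$, and the single-cycle result of \cite{BD} (their Theorems~1.1 and~1.5(a)) gives each such event probability at most $\exp\bigl(-(c/2-o(1))n^2p^2\log\tfrac1p\bigr)$, since by construction the single-cycle constant for threshold $\lfloor c\rfloor+\{c\}^{i_j/2}$ is exactly $c$. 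No variational problem needs to be solved.

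There is also a concrete loss in your route: for a union of cycles $\Delta_*(K)=2$, so Theorem~\ref{ldpupperboundthm} only applies for $p\gg(n^{-1}\log n)^{1/4}$, whereas the direct reduction to \cite{BD} covers the full range $p\gg n^{-1/3}$ claimed in the statement.
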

For completeness, we include a proof of Theorem \ref{cycleunionthm} in Section \ref{correctexponentcorsection}. The previous two theorems easily imply our desired log gap.
\begin{cor}\label{correctexponentcor}
Let $K$ be any nonforest graph and fix $\delta>0$. If $d=d(n)$ with $dn$ even and $p:=\frac{d}{n}$ satisfies $(n^{-1}\log n)^{\frac{1}{2e(K)-2-\gamma(K)}}\ll p\ll 1$, then
\[n^2p^{2+\gamma(K)}\lesssim -\log\Pr\left[\Hom(K,G_n^d)\geq (1+\delta)p^{e(K)}n^{v(K)}\right]\lesssim n^2p^{2+\gamma(K)}\log\frac{1}{p}\]
as $n\to\infty$.
\end{cor}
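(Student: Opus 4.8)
The plan is to separate into the two cases governed by whether the $2$-core of $K$ is a vertex-disjoint union of cycles, and to invoke Theorem \ref{correctexponent} in one case and Theorem \ref{cycleunionthm} in the other. Since $K$ is a nonforest its $2$-core is a nonempty graph of minimum degree at least $2$, and it either is or is not a disjoint union of cycles, so the two cases are exhaustive and exclusive. In the case where the $2$-core is \emph{not} a disjoint union of cycles, Theorem \ref{correctexponent} applies verbatim, since its sparsity hypothesis $(n^{-1}\log n)^{1/(2e(K)-2-\gamma(K))}\ll p\ll 1$ is exactly that of the corollary. The lower bound $n^2p^{2+\gamma(K)}\lesssim -\log\Pr[\,\cdot\,]$ is then immediate, and the upper bound $-\log\Pr[\,\cdot\,]\le (1+o(1))\rho(K,\delta)\,n^2p^{2+\gamma(K)}\log\tfrac1p$ gives $-\log\Pr[\,\cdot\,]\lesssim n^2p^{2+\gamma(K)}\log\tfrac1p$ provided $\rho(K,\delta)$ is a finite constant depending only on $K$ and $\delta$. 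To see this I would check that $P_K$ is a nonzero polynomial with nonnegative coefficients and no constant term: there is at least one contributing subgraph (start from an $H$ maximizing $(e(H)-v(H))/c(H)$ and repeatedly delete a degree-$1$ vertex; using $\gamma(K)\ge 0$ one checks this never leaves the maximizer and terminates at a subgraph of minimum degree $\ge 2$), and every subgraph has a valid subset by half-integrality of the fractional vertex cover polytope, while $c(H)\ge 1$ for every $H$ with an edge kills the constant term. Hence $P_K(z,w)\to\infty$ along $z=w\to\infty$, so $\{P_K\ge 1+\delta\}$ is nonempty and $\rho(K,\delta)<\infty$.

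In the case where the $2$-core of $K$ is a disjoint union of cycles $C_{i_1},\dots,C_{i_\ell}$, I would first verify that $\gamma(K)=0$. On one hand $\gamma(K)\ge 0$ by taking $H=C_{i_1}$, where $e(H)-v(H)=0$ and $c(H)=i_1/2>0$. On the other hand, for any $H\subseteq K$ one has $e(H)-v(H)=\operatorname{cr}(H)-k(H)$, where $\operatorname{cr}$ is the cycle rank and $k$ the number of components; every cycle of $K$ lies in the $2$-core (a tree attached to the $2$-core meets it in a single vertex, else its connecting path would lie in the $2$-core), the $2$-core's cycles are vertex-disjoint and hence lie in distinct components of $K$, so no component of $H$ contains two independent cycles, giving $\operatorname{cr}(H)\le k(H)$ and thus $e(H)-v(H)\le 0$. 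Hence $\gamma(K)=0$ and $2+\gamma(K)=2$. It remains to see that the corollary's sparsity range lies inside that of Theorem \ref{cycleunionthm}: since the shortest cycle has three edges we have $e(K)\ge 3$, so $2e(K)-2-\gamma(K)\ge 4$ and $(n^{-1}\log n)^{1/(2e(K)-2-\gamma(K))}\ge (n^{-1}\log n)^{1/4}\gg n^{-1/3}$; therefore $p\gg (n^{-1}\log n)^{1/(2e(K)-2-\gamma(K))}$ forces $p\gg n^{-1/3}$ and Theorem \ref{cycleunionthm} applies, yielding $-\log\Pr[\,\cdot\,]=\big(\tfrac12 c(i_1,\dots,i_\ell;\delta)+o(1)\big)n^2p^2\log\tfrac1p$. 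As $c(i_1,\dots,i_\ell;\delta)$ is a positive constant and $\log\tfrac1p\ge 1$ for large $n$, this quantity is $\Theta\big(n^2p^2\log\tfrac1p\big)=\Theta\big(n^2p^{2+\gamma(K)}\log\tfrac1p\big)$, and in particular lies between $n^2p^{2+\gamma(K)}$ and $n^2p^{2+\gamma(K)}\log\tfrac1p$ up to constants.

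Combining the two cases proves the corollary. None of these steps is a genuine obstacle, as all the substance is already contained in Theorems \ref{correctexponent} and \ref{cycleunionthm}; the only points needing a little care are the bookkeeping that $\rho(K,\delta)<\infty$ and the verification that the two sparsity windows are compatible in the disjoint-cycles case. Of these, the least immediate is the identity $\gamma(K)=0$, which I expect to be the main (still routine) obstacle, and which reduces to the structural observation above about how cycles and trees can sit inside a graph whose $2$-core is a disjoint union of cycles.
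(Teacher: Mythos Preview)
Your proof is correct and follows essentially the same two-case approach as the paper. One minor slip: $P_K$ does have a constant term, namely $1$, coming from the empty contributing subgraph $H=\emptyset$ with valid subset $A=\emptyset$; what you actually need (and what your argument about a nonempty contributing subgraph with $c(H)\ge 1$ correctly establishes) is that $P_K$ has a \emph{nonconstant} monomial, so that $P_K(z,w)\to\infty$ and $\rho(K,\delta)<\infty$. Your structural argument for $\gamma(K)=0$ in the disjoint-cycles case is a slightly more explicit version of the paper's one-line observation that the $2$-core of any $H'\subseteq K$ is again a (possibly empty) disjoint union of cycles, whence $e(H')\le v(H')$.
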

\begin{remark}
Note that the upper tail question is trivial when $K$ is a forest, as a forest has the same number of homomorphisms into every regular graph. Thus Corollary \ref{correctexponentcor} solves the upper tail problem to within a log gap for all graphs.
\end{remark}
For most graphs, we obtain an improved result that bounds the probability to within a $1+o(1)$ factor in the exponent. First we must make some additional definitions.
\begin{definition}
For a graph $H$, call a tuple $(w_e)_{e\in E(H)}$ a \emph{fractional matching} if $w_e\geq 0$ for all $e\in E(H)$ and $\displaystyle\sum_{e\ni v}w_e\leq 1$ for all $v\in V(H)$.

Call a tuple $(w_e)_{e\in E(H)}$ a \emph{fractional edge cover} if $w_e\geq 0$ for all $e\in E(H)$ and $\displaystyle\sum_{e\ni v}w_e\geq 1$ for all $v\in V(H)$.

Call a tuple $(w_e)_{e\in E(H)}$ a \emph{fractional perfect matching} if it is both a fractional matching and a fractional edge cover; that is, if $w_e\geq 0$ for all $e\in E(H)$ and $\displaystyle\sum_{e\ni v}w_e=1$ for all $v\in V(H)$.

Call a fractional matching \emph{maximum} if $\displaystyle\sum_{e\in E(H)}w_e=c(H)$. Call a fractional edge cover \emph{minimum} if $\displaystyle\sum_{e\in E(H)}w_e=v(H)-c(H)$.

Call an edge $e_0\in H$ \emph{bad} if for every maximum fractional matching $(w_e)_{e\in E(H)}$, $w_{e_0}=1$.
\end{definition}
\begin{remark}
It will follow from linear programming duality (as we will prove in Lemma \ref{edgeweightslemma}) that $c(H)$ is indeed the maximum of $\sum w_e$ over all fractional matchings $(w_e)_{e\in E(H)}$, and $v(H)-c(H)$ is the minimum over all fractional edge covers, justifying our terminology.
\end{remark}
\begin{thm}\label{correctconstant}
Let $K$ be a fixed nonforest graph none of whose contributing subgraphs have bad edges, and whose $2$-core is not a disjoint union of cycles. Fix $\delta>0$. If $d=d(n)$ with $dn$ even and $p:=\frac{d}{n}$ satisfies $(n^{-1}\log n)^{\frac{1}{2e(K)-2-\gamma(K)}}\ll p\ll 1$, then
\[-\log\Pr\left[\Hom(K,G_n^d)\geq (1+\delta)p^{e(K)}n^{v(K)}\right]=(1+o(1))\rho(K,\delta)n^2p^{2+\gamma(K)}\log\frac{1}{p}\]
as $n\to\infty$.
\end{thm}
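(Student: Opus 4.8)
The plan is to derive Theorem \ref{correctconstant} from the two complementary halves of the paper. The construction side — namely $-\log\Pr[\Hom(K,G_n^d)\ge(1+\delta)p^{e(K)}n^{v(K)}]\le(1+o(1))\rho(K,\delta)n^2p^{2+\gamma(K)}\log\frac1p$ — is already furnished by Theorem \ref{correctexponent}, whose hypotheses (nonforest, $2$-core not a union of cycles, same range of $p$) are a subset of those assumed here, so the entire content is the reverse inequality: an upper bound on the upper-tail probability of strength $(1-o(1))\rho(K,\delta)n^2p^{2+\gamma(K)}\log\frac1p$. The first step is to invoke the large-deviations upper bound (Theorem \ref{ldpupperboundthm}, essentially the Cook--Dembo framework specialized to the regular model), which is valid precisely in the stated range $p\gg(n^{-1}\log n)^{1/(2e(K)-2-\gamma(K))}$ and reduces the problem to the variational statement that the minimum of the (normalized) relative entropy over $p$-regular graphons $W$ with $\Hom(K,W)\ge(1+\delta)p^{e(K)}$ is at least $(1-o(1))\rho(K,\delta)p^{2+\gamma(K)}\log\frac1p$.

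For the variational lower bound I would expand $\Hom(K,W)$ around the constant graphon $p$; the terms that can be inflated to size $\Theta(p^{e(K)})$ at the least entropic cost per unit are exactly those indexed by the contributing subgraphs $H\subseteq K$, so it suffices to bound, for each contributing $H$, the entropy needed to make the relevant $H$-term large. Here I apply the regularity-sensitive generalized H\"older inequality (Theorem \ref{graphholderthm}) with exponents given by a \emph{maximum fractional matching} $(w_e)_{e\in E(H)}$ of $H$; the hypothesis that no contributing subgraph has a bad edge is exactly what guarantees such a matching can be chosen with every $w_e<1$, which is the condition under which Theorem \ref{graphholderthm} genuinely exploits the regularity constraints $\int_0^1 W(x,y)\,dy\equiv p$. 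Combining the H\"older estimate with the regularity constraint turns the bound on the $H$-term into a bound on weighted $L^q$-type norms of the excess $W-p$, restricted to the regions where $W$ exceeds various multiples of $p$.

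The third step is the adaptive-thresholding argument in the spirit of Section 5 of \cite{BGLZ}: partition $W$ into level sets according to the size of $W/p$, run the preceding H\"older estimate on each scale, and optimize over scales and over the measures of the corresponding level sets. This forces the extremal $W$ to look like the hub-plus-clique graphon of the construction, with hub size governed by a parameter $z$ and clique size governed by a parameter $w$ exactly as in the remark following Theorem \ref{correctexponent}, and with the valid subsets $A\subseteq V(H)$ indexing which vertices of $H$ land in the hub. Tracking the entropy produces the functional $z+\frac w2$ while the homomorphism constraint produces $P_K(z,w)\ge1+\delta$, so the variational value is $\ge(1-o(1))\rho(K,\delta)p^{2+\gamma(K)}\log\frac1p$, matching the construction.

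The main obstacle is this third step: extracting the \emph{exact} optimization $\rho(K,\delta)=\min\{z+\frac w2:P_K(z,w)\ge1+\delta\}$ rather than merely a bound of the same order. This requires handling all contributing subgraphs and all valid subsets simultaneously, and checking that the fractional matching weights used on the lower-bound side dualize, via linear-programming duality (Lemma \ref{edgeweightslemma}) and the structure of minimum fractional vertex covers, precisely to the monomials $z^{|A|}w^{c(H)-|A|}$ appearing in $P_K$. The delicate point throughout is bookkeeping the regularity constraint so that no budget from the row and column sums of the excess is wasted; this is where the new H\"older inequality of Theorem \ref{graphholderthm} and the no-bad-edge hypothesis are essential, and it is exactly what separates the graphs covered here from the example $K_0$, whose contributing subgraph carries a bad edge and for which the construction and the H\"older bound provably fail to meet.
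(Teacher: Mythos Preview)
Your overall plan matches the paper's: the upper bound on $-\log\Pr$ is exactly that of Theorem \ref{correctexponent}, and the lower bound goes through Theorem \ref{ldpupperboundthm} to reduce to the variational lower bound of Theorem \ref{varcorrectconstant}, which is then proved by expanding $\Hom(K,W)$ around $p$, discarding non-contributing subgraphs, applying the regularity-aware H\"older inequality (Theorem \ref{graphholderthm}) with a maximum fractional matching having all $w_e<1$ (Lemma \ref{edgeweightslemma2}, available precisely because contributing subgraphs have no bad edges), and then adaptive thresholding. One small omission: Theorem \ref{ldpupperboundthm} needs $p\gg(n^{-1}\log n)^{1/(2\Delta_*(K))}$, not the range you quote; the paper verifies $2\Delta_*(K)\le 2e(K)-2-\gamma(K)$ separately in Section \ref{maintheoremsection}.

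There is, however, a genuine soft spot in your third step. You describe the thresholding as ``partition $W$ into level sets according to the size of $W/p$,'' but the paper's threshold is on the \emph{row sums} $\int_0^1|U(x,y)|\,dy$ (Definition \ref{hubdef}), not on pointwise values of $W$; this is what defines the hub $B_b$. The reason this matters is the factor of two: with $z$ defined as the (normalized) entropy on $B_b\times\overline{B_b}$ and $w$ on $\overline{B_b}\times\overline{B_b}$, the \emph{symmetry} of $W$ forces $I_p(W)\ge(2z+w)p^{2+\gamma}\log(1/p)$ (Lemma \ref{2z+wlemma}), because the hub entropy appears once on $B_b\times\overline{B_b}$ and again on $\overline{B_b}\times B_b$. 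A H\"older bound alone, or a pointwise-value threshold, loses exactly this factor of two --- the paper illustrates this explicitly for $K_{2,3}$ in Section \ref{K23section}. Relatedly, the role of the adaptive thresholding (Proposition \ref{multiplethresholdsprop}) is not to ``optimize over scales'' but to locate a \emph{single} threshold $b$ at which all \emph{invalid} subsets $A\subseteq V(H)$ contribute $o(p^{e(H)})$; only then does Proposition \ref{monomialboundprop} pin each surviving term to the monomial $z^{|A|}w^{c(H)-|A|}$ and recover $P_K(z,w)\ge 1+\delta-o(1)$. Your plan will go through once you make the hub a row-sum level set and explicitly invoke the symmetry to get $2z+w$ rather than $z+w$.
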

\begin{remark}
Although it is not obvious, the log probability in Theorem \ref{correctconstant} is quite similar to the one found in \cite{BGLZ} for Erd\H{o}s-R\'{e}nyi random graphs. Suppose in the definitions above we replace the expression $2+\gamma(K)=\displaystyle\max_{H\subseteq K}\frac{e(H)+2c(H)-v(H)}{c(H)}$ with $\displaystyle\max_{H\subseteq K}\frac{e(H)}{c(H)}$, change the definition of contributing subgraphs accordingly to instead maximize $\frac{e(H)}{v(H)}$, and redefine $P_K(z,w)$ and $\rho(K,\delta)$ correspondingly. Then $\displaystyle\max_{H\subseteq K}\frac{e(H)}{c(H)}$ turns out to simply be the maximum degree $\Delta(K)$, which is exactly the exponent in Corollary 1.6 of \cite{BGLZ}. Furthermore, if one works out the new definition of $\rho(K,\delta)$, it turns out to be exactly the constant in Corollary 1.6 of \cite{BGLZ}, so the expression $(1+o(1))\rho(K,\delta)n^2p^{2+\gamma(K)}\log(1/p)$ in fact still gives the correct log-probability (albeit with modified definitions). Indeed, there is a reason for this: we are able to modify our application of Hölder's inequality so that our degree condition will give us an extra factor of $p^{v(H)-2c(H)}$ (see for example Corollary \ref{simpleholdercor}). In fact, dropping the degree condition and carrying the proof of Theorem \ref{varcorrectconstant} in this paper through without that term, one would essentially reprove Theorem 1.5 of \cite{BGLZ}. This is because the condition `no contributing subgraphs have bad edges' is true for \emph{all graphs} (except the single-edge graph) under the modified definition of contributing subgraph. This highlights a way in which the random $d$-regular case presents new fundamental difficulties not present in the Erd\H{o}s-R\'{e}nyi case.
\end{remark}
We claimed above that Theorem \ref{correctconstant} applies to `most' graphs; the proposition below justifies this claim.
\begin{prop}\label{mostgraphsprop}
If $\gamma(K)>2$, then the conditions on $K$ in Theorem \ref{correctconstant} hold. In particular, if $K$ or any subgraph of $K$ has average degree greater than $4$, then the conditions on $K$ in Theorem \ref{correctconstant} hold.

The conditions in Theorem \ref{correctconstant} also hold for any nonforest $K$ with $v(K)\leq 5$.
\end{prop}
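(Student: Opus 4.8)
The plan is to prove the three assertions in turn, deriving the average-degree statement from the $\gamma(K)>2$ one and isolating the single delicate point, the bad-edge condition. Two of the three requirements in Theorem \ref{correctconstant} follow easily from $\gamma(K)>2$. If $K$ were a forest, every subgraph $H$ with an edge would have $e(H)<v(H)$, so $\gamma(K)<0$; hence $K$ is a nonforest. And deleting degree-$\le1$ vertices preserves connectivity, so the $2$-core of a connected graph is connected; thus if the $2$-core of $K$ were a disjoint union of cycles, each component of $K$ would have $2$-core a single cycle or empty, so $e(H)\le v(H)$ for all $H\subseteq K$ and $\gamma(K)\le0$ — again contradicting $\gamma(K)>2$.

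The crux is that no contributing subgraph $H\subseteq K$ has a bad edge. Suppose $e_0=uv\in E(H)$ is bad and set $H^-:=H-e_0$; since $u,v$ have degree $\ge2$ in $H$, $H^-$ is a subgraph of $K$ with $v(H^-)=v(H)$ and $e(H^-)=e(H)-1\ge v(H)-1\ge2$. Using that $c(\cdot)$ equals the maximum fractional matching (Lemma \ref{edgeweightslemma}): every maximum fractional matching of $H$ gives $e_0$ weight $1$, so restricting one to $H^-$ yields $c(H^-)\ge c(H)-1$, while no maximum fractional matching of $H$ vanishes on $e_0$, so $c(H^-)<c(H)$; half-integrality of the fractional matching polytope then forces $c(H^-)\in\{c(H)-1,\,c(H)-\tfrac12\}$ (note $c(H^-)\ge1$ as $H^-$ has an edge, so $c(H)\ge\tfrac32$). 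Combining $\gamma(K)\ge\dfrac{e(H^-)-v(H^-)}{c(H^-)}$ with $\gamma(K)=\dfrac{e(H)-v(H)}{c(H)}$ (valid since $H$ is contributing), one cross-multiplication gives $e(H)-v(H)\le c(H)$ in the first case and $e(H)-v(H)\le 2c(H)$ in the second — so $\gamma(K)\le2$ either way, contradicting $\gamma(K)>2$. The average-degree statement is now immediate: for any fractional matching $\sum_e w_e=\tfrac12\sum_v\sum_{e\ni v}w_e\le\tfrac12v(H)$, whence $c(H)\le v(H)/2$, so a subgraph of average degree $>4$ has $e(H)-v(H)>v(H)\ge2c(H)$ and $\gamma(K)>2$.

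For $v(K)\le5$ it suffices to show no graph $H$ with minimum degree $\ge2$ and at most $5$ vertices has a bad edge. When $v(H)\le4$ such an $H$ is Hamiltonian and has a fractional perfect matching, so the uniform $\tfrac12$-weighting on a Hamilton cycle is a maximum fractional matching with no edge of weight $1$; no bad edge. For $v(H)=5$ I would rerun the dichotomy above without any hypothesis on $\gamma$. If $c(H^-)=c(H)-1$: $H^-$ still has three vertices of degree $\ge2$ and no isolated vertex, and a short check (if those three span an edge one finds two disjoint edges; otherwise each is adjacent to both of $u,v$, so $H^-\supseteq K_{2,3}$) forces $c(H^-)\ge2$, hence $c(H)\ge3>v(H)/2$, impossible. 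If $c(H^-)=c(H)-\tfrac12$: then $c(H)=v(H)/2$, so $H$ has a fractional perfect matching; if $H$ is Hamiltonian we again get a contradiction as above, and if not, then every half-integral maximum fractional matching is supported on a triangle plus a disjoint edge (and it suffices to consider these, the set of maximizers being the hull of its half-integral vertices), and ``$e_0$ bad'' forces that triangle always to be $V(H)\setminus\{u,v\}$ with $e_0$ the disjoint edge; chasing the resulting non-edge constraints together with $\min\deg\ge2$ pins $H$ down to a single graph up to isomorphism, which turns out to be Hamiltonian — the final contradiction.

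The main obstacle is the bad-edge analysis, and the pivotal move is to delete the bad \emph{edge} rather than its endpoints and then use half-integrality: this yields exactly two scenarios, and in each, comparing the ratios $\tfrac{e(\cdot)-v(\cdot)}{c(\cdot)}$ for $H$ and $H-e_0$ forces $\gamma\le2$. For $v(K)\le5$ one additionally needs the elementary but slightly fiddly small-graph facts — the estimate $c(H-e_0)\ge2$, and the ``Hamilton cycle or triangle-plus-edge'' dichotomy for fractional perfect matchings on five vertices.
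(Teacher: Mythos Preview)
Your argument is correct. For the $\gamma(K)>2$ claim you and the paper use the same engine: delete an edge $e_0$ from a contributing $H$, compare $c(H)$ with $c(H\setminus e_0)$, and combine the ratio inequality $(e(H)-1-v(H))/c(H\setminus e_0)\le \gamma(K)=(e(H)-v(H))/c(H)$ with half-integrality. The paper argues directly that $\gamma>2$ forces $c(H\setminus e_0)=c(H)$ for every edge, whence no edge is bad; you assume $e_0$ bad, obtain $c(H^-)\in\{c(H)-1,c(H)-\tfrac12\}$, and derive $\gamma\le 2$ — the contrapositive of the same computation.

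For $v(K)\le 5$ the approaches genuinely diverge. The paper simply enumerates the non-Hamiltonian graphs on at most five vertices with minimum degree $\ge 2$ (namely $K_{2,3}$, $K_{1,1,3}$, and the butterfly) and exhibits, for each, an explicit maximum fractional matching with no edge of weight $1$. You instead rerun the $c(H^-)$ dichotomy. Your bound $c(H^-)\ge 2$ is correct (cleanest justification: $H^-$ has five vertices, no isolated vertex, and at least three vertices of degree $\ge 2$, so it is not a star and hence contains two disjoint edges); note that this bound is what forces $c(H)=5/2$ in the second case, and you should state that deduction explicitly rather than leaving it implicit. Your constraint-chase at the end is also correct: forbidding any triangle-plus-edge fractional perfect matching other than $\{a,b,c\}\cup\{uv\}$, together with $\deg u,\deg v\ge 2$, pins $H$ down (up to relabelling) to the edge set $\{uv,ua,vb,ab,ac,bc\}$, which has the Hamilton cycle $u\,a\,c\,b\,v\,u$. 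The paper's enumeration is shorter and more transparent; your structural route avoids needing the list of exceptional graphs in advance but requires more bookkeeping to carry out in full.
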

One might expect, given Theorem \ref{correctconstant}, that
\[-\log\Pr\left[\Hom(K,G_n^d)\geq (1+\delta)p^{e(K)}n^{v(K)}\right]=\Theta\left(n^2p^{2+\gamma(K)}\log\frac{1}{p}\right)\]
for all graphs $K$ and $d$ in the appropriate sparsity regime. Indeed, for the Erd\H{o}s-R\'{e}nyi graph $G(n,p)$, the result
\[-\log\Pr\left[\Hom(K,G(n,p))\geq (1+\delta)p^{e(K)}n^{v(K)}\right]=\Theta\left(n^2p^{\Delta(K)}\log\frac{1}{p}\right)\]
holds for appropriate values of $p$, per Corollary 1.6 of \cite{BGLZ}. However, in our case this does not turn out to be true, as the following theorem shows.
\begin{figure}
\scalebox{0.5}{
\begin{tikzpicture}
\GraphInit[vstyle=simple]
\grEmptyPath[Math,prefix=w,RA=2,RS=0]{4}
\begin{scope}[xshift=2cm]
\grEmptyPath[Math,prefix=v,RA=2,RS=3]{2}
\end{scope}
\EdgeInGraphSeq{w}{1}{1}
\EdgeFromOneToAll{v}{w}{0}{4}
\EdgeFromOneToAll{v}{w}{1}{4}
\end{tikzpicture}
}
\caption{The graph $K_0$}
\label{K24plusanedgefigure}
\end{figure}

\begin{thm}\label{K24thm}
Let $K_0$ be the graph given by adding an edge to $K_{2,4}$ on the side with four vertices, as in Figure \ref{K24plusanedgefigure}. If $d=d(n)$ with $dn$ even and $p:=\frac{d}{n}$ satisfies $(n^{-1}\log n)^{\frac{1}{15}}\ll p\ll 1$, then
\[-\log\Pr\left[\Hom(K_0,G_n^d)\geq (1+\delta)p^9n^6\right]=(1+o(1))\frac{(18\delta)^{\frac{1}{3}}}{2}n^2p^3\left(\log\frac{1}{p}\right)^{\frac{2}{3}}\left(\log\log\frac{1}{p}\right)^{\frac{1}{3}}\]
as $n\to\infty$.
\end{thm}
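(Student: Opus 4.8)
The plan is to obtain Theorem~\ref{K24thm} from the two large deviation results, Theorems~\ref{ldpupperboundthm} and~\ref{ldplowerboundthm}, which reduce the estimate to a variational quantity: the minimal entropy (with the usual normalization) of a $p$-regular graphon $W$ with $t(K_0,W)\ge(1+\delta)p^9$. The substance of the theorem is that this is \emph{not} $\rho(K_0,\delta)\,n^2p^{2+\gamma(K_0)}\log\tfrac1p$, because the hypotheses of Theorem~\ref{correctconstant} fail for $K_0$: its only contributing subgraph containing the edge $w_1w_2$ is $K_0$ itself, and $w_1w_2$ is a bad edge of $K_0$ --- the eight edges at the two degree-$4$ vertices can carry total fractional matching weight at most $2$, so $w_1w_2$ must carry weight at least $c(K_0)-2=1$, hence exactly $1$, in every maximum fractional matching. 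One computes $\gamma(K_0)=1$ (the maximizing subgraphs being $K_{2,4}$ and $K_0$, both with $e-v=c$), so $2+\gamma(K_0)=3$ and $2e(K_0)-2-\gamma(K_0)=15$, matching the stated sparsity range; the claim is that the correct entropy scale is instead $p^3(\log\tfrac1p)^{2/3}(\log\log\tfrac1p)^{1/3}$ with constant $\tfrac{(18\delta)^{1/3}}{2}$.

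For the probability lower bound (equivalently, an upper bound on the variational problem) I would use a two-parameter construction. Fix $\alpha=\alpha(p)\to\infty$ with $\alpha p\to 0$ and set $q=\alpha p$; take the graphon equal to $1$ on $A\times[d]$ and $[d]\times A$ for a ``hub'' $A$ of measure $\asymp\sqrt{\delta/\alpha}\,p^2$, equal to $q$ on $[d]\times[d]$, and equal to $p$ elsewhere after the lower-order degree corrections needed for $p$-regularity (admissible since $\alpha p\to 0$, and contributing negligible entropy). In the random graph this amounts to planting a hub joined completely to $[d]$ and conditioning on the subgraph on $[d]$ having density at least $q$. The excess homomorphisms arise by mapping $v_0,v_1$ into the hub, the edge $w_1w_2$ into the densified $[d]$, and $w_3,w_4$ freely into $[d]$: this contributes $\asymp |A|^2 q\,p^4\cdot n^6\asymp\delta p^9 n^6$ copies. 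The entropy is $\asymp p^3 n^2\big(\sqrt{\delta/\alpha}\,\log\tfrac1p+\tfrac12\alpha\log\alpha\big)$, the first term from the density-$1$ hub and the second from the density-$q$ region $[d]\times[d]$ (using $I_p(q)=(1+o(1))\alpha p\log\alpha$); minimizing the bracket gives the critical equation $\alpha^{3/2}\log\alpha=(1+o(1))\sqrt\delta\,\log\tfrac1p$, hence $\alpha=(1+o(1))(\tfrac32)^{2/3}\delta^{1/3}\big(\log\tfrac1p/\log\log\tfrac1p\big)^{2/3}$ and minimum value $(1+o(1))(\tfrac32)^{2/3}\delta^{1/3}(\log\tfrac1p)^{2/3}(\log\log\tfrac1p)^{1/3}$. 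Since $(\tfrac32)^{2/3}=\tfrac{18^{1/3}}{2}$, this is the claimed constant. I would also check that the constructions seen by $\rho$ --- a plain clique, or sending $v_0,v_1,w_1$ into the hub, both giving order $\delta^{1/3}n^2p^3\log\tfrac1p$ --- are strictly worse, and then feed the graphon into Theorem~\ref{ldplowerboundthm}, whose validity for graphons taking values strictly between $p$ and $1$ (unlike in \cite{BD}) is exactly what is needed here.

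The matching lower bound on the variational problem is the heart of the argument and the principal obstacle. Given a $p$-regular $W$ with $t(K_0,W)\ge(1+\delta)p^9$, I would run the adaptive thresholding of \cite[Section~5]{BGLZ}, but with the threshold at an a priori unknown scale $q$ with $p\ll q\ll 1$, and bound $t(K_0,W)$ from above through the regularity-responsive generalized H\"older inequality (Theorem~\ref{graphholderthm}) with weights from maximum fractional matchings of the contributing subgraphs $K_{2,4}$ and $K_0$. The decisive point is that the bad edge $w_1w_2$ is forced to carry H\"older weight $1$, so the portion of $W$ responsible for the excess ``along'' $w_1w_2$ is charged at the full rate $\log\tfrac1p$ per unit entropy and cannot be discounted, whereas the other eight edges form $K_{2,4}$, which has a fractional perfect matching, so each carries weight $\tfrac14$ and the structure there can sit at the sub-constant density $q$. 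The Hölder estimate should then show that exceeding $(1+\delta)p^9$ forces a set $A$ (the incipient hub) and a set on which $W$ is elevated to density $\asymp q$, with $|A|$ and $q$ tied by the same constraint $|A|^2 q\gtrsim\delta p^5$ as in the construction, whereupon the entropy is bounded below by the identical two-parameter optimization and the constant $\tfrac{(18\delta)^{1/3}}{2}$ emerges. The genuinely new difficulty, absent in all prior work on either model, is that the optimal threshold $q$ is neither $\Theta(p)$ nor $\Theta(1)$ but a power-of-log multiple of $p$: the thresholding and all the Hölder bookkeeping must be uniform across this window, and one must preclude configurations in which the density-$1$ hub part and the density-$q$ part interact in a way the clean two-parameter family does not account for.
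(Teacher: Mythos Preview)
Your construction is exactly the paper's (Figure~\ref{K24graphonfigure}): a hub of size $a(p)$ joined to $[0,p]$, plus a raised density $b(p)$ on $[0,p]^2$. Your optimization and the constant $(18\delta)^{1/3}/2=(3/2)^{2/3}\delta^{1/3}$ are correct. One small slip: $K_{2,4}$ does \emph{not} have a fractional perfect matching (its $c=2\neq v/2=3$); your argument that the eight $K_{2,4}$-edges together carry weight $\le 2$, forcing $w_{w_1w_2}=1$, is nonetheless right.

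The gap is in the variational lower bound. Your plan --- H\"older via Theorem~\ref{graphholderthm} with the forced weight $1$ on $w_1w_2$, plus adaptive thresholding --- would only recover the log-gap bound of Theorem~\ref{varcorrectexponent}, not the sharp $(\log\tfrac1p)^{2/3}(\log\log\tfrac1p)^{1/3}$ rate. When $w'_e=1$ the H\"older output is an $L^1$-norm of $U$, and the only available conversion to entropy is Lemma~\ref{firstmomentlemma}, which costs a full factor of $\log\tfrac1p$; this is precisely the loss the paper's Remark after Theorem~\ref{graphholderthm} warns about, and it cannot be repaired by choosing the threshold cleverly. The paper does \emph{not} use adaptive thresholding for $K_0$ at all.

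What the paper actually does (Section~\ref{K24section}) is a genuinely different mechanism that you have not identified. First it splits $U=U_1+U_2$ at a fixed threshold $p^{7/8}$ and shows, by direct moment comparison, that the $4$-cycle $v_1w_3v_2w_4$ must use the high part $U_1$ while $w_1w_2$ must use the low part $U_2$ (Lemma~\ref{highlowlemma}). It then localizes $v_1,v_2$ to a fixed-threshold hub $B_b$ with $b=p^{5/4}$. The decisive step is Lemma~\ref{smoothinglemma}: writing the residual $w_1w_2$-contribution as $\int a(x)a(y)|U_2(x,y)|\,dx\,dy$ with $\|a\|_1\le p+o(p)$, one shows by \emph{Jensen's inequality} (convexity of $I_p$) that
\[
I_p\!\left(p+p^{-2}\!\int a(x)a(y)|U_2|\right)\;\le\;(1+o(1))\,p^{-2}I_p(p+U_2),
\]
which ties the bad-edge contribution $c_2$ directly to $I_p(p+U_2)$ with no logarithmic loss. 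This reduces everything to the two-variable optimization $\min_{c_1^2c_2\ge\delta}\bigl(2p^3c_1\log\tfrac1p+p^2I_p(p+pc_2)\bigr)$ (Lemma~\ref{optimizationproblemlemma}), which is exactly your $\alpha$-optimization in disguise and produces the constant. The smoothing/Jensen step is the missing idea; without it the H\"older route stalls at the wrong power of the logarithm.
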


Notice $K_0$ indeed has a bad edge--namely, the edge added to $K_{2,4}$ to obtain $K_0$.

\begin{remark}
The behavior of the upper tail in the case of $K_0$ is fundamentally different to in other known cases in the sparsity regime $n^{-\epsilon}\ll p\ll 1$. In previous work, the prototypical graph containing many copies of some $K$ is given by guaranteeing the existence of (`planting') some large substructure (such as a clique) that will force extra copies of $K$. However, the prototypical graph containing many copies of $K_0$ is given by simultaneously planting a subgraph and uniformly raising the density on a different subgraph.

This logarithmic gap between the true growth rate of the upper tail and its `expected' growth rate is similar to results of \v{S}ileikis and Warnke \cite{SW}, who found a similar logarithmic gap in the Erd\H{o}s-R\'{e}nyi case when the sparsity is very close to the appearance threshold of the graph $K$.
\end{remark}

Our method of proving Theorems \ref{correctexponent}, \ref{correctconstant}, and \ref{K24thm} will be to reduce to a variational problem. Specifically, we will (approximately) show that the upper tail probability is determined by the minimum-entropy graphon $W$ such that $\Hom(K,W)\geq 1+\delta$.

\begin{definition}\label{phidef}
For $x\in [0,1]$, let $I_p(x):=x\log\frac{x}{p}+(1-x)\log\frac{1-x}{1-p}$ be the $p$-entropy of $x$. (Define $I_p(0)$ and $I_p(1)$ to be the appropriate limiting values $\log(1/p)$ and $\log(1/(1-p))$, respectively.)

If $W$ is a graphon, let $I_p(W):=\displaystyle\int_0^1\displaystyle\int_0^1 I_p(W(x,y)) dx dy$ be the total entropy of $W$.

Say that a graphon $W$ is \emph{$p$-regular} if $\displaystyle\int_0^1 W(x_0,y) dy=p$ for all $x_0\in [0,1]$.

With $K$ any graph and $W$ any symmetric measurable from $[0,1]^2\to\mathbb{R}$ (e.g. a graphon), define
\[\Hom(K,W)=\displaystyle\int_{[0,1]^{v(K)}}\displaystyle\prod_{uv\in E(K)}W(x_u,x_v)\displaystyle\prod_{v\in V(K)}dx_v.\]

Further letting $t>1$, $d\leq n\in\mathbb{Z}^+$, $p=\frac{d}{n}$, we define
\[\Phi_n^d(K,t):=\frac{n^2}{2}\displaystyle\inf_{\substack{W\text{ }p\text{-regular} \\ \Hom(K,W)\geq tp^{e(K)}}}I_p(W),\]
where the minimum is taken over all graphons $W$ satisfying the desired properties.
\end{definition}
\begin{definition}\label{deltastardef}
For a graph $K$, let $\Delta_*(K)=\frac{1}{2}\displaystyle\max_{vw\in E(K)}(\deg_K(v)+\deg_K(W))$.
\end{definition}
The following two theorems (mostly) reduce the upper tail problem to that of determining $\Phi_n^d(K,t)$. The first result essentially follows from the argument of Cook-Dembo \cite{CD}.
\begin{thm}\label{ldpupperboundthm}
Let $K$ be any nonforest graph, and fix $t>1$. If $d=d(n)$ with $dn$ even and $p:=\frac{d}{n}$ satisfies $(n^{-1}\log n)^{-\frac{1}{2\Delta_{\star}(K)}}\ll p\ll 1$, then
\[-\log\left(\Pr\left[\Hom(K,G_n^d)\geq (t+o(1))p^{e(K)}n^{v(K)}\right]\right)\geq (1-o(1))\Phi_n^d(K,t)\]
as $n\to\infty$.
\end{thm}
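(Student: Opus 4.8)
The plan is to deduce this lower bound on $-\log\Pr$ (equivalently, this upper bound on the upper tail probability) from the large deviation framework of Cook--Dembo \cite{CD}, transplanted to the $d$-regular setting. First I would recall that a uniform random $d$-regular graph $G_n^d$ can be compared to $G(n,p)$ conditioned on being (nearly) $d$-regular: the key point is that the Cook--Dembo machinery produces, for any ``nice'' bounded event, a bound of the form $\Pr[\cdot]\le\exp(-(1-o(1))\inf I_p(W))$ where the infimum runs over graphons $W$ consistent with the event. Concretely, I would set up the discretization: partition $[n]$ into blocks and associate to $G_n^d$ its empirical graphon $W_{G_n^d}$, then argue via a net/covering argument over the (compact, in the cut metric) space of graphons that it suffices to control, for each fixed target graphon $W$, the probability that $G_n^d$ lies cut-close to $W$. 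The exponential cost of being cut-close to $W$ is $(1-o(1))\frac{n^2}{2} I_p(W)$ by a standard entropy computation (this is where the sparsity hypothesis $p\gg (n^{-1}\log n)^{-1/(2\Delta_\star(K))}$ — I believe the exponent should be negated, i.e. $p\gg(n^{-1}\log n)^{1/(2\Delta_\star(K))}$ — enters, guaranteeing enough room for the counting/entropy estimates to be tight).

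The second ingredient is continuity of the homomorphism density functional: if $G_n^d$ has $\Hom(K,G_n^d)\ge (t+o(1))p^{e(K)}n^{v(K)}$, then its empirical graphon $W_{G_n^d}$ (rescaled) satisfies $\Hom(K,W_{G_n^d})\ge (t-o(1))p^{e(K)}$. Here one must be careful because $K$ may be unbounded-degree relative to $p$, so I would use the standard trick of passing to homomorphism counts and controlling the contribution of atypical (high-degree) vertices — this is exactly the role of $\Delta_\star(K)$, the max edge-degree-sum over two, which governs the worst-case local density fluctuation. Combining: the event $\{\Hom(K,G_n^d)\ge (t+o(1))p^{e(K)}n^{v(K)}\}$ is contained (up to negligible probability) in a cut-neighborhood of the set $\{W : W\ p\text{-regular (approximately)},\ \Hom(K,W)\ge tp^{e(K)}\}$, so its probability is at most $\exp(-(1-o(1))\inf_W \frac{n^2}{2}I_p(W)) = \exp(-(1-o(1))\Phi_n^d(K,t))$.

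The main obstacle is handling the \emph{regularity constraint}. In $G(n,p)$ the Cook--Dembo result is stated directly; here I must show that conditioning on exact $d$-regularity does not destroy the large deviation upper bound — that is, that the uniform $d$-regular measure is suitably ``dominated'' by $G(n,p)$ for the purpose of upper-bounding probabilities of increasing-type events, or alternatively re-run the Cook--Dembo argument with the regularity constraint built into both the combinatorial counting and the variational problem. The cleanest route, I expect, is: (i) the number of $d$-regular graphs is $\exp(O(dn\log(1/p)))$ via the configuration model, while a typical realization of the relevant event in $G(n,p)$ requires paying $\exp((1-o(1))\frac{n^2}{2}I_p(W))$; since $I_p(W)\ge I_p(p)=0$ with the relevant $W$ bounded away from $p$ on a set of positive measure, the regularity restriction costs only a lower-order $\exp(o(n^2 p^{2+\gamma}\log(1/p)))$ factor and can be absorbed; (ii) one then observes $\Phi_n^d$ as defined already restricts to $p$-regular graphons, so the variational problem matches. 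A second, more technical obstacle is that the ``$+o(1)$'' slack in $(t+o(1))$ in the event must be tracked through the continuity estimate so that the infimum defining $\Phi_n^d(K,t)$ (with the sharp threshold $\ge t$) is what appears; this is a routine but careful limiting argument, taking the slack to $0$ slowly relative to the net mesh. I would relegate the configuration-model counting bounds and the cut-metric continuity of $\Hom(K,\cdot)$ to lemmas citing or paralleling \cite{CD}, and present the above as the skeleton.
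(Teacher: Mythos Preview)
Your high-level plan---invoke the Cook--Dembo large deviation framework and read off the variational problem---is indeed what the paper does, but you misidentify the main obstacle and consequently miss the actual technical content of the proof.

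You frame the central difficulty as showing that conditioning on exact $d$-regularity does not spoil the upper bound, and your proposed route (i) is to bound $\mathbb{P}_{G_n^d}[\mathcal{H}]$ by $\mathbb{P}_p[\mathcal{H}]/\mathbb{P}_p[\mathcal{K}_{n,d}]$ and absorb the denominator as lower order. This does not work: applying Cook--Dembo to the bare event $\mathcal{H}=\{\Hom(K,G)\geq t p^{e(K)}n^{v(K)}\}$ under $\mathbb{P}_p$ yields the Erd\H{o}s--R\'enyi variational problem, whose infimum runs over \emph{all} graphons and is therefore at most $\Phi_n^d(K,t)$, giving a strictly weaker inequality than claimed. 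The correct maneuver---already done in Section~2.2 of \cite{BD}---is to apply the Cook--Dembo covering to the \emph{joint} event $\mathcal{H}\cap\mathcal{K}_{n,d}$; the regularity then enters the variational problem as a constraint and $\Phi_n^d$ emerges directly. So the regularity issue is not new here. (Separately, the Cook--Dembo decomposition is into convex sets in edge-indicator space, not cut-metric balls, so the cut-continuity discussion is off target.)

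With \cite{BD} having already handled regularity, the paper's actual work is that the existing \cite{BD}/\cite{CD} argument is calibrated to a rate of order $p^{\Delta(K)}n^2$, which is wrong in the nonregular-core case. The proof consists of (a) re-tuning the parameters $\delta_0$, $L$, $k$ in Theorem~3.4 of \cite{CD} so that the number $N$ of convex sets satisfies $\log N=o(\Phi_n^d(K,t))$ and the exceptional set has measure $\leq\exp(-(1-o(1))\Phi_n^d(K,t))$---this requires in particular checking $\Phi_n^d(K,t)\gg n\log n$, which uses Lemma~6.1 of \cite{CD} together with $\Delta_\star(K)>\delta(K)/2$ and is where the hypothesis on $p$ enters; and (b) verifying that the induction over subgraphs $F\subseteq K$ (the analogue of (6.16)--(6.23) in \cite{CD}) still closes, by exhibiting for each $F$ some $L>1$ with $\Phi_n^d(F,L)\geq\Phi_n^d(K,t)$, done via the averaging trick $X\mapsto(X+p)/2$. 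Neither step is present in your sketch.
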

To give a matching upper bound for the left side of Theorem \ref{ldpupperboundthm}, notice that $\Phi_n^d(K,t)$ is a minimum, so ideally we would like to have the upper bound $\left(\frac{1}{2}+o(1)\right)I_p(W)n^2$ for all $p$-regular $W$ satisfying $\Hom(K,W)\geq tp^{e(K)}$. This proves to be difficult in general, but it in fact suffices to prove this upper bound when $W$ satisfies several nice properties (which will be stated in Conditions \ref{lowerboundconditions}) that we expect the solution to the variational problem to have.
\begin{thm}\label{ldplowerboundthm}
Suppose $n^{-1}\log\log n\ll p\ll 1$, and let $W=W(n)$ be a block graphon on some constant $k=k(n)=O(1)$ number of blocks satisfying Conditions \ref{lowerboundconditions}. Then
\[-\log\left(\Pr\left[\Hom(K,G_n^d)\geq (1-o(1))\Hom(K,W)n^{v(K)}\right]\right)\leq \left(\frac{1}{2}+o(1)\right)I_p(W)n^2.\]
as $n\to\infty$.
\end{thm}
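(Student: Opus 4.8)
The plan is to construct, via a two-stage random procedure, a positive-probability event under which $G_n^d$ resembles the target graphon $W$ closely enough to force $\Hom(K,G_n^d)\geq(1-o(1))\Hom(K,W)n^{v(K)}$, and to bound the probability of that event below by $\exp\left(-\left(\tfrac12+o(1)\right)I_p(W)n^2\right)$. First I would partition $[n]$ into $k$ classes $V_1,\dots,V_k$ with $|V_i|\approx\alpha_i n$, where the $\alpha_i$ are the block sizes of $W$, and think of building $G_n^d$ by independently including each potential edge between $V_i$ and $V_j$ with probability $W_{ij}$ (and within $V_i$ with probability $W_{ii}$). This auxiliary product measure $\mathbb{Q}$ on graphs on $[n]$ is \emph{not} supported on $d$-regular graphs, so the core of the argument is a change of measure: I would show that the uniform measure on $d$-regular graphs dominates $\mathbb{Q}$ conditioned on the event $\mathcal{R}$ that every vertex has degree exactly $d$, up to a factor that is subexponential in $n^2$. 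Concretely, $\Pr[\Hom(K,G_n^d)\geq(1-o(1))\Hom(K,W)n^{v(K)}]\geq \mathbb{Q}(\mathcal{R}\cap\mathcal{H})/(\text{number of }d\text{-regular graphs})\cdot(\dots)$, and since the number of $d$-regular graphs on $[n]$ is $\exp(O(dn\log n))=\exp(o(n^2\log(1/p)))$ in this sparsity range, it suffices to lower-bound $\mathbb{Q}(\mathcal{R}\cap\mathcal{H})$.

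The second step is to estimate $\mathbb{Q}(\mathcal{R})$. Heuristically, $\mathbb{Q}(\mathcal{R})\approx\prod_{ij}\Pr[\text{Bin}(|V_i||V_j|,W_{ij})\approx d|V_i|]$, and a local central limit theorem / Stirling estimate shows that the \emph{entropic} cost of forcing the block densities to the regular values $p$ is exactly $\exp\left(-\left(\tfrac12+o(1)\right)I_p(W)n^2\right)$: the log-probability that independent $W_{ij}$-coins on the $V_i\times V_j$ block produce a density-$p$ configuration is $-(1+o(1))I_p(W_{ij})|V_i||V_j|$ by the large-deviations rate function for Bernoulli sums, and summing over blocks (with the factor $\tfrac12$ from double counting) gives $-\left(\tfrac12+o(1)\right)I_p(W)n^2$. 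The conditions in Conditions \ref{lowerboundconditions} (which I would invoke as a black box) should guarantee that $W$ is genuinely $p$-regular and bounded away from $0$ and $1$ on the relevant blocks in a way that makes these local limit estimates valid and makes the degree-regularization cost only polynomially-in-$n$ (hence $n^{o(1)}$ times $\exp(-n^2\cdot(\dots))$) worse than the naive block-density cost — this is where sparsity $p\gg n^{-1}\log\log n$ enters, ensuring $dn\gg n\log\log n$ so that degree concentration within each class is strong enough.

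The third step is to show $\mathbb{Q}(\mathcal{H}\mid\mathcal{R})=1-o(1)$, i.e. that conditionally on being $d$-regular and having the right block structure, $G_n^d$ has at least $(1-o(1))\Hom(K,W)n^{v(K)}$ homomorphisms from $K$ with overwhelming probability. For this I would compute $\mathbb{E}_{\mathbb{Q}}[\Hom(K,G_n^d)]=(1-o(1))\Hom(K,W)n^{v(K)}$ directly from the block model (each edge of $K$ contributes a factor $W_{ij}$ averaged over the block assignment), and then prove concentration — either a second-moment bound showing $\mathrm{Var}_{\mathbb{Q}}[\Hom(K,G_n^d)]=o(\mathbb{E}^2)$, or an Azuma/McDiarmid edge-exposure argument — and finally argue that conditioning on $\mathcal{R}$ (a probability-$\exp(-\Theta(k^2\log n))$, i.e. not-too-rare, event) does not destroy this concentration, e.g. by observing that $\Hom(K,\cdot)$ is insensitive to the $O(\sqrt{dn\log n})$ edge modifications needed to repair degrees. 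I expect the main obstacle to be precisely this last point: controlling homomorphism counts \emph{after} conditioning on exact regularity, since the conditioning is not a product event and standard concentration does not apply directly. I would handle it by the switching/repair strategy — first expose the block model, check $\mathcal{H}$ and approximate-regularity hold, then show the cost of surgically correcting all vertex degrees to exactly $d$ is entropically negligible and changes $\Hom(K,G)$ by a lower-order term — mirroring but substantially extending Section 2.3 of \cite{BD}, the extra difficulty being that $W$ now takes values strictly between $p$ and $1$ rather than just $p$ and $1$.
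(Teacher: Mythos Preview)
Your overall architecture --- auxiliary product measure, change of measure, concentration --- matches the paper's, but two of your three steps contain genuine gaps that would make the argument fail as stated.

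\textbf{The entropy bookkeeping in Step 2 is inverted.} You want to compute $\mathbb{Q}(\mathcal{R})$, the probability under the $W_{ij}$-block model that every vertex has degree exactly $d$. But since $W$ is $p$-regular (Condition (1)), under $\mathbb{Q}$ every vertex already has \emph{expected} degree $d$; the cost of hitting exact regularity is only polynomial in $n$, not $\exp\bigl(-\tfrac12 I_p(W)n^2\bigr)$. The quantity $\exp\bigl(-\tfrac12 I_p(W)n^2\bigr)$ is not $\mathbb{Q}(\mathcal{R})$ at all: it is the Radon--Nikodym derivative $d\mathbb{P}_p/d\mathbb{P}_\star$ evaluated on the event $\mathcal{A}_n$ that the edge counts in each \emph{important} block are exactly $a_{ij}\approx w_{ij}|V_i||V_j|$. (Your formula ``$\Pr[\text{Bin}(|V_i||V_j|,W_{ij})\approx d|V_i|]\approx\exp(-I_p(W_{ij})|V_i||V_j|)$'' is also the wrong rate function --- Sanov gives $I_{W_{ij}}(p)$, not $I_p(W_{ij})$, and these differ.) The paper never conditions on $\mathcal{R}$ under $\mathbb{Q}$; instead it writes $\mathbb{P}_{G_n^d}(\cdot)=\mathbb{P}_p(\cdot\cap\mathcal{K}_{n,d})/\mathbb{P}_p(\mathcal{K}_{n,d})$, compares $\mathbb{P}_{G_n^d}$ to $\mathbb{P}_p$ on events depending only on important-block edges via a switching argument (Lemma~\ref{swappinglemma}), and then passes from $\mathbb{P}_p$ to $\mathbb{P}_\star$ on $\mathcal{A}_n$, where the derivative is \emph{constant} and equal to $\exp\bigl(-(\tfrac12+o(1))I_p(W)n^2\bigr)$.

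\textbf{The concentration in Step 3 fails precisely when $W$ takes values strictly between $p$ and $1$.} You propose second-moment or Azuma bounds under $\mathbb{Q}$ and then repairing degrees. But the paper explicitly remarks (after Lemma~\ref{changeofmeasurelemma}) that the analogue of your claim, namely $\mathbb{P}_\star(\neg\mathcal{H}_{\delta'})\leq\exp(-\Omega(I_p(W)n^2))$, is \emph{false} for the $K_0$ construction: the somewhat-important block has size $\Theta(p\times p)$ and density $b(p)=p^{1+o(1)}$, so its expected edge count is $\Theta(n^2 p^2 b(p))\ll n^2 I_p(W)$, and Chernoff only gives a lower-tail probability of $\exp(-\Theta(n^2 p^2 b(p)))\gg\exp(-\Theta(I_p(W)n^2))$. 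If that block is sparse, $\Hom(K_0,G)$ drops by a constant factor, so $\mathbb{Q}(\neg\mathcal{H})$ is too large, and no repair of degrees fixes this. The paper's remedy is the event $\mathcal{A}_n^{\deg}$, which \emph{freezes} the edge set in every important block (and bounds degrees there); one then conditions on each such frozen configuration $S$ and applies Janson's inequality only to the \emph{unimportant} blocks, which by Condition (8) are large enough that the Janson denominator behaves. Conditions (7) and (9) are there precisely to make the expected count under $\mathbb{P}_S$ match $\Hom(K,W)$ regardless of which allowed $S$ occurred (Lemma~\ref{GSexpectationlemma}). Your ``switching/repair'' suggestion is the \cite{BD} strategy that the paper says no longer suffices; the missing idea is this two-level conditioning on $\mathcal{A}_n^{\deg}$.
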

If $W$ is an approximate solution to the variational problem, such that $\Phi_n^d(K,t)=\left(\frac{1}{2}+o(1)\right)n^2 I_p(W)$, then the upper and lower bounds in Theorems \ref{ldpupperboundthm} and \ref{ldplowerboundthm} coincide. Thus these results reduce the upper tail problem to computing $\Phi_n^d(K,1+\delta)$, or in other words minimizing $I_p(W)$ over all graphons $W$ satisfying $\Hom(K,W)\geq (1+\delta)p^{e(K)}$ (and checking that the optimizer has the correct form). Thus together with Theorems \ref{ldpupperboundthm} and \ref{ldplowerboundthm}, the next three results (which bound the solutions to this variational problem) will easily respectively show our three main theorems.
\begin{thm}\label{varcorrectexponent}
Take $\delta>0$ and let $p\to 0$. Fix a nonforest graph $K$ whose $2$-core is not a disjoint union of cycles. Then
\[p^{2+\gamma(K)}\lesssim\displaystyle\inf_{\substack{\Hom(K,W)\geq (1+\delta)p^{e(K)} \\ W\text{ }p\text{-regular}}}I_p(W)\leq (2+o(1))\rho(K,\delta)p^{2+\gamma(K)}\log\frac{1}{p}.\]
Furthermore, the upper bound is attained by a graphon satisfying the conditions of Theorem \ref{ldplowerboundthm} for any $n$ satisfying $(n^{-1}\log n)^{\frac{1}{2e(K)-2-\gamma(K)}}\ll p\ll 1$.
\end{thm}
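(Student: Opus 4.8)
The plan is to prove the two bounds separately. The upper bound, and with it the ``Furthermore'' clause, is read off from the construction developed in Section~\ref{constructionssection}; the lower bound $\inf I_p(W)\gtrsim p^{2+\gamma(K)}$ is a short argument combining the $p$-regularity of $W$ with the H\"older inequality of Theorem~\ref{graphholderthm} --- in the packaged form of Corollary~\ref{simpleholdercor} --- and Pinsker's inequality $\int\int(W-p)^2\le\tfrac12 I_p(W)$. I begin with the lower bound.

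Write $W=p+U$, so that every row of $U$ integrates to $0$. Expanding $\Hom(K,W)=\int\prod_{uv\in E(K)}(p+U(x_u,x_v))$ over subsets $F\subseteq E(K)$ gives $\Hom(K,W)=\sum_F p^{\,e(K)-e(F)}\Hom(F,U)$, where $F$ is regarded as a graph on its incident vertices. If $F$ has a vertex of degree $1$ then integrating it out and using the mean-zero property kills the term, so
\[
\Hom(K,W)=p^{e(K)}+\sum_{\substack{\varnothing\ne F\subseteq E(K)\\ \delta(F)\ge 2}}p^{\,e(K)-e(F)}\Hom(F,U).
\]
For each such $F$, Corollary~\ref{simpleholdercor} (whose hypothesis is exactly that $W$ be $p$-regular) produces a constant $C_F$ with $|\Hom(F,U)|\le C_F\,p^{\,v(F)-2c(F)}\,I_p(W)^{c(F)}$; equivalently, apply the $L^2$ form of the inequality and then Pinsker. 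Since $c(F)\ge 1$ (as $F$ has an edge) and $e(F)-v(F)\le\gamma(K)\,c(F)$ by the definition of $\gamma(K)$, the assumption $I_p(W)\le c\,p^{2+\gamma(K)}$ with $0<c\le 1$ yields
\[
p^{\,e(K)-e(F)}|\Hom(F,U)|\ \le\ C_F\,c^{\,c(F)}\,p^{\,e(K)+\gamma(K)c(F)-(e(F)-v(F))}\ \le\ C_F\,c\,p^{\,e(K)}.
\]
Summing over the finitely many admissible $F$ gives $\Hom(K,W)\le(1+c\sum_F C_F)p^{e(K)}<(1+\delta)p^{e(K)}$ once $c=c(K,\delta)$ is chosen small enough; contrapositively, every feasible $W$ satisfies $I_p(W)>c(K,\delta)\,p^{2+\gamma(K)}$, which is the lower bound.

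For the upper bound, Section~\ref{constructionssection} produces, for each $z,w\ge 0$, a $p$-regular block graphon $W_{z,w}$ on $O(1)$ blocks --- a ``hub'' and a ``clique'' of relative sizes polynomial in $p$ governed by $z$ and $w$ respectively, with density-$p$ filler --- with $\Hom(K,W_{z,w})\ge(P_K(z,w)-o(1))p^{e(K)}$, $I_p(W_{z,w})\le(2+o(1))(z+\tfrac w2)p^{2+\gamma(K)}\log\tfrac1p$, and satisfying Conditions~\ref{lowerboundconditions} in the stated range of $n$. Pick $(z,w)$ attaining $\rho(K,\delta)$ (the set $\{P_K\ge 1+\delta\}$ is closed and $z+\tfrac w2$ is coercive on it, so a minimizer exists) and let the parameters drift to $(z(n),w(n))$ with $P_K(z(n),w(n))=1+\delta+\epsilon(n)$ for $\epsilon(n)\downarrow 0$ slowly enough that $W_{z(n),w(n)}$ is feasible for all large $n$. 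Then $\inf I_p(W)\le(2+o(1))\rho(K,\delta+\epsilon(n))\,p^{2+\gamma(K)}\log\tfrac1p=(2+o(1))\rho(K,\delta)\,p^{2+\gamma(K)}\log\tfrac1p$, the equality using right-continuity of $\rho(K,\cdot)$ (valid because $P_K$ is a polynomial with nonnegative coefficients and no constant term, so scaling a $\rho(K,\delta)$-minimizer by $1+O(\epsilon)$ is feasible for $\rho(K,\delta+\epsilon)$). This $W_{z(n),w(n)}$ is the graphon required by the ``Furthermore''.

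Essentially all the difficulty is in the two cited inputs. The construction of Section~\ref{constructionssection} must simultaneously achieve the homomorphism count $P_K(z,w)p^{e(K)}$ --- this is where the combinatorics of contributing subgraphs, valid subsets, and minimum fractional vertex covers does its work --- and meet Conditions~\ref{lowerboundconditions}. On the other side, $p$-regularity enters the lower bound twice: to annihilate every $F$ with a degree-one vertex, and (through Corollary~\ref{simpleholdercor}) to supply the factor $p^{\,v(F)-2c(F)}$ that pulls each surviving term down to the scale $p^{e(K)}$; without that factor one is left with $p^{\,e(K)-e(F)}|\Hom(F,W-p)|\lesssim p^{\,e(K)-e(F)}\|W-p\|_1^{c(F)}\lesssim p^{\,e(K)-e(F)+c(F)}=\omega(p^{e(K)})$, and the lower bound collapses. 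Granting those, the proof above is routine; its one new feature is the observation that $\gamma(K)=\max_H\frac{e(H)-v(H)}{c(H)}$ is exactly the exponent making every term $p^{\,e(K)-e(F)}\Hom(F,W-p)$ comparable to $p^{e(K)}$, with equality of exponents precisely for the contributing $F$.
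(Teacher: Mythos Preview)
Your upper bound is essentially the paper's argument from Section~\ref{constructionssection}, and the ``Furthermore'' is handled the same way (one minor slip: $P_K$ has constant term $1$, not $0$, coming from $H=\varnothing$; your scaling argument still works once you subtract $1$ first, exactly as the paper does).

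The lower bound, however, has a real gap. You invoke Corollary~\ref{simpleholdercor} as if its only hypothesis were $p$-regularity; it is not. That corollary also requires that $W$ take no values in $[(1-\epsilon)p,(1+\epsilon)p]\setminus\{p\}$, and the conclusion carries a factor $\epsilon^{-c(F)}$. Without that hypothesis there is no pointwise inequality $|U|\lesssim I_p(p+U)$ (take $U\equiv tp$ with $t\to 0$: then $|U|/I_p(p+U)\asymp 1/t\to\infty$), so one cannot pass from Theorem~\ref{simpleholderresult}'s bound $(\mathbb E|U|)^{c(F)}$ to $I_p(W)^{c(F)}$.

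Your proposed rescue via ``the $L^2$ form and Pinsker'' does not work either. Pinsker gives $\|U\|_2^2\le\tfrac12 I_p(W)$, hence only $\mathbb E|U|\le\|U\|_2\lesssim I_p(W)^{1/2}$, so Theorem~\ref{simpleholderresult} yields $|\Hom(F,U)|\lesssim p^{v(F)-2c(F)}I_p(W)^{c(F)/2}$, not $I_p(W)^{c(F)}$. Plugging $I_p(W)\le cp^{2+\gamma}$ then produces $p^{-e(F)}|\Hom(F,U)|\lesssim c^{c(F)/2}p^{v(F)-e(F)-c(F)+\gamma c(F)/2}$, and for any contributing $F$ (where $e(F)-v(F)=\gamma c(F)$) the exponent equals $-c(F)(1+\gamma/2)<0$, so the term blows up rather than going to zero. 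Concretely, for $K=K_{2,3}$ and $F=K_{2,3}$ the exponent is $-5/2$.

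The paper closes this gap with Lemma~\ref{replacebyplemma}: one first replaces the minimizing $W$ by a nearby graphon in $\Gamma_\epsilon(K,\delta)$ (for a fixed constant $\epsilon$), which by construction satisfies the ``no values near $p$'' condition while not increasing $I_p$. Only then does Corollary~\ref{simpleholdercor} apply, with $\epsilon^{-c(F)}=O(1)$, and your algebra goes through. Your degree-$1$ killing step, while correct for exactly $p$-regular $W$, is then unavailable (the replaced graphon is only approximately regular), but it is also unnecessary: the paper simply takes whichever nonempty $H_0$ has $\Hom(H_0,|U|)\gtrsim p^{e(H_0)}$ and applies the corollary to that single subgraph.
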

\begin{thm}\label{varcorrectconstant}
Take $\delta>0$ and let $p\to 0$. If the $2$-core of $K$ is not a disjoint union of cycles and none of the contributing subgraphs of $K$ have bad edges, then
\[\displaystyle\inf_{\substack{\Hom(K,W)\geq (1+\delta)p^{e(K)} \\ W\text{ }p\text{-regular}}}I_p(W)=(2+o(1))\rho(K,\delta)p^{2+\gamma(K)}\log\frac{1}{p}.\]
Furthermore, the upper bound is attained by a graphon satisfying the conditions of Theorem \ref{ldplowerboundthm} for any $n$ satisfying $(n^{-1}\log n)^{\frac{1}{2e(K)-2-\gamma(K)}}\ll p\ll 1$.
\end{thm}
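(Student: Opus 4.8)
The upper bound, together with the statement that it is attained by a graphon satisfying the hypotheses of Theorem~\ref{ldplowerboundthm}, is already furnished by Theorem~\ref{varcorrectexponent}, via the hub-and-clique construction: a minimum fractional vertex cover of each contributing subgraph $H$ prescribes which vertices of $H$ map into the hub (the $c_v=1$ vertices, forming a valid set $A$), into the clique (the $c_v=\frac{1}{2}$ vertices), or into the bulk set $[d]$ (the $c_v=0$ vertices), and a short computation using $e(H)-v(H)=c(H)\gamma(K)$ shows simultaneously that $\Hom(K,W)\ge(1+\delta)p^{e(K)}$ (which reduces to the feasibility constraint $P_K(z,w)\ge1+\delta$) and that $I_p(W)=(2+o(1))(z+\frac{w}{2})p^{2+\gamma(K)}\log\frac1p$; optimizing $z+\frac{w}{2}$ then yields $\rho(K,\delta)$. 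Hence the only genuinely new content of Theorem~\ref{varcorrectconstant} is the matching \emph{lower bound}: under the hypothesis that no contributing subgraph of $K$ has a bad edge, every $p$-regular graphon $W$ with $\Hom(K,W)\ge(1+\delta)p^{e(K)}$ satisfies
\[
I_p(W)\ \ge\ (2-o(1))\,\rho(K,\delta)\,p^{2+\gamma(K)}\log\frac1p .
\]

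The plan for this lower bound starts from an exact expansion and a cancellation forced by regularity. Fix such a $W$, which we may assume has $I_p(W)=O(p^{2+\gamma(K)}\log\frac1p)$, and put $g:=W-p$. Expanding $\Hom(K,W)=\int\prod_{uv\in E(K)}(p+g(x_u,x_v))$ along subsets of $E(K)$ gives $\Hom(K,W)=\sum_{F\subseteq E(K)}p^{e(K)-|F|}\int\prod_{e\in F}g$, with $F=\emptyset$ contributing exactly $p^{e(K)}$. Since $p$-regularity gives $\int_0^1 g(x,y)\,dy=0$, the term indexed by $F$ vanishes whenever the subgraph spanned by $F$ has a degree-$1$ vertex, so only subgraphs of minimum degree $\ge 2$ survive; as the remaining sum is $\ge\delta p^{e(K)}$, some such $H$ has $\int_{[0,1]^{v(H)}}\prod_{uv\in E(H)}g(x_u,x_v)\gtrsim_K\delta\,p^{e(H)}$. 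Now split $g=g_+-g_-$ with $g_+=(W-p)_+$ and $g_-=(p-W)_+\le p$; expanding this product, using $g_-\le p$ on the non-matched edges and layer-cake decomposing $g_+$ through superlevel sets $U_j=\{W\ge r_j\}$ with thresholds $r_j$ chosen adaptively (as in Section~5 of \cite{BGLZ}, to avoid losing constants), one reduces to the following: for some min-degree-$\ge 2$ subgraph $H'\subseteq K$ and some assignment $e\mapsto j(e)$ of levels to its edges,
\[
\Big(\prod_{e\in E(H')}r_{j(e)}\Big)\ \int_{[0,1]^{v(H')}}\ \prod_{uv\in E(H')}\mathbf{1}_{U_{j(uv)}}(x_u,x_v)\ \prod_{v\in V(H')}dx_v\ \gtrsim_K\ \delta\,p^{e(H')}.
\]

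The heart of the argument is to bound the colored-indicator integral above. For this I would apply the regularity-responsive generalized H\"older inequality, Theorem~\ref{graphholderthm}, with weights taken from a maximum fractional matching of $H'$ (Lemma~\ref{edgeweightslemma}). Ordinary generalized H\"older would only give roughly $\prod_j\mu(U_j)^{(\text{fractional edge-cover exponent})}$; Theorem~\ref{graphholderthm} additionally exploits the row bound $\int_0^1\mathbf{1}_{U_j}(x,y)\,dy\le p/r_j$ forced by $p$-regularity, gaining the extra factor $p^{\,v(H')-2c(H')}$ --- exactly what turns the Erd\H{o}s--R\'enyi exponent $\frac{e(H')}{c(H')}=\Delta(K)$ into the exponent $\frac{e(H')+2c(H')-v(H')}{c(H')}=2+\gamma(K)$ relevant here. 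Plugging in $\mu(U_j)\le I_p(W)/I_p(r_j)$ and the value factors $r_j$, and optimizing over level assignments and thresholds, converts the displayed inequality into a bound of the form $I_p(W)\ge(2-o(1))(z+\frac{w}{2})p^{2+\gamma(K)}\log\frac1p$ for some $(z,w)$; the bookkeeping of which vertices of $H'$ sit at level $\approx 1$ (``clique-like'') versus an intermediate level (``hub-like'') reproduces the inner sum over valid sets in $P_K$, while min-degree-$\ge 2$ subgraphs with $\frac{e(H)-v(H)}{c(H)}<\gamma(K)$ turn out to force strictly more entropy and hence never bind, so that summing over the relevant (contributing) subgraphs forces $P_K(z,w)\gtrsim 1+\delta$, whence $z+\frac{w}{2}\ge\rho(K,\delta)$. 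The $o(1)$ losses from the adaptive thresholding, the $g_\pm$ split, the level-assignment sum (polylogarithmically many terms), and the passage through subgraphs are all genuinely lower order in the assumed range of $p$.

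I expect the main obstacle to be obtaining the \emph{constant}, i.e.\ matching the H\"older lower bound to the construction value $\rho(K,\delta)$ rather than merely the right order, and this is precisely where ``no bad edge'' is used. The H\"older step has freedom in the choice of maximum fractional matching of $H'$; a bad edge $e_0$ (forced to carry weight $1$ in every maximum fractional matching of some contributing $H$) removes that freedom, and the resulting lower bound is then strictly weaker than the hub-and-clique upper bound --- equivalently, once a contributing subgraph has a bad edge the extremal configuration is no longer hub-and-clique but also raises the density on a subgraph to a value bounded away from both $p$ and $1$, the phenomenon behind the anomalous Theorem~\ref{K24thm}. So the crux is a linear-programming fact: absent bad edges, the primal program (minimum fractional vertex cover, governing $P_K$ and the construction) and the dual program (maximum fractional matching, governing the H\"older bound) align to the common value $\rho(K,\delta)$, and one must run the optimization carefully enough --- tracking the interplay between the level values $r_j$, the measures $\mu(U_j)$, and the matching weights --- to make this alignment visible.
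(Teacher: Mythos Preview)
Your high-level outline is right, and you have correctly located where the no-bad-edge hypothesis bites: it guarantees (Lemma~\ref{edgeweightslemma2}) a maximum fractional matching of each contributing $H$ with all weights strictly less than $1$, so that when Theorem~\ref{graphholderthm} is applied one may invoke Lemma~\ref{momentlemma} rather than Lemma~\ref{firstmomentlemma} at every edge and no logarithm is lost.

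However, your proposed mechanism --- layer-cake decomposing $g_+$ into superlevel sets $U_j=\{W\ge r_j\}\subset[0,1]^2$ and assigning levels $e\mapsto j(e)$ to \emph{edges} --- is not what the paper does, and as written it has a genuine gap: it will lose a factor of $2$ in the constant. The paper instead fixes a single \emph{vertex-level} threshold $b$ and defines a hub $B_b=\{x:\int_0^1|U(x,y)|\,dy\ge b\}\subseteq[0,1]$, then decomposes $\Hom(H,U)=\sum_{A\subseteq V(H)}\Hom_b^A(H,U)$ according to which vertices of $H$ land in $B_b$. Proposition~\ref{multiplethresholdsprop} (adaptive thresholding over $\Theta(\log\log\log\frac1p)$ candidate values of $b$) shows that for a well-chosen $b$ all \emph{invalid} $A$ contribute $o(p^{e(H)})$; for each contributing $H$ and valid $A$, Proposition~\ref{monomialboundprop} applies Theorem~\ref{graphholderthm} to get $\Hom_b^A(H,U)\le(1+o(1))p^{e(H)}z^{|A|}w^{c(H)-|A|}$, where $z,w$ are the normalized entropies of $W$ on $B_b\times\overline{B_b}$ and on $\overline{B_b}\times\overline{B_b}$ (Definition~\ref{zwdef}). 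Summing over contributing $H$ and valid $A$ reproduces exactly $P_K(z,w)\ge 1+\delta-o(1)$.

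The factor of $2$ you wrote down as if automatic is not: it comes from the symmetry step (Lemma~\ref{2z+wlemma})
\[
I_p(W)\ \ge\ \int_{B_b\times\overline{B_b}}I_p(W)+\int_{\overline{B_b}\times B_b}I_p(W)+\int_{\overline{B_b}^2}I_p(W)\ =\ (2z+w)\,p^{2+\gamma}\log\tfrac1p,
\]
and this doubling is only visible once the domain has been split along a \emph{vertex} set $B_b$. An edge-based superlevel decomposition bounds norms over all of $[0,1]^2$ and cannot see it; the paper works the $K=K_{2,3}$ example in Section~\ref{K23section} precisely to show that the na\"ive H\"older route --- essentially your edge-level approach --- yields the constant $\sqrt\delta$ rather than the correct $2\sqrt\delta$. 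Likewise, the valid-subset sum inside $P_K$ records which \emph{vertices} of $H$ sit in the hub, not which edges lie in a high level set, so it is unclear how your edge-labelled reduction would recover it; your text slides between ``levels assigned to edges'' and ``which vertices sit at level $\approx 1$'' without reconciling the two.
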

\begin{thm}\label{K24correctconstant}
Take $\delta>0$ and let $p\to 0$. If $K_0$ is the graph from Figure \ref{K24plusanedgefigure}, then
\[\displaystyle\inf_{\substack{\Hom(K_0,W)\geq (1+\delta)p^{9} \\ W\text{ }p\text{-regular}}}I_p(W)=(1+o(1))(18\delta)^{\frac{1}{3}}p^3\left(\log\frac{1}{p}\right)^{\frac{2}{3}}\left(\log\log\frac{1}{p}\right)^{\frac{1}{3}}.\]
Furthermore, the upper bound is attained by a graphon satisfying the conditions of Theorem \ref{ldplowerboundthm} for any $n$ satisfying $(n^{-1}\log n)^{\frac{1}{15}}\ll p\ll 1$.
\end{thm}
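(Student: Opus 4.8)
The plan is to establish the two matching bounds on $\inf I_p(W)$ separately: an upper bound by producing an explicit near-optimal $p$-regular graphon, and a lower bound by an adaptive-thresholding argument combined with the regularity-sensitive generalized H\"older inequality. This variational problem is the most delicate of the three treated in the paper, because the edge added to $K_{2,4}$ is a \emph{bad} edge, which forces the optimal construction to uniformly \emph{raise the density} of a block rather than to plant a clique of density $1$; this is precisely what produces the unusual $\left(\log\frac{1}{p}\right)^{2/3}\left(\log\log\frac{1}{p}\right)^{1/3}$ factor in the answer.

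For the upper bound I would take $W$ to be a block graphon built from two features. The first is a hub: a block $S$ of measure $\sigma$ with $W\equiv 1$ on $S\times D$ and $W\equiv 0$ on $S\times([0,1]\setminus D)$, where $D$ is a fixed block of measure $p$ (playing the role of $[d]$). The second is a raised block: $W\equiv q$ on $D\times D$ with $q=pr$ for a slowly growing parameter $r$. Elsewhere $W=p$, with corrections of size $O(p^2)$ to the densities between $D$ and its complement so that $W$ becomes exactly $p$-regular; these corrections affect $I_p(W)$ only to lower order. Sending $v_1,v_2$ into $S$ and $w_1,w_2,w_3,w_4$ into $D$ yields $\Hom(K_0,W)=p^9+(1+o(1))\sigma^2 p^4 q$, all other placements being of strictly lower order once $\sigma\ll p^2$ and $pr\ll 1$, so the constraint becomes $\sigma^2 p^4 q\geq(\delta+o(1))p^9$; meanwhile $I_p(W)=(1+o(1))\left(2\sigma p\log\frac{1}{p}+p^3 r\log r\right)$. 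Maximizing the excess $\sigma^2 p^5 r$ for a fixed value of the entropy, Lagrange multipliers show that at the optimum the two entropy contributions $2\sigma p\log\frac{1}{p}$ and $p^3 r\log r$ stand in ratio $2:1$; a short computation then gives that the optimal $r$ satisfies $r\log r=\Theta\!\left(\delta^{1/3}\left(\log\frac{1}{p}\right)^{2/3}\left(\log\log\frac{1}{p}\right)^{1/3}\right)$, hence $\log r=(1+o(1))\frac{2}{3}\log\log\frac{1}{p}$, and substituting back yields $I_p(W)=(1+o(1))(18\delta)^{1/3}p^3\left(\log\frac{1}{p}\right)^{2/3}\left(\log\log\frac{1}{p}\right)^{1/3}$, the constant $18^{1/3}=3\cdot(2/3)^{1/3}$ reflecting the $2:1$ split together with the value of $\log r$. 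Finally I would verify that this $W$ satisfies Conditions \ref{lowerboundconditions} (it is a block graphon on $O(1)$ blocks with all densities strictly between $p$ and $1$), which is where the hypothesis $(n^{-1}\log n)^{1/15}\ll p\ll 1$ enters; note $2e(K_0)-2-\gamma(K_0)=18-2-1=15$.

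For the lower bound I would first apply the adaptive-thresholding reduction of Section~5 of \cite{BGLZ}: after cleaning, one may assume $W$ takes $O(1)$ values, each a constant multiple of $p$, and $\Hom(K_0,W)$ is, to leading order, a sum over assignments of a scale to each edge of $K_0$, each term of which is controlled by Theorem \ref{graphholderthm} with weights coming from a \emph{maximum fractional matching} of $K_0$. The decisive point is that $w_1 w_2$ is a bad edge, hence receives weight $1$ in every maximum fractional matching, while the remaining weight $2$ is carried by the $K_{2,2}$ on $\{v_1,v_2,w_3,w_4\}$; together with the extra factor of $p^{v(H)-2c(H)}$ that the regularity condition supplies (as in Corollary \ref{simpleholdercor}), this shows that any term contributing $\gtrsim\delta p^9$ to the excess must come from a configuration in which some vertex set plays the role of the hub $S$ and some block plays the role of $D\times D$ at an elevated but sub-constant density. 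One thereby derives an inequality of the form $\Hom(K_0,W)-p^9\lesssim\sigma^2 p^5 r$ subject to $2\sigma p\log\frac{1}{p}+p^3 r\log r\lesssim I_p(W)$, where $\sigma$ and $r$ are quantities extracted from $W$; running the same optimization as in the construction then forces $I_p(W)\geq(1-o(1))(18\delta)^{1/3}p^3\left(\log\frac{1}{p}\right)^{2/3}\left(\log\log\frac{1}{p}\right)^{1/3}$.

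The hard part is the lower bound, and in particular showing that the $\log\log\frac{1}{p}$ factor is genuinely forced. Unlike every previously analyzed case, one cannot bound the excess by a clique-type term proportional to $(\text{area})\cdot\log\frac{1}{p}$; the bad edge compels the relevant dense region to have density only $p$ times a power of $\log\frac{1}{p}$, and one must track precisely how the entropy cost $(\text{area})\cdot q\log\frac{q}{p}$ of such a region is spent, including the optimization over the density scale $q$, to see that the effective constraint is $q\left(\log\frac{q}{p}\right)^{2/3}\gtrsim\delta^{1/3}p\left(\log\frac{1}{p}\right)^{2/3}$ rather than the naive one. Making this rigorous requires interleaving the regularity-sensitive H\"older inequality with the multiscale bookkeeping of adaptive thresholding, handling the two contributing subgraphs $K_0$ and $K_{2,4}$ separately, and controlling all $o(1)$ errors uniformly over the $O(1)$ scales.
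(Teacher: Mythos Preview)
Your upper-bound construction is correct and is essentially the graphon of Figure~\ref{K24graphonfigure}; your optimization reproduces the paper's computation in Section~\ref{constructionssection}.

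The lower bound sketch has a genuine gap. The mechanism you propose---adaptive thresholding to ``$O(1)$ values, each a constant multiple of $p$'' followed by Theorem~\ref{graphholderthm} with weight $1$ on the bad edge---is exactly the machinery that proves Theorem~\ref{varcorrectconstant} for graphs with \emph{no} bad edges, and the point of $K_0$ is that this machinery does not suffice. Two concrete problems: first, the optimal raised-block density is $b(p)=\Theta\bigl(p(\log\tfrac{1}{p})^{2/3}(\log\log\tfrac{1}{p})^{-2/3}\bigr)$, which is \emph{not} a constant multiple of $p$, so the reduction you describe would destroy the very configuration you need to detect. Second, H\"older with $w_{w_1w_2}=1$ produces a factor $\|U\|_1$, and Lemma~\ref{firstmomentlemma} bounds $|U|$ by only $O(I_p(W)\cdot\epsilon^{-1})$ with no $\log\tfrac{1}{p}$ gain; you give no mechanism by which the optimization over the density scale $q$ forces the $\log\log\tfrac{1}{p}$ factor rather than losing it entirely. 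Your last paragraph acknowledges this is the hard part but does not supply the idea.

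The paper's actual argument (Section~\ref{K24section}) is different. After eliminating the $K_{2,4}$ subgraph outright (Lemma~\ref{eliminatingK24lemma}; it is not handled in parallel), one splits $U=U_1+U_2$ at a threshold $p^{7/8}$ and proves (Lemma~\ref{highlowlemma}) that the dominant contribution to $\Hom(K_0,U)$ has the four-cycle $v_1w_3v_2w_4$ using high values $U_1$ and the bad edge $w_1w_2$ using low values $U_2$. Cauchy--Schwarz on the four-cycle, together with a hub argument, yields a quantity $c_1=p^{-3}\|U_1^b\|_2^2$ controlled via Lemma~\ref{momentlemma} by $I_p(p+U_1)/(p^3\log\tfrac{1}{p})$. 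The contribution from the bad edge is packaged as $c_2=p^{-3}\sup_{x_{v_1},x_{v_2}}\bigl|\int U_2(x,y)a(x)a(y)\,dx\,dy\bigr|$ with $\|a\|_1\le p+o(p)$, and the decisive new step is a \emph{smoothing/Jensen argument} (Lemma~\ref{smoothinglemma}): convexity of $I_p$ gives $I_p(p+U_2)\ge(1-o(1))p^2\,I_p(p+pc_2)$, i.e.\ the entropy is bounded below by what it would be if $U_2$ were \emph{constant} on a $p\times p$ box. Only then does one arrive at the constraint $c_1^2c_2\ge\delta-o(1)$ together with the entropy inequality $I_p(W)\ge(1-o(1))\bigl(2c_1 p^3\log\tfrac{1}{p}+p^2 I_p(p+pc_2)\bigr)$, whose minimization (Lemma~\ref{optimizationproblemlemma}) yields $(18\delta)^{1/3}p^3(\log\tfrac{1}{p})^{2/3}(\log\log\tfrac{1}{p})^{1/3}$. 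The smoothing step is the missing ingredient in your outline.
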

\begin{remark}
In the language of the variational problem, the different behavior of the graph $K_0$ is due to the fact that the solution to the variational problem takes values less than $1$ but substantially higher than $p$. Informally (in a sense that we will describe in more detail in Section \ref{K24section}), the bad edge of $K_0$ forces us to `smooth' some of the values of $W$ inwards rather than outwards.
\end{remark}
The remainder of the paper will be structured as follows. In Section \ref{constructionssection}, we will discuss the constructions for the upper bounds of Theorems \ref{varcorrectexponent}, \ref{varcorrectconstant}, and \ref{K24correctconstant}, and prove those upper bounds. In Section \ref{preliminariessection}, we will prove some generally useful statements that we will reuse over the course of proving the lower bounds of Theorems \ref{varcorrectexponent}, \ref{varcorrectconstant}, and \ref{K24correctconstant}.

The next three sections primarily deal with the lower bound of Theorem \ref{varcorrectexponent}. In Section \ref{examplessection}, we will prove the lower bound of Theorem \ref{varcorrectexponent} for two illustrative examples. Section \ref{holdersection} will introduce and prove a generalized H\"older's inequality that will be a main technical engine in proving the lower bounds of Theorems \ref{varcorrectexponent} and \ref{varcorrectconstant}, and in Section \ref{simpleholderproofsection} we will use this to complete the proof of the lower bound of Theorem \ref{varcorrectexponent}.

Sections \ref{K23section} through \ref{constantsection2} deal with the lower bound of Theorem \ref{varcorrectconstant}. Section \ref{K23section} will prove this lower bound for the example graph $K=K_{2,3}$, and Sections \ref{constantsection1} and \ref{constantsection2} will contain the proof of the lower bound in general.

Section \ref{K24section} is dedicated to the proof of the lower bound of Theorem \ref{K24correctconstant}.

Section \ref{ldpupperboundsection} will describe how to modify the argument of \cite{CD} to prove Theorem \ref{ldpupperboundthm}.

The next four sections, namely sections \ref{ldplowerboundsection1}, \ref{ldplowerboundsection2}, \ref{firsttermsection}, and \ref{secondtermsection}, together prove Theorem \ref{ldplowerboundthm}.

The following three sections clean up leftover loose ends. Section \ref{thesegraphonsworksection} proves the promised statement that the constructions for Theorems \ref{varcorrectexponent}, \ref{varcorrectconstant}, and \ref{K24correctconstant} indeed satisfy the conditions of Theorem \ref{ldplowerboundthm} under the appropriate settings. Section \ref{correctexponentcorsection} will describe how Theorem \ref{cycleunionthm} follows from the methods of \cite{BD}, and will deduce Corollary \ref{correctexponentcor} from Theorems \ref{correctexponent} and \ref{cycleunionthm}. Section \ref{mostgraphssection} contains the proof of Proposition \ref{mostgraphsprop}.

Finally, Section \ref{maintheoremsection} proves the main Theorems \ref{correctexponent}, \ref{correctconstant}, and \ref{K24thm}.
\section{Constructions and Upper Bounds}\label{constructionssection}
The graphons that solve the variational problems given in Theorems \ref{varcorrectexponent}, \ref{varcorrectconstant}, and \ref{K24correctconstant}, and thus provide a template for graphs containing many copies of $K$, will be of several forms.
\begin{figure}
\begin{tikzpicture}[xscale=1.5,yscale=1.5]
\draw (0,4) -- (4,4) -- (4,0) -- (0,0) node[anchor=north east]{$1$} -- (0,4) node[anchor=south east]{$0$}; \draw (0.5,0) -- (0.5,4); \draw (4,3.5) -- (0,3.5) node[anchor=east]{$zp^{1+\gamma}$}; \draw (4,2.5) -- (0,2.5) node[anchor=east]{$p$}; \draw (1.5,0) -- (1.5,4);

\node at (0.25,3.75) {$1$}; \node at (0.25,3) {$1$}; \node at (0.25,1.25) {$0$}; \node at (1,3.75) {$1$}; \node at (1,3) {$p$}; \node[align=center] at (1,1.25) {$p+$ \\ $O(p^{1+\gamma})$}; \node at (2.75,3.75) {$0$}; \node at (2.75,3) {$p+O(p^{1+\gamma})$}; \node at (2.75,1.25) {$p+O(p^{2+\gamma})$};
\end{tikzpicture}
\hspace{12pt}
\begin{tikzpicture}[xscale=1.5,yscale=1.5]
\draw (0,4) -- (4,4) -- (4,0) -- (0,0) node[anchor=north east]{$1$} -- (0,4) node[anchor=south east]{$0$}; \draw (1.1,0) -- (1.1,4); \draw (4,2.9) -- (0,2.9) node[anchor=east]{$\sqrt{w}p^{1+\frac{\gamma}{2}}$}; 

\node at (0.55,3.45){$1$}; \node[align=center] at (0.55,1.45){$p+$ \\ $O(p^{1+\frac{\gamma}{2}})$}; \node at (2.55,3.45){$p+O(p^{1+\frac{\gamma}{2}})$}; \node at (2.55,1.45){$p+O(p^{2+\gamma})$};
\end{tikzpicture}

\begin{tikzpicture}[xscale=2.5, yscale=2.5]
\draw (0,4) -- (4,4) -- (4,0) -- (0,0) node[anchor=north east]{$1$} -- (0,4) node[anchor=south east]{$0$}; \draw (0.3,0) -- (0.3,4); \draw (4,3.7) -- (0,3.7) node[anchor=east]{$zp^{1+\gamma}$}; \draw (0.9,0) -- (0.9,4); \draw (4,3.1) -- (0,3.1) node[anchor=east]{$\sqrt{w}p^{1+\frac{\gamma}{2}}$}; \draw (4,2.3) -- (0,2.3) node[anchor=east]{$p$}; \draw (1.7,0) -- (1.7,4);

\node at (0.15,3.85) {$1$}; \node at (0.15,3.4) {$1$}; \node at (0.15,1.15) {$0$}; \node at (0.6,3.85) {$1$}; \node at (0.6,3.4) {$1$}; \node[align=center] at (0.6,1.15) {$p+$ \\ $O(p^{1+\frac{\gamma}{2}})$}; \node at (2.85,3.85) {$0$}; \node at (2.85,3.4) {$p+O(p^{1+\frac{\gamma}{2}})$}; \node at (2.85,1.15) {$p+O(p^{2+\gamma})$}; \node at (0.15,2.7) {$1$}; \node at (0.6,2.7) {$p$}; \node at (1.3,2.7) {$p$}; \node at (2.85,2.7) {$p+O(p^{1+\gamma})$}; \node at (1.3,3.85) {$1$}; \node at (1.3,3.4) {$p$}; \node[align=center] at (1.3,1.15) {$p+$ \\ $O(p^{1+\gamma})$};
\end{tikzpicture}
\caption{Constructions for the upper bounds of Theorems \ref{varcorrectexponent} and \ref{varcorrectconstant}, with $\gamma=\gamma(K)$. Planting respectively a modified hub, a clique, and both a modified hub and a clique. Note that the first two constructions can be considered specializations of the third by setting respectively $w=0$ or $z=0$. The entries that are only specified as $p+O(\cdot)$ are uniquely determined by the condition that the graphon be $p$-regular, and it is easy to check that the asymptotics are satisfied.}\label{constantgraphonfigure}
\end{figure}

\begin{figure}
\begin{tikzpicture}[xscale=1.5,yscale=1.5]
\draw (0,4) -- (4,4) -- (4,0) -- (0,0) node[anchor=north east]{$1$} -- (0,4) node[anchor=south east]{$0$}; \draw (0.5,0) -- (0.5,4); \draw (4,3.5) -- (0,3.5) node[anchor=east]{$a(p)$}; \draw (4,2.5) -- (0,2.5) node[anchor=east]{$p$}; \draw (1.5,0) -- (1.5,4);

\node at (0.25,3.75) {$1$}; \node at (0.25,3) {$1$}; \node at (0.25,1.25) {$0$}; \node at (1,3.75) {$1$}; \node at (1,3) {$b(p)$}; \node[align=center] at (1,1.25) {$p+$ \\ $O(pb(p))$}; \node at (2.75,3.75) {$0$}; \node at (2.75,3) {$p+O(pb(p))$}; \node at (2.75,1.25) {$p+O(p^2b(p))$};
\end{tikzpicture}
\caption{Construction for the upper bound of Theorem \ref{K24correctconstant}. Asymptotics hold as long as $a(p)\lesssim pb(p)$. In addition to planting a modified hub, part of the graphon takes on some value $b(p)$ with $p\ll b(p)\ll 1$.}\label{K24graphonfigure}
\end{figure}
These forms are given by the graphons appearing in Figures \ref{constantgraphonfigure} and \ref{K24graphonfigure}. Notice that in Figure \ref{constantgraphonfigure}, the first graphon has a `partial hub' of size $zp^{1+\gamma}$, the second has a `clique' of size $\sqrt{w}p^{1+\frac{\gamma}{2}}$, and the third has both.

In the graphon in Figure \ref{K24graphonfigure}, we again have a partial hub, but instead of a clique we have instead raised the value of the upper left $p$ by $p$ box to a value smaller than $1$ but significantly larger than $p$. This unusual behavior is the reason that the graph $K_0$ behaves differently than the graphs satisfying the conditions of Theorem \ref{varcorrectconstant}.

Some motivation for why this is the solution for $K_0$ may be had as follows. Let us take for granted that in the typical homomorphism from $K_0$ to $W$ we will send the two vertices of degree four into some partial hub $B$ such that $W=1$ on $B\times [0,p]$. Then all other vertices should be sent into $[0,p]$. The number of homomorphisms is then directly proportional to the average value of $W$ on $[0,p]\times [0,p]$. To maximize this while minimizing entropy, $W$ should be taken to be constant (or as close as possible) on $[0,p]\times [0,p]$.

As we would like, it turns out that these graphons satisfy the conditions of Theorem \ref{ldplowerboundthm} (indeed, most conditions of that theorem are chosen carefully to include these graphons as solution).

\begin{prop}\label{thesegraphonsworkprop}
The graphons in Figure \ref{constantgraphonfigure} satisfy the conditions of Theorem \ref{ldplowerboundthm} with any nonforest graph $K$, taking $\gamma:=\gamma(K)$, as long as $\gamma>0$ and $(n^{-1}\log n)^{\frac{1}{2e(K)-2-\gamma(K)}}\ll p\ll 1$, as does the graphon in Figure \ref{K24graphonfigure} with the graph $K_0$ from Figure \ref{K24plusanedgefigure} with $a(p)=\Theta\left(p^2\left(\log\frac{1}{p}\right)^{-\frac{1}{3}}\left(\log\log\frac{1}{p}\right)^{\frac{1}{3}}\right)$ and $b(p)=\Theta\left(p\left(\log\frac{1}{p}\right)^{\frac{2}{3}}\left(\log\log\frac{1}{p}\right)^{-\frac{2}{3}}\right)$ as long as $(n^{-1}\log n)^{\frac{1}{15}}\ll p\ll 1$.
\end{prop}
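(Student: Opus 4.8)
## Proof Proposal for Proposition \ref{thesegraphonsworkprop}

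The plan is to verify, one by one, that the explicit block graphons of Figures \ref{constantgraphonfigure} and \ref{K24graphonfigure} meet each clause of Conditions \ref{lowerboundconditions} (the hypotheses of Theorem \ref{ldplowerboundthm}), under the stated sparsity windows. Since these graphons were engineered precisely to be the extremal configurations, most checks should be mechanical; the work is in bookkeeping rather than in any single hard estimate. First I would record the basic structural facts: each graphon is a block graphon on $k=O(1)$ blocks (in fact $k \le 3$ for the constructions in Figure \ref{constantgraphonfigure} and $k=3$ for Figure \ref{K24graphonfigure}), the block sizes are the prescribed powers of $p$ (namely $zp^{1+\gamma}$, $\sqrt{w}\,p^{1+\gamma/2}$, and for $K_0$ a hub of size $a(p)$ together with a $[0,p]$-block), and each block value is either one of $\{0,1\}$ or lies in the range $[p, 1)$. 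I would then confirm $p$-regularity: the entries left as $p + O(\cdot)$ in the figures are, as the figure captions assert, uniquely pinned down by the requirement $\int_0^1 W(x_0,y)\,dy = p$, and a short computation shows the error terms have the claimed orders (e.g. for the first graphon the lower-right entry is $p - O(p^{2+\gamma})$ once one subtracts off the contributions of the hub and the top-left blocks, each of which contributes $O(p^{2+\gamma})$ to the row integral).

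Next I would handle the quantitative conditions in Conditions \ref{lowerboundconditions} that control the entropy and the homomorphism count. The key bound is that $I_p(W) = \Theta(p^{2+\gamma}\log(1/p))$ for the Figure \ref{constantgraphonfigure} graphons (respectively $\Theta\big(p^3(\log\frac1p)^{2/3}(\log\log\frac1p)^{1/3}\big)$ for $K_0$), which follows from Theorems \ref{varcorrectexponent} and \ref{K24correctconstant} respectively together with direct evaluation: the dominant entropy contribution comes from the $\{0,1\}$-valued blocks (each unit of which, on a block of area $A$, contributes $\approx A\log(1/p)$) and from the $b(p)$-block in the $K_0$ case (contributing $\approx p^2\, I_p(b(p)) \asymp p^2 b(p)\log(b(p)/p) \asymp p^2 b(p)\log(1/p)$ by the choice of $b$), while the blocks of value $p + O(p^{1+c})$ contribute only lower-order $O(p^{2c}\cdot\text{area})$ by the local-quadratic behavior $I_p(p+t) \asymp t^2/p$. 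I would then verify the homomorphism lower bound $\Hom(K,W) \ge (1-o(1))(1+\delta)p^{e(K)}$ — but this is exactly the content of the upper-bound halves of Theorems \ref{varcorrectexponent}, \ref{varcorrectconstant}, \ref{K24correctconstant}, proved in Section \ref{constructionssection}, so here I would simply cite those. The remaining conditions — roughly, that the block values not be too small relative to $p$ where it matters, that $W$ dominate a suitable multiple of the constant-$p$ graphon off a small set, and that certain degree/codegree regularity hold within blocks — are each read off directly from the figures, using only that $\gamma(K) > 0$ (which forces $1+\gamma > 1$, so the hub is genuinely sub-$np$ in size) and the lower endpoint of the $p$-range (which guarantees that the smallest block has size $\gg n^{-1}\log n$ in vertex count, so the block graphon is realizable by an actual graph with the stated error tolerances).

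The one genuinely delicate point — and the step I expect to be the main obstacle — is checking the sparsity-window conditions, i.e. that the lower bound $(n^{-1}\log n)^{1/(2e(K)-2-\gamma(K))} \ll p$ (resp. $(n^{-1}\log n)^{1/15} \ll p$ for $K_0$) is exactly what is needed for Conditions \ref{lowerboundconditions} to hold. This exponent is not arbitrary: it is the threshold at which the planted hub of size $zp^{1+\gamma} \cdot n$ has enough vertices, and the number of near-$p$ entries is small enough, that the concentration/counting estimates underlying Theorem \ref{ldplowerboundthm} go through; tracing why $2e(K)-2-\gamma(K)$ (and $15$ for $K_0$, where $e(K_0)=9$ and $\gamma(K_0)=2$ give $2\cdot 9 - 2 - 2 = 14$ — so the extra $+1$ comes from the $b(p)$ refinement, and this discrepancy must be accounted for carefully) is the right denominator requires matching the error terms in the block graphon's homomorphism count against $(1+\delta)p^{e(K)}$ down to the $1+o(1)$ level. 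I would isolate this as a lemma: for the given graphons, the difference between $\Hom(K,W)$ and its leading term is $O(p^{e(K)} \cdot (np^{?})^{-1}\cdot\text{polylog})$, and demanding this be $o(p^{e(K)})$ yields precisely the stated constraint on $p$. For $K_0$ specifically I would additionally need to confirm that the choices $a(p) = \Theta\big(p^2(\log\frac1p)^{-1/3}(\log\log\frac1p)^{1/3}\big)$ and $b(p) = \Theta\big(p(\log\frac1p)^{2/3}(\log\log\frac1p)^{-2/3}\big)$ satisfy the figure's running hypothesis $a(p) \lesssim p\,b(p)$ — which they do, since $p\,b(p) = \Theta\big(p^2(\log\frac1p)^{2/3}(\log\log\frac1p)^{-2/3}\big)$ dominates $a(p)$ — and that with these choices the entropy and homomorphism asymptotics match Theorem \ref{K24correctconstant} on the nose. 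Everything else is a finite verification against the figures.
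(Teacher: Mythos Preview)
Your proposal has the right overall shape but misidentifies both the source of the sparsity constraint and the location of the real work.

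First, a factual slip: $\gamma(K_0)=1$, not $2$ (see Lemma~\ref{K0contributingsubgraphslemma}), so $2e(K_0)-2-\gamma(K_0)=18-2-1=15$ on the nose; there is no ``extra $+1$ from the $b(p)$ refinement'' to account for. More substantively, the exponent $2e(K)-2-\gamma(K)$ does not arise from controlling lower-order terms in $\Hom(K,W)$ as you suggest. It comes from the \emph{upper} inequality in condition~(4), $I_p(W)\ll p^{2e(K)}n$: since $I_p(W)=\Theta(p^{2+\gamma}\log(1/p))$ for the Figure~\ref{constantgraphonfigure} graphons, this forces $p^{2+\gamma}\log(1/p)\ll p^{2e(K)}n$, i.e.\ $n^{-1}\log n\ll p^{2e(K)-2-\gamma}$. (Conditions~(5) and~(9) only need the weaker $p\gg(n^{-1}\log n)^{1/(2+\gamma)}$, obtained via $2e(K)-2-\gamma\ge 2+\gamma$.) Your proposed lemma about homomorphism-count error terms is a red herring.

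Second, and more seriously, you dismiss conditions~(7) and~(10) as things to be ``read off directly from the figures,'' but in the paper these are explicitly called ``the more difficult statements'' and occupy most of Section~\ref{thesegraphonsworksection}. Condition~(10), $\Hom(K,W+p)=O(p^{e(K)})$, requires bounding $\Hom_B(K,\max(W,p))$ for every $K$-block $B$; the argument partitions $V(K)$ according to which interval each vertex lands in, identifies the subgraph $H$ of edges sent to value-$1$ blocks, and then uses that the induced weighting on $V(H)$ is a fractional vertex cover to invoke the defining inequality $e(H)\le v(H)+\gamma\, c(H)$. Condition~(7) --- that edges sent to somewhat-important blocks form a matching in every non-negligible $K$-block --- is proved by contradiction: given two adjacent edges $uv,uw$ landing in somewhat-important blocks, one modifies $B$ by rerouting $u$ into the hub and shows (using~(10)) that the original $B$ was negligible. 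Neither of these is a bookkeeping check, and your plan as written does not contain them.
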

We will defer the proof for later, after we fully state the conditions of Theorem \ref{ldplowerboundthm}.

The upper bound of Theorem \ref{varcorrectexponent} (and thus Theorem \ref{varcorrectconstant}, which has the same upper bound) will easily follow from the following two lemmas.

\begin{lemma}\label{constructionenoughhoms}
Let $K$ be a nonforest graph with $2$-core not a disjoint union of cycles. With $\gamma=\gamma(K)$, let $W_0^{z,w}$ be the third graphon from Figure \ref{constantgraphonfigure}. Then
\[\Hom(K,W_0^{z,w})\geq (1-o(1))P_K(z,w) p^{e(K)}.\]
\end{lemma}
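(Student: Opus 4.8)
The plan is to directly estimate $\Hom(K, W_0^{z,w})$ by restricting attention to homomorphisms that factor through a structured skeleton, and showing that the combinatorics of such skeletons is exactly captured by $P_K(z,w)$. Recall that $W_0^{z,w}$ (the third graphon in Figure \ref{constantgraphonfigure}) has, up to a relabeling of $[0,1]$, a ``hub'' block $B$ of measure $\Theta(zp^{1+\gamma})$, a ``clique'' block $C$ of measure $\Theta(\sqrt w\, p^{1+\gamma/2})$, a ``core'' block $D$ of measure $\Theta(p)$ containing $B\cup C$, and the remainder $R$, with the stated values: $W_0 = 1$ on $B\times D$ and on $C\times C$, $W_0 \approx p$ on most other pairs, and the off-core values slightly perturbed to maintain $p$-regularity. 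Since $\Hom(K, W_0^{z,w})$ only decreases when we replace each perturbed entry $p + O(p^{1+\gamma'})$ by its lower bound $(1-o(1))p$, it suffices to work with the idealized block values $\{0, p, 1\}$ and absorb the perturbations into the $1-o(1)$ factor at the end.

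The main step is to show $\Hom(K, W_0^{z,w}) \geq (1-o(1)) \sum_{H, A} (\text{measure of }B)^{|A|} (\text{measure of }C)^{c(H)-|A|} \cdot p^{\#(\text{something})} \cdot n^{0}$-type contributions, normalized correctly. Concretely, I would lower-bound the integral by summing, over all contributing subgraphs $H \subseteq K$ and all valid $A \subseteq V(H)$, the contribution of homomorphisms $\phi: V(K) \to [0,1]$ where the vertices of $A$ land in $B$, the vertices of $V(H)\setminus A$ with cover value $\tfrac12$ land in $C$, those with cover value $0$ land in $D \setminus (B \cup C)$, and all vertices of $K$ outside $H$ (which, since $H$ is the $2$-core-type contributing piece and $K$ may have a forest attached, must be handled) land in $D$ or $R$ appropriately. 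For such a configuration: each edge of $H$ incident to $A$ contributes a factor $1$ (using $W_0 = 1$ on $B \times D$ and the fact that the fractional cover assigns $1$ to that endpoint), each edge of $H$ inside $C$ contributes $1$, and every remaining edge of $H$ contributes $p$; the measure of the configuration is $(\mu(B))^{|A|} (\mu(C))^{|V(H)\setminus A \text{ with } c_v = 1/2|} (\mu(D))^{\#\{v : c_v = 0\}}$ times a factor $\Theta(1)$ per vertex of $K \setminus H$. Plugging in $\mu(B) = \Theta(zp^{1+\gamma})$, $\mu(C) = \Theta(\sqrt w\, p^{1+\gamma/2})$, $\mu(D) = \Theta(p)$, and carefully counting edges, one checks via the defining identity $e(H) - v(H) = c(H)\gamma$ for contributing $H$ and the structure of valid covers that the total exponent of $p$ works out to exactly $e(K)$ and the $z, w$ dependence is exactly $z^{|A|} w^{c(H)-|A|}$. (One must verify that $\sum_v c_v = c(H)$, that $|A| + \#\{c_v = 1/2\}/2 \cdot 2$ bookkeeping matches $|A| + (c(H) - |A|)$ correctly, and that the edges split as: $1$'s come for free and the $p$'s number $e(H) - |\text{covered-for-free edges}|$ so that the leftover power of $p$ after accounting for vertex measures is $e(K)$.)

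The subtlety — and where I expect to spend the most care — is twofold. First, distinct $(H, A)$ pairs contribute to disjoint or nearly-disjoint regions of $[0,1]^{V(K)}$, so that summing their contributions does not double-count more than a lower-order amount; this should follow because the blocks $B, C, D\setminus(B\cup C), R$ are disjoint and the assignment of each vertex to a block is determined by $(H,A)$, but one has to be careful that a single homomorphism isn't being counted for multiple $H$'s in a way that would force an inequality in the wrong direction — since we only want a lower bound, it is in fact harmless to just take the maximum contribution over $(H,A)$, or to sum contributions from a single well-chosen family of pairwise-disjoint regions; getting $P_K$ as a \emph{sum} rather than a max requires choosing the regions to be genuinely disjoint, which I would do by, for each contributing $H$, designating a canonical embedding of $V(H)$ and letting the $V(K)\setminus V(H)$ vertices range over $R$ (measure $\Theta(1)$) so the regions for different $H$ overlap only in measure-zero or lower-order sets. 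Second, one must handle the vertices of $K$ not in any contributing subgraph, i.e. the ``tree part'': these are leaves (iteratively) of $K$, each attached by an edge of weight forced to have one endpoint covered; mapping them into $R$ contributes a factor $\Theta(1)$ in measure and $p$ per pendant edge, and I need the total count of pendant edges plus $e(H)$-accounting to still yield $p^{e(K)}$ overall — this is a bookkeeping check using $e(K) = e(H) + (\text{pendant edges})$ and $v(K) = v(H) + (\text{pendant vertices})$ together with $e(H) - v(H) = c(H)\gamma$. Assembling these pieces gives $\Hom(K, W_0^{z,w}) \geq (1-o(1)) \sum_{H \text{ contributing}} \sum_{A \text{ valid}} z^{|A|} w^{c(H)-|A|} p^{e(K)} = (1-o(1)) P_K(z,w) p^{e(K)}$, as desired.
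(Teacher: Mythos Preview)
Your overall architecture is right and matches the paper's: lower-bound $\Hom(K,W_0^{z,w})$ by summing, over contributing $H$ and valid $A\subseteq V(H)$, the contribution from homomorphisms that send $A$ to the hub, the $c_v=\tfrac12$ vertices to the clique, and the $c_v=0$ vertices to the rest of the core, and then verify via $e(H)-v(H)=\gamma c(H)$ that each such term equals $(1-o(1))z^{|A|}w^{c(H)-|A|}p^{e(K)}$. However, there are two genuine gaps, both stemming from the fact that you only use the \emph{equation} $e(H)-v(H)=\gamma c(H)$ and never the \emph{extremality} of $H$.

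First, your treatment of $V(K)\setminus V(H)$ is not correct. The block $(\text{hub})\times R$ has value $0$, not $p$: so any vertex $v_0\in V(K)\setminus V(H)$ adjacent to some vertex of $A$ cannot be sent to $R$ without annihilating the contribution. The paper defines $T:=N_K(A)\setminus V(H)$ and sends $T$ into the core interval $(\sqrt{w}p^{1+\gamma/2},p]$ instead. This costs a factor of $p$ in measure per vertex of $T$, which must be exactly recouped; the paper proves (Fact~1) that each $v_0\in T$ has \emph{exactly one} neighbor in $A$, so the edge $v_0\to A$ lands in a value-$1$ block and recovers precisely one factor of $p$. This fact is not automatic: if some $v_0\in T$ had two neighbors $w,w'\in A$, then $H':=H\cup\{v_0w,v_0w'\}$ would satisfy $c(H')=c(H)$ but $e(H')-v(H')>e(H)-v(H)$, contradicting that $H$ is contributing. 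Your ``tree part'' discussion does not see this issue. A related point (the paper's Fact~2) is that edges of $K$ between vertices of $V(H)$ that are not in $E(H)$ could a priori land in value-$1$ blocks and interfere; the paper shows $E_K(A,V(H))\cup E_K(B)\subseteq E(H)$, again by the extremality of $H$.

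Second, your disjointness argument does not work as stated. Saying ``let $V(K)\setminus V(H)$ range over $R$'' does not separate the regions for different pairs $(H,A)$, since distinct contributing $H$'s can share vertices and edges arbitrarily. The paper instead shows that the pair $(H,A)$ can be \emph{reconstructed} from the homomorphism: $A$ is the preimage of the hub, and $H$ is the $2$-core of the subgraph of edges sent to value-$1$ blocks (which equals $E(H)\cup E_K(A,T)$ by Fact~2, and whose degree-$1$ vertices are exactly $T$ by Fact~1). This is what certifies that the contributions are genuinely disjoint and hence additive.
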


\begin{lemma}\label{constructionlowentropy}
Under the conditions of the previous lemma,
\[I_p(K,W_0^{z,w})=(1+o(1))(2z+w)p^{2+\gamma(K)}\log\frac{1}{p}.\]
\end{lemma}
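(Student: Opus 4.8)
The plan is to compute $I_p(W_0^{z,w})$ directly from the block structure displayed in the third graphon of Figure \ref{constantgraphonfigure}, keeping track only of which blocks contribute at the order $p^{2+\gamma}\log\frac1p$. Recall the relevant scalings: the ``hub'' block has side-length $zp^{1+\gamma}$, the ``clique'' block has side-length $\sqrt{w}\,p^{1+\frac{\gamma}{2}}$, and the remainder of $[0,1]$ has measure $1-o(1)$. The entries of $W_0^{z,w}$ are either $0$, $1$, $p$, or $p+O(p^{1+\frac{\gamma}{2}})$, $p+O(p^{1+\gamma})$, $p+O(p^{2+\gamma})$ on the various blocks. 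The key elementary estimate is that for $x = p + O(p^{1+\beta})$ with $\beta > 0$ one has $I_p(x) = O(p^{1+2\beta})$ (Taylor expansion of $I_p$ around $p$, whose first derivative vanishes there: $I_p(p) = 0$, $I_p'(p) = 0$, $I_p''(p) = \frac{1}{p(1-p)}$, so $I_p(p + t) = \frac{t^2}{2p(1-p)}(1+o(1))$), while $I_p(0) = \log\frac{1}{1-p} = O(p)$ and $I_p(1) = \log\frac1p$.

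First I would dispose of every block whose entry is $p + O(p^{1+\beta})$: such a block has area $O(1)$ but integrand $O(p^{1+2\beta})$, and for the blocks in question $\beta \geq \frac{\gamma}{2}$ gives $O(p^{1+\gamma})$, or more carefully the blocks where the perturbation is genuinely of size $p^{1+\frac{\gamma}{2}}$ have small area. One has to be slightly careful: the block $[0,\sqrt w p^{1+\gamma/2}]\times[\text{bulk}]$ has value $p + O(p^{1+\gamma/2})$ and area $\Theta(\sqrt w p^{1+\gamma/2})$, contributing $O(p^{1+\gamma/2})\cdot O(p^{1+\gamma}) = o(p^{2+\gamma})$; the bulk-by-bulk block has value $p + O(p^{2+\gamma})$ and area $1-o(1)$, contributing $O(p^{3+2\gamma}) = o(p^{2+\gamma}\log\frac1p)$. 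So all of these are negligible. Next, the $0$-entries: the hub contributes a $0$ on a block of area $\Theta(zp^{1+\gamma})$ (the part $[0,zp^{1+\gamma}]\times[\text{far bulk}]$ where $W=0$), giving $O(zp^{1+\gamma})\cdot O(p) = O(p^{2+\gamma}) = o(p^{2+\gamma}\log\frac1p)$, again negligible.

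The main term therefore comes entirely from the $1$-entries. The hub contributes $W_0^{z,w} = 1$ on $[0,zp^{1+\gamma}] \times [0,p]$ (and its transpose): total area $2\cdot zp^{1+\gamma}\cdot p = 2zp^{2+\gamma}$, each point contributing $I_p(1) = \log\frac1p$, for a total of $2zp^{2+\gamma}\log\frac1p$. The clique contributes $W_0^{z,w} = 1$ on $[0,\sqrt w p^{1+\gamma/2}]^2$: area $w p^{2+\gamma}$, each point contributing $\log\frac1p$, total $w p^{2+\gamma}\log\frac1p$. (One must check the hub-clique overlap and the ordering of the blocks from the figure so as not to double-count; since the hub sits inside $[0,zp^{1+\gamma}]$ and the clique inside a separate coordinate band, the overlap has area $O(p^{1+\gamma}\cdot p^{1+\gamma/2}) = o(p^{2+\gamma})$ and is negligible.) Summing, $I_p(W_0^{z,w}) = (2z + w)p^{2+\gamma}\log\frac1p + o\!\left(p^{2+\gamma}\log\frac1p\right)$, which is the claim.

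The only genuine obstacle is bookkeeping: making sure the block decomposition read off Figure \ref{constantgraphonfigure} is exhaustive and that no two blocks are counted twice, and confirming that every ``$p + O(p^{1+\beta})$'' entry really does have the small exponent claimed (this is where one uses $p$-regularity to pin down the unspecified entries, exactly as the figure caption asserts). Once the entries and block areas are in hand, the entropy computation is the routine Taylor estimate above; no inequality is needed, only the asymptotic evaluation, so the lemma follows.
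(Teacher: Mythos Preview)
Your proof is correct and takes essentially the same approach as the paper's: both compute $I_p(W_0^{z,w})$ block by block, identify the value-$1$ blocks as giving the main contribution $(2z+w)p^{2+\gamma}\log\frac{1}{p}$ via $I_p(1)=\log\frac{1}{p}$, and bound the remaining blocks as $o\bigl(p^{2+\gamma}\log\frac{1}{p}\bigr)$ using the quadratic approximation $I_p(p+t)=\Theta(t^2/p)$ for small $t$ together with $I_p(0)=O(p)$. The paper's write-up is terser (it lumps all ``$p+o(p)$'' blocks into a single $O(p^{2+3\gamma/2})$ estimate and invokes $\gamma\geq 0$), while you spell out each block and its area explicitly, but the content is the same.
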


\begin{proof}[Proof of Lemma \ref{constructionenoughhoms}]
Note that $\gamma=\gamma(K)>0$, as if $H\subseteq K$ is the $2$-core of $K$, then $\delta(H)\geq 2$ and $H$ is not a disjoint union of cycles, so $e(H)>v(H)$. Thus $W_0^{z,w}$ indeed takes values in $[0,1]$, as $p^{1+\frac{\gamma}{2}}=o(p)$.

Given the definition of $P_K(z,w)$, it suffices to, for every contributing subgraph $H\subseteq K$ and every valid $A\subseteq V(H)$, to find $(1-o(1))z^{|A|}w^{c(H)-|A|}p^{e(H)}$ homorphisms from $K$ to $W_0^{z,w}$, and then to show that these classes of homomorphisms are distinct.

Take a contributing subgraph $H\subseteq K$ and valid $A\subseteq V(H)$. Since $A$ is valid, there is some $B,C\subseteq V(H)$ such that $A\cup B\cup C$ is a partition of $V(H)$ and
\[c_v=\begin{cases}1 & v\in A \\ \frac{1}{2} & v\in B \\ 0 & v\in C\end{cases}\]
is a minimum fractional vertex cover of $H$.

Let $T=N_K(A)\backslash V(H)$. We count the homomorphisms that send the vertices in $A$ into $[0,zp^{1+\gamma}]$, the vertices in $B$ into $(zp^{1+\gamma},\sqrt{w}p^{1+\frac{\gamma}{2}}]$, the vertices in $C\cup T$ into $(\sqrt{w}p^{1+\frac{\gamma}{2}},p]$, and the vertices of $V(K)\backslash (V(H)\cup T)$ into $(p,1]$.

We show two useful facts.

\begin{fact}\label{contributingfact1}
For all $v_0\in T$, $|N_K(v_0)\cup A|=1$.
\end{fact}

\begin{fact}\label{contributingfact2}
The set of edges sent by our homomorphisms above into blocks of $W_0^{z,w}$ with value $1$ is exactly $E(H)\cup E(A,T)$.
\end{fact}

\begin{proof}[Proof of Fact \ref{contributingfact1}]
Since $v_0\in T$, $v_0$ is adjacent to at least one element of $A$. Suppose for the sake of contradiction that $v_0$ is adjacent to $w,w'\in A$. Let $H'$ be a graph with $V(H')=V(H)\cup\{v_0\}$ and $E(H')=E(H)\cup\{v_0w,v_0w'\}$. Then $c(H')=c(H)$, as we may extend our minimum fractional matching $(c_v)$ to $H'$ by weighting $c_{v_0}=0$. Therefore, $\frac{e(H')-v(H')}{c(H')}>\frac{e(H)-v(H)}{c(H)}$, contradicting the fact that $H$ is contributing. So $v_0$ is adjacent to exactly one element of $A$.
\end{proof}

\begin{proof}[Proof of Fact \ref{contributingfact2}]
All edges in $H$ are sent into blocks with value $1$, as $e_H(B,C)=e_H(C)=0$ by the fact that $(c_v)$ is a fractional vertex cover.

To show the other direction, we must show that
\[E_K(A,V(H))\cup E_K(B)\subseteq E(H).\]
But if there is any edge $e\in E_K(A,V(H))\cup E_K(B)\backslash E(H)$, then our fractional vertex cover $(c_v)$ is also a fractional vertex cover $H'=H\cup e$, so $c(H')=c(H)$ and so $\frac{e(H')-v(H')}{c(H')}>\frac{e(H)-v(H)}{c(H)}$, a contradiction.
\end{proof}

By our construction, no edges are sent into the blocks of the graphon with value $0$ (this is the point of separating out $T$). By Fact \ref{contributingfact2}, all edges in $E(H)\cup E_K(A,T)$ are sent into blocks of $W_0^{z,w}$ with value $1$, and all other edges in $E(K)$ are sent into blocks with value at least $p-o(p)$. Thus (since $p\ll 1$) the homomorphism count of this form is
\[(1-o(1))z^{|A|}w^{|B|/2}p^{v(H)+|T|+\gamma(|A|+|B|/2)+e(K)-e(H)-e_K(A,T)}.\]
Now, $|A|+|B|/2=c(H)$, by the minimality of our vertex cover, and $e(H)=v(H)+\gamma c(H)$ by the fact that $H$ is contributing. Since $e_K(A,T)=|T|$ by Fact \ref{contributingfact1}, we can simplify our expression to
\[(1-o(1))z^{|A|}w^{c(H)-|A|}p^{e(K)},\]
which is exactly the desired expression.

We now show that for all $H,A$, these homomorphisms are distinct; that is, given a homomorphism $\varphi$ of this form from $K$ to $W_0^{z,w}$, we can recover $H$ and $A$.

Recovering $A$ is simple; $A$ is just the set of vertices of $K$ that $\varphi$ sends into $[0,zp^{1+\gamma}]$.

Let $H'$ be the subgraph of $K$ consisting of the edges that $\varphi$ sends into blocks of $W_0^{z,w}$ of weight $1$. By Fact \ref{contributingfact2}, $E(H')=E(H)\cup E_K(A,T)$. By Fact \ref{contributingfact1}, each vertex in $T$ has degree $1$ in $H'$. Since $\delta(H)\geq 2$ by assumption, this means that we may recover $H$ as the $2$-core of $H'$, finishing the proof of Lemma \ref{constructionenoughhoms}.
\end{proof}
\begin{proof}[Proof of Lemma \ref{constructionlowentropy}]
First notice that the blocks of $W_0^{z,w}$ with value $1$ contribute $(1+o(1))(2z+w)p^{2+\gamma(K)}\log\frac{1}{p}$, because $I_p(1)=\log\frac{1}{p}$. Thus we must just show that the other parts contribute $o\left(p^{2+\gamma(K)}\log\frac{1}{p}\right)$.

We will use the first part of Lemma \ref{entropyapproxlemma}, stated in the next section. In particular, we use that if $x=O(p)$, $I_p(p+x)=\Theta\left(\frac{x^2}{p}\right)$. One may check using this that the blocks of $W_0^{z,w}$ with value $p+o(p)$ contribute $O\left(p^{2+\frac{3\gamma}{2}}\right)$. As $K$ is not a forest we must have $\gamma(K)\geq 0$ (as a cycle has $(e-v)/c=0$), so this contribution is sufficiently small.

Finally, the blocks with value $0$ contribute $O(p^{1+\gamma}I_p(0))=O(p^{2+\gamma})$. This contribution is also sufficiently small, proving the Lemma.
\end{proof}
\begin{proof}[Proof of Upper Bound of Theorems \ref{varcorrectexponent} and \ref{varcorrectconstant}]
By the previous two lemmas, we have that
\begin{align*}
\displaystyle\inf_{\substack{\Hom(K,W)\geq (1+\delta-o(1))p^{e(K)} \\ W\text{ }p\text{-regular}}}I_p(W) & \leq (1+o(1))\left(\min_{\substack{z,w\geq 0 \\ P_K(z,w)\geq 1+\delta}}\left(2z+w\right)\right)p^{2+\gamma(K)}\log\frac{1}{p} \\ & =(2+o(1))\rho(K,\delta)p^{2+\gamma(K)}\log\frac{1}{p}.
\end{align*}
The only difference between this and the desired upper bound of Theorems \ref{varcorrectexponent} and \ref{varcorrectconstant} is the $o(1)$ in the infimum on the left hand side. We may fix this by increasing $\delta$ by a negligible amount, but we must show that $\rho(K,\delta)$ also only increases by a negligible amount; that is, that
\[\rho(K,(1+o(1))\delta)=(1+o(1))\rho(K,\delta).\]

But if $k$ is the lowest degree of a nonconstant monomial in $P_K$, presuming such a monomial exists, since $P_K$ has constant term $1$ and only nonnegative coefficients, for all $z,w\geq 0$ and any $\epsilon>0$.
\[P_K((1+\epsilon)z,(1+\epsilon)w)-1\geq (1+\epsilon)^k (P_K(z,w)-1).\]
Therefore, if $P_K(z,w)\geq 1+\delta$, then $P_K((1+\epsilon)z,(1+\epsilon)w)\geq 1+(1+\epsilon)^k\delta$ for any $\epsilon>0$. By the definition of $\rho$, this immediately implies that
\[\rho(K,(1+\epsilon)^k\delta)\leq (1+\epsilon)\rho(K,\delta)\]
for all $\epsilon>0$. Since clearly $f$ is increasing, taking $\epsilon\to 0$ yields $\rho(K,(1+o(1))\delta)=(1+o(1))\rho(K,\delta)$ and we have proven the upper bound. Furthermore, by Proposition \ref{thesegraphonsworkprop}, a graphon attaining this upper bound satisfies the conditions of Theorem \ref{ldplowerboundthm} as long as we take $n,p$ with $(n^{-1}\log n)^{\frac{1}{2e(K)-2-\gamma(K)}}\ll p\ll 1$

If instead $P_K$ is constant, then $\rho(K,\delta)=\infty$, so we are also done in this case.
\end{proof}
We prove the upper bound of Theorem \ref{K24correctconstant} in a similar way.
\begin{proof}[Proof of Upper Bound of Theorem \ref{K24correctconstant}]
We will use the graphon given in Figure \ref{K24graphonfigure}, with
\[a(p)=d_1p^2\left(\log\frac{1}{p}\right)^{-\frac{1}{3}}\left(\log\log\frac{1}{p}\right)^{\frac{1}{3}}\]
and
\[b(p)=d_2p\left(\log\frac{1}{p}\right)^{\frac{2}{3}}\left(\log\log\frac{1}{p}\right)^{-\frac{2}{3}}\]
for constants $d_1,d_2$.
Call this graphon $W_1^{d_1,d_2}$. Note that since $a(p)\lesssim pb(p)$, the asymptotics in Figure \ref{K24graphonfigure} hold. We prove two claims, analogous to Lemma \ref{constructionenoughhoms} and \ref{constructionlowentropy}.
\setcounter{claimcounter}{0}
\begin{claim}
\[\Hom(K_0,W_1^{d_1,d_2})\geq (1+d_1^2d_2-o(1))p^9\]
\end{claim}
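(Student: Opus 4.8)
The plan is to identify the dominant class of homomorphisms from $K_0$ into the graphon $W_1^{d_1,d_2}$ depicted in Figure~\ref{K24graphonfigure}, exactly paralleling the motivation given before Proposition~\ref{thesegraphonsworkprop}. Recall that $K_0$ has six vertices: two ``hub'' vertices of degree four (the side of $K_{2,4}$ with two vertices), and four vertices on the other side, two of which are joined by the extra edge. The graphon has three vertical strips: a ``hub'' strip $S_1=[0,\tfrac12 a(p)/p]$ of width roughly $a(p)/(2p)$ on which $W=1$ against $[0,p]$, a strip $S_2$ of width roughly $p$ on which $W$ takes the value $b(p)$ against itself and against $S_1$, and the bulk. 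First I would count the homomorphisms that send the two degree-four vertices into $S_1$ and the four remaining vertices into $S_2=[\text{(width of }S_1),\,p]$. Since $W=1$ on $S_1\times S_2$, the eight edges incident to the hub vertices each contribute a factor $1$, and the single remaining edge (the extra edge, inside $S_2\times S_2$) contributes a factor $b(p)$. Accounting for the measures: each hub vertex ranges over a set of measure $\sim a(p)/(2p) = \tfrac{d_1}{2}p(\log\frac1p)^{-1/3}(\log\log\frac1p)^{1/3}$, and each of the four other vertices ranges over a set of measure $\sim p$. Multiplying, this class contributes
\[
(1-o(1))\Bigl(\tfrac{d_1}{2}p^{?}\Bigr)^2\cdot p^4\cdot b(p),
\]
and one checks the powers of $p$ work out: $a(p) \asymp p^2$ up to logs, so $(a(p)/p)^2 \asymp p^2$ up to logs, hence $(a(p)/p)^2\cdot p^4 \cdot p^{-1}\cdot b(p) \asymp p^2\cdot p^4 \cdot p^{-1} \cdot p = p^6$ — wait, one must be careful: since we want the leading coefficient exactly, I would instead directly normalize. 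Writing $a(p)/p$ and $b(p)$ in terms of the stated forms, the product of the two hub-measures times $b(p)$ equals $(1-o(1))\tfrac{d_1^2}{4}\cdot 4 \cdot d_2 \cdot p^{?}$; the bookkeeping is designed so that, after also multiplying by $p^4$ from the four non-hub vertices and dividing by the appropriate normalization $p^9$ built into the claim, the leading constant is $\tfrac{d_1^2 d_2}{?}$. The key point is that the construction's parameters $a(p), b(p)$ were chosen (see Figure~\ref{K24graphonfigure} and the remark after it) precisely so that this product gives $d_1^2 d_2 p^9$ up to the constant factor coming from the symmetry of which hub vertex goes where; I would track that symmetry factor carefully (there is a factor of $2$ from ordering the two hub vertices, but also the extra edge picks a specific pair among the four non-hub vertices, contributing a combinatorial factor), arriving at the stated $(1+d_1^2 d_2 - o(1))p^9$ after adding this to the baseline count.

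Second, I would add the ``baseline'' contribution: the homomorphisms that send all six vertices into the bulk strip (width $1-o(1)$), where $W = p + o(p)$ on average, giving $(1+o(1))p^9$ since $e(K_0)=9$. More precisely, $\Hom(K_0,W)\geq \Hom(K_0, \text{constant } p) \cdot (1-o(1))$ by a convexity/averaging argument, or one simply notes that restricting all coordinates to the bulk already yields $(1-o(1))p^9$ because there $W\geq p - O(p^{1+\varepsilon})$ and there are $9$ edges. Adding the hub-into-$S_1$ class from the previous paragraph (which is nonnegative and disjoint as a ``type'' of homomorphism, so there is no double counting once one is slightly careful to use half-open intervals as in the proof of Lemma~\ref{constructionenoughhoms}), the total is at least $(1 + d_1^2 d_2 - o(1))p^9$.

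The main obstacle I anticipate is pinning down the exact leading constant $d_1^2 d_2$ rather than just the order of magnitude: this requires being careful about (i) the combinatorial factor for which of the four degree-two vertices the extra edge connects, (ii) the factor of $2$ for the two hub vertices, and (iii) confirming that no other class of homomorphisms contributes at the same order $p^9$ — in particular one must rule out, or correctly account for, homomorphisms that send only one hub vertex into $S_1$, or that route some non-hub vertices through $S_2$ against the bulk. The use of $b(p)$, which is $\gg p$ but $\ll 1$, is what makes the extra-edge factor land the hub-class at exactly order $p^9$ (the same order as baseline), and this is the delicate balance that the choice of $a(p)$ and $b(p)$ is engineered to achieve; I would verify this balance is exactly right and that all competing classes are either lower order or already subsumed. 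The regularity and range conditions $(n^{-1}\log n)^{1/15}\ll p \ll 1$ do not enter here — this claim is purely about the graphon — so the argument is self-contained given the explicit form of $W_1^{d_1,d_2}$.
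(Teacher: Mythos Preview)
Your overall approach matches the paper's: identify the class of homomorphisms sending the two degree-$4$ vertices into the hub strip and the other four vertices into the middle strip, add the baseline contribution from the bulk, and sum. However, the execution contains two genuine errors that prevent you from reaching the constant $d_1^2d_2$.

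First, you misread the figure: the hub strip is $[0,a(p)]$, of width $a(p)$, not $a(p)/(2p)$ or $a(p)/p$. Second, and more importantly, the ``symmetry'' and ``combinatorial'' factors you anticipate do not exist. The quantity $\Hom(K_0,W)$ is an integral over \emph{labeled} tuples $(x_v)_{v\in V(K_0)}$; each of the two labeled degree-$4$ vertices independently ranges over $[0,a(p)]$, contributing $a(p)^2$ with no factor of $2$, and the extra edge joins two specific labeled vertices, so there is no choice of ``which pair'' to account for. Once these are corrected the arithmetic is immediate: the eight edges from hub vertices to the middle strip land in blocks of value $1$, the single extra edge lands in the $b(p)$ block, and the restricted integral is
\[
a(p)^2\cdot (p-a(p))^4\cdot 1^8\cdot b(p)=(1-o(1))\,a(p)^2 p^4 b(p)=(1-o(1))\,d_1^2d_2\,p^9,
\]
since $a(p)^2b(p)=d_1^2d_2\,p^5$ by direct substitution (the logarithmic factors cancel exactly). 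Finally, your concern about ruling out competing classes is misplaced: this is a \emph{lower} bound, so it suffices that the two disjoint classes (hub class and bulk class, the latter giving $(1-o(1))p^9$) sum to at least $(1+d_1^2d_2-o(1))p^9$; other classes can only help.
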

Consider homomorphisms where the two vertices of degree $4$ in $K_0$ are sent into the hub $[0,a(p)]$ and all four other vertices are sent into $(a(p),p]$. Since $a(p)=o(p)$, and all edges are sent into blocks of $W_1^{c_1,c_2}$ with value $1$ except for the single edge between the two vertices of degree $3$, we have that the number of homomorphisms of this form is at least
\[(1-o(1)) a(p)^2b(p)p^4=(1-o(1))d_1^2d_2p^9.\]
Since there are $(1-o(1))p^9$ homomorphisms where each vertex is sent into $(p,1]$, we have that
\[\Hom(K_0,W_1^{d_1,d_2})\geq (1+d_1^2d_2-o(1))p^9,\]
proving the claim.
\begin{claim}
\[I_p(W_1^{d_1,d_2})=\left(2d_1+\frac{2d_2}{3}+o(1)\right)p^3\left(\log\frac{1}{p}\right)^{\frac{2}{3}}\left(\log\log\frac{1}{p}\right)^{\frac{1}{3}},\]
\end{claim}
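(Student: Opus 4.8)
The plan is to evaluate $I_p(W_1^{d_1,d_2})=\int_{[0,1]^2}I_p(W_1^{d_1,d_2}(x,y))\,dx\,dy$ by summing the contributions of the finitely many constant blocks of the graphon of Figure \ref{K24graphonfigure}, and to show that exactly two of them sit at the target scale $p^3(\log\tfrac1p)^{2/3}(\log\log\tfrac1p)^{1/3}$ — the value-$1$ blocks, producing the $2d_1$, and the central value-$b(p)$ block, producing the $\tfrac{2d_2}{3}$ — while all remaining blocks are $o$ of this scale.

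First I would handle the value-$1$ part. The region on which $W_1^{d_1,d_2}=1$ is $\big([0,p]\times[0,a(p)]\big)\cup\big([0,a(p)]\times[0,p]\big)$, whose area is $2pa(p)-a(p)^2=(1+o(1))2pa(p)$ since $a(p)=o(p)$; as $I_p(1)=\log\tfrac1p$, this contributes $(1+o(1))\,2pa(p)\log\tfrac1p=(2d_1+o(1))\,p^3(\log\tfrac1p)^{2/3}(\log\log\tfrac1p)^{1/3}$ after substituting $a(p)=d_1p^2(\log\tfrac1p)^{-1/3}(\log\log\tfrac1p)^{1/3}$. Next, the central block $(a(p),p]^2$ has area $(1+o(1))p^2$ and value $b(p)$. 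Since $b(p)\to 0$ while $b(p)/p=d_2(\log\tfrac1p)^{2/3}(\log\log\tfrac1p)^{-2/3}\to\infty$, I would argue $I_p(b(p))=(1+o(1))\,b(p)\log\tfrac{b(p)}{p}$ — the summand $(1-b(p))\log\tfrac{1-b(p)}{1-p}$ is $O(b(p))$, negligible against $b(p)\log\tfrac{b(p)}{p}$ — and $\log\tfrac{b(p)}{p}=\log d_2+\tfrac23\log\log\tfrac1p-\tfrac23\log\log\log\tfrac1p=(1+o(1))\tfrac23\log\log\tfrac1p$. Hence this block contributes $(1+o(1))\,p^2\cdot\tfrac23 b(p)\log\log\tfrac1p=(\tfrac{2d_2}{3}+o(1))\,p^3(\log\tfrac1p)^{2/3}(\log\log\tfrac1p)^{1/3}$ after substituting $b(p)=d_2p(\log\tfrac1p)^{2/3}(\log\log\tfrac1p)^{-2/3}$.

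It then remains to dispose of the error blocks. The value-$0$ blocks $[0,a(p)]\times(p,1]$ and its transpose have area $O(a(p))$ and $I_p(0)=\log\tfrac{1}{1-p}=O(p)$, so contribute $O(a(p)p)$, which is $o$ of the target since $a(p)p$ divided by it is $O((\log\tfrac1p)^{-1})$. On the remaining blocks the value is $p+O(pb(p))$ (on $(a(p),p]\times(p,1]$ and transpose, area $O(p)$) or $p+O(p^2b(p))$ (on $(p,1]^2$, area $O(1)$); in both cases the deviation from $p$ is $o(p)$ because $b(p)=o(1)$, so the first part of Lemma \ref{entropyapproxlemma} gives $I_p$-values $O(pb(p)^2)$ and $O(p^3b(p)^2)$ respectively, for total contributions $O(p^2b(p)^2)$ and $O(p^3b(p)^2)$. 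Both are $o$ of the target: the first because $b(p)^2=o((\log\tfrac1p)^{2/3})$ after dividing by $p^3(\log\tfrac1p)^{2/3}(\log\log\tfrac1p)^{1/3}$ and using $a(p)\lesssim pb(p)$-type comparisons, the second because $b(p)^2=o(1)$. Adding the two leading contributions and the $o(\cdot)$ errors yields $I_p(W_1^{d_1,d_2})=(2d_1+\tfrac{2d_2}{3}+o(1))\,p^3(\log\tfrac1p)^{2/3}(\log\log\tfrac1p)^{1/3}$, as claimed; this then feeds into the optimization $\min\{2d_1+\tfrac{2d_2}{3}:d_1^2d_2\ge\delta\}=(18\delta)^{1/3}$ (via AM--GM) to finish the upper bound of Theorem \ref{K24correctconstant}.

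The one genuinely delicate point I expect is the estimate of $I_p(b(p))$: one must confirm that $\log\tfrac{b(p)}{p}$ is asymptotic to $\tfrac23\log\log\tfrac1p$ and is not spoiled by the $\log\log\log\tfrac1p$ correction term, and that the second summand of $I_p(b(p))$ together with the value-$0$ and near-$p$ blocks all genuinely fall below a scale involving three nested logarithms. These estimates are routine but require careful bookkeeping precisely because of that triple-logarithm target scale.
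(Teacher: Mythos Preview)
Your proposal is correct and follows essentially the same approach as the paper: a block-by-block decomposition in which the value-$1$ blocks give $(2d_1+o(1))$ times the target scale, the value-$b(p)$ block gives $(\tfrac{2d_2}{3}+o(1))$ times the target scale via Lemma~\ref{entropyapproxlemma} applied to $I_p(b(p))$, and the value-$0$ and near-$p$ blocks are shown to be negligible using $I_p(0)=O(p)$ and the quadratic estimate $I_p(p+x)=\Theta(x^2/p)$. The paper dispatches the near-$p$ blocks slightly more crudely by noting that $p^2b(p)^2=p^{4+o(1)}$ while the target is $p^{3+o(1)}$, whereas you compute the ratios more explicitly; both arguments are fine.
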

Now we compute $I_p(W_1^{d_1,d_2})$. The blocks of value $1$ contribute at most $(2+o(1))pa(p)\log\frac{1}{p}$. The blocks of value $0$ contribute $(2+o(1))pa(p)$, which is asymptotically smaller, so we may ignore them.

The blocks of value $p+O(pb(p))$ (again using that $I_p(p+x)=\Theta\left(\frac{x^2}{p}\right)$ for $x=O(p)$) contribute $O(p^2b(p)^2)\ll pa(p)\log\frac{1}{p}$ (since the former is $p^{4+o(1)}$ and the latter is $p^{3+o(1)}$), and similarly, the lower right block contributes $O(p^3b(p)^2)$, which is also negligible.

Thus we have
\begin{align*}
I_p(W_1^{d_1,d_2}) & =(1+o(1))\left(2pa(p)\log\frac{1}{p}+p^2I_p(b(p))\right) \\ & =\left(2d_1+\frac{2d_2}{3}+o(1)\right)p^3\left(\log\frac{1}{p}\right)^{\frac{2}{3}}\left(\log\log\frac{1}{p}\right)^{\frac{1}{3}},
\end{align*}
where we have used our expressions for $a(p)$ and $b(p)$ and the fact that by Lemma \ref{entropyapproxlemma},
\[I_p(b(p))=(1+o(1))b(p)\log\frac{b(p)}{p}=\left(\frac{2d_2}{3}+o(1)\right)p\left(\log\frac{1}{p}\right)^{\frac{2}{3}}\left(\log\log\frac{1}{p}\right)^{\frac{1}{3}}.\]
This proves the second claim.

Given the claims, we would like to minimize $2d_1+\frac{2d_2}{3}$ while maximizing $d_1^2d_2$. This occurs when $d_2=\frac{3d_1}{2}$. In particular, for any fixed $\epsilon>0$, let $d_1=(1+\epsilon)\left(\frac{2\delta}{3}\right)^{\frac{1}{3}}$ and $d_2=(1+\epsilon)\left(\frac{9\delta}{4}\right)^{\frac{1}{3}}$.
Then
\[\Hom(K_0,W_1^{d_1,d_2})\geq (1+(1+\epsilon)\delta-o(1))p^9\geq (1+\delta)p^9,\]
and
\[I_p(W_1^{d_1,d_2})\leq (1+\epsilon+o(1))(18\delta)^{\frac{1}{3}}p^3\left(\log\frac{1}{p}\right)^{\frac{2}{3}}\left(\log\log\frac{1}{p}\right)^{\frac{1}{3}}.\]
Therefore,
\[\displaystyle\inf_{\substack{\Hom(K_0,W)\geq (1+\delta)p^{9} \\ W\text{ }p\text{-regular}}}I_p(W)\leq (1+\epsilon+o(1))(18\delta)^{\frac{1}{3}}p^3\left(\log\frac{1}{p}\right)^{\frac{2}{3}}\left(\log\log\frac{1}{p}\right)^{\frac{1}{3}}\]
for all fixed $\epsilon>0$. The conclusion follows. Again, by Proposition \ref{thesegraphonsworkprop}, a graphon attaining this upper bound satisfies the conditions of Theorem \ref{ldplowerboundthm} as long as we take $n,p$ with $(n^{-1}\log n)^{\frac{1}{15}}\ll p\ll 1$.
\end{proof}
\section{Lower Bound Preliminaries}\label{preliminariessection}
The lower bounds in Theorems \ref{varcorrectexponent}, \ref{varcorrectconstant}, and \ref{K24correctconstant} will be proven over the next several sections. We will start by introducing the ideas behind and proving the lower bound of Theorem \ref{varcorrectexponent}, and then strengthen those ideas to prove the other two lower bounds. We first prove some preliminary identities and bounds that will be useful for manipulating commonly occurring expressions in our proof.

\begin{lemma}\label{subgraphbreakdown}
If $W:[0,1]^2\to\mathbb{R}$ is symmetric and measurable and $U=W-p$, then
\[\Hom(K,W)=\displaystyle\sum_{H\subseteq K}p^{e(K)-e(H)}\Hom(H,U).\]
\end{lemma}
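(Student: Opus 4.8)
The identity is a purely algebraic expansion, so the plan is to write $W = p + U$ pointwise and expand the product over edges. Starting from the definition
\[
\Hom(K,W) = \int_{[0,1]^{v(K)}} \prod_{uv \in E(K)} W(x_u,x_v) \prod_{v\in V(K)} dx_v,
\]
I substitute $W(x_u,x_v) = p + U(x_u,x_v)$ in each factor and fully distribute. A product of $e(K)$ binomials expands into a sum over all subsets $S \subseteq E(K)$ of the terms $\bigl(\prod_{e\in S} U(x_u,x_v)\bigr) \cdot p^{e(K) - |S|}$, where $S$ records the edges for which we picked the ``$U$'' summand and the remaining $e(K) - |S|$ edges contribute a factor of $p$. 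This is just the standard multilinear expansion $\prod_e (p + u_e) = \sum_{S} p^{e(K)-|S|}\prod_{e\in S} u_e$.

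Next I would interchange the (finite) sum over $S$ with the integral, which is legitimate since there are finitely many terms and everything is measurable. This gives
\[
\Hom(K,W) = \sum_{S\subseteq E(K)} p^{e(K)-|S|} \int_{[0,1]^{v(K)}} \prod_{e\in S} U(x_u,x_v)\, \prod_{v\in V(K)} dx_v.
\]
Now identify each edge subset $S$ with the subgraph $H \subseteq K$ it spans — recalling the paper's convention that subgraphs have no isolated vertices and correspond exactly to edge subsets — so $|S| = e(H)$. The key observation is that the integrand $\prod_{e\in S} U(x_u,x_v)$ depends only on the coordinates $x_v$ for $v \in V(H)$; integrating out the remaining $v(K) - v(H)$ coordinates (each over $[0,1]$, contributing a factor of $1$) leaves exactly $\Hom(H,U)$ as defined in Definition \ref{phidef} for the symmetric measurable function $U$. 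Substituting $e(K) - |S| = e(K) - e(H)$ yields
\[
\Hom(K,W) = \sum_{H\subseteq K} p^{e(K)-e(H)} \Hom(H,U),
\]
which is the claim. (The empty subgraph $H = \emptyset$ contributes the term $p^{e(K)} \Hom(\emptyset, U) = p^{e(K)}$, consistent with $\Hom$ of the edgeless graph being $1$.)

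There is essentially no hard step here; the only points requiring a word of care are (i) the bookkeeping translating edge subsets to subgraphs under the ``no isolated vertices'' convention, and (ii) noting that the variables not touched by $H$ integrate out trivially because $U$ is bounded/measurable so Fubini applies. I would keep the write-up to a couple of lines, as it is a setup identity that will be invoked repeatedly in the subsequent sections.
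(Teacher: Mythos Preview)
Your proposal is correct and follows essentially the same approach as the paper's own proof: substitute $W = p + U$ into the defining integral, expand the product over edges into a sum over edge subsets $S \subseteq E(K)$, and identify each subset with the subgraph $H$ it spans. The only difference is that you spell out the step of integrating out the coordinates not in $V(H)$, which the paper leaves implicit.
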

\begin{proof}[Proof of Lemma \ref{subgraphbreakdown}]
We simply expand out
\begin{align*}
\Hom(K,W) & =\displaystyle\int_{[0,1]^{v(K)}}\displaystyle\prod_{vw\in E(K)}W(x_v,x_w)\displaystyle\prod_{v\in V(K)}dx_v \\ & =\displaystyle\int_{[0,1]^{v(K)}}\displaystyle\prod_{vw\in E(K)}(p+U(x_v,x_w))\displaystyle\prod_{v\in V(K)}dx_v \\ & =\displaystyle\sum_{S\subseteq E(K)}p^{e(K)-|S|}\displaystyle\int_{[0,1]^{v(K)}}\displaystyle\prod_{vw\in S}U(x_v,x_w)\displaystyle\prod_{v\in V(K)}dx_v \\ & =\displaystyle\sum_{H\subseteq K}p^{e(K)-e(H)}\Hom(H,U).
\end{align*}
(Since we ignore isolated vertices, subsets of $E(K)$ simply correspond to subgraphs of $K$.)
\end{proof}
It will be important to have bounds on the entropy function $I_p(W)$, especially in terms of the moments of $U:=W-p$. To that end, we prove the following three lemmas.
\begin{lemma}\label{entropyapproxlemma}
Let $p\to 0$ and $x=x(p)$. If $|x|=O(p)$, then $I_p(p+x)=\Theta\left(\frac{x^2}{p}\right)$. If $x\gg p$, then $I_p(p+x)=(1+o(1))x\log\frac{x}{p}$.
\end{lemma}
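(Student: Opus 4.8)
The plan is to treat the two regimes separately, in each case reducing the claim about $I_p(p+x)$ to an elementary estimate obtained by Taylor expansion (for the first regime) or by extracting the dominant term (for the second). Recall $I_p(y)=y\log\frac{y}{p}+(1-y)\log\frac{1-y}{1-p}$. It is convenient to write $y=p+x$ and split $I_p(p+x)=f(x)+g(x)$ where $f(x):=(p+x)\log\frac{p+x}{p}$ and $g(x):=(1-p-x)\log\frac{1-p-x}{1-p}$.

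For the first regime ($|x|=O(p)$, so $p+x=\Theta(p)$ and $x/p$ is bounded), I would expand $f$ around $x=0$. Using $\log(1+t)=t-\tfrac{t^2}{2}+O(t^3)$ with $t=x/p$ bounded, one gets $f(x)=(p+x)\big(\tfrac{x}{p}-\tfrac{x^2}{2p^2}+O(x^3/p^3)\big)=x+\tfrac{x^2}{2p}+O(x^3/p^2)$. For $g$, since $x,p\to 0$ the argument of the logarithm is $1+O(p)$, so $g(x)=(1-p-x)\log\big(1-\tfrac{x}{1-p}\big)=-x+O(x^2+xp)=-x+O(p^2)$ (using $|x|=O(p)$). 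Adding, the linear terms cancel and $I_p(p+x)=\tfrac{x^2}{2p}+O(x^3/p^2)+O(p^2)$. Since $|x|=O(p)$ the error $O(x^3/p^2)=O(x^2/p\cdot (x/p))=O(x^2/p)$, and $O(p^2)=O(x^2/p)$ would only hold if $x=\Omega(p^{3/2})$, so here I need a touch more care: when $x$ is very small the $O(p^2)$ term could dominate. However, one also has the exact lower bound $I_p(p+x)\ge 0$ and, more usefully, the convexity/Pinsker-type bound $I_p(y)\ge \tfrac{(y-p)^2}{2\max(y,p)}$ is not quite what we want either. The clean fix is to note that when $|x|\le p$ we in fact have $p+x\le 2p$ and $p+x\ge 0$, and a direct second-derivative argument gives $I_p(p+x)=\tfrac{x^2}{2\xi}$ for some $\xi$ between $p$ and $p+x$ (Taylor with Lagrange remainder, since $I_p(p)=0$, $I_p'(p)=0$, $I_p''(y)=\tfrac1{y}+\tfrac1{1-y}$), and $\xi=\Theta(p)$, giving $I_p(p+x)=\Theta(x^2/p)$ directly without any stray additive error. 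This is the cleanest route and I would use it: $I_p''(y)=\frac1{y(1-y)}$, so $I_p(p+x)=\frac{x^2}{2}\cdot\frac1{\xi(1-\xi)}$ for $\xi\in[\min(p,p+x),\max(p,p+x)]=[\Theta(p),\Theta(p)]$, and $1-\xi=1-o(1)$, yielding $\Theta(x^2/p)$.

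For the second regime ($x\gg p$, hence $y=p+x\sim x$ and $x=o(1)$ is forced since $y\le 1$), the term $f(x)=(p+x)\log\frac{p+x}{p}$ dominates: $p+x=(1+o(1))x$ and $\log\frac{p+x}{p}=\log\frac{x}{p}+\log(1+p/x)=\log\frac{x}{p}+o(1)$, and since $x\gg p$ we have $\log\frac{x}{p}\to\infty$, so $f(x)=(1+o(1))x\log\frac{x}{p}$. Meanwhile $g(x)=(1-p-x)\log\frac{1-p-x}{1-p}$; since $x=o(1)$, this is $-\,x+O(x^2)+O(p)=O(x)$, which is $o\big(x\log\frac{x}{p}\big)$ because $\log\frac{x}{p}\to\infty$. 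Hence $I_p(p+x)=(1+o(1))x\log\frac{x}{p}$, as claimed. The main (very mild) obstacle is just being careful that in the first regime the naive Taylor expansion leaves an additive $O(p^2)$ that is not obviously $O(x^2/p)$ for the smallest $x$; using the exact Lagrange form of the remainder for $I_p$ around its minimum at $y=p$ sidesteps this entirely, and no other step presents any difficulty.
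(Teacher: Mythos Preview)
Your route differs from the paper's: the paper cites Lemma~3.3 of Lubetzky--Zhao for the cases $|x|\ll p$ and $x\gg p$, and handles $x=\Theta(p)$ by writing $x=ap$ and computing $I_p((1+a)p)=((1+a)\log(1+a)-a+o(1))p$ directly (with $a=-1$ done separately). Your second-order Taylor/Lagrange argument is more self-contained and, for the second regime, your extraction of the dominant term is correct. One minor slip there: ``$x=o(1)$ is forced since $y\le 1$'' is false (only $x\le 1-p$ follows), but your conclusion $g(x)=O(x)$ survives anyway since $|u\log\tfrac{u}{1-p}|\le(1-p)/e=O(1)=O(x)$ when $x=\Theta(1)$.

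There is, however, a real gap in the first regime. Your Lagrange remainder gives $I_p(p+x)=\tfrac{x^2}{2\xi(1-\xi)}$ with $\xi$ between $p$ and $p+x$, and you assert $\xi\in[\Theta(p),\Theta(p)]$. But $|x|=O(p)$ does not force $p+x=\Theta(p)$: e.g.\ $x=-p+p^2$ gives $p+x=p^2=o(p)$, and then $\xi$ may be as small as $p^2$, so the \emph{upper} bound $I_p(p+x)=O(x^2/p)$ does not follow from your argument. (The lower bound is fine: $\xi\le\max(p,p+x)=O(p)$ always.) The patch is easy---when $p+x\le p/2$ one has $|x|\ge p/2$, hence $x^2/p\ge p/4$, while $I_p$ is decreasing on $[0,p]$ so $I_p(p+x)\le I_p(0)=\log\tfrac{1}{1-p}\sim p=O(x^2/p)$---but as written this boundary case is missing. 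The paper's proof avoids the issue by treating $a=-1$ explicitly.
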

\begin{proof}
We may split $|x|=O(p)$ into the cases when $x=o(p)$ and when $x=\Theta(p)$. This is because if the result does not hold for some $x=O(p)$, we may find a subsequence $(p_i)$ with either $x=o(p)$ or $x=\Theta(p)$ on that subsequence where the result also does not hold.

Lemma 3.3 of \cite{LZ} implies the Lemma both when $|x|\ll p$ and when $x\gg p$. (The lemma does not explicitly address when $x<0$, but the exact same argument applies.)

We must just deal with the case when $x=\Theta(p)$. We seek to show that $I_p(p+x)=\Theta(p)$. Say $x=ap$, $a>-1$, $a\neq 0$ with $a=\Theta(1)$. (The case $a=-1$ is easily computed separately.) Then $I_p(p+x)=(1+a)(\log (1+a))p+(1-(1+a)p)\log\frac{1-(1+a)p}{1-p}$. The second term is $-(1+o(1))ap$, so $I_p(p+x)=((1+a)\log(1+a)-a+o(1))p$. Since $f(a):=(1+a)\log (1+a)-a$ and its first derivative are $0$ at $a=0$ and $f$ has positive second derivative on $(-1,\infty)$, $f(a)$ cannot be $0$ unless $a=0$. Thus $f(a)=\Theta(1)$ if $a=\Theta(1)$, and so $I_p(p+x)=\Theta(p)$. This finishes this case and completes the proof.
\end{proof}
\begin{lemma}\label{momentlemma}
For all $c>0$ there exists $\epsilon>0$ such that if $p\to 0$ and $x=x(p)$ with $|x|\geq p^{1+\epsilon}$, then
\[I_p(p+x)\geq (1-o(1))|x|^{1+c}\log\frac{1}{p}.\]
\end{lemma}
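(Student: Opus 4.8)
The plan is to reduce, using Lemma~\ref{entropyapproxlemma} together with a standard subsequence argument, to two regimes for $x=x(p)$ (which we may assume satisfies $p+x\in[0,1]$, else the statement is vacuous): the regime $|x|=O(p)$, and the regime $x/p\to\infty$. In the first regime Lemma~\ref{entropyapproxlemma} gives $I_p(p+x)=\Theta(x^2/p)$, so it is enough to show $x^2/p\gtrsim|x|^{1+c}\log\frac1p$, i.e.\ $|x|^{1-c}\gtrsim p\log\frac1p$. If $c\ge 1$ the left-hand side is $\ge 1$ (since $|x|<1$ and $1-c\le 0$), so this holds for \emph{any} $\epsilon>0$. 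If $c<1$, then from $|x|\ge p^{1+\epsilon}$ we get $|x|^{1-c}\ge p^{(1+\epsilon)(1-c)}$, which is polynomially larger than $p\log\frac1p$ precisely when $(1+\epsilon)(1-c)<1$. Thus it suffices to take, say, $\epsilon:=\frac{c}{2(1+c)}$, which one checks satisfies $\epsilon<\frac{c}{1-c}$ for all $c<1$; this is the only place the value of $\epsilon$ is constrained.

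In the regime $x/p\to\infty$ we have $x>0$ and the constraint $|x|\ge p^{1+\epsilon}$ is automatic, and Lemma~\ref{entropyapproxlemma} gives $I_p(p+x)=(1+o(1))x\log\frac xp$. So it suffices to prove that
\[\log\tfrac xp\ \ge\ x^{c}\log\tfrac1p\]
whenever $e^{1/c}p\le x\le 1$. Substituting $u:=\frac{\log(1/x)}{\log(1/p)}\in[0,1]$, so that $x=p^{u}$ and $\log\frac xp=(1-u)\log\frac1p$, the claimed inequality becomes $1-u\ge p^{cu}$. For $u=0$ (i.e.\ $x=1$) this is an equality; for $u\in(0,1)$ we take logarithms and use $-\log(1-u)\le\frac{u}{1-u}$ to see it suffices to have $cu\log\frac1p\ge\frac{u}{1-u}$, i.e.\ $1-u\ge\frac1{c\log(1/p)}$, which rearranges to exactly $\log\frac xp\ge\frac1c$, i.e.\ $x\ge e^{1/c}p$. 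Therefore $I_p(p+x)\ge(1+o(1))\,x\cdot x^{c}\log\frac1p=(1-o(1))|x|^{1+c}\log\frac1p$, as desired.

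The only point requiring any care is the inequality $\log\frac xp\ge x^c\log\frac1p$ in the second regime, and in particular verifying that it does not degrade when $x$ is close to $1$, where both sides are $(1-o(1))\log\frac1p$; the substitution $x=p^u$ reduces this cleanly to the elementary estimate $-\log(1-u)\le\frac u{1-u}$. Everything else is routine: setting up the subsequence reduction so that Lemma~\ref{entropyapproxlemma} applies, and noting that no separate treatment of the endpoint $x=-p$ is needed, since there $I_p(0)=\log\frac1{1-p}=\Theta(p)$, which already exceeds the target $(1-o(1))p^{1+c}\log\frac1p=o(p)$.
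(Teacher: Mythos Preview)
Your proof is correct and follows essentially the same approach as the paper: the same case split into $|x|=O(p)$ versus $x\gg p$ via subsequences, the same use of Lemma~\ref{entropyapproxlemma} in each regime, the same substitution $x=p^u$ in the large-$x$ case, and the same key elementary bound $-\log(1-u)\le u/(1-u)$. The only cosmetic differences are that you give an explicit choice of $\epsilon$ and phrase the boundary $x\approx e^{1/c}p$ directly rather than by contradiction; your separate remark about $x=-p$ is redundant (that endpoint is already handled by the $|x|=O(p)$ case) but harmless.
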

\begin{lemma}\label{firstmomentlemma}
Let $p\to 0$. Take $x=x(p)\in [-p,1-p]$ and $\epsilon=\epsilon(p)\in (0,1]$. If $|x|\geq\epsilon p$, then
\[I_p(p+x)=\Omega(\epsilon |x|).\]
\end{lemma}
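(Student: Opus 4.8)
The plan is to reduce the whole statement to the elementary pointwise estimate
\[
I_p(p+x)\;\ge\;\frac{x^2}{2\max(p+x,\,p)},
\]
valid for every $p\in(0,1)$ and $x\in[-p,1-p]$. This is the standard lower bound on the Bernoulli relative entropy $I_p(y)=I_p(y)$ viewed as a function of $y=p+x$; I would justify it with a one-line convexity argument: the function $g(y):=I_p(y)-\frac{(y-p)^2}{2\max(y,p)}$ satisfies $g(p)=g'(p)=0$, and on each of the intervals $(0,p]$ and $[p,1)$ one checks $g''\ge 0$ (for $y\le p$ because $\max(y,p)=p$ and $y(1-y)\le y\le p$, and for $y\ge p$ because $\max(y,p)=y$ and a direct computation gives $g''(y)=\frac{y^2-p^2+yp^2}{y^3(1-y)}\ge 0$). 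Hence $g$ is convex and nonnegative on each piece. (Alternatively, one could invoke Lemma~\ref{entropyapproxlemma} directly, but that requires passing to a subsequence on which $|x|/p$ converges in $[0,\infty]$, so the direct inequality is cleaner.)

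Granting this, the lemma is a two-case computation using the hypotheses $|x|\ge\epsilon p$ and $\epsilon\le 1$. If $x\ge 0$, then $\max(p+x,p)=p+x$; from $x\ge\epsilon p$ we get $p\le x/\epsilon$, so $p+x\le(1+\tfrac1\epsilon)x\le\tfrac2\epsilon x$, and therefore
\[
I_p(p+x)\;\ge\;\frac{x^2}{2(p+x)}\;\ge\;\frac{x^2}{(4/\epsilon)x}\;=\;\frac{\epsilon x}{4}.
\]
If $x<0$, then $\max(p+x,p)=p$, and using $|x|\ge\epsilon p$,
\[
I_p(p+x)\;\ge\;\frac{x^2}{2p}\;=\;\frac{|x|}{2}\cdot\frac{|x|}{p}\;\ge\;\frac{\epsilon|x|}{2}.
\]
In either case $I_p(p+x)\ge\tfrac14\,\epsilon|x|$, which is exactly the asserted $\Omega(\epsilon|x|)$.

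I do not expect any real obstacle: the only nontrivial ingredient is the pointwise relative-entropy bound, and that itself is elementary. The one point worth care is keeping track of which factor of the hypothesis is being used where — specifically that $\epsilon\le 1$ is needed to bound $1+1/\epsilon$ by $2/\epsilon$ in the $x\ge 0$ case, while $|x|\ge\epsilon p$ is what converts the quadratic lower bound $x^2/p$ into the linear bound $\epsilon|x|$ in the $x<0$ case. As a sanity check, when $|x|\asymp p$ and $\epsilon\asymp 1$ this recovers $I_p(p+x)=\Omega(p)$, consistent with Lemma~\ref{entropyapproxlemma}.
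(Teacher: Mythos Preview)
Your proof is correct and in fact cleaner than the paper's. The paper argues by invoking Lemma~\ref{entropyapproxlemma}: it splits into the asymptotic regimes $x\gg p$ (where $I_p(p+x)\sim x\log(x/p)\gg x\ge\epsilon|x|$) and $|x|=O(p)$ (where $I_p(p+x)=\Theta(x^2/p)\ge\Omega(\epsilon|x|)$ from $|x|\ge\epsilon p$). That route is short given Lemma~\ref{entropyapproxlemma}, but it implicitly relies on a subsequence argument to make the case split exhaustive, and it only yields an $\Omega(\cdot)$ with an unspecified constant.

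Your approach replaces all of this with the single pointwise inequality $I_p(p+x)\ge x^2/\bigl(2\max(p+x,p)\bigr)$, proved by a direct second-derivative computation, and then a sign-of-$x$ case split that uses $\epsilon\le1$ and $|x|\ge\epsilon p$ in the natural places. This is more elementary, avoids any asymptotic or subsequence reasoning, and delivers the explicit constant $1/4$. The convexity check you sketch is correct on both pieces (on $[p,1)$ one has $g''(y)=\bigl(y^2-p^2+p^2y\bigr)/\bigl(y^3(1-y)\bigr)\ge0$ since $y\ge p$, and on $(0,p]$ it follows from $y(1-y)\le y\le p$), so there is no gap.
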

The intuition behind these lemmas is that by Lemma \ref{entropyapproxlemma}, we are trying to bound $x\log(x/p)$ from below. $x\log(x/p)$ is convex, but grows slower than $x^c$ for any $c>1$, so when comparing it to $x$ we should `smooth inward' but when comparing it to $x^c$ for $c>1$ we should `smooth outward'.
\begin{proof}[Proof of Lemma \ref{momentlemma}]
We split into $x=O(p)$ and $x\gg p$ in the same way as the last lemma.

If $x=O(p)$, then by Lemma \ref{entropyapproxlemma} it suffices to show that $\frac{x^2}{p}\gg |x|^{1+c}\log\frac{1}{p}$, or in other words that
\[|x|^{1-c}\gg p\log\frac{1}{p}.\]
If $c>1$, this holds for all $x=O(p)$, and if $c\leq 1$ this holds for all $|x|\geq p^{1+\epsilon}$ as long as we choose $\epsilon$ such that $(1+\epsilon)(1-c)<1$.

The other case is when $x\gg p$. In this case, by Lemma \ref{entropyapproxlemma}, it suffices to show that
\[x\log\frac{x}{p}\geq (1-o(1))x^{1+c}\log\frac{1}{p}.\]
Take $y$ such that $x=p^y$ (so $0<y<1$ since $x\gg p$). Then after cancelling a factor of $x\log\frac{1}{p}$, the expression above reduces to
\[1-y\geq (1-o(1))p^{cy}.\]
Since $\log (1-y)=-y-\frac{y^2}{2}-\frac{y^3}{3}-\cdots>-\frac{y}{1-y}$, it suffices to show that
\[\frac{y}{1-y}\leq cy\log\frac{1}{p}+o(1).\]
As long as $y\leq 1-\frac{1}{c\log\frac{1}{p}}$, this holds (without the $o(1)$, in fact).

If $y\geq 1-\frac{1}{c\log\frac{1}{p}}$, then $x\leq p\cdot p^{-\frac{1}{c\log\frac{1}{p}}}$. But $p^{-\frac{1}{c\log\frac{1}{p}}}=e^{\frac{1}{c}}=O(1)$, so in this case $x=O(p)$, a contradiction, completing the proof.
\end{proof}
\begin{proof}[Proof of Lemma \ref{firstmomentlemma}]
When $x\gg p$, we use Lemma \ref{entropyapproxlemma}. We have that
\[I_p(p+x)=(1+o(1))x\log\frac{x}{p}\gg x\geq\epsilon |x|,\]
since $x\gg p$ and $\epsilon\leq 1$.

When $x=O(p)$, we may again use Lemma \ref{entropyapproxlemma}. In particular, we have that
\[I_p(p+x)=\Theta\left(\frac{x^2}{p}\right)=\Omega(\epsilon |x|),\]
since $|x|\geq\epsilon p$ by assumption.
\end{proof}
Lemmas \ref{momentlemma} and \ref{firstmomentlemma} bound the function $I_p$ effectively as long as we are not evaluating $I_p$ on a number very close to $p$. Thus we would like to only consider graphons taking no values that are $p+o(p)$ (except for $p$ itself, where $I_p(p)=0$ and thus our bounds will hold anyway).

For this reason, we define a class of graphons that have several desirable properties, including taking no values close to $p$ except $p$ itself.
\begin{definition}\label{nicegraphonsdef}
For any graph $K$, $\delta\geq 0$, $p\geq 0$, and $0\leq\epsilon\leq 1$, let $\Gamma_{\epsilon}(K,\delta,p)$ be the set of all graphons $W$ satisfying
\begin{enumerate}
\item $(1-\epsilon)p\leq\displaystyle\int_0^1 W(x_0,y)dy\leq (1+\epsilon)p$ for all $x_0\in [0,1]$
\item $\Hom(K,W)\geq (1-\epsilon)^{e(K)}(1+\delta)p^{e(K)}$
\item $W$ takes no values in $[(1-\epsilon)p,(1+\epsilon)p]\backslash\{p\}$.
\item $\Hom(H,|W-p|)\leq 2^{e(K)}(1+\delta)p^{e(H)}$ for all $H\subseteq K$.
\end{enumerate}
We will in general write $\Gamma_{\epsilon}(K,\delta)$ when the $p$ is implicit.
\end{definition}
\begin{remark}
Conditions (1) and (2) are simply approximate versions of the $p$-regularity and homomorphism count conditions from the variational problem (when $\epsilon=0$ they are exactly the same). Condition (3) will allow the use of our bounds in Lemmas \ref{momentlemma} and \ref{firstmomentlemma}, as discussed earlier. Condition (4) is a technical condition that will be useful later.

This idea of eliminating values of $W$ close to $p$ in order to ensure the accuracy of polynomial approximations of entropy also appears in the work of Liu and Zhao \cite{LiuZhao}.
\end{remark}
We would like to show that the solution to the variational problem is approximately the same when we work over $\Gamma_{\epsilon}(K,\delta)$. This is accomplished by the following lemma.
\begin{lemma}\label{replacebyplemma}
Take any graph $K$ and constant $\delta>0$. If $p\to 0$ and $\epsilon=\epsilon(p)$ with $0\leq\epsilon\leq 1$, then
\[\displaystyle\inf_{W\in\Gamma_{\epsilon}(K,\delta,p)}I_p(W)\leq (1+o(1))\displaystyle\inf_{\substack{\Hom(K,W')\geq (1+\delta)p^{e(K)} \\ W'\text{ }p\text{-regular}}}I_p(W').\]
\end{lemma}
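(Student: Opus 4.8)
The plan is to start with a near-optimal $p$-regular graphon $W'$ with $\Hom(K,W')\geq(1+\delta)p^{e(K)}$ and $I_p(W')$ within $(1+o(1))$ of the infimum on the right, and to modify it into a graphon $W\in\Gamma_\epsilon(K,\delta,p)$ while increasing the entropy by at most a $(1+o(1))$ factor. The only condition of Definition \ref{nicegraphonsdef} that $W'$ may genuinely violate is (3): $W'$ could take values in the forbidden band $[(1-\epsilon)p,(1+\epsilon)p]\setminus\{p\}$. Conditions (1) and (2) hold with room to spare since $W'$ is exactly $p$-regular and has $\Hom(K,W')\geq(1+\delta)p^{e(K)}\geq(1-\epsilon)^{e(K)}(1+\delta)p^{e(K)}$, and condition (4) should follow from condition (2) together with the fact that $|W'-p|\leq 1$ pointwise, since $\Hom(H,|W'-p|)\leq \Hom(H, \mathbf 1)\cdot(\text{something})$—more carefully, one uses that each factor $|W'-p|$ is at most $\max(p, 1-p)\le 1$ and bounds $\Hom(H,|W'-p|)$ crudely by a constant times $p^{e(H)}$ using, e.g., that the average of $|W'-p|$ over $[0,1]^2$ is $O(p)$ by $p$-regularity; I would double-check whether a direct Hölder-type bound or a more hands-on argument is cleanest here, but in any case this is a soft estimate independent of the delicate asymptotics.

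The substantive step is repairing condition (3). The idea is to define $W$ from $W'$ by a pointwise truncation: wherever $W'(x,y)\in((1-\epsilon)p,(1+\epsilon)p)$ and $W'(x,y)\neq p$, replace the value by $p$ (or, to preserve $p$-regularity more carefully, we may need to be slightly cleverer—see below). Since $I_p$ is minimized at $p$ with $I_p(p)=0$ and is monotone on each side of $p$, this replacement only \emph{decreases} the integrand $I_p(W'(x,y))$ pointwise, so $I_p(W)\leq I_p(W')$; that direction is free. The cost is that we may have perturbed the row sums and the homomorphism count. The row sum $\int_0^1 W(x_0,y)\,dy$ changes by at most $2\epsilon p$ in absolute value (each altered value moves by less than $\epsilon p$), so condition (1) holds. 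For the homomorphism count, $\Hom(K,\cdot)$ is Lipschitz in the cut norm / in $L^1$ with a constant depending only on $e(K)$ and the pointwise bound on the graphon, and $\|W-W'\|_1\leq 2\epsilon p$, so $\Hom(K,W)\geq\Hom(K,W')-O(\epsilon p\cdot p^{e(K)-1})=\Hom(K,W')-O(\epsilon p^{e(K)})\geq(1-O(\epsilon))(1+\delta)p^{e(K)}$, which gives condition (2) for small enough $\epsilon$ after absorbing constants (here one uses that $\epsilon=\epsilon(p)\to$ something bounded; if $\epsilon$ is a fixed constant the constants work out, and if $\epsilon\to 0$ it is even easier). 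Condition (4) is inherited by the same $L^1$-closeness argument applied to each $H\subseteq K$, or simply re-derived for $W$ directly.

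The one place that requires care—and which I expect to be the main obstacle—is reconciling the truncation with \emph{exact} $p$-regularity if one insists on it, but in fact condition (1) of $\Gamma_\epsilon$ is only the \emph{approximate} regularity $(1-\epsilon)p\leq\int W(x_0,y)\,dy\leq(1+\epsilon)p$, so no exact fix is needed and the crude truncation above suffices. Thus the genuine content is just: (i) truncation never raises entropy; (ii) truncation moves the graphon by $O(\epsilon p)$ in $L^1$ (in fact in $L^\infty$); (iii) $\Hom(K,\cdot)$ and $\Hom(H,\cdot)$ are stable under $O(\epsilon p)$ $L^1$-perturbations at the scale $p^{e(K)}$, $p^{e(H)}$; and (iv) hence $W\in\Gamma_\epsilon(K,\delta,p)$ with $I_p(W)\leq I_p(W')\leq(1+o(1))\inf_{W'}I_p(W')$, taking the infimum over $W$ on the left. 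Assembling these yields the claimed inequality. I would present (i) via the unimodality of $x\mapsto I_p(x)$ about $x=p$, (ii)–(iii) via the elementary telescoping/Lipschitz estimate for multilinear functionals $\prod_{e}W(x_{u_e},x_{v_e})$, and (iv) as a one-line conclusion.
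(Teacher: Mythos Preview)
Your plan has the right shape (truncate values near $p$; entropy only drops; the approximate regularity in (1) absorbs the $O(\epsilon p)$ row-sum drift), but there are two genuine gaps.

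\textbf{Condition (4) is not soft and does not follow from $p$-regularity or from condition (2).} Condition (2) is a \emph{lower} bound on $\Hom(K,W)$; condition (4) is an \emph{upper} bound on $\Hom(H,|W-p|)$, and nothing you wrote connects them. The row-sum bound $\int_0^1|W'-p|(x_0,y)\,dy\le 2p$ only lets you peel leaves; for any $H$ containing a cycle the best you get from regularity and H\"older-type bounds is $\Hom(H,|U|)\lesssim p^{v(H)-c(H)}$, which is much weaker than $p^{e(H)}$. In fact (4) can fail for a generic $p$-regular $W'$ with $\Hom(K,W')\ge(1+\delta)p^{e(K)}$: just scale up the hub in the construction graphons so that $\Hom(K,W')$ is enormous; such a $W'$ is still $p$-regular but violates (4). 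The paper's proof uses the one piece of information you left on the table, \emph{near-optimality of $W'$}: since $\tfrac{W'+p}{2}$ is $p$-regular with $I_p\!\left(\tfrac{W'+p}{2}\right)\le\tfrac12 I_p(W')$, near-optimality forces $\Hom\!\left(K,\tfrac{W'+p}{2}\right)<(1+\delta)p^{e(K)}$, hence $\Hom(K,W'+p)<2^{e(K)}(1+\delta)p^{e(K)}$; then $\Hom(H,W'+p)\le p^{e(H)-e(K)}\Hom(K,W'+p)$ for every $H\subseteq K$, and finally $|W'-p|\le W'+p$ gives (4). This step is essential and cannot be replaced by a generic Lipschitz or moment estimate.

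\textbf{Your Lipschitz argument for condition (2) gives the wrong scale.} The $L^1$-Lipschitz constant of $\Hom(K,\cdot)$ at a graphon bounded by $1$ is $O(1)$, not $O(p^{e(K)-1})$, so $\|W-W'\|_1\le 2\epsilon p$ only yields $|\Hom(K,W)-\Hom(K,W')|=O(\epsilon p)$, useless against $(1+\delta)p^{e(K)}$. (Getting the extra $p^{e(K)-1}$ in the telescoping would require bounds of the form $\Hom(K\setminus e,\max(W,W'))=O(p^{e(K)-1})$, which is precisely a (4)-type estimate you have not yet proved.) The correct route is the pointwise inequality: truncation gives $W\ge W'/(1+\epsilon)$ everywhere, so $\Hom(K,W)\ge(1+\epsilon)^{-e(K)}\Hom(K,W')\ge(1-\epsilon)^{e(K)}(1+\delta)p^{e(K)}$, which is exactly condition (2).
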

\begin{proof}[Proof of Lemma \ref{replacebyplemma}]
The idea is to replace values close to $p$ by $p$ itself.

First note that if the set of $p$-regular graphons $W'$ with $\Hom(K,W')\geq (1+\delta)p^{e(K)}$ is empty, the right side is $\infty$ and we are done.

Otherwise, take a $W'=W'(p)$ that approximately minimizes $I_p(W')$ subject to $\Hom(K,W')\geq (1+\delta)p^{e(K)}$ and $W'$ $p$-regular, so that $I_p(W')$ is within a $1+o(1)$ factor of the infimum on the right side of the lemma. It then suffices to find $W\in\Gamma_{\epsilon}(K,\delta)$ with $I_p(W)\leq I_p(W')$.

Now, $\frac{W'+p}{2}$ is $p$-regular. Since $I_p$ is convex and $I_p(p)=0$, we have that $I_p\left(\frac{x+p}{2}\right)\leq\frac{I_p(x)}{2}$ for all $x$. Thus $I_p\left(\frac{W'+p}{2}\right)\leq\frac{I_p(W')}{2}$. But by assumption, $I_p(W')$ is at most $1+o(1)$ times the entropy of any $p$-regular graphon with at least $(1+\delta)p^{e(K)}$ homomorphisms from $K$. Thus
\[\Hom\left(K,\frac{W'+p}{2}\right)<(1+\delta)p^{e(K)}.\]
We can cancel the $2$ in the denominator by multiplying by $2^{e(K)}$. Furthermore,
\[\Hom(K,W'+p)\geq p^{e(K)-e(H)}\Hom(H,W'+p)\]
for all $H\subseteq K$, because the right hand side can be obtained from the left by writing out the integral form of $\Hom(K,W'+p)$ and replacing each occurrence of $W'+p$ with $p$ in factors that do not correspond to edges of $H$. Therefore,
\[\Hom(H,W'+p)\leq (1+\delta)2^{e(K)}p^{e(H)}\]
for all $H\subseteq K$, and since $|W'-p|\leq W'+p$,
\[\Hom(H,|W'-p|)\leq (1+\delta)2^{e(K)}p^{e(H)}.\]

We now construct $W$ by replacing all values of $W'$ in $[(1-\epsilon)p,(1+\epsilon)p]$ with $p$. $W$ clearly satisfies condition (3) of Definition \ref{nicegraphonsdef}. Furthermore, since pointwise we have $W\geq\frac{1}{1+\epsilon}W'\geq (1-\epsilon)W'$, condition (2) holds as well. Since we also have $|W'-W|\leq\epsilon p$ pointwise and $W'$ is $p$-regular, condition (1) holds.

Finally, condition (4) holds because $|W-p|\leq |W'-p|$ pointwise, so
\[\Hom(H,|W-p|)\leq\Hom(H,|W'-p|)\leq (1+\delta)2^{e(K)}p^{e(H)}.\]
Thus $W\in\Gamma_{\epsilon}(K,\delta)$. Since $I_p(W)\leq I_p(W')$ (since $I_p$ is minimized at $p$) and $W'$ minimizes the variational problem by assumption, we are done.
\end{proof}
The upshot of Lemma \ref{replacebyplemma} is that we may now apply Lemmas \ref{momentlemma} and \ref{firstmomentlemma} to the values of our graphon, with the only cost being that due to conditions (1) and (2) of Definition \ref{nicegraphonsdef}, we only have approximate $p$-regularity and slightly fewer homomorphisms.

After applying Lemma \ref{subgraphbreakdown}, we will need to bound the terms $\Hom(H,U)$. This will culminate in the following result.
\begin{thm}\label{simpleholderresult}
Let $H$ be a graph with no isolated vertices and let $W$ be a graphon satisfying $(1-\epsilon)p\leq\displaystyle\int_0^1 W(x_0,y)dy\leq (1+\epsilon)p$ for all $x_0\in [0,1]$ for some $\epsilon\leq 1$. If $U=W-p$, then

\[\Hom(H,|U|)\leq ((2+\epsilon)p)^{v(H)-2c(H)}\mathbb{E}(|U|)^{c(H)}.\]
\end{thm}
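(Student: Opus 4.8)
The plan is to prove this by induction on $c(H)$, using a vertex of minimum (fractional cover) weight as a ``pivot'' and applying the Cauchy--Schwarz inequality to integrate out its neighbors. The key structural fact is that since $(c_v)$ is a minimum fractional vertex cover, by LP duality $c(H) = \sum_v c_v$ where each $c_v \in \{0,\tfrac12,1\}$ may be assumed (by the half-integrality of fractional vertex cover polytopes), and moreover there is a maximum fractional matching supported on the edges. We will use the half-integral cover to organize the argument: let $A = \{v : c_v = 1\}$, $B = \{v : c_v = \tfrac12\}$, $C = \{v : c_v = 0\}$, so that $C$ is an independent set, every edge meets $A \cup B$, and every vertex of $C$ has all neighbors in $A$. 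Here $v(H) - 2c(H) = v(H) - 2|A| - |B| = |C| - |A|$.

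First I would handle the base case and set up the recursion. If $H$ has an isolated vertex we have excluded that case; if $c(H) = v(H)/2$ exactly (a fractional perfect matching exists, e.g. $H$ is a disjoint union of odd components admitting the all-$\tfrac12$ cover), then $v(H) - 2c(H) = 0$ and the claim reads $\Hom(H,|U|) \le \mathbb{E}(|U|)^{c(H)}$, which is the classical ``fractional independence/Hölder'' bound — this is the $\epsilon=0$, $A=C=\emptyset$ case and follows from the standard generalized Hölder inequality (as in \cite{BGLZ,BD,LZ}) applied with the fractional matching weights, using that $|U| \le \max(p, 1-p) \le 1$ so $\mathbb{E}(|U|^{w}) \le \mathbb{E}(|U|)$ is not what we want — rather we use Hölder directly: $\Hom(H,|U|) \le \prod_{e} \mathbb{E}(|U|)^{w_e} = \mathbb{E}(|U|)^{\sum w_e} = \mathbb{E}(|U|)^{c(H)}$. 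For the inductive step, pick a vertex $u \in C$ (if $C \ne \emptyset$). All its neighbors lie in $A$. Integrate out $u$ first: $\int_0^1 \prod_{v \sim u} |U(x_u,x_v)|\,dx_u \le \int_0^1 \prod_{v\sim u}|U(x_u, x_v)|\,dx_u$, and I would bound this by pulling out a factor for each neighbor using the degree hypothesis: since $\int_0^1 |U(x_u, y)|\,dy \le \int_0^1 |W(x_u,y) - p|\,dy \le \int_0^1 W + p \le (2+\epsilon)p$ (using approximate $p$-regularity and $|W-p| \le W+p$), and $|U| \le 1$ elsewhere, bounding all but one of the neighbor-factors by $1$ and integrating the last gives a factor of $(2+\epsilon)p$ per vertex of $C$ after iterating — but one must be careful that removing $u$ does not destroy the cover structure on the rest. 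Removing $u$ from $H$ and deleting its edges leaves a graph $H'$ on $V(H) \setminus \{u\}$ for which $(c_v)_{v \ne u}$ is still a fractional vertex cover, of total weight $c(H)$; if some $a \in A$ becomes isolated in $H'$ we may drop it too, decreasing the cover weight by $1$ and decreasing $|A|$ by $1$, which is exactly compatible with the exponents.

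The cleanest route, which I would actually write up, avoids delicate bookkeeping: split $V(H)$ according to the half-integral optimal cover, and apply a \emph{single} generalized Hölder inequality with carefully chosen exponents — weight $1$ on $A$-incident structure and $\tfrac12$ via a fractional matching on the $B$-part — while at the very start bounding, for each $u \in C$, the innermost integral $\int |U(x_u, \cdot)|$ crudely by the approximate-regularity bound $(2+\epsilon)p$. Concretely: order the vertices of $C$ as $u_1, \dots, u_{|C|}$; successively integrate out $x_{u_1}, \dots, x_{u_{|C|}}$, each time using $\int_0^1 |U(x_{u_i}, x_v)|\, dx_{u_i} \le (2+\epsilon)p$ for one chosen neighbor $v = v(u_i) \in A$ and $|U| \le 1$ for the rest; after removing all of $C$ we are left with a graph on $A \cup B$, whose fractional cover is all-$1$ on (a subset of) $A$ and all-$\tfrac12$ on $B$. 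On that remaining graph apply generalized Hölder with the maximum fractional matching: each vertex of $B$ absorbs total weight $1$ across its edges, and each remaining vertex of $A$ absorbs weight $1$, yielding $\mathbb{E}(|U|)^{|A'| + |B|/2}$ where $A'$ is the set of $A$-vertices not fully consumed — and one checks $|A'| + |B|/2 = c(H) - (\text{number of } A\text{-vertices used as pivot targets})$, while the accumulated $(2+\epsilon)p$ factors number $|C|$, minus a correction. Matching exponents: $(2+\epsilon)p$ to the power $|C| - |A| = v(H) - 2c(H)$ times $\mathbb{E}(|U|)^{c(H)}$.

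The main obstacle I anticipate is exactly this exponent bookkeeping: making the number of $(2+\epsilon)p$ factors come out to $v(H) - 2c(H)$ rather than $|C|$ requires that each vertex of $C$ be ``charged'' to a distinct neighbor in $A$ \emph{up to} the slack $|A| = c(H) - |B|/2 - \dots$, i.e. one needs a matching saturating $A$ from $C \cup (\text{edges within/among }A)$; this is where the minimality of the cover $(c_v)$ is essential and must be invoked via a Hall-type / LP-duality argument (the complementary slackness between the minimum fractional cover and a maximum fractional matching forces the matching to be ``tight'' on $A$). I would isolate this combinatorial claim as a sub-lemma, prove it from half-integrality and complementary slackness, and then the analytic part (iterated Cauchy--Schwarz / Hölder with the regularity bound) is routine. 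A secondary technical point is ensuring $|U| \le 1$ pointwise: since $W \in [0,1]$ and $p \in [0,1]$, $|W - p| \le \max(p, 1-p) \le 1$, so $\mathbb{E}(|U|^s) \le \mathbb{E}(|U|)$ for $s \ge 1$ and the Hölder step only ever raises $|U|$ to powers $\le 1$ on each edge, so no such issue actually arises — but it is worth a one-line remark.
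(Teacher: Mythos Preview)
Your proposal has a genuine gap in the analytic step, which you mis-diagnose as a purely combinatorial bookkeeping issue.

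Concretely, your ``cleanest route'' integrates out each $u\in C$ by bounding all but one factor of $|U(x_u,\cdot)|$ by $1$ and integrating the last against the degree bound, yielding one factor of $(2+\epsilon)p$ per vertex of $C$. Test this on $H=K_{2,3}$: here $A=\{v_1,v_2\}$, $B=\emptyset$, $C=\{w_1,w_2,w_3\}$, and after removing $C$ the induced graph on $A\cup B$ has \emph{no edges}, so the subsequent H\"older step contributes nothing. You obtain $((2+\epsilon)p)^3$, whereas the theorem claims $((2+\epsilon)p)^{1}\,\mathbb{E}(|U|)^{2}$. Since $\mathbb{E}(|U|)\le (2+\epsilon)p$ your bound is a valid inequality, but it is strictly weaker than the stated one and is useless for the downstream application (Corollary~\ref{simpleholdercor} and the lower bound of Theorem~\ref{varcorrectexponent}). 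No Hall-type matching argument can repair this, because the information loss happens in the analytic step: replacing $|U|\le 1$ throws away exactly the moment contribution $\mathbb{E}(|U|)^{w_e}$ that the edges from $C$ into $A$ are supposed to supply.

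The paper's fix is to work with \emph{edge} weightings rather than the vertex cover directly: take a maximum fractional matching $(w_e)$ and a minimum fractional edge cover $(w'_e)$ with $w_e\le w'_e$ (Lemma~\ref{edgeweightslemma}). At each slack vertex (your $C$) apply H\"older with exponents $1/w'_e$ --- not the crude bound --- obtaining factors $S_{v'}^{1/w'_e}|U|(x_{v'})$ at the neighbors $v'$. Then split each such factor as a $(w_e/w'_e)$-power, kept for the global H\"older step and ultimately yielding $\mathbb{E}(|U|)^{w_e}$, times a $((w'_e-w_e)/w'_e)$-power, which is bounded pointwise via regularity by $((2+\epsilon)p)^{w'_e-w_e}$. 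Summing, the pulled-out exponent is $\sum_e(w'_e-w_e)=v(H)-2c(H)$ and the retained exponent is $\sum_e w_e=c(H)$, exactly as required. In the $K_{2,3}$ example this is precisely the Cauchy--Schwarz at each $w_j$ followed by pulling out one copy of $S_2$ at each $v_i$, as in Example~\ref{K23example}. The missing idea in your writeup is this ``pull out only the excess $w'_e-w_e$'' manoeuvre; the rest of your outline (half-integral cover, handling $B$ via a fractional matching) is on the right track but cannot be completed without it.
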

We will prove Theorem \ref{simpleholderresult} over the following two sections. For now, we complete the lower bound of Theorem \ref{varcorrectexponent}. We start with a corollary of Theorem \ref{simpleholderresult}.
\begin{cor}\label{simpleholdercor}
Take a graph $H$ with no isolated vertices. Further take $p\to 0$, $\epsilon=\epsilon(p)$, and a graphon $W$ collectively satisfying the constraints of Theorem 
\ref{simpleholderresult}. If $W$ takes no values in $[(1-\epsilon)p,(1+\epsilon)p]\backslash\{p\}$, then
\[\Hom(H,|U|)=O\left(\epsilon^{-c(H)}p^{v(H)-2c(H)}I_p(W)^{c(H)}\right).\]
\end{cor}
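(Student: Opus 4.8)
The plan is to read the bound off directly from Theorem~\ref{simpleholderresult}, after replacing the factor $\mathbb{E}(|U|)$ appearing there by $I_p(W)$ by means of a pointwise bound on the entropy. We may assume $\epsilon>0$ (otherwise $\epsilon^{-c(H)}=\infty$ and there is nothing to prove), so, combined with the standing hypothesis $\epsilon\le 1$ from Theorem~\ref{simpleholderresult}, we work with $\epsilon\in(0,1]$. Write $U:=W-p$, so that $\mathbb{E}(|U|)=\int_{[0,1]^2}|U(x,y)|\,dx\,dy$ and $I_p(W)=\int_{[0,1]^2}I_p(W(x,y))\,dx\,dy$. Theorem~\ref{simpleholderresult} yields
\[\Hom(H,|U|)\le ((2+\epsilon)p)^{v(H)-2c(H)}\,\mathbb{E}(|U|)^{c(H)}.\]
Since assigning weight $\tfrac12$ to every vertex is a fractional vertex cover of $H$, we have $c(H)\le v(H)/2$, so $v(H)-2c(H)\ge 0$; together with $\epsilon\le 1$ this gives $((2+\epsilon)p)^{v(H)-2c(H)}=O(p^{v(H)-2c(H)})$, with an implied constant depending only on $H$. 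Hence it remains to show $\mathbb{E}(|U|)=O(\epsilon^{-1}I_p(W))$.

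I would prove this last bound pointwise. Fix $(x,y)$ and set $t:=U(x,y)\in[-p,1-p]$. If $t=0$, then $|U(x,y)|=0=I_p(W(x,y))$ and the pointwise inequality is trivial. If $t\neq 0$, then $W(x,y)\neq p$, so the hypothesis that $W$ takes no value in $[(1-\epsilon)p,(1+\epsilon)p]\setminus\{p\}$ forces $|t|=|W(x,y)-p|>\epsilon p$; Lemma~\ref{firstmomentlemma} then gives $I_p(W(x,y))=I_p(p+t)=\Omega(\epsilon|t|)$, i.e.\ $|U(x,y)|\le C\epsilon^{-1}I_p(W(x,y))$ for a fixed constant $C$, valid at a.e.\ $(x,y)$ and all small $p$. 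Integrating over $[0,1]^2$ gives $\mathbb{E}(|U|)\le C\epsilon^{-1}I_p(W)$, and substituting back into the displayed inequality,
\[\Hom(H,|U|)\le ((2+\epsilon)p)^{v(H)-2c(H)}(C\epsilon^{-1}I_p(W))^{c(H)}=O(\epsilon^{-c(H)}p^{v(H)-2c(H)}I_p(W)^{c(H)}),\]
which is the claim.

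The one point that requires care, and the step I expect to be the main (though minor) obstacle, is the uniformity of the constant when invoking Lemma~\ref{firstmomentlemma}: that lemma is phrased for a single sequence $x=x(p)$, whereas here the value $t=U(x,y)$ ranges over all of $\{t\in[-p,1-p]:|t|>\epsilon p\}$ simultaneously, so one needs the implied constant there to be absolute. This is straightforward to extract from the proofs of Lemmas~\ref{firstmomentlemma} and~\ref{entropyapproxlemma}: for $|t|\le p$, a second-order Taylor expansion of $I_p$ about $p$ (using $I_p''(\xi)=\tfrac{1}{\xi(1-\xi)}\ge\tfrac1p$ for $\xi\in(0,p)$) gives $I_p(p+t)\gtrsim t^2/p\ge\epsilon|t|$, and for $t>p$ a direct estimate gives $I_p(p+t)\gtrsim|t|\ge\epsilon|t|$, with absolute implied constants in both ranges. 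Modulo this uniformity check, the corollary is an immediate combination of Theorem~\ref{simpleholderresult} and Lemma~\ref{firstmomentlemma}.
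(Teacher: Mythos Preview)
Your proof is correct and follows essentially the same approach as the paper: apply Theorem~\ref{simpleholderresult}, then bound $\mathbb{E}(|U|)$ by $O(\epsilon^{-1}I_p(W))$ pointwise via Lemma~\ref{firstmomentlemma} using the gap hypothesis on $W$. If anything, you are more careful than the paper in explicitly justifying $v(H)-2c(H)\ge 0$ and in flagging the uniformity of the constant in Lemma~\ref{firstmomentlemma}.
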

\begin{proof}[Proof of Corollary \ref{simpleholdercor}]
By Theorem \ref{simpleholderresult},
\[\Hom(H,|U|)=O\left(p^{v(H)-2c(H)}\mathbb{E}(|U|)^{c(H)}\right).\]
Thus it suffices to show that $I_p(W)=\Omega(\epsilon\mathbb{E}|U|)$. But for all $x,y\in [0,1]$,
\begin{align*}
I_p(W(x,y)) & =I_p(p+U(x,y)) \\ & =\Omega(\epsilon |U(x,y)|),
\end{align*}
as by our assumption either $U(x,y)=0$ (in which case $I_p(p+U(x,y))=|U(x,y)|=0$), or $|U(x,y)|>\epsilon p$, in which case Lemma \ref{firstmomentlemma} applies. Integrating both sides yields over $[0,1]^2$ yields the Corollary.
\end{proof}
\begin{proof}[Proof of Lower Bound of Theorem \ref{varcorrectexponent}]
Fix a nonforest graph $K$ whose $2$-core is not a disjoint union of cycles, and take $\delta>0$ and $p\to 0$. By Lemma \ref{replacebyplemma}, it suffices to show that there exists $\epsilon=\epsilon(p)$, $0\leq\epsilon\leq 1$ such that for all $W\in\Gamma_{\epsilon}(K,\delta)$,
\[I_p(W)\gtrsim p^{2+\gamma(K)}.\]

Take $\epsilon$ to be the constant such that $(1+\delta)(1-\epsilon)^{e(K)}=1+\frac{\delta}{2}$ and take $W$ to be any graphon in $\Gamma_{\epsilon}(K,\delta)$. Then by condition (2) of Definition \ref{nicegraphonsdef},
\[\Hom(K,W)\geq \left(1+\frac{\delta}{2}\right)p^{e(K)}.\]
Now, by Lemma \ref{subgraphbreakdown},
\[p^{-e(K)}\Hom(K,W)=\displaystyle\sum_{H\subseteq K}p^{-e(H)}\Hom(H,U),\]
where $U=W-p$. When $H=\emptyset$, $\Hom(H,U)=1$. Thus since the left hand side is at least $1+\frac{\delta}{2}$, there must be some $H_0\subseteq K$, $H_0\neq\emptyset$ (with $H_0$ possibly depending on $p$) such that
\[\Hom(H_0,|U|)\geq\Hom(H_0,U)=\Omega(p^{e(H_0)}).\]
By conditions (1) and (3) of Definition \ref{nicegraphonsdef}, the conditions of Corollary \ref{simpleholdercor} hold. Therefore, (since we chose $\epsilon=\Theta(1)$)
\[p^{e(H_0)}=O\left(p^{v(H_0)-2c(H_0)}I_p(W)^{c(H_0)}\right).\]
Rearranging,
\[I_p(W)^{c(H_0)}\gtrsim p^{e(H_0)-v(H_0)+2c(H_0)},\]
so
\begin{align*}
I_p(W) & \gtrsim p^{2+\frac{e(H_0)-v(H_0)}{c(H_0)}} \\ & \geq p^{2+\gamma(K)},
\end{align*}
as $\gamma(K)$ is the maximum of $\frac{e(H)-v(H)}{c(H)}$ over all subgraphs $H\subseteq K$. This completes the proof.
\end{proof}
\section{Examples}\label{examplessection}
We will demonstrate how to prove Theorem \ref{simpleholderresult} for two specific graphs $H$, in order to demonstrate the proof techniques we will use to prove the result in general, and in particular to demonstrate our specific use of H\"{o}lder's Inequality.

\begin{example}\label{butterflyexample}
Let $H_1$ be the butterfly graph; that is, two triangles joined at a vertex.
\end{example}
\begin{proof}[Proof of Theorem \ref{simpleholderresult} for $H=H_1$]
Label the unique degree-$4$ vertex of $H_1$ $v$, and let $w_1,w_2,w_3,w_4$ be the other vertices such that $w_1w_2$ and $w_3w_4$ are edges of $H_1$.

We compute $c(H_1)$. It is clear that $c(H_1)\leq\frac{5}{2}$, as weighting all vertices $\frac{1}{2}$ is a valid fractional vertex cover of $H_1$.

For any fractional vertex cover $(c_v,c_{w_1},c_{w_2},c_{w_3},c_{w_4})$, we have that $2(c_v+c_{w_1}+c_{w_2})=(c_v+c_{w_1})+(c_v+c_{w_2})+(c_{w_1}+c_{w_2})\geq 3$, as $vw_1w_2$ is a triangle. Since $w_3w_4$ is an edge, we also know that $c_{w_3}+c_{w_4}\geq 1$. Therefore, $c_v+c_{w_1}+c_{w_2}+c_{w_3}+c_{w_4}\geq\frac{5}{2}$, so $c(H_1)\geq\frac{5}{2}$. Since our upper and lower bounds agree, $c(H_1)=\frac{5}{2}$. Notice that $v(H)=2c(H)$.

Thus to prove Theorem \ref{simpleholderresult}, we must show that if $W$ is a graphon satisfying $(1-\epsilon)p\leq\displaystyle\int_0^1 W(x_0,y)dy\leq (1+\epsilon)p$ for all $x_0\in [0,1]$ and $U=W-p$,
\begin{equation}\label{H1eq}
\Hom(H_1,U)\leq\mathbb{E}(|U|)^{\frac{5}{2}}.
\end{equation}
It turns out that for this choice of $H=H_1$, the `approximate regularity' condition will not be necessary, and in fact we will show the stronger statement that \ref{H1eq} holds for any symmetric measurable function $U:[0,1]^2\to [-1,1]$. Take $U$ to be such a function. We may assume $U$ is positive-valued, as both sides of the desired inequality only depend on $|U|$. We first write out
\[\Hom(H_1,U)=\displaystyle\int_{[0,1]^5}U(x_{w_1},x_{w_2})U(x_{w_3},x_{w_4})\displaystyle\prod_{i=1}^4 U(x_v,x_{w_i})dx_v\displaystyle\prod_{i=1}^4 dx_{w_i}.\]
We will apply H\"{o}lder's inequality at one vertex at a time of $H_1$. Our weights will be `given' by the fractional perfect matching where the edges $w_1w_2$ and $w_3w_4$ have weight $\frac{3}{4}$ and the edges $vw_i$, $1\leq i\leq 4$ have weight $\frac{1}{4}$. (The exact way of turning an edge weighting into an application of H\"{o}lder's inequality will be given by Theorem \ref{graphholderthm}.)

It will be useful to define $S_a(x)=\left(\displaystyle\int_0^1 U(x,y)^a dy\right)^{\frac{1}{a}}$ for $a\geq 1$. Notice that $\| S_a\|_a=\| U\|_a$.

We first apply H\"{o}lder's inequality at vertices $w_1$ and $w_2$. We first break out those vertices into an inner integral, writing
\begin{align*}\Hom(H_1,U) & =\displaystyle\int_{[0,1]^3}U(x_v,x_{w_3})U(x_v,x_{w_4})U(x_{w_3},x_{w_4}) \\ &  \cdot\left(\displaystyle\int_0^1 U(x_v,x_{w_2})\left(\displaystyle\int_0^1 U(x_v,x_{w_1})U(x_{w_1},x_{w_2})dx_{w_1}\right) dx_{w_2}\right)dx_v dx_{w_3}dx_{w_4}.\end{align*}
By H\"{o}lder's inequality, for any fixed $x_v,x_{w_2}$,
\begin{align*}
\displaystyle\int_0^1 U(x_v,x_{w_1})U(x_{w_1},x_{w_2})dx_{w_1} & \leq\left(\displaystyle\int_0^1 U(x_v,x_{w_1})^4 dx_{w_1}\right)^{\frac{1}{4}}\left(\displaystyle\int_0^1 U(x_{w_2},x_{w_1})^{\frac{4}{3}} dx_{w_1}\right)^{\frac{3}{4}} \\ & =S_4(x_v)\cdot S_{\frac{4}{3}}(x_{w_2})
\end{align*}
Substituting, we have
\begin{align*}\Hom(H_1,U) & =\displaystyle\int_{[0,1]^3}U(x_v,x_{w_3})U(x_v,x_{w_4})U(x_{w_3},x_{w_4})S_4(x_v) \\ &  \cdot\left(\displaystyle\int_0^1 U(x_v,x_{w_2})S_{\frac{4}{3}}(x_{w_2}) dx_{w_2}\right)dx_v dx_{w_3}dx_{w_4}.\end{align*}
Applying H\"{o}lder again,
\begin{align*}
\displaystyle\int_0^1 U(x_v,x_{w_2})S_{\frac{4}{3}}(x_{w_2}) dx_{w_2} & \leq\left(\displaystyle\int_0^1 U(x_v,x_{w_2})^4\right)^{\frac{1}{4}}\left(\displaystyle\int_0^1 S_{\frac{4}{3}}(x_{w_2})^{\frac{4}{3}}\right)^{\frac{3}{4}} \\ & =S_4(x_v)\| S_{\frac{4}{3}}\|_{\frac{4}{3}} \\ & =S_4(x_v)\cdot \| U\|_{\frac{4}{3}}.
\end{align*}
Substituting again,
\[\Hom(H_1,U)\leq\| U\|_{\frac{4}{3}}\displaystyle\int_{[0,1]^3}U(x_v,x_{w_3})U(x_v,x_{w_4})U(x_{w_3},x_{w_4})S_4(x_v)^2dx_v dx_{w_3}dx_{w_4}.\]
Breaking out $w_3$ and $w_4$ and applying H\"{o}lder in the same way, we see that for all $x_v\in [0,1]$,
\begin{align*}
\displaystyle\int_{[0,1]^2}U(x_v,x_{w_3})U(x_v,x_{w_4})U(x_{w_3},x_{w_4})dx_{w_3}dx_{w_4} & =\displaystyle\int_0^1U(x_v,x_{w_4})\left(\displaystyle\int_0^1 U(x_v,x_{w_3})U(x_{w_3},x_{w_4})\right)dx_{w_3}dx_{w_4} \\ & \leq\displaystyle\int_0^1 U(x_v,x_{w_4})S_4(x_v)S_{\frac{4}{3}}(x_{w_4})dx_{w_4} \\ & =S_4(x_v)\displaystyle\int_0^1 U(x_v,x_{w_4})S_{\frac{4}{3}}(x_{w_4})dx_{w_4} \\ & \leq S_4(x_v)^2 \| S_{\frac{4}{3}}\|_{\frac{4}{3}} \\ & =S_4(x_v)^2\| U\|_{\frac{4}{3}}.
\end{align*}
Substituting in for the final time, and applying H\"{o}lder again at the vertex $v$,
\begin{align*}
\Hom(H_1,U) & \leq \| U\|_{\frac{4}{3}}^2\displaystyle\int_0^1 S_4(x_v)^4 dx_v \\ & \leq \| U\|_{\frac{4}{3}}^2 \| S_4\|_4^4 \\ & =\| U\|_{\frac{4}{3}}^2\| U\|_4^4.
\end{align*}
Note the correspondence between this and the fractional perfect matching we gave at the beginning of the argument. We started with two edges weighted $\frac{3}{4}$ and four edges weighed $\frac{1}{4}$ and we ended with two copies of $\| U\|_{\frac{4}{3}}$ and four copies of $\| U\|_4$.

All that is left is to note that for all $a\geq 1$, $\| U\|_a\leq\| U\|_1^{\frac{1}{a}}$, as $U$ is $1$-bounded. Therefore,
\begin{align*}
\Hom(H_1,U) & \leq\| U\|_{\frac{4}{3}}^2\| U\|_4^4 \\ & \leq \left(\|U\|_1^{\frac{3}{4}}\right)^2\left(\| U\|_1^{\frac{1}{4}}\right)^4 \\ & =\| U\|_1^{\frac{5}{2}},
\end{align*}
completing the proof. Notice that the power of $\| U\|_1$ we obtained was given by adding the edge weights from earlier, yielding $c(H_1)$.
\end{proof}
As discussed, we are implicitly using an edge weighting of the graph $H_1$ to determine our application of H\"{o}lder. It is useful that $H_1$ has a fractional perfect matching. We now consider a case where there is no such fractional perfect matching.
\begin{example}\label{K23example}
Let $H_2=K_{2,3}$.
\end{example}
\begin{proof}[Proof of Theorem \ref{simpleholderresult} for $H=H_2$]
Let the two vertices of degree $3$ be called $v_1,v_2$ and the three vertices of degree $2$ be called $w_1,w_2,w_3$. Since giving $v_1,v_2$ weight $1$ and $w_1,w_2,w_3$ weight $0$ yields a fractional vertex cover, $c(H_2)\leq 2$. For any fractional vertex cover $(c_v)$, $c_{v_1}+c_{v_2}+c_{w_1}+c_{w_2}\geq 2$, because of the constraints given by the two edges $v_1w_1$ and $v_2w_2$, so in fact $c(H_2)=2$. This implies that $v(H_2)-2c(H_2)=1$.

Take $W$ satisfying the conditions of Theorem \ref{simpleholderresult}; that is, for some $0\leq\epsilon\leq 1$, $(1-\epsilon)p\leq\displaystyle\int_0^1 W(x_0,y)dy\leq (1+\epsilon)p$ for all $x_0\in [0,1]$. Let $U=W-p$. We would like to show that
\[\Hom(H_2,|U|)\leq (2+\epsilon)p\mathbb{E}(|U|)^2.\]

We may write

\begin{align*}
\Hom(H_2,|U|) & =\displaystyle\int_{[0,1]^5}\left(\displaystyle\prod_{i=1}^2\displaystyle\prod_{j=1}^3 |U(x_{v_i},x_{w_j})|\right)dx_{v_1}dx_{v_2}dx_{w_1}dx_{w_2}dx_{w_3} \\ & =\displaystyle\int_{[0,1]^2}\displaystyle\prod_{j=1}^3\left(\displaystyle\int_0^1 |U(x_{v_1},x_{w_j})U(x_{v_2},x_{w_j})|dx_{w_j}\right)dx_{v_1}dx_{v_2}.
\end{align*}
Now, for each inner integral, we may apply Cauchy-Schwarz to say that
\[\displaystyle\int_0^1 |U(x_{v_1},x_{w_j})U(x_{v_2},x_{w_j})|dx_{w_j}\leq S_2(x_{v_1})S_2(x_{v_2}),\]
where we define (similarly to before) $S_a(x)=\left(\displaystyle\int_0^1 |U(x,y)|^a dy\right)^{\frac{1}{a}}$.

Here we are (in the language of the previous example) implicitly using the fractional edge cover with $w_e=\frac{1}{2}$ for all edges $e$, and applying H\"{o}lder (which is here just Cauchy-Schwarz) using those edge weights at the vertices $w_1$, $w_2$, and $w_3$. However, this edge cover is not a fractional perfect matching, which will pose an issue when we move to $v_1$ and $v_2$, where the edge weights do not sum to $1$.

Substituting our bound on the inner integral, we see that

\[\Hom(H_2,|U|)\leq\displaystyle\int_{[0,1]^2} S_2(x_{v_1})^3 S_2(x_{v_2})^3 dx_{v_1}dx_{v_2}=\displaystyle\int_0^1 S_2(x_{v_1})^3dx_{v_1}\displaystyle\int_0^1 S_2(x_{v_2})^3dx_{v_2}.\]

The difference between this example and the previous is that we now have an expression of the form $S_2^3$, whereas in the previous we only had expressions of the form $S_a^a$. This is equivalent to the fact the edge weights do not sum to $1$ at $v_1$ and $v_2$.

However, we solve this problem by noting that we in fact can bound $\| S_2\|_{\infty}$, so we can `pull out' a factor of $S_2$, replacing it with its upper bound. Specifically, for all $x\in [0,1]$, since $\| U\|_{\infty}\leq 1$,
\begin{align*}
S_2(x) & =\left(\displaystyle\int_0^1 |U(x,y)|^2 dy\right)^{\frac{1}{2}} \\ & \leq\left(\displaystyle\int_0^1 |U(x,y)| dy\right)^{\frac{1}{2}} \\ & =\left(\displaystyle\int_0^1 |W(x,y)-p| dy\right)^{\frac{1}{2}} \\ & \leq\left(\displaystyle\int_0^1 (W(x,y)+p)\right)^{\frac{1}{2}} \\ & \leq ((2+\epsilon)p)^{\frac{1}{2}},
\end{align*}
by our degree condition $\displaystyle\int_0^1 W(x,y)dy\leq (1+\epsilon)p$ for all $x\in [0,1]$.
Therefore,
\begin{align*}
\Hom(H_2,U) & \leq\displaystyle\int_0^1 S_2(x_{v_1})^3dx_{v_1}\displaystyle\int_0^1 S_2(x_{v_2})^3dx_{v_2} \\ & \leq (2+\epsilon)p\displaystyle\int_0^1 S_2(x_{v_1})^2dx_{v_1}\displaystyle\int_0^1 S_2(x_{v_2})^2dx_{v_2} \\ & =(2+\epsilon)p\| S_2\|_2^4 \\ & =(2+\epsilon)p\| U\|_2^4 \\ & \leq (2+\epsilon)p\| U\|_1^2,
\end{align*}
as $U$ is $1$-bounded. Since $\| U\|_1=\mathbb{E}(|U|)$, this completes the proof in this example.
\end{proof}
Notice that in this last example, at each $v_i$ we had one copy of $S_2$ coming from applying Cauchy-Schwarz at each of the three vertices $w_1,w_2,w_3$. We then eliminated one of those copies by replacing it with its upper bound and pulling it out of the integral; say, the copy coming from $w_3$. Thus when we are looking at the vertices $v_1$ and $v_2$, we are implicitly not using the weighting $w_e=\frac{1}{2}$ $\forall e$ from earlier, but instead using the fractional matching
\[w_e=\begin{cases}\frac{1}{2} & e\in\{v_1w_1,v_1w_2,v_2w_1,v_2w_2\} \\ 0 & e\in\{v_1w_3,v_2w_3\}\end{cases}.\]
This switching between a minimum fractional edge cover and a maximum fractional matching to make the weights sum to $1$ at the appropriate vertices by 	`pulling out' copies of $S_i$ is a key concept in the general case, as we will see in Theorem \ref{graphholderthm}.
\section{H\"older's Inequality}\label{holdersection}
The goal of this section will be to prove our main technical engine, which will be a generalized H\"older's inequality. We begin by citing a generalized H\"older's inequality that has appeared in several previous works.
\begin{thm}\label{genholder}[Theorem 2.1 of \cite{Holder}, restated as in Theorem 4.1 of \cite{BGLZ}]
Let $m,n\in\mathbb{Z}^+$. Take $A_1,\ldots,A_m\subseteq [n]$ and $p_1,\ldots,p_m\in\mathbb{R}^{\geq 1}\cup\{\infty\}$. Let $\Omega_1,\ldots,\Omega_n$ be spaces with associated probability measures $\mu_1,\ldots,\mu_n$.For each $i\in [m]$, let $\Omega_{A_i}=\displaystyle\prod_{j\in A_i}\Omega_j$ and $\mu_{A_i}=\displaystyle\prod_{j\in A_i}\mu_j$ for $i\in [m]$, and take some $f_i\in L^{p_i}(\Omega_{A_i},\mu_{A_i})$.

Suppose that $\displaystyle\sum_{i:A_i\ni j}\frac{1}{p_i}\leq 1$ for all $j\in [n]$ (where we take $\frac{1}{\infty}=0$). Then
\[\displaystyle\int_{\prod\Omega_j}\left(\displaystyle\prod_{i=1}^m f_i\right)d\mu_1\cdots d\mu_n\leq \displaystyle\prod_{i=1}^m \|f_i\|_{p_i},\]
where $\|\cdot\|_a$ denotes the $L^a$ norm.
\end{thm}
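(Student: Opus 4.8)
The plan is to prove Theorem \ref{genholder} by induction on $n$, peeling off one coordinate space at a time and reducing, at each step, to the ordinary multi-function H\"older inequality on a single probability space. I would first make two harmless reductions: replacing each $f_i$ by $|f_i|$ only increases the left-hand integral and does not change the right-hand product, so I may assume every $f_i \ge 0$; and if some $\|f_i\|_{p_i} = \infty$ there is nothing to prove. I would also record the single-space statement that does all the real work: if $h_1,\dots,h_k \ge 0$ live on a probability space $(\Omega,\mu)$ and $q_1,\dots,q_k \in [1,\infty]$ satisfy $\sum_i 1/q_i \le 1$, then $\int_\Omega \prod_i h_i\, d\mu \le \prod_i \|h_i\|_{q_i}$. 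When $\sum 1/q_i = 1$ this is the classical H\"older inequality, obtained by iterating the two-function version; when $\sum 1/q_i < 1$ one appends the constant function $h_{k+1}\equiv 1$ with exponent $q_{k+1}$ defined by $1/q_{k+1} = 1-\sum_{i\le k}1/q_i$, and uses $\|1\|_{q_{k+1}} = \mu(\Omega)^{1/q_{k+1}} = 1$. This last point is the one place where the probability (rather than merely $\sigma$-finite) hypothesis is genuinely used.

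For the inductive step (from $n-1$ to $n$), split the index set as $[m] = I \sqcup J$ with $I = \{i : n \in A_i\}$ and $J = \{i : n \notin A_i\}$, and write the integral over $\prod_{j\le n}\Omega_j$ as an iterated integral with $\Omega_n$ innermost: $\int \prod_i f_i = \int_{\prod_{j\le n-1}\Omega_j} \big(\prod_{i\in J} f_i\big)\big(\int_{\Omega_n}\prod_{i\in I} f_i\, d\mu_n\big)$, which is valid by Tonelli since everything is nonnegative. The hypothesis at $j=n$ says $\sum_{i\in I}1/p_i \le 1$, so for each fixed value of the remaining coordinates the single-space lemma bounds the inner integral by $\prod_{i\in I} g_i$, where $g_i := \big(\int_{\Omega_n} f_i^{p_i}\, d\mu_n\big)^{1/p_i}$ is a nonnegative function on $\Omega_{A_i\setminus\{n\}}$ (interpreted as the $\Omega_n$-essential supremum when $p_i = \infty$). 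Substituting, the left side is at most $\int_{\prod_{j\le n-1}\Omega_j}\big(\prod_{i\in J} f_i\big)\big(\prod_{i\in I} g_i\big)$, which is again an integral over $n-1$ probability spaces of a product of functions indexed by $[m]$: the $i$-th factor lives on $A_i' := A_i$ for $i\in J$ and on $A_i' := A_i\setminus\{n\}$ for $i\in I$, with the same exponents $p_i$. For every $j \le n-1$ one has $A_i'\ni j \iff A_i\ni j$, so $\sum_{i:A_i'\ni j}1/p_i \le 1$ is inherited from the original constraint; and by Tonelli $\|g_i\|_{p_i} = \|f_i\|_{p_i}$ (with the $p_i=\infty$ case being an essential supremum of an essential supremum). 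The inductive hypothesis then gives the bound $\big(\prod_{i\in J}\|f_i\|_{p_i}\big)\big(\prod_{i\in I}\|g_i\|_{p_i}\big) = \prod_{i=1}^m \|f_i\|_{p_i}$. The base case $n=0$ (equivalently $n=1$) is precisely the single-space statement, so the induction closes.

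I do not expect a serious obstacle here; the content is entirely in organizing the peeling-off induction so that the exponent condition and the norm identities are preserved at each stage. The two points that require a little care are the bookkeeping for $p_i = \infty$ — checking that $g_i$ and the identity $\|g_i\|_{p_i} = \|f_i\|_{p_i}$ still make sense and that measurability is preserved under taking partial essential suprema — and the justification of the single-space multi-function H\"older inequality when the reciprocal exponents sum to strictly less than $1$, which is where the probability-measure assumption enters.
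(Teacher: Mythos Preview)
The paper does not prove this statement at all: Theorem~\ref{genholder} is quoted as a known result from the literature (Finner~\cite{Holder}, restated in~\cite{BGLZ}) and is used as a black box in the proof of Theorem~\ref{graphholderthm}. Your induction-on-$n$ argument, peeling off one coordinate and applying the single-space H\"older inequality at each stage, is correct and is essentially the standard proof of Finner's inequality; there is nothing in the paper to compare it against.
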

This result is along the lines of the statement we would like to obtain. Indeed, letting taking $[n]$ in Theorem \ref{genholder} to be the vertex set of $H$, the $A_i$ to be the edges of $H$, $f_i=U$ for all $i$, and finally taking the $\frac{1}{p_i}$ to be some fractional matching of $H$, we do obtain some upper bound on $\Hom(H,U)$.

However, note that if we apply Theorem \ref{genholder} to Example \ref{K23example}, and use the fractional matching mentioned there (with four edges of weight $\frac{1}{2}$ and two of weight $0$), we obtain
\[\Hom(H_2,U)\leq \|U\|_2^4,\]
weaker by about a factor of $p$ than the bound $\Hom(H_2,U)\leq (2+\epsilon)p\|U_2\|_2^4$ we obtained there. The reason is that Theorem \ref{genholder} does not apply the step where we `pulled out' one of the factors of $S_2$, and replaced it by its upper bound. Thus we need to bootstrap Theorem \ref{genholder} to a stronger result which does apply that step, which we now state and prove.
\begin{thm}\label{graphholderthm}
Let $H$ be a graph with no isolated vertices. For all $v\in V(H)$, let $B_v\subseteq [0,1]$ be a measurable subset, and for all $e\in E(H)$, $e=\{v,v'\}$, take $f_e:B_v\times B_{v'}\to\mathbb{R}$ bounded and measurable. Let $(w_e)_{e\in E(H)}$ be a maximum fractional matching and let $(w'_e)_{e\in E(H)}$ be a minimum fractional edge cover such that $w'_e\geq w_e$ for all $e$. Let $S'=\left\{v\in V(H):\displaystyle\sum_{e\ni v}w'_e>1\right\}$.

For $a\in\mathbb{R}^+\cup\{\infty\}$ and $e=\{v,v'\}$, let $S_v^af_e:B_v\to [0,1]$ be given by $S_v^af_e(x_{v})=\left(\displaystyle\int_{B_{v'}}\left|f_e(x_v,x_{v'})\right|^adx_{v'}\right)^{\frac{1}{a}}$ when $a\in\mathbb{R}^+$ and $\displaystyle\esssup_{x_{v'}\in B_{v'}}|f_e(x_v,x_{v'})|$ when $a=\infty$; that is, the outputs of $S_v^af_e$ are the $L^a$ norms of the $v$-columns of $f_e$.

Define $\|f_e\|_{v,a}:=\displaystyle\esssup_{x_v\in B_v}S_v^af_e(x_v)$ for all $a\in\mathbb{R}^+\cup\{\infty\}$. Then we have the following generalized H\"{o}lder inequality.

\begin{equation}\label{graphholder}
\displaystyle\int_{\prod B_v}\displaystyle\prod_{e=\{v,v'\}\in E(H)}f_e(x_v,x_{v'})\prod dx_v\leq\left(\displaystyle\prod_{v\in S'}\displaystyle\prod_{e\ni v}\|f_e\|_{v,\frac{1}{w'_e}}^{\frac{w'_e-w_e}{w'_e}}\right)\displaystyle\prod_{e\in E(H)}\|f_e\|_{\frac{1}{w'_e}}^{\frac{w_e}{w'_e}},
\end{equation}
where we take $\frac{1}{0}=\infty$ and when $w'_e=w_e=0$ we take $\frac{w'_e-w_e}{w'_e}=0$ and $\frac{w_e}{w'_e}=1$.
\end{thm}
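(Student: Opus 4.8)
The plan is to deduce \eqref{graphholder} from the generalized H\"older inequality of Theorem~\ref{genholder} by an iterative argument that processes the vertices of $H$ one at a time, just as in the worked Examples~\ref{butterflyexample} and~\ref{K23example} but with the bookkeeping organized uniformly. First I would make some harmless reductions. Both sides of \eqref{graphholder} depend only on $|f_e|$, so assume each $f_e\geq 0$; note also that the hypothesis $S_v^af_e\colon B_v\to[0,1]$ forces $\|f_e\|_\infty\leq 1$, which will be used below. An edge $e$ with $w'_e=0$ (hence $w_e=0$) contributes exactly $\|f_e\|_\infty$ to the right-hand side under the stated conventions, so I bound such an $f_e$ pointwise by $\|f_e\|_\infty$, delete $e$ from $H$, and integrate any vertex thereby made isolated against $\int_{B_v}dx_v\leq 1$; thus I may assume $w'_e>0$ for every $e$. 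Extending each $f_e$ by zero to $[0,1]^2$ then lets me apply Theorem~\ref{genholder} over honest probability spaces.

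The one structural input needed is that $S'$ is an independent set of $H$: if an edge $e$ with $w'_e>0$ had both endpoints over-covered (i.e.\ in $S'$), then decreasing $w'_e$ slightly would keep $w'$ a fractional edge cover of strictly smaller weight, contradicting minimality; in particular $w'_e\leq 1$ for every $e$, so all the exponents $1/w'_e,1/w_e$ that appear lie in $[1,\infty]$. I also use the trivial facts $\sum_{e\ni v}w'_e=1$ for $v\notin S'$ and $\sum_{e\ni v}w_e\leq 1$ for all $v$ (with Lemma~\ref{edgeweightslemma} available if a duality statement is needed). Consequently every edge has at least one endpoint outside $S'$. Now order $V(H)$ so that all vertices outside $S'$ precede all vertices of $S'$, and process them in this order, integrating out one variable at a time. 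The invariant is that after processing a prefix of the ordering, the integrand is a product of already-extracted scalar norm factors times $\prod_e g_e$, where for each still-active edge $e=\{v,v'\}$ the factor $g_e$ equals $f_e(x_v,x_{v'})$ if neither endpoint has been processed and equals the column function $x_u\mapsto S_u^{1/w'_e}f_e(x_u)$ (for the unprocessed endpoint $u$) if exactly one endpoint has been processed.

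When I process a vertex $v\notin S'$ I apply Theorem~\ref{genholder} in the single variable $x_v$ with exponents $1/w'_e$ for the edges $e\ni v$ --- legitimate since $\sum_{e\ni v}w'_e=1$; each such edge either becomes a column function at its other endpoint (first touch), or, if that endpoint was already processed, collapses to the scalar $\|f_e\|_{1/w'_e}$ via $\|S_u^{1/w'_e}f_e\|_{1/w'_e}=\|f_e\|_{1/w'_e}$. When I finally process a vertex $v\in S'$, independence guarantees that all its neighbours lie outside $S'$ and have already been processed, so every edge $e\ni v$ is currently the column function $S_v^{1/w'_e}f_e(x_v)$; I split it pointwise as
\[
S_v^{1/w'_e}f_e(x_v)\ \leq\ \|f_e\|_{v,1/w'_e}^{(w'_e-w_e)/w'_e}\,\bigl(S_v^{1/w'_e}f_e(x_v)\bigr)^{w_e/w'_e},
\]
pull the constant first factors out of the integral, and apply Theorem~\ref{genholder} in $x_v$ to the product of the remaining factors $\bigl(S_v^{1/w'_e}f_e\bigr)^{w_e/w'_e}$ with exponents $1/w_e$ --- legitimate since $\sum_{e\ni v}w_e\leq 1$ --- which contributes $\prod_{e\ni v}\|f_e\|_{1/w'_e}^{w_e/w'_e}$. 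Multiplying together everything extracted at all vertices gives $\prod_e\|f_e\|_{1/w'_e}^{w_e/w'_e}\cdot\prod_{v\in S'}\prod_{e\ni v}\|f_e\|_{v,1/w'_e}^{(w'_e-w_e)/w'_e}$: each edge with an endpoint in $S'$ contributes exactly its two named factors, while each edge disjoint from $S'$ contributes $\|f_e\|_{1/w'_e}\leq\|f_e\|_{1/w'_e}^{w_e/w'_e}$, using $\|f_e\|_\infty\leq 1$ and $w_e\leq w'_e$. This is the right-hand side of \eqref{graphholder}.

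The substance of the proof is the bookkeeping, and that is where I expect the main difficulty to lie: checking that the invariant is genuinely maintained step by step, that the exponents chosen at each vertex satisfy the hypothesis of Theorem~\ref{genholder}, that the identity $\|S_u^af_e\|_a=\|f_e\|_a$ and its partial version (producing $\|f_e\|_{1/w'_e}^{w_e/w'_e}$ out of $(S_v^{1/w'_e}f_e)^{w_e/w'_e}$ integrated against exponent $1/w_e$) are applied correctly, and that the degenerate cases $w_e=0$ --- where $1/w_e=\infty$ and the whole factor is pulled out as $\|f_e\|_{v,1/w'_e}$ --- and $w'_e=w_e=0$ agree with the conventions declared in the statement. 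The only genuinely non-formal step is the independence of $S'$, which is precisely what allows the $S'$-vertices to be handled cleanly at the very end.
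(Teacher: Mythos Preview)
Your argument is correct, and it follows a route that is recognisably the same strategy as the paper's --- iterated H\"older with the $w'$ weights, then a ``pull out the excess'' step, then H\"older with the $w$ weights --- but organised around a different set. The paper introduces $S=\{v:\sum_{e\ni v}w_e<1\}$, proves three structural claims ($S$ is independent, $S\cap S'=\emptyset$, and $w'_e=w_e$ unless $e$ has one endpoint in $S$ and one in $S'$), integrates the $S$-vertices out first with the $w'$ weights, pulls out the sup-norm factors, and then applies Theorem~\ref{genholder} in one shot on $V(H)\setminus S$ with the $w$ weights. You instead work with $S'$ only, prove just that $S'$ is independent, process the non-$S'$ vertices one at a time with the $w'$ weights, and process the $S'$ vertices last with the $w$ weights after the split. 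Your bookkeeping is simpler --- one structural fact instead of three --- but you pay for it at the very last step: for an edge $e$ disjoint from $S'$ you obtain $\|f_e\|_{1/w'_e}$ rather than the target $\|f_e\|_{1/w'_e}^{w_e/w'_e}$, and you close the gap with the inequality $\|f_e\|_{1/w'_e}\leq\|f_e\|_{1/w'_e}^{w_e/w'_e}$ using $\|f_e\|_\infty\leq 1$. The paper instead proves (its Claim~3) that $w_e=w'_e$ for every such edge, so the two exponents already agree and no boundedness assumption is needed. In the paper's applications one always has $\|f_e\|_\infty\leq 1$, and the codomain declaration $S_v^af_e:B_v\to[0,1]$ in the statement does force it, so your reading is legitimate; but be aware that the paper's own proof does not use that bound, and if you want the theorem in the stated generality (``bounded'' rather than ``$1$-bounded'') you would need to insert the observation that $w_e=w'_e$ off $S'$, which follows quickly from maximality of $(w_e)$ and minimality of $(w'_e)$.
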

\begin{remark}
We have stated the result above in the full generality we will eventually need for the proof of Theorem \ref{varcorrectconstant}. However, for the purposes of Theorem \ref{simpleholderresult} (and thus Theorem \ref{varcorrectexponent}) it suffices to consider $f_e=U$ for all $e\in E(H)$ and $B_v=[0,1]$ for all $v\in V(H)$. The left side of (\ref{graphholder}) then simply becomes $\Hom(H,U)$.

In relation to Example \ref{K23example}, the first term on the right side of (\ref{graphholder}) consists of the copies of $S_i$ (here $S_v^a$) we are pulling out of the integral, and the second term gives us the remaining norms after we have repeatedly applied H\"{o}lder's inequality.

We now have the tools to motivate why graphs with bad edges will cause problems for us when we want to be more precise than logarithmic. In the substitution above ($f_e=U$ for all $e\in E(H)$), the right hand side of (\ref{graphholder}) will contain the product of many terms of the form $\|U\|_{1/w'_e}^{w_e/w'_e}=\left(\int |U|^{1/w'_e}\right)^{w_e}$. If $w'_e<1$, then we can use Lemma \ref{momentlemma} in order to bound this quantity using the entropy $I_p(U+p)=I_p(W)$. Since the equality case of Lemma \ref{momentlemma} is at $x=1$, this will imply that our graphons should take value $1$ in the relevant sections; that is, they should correspond to planting a graph. However, if $w'_e=1$, then we must instead use Lemma \ref{firstmomentlemma} to bound, which loses a logarithmic factor compared to Lemma \ref{momentlemma}, and its equality case occurs when $x$ is on the order of $p$, corresponding to a strategy of `raising the density' of a component of the graphon.

If $H$ has a bad edge $e$, then $w_e=w'_e=1$ by the definition of bad edge (and the fact that $w_e\leq w'_e$), which causes problems for the reasons just described. However, if $H$ has no bad edges, it turns out that a converse holds in the relevant cases (see Lemma \ref{edgeweightslemma2}), and so we will be able to assume $w'_e<1$ for all $e$, making our job much easier.
\end{remark}
We now proceed to prove the inequality.
\begin{proof}[Proof of Theorem \ref{graphholderthm}]
Let $S=\left\{v\in V(H):\displaystyle\sum_{e\ni v}w_e<1\right\}$. We first show three useful claims.
\setcounter{claimcounter}{0}
\begin{claim}\label{noSedges}
There are no edges of $H$ between vertices of $S$.
\end{claim}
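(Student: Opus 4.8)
The statement to prove is Claim~\ref{noSedges}: with $S=\{v\in V(H):\sum_{e\ni v}w_e<1\}$ where $(w_e)$ is a maximum fractional matching, there are no edges of $H$ between two vertices of $S$.

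The plan is to argue by contradiction: suppose $e_0=\{u,v\}\in E(H)$ with $u,v\in S$, so that $\sum_{e\ni u}w_e<1$ and $\sum_{e\ni v}w_e<1$. The natural move is to push more weight onto the edge $e_0$ itself. Concretely, let $\eta=\min\bigl(1-\sum_{e\ni u}w_e,\ 1-\sum_{e\ni v}w_e\bigr)>0$ and define a new weighting $w'_{e_0}=w_{e_0}+\eta$, $w'_e=w_e$ for $e\neq e_0$. I would then check that $(w'_e)$ is still a fractional matching: the only vertex constraints that change are those at $u$ and $v$, and by the choice of $\eta$ we have $\sum_{e\ni u}w'_e=\sum_{e\ni u}w_e+\eta\leq 1$ and likewise at $v$; all other vertex sums are unchanged, hence still $\leq 1$, and all weights remain nonnegative. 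But then $\sum_{e\in E(H)}w'_e=\sum_{e\in E(H)}w_e+\eta=c(H)+\eta>c(H)$, contradicting the fact (guaranteed by LP duality, as recorded in the remark after the fractional-matching definitions and to be proved in Lemma~\ref{edgeweightslemma}) that $c(H)$ is the \emph{maximum} value of $\sum_e w_e$ over all fractional matchings. This contradiction shows no such edge $e_0$ exists.

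There is essentially no obstacle here — the argument is a one-line perturbation of the maximum fractional matching. The only thing to be slightly careful about is that I am using the characterization of $c(H)$ as the max-weight fractional matching value (the LP-duality statement flagged in the paper); since the excerpt explicitly says this will be established in Lemma~\ref{edgeweightslemma}, I may invoke it freely. One could alternatively phrase the contradiction without naming $c(H)$ at all: simply say $(w'_e)$ is a fractional matching of strictly larger total weight than the maximum fractional matching $(w_e)$, which is absurd by definition of ``maximum.'' Either phrasing works; I would use the latter to keep the claim self-contained.

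I expect this claim to be used, in the body of the proof of Theorem~\ref{graphholderthm}, to justify that when we ``break out'' a vertex $v\in S$ (equivalently, a vertex where we will pull out an $S_v^a$ factor and replace it by its $\|\cdot\|_{v,a}$ bound), its neighbors are not themselves being treated the same way — so the inductive/iterative application of Hölder's inequality at the vertices of $S$ does not create conflicting demands on a single edge. So after proving the claim I would note briefly that it guarantees the edges incident to $S$ form a well-defined collection with each such edge having exactly one endpoint in $S$, which is what makes the later bookkeeping (the product over $v\in S'$, $e\ni v$ in~(\ref{graphholder})) consistent.
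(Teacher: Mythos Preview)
Your proof is correct and takes essentially the same approach as the paper: both argue by contradiction that an edge $e_0$ between two vertices of $S$ could have its weight increased while preserving the fractional matching constraints, contradicting maximality. Your version is simply more explicit about the size of the perturbation $\eta$, whereas the paper just says ``increase $w_{e_0}$ slightly.''
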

\begin{claim}\label{SS'claim}
$S\cap S'=\emptyset$.
\end{claim}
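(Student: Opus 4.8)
The plan is to argue by contradiction. Suppose some $v\in S\cap S'$, so that $\sum_{e\ni v}w_e<1$ while $\sum_{e\ni v}w'_e>1$. I will show that this forces $w_e=w'_e$ for \emph{every} edge $e$ incident to $v$; summing over those edges then gives $\sum_{e\ni v}w_e=\sum_{e\ni v}w'_e$, which cannot be simultaneously $<1$ and $>1$. This contradiction shows $S\cap S'=\emptyset$.

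\textbf{Two saturation facts.} I would first record two facts obtained by local exchange. (a) Every neighbor $u$ of $v$ satisfies $\sum_{e\ni u}w_e=1$: otherwise, since $v$ is also unsaturated by $w$, one could increase $w_{\{v,u\}}$ by a positive amount and obtain a fractional matching of strictly larger total weight, contradicting maximality of $w$. (This is exactly Claim \ref{noSedges} applied at the vertex $v$, so it may be quoted directly rather than re-derived.) (b) Every neighbor $u$ of $v$ with $w'_{\{v,u\}}>0$ satisfies $\sum_{e\ni u}w'_e=1$: otherwise, since $v$ also has excess under $w'$, one could decrease $w'_{\{v,u\}}$ by a positive amount $t=\min(w'_{\{v,u\}},\,\sum_{e\ni v}w'_e-1,\,\sum_{e\ni u}w'_e-1)>0$ and still have a fractional edge cover, now of strictly smaller total weight, contradicting minimality of $w'$. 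Note (b) only uses minimality of $w'$ among \emph{all} fractional edge covers, so it is irrelevant that the perturbed cover may violate $w'\ge w$.

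\textbf{Finishing.} Fix any neighbor $u$ of $v$. If $w'_{\{v,u\}}=0$ then $0\le w_{\{v,u\}}\le w'_{\{v,u\}}=0$, so $w_{\{v,u\}}=w'_{\{v,u\}}$. If $w'_{\{v,u\}}>0$, then by (a) and (b) we have $\sum_{e\ni u}w_e=\sum_{e\ni u}w'_e=1$; since $w'_e\ge w_e$ for every edge $e$, each summand of $\sum_{e\ni u}(w'_e-w_e)=0$ is zero, so $w'_e=w_e$ for all $e\ni u$, and in particular $w'_{\{v,u\}}=w_{\{v,u\}}$. In every case $w$ and $w'$ agree on $\{v,u\}$, completing the argument as described above. (Alternatively, one can phrase this via linear programming duality, e.g. Lemma \ref{edgeweightslemma}: complementary slackness for the fractional matching LP forces $c_v=0$ in every minimum fractional vertex cover $c$ whenever $v\in S$, while complementary slackness for the fractional edge cover LP—whose dual optima are the vectors $\mathbf 1-c$—forces $c_v=1$ whenever $v\in S'$; these are incompatible.)

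\textbf{Expected main obstacle.} The content is genuinely short; the only care needed is bookkeeping in the two exchange steps, namely checking that the perturbed $w$ (resp.\ $w'$) is still a legitimate fractional matching (resp.\ fractional edge cover). Only the vertex constraints at $v$ and at the chosen neighbor $u$ are disturbed, and in each case both constraints have been shown to have the requisite slack, so the perturbation is admissible; one must also keep straight that the edge-cover exchange in (b) contradicts minimality over \emph{all} fractional edge covers, not just those dominating $w$.
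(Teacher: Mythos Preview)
Your proof is correct and uses essentially the same exchange ideas as the paper: neighbors of $v$ are $w$-saturated (Claim~\ref{noSedges}), and an edge with positive $w'$-weight whose endpoints both lie in $S'$ can be decreased, contradicting minimality of $w'$. The paper packages this slightly more directly---it picks a single edge $e_0\ni v$ with $w'_{e_0}>w_{e_0}$, shows the other endpoint lands in $S'$, and immediately decreases $w'_{e_0}$---whereas you establish the stronger intermediate fact that $w$ and $w'$ agree on every edge at $v$ before reading off the contradiction from the sums; but the core mechanism is identical. Your parenthetical LP-duality alternative is also valid, though note that Lemma~\ref{edgeweightslemma} is stated after this claim in the paper, so if you invoke it you are relying on independent standard duality facts rather than the paper's later lemma.
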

\begin{claim}\label{weightssameclaim}
$w'_e=w_e$ unless $e$ has one vertex in $S$ and one in $S'$.
\end{claim}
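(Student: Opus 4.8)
All three claims are established by the same mechanism: make a small local perturbation of one of the two extremal weightings and derive a contradiction with its extremality, using that $\sum_e w_e = c(H)$ is the \emph{maximum} of $\sum_e w_e$ over fractional matchings and $\sum_e w'_e = v(H)-c(H)$ is the \emph{minimum} over fractional edge covers (linear programming duality, Lemma \ref{edgeweightslemma}). I would prove them in the stated order, since Claim \ref{SS'claim} will use Claim \ref{noSedges} and Claim \ref{weightssameclaim} will use both.

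\textbf{Claim \ref{noSedges}.} Suppose $e=\{u,v\}\in E(H)$ with $u,v\in S$, so $\sum_{f\ni u}w_f<1$ and $\sum_{f\ni v}w_f<1$. Then increasing $w_e$ by a sufficiently small $\epsilon>0$ and leaving the other weights fixed still gives a fractional matching, because the only vertex constraints involving $e$ are at $u$ and $v$, both of which have strict slack; but this strictly increases $\sum_e w_e$, contradicting maximality.

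\textbf{Claim \ref{SS'claim}.} Suppose $v\in S\cap S'$. I will show $w'_e=w_e$ for \emph{every} edge $e\ni v$, which contradicts $\sum_{e\ni v}w'_e>1>\sum_{e\ni v}w_e$. Fix $e=\{v,u\}$. If $w'_e=0$ then $w_e=0$ too, since $0\le w_e\le w'_e$. If $w'_e>0$: first, $\sum_{f\ni u}w'_f=1$, for otherwise $\sum_{f\ni u}w'_f>1$ and, since also $\sum_{f\ni v}w'_f>1$ (as $v\in S'$), decreasing $w'_e$ slightly keeps a fractional edge cover of strictly smaller total, contradicting minimality; second, since $v\in S$ and $vu\in E(H)$, Claim \ref{noSedges} forces $u\notin S$, i.e.\ $\sum_{f\ni u}w_f=1$. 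Hence $\sum_{f\ni u}(w'_f-w_f)=1-1=0$ with all summands nonnegative (here is where $w'\ge w$ is used), so $w'_f=w_f$ for every $f\ni u$, in particular $w'_e=w_e$.

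\textbf{Claim \ref{weightssameclaim}.} It suffices to show that if $w'_e>w_e$ (so $w'_e>0$) for $e=\{u,v\}$, then one endpoint is in $S$ and the other in $S'$. Three observations combine. (i) If some endpoint, say $u$, lay in neither $S$ nor $S'$, then $\sum_{f\ni u}w_f=1=\sum_{f\ni u}w'_f$, so $\sum_{f\ni u}(w'_f-w_f)=0$ and thus $w'_e=w_e$, a contradiction; hence both endpoints lie in $S\cup S'$. (ii) By Claim \ref{noSedges} there is no edge with both endpoints in $S$. (iii) If both endpoints were in $S'$, then since $w'_e>0$ and both endpoint constraints have strict slack we could decrease $w'_e$ slightly, contradicting minimality of $(w'_e)$. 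Since $S\cap S'=\emptyset$ by Claim \ref{SS'claim}, the only case left is one endpoint in $S$ and one in $S'$.

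\textbf{Main obstacle.} There is no substantial difficulty here; the arguments are short perturbation checks. The one point requiring care is bookkeeping: keeping track of which extremal property (maximality of the matching versus minimality of the edge cover) is being contradicted in each perturbation, and invoking the hypothesis $w'_e\ge w_e$ precisely at the two places where a sum of nonnegative differences is forced to vanish. These three claims are exactly the structural facts needed so that, in the remainder of the proof of Theorem \ref{graphholderthm}, the iterated Hölder steps at the vertices of $S\cup S'$ and the norms $\|f_e\|_{1/w'_e}$ line up correctly.
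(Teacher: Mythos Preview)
Your proof is correct and follows essentially the same perturbation-of-extremal-weightings approach as the paper. The only cosmetic difference is organizational: for Claim~\ref{weightssameclaim} the paper first shows $e_0$ has a vertex in $S$ by replacing $w'_{e_0}$ with $w_{e_0}$ wholesale (rather than your vanishing-sum-of-nonnegatives argument in (i)) and then shows the other endpoint lies in $S'$ via the inequality chain from Claim~\ref{SS'claim}, whereas you run the cleaner trichotomy ``both endpoints in $S\cup S'$, not both in $S$, not both in $S'$''; the content is the same.
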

\begin{proof}[Proof of Claims]
For Claim \ref{noSedges}, if there were an edge $e_0$ $H$ between two vertices of $S$, we could increase $w_{e_0}$ slightly while still having a fractional matching, contradicting the maximality of $(w_e)$. Thus the first claim holds.

For Claim \ref{SS'claim}, assume for the sake of contradiction that $v\in S\cap S'$. Then $\displaystyle\sum_{e\ni v}(w'_e-w_e)>0$, so there is some $e_0\ni v$ with $w'_{e_0}>w_{e_0}$, say $e_0=vv'$. Since there are no edges between two vertices of $S$ by Claim \ref{noSedges} and $v\in S$, we must have $v'\notin S$. Thus $\sum_{e\ni v'}w_e=1$, so
\[\displaystyle\sum_{e\ni v'}w'_e\geq (w'_{e_0}-w_{e_0})+\displaystyle\sum_{e\ni v}w_e\geq w'_{e_0}-w_{e_0}+1>1.\]
So $v'\in S'$. But by assumption $v\in S'$. So we may decrease $w'_{e_0}$ slightly and still maintain the $\{w'_e\}$ as a fractional edge cover, contradicting minimality. Thus $S\cap S'=\emptyset$.'

For Claim \ref{weightssameclaim}, take any edge $e_0$ with $w'_{e_0}>w_{e_0}$. If $e$ has no vertices in $S$, then taking the weight system that is equal to $w'_e$ when $e\neq e_0$ and $w_e$ when $e=e_0$ is a smaller fractional edge cover than the $w'_e$, contradicting minimality. (This is a fractional edge cover as for all vertices $v\notin S$, the edge cover condition was already met by the $w_e$, and no edges adjacent to vertices $v\in S$ were changed by replacing $w'_{e_0}$ with $w_{e_0}$.) So there must be $v\in e_0\cap S$. Let $e_0=\{v,v'\}$. Then we must show that $v'\in S'$. But this is true by the same chain of inequalities as in the previous paragraph, as $v'\notin S$ since it is adjacent to a vertex of $S$. Therefore, for all edges $e$ with $w'_e>w_e$, $e$ has exactly one vertex in $S'$ and exactly one vertex in $S$.
\end{proof}

Having proved the three claims, our strategy will be to first apply H\"{o}lder's inequality to the vertices in $S$, and then apply the generalized H\"{o}lder's inequality to the remaining graph. We proceed in three steps.
\setcounter{claimcounter}{0}
\begin{step}
H\"{o}lder's inequality with weights $w'_e$.
\end{step}
Since there are no edges between two vertices of $S$ by Claim \ref{noSedges}, we may start by rewriting the left side of (\ref{graphholder}) in the form
\begin{equation}\label{Sseparate}
\displaystyle\int_{\prod_{v\notin S} B_v}\displaystyle\prod_{\substack{e=\{v,v'\}\in E(H) \\ v,v'\notin S}}f_e(x_v,x_{v'})\displaystyle\prod_{v\in S}\left(\displaystyle\int_{B_v}\displaystyle\prod_{\substack{v'\in N(v)\\ e=\{v,v'\}}}f_e(x_v,x_{v'})dx_v\right)\displaystyle\prod_{v\notin S} dx_v,
\end{equation}
by breaking vertices $v\in S$ into their own separate integrals.

Now, since $S\cap S'=\emptyset$ by Claim \ref{SS'claim}, for every $v\in S$ we must have $\displaystyle\sum_{e\ni v}w'_e=1$. Thus we may apply H\"{o}lder's inequality with weights $\frac{1}{w'_e}$ to the inner integral of the expression above. Namely, for all $v\in S$ and fixing $x_{v'}$ for all $v'\in N(v)$,
\[\displaystyle\int_{B_v}\displaystyle\prod_{\substack{v'\in N(v)\\ e=\{v,v'\}}}f_e(x_v,x_{v'})dx_v\leq\displaystyle\prod_{\substack{v'\in N(v)\\ e=\{v,v'\}}}S_{v'}^{\frac{1}{w'_e}}f_e(x_{v'}).\]
\begin{step}
Pulling out copies of $S_v^a$.
\end{step}
We will further bound
\[S_{v'}^{\frac{1}{w'_e}}f_e(x_{v'})=\left(S_{v'}^{\frac{1}{w'_e}}f_e(x_{v'})\right)^{\frac{w_e}{w'_e}}\left(S_{v'}^{\frac{1}{w'_e}}f_e(x_{v'})\right)^{\frac{w'_e-w_e}{w'_e}}\leq\left(S_{v'}^{\frac{1}{w'_e}}f_e(x_{v'})\right)^{\frac{w_e}{w'_e}}\|f_e\|_{v',\frac{1}{w'_e}}^{\frac{w'_e-w_e}{w'_e}},\]
simply replacing $S_{v'}^{\frac{1}{w'_e}}f_e$ by its maximum $\|f_e\|_{v',\frac{1}{w'_e}}$.

Substituting, we may bound (\ref{Sseparate}) by
\begin{equation}\label{Sholder}
\displaystyle\int_{\prod_{v\notin S} B_v}\displaystyle\prod_{\substack{e=\{v,v'\}\in E(H) \\ v,v'\notin S}}f_e(x_v,x_{v'})\displaystyle\prod_{v\in S}\left(\displaystyle\prod_{\substack{v'\in N(v)\\ e=\{v,v'\}}}\left(S_{v'}^{\frac{1}{w'_e}}f_e(x_{v'})\right)^{\frac{w_e}{w'_e}}\|f_e\|_{v',\frac{1}{w'_e}}^{\frac{w'_e-w_e}{w'_e}}\right)\displaystyle\prod_{v\notin S} dx_v.
\end{equation}
Now, consider the subproduct
\[\displaystyle\prod_{v\in S}\displaystyle\prod_{\substack{v'\in N(v)\\ e=\{v,v'\}}}\|f_e\|_{v',\frac{1}{w'_e}}^{\frac{w'_e-w_e}{w'_e}}.\]
Note that when $w'_e=w_e$, the factor is simply equal to $1$ (even when $w'_e=w_e=0$ by our convention chosen). Thus we only need consider the factors when $w'_e>w_e$. But by Claim \ref{weightssameclaim}, $w_e=w'_e$ except when $e$ has exactly one vertex in $S$ and one in $S'$, so we have that
\[\displaystyle\prod_{v\in S}\displaystyle\prod_{\substack{v'\in N(v)\\ e=\{v,v'\}}}\|f_e\|_{v',\frac{1}{w'_e}}^{\frac{w'_e-w_e}{w'_e}}=\displaystyle\prod_{v\in S}\displaystyle\prod_{\substack{v'\in N(v)\cap S'\\ e=\{v,v'\}}}\|f_e\|_{v',\frac{1}{w'_e}}^{\frac{w'_e-w_e}{w'_e}}=\displaystyle\prod_{v'\in S'}\displaystyle\prod_{\substack{v\in N(v')\cap S\\ e=\{v,v'\}}}\|f_e\|_{v',\frac{1}{w'_e}}^{\frac{w'_e-w_e}{w'_e}}=\displaystyle\prod_{v'\in S'}\displaystyle\prod_{\substack{v\in N(v')\\ e=\{v,v'\}}}\|f_e\|_{v',\frac{1}{w'_e}}^{\frac{w'_e-w_e}{w'_e}},\]
where in the middle equality we switched the order of the products (but still iterate over edges in $S\times S'$). 
Thus this product is equal to (replacing the dummy variable $v'$ with $v$)
\[\displaystyle\prod_{v\in S'}\displaystyle\prod_{e\ni v}\|f_e\|_{v,\frac{1}{w'_e}}^{\frac{w'_e-w_e}{w'_e}}.\]
Pulling this product out of the integral, we see that (\ref{Sholder}) is equal to
\[\left(\displaystyle\prod_{v\in S'}\displaystyle\prod_{e\ni v}\|f_e\|_{v,\frac{1}{w'_e}}^{\frac{w'_e-w_e}{w'_e}}\right)\displaystyle\int_{\prod_{v\notin S} B_v}\displaystyle\prod_{\substack{e=\{v,v'\}\in E(H) \\ v,v'\notin S}}f_e(x_v,x_{v'})\displaystyle\prod_{v\in S}\left(\displaystyle\prod_{\substack{v'\in N(v)\\ e=\{v,v'\}}}\left(S_{v'}^{\frac{1}{w'_e}}f_e(x_{v'})\right)^{\frac{w_e}{w'_e}}\right)\displaystyle\prod_{v\notin S} dx_v.\]
Note that we have obtained one of the two terms on the right side of (\ref{graphholder}). So it suffices to show that
\begin{equation}\label{genholderapplication}
\displaystyle\int_{\prod_{v\notin S} B_v}\displaystyle\prod_{\substack{e=\{v,v'\}\in E(H) \\ v,v'\notin S}}f_e(x_v,x_{v'})\displaystyle\prod_{v\in S}\left(\displaystyle\prod_{\substack{v'\in N(v)\\ e=\{v,v'\}}}\left(S_{v'}^{\frac{1}{w'_e}}f_e(x_{v'})\right)^{\frac{w_e}{w'_e}}\right)\displaystyle\prod_{v\notin S} dx_v\leq\displaystyle\prod_{e\in E(H)}\|f_e\|_{\frac{1}{w'_e}}^{\frac{w_e}{w'_e}}.
\end{equation}
\begin{step}
H\"{o}lder's inequality with weights $w_e$.
\end{step}
The proof of (\ref{genholderapplication}) is exactly an application of Theorem \ref{genholder}. Our base set $[n]$ will correspond to the vertices in $V(H)\backslash S$. Let our spaces $\Omega_v$ be exactly the intervals $B_v$ with the measure $\mu_v$ being the standard measure, $v\in V(H)\backslash S$. Our set $[m]$ will be correspond to the edges $e\in E(H)$.

Our subsets $A$ will be given by $A_e=e$ if $e\cap S=\emptyset$ and $A_e=e\backslash\{v\}$ if $e\cap S=\{v\}$. We will take $p_e=\frac{1}{w_e}$ (again with $\frac{1}{0}=\infty$) for all $e\in E(H)$.

The function corresponding to the edge $e$ will simply be the function $f_e$ when $e\cap S=\emptyset$. If $e=vv'$ with $v\in S$ (so that $A_e=\{v'\}$ consists of a single vertex), then we take our function to be $x_{v'}\mapsto\left(S_{v'}^{\frac{1}{w'_e}}f_e(x_{v'})\right)^{\frac{w_e}{w'_e}}$.

Note that for $v\in V(H)\backslash S$ and $e\in E(H)$, $v\in A_e$ if and only if $v\in e$. Thus the necessary condition $\displaystyle\sum_{e:A_e\ni v}\frac{1}{p_e}\leq 1$ simply of Theorem \ref{genholder} simply follows from the fractional matching condition on the $w_e$.

With these specifications, the left side of Theorem \ref{genholder} simply becomes the left side of (\ref{genholderapplication}). The right side becomes
\begin{align}
 & \displaystyle\prod_{\substack{e=\{v,v'\}\in E(H) \\ v,v'\notin S}}\|f_e\|_{\frac{1}{w_e}}\displaystyle\prod_{v\in S}\displaystyle\prod_{\substack{v'\in N(v)\\ e=\{v,v'\}}}\left(\displaystyle\int_{B_{v'}}\left(\left(S_{v'}^{\frac{1}{w'_e}}f_e(x_{v'})\right)^{\frac{w_e}{w'_e}}\right)^{\frac{1}{w_e}}dx_{v'}\right)^{w_e} \\ & \label{holderanswer}=\displaystyle\prod_{\substack{e=\{v,v'\}\in E(H) \\ v,v'\notin S}}\|f_e\|_{\frac{1}{w_e}}\displaystyle\prod_{v\in S}\displaystyle\prod_{\substack{v'\in N(v)\\ e=\{v,v'\}}}\left(\displaystyle\int_{B_{v'}}\left(S_{v'}^{\frac{1}{w'_e}}f_e(x_{v'})\right)^{\frac{1}{w'_e}}dx_{v'}\right)^{w_e}.
\end{align}
Now, substituting the definition of $S_{v'}^{\frac{1}{w'_e}}f_e$, we see that
\begin{align*}
\left(\displaystyle\int_{B_{v'}}\left(S_{v'}^{\frac{1}{w'_e}}f_e(x_{v'})\right)^{\frac{1}{w'_e}}dx_{v'}\right)^{w_e} & =\left(\displaystyle\int_{B_{v'}}\left(\left(\displaystyle\int_{B_v}|f_e(x_v,x_{v'})|^{\frac{1}{w'_e}}dx_v\right)^{w'_e}\right)^{\frac{1}{w'_e}}dx_{v'}\right)^{w_e} \\ & =\left(\displaystyle\int_{B_{v'}}\left(\displaystyle\int_{B_v}|f_e(x_v,x_{v'})|^{\frac{1}{w'_e}}dx_v\right)dx_{v'}\right)^{w_e} \\ & =\left(\displaystyle\int_{B_v\times B_{v'}}|f_e(x_v,x_{v'})|^{\frac{1}{w'_e}}dx_vdx_{v'}\right)^{w_e} \\ & =\|f_e\|_{\frac{1}{w'_e}}^{\frac{w_e}{w'_e}}.
\end{align*}
Thus (\ref{holderanswer}) is equal to
\begin{equation}\label{simplifiedholderanswer}
\displaystyle\prod_{\substack{e=\{v,v'\}\in E(H) \\ v,v'\notin S}}\|f_e\|_{\frac{1}{w_e}}\displaystyle\prod_{v\in S}\displaystyle\prod_{\substack{v'\in N(v)\\ e=\{v,v'\}}}\|f_e\|_{\frac{1}{w'_e}}^{\frac{w_e}{w'_e}}.
\end{equation}
But when $e\cap S=\emptyset$, $w_e=w'_e$ by Claim \ref{weightssameclaim}, so $\|f_e\|_{\frac{1}{w_e}}=\|f_e\|_{\frac{1}{w'_e}}^{\frac{w_e}{w'_e}}$. Since no edge can have two vertices in $S$ by Claim \ref{noSedges}, we may write (\ref{simplifiedholderanswer}) as
\[\displaystyle\prod_{\substack{e\in E(H) \\ e\cap S=\emptyset}}\|f_e\|_{\frac{1}{w'_e}}^{\frac{w_e}{w'_e}}\displaystyle\prod_{\substack{e\in E(H) \\ e\cap S\neq\emptyset}}\|f_e\|_{\frac{1}{w'_e}}^{\frac{w_e}{w'_e}}=\displaystyle\prod_{e\in E(H)}\|f_e\|_{\frac{1}{w'_e}}^{\frac{w_e}{w'_e}},\]
which is the second term of (\ref{graphholder}), so we have proven Theorem \ref{graphholderthm}.
\end{proof}
\section{Proof of Theorem \ref{simpleholderresult}}\label{simpleholderproofsection}
Take $p\to 0$, $\epsilon=\epsilon(p)$ with $0<\epsilon\leq 1$, and let $W$ be a graphon satisfying the conditions of Theorem \ref{simpleholderresult}. That is, for all $x_0\in [0,1]$, $(1-\epsilon)p\leq\displaystyle\int_0^1 W(x_0,y)\leq (1+\epsilon)p$. Let $U=W-p$.

We apply Theorem \ref{graphholderthm}, with the following parameters. Let $H$ be any graph with no isolated vertices, and for all $e\in E(H)$ we take the interval $B_v=[0,1]$. Let $f_e=|U|$ for all $e\in E(H)$. Finally, let $(w_e)_{e\in E(H)}$ and $(w'_e)_{e\in E(H)}$ be respectively any maximum fractional matching and any minimum fractional edge cover of $H$ such that $w_e\leq w'_e$ for all $e\in E(H)$. (The fact that these edge weightings exist will be shown later.)

The left side of (\ref{graphholder}) then simply becomes $\Hom(H,|U|)$. We bound the terms on the right side.

Unpacking the definition of $\| f_e\|_{v,\frac{1}{w'_e}}^{\frac{w'_e-w_e}{w'_e}}$ with our setting $f_e=|U|$, we see that it is equal to
\[\displaystyle\esssup_{x_0\in [0,1]}\left(\displaystyle\int_0^1 |U(x_0,y)|^{\frac{1}{w'_e}}dy\right)^{w'_e-w_e}\]
when $w'_e\neq 0$ and $1$ when $w'_e=w_e=0$. Now, since our fractional edge cover $(w'_e)$ is minimum, $w'_e\leq 1$ for all $e\in E(H)$. Since $U$ is $1$-bounded, we thus have
\begin{align*}
\displaystyle\esssup_{x_0\in [0,1]}\left(\displaystyle\int_0^1 |U(x_0,y)|^{\frac{1}{w'_e}}dy\right)^{w'_e-w_e} & \leq\displaystyle\esssup_{x_0\in [0,1]}\left(\displaystyle\int_0^1 |U(x_0,y)| dy\right)^{w'_e-w_e} \\ & =\displaystyle\esssup_{x_0\in [0,1]}\left(\displaystyle\int_0^1 |W(x_0,y)-p| dy\right)^{w'_e-w_e} \\ & \leq\displaystyle\esssup_{x_0\in [0,1]}\left(\displaystyle\int_0^1 (W(x_0,y)+p) dy\right)^{w'_e-w_e} \\ & \leq ((2+\epsilon)p)^{w'_e-w_e}
\end{align*}
by our conditions on $W$ (the same bound also trivially holds when $w'_e=w_e=0$).

The $\|f_e\|_{\frac{1}{w'_e}}^{\frac{w_e}{w'_e}}$ terms are easier, as they are equal to
\begin{align*}
\left(\displaystyle\int_{[0,1]^2}|U(x,y)|^{\frac{1}{w'_e}}dx dy\right)^{w_e} & \leq\left(\displaystyle\int_{[0,1]^2}|U(x,y)| dx dy\right)^{w_e} \\ & =\mathbb{E}(|U|)^{w_e}
\end{align*}
when $w'_e>0$ (using that $w_e\leq w'_e\leq 1$ for all $e\in E(H)$ and that $U$ is $1$-bounded), and the same bound again easily holds when $w'_e=w_e=0$. Substituting, Theorem \ref{graphholderthm} gives that
\[\Hom(H,|U|)\leq ((2+\epsilon)p)^{\displaystyle\sum_{v\in S'}\displaystyle\sum_{e\ni v}(w'_e-w_e)}\mathbb{E}(|U|)^{\displaystyle\sum_{e\in E(H)}w_e},\]
where $S'=\left\{v\in V(H):\displaystyle\sum_{e\ni v}w'_e>1\right\}$.

By Claims \ref{SS'claim} and \ref{weightssameclaim} in the proof of Theorem \ref{graphholderthm}, if $e\in E(H)$ does not have exactly one vertex in $S'$, then $w_e=w'_e$. Thus the sum $\sum_{v\in S'}\sum_{e\ni v}(w'_e-w_e)$ counts each edge where $w'_e\neq w_e$ exactly once, so this sum simply equals $\sum_{e\in E(H)}w'_e-\sum_{e\in E(H)} w_e$. Substituting,
\[\Hom(H,|U|)\leq ((2+\epsilon)p)^{\sum w'_e-\sum w_e}\mathbb{E}(|U|)^{\sum w_e},\]
where all sums are over $e\in E(H)$. Thus to finish the proof of Theorem \ref{simpleholderresult}, it suffices to show the following lemma.
\begin{lemma}\label{edgeweightslemma}
Let $H$ be a graph with no isolated vertices. Then the following statements hold.
\begin{enumerate}
\item Let $(w_e)_{e\in E(H)}$ be a maximum fractional matching. Then there is some minimum fractional edge cover $(w'_e)_{e\in E(H)}$ such that $w_e\leq w'_e$ for all $e\in E(H)$.
\item Let $(w'_e)_{e\in E(H)}$ be a minimum fractional edge cover. Then there is some maximum fractional matching $(w_e)_{e\in E(H)}$ such that $w_e\leq w'_e$ for all $e\in E(H)$.
\item The fractional matching number of $H$ (that is, the maximum total weight of a fractional matching) is equal to the fractional vertex cover number $c(H)$.
\item The fractional edge cover number of $H$ (that is, the minimum total weight of a fractional edge cover) is equal to $v(H)-c(H)$.
\end{enumerate}
\end{lemma}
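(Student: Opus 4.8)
The plan is to deduce parts (3) and (4) from linear-programming duality, and then to prove parts (1) and (2) by explicit constructions that mirror one another. For (3): the fractional matching number of $H$ is the value of the linear program $\max\{\sum_e w_e : w\ge 0,\ \sum_{e\ni v}w_e\le 1\text{ for all }v\in V(H)\}$, whose LP dual is exactly $\min\{\sum_v c_v : c\ge 0,\ c_v+c_{v'}\ge 1\text{ for all }\{v,v'\}\in E(H)\}$, the fractional vertex cover LP of value $c(H)$. Both are feasible ($w=0$, resp.\ $c\equiv 1$) and bounded, so strong duality gives (3), and in particular a maximum fractional matching exists. For (4): the fractional edge cover LP $\min\{\sum_e w'_e : w'\ge 0,\ \sum_{e\ni v}w'_e\ge 1\text{ for all }v\}$ has dual $\max\{\sum_v y_v : y\ge 0,\ y_v+y_{v'}\le 1\text{ for all }\{v,v'\}\in E(H)\}$. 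Since $H$ has no isolated vertex, any feasible $y$ automatically satisfies $y_v\le 1$, so $v\mapsto 1-y_v$ ranges over the fractional vertex covers that are pointwise at most $1$; as truncating any fractional vertex cover entrywise to $\min(\cdot,1)$ neither raises its total nor destroys the covering property, the least such total is still $c(H)$, so the dual value is $v(H)-c(H)$, and strong duality yields (4) (and the existence of a minimum fractional edge cover).

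For part (1), given a maximum fractional matching $(w_e)$ I would set $S=\{v\in V(H):\sum_{e\ni v}w_e<1\}$. If some edge had both endpoints in $S$ I could raise its weight slightly and stay a fractional matching of total weight $>c(H)$, contradicting (3); hence no edge lies inside $S$, and every $v\notin S$ has $\sum_{e\ni v}w_e=1$. As $H$ has no isolated vertex, each $v\in S$ has an incident edge $e_v$, and these edges are pairwise distinct; I then obtain $w'$ by raising each $w_{e_v}$ by the deficiency $1-\sum_{e\ni v}w_e$ and leaving all other weights unchanged. Then $w'\ge w$ pointwise and $w'$ is a fractional edge cover, and $\sum_e w'_e=\sum_e w_e+\sum_{v\in S}\bigl(1-\sum_{e\ni v}w_e\bigr)$, which by the degree identity $\sum_v\sum_{e\ni v}w_e=2\sum_e w_e=2c(H)$ together with $\sum_{e\ni v}w_e=1$ for $v\notin S$ equals $c(H)+\bigl(v(H)-2c(H)\bigr)=v(H)-c(H)$; by (4) this is optimal, so $w'$ is a minimum fractional edge cover dominating $w$.

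Part (2) is the mirror image: given a minimum fractional edge cover $(w'_e)$ I would set $T=\{v:\sum_{e\ni v}w'_e>1\}$. Minimality forces $\sum_{e\ni v}w'_e=1$ for every $v\notin T$ and forces that no positive-weight edge has both endpoints in $T$ (else lower that weight slightly, keeping an edge cover of smaller total, contradicting (4)). For $v\in T$ put $\varepsilon_v=\sum_{e\ni v}w'_e-1>0$; since the positive-weight edges at $v$ carry total weight $1+\varepsilon_v\ge\varepsilon_v$, I can lower them by a total of $\varepsilon_v$, and because no positive-weight edge meets two vertices of $T$ these reductions do not interfere across $T$. The resulting $w\le w'$ has $\sum_{e\ni v}w_e=1$ for $v\in T$ and $\le 1$ otherwise, so it is a fractional matching, and the same degree double-count (now $\sum_{v\in T}\varepsilon_v=v(H)-2c(H)$) gives $\sum_e w_e=c(H)$, so $w$ is a maximum fractional matching dominated by $w'$.

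The routine content is (3) and (4) — standard LP duality, plus the small truncation observation for (4). The step requiring genuine care is verifying in (1) and (2) that the constructed edge cover (resp.\ matching) achieves \emph{exactly} the extremal total weight; this rests on the two structural facts used above — no edge inside the deficient set $S$ of a maximum matching, and no positive-weight edge inside the surplus set $T$ of a minimum edge cover — which are precisely where optimality enters, after which the weight equality is a one-line degree double-count.
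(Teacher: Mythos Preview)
Your proof is correct and follows essentially the same approach as the paper: the same structural observations (no edge inside the deficient set $S$, no positive-weight edge inside the surplus set $T$) drive the constructions, and the same degree double-count verifies optimality. The only organizational difference is that you establish (4) up front via LP duality (with the truncation remark), whereas the paper derives (4) as a byproduct of the two constructions by showing each gives the opposite inequality between the fractional edge cover number and $v(H)-c(H)$; your construction for (1) also picks a single incident edge per deficient vertex rather than spreading the deficiency evenly, but this is cosmetic.
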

\begin{proof}[Proof of Lemma \ref{edgeweightslemma}]
(3) is a consequence of the strong duality theorem for linear programming, as the dual system of fractional vertex cover is simply fractional matching.

We prove (1), (2), and (4) simultaneously. Let $w_e$ be a maximum fractional matching. Let $S$ be the set of vertices $v\in V(H)$ such that $\displaystyle\sum_{e\ni v}w_e<1$. Then there are no edges between vertices of $S$, as if there was such an edge we could increase its weight slightly to obtain a larger fractional matching.

For each $v\in S$, we modify the weights of the edges $e\ni v$ as follows. If $\displaystyle\sum_{e\ni v}w_e>0$, take all edges $e\ni v$ and increase each of their weights by $\frac{1}{\deg(v)}\left(1-\displaystyle\sum_{e\ni v}w_e\right)$.

Call this new set of weights $w'_e$. Note that now $\displaystyle\sum_{e\ni v}w'_e\geq 1$ for all $v$ (as if this sum was originally less than $1$ it was increased to $1$ by the process above), so the $w'_e$ form a fractional edge cover with $w_e\leq w'_e$ for all $e$. However, we do not know yet that $(w'_e)$ is minimum.

Since we increased weights of edges adjacent to a vertex $v\in S$ by a total of $1-\displaystyle\sum_{e\ni v}w_e$, we have that
\[\displaystyle\sum_{e\in E(H)}(w'_e-w_e)=\displaystyle\sum_{v\in V(H)}\left(1-\displaystyle\sum_{e\ni v}w_e\right)=v(H)-2\displaystyle\sum_{e\in E(H)}w_e=v(H)-2c(H),\]
as each edge $w_e$ occurs twice in the nested sum and applying (3). Thus $\displaystyle\sum_{e\in E(H)}w'_e=v(H)-c(H)$, so letting $w(H)$ be the fractional edge cover number, we have that $w(H)\leq v(H)-c(H)$.

\vspace{12pt}

In the reverse direction, let $w'_e$ be a minimum fractional edge cover. Let $S'$ be the set of vertices $v\in V(H)$ such that $\displaystyle\sum_{e\ni v}w_e>1$. Then all edges $e$ between two vertices of $S'$ have weight $w'_e=0$, as otherwise we could decrease $w'_e$ slightly and obtain a smaller fractional edge cover.

We again modify the edges adjacent to each $v\in S'$, although as we must be slightly more careful to ensure weights do not drop below $0$, we will scale multiplicatively. For each $v\in S'$ and $e\ni v$, replace $w'_e$ by $\frac{w'_e}{\displaystyle\sum_{e\ni v}w'_e}$. Now, each edge's weight is only modified once by this process, as any edge between two vertices of $S'$ has weight $0$ and thus is not modified at all. Let the new weights be $w_e$. Then it is easy to see that $\displaystyle\sum_{e\ni v}w_e\leq 1$ for all $v\in V(H)$, so the $w_e$ form a fractional matching with $w_e\leq w'_e$ for all $e$. However, we (similarly to before) do not yet know that $(w_e)$ is maximum.

Since at each vertex $v\in S'$ we replaced the weights of the edges containing $v$ with weights that summed to $1$, we have that
\[\displaystyle\sum_{e\in E(H)}(w'_e-w_e)=\displaystyle\sum_{v\in V(H)}\left(\displaystyle\sum_{e\ni v}w_e-1\right)=2\displaystyle\sum_{e\in E(H)}w_e-v(H)=2w(H)-v(H).\]
Thus $\displaystyle\sum_{e\in E(H)}w_e=v(H)-w(H)$, and so $c(H)\geq v(H)-w(H)$.

Combining this with the earlier statement that $w(H)\leq v(H)-c(H)$ gives the conclusion of (4); that is, that $w(H)=v(H)-c(H)$. Furthermore, this implies that the fractional edge cover and fractional matching we constructed by these processes are respectively minimum and maximum, proving (1) and (2) as well. This concludes the proof of the Lemma.
\end{proof}
This completes the proof of Theorem \ref{simpleholderresult} and thus the lower bound of Theorem \ref{varcorrectexponent}.
\section{$K_{2,3}$ Revisited}\label{K23section}

In this section, we will attempt to motivate some of the techniques that go into proving the stronger lower bounds given in Theorem \ref{varcorrectconstant}. We will let $K=K_{2,3}$ and prove Theorem \ref{varcorrectconstant} in this case.

It is not difficult to compute that $\gamma(K_{2,3})=\frac{1}{2}$ and $P_{K_{2,3}}(z,w)=1+z^2$. Thus
\[\rho(K_{2,3},\delta):=\min_{\substack{z,w\geq 0 \\ P_{K_{2,3}}(z,w)\geq 1+\delta}}(2z+w)=2\sqrt{\delta}.\]
So for the graph $K=K_{2,3}$, the lower bound of Theorem \ref{varcorrectconstant} becomes the following proposition.
\begin{prop}\label{K23prop}
Take a constant $\delta>0$ and take $p\to 0$. Then
\[\displaystyle\inf_{\substack{\Hom(K_{2,3},W)\geq (1+\delta)p^6 \\ W\text{ }p\text{-regular}}}I_p(W)\geq (2\sqrt{\delta}-o(1))p^{\frac{5}{2}}\log\frac{1}{p}.\]
\end{prop}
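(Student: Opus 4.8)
The plan is to first invoke Lemma~\ref{replacebyplemma} to reduce to proving $I_p(W)\ge(2\sqrt\delta-o(1))p^{5/2}\log\tfrac1p$ for every $W\in\Gamma_\epsilon(K_{2,3},\delta,p)$, where $\epsilon=\epsilon(p)\to0$ is chosen to decay slowly enough (say $\epsilon=1/\log\log\tfrac1p$) that $\epsilon p\gg p^{1+\epsilon'}$ for the $\epsilon'$ furnished by Lemma~\ref{momentlemma} at a small fixed exponent $c$. Then condition~(3) of Definition~\ref{nicegraphonsdef} together with Lemma~\ref{momentlemma} and the inequality $|u|^{1+c}\ge|u|^2$ (valid since $|U|\le1$) give the pointwise bound $I_p(p+U(x,y))\ge(1-o(1))|U(x,y)|^2\log\tfrac1p$, whence $I_p(W)\ge(1-o(1))\log\tfrac1p\cdot\lVert U\rVert_2^2$ with $U:=W-p$. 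Since $\gamma(K_{2,3})=\tfrac12$, it therefore suffices to show $\lVert U\rVert_2^2\ge(2\sqrt\delta-o(1))p^{5/2}$, and we may assume $\lVert U\rVert_2^2=O(p^{5/2})$ (otherwise we are already done).

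Next I would run the subgraph expansion $\Hom(K_{2,3},W)=\sum_{H\subseteq K_{2,3}}p^{6-e(H)}\Hom(H,U)$ of Lemma~\ref{subgraphbreakdown}. Any $H$ with a degree-$1$ vertex contributes $O(\epsilon p^6)$: integrating out a leaf yields a factor of size $O(\epsilon p)$ by approximate regularity (condition~(1)), and the rest is $O(p^{e(H)-1})$ by condition~(4). The subgraphs of $K_{2,3}$ of minimum degree $\ge2$ are precisely the three $4$-cycles and $K_{2,3}$ itself (the unique \emph{contributing} subgraph), so condition~(2) yields $3p^2\Hom(C_4,U)+\Hom(K_{2,3},U)\ge(\delta-o(1))p^6$. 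But $\Hom(C_4,U)\le\Hom(C_4,|U|)\le\lVert U\rVert_2^4=O(p^5)$ (Cauchy--Schwarz at the two degree-$2$ vertices of $C_4$), so the $C_4$ term is $o(p^6)$ and hence $\Hom(K_{2,3},|U|)\ge\Hom(K_{2,3},U)\ge(\delta-o(1))p^6$.

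The heart of the matter is to feed this into the generalized Hölder inequality of Theorem~\ref{graphholderthm}, applied at the three degree-$2$ vertices $w_1,w_2,w_3$ of $K_{2,3}$ with the edge weights $w'_e=\tfrac12$ on every edge and $w_e\in\{0,\tfrac12\}$ a maximum fractional matching; the point is that $w'_e=\tfrac12<1$ for all $e$ (equivalently, $K_{2,3}$ has no bad edges), so the relevant norms are $L^2$-norms and the cost is governed by Lemma~\ref{momentlemma} rather than Lemma~\ref{firstmomentlemma}, which is what preserves the $\log\tfrac1p$. This gives $\Hom(K_{2,3},|U|)\le\bigl(\int_0^1 S_2(x)^3\,dx\bigr)^2$ where $S_2(x):=\bigl(\int_0^1|U(x,y)|^2\,dy\bigr)^{1/2}$, so $\int_0^1 S_2(x)^3\,dx\ge(\sqrt\delta-o(1))p^3$, while approximate regularity and $|U|\le1$ force $S_2(x)^2\le(1+O(\epsilon))p$ for every $x$. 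Now I would apply the adaptive thresholding idea of \cite[Section~5]{BGLZ}: with $\mathcal B:=\{x:S_2(x)\ge\tau\}$ and $\tau$ taken just below $\sqrt p$, the hypothesis $\lVert U\rVert_2^2=O(p^{5/2})$ gives both $|\mathcal B|\le\lVert U\rVert_2^2/\tau^2=o(p^{5/4})$ and $\int_{\{S_2<\tau\}}S_2^3\le\tau\lVert U\rVert_2^2=o(p^3)$, so $\int_{\mathcal B}S_2^3\ge(\sqrt\delta-o(1))p^3$ and hence $\int_{\mathcal B}S_2(x)^2\,dx\ge\bigl(\sup_x S_2(x)\bigr)^{-1}\int_{\mathcal B}S_2^3\ge(\sqrt\delta-o(1))p^{5/2}$.

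The remaining factor of $\sqrt\delta$ I would harvest from the symmetry $U(x,y)=U(y,x)$: one has $\int_{[0,1]\times\mathcal B}|U|^2=\int_{\mathcal B\times[0,1]}|U|^2=\int_{\mathcal B}S_2(x)^2\,dx\ge(\sqrt\delta-o(1))p^{5/2}$, so by inclusion--exclusion $\lVert U\rVert_2^2\ge\int_{\mathcal B\times[0,1]}|U|^2+\int_{[0,1]\times\mathcal B}|U|^2-\int_{\mathcal B\times\mathcal B}|U|^2\ge 2(\sqrt\delta-o(1))p^{5/2}-|\mathcal B|^2=(2\sqrt\delta-o(1))p^{5/2}$, which is exactly what was needed. (This transposed strip is the reason the entropy of the extremal graphon contains $2z$ and not $z$: the construction plants $W=1$ on a hub $B\times[0,p]$ \emph{and} on its transpose.) The step I expect to be the real obstacle is the adaptive thresholding: a single threshold suffices only under the reduction $\lVert U\rVert_2^2=O(p^{5/2})$, and to make the argument robust one must dyadically decompose the profile of $S_2(\cdot)$ and check that the "spread-out" profiles already carry super-abundant entropy, while simultaneously confirming that the overlap $\mathcal B\times\mathcal B$ and all accumulated error terms remain $o(p^{5/2}\log\tfrac1p)$ so that the factor of $2$ genuinely survives.
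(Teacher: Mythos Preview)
Your argument is correct and is essentially the paper's proof: reduce via Lemma~\ref{replacebyplemma}, eliminate non-contributing subgraphs, apply H\"older at the degree-$2$ vertices to get $\int S_2^3\ge(\sqrt\delta-o(1))p^3$, threshold on $S_2$ to isolate a hub, and use the symmetry of $U$ to double the bound (the paper carries out the same steps with $b=\tau^2$ and $\int_{B_b\times[0,1]}I_p(W)$ in place of your reduction to $\|U\|_2^2$). Your closing worry is unnecessary---a single threshold with $p^{5/8}\ll\tau\ll\sqrt p$ suffices here (the paper takes $p^{5/4}\ll b\ll p$), and the only imprecision is that ``$\tau$ just below $\sqrt p$'' should read $\tau=o(\sqrt p)$ so that $\tau\|U\|_2^2=o(p^3)$ actually holds.
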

To prove this proposition, we will use the following setup.
\begin{setup}\label{K23setup}
Fix a constant $\delta>0$ and take $p\to 0$. Let $\epsilon=\epsilon(p)=1/\log(1/p)$. Recall from Definition \ref{nicegraphonsdef} the definition of $\Gamma_{\epsilon}(K_{2,3},\delta)$. Using Lemma \ref{replacebyplemma}, take some graphon $W\in\Gamma_{\epsilon}(K_{2,3},\delta)$ such that $I_p(W)\leq (1+o(1))\displaystyle\inf_{\substack{\Hom(K_{2,3},W')\geq (1+\delta)p^6 \\ W\text{ }p\text{-regular}}}I_p(W')$.
\end{setup}
Notice that since $I_p(W)$ is at most $1+o(1)$ times the infimum on the left side of Proposition \ref{K23prop}, to prove that proposition it suffices to show that under Setup \ref{K23setup}, $I_p(W)\geq (2\sqrt{\delta}-o(1))p^{\frac{5}{2}}\log\frac{1}{p}$.

We first prove some simple consequences of this setup.
\begin{lemma}\label{K23consequenceslemma}
Under Setup \ref{K23setup}, the following properties hold.
\begin{enumerate}
\item $\displaystyle\int_0^1 W(x_0,y) dy=(1+o(1))p$ for all $x_0\in [0,1]$
\item $\Hom(K_{2,3},W)\geq (1+\delta-o(1))p^{6}$.
\item $W$ takes no values in $[p-p^{1+o(1)},p+p^{1+o(1)}]\backslash p$.
\item $\Hom(H,|U|)\leq 64(1+\delta)p^{e(H)}$ for all $H\subseteq K_{2,3}$.
\item $I_p(W)=O\left(p^{\frac{5}{2}}\log\frac{1}{p}\right)$
\end{enumerate}
\end{lemma}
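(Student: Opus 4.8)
The plan is to read off properties (1)--(4) directly from the definition of $\Gamma_{\epsilon}(K_{2,3},\delta)$ (Definition \ref{nicegraphonsdef}), using only that $\epsilon=1/\log(1/p)=o(1)$, and to get property (5) from the upper bound on the variational problem already established in Section \ref{constructionssection}. Throughout, recall $e(K_{2,3})=6$, $v(K_{2,3})=5$, $c(K_{2,3})=2$, $\gamma(K_{2,3})=\tfrac12$, and $\rho(K_{2,3},\delta)=2\sqrt{\delta}$.

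First I would dispatch (1), (2), (4): condition (1) of Definition \ref{nicegraphonsdef} gives $(1-\epsilon)p\le\int_0^1 W(x_0,y)\,dy\le(1+\epsilon)p$, which is $(1+o(1))p$ since $\epsilon\to0$; condition (2) gives $\Hom(K_{2,3},W)\ge(1-\epsilon)^6(1+\delta)p^6=(1+\delta-o(1))p^6$; and condition (4) with $2^{e(K_{2,3})}=64$ is literally the statement of (4). For (3), condition (3) of Definition \ref{nicegraphonsdef} says $W$ takes no value in $[(1-\epsilon)p,(1+\epsilon)p]\setminus\{p\}$, so it suffices to check $\epsilon p=p^{1+o(1)}$. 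This is the elementary estimate $\log(\epsilon p)=\log p-\log\log(1/p)=(1+o(1))\log p$, valid since $\log\log(1/p)=o(\log(1/p))$; hence the interval $[(1-\epsilon)p,(1+\epsilon)p]$ is contained in $[p-p^{1+o(1)},p+p^{1+o(1)}]$ and (3) follows.

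For property (5), recall from Setup \ref{K23setup} that $I_p(W)$ is within a $1+o(1)$ factor of $\inf\{I_p(W'):W'\ p\text{-regular},\ \Hom(K_{2,3},W')\ge(1+\delta)p^6\}$. By the upper bound of Theorem \ref{varcorrectexponent}, proved in Section \ref{constructionssection} via the explicit graphons of Figure \ref{constantgraphonfigure} (Lemmas \ref{constructionenoughhoms} and \ref{constructionlowentropy}), this infimum is at most $(2+o(1))\rho(K_{2,3},\delta)p^{2+\gamma(K_{2,3})}\log\frac1p=(4\sqrt{\delta}+o(1))p^{5/2}\log\frac1p$, which is $O(p^{5/2}\log\frac1p)$; therefore so is $I_p(W)$.

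There is no real obstacle in this lemma: (1)--(4) are just Definition \ref{nicegraphonsdef} restated with $\epsilon$ sent to $0$, the only mildly non-routine point being the estimate $\epsilon p=p^{1+o(1)}$ needed for (3), and (5) is an immediate consequence of the already-proved construction. The substantive work of the section is Proposition \ref{K23prop} itself, for which this lemma merely supplies the clean hypotheses.
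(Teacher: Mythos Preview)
Your proof is correct and follows essentially the same approach as the paper: properties (1)--(4) come straight from Definition \ref{nicegraphonsdef} using that $\epsilon=1/\log(1/p)$ satisfies $\epsilon=o(1)$ and $\epsilon=p^{o(1)}$, and (5) from the already-proved upper bound of Theorem \ref{varcorrectexponent}. One tiny wording slip: for (3) you write that $[(1-\epsilon)p,(1+\epsilon)p]$ is \emph{contained in} $[p-p^{1+o(1)},p+p^{1+o(1)}]$, but the logical direction you need is equality (which you have, since $\epsilon p=p^{1+o(1)}$ is exactly the $p^{1+o(1)}$ in the statement); this is harmless.
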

\begin{proof}
Since $W\in\Gamma_{1/\log(1/p)}(K_{2,3},\delta)$, (1) through (4) follow from recalling Definition \ref{nicegraphonsdef} and noting that $\epsilon:=1/\log(1/p)$ satisfies $\epsilon=o(1)$ and $\epsilon=p^{o(1)}$.

(5) follows from the fact that $I_p(W)\leq (1+o(1))\displaystyle\inf_{\substack{\Hom(K_{2,3},W')\geq (1+\delta)p^6 \\ W'\text{ }p\text{-regular}}}I_p(W')$ and the upper bound of Theorem \ref{varcorrectexponent}.
\end{proof}
We show the following lemma, eliminating the contributions from all subgraphs of $K_{2,3}$ that are not $\emptyset$ and $K_{2,3}$ itself. (Not coincidentally, one can check that these are exactly the contributing subgraphs of $K_{2,3}$.)
\begin{lemma}\label{K23subgraphbreakdown}
Under Setup \ref{K23setup},
\[\Hom(K_{2,3},U)\geq (\delta-o(1))p^6.\]
\end{lemma}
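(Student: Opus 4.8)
The plan is to use the subgraph expansion of Lemma \ref{subgraphbreakdown} and show that, among all subgraphs $H\subseteq K_{2,3}$, only $H=\emptyset$ and $H=K_{2,3}$ contribute at the scale $p^6$, with every other term being of strictly smaller polynomial order.

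Concretely, set $U=W-p$ and apply Lemma \ref{subgraphbreakdown} with $e(K_{2,3})=6$, isolating the empty and full subgraphs:
\[p^{-6}\Hom(K_{2,3},W)=\sum_{H\subseteq K_{2,3}}p^{-e(H)}\Hom(H,U)=1+p^{-6}\Hom(K_{2,3},U)+\sum_{\emptyset\neq H\subsetneq K_{2,3}}p^{-e(H)}\Hom(H,U),\]
using $\Hom(\emptyset,U)=1$. By property (2) of Lemma \ref{K23consequenceslemma}, the left side is at least $1+\delta-o(1)$. Since there are only finitely many proper subgraphs and $|\Hom(H,U)|\le\Hom(H,|U|)$, the statement of the lemma will follow once we show $\Hom(H,|U|)=o(p^{e(H)})$ for every proper nonempty $H\subsetneq K_{2,3}$.

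To control $\Hom(H,|U|)$ I would invoke Corollary \ref{simpleholdercor}: since $W\in\Gamma_\epsilon(K_{2,3},\delta)$ with $\epsilon=1/\log(1/p)$, conditions (1) and (3) of Definition \ref{nicegraphonsdef} are exactly the hypotheses of that corollary, so $\Hom(H,|U|)=O\!\left(\epsilon^{-c(H)}p^{v(H)-2c(H)}I_p(W)^{c(H)}\right)$. Substituting $\epsilon^{-1}=\log(1/p)$ and the a priori bound $I_p(W)=O\!\left(p^{5/2}\log\tfrac1p\right)$ from property (5) of Lemma \ref{K23consequenceslemma} gives
\[\Hom(H,|U|)=O\!\left(p^{\,v(H)+\frac12 c(H)}\Bigl(\log\tfrac1p\Bigr)^{2c(H)}\right).\]
Hence it remains to check that $v(H)+\tfrac12 c(H)>e(H)$ for every proper nonempty $H\subsetneq K_{2,3}$, since then the polynomial gap swallows the logarithmic factor. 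This is where the special structure of $K_{2,3}$ enters: a direct check shows that $K_{2,3}$ is the unique subgraph of itself with $e(H)>v(H)$ — a bipartite graph with parts of size $\le 2$ and $\le 3$ can have more edges than (non-isolated) vertices only when it uses all $5$ vertices and all $6$ edges — so every proper subgraph satisfies $e(H)\le v(H)$, while $c(H)\ge 1$ since $H$ has an edge; thus $v(H)+\tfrac12 c(H)\ge v(H)+\tfrac12>e(H)$, as required.

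I expect the argument to be essentially routine once organized this way; the only genuine content is the combinatorial observation that $K_{2,3}$ has no proper subgraph with $e(H)>v(H)$ — equivalently, that $K_{2,3}$ is its own only contributing subgraph — which is precisely what lets the off-diagonal terms be bounded by $o(p^6)$ without the finer, weighted Hölder machinery. For a general graph $K$ this step fails (there can be several contributing subgraphs), and handling those is the main difficulty deferred to Sections \ref{constantsection1} and \ref{constantsection2}.
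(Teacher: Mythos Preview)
Your proposal is correct and follows essentially the same approach as the paper's proof: expand via Lemma \ref{subgraphbreakdown}, apply Corollary \ref{simpleholdercor} with the entropy bound from Lemma \ref{K23consequenceslemma}(5) to control each proper nonempty subgraph term, and use the observation that $e(H)\le v(H)$ for all $H\subsetneq K_{2,3}$ together with $c(H)>0$ to obtain the strict inequality $e(H)<v(H)+\tfrac12 c(H)$. The only cosmetic difference is that you track the logarithmic factors explicitly as $(\log\tfrac1p)^{2c(H)}$, whereas the paper absorbs them into a $p^{-o(1)}$.
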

\begin{proof}
By Lemma \ref{subgraphbreakdown} and (2) of Lemma \ref{K23consequenceslemma},
\[1+\delta-o(1)\leq p^{-6}\Hom(K_{2,3},W)=\displaystyle\sum_{H\subseteq K_{2,3}}p^{-e(H)}\Hom(H,U).\]
Let $H\subseteq K_{2,3}$ be any subgraph. By (1) and (3) of Lemma \ref{K23consequenceslemma}, the conditions of Corollary \ref{simpleholdercor} are satisfied with $\epsilon=\left(\log\frac{1}{p}\right)^{-1}$. Since $\epsilon=p^{o(1)}$ and $I_p(W)=O(p^{\frac{5}{2}-o(1)})$  by (5) of Lemma \ref{K23consequenceslemma}, applying Corollary \ref{simpleholdercor} we obtain
\[\Hom(H,U)=O\left(p^{v(H)-2c(H)+\frac{5}{2}c(H)-o(1)}\right)=O\left(p^{v(H)+\frac{1}{2}c(H)-o(1)}\right).\]
Thus if $e(H)<v(H)+\frac{c(H)}{2}$, $p^{-e(H)}\Hom(H,U)=o(1)$. (Notice that $e(H)=v(H)+\frac{c(H)}{2}$ is exactly one of the conditions to be contributing!)

It is easy to see that $e(H)\leq v(H)$ for all $H\subsetneq K_{2,3}$. If $H\neq\emptyset$, $c(H)>0$, so if $H\not\in\{\emptyset,K_{2,3}\}$, $e(H)<v(H)+\frac{c(H)}{2}$. Therefore, $p^{-e(H)}\Hom(H,U)=o(1)$ unless $H=\emptyset$ or $H=K_{2,3}$. When $H=\emptyset$, clearly $p^{-e(H)}\Hom(H,U)=1$. Therefore,
\begin{align*}
1+\delta-o(1) & \leq\displaystyle\sum_{H\subseteq K_{2,3}}p^{-e(H)}\Hom(H,U) \\ & =1+o(1)+p^{-6}\Hom(K_{2,3},U).
\end{align*}
Rearranging, the lemma follows.
\end{proof}
Now that we have a lower bound for $\Hom(K_{2,3},U)$, we would like to improve on our argument in Example \ref{K23example}. There are some simple improvements we can make, but they will not be enough, as we will now demonstrate. In Example \ref{K23example} we showed (implicitly) that
\[\Hom(K_{2,3},U)\leq\| U\|_2^4\displaystyle\sup_{x\in [0,1]}\left(\displaystyle\int_0^1 U(x,y)^2 dy\right).\]
Now, instead of bounding $U^2\leq U$ (using the $1$-boundedness of $U$), as we did in that example, we may obtain better bounds if we simply keep the $U^2$. For example, instead of Lemma \ref{firstmomentlemma}, we may use the stronger Lemma \ref{momentlemma} to show that $I_p(W)\geq (1+o(1))\| U\|_2^2\log\frac{1}{p}$. Furthermore, for all $x\in [0,1]$,
\begin{align*}
\displaystyle\int_0^1 U(x,y)^2 dy & =\displaystyle\int_0^1 (W(x,y)-p)^2 dy \\ & =\displaystyle\int_0^1 W(x,y)^2 dy-2p\displaystyle\int_0^1 W(x,y) dy+p^2 \\ & \leq (1-2p)\displaystyle\int_0^1 W(x,y)+p^2=(1+o(1))p.
\end{align*}
Substituting into our bound yields
\[\Hom(H,U)\leq (1+o(1))p\left(\frac{I_p(W)}{\log\frac{1}{p}}\right)^2,\]
and using Lemma \ref{K23subgraphbreakdown} and rearranging we obtain
\[I_p(W)\geq (\sqrt{\delta}-o(1))p^{\frac{5}{2}}\log\frac{1}{p}.\]
As we can see, we are off by a factor of $2$ from the desired result.

The fundamental issue is that the symmetry of the graphon forces the `modified hub' in the first diagram of Figure \ref{constantgraphonfigure} to contribute \emph{twice} to the entropy, not just once, and we have not accounted for this fact anywhere in our argument. It therefore becomes very important to prove that most of the entropy comes from a small hub that will be forced to contribute twice.

We now define our `hub'.
\begin{definition}\label{K23hubdef}
Under Setup \ref{K23setup}, for any $b=b(p)$, let
\[B_b=\left\{x\in [0,1]:\displaystyle\int_0^1 U(x,y)^2 dy\geq b\right\}.\]
\end{definition}
Now, we would like to show that for almost all homomorphisms from $K_{2,3}$ to $U$, the two vertices $v_1,v_2$ of degree $3$ are sent into the hub $B_b$ for appropriate $b$.
\begin{lemma}\label{K23hublemma}
Assume Setup \ref{K23setup}. If $b\ll p$, then
\[\displaystyle\int_{\substack{x_{v_1}\in [0,1]\backslash B_b \\ x_{v_2},x_{w_1},x_{w_2},x_{w_3}\in [0,1]}}\displaystyle\prod_{i,j}\left|U(x_{v_i},x_{w_j})\right|dx_{v_1}dx_{v_2}dx_{w_1}dx_{w_2}dx_{w_3}=o(p^6),\]
where $B_b$ is as in Definition \ref{K23hubdef}.
\end{lemma}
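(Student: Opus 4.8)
The plan is a refinement of the Cauchy--Schwarz estimate of Example~\ref{K23example} that keeps careful track of where the $L^2$-mass of $U := W - p$ lives. Throughout write $S_2(x) := \left(\int_0^1 U(x,y)^2\,dy\right)^{1/2}$, so that $x \notin B_b$ says exactly $S_2(x)^2 < b$.

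The one ingredient doing the real work is the \emph{sharp} second-moment bound $\|U\|_2^2 = O(p^{5/2})$, \emph{with no logarithmic factor}. To obtain it, note that by Lemma~\ref{K23consequenceslemma}(3) every nonzero value $u$ of $U$ satisfies $|u| > \epsilon p = p/\log\frac{1}{p}$, which for $p$ small is at least $p^{1+\epsilon_0}$, where $\epsilon_0 > 0$ is the constant furnished by Lemma~\ref{momentlemma} applied with $c = 1$. Hence for every $(x,y)$ one has
\[
U(x,y)^2 \;\le\; (1+o(1))\,\frac{I_p(W(x,y))}{\log\frac{1}{p}}
\]
(trivially when $U(x,y) = 0$, and by Lemma~\ref{momentlemma} otherwise, the $o(1)$ being uniform over $u$ in the range $[p^{1+\epsilon_0},1]$ and its negative since the bound there is genuinely uniform). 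Integrating over $[0,1]^2$ and invoking the entropy upper bound $I_p(W) = O\!\left(p^{5/2}\log\frac{1}{p}\right)$ from Lemma~\ref{K23consequenceslemma}(5) yields $\|U\|_2^2 = O(p^{5/2})$. It is exactly the cancellation of the $\log\frac{1}{p}$ in the entropy bound against the $\log\frac{1}{p}$ produced by Lemma~\ref{momentlemma} that makes the lemma go through; the cruder bound $\|U\|_2^2 \le \|U\|_1 = O\!\left(p^{5/2}(\log\tfrac{1}{p})^2\right)$ coming from Corollary~\ref{simpleholdercor} would not suffice. I would also record the uniform estimate $\sup_x S_2(x)^2 \le \int_0^1 |U(x,y)|\,dy \le \int_0^1 (W(x,y)+p)\,dy \le (2+o(1))p$, using $1$-boundedness of $U$ and the approximate regularity of $W$ from Lemma~\ref{K23consequenceslemma}(1).

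Now to the integral itself. Integrating out $x_{w_1}, x_{w_2}, x_{w_3}$ one at a time and applying Cauchy--Schwarz to each inner integral exactly as in Example~\ref{K23example} gives, for each fixed $x_{v_1}$,
\[
\int_{[0,1]^4}\prod_{i=1}^{2}\prod_{j=1}^{3}|U(x_{v_i},x_{w_j})|\,dx_{v_2}\,dx_{w_1}\,dx_{w_2}\,dx_{w_3} \;\le\; S_2(x_{v_1})^3 \int_0^1 S_2(x_{v_2})^3\,dx_{v_2}.
\]
For the $x_{v_2}$-integral, $S_2(x_{v_2})^3 \le \big(\sup_x S_2(x)\big)S_2(x_{v_2})^2$, so $\int_0^1 S_2(x_{v_2})^3\,dx_{v_2} \le \sqrt{(2+o(1))p}\,\|U\|_2^2$. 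For the outer integral, $x_{v_1} \notin B_b$ gives $S_2(x_{v_1})^3 < \sqrt{b}\,S_2(x_{v_1})^2$, so $\int_{[0,1]\setminus B_b} S_2(x_{v_1})^3\,dx_{v_1} \le \sqrt{b}\,\|U\|_2^2$. Multiplying, the left-hand side of the lemma is at most
\[
\sqrt{b}\,\|U\|_2^2 \cdot \sqrt{(2+o(1))p}\,\|U\|_2^2 \;=\; O\!\left(\sqrt{bp}\,\|U\|_2^4\right) \;=\; O\!\left(\sqrt{bp}\;p^5\right),
\]
and since $b \ll p$ we have $\sqrt{bp} = o(p)$, so this is $o(p^6)$, as required.

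The main obstacle is thus concentrated entirely in the sharp estimate $\|U\|_2^2 = O(p^{5/2})$; once that is in hand, the rest is a routine two-step iterated Cauchy--Schwarz together with the elementary sup-bound on $S_2$. (Morally, restricting to $x_{v_1} \notin B_b$ replaces a $\sqrt{p}$ by a $\sqrt{b}$, and the gain $\sqrt{b/p} \to 0$ is exactly what is needed provided the other pieces are estimated logarithm-free.)
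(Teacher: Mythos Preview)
Your proof is correct and follows essentially the same route as the paper's: both apply Cauchy--Schwarz at the $w_j$ vertices to reduce the integral to $\int S_2(x_{v_1})^3\,dx_{v_1}\cdot\int S_2(x_{v_2})^3\,dx_{v_2}$, peel off one factor of $S_2$ from each integral (bounded by $\sqrt{b}$ on $[0,1]\setminus B_b$ and by $\sqrt{(1+o(1))p}$ globally), and then use the logarithm-free bound $\|U\|_2^2 = O(p^{5/2})$ obtained from Lemma~\ref{momentlemma} with $c=1$ combined with Lemma~\ref{K23consequenceslemma}(5). The only cosmetic difference is that the paper frames the Cauchy--Schwarz step as an instance of Theorem~\ref{graphholderthm} and obtains the slightly sharper $\sup_x S_2(x)^2 \le p+o(p)$ via the expansion $U^2 = W^2 - 2pW + p^2$ rather than your $(2+o(1))p$, but neither change affects the argument.
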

Note that this integral is (modulo absolute value signs) the contribution to the homomorphism count when $v_1$ is not sent into the hub $B_b$.
\begin{proof}[Proof of Lemma \ref{K23hublemma}]
We may apply either the Cauchy-Schwarz argument from earlier or (equivalently) Theorem \ref{graphholderthm} with $H=K_{2,3}$, $f_e=|U|$ for all $e\in E(H)$,  $w'_e=\frac{1}{2}$ for all $e$, $w_e=\frac{1}{2}$ on a $4$-cycle and $0$ otherwise, and $B_{v_1}=[0,1]\backslash B_b$ and the other $B_v=[0,1]$. We obtain that the given integral is bounded above by
\begin{align*}
& \displaystyle\sup_{x_{v_1}\in [0,1]\backslash B_b}\left(\displaystyle\int_0^1 U(x_{v_1},y)^2 dy\right)^{\frac{1}{2}}\displaystyle\sup_{x_{v_2}\in [0,1]}\left(\displaystyle\int_0^1 U(x_{v_2},y)^2 dy\right)^{\frac{1}{2}} \\ & \cdot\displaystyle\int_{[0,1]\backslash B_b\times [0,1]} U(x,y)^2 dx dy\displaystyle\int_{[0,1]\times [0,1]} U(x,y)^2 dx dy.
\end{align*}
By the argument outlined earlier, $\displaystyle\int_0^1 U(x_0,y)^2 dy\leq p+o(p)$ for all $x_0\in [0,1]$, and by the definition of $B_b$ it is at most $b$ when $x_0\in [0,1]\backslash B_b$. Thus (replacing the third integral by one over $[0,1]^2$, we have an upper bound of
\begin{equation}\label{sqrtbp}
\sqrt{bp}\left(\displaystyle\int_{[0,1]^2}U(x,y)^2 dx dy\right)^2.
\end{equation}
By Lemma \ref{momentlemma} and (3) of Lemma \ref{K23consequenceslemma},
\[I_p(W)\geq (1-o(1))\displaystyle\int_{[0,1]^2}U(x,y)^2 dx dy\log\frac{1}{p}.\]
By (5) of Lemma \ref{K23consequenceslemma}, $I_p(W)=O\left(p^{\frac{5}{2}}\log\frac{1}{p}\right)$, so
\[\displaystyle\int_{[0,1]^2}U(x,y)^2 dx dy\leq (1+o(1))p^{\frac{5}{2}}.\]
Thus (\ref{sqrtbp}) is bounded above by $\sqrt{b}p^{\frac{11}{2}}$. Since $b\ll p$ by assumption, the Lemma follows.
\end{proof}
We will now show our lower bound on entropy from earlier, but this time restricted to the hub.
\begin{lemma}\label{hubentropylemma}
Assume Setup \ref{K23setup} and let $b\ll p$. Then
\[\displaystyle\int_{B_b\times [0,1]}I_p(W(x,y)) dx dy\geq (\sqrt{\delta}-o(1))p^{\frac{5}{2}}\log\frac{1}{p},\]
where $B_b$ is as in Definition \ref{K23hubdef}.
\end{lemma}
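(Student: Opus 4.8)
The plan is to refine the Cauchy--Schwarz computation of Example \ref{K23example}, but this time first \emph{localizing} the two degree-$3$ vertices of $K_{2,3}$ to the hub $B_b$, so that the wasteful step of bounding one factor of $S_2$ by its supremum only costs us entropy integrated over the small set $B_b$ rather than over all of $[0,1]^2$.

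First I would record that, by Lemma \ref{K23subgraphbreakdown}, $\Hom(K_{2,3},|U|)\ge\Hom(K_{2,3},U)\ge(\delta-o(1))p^6$. Since $b\ll p$, Lemma \ref{K23hublemma} says the portion of this integral with $x_{v_1}\notin B_b$ is $o(p^6)$, and by the symmetry $v_1\leftrightarrow v_2$ the same holds with $x_{v_2}\notin B_b$; inclusion--exclusion then gives
\[\int_{B_b\times B_b\times[0,1]^3}\prod_{i=1}^2\prod_{j=1}^3\left|U(x_{v_i},x_{w_j})\right|\,dx_{v_1}dx_{v_2}dx_{w_1}dx_{w_2}dx_{w_3}\ge(\delta-o(1))p^6.\]
Applying Cauchy--Schwarz to each inner integral over $x_{w_j}$ exactly as in Example \ref{K23example} then yields, with $S_2(x):=\left(\int_0^1U(x,y)^2\,dy\right)^{1/2}$,
\[(\delta-o(1))p^6\le\int_{B_b\times B_b}S_2(x_{v_1})^3S_2(x_{v_2})^3\,dx_{v_1}dx_{v_2}=\left(\int_{B_b}S_2(x)^3\,dx\right)^2,\]
so that $\int_{B_b}S_2(x)^3\,dx\ge(\sqrt\delta-o(1))p^3$.

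Next I would invoke the uniform bound $S_2(x)^2=\int_0^1U(x,y)^2\,dy\le(1-2p)\int_0^1W(x,y)\,dy+p^2=(1+o(1))p$, which follows from $W\le1$ together with part (1) of Lemma \ref{K23consequenceslemma} (this is the same estimate used in the run-up to Definition \ref{K23hubdef}). Hence $S_2(x)\le(1+o(1))\sqrt p$ everywhere, and pulling this factor out of one of the three copies of $S_2$,
\[(\sqrt\delta-o(1))p^3\le\int_{B_b}S_2(x)^3\,dx\le(1+o(1))\sqrt p\int_{B_b}S_2(x)^2\,dx=(1+o(1))\sqrt p\int_{B_b\times[0,1]}U(x,y)^2\,dx\,dy,\]
giving $\int_{B_b\times[0,1]}U(x,y)^2\,dx\,dy\ge(\sqrt\delta-o(1))p^{5/2}$. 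Finally, by part (3) of Lemma \ref{K23consequenceslemma} every nonzero value of $U$ satisfies $|U(x,y)|\ge p^{1+o(1)}$, so Lemma \ref{momentlemma} with $c=1$ gives the pointwise bound $I_p(W(x,y))\ge(1-o(1))U(x,y)^2\log\frac1p$ (trivially true when $U(x,y)=0$); integrating over $B_b\times[0,1]$ and substituting the previous display finishes the proof.

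The computation is essentially routine once Lemmas \ref{K23subgraphbreakdown} and \ref{K23hublemma} are in hand; the only genuine content is the decision to restrict to $B_b\times B_b$ \emph{before} applying Cauchy--Schwarz, which is what converts the lossy ``$S_2\le\sqrt p$'' step from an estimate on the full $L^2$ mass of $U$ into one involving only the $L^2$ mass of $U$ on the small set $B_b$. I do not anticipate any real obstacle beyond careful bookkeeping of the various $o(1)$ error terms.
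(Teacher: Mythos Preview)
Your proof is correct and follows essentially the same route as the paper's: localize $v_1,v_2$ to $B_b$ using Lemmas \ref{K23subgraphbreakdown} and \ref{K23hublemma}, apply Cauchy--Schwarz at the $w_j$, pull out one factor of $S_2\le(1+o(1))\sqrt p$, and convert $U^2$ to $I_p(W)$ via Lemma \ref{momentlemma}. The only cosmetic difference is that you carry absolute values on $U$ from the outset (passing through $\Hom(K_{2,3},|U|)\ge\Hom(K_{2,3},U)$), whereas the paper keeps the signed $U$ until the Cauchy--Schwarz step; your version is if anything slightly tidier, and your ``inclusion--exclusion'' is really just the union bound $\int_{\{v_1\notin B_b\}\cup\{v_2\notin B_b\}}\le\int_{v_1\notin B_b}+\int_{v_2\notin B_b}$, which is all that is needed.
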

\begin{proof}
Since $v_1$ and $v_2$ play symmetric roles, Lemma \ref{K23hublemma} is true with $v_1$ and $v_2$ switched. Applying Lemmas \ref{K23subgraphbreakdown} and \ref{K23hublemma} thus yields
\begin{align*}
& \displaystyle\int_{\substack{x_{v_1},x_{v_2}\in B_b \\ x_{w_1},x_{w_2},x_{w_3}\in [0,1]}}\displaystyle\prod_{i,j} U(x_{v_i},x_{w_j})dx_{v_1}dx_{v_2}dx_{w_1}dx_{w_2}dx_{w_3} \\ & =\Hom(H,U)-\displaystyle\int_{\substack{x_{v_1}\in B_b, x_{v_2}\in [0,1]\backslash B_b \\ x_{w_1},x_{w_2},x_{w_3}\in [0,1]}}\displaystyle\prod_{i,j} U(x_{v_i},x_{w_j})dx_{v_1}dx_{v_2}dx_{w_1}dx_{w_2}dx_{w_3} \\ & -\displaystyle\int_{\substack{x_{v_1}\in[0,1]\backslash B_b,  \\ x_{v_2},x_{w_1},x_{w_2},x_{w_3}\in [0,1]}}\displaystyle\prod_{i,j} U(x_{v_i},x_{w_j})dx_{v_1}dx_{v_2}dx_{w_1}dx_{w_2}dx_{w_3} \\ & \geq (\delta-o(1))p^6-\displaystyle\int_{\substack{x_{v_2}\in [0,1]\backslash B_b \\ x_{v_1},x_{w_1},x_{w_2},x_{w_3}\in [0,1]}}\displaystyle\prod_{i,j} |U(x_{v_i},x_{w_j})|dx_{v_1}dx_{v_2}dx_{w_1}dx_{w_2}dx_{w_3} \\ & -\displaystyle\int_{\substack{x_{v_1}\in [0,1]\backslash B_b \\ x_{v_2},x_{w_1},x_{w_2},x_{w_3}\in [0,1]}}\displaystyle\prod_{i,j} |U(x_{v_i},x_{w_j})|dx_{v_1}dx_{v_2}dx_{w_1}dx_{w_2}dx_{w_3} \\ & \geq (\delta-o(1))p^6.
\end{align*}
So we have successfully proven a bound with $v_1$ and $v_2$ required to be sent into the hub $B_b$.

Let $S_2(x)=\left(\displaystyle\int_0^1 U(x,y)^2 dy\right)^{\frac{1}{2}}$. Since
\begin{align*}
\displaystyle\int_0^1 U(x,y)^2 dy & =\displaystyle\int_0^1 W(x,y)^2 dy-2p\displaystyle\int_0^1 W(x,y) dy+p^2 \\ & \leq (1-2p)\displaystyle\int_0^1 W(x,y) dy \\ & =p+o(p)
\end{align*}
by (1) of Lemma \ref{K23consequenceslemma}, we have that $S_2(x)\leq (1+o(1))\sqrt{p}$ for all $x\in [0,1]$.

Now we apply Cauchy-Schwarz (or Theorem \ref{graphholderthm}) in a similar way to Example \ref{K23example} and the previous lemma. In particular,
\begin{align*}
(\delta-o(1))p^6 & \leq\displaystyle\int_{\substack{x_{v_1},x_{v_2}\in B_b \\ x_{w_1},x_{w_2},x_{w_3}\in [0,1]}}\displaystyle\prod_{i,j} U(x_{v_i},x_{w_j})dx_{v_1}dx_{v_2}dx_{w_1}dx_{w_2}dx_{w_3} \\ & \leq\displaystyle\int_{B_b}S_2(x_{v_1})^3 dx_{v_1}\displaystyle\int_{B_b}S_2(x_{v_2})^3 dx_{v_2} \\ & \leq p\displaystyle\int_{B_b}S_2(x_{v_1})^2 dx_{v_1}\displaystyle\int_{B_b}S_2(x_{v_2})^2 dx_{v_2} \\ & =p\left(\displaystyle\int_{B_b\times [0,1]}U(x,y)^2 dx dy\right)^2,
\end{align*}
so
\[\displaystyle\int_{B_b\times [0,1]}U(x,y)^2 dx dy\geq (\sqrt{\delta}-o(1))p^{\frac{5}{2}}\]
By Lemma \ref{momentlemma} and (3) of Lemma \ref{K23consequenceslemma}, $I_p(W(x,y))\geq (1-o(1))U(x,y)^2\log\frac{1}{p}$ for all $x,y\in [0,1]$. Substituting,
\[\displaystyle\int_{B_b\times [0,1]}I_p(W(x,y)) dx dy\geq (\sqrt{\delta}-o(1))p^{\frac{5}{2}}\log\frac{1}{p},\]
as desired.
\end{proof}
Since $W$ is symmetric, the integral in Lemma \ref{hubentropylemma} contributes twice to the total entropy of $W$, once on $B_b\times [0,1]$ and once on $[0,1]\times B_b$. Thus we will be done if we can show that the contribution of the overlap $B_b\times B_b$ to the total entropy is small. We do this simply by showing that $B_b$ has small measure.
\begin{lemma}
Under Setup \ref{K23setup}, for any $b\geq 0$,
\[m(B_b)=O\left(\frac{p^{\frac{5}{2}}}{b}\right),\]
with $B_b$ as in Definition \ref{K23hubdef}.
\end{lemma}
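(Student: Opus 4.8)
This is a Markov-type (first-moment) estimate. The plan is to integrate the quantity $\int_0^1 U(x,y)^2\,dy$ over $B_b$ and bound it two ways. On one hand, by the definition of $B_b$ in Definition \ref{K23hubdef}, the integrand is at least $b$ on $B_b$, so
\[\displaystyle\int_{B_b}\displaystyle\int_0^1 U(x,y)^2\,dy\,dx\geq b\cdot m(B_b).\]
On the other hand, enlarging the outer region from $B_b$ to all of $[0,1]$ only increases the integral, so this quantity is at most $\displaystyle\int_{[0,1]^2}U(x,y)^2\,dx\,dy$.

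The remaining step is to recall the bound $\displaystyle\int_{[0,1]^2}U(x,y)^2\,dx\,dy\leq (1+o(1))p^{\frac{5}{2}}$, which was already established in the proof of Lemma \ref{K23hublemma}: by Lemma \ref{momentlemma} together with (3) of Lemma \ref{K23consequenceslemma} we have $I_p(W)\geq (1-o(1))\log\frac{1}{p}\int_{[0,1]^2}U(x,y)^2\,dx\,dy$, and by (5) of Lemma \ref{K23consequenceslemma} we have $I_p(W)=O\!\left(p^{\frac{5}{2}}\log\frac{1}{p}\right)$, so dividing gives the claimed second-moment bound on $U$. Combining the two displays yields $b\cdot m(B_b)\leq (1+o(1))p^{\frac{5}{2}}$, hence $m(B_b)=O\!\left(p^{\frac{5}{2}}/b\right)$, as desired.

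There is no real obstacle here; the statement is a direct consequence of Chebyshev/Markov applied to the function $x\mapsto\int_0^1 U(x,y)^2\,dy$, using the already-available global $L^2$ bound on $U$. The only thing to be slightly careful about is that the inequality is claimed for \emph{all} $b\geq 0$ (including $b=0$, where it is vacuous since $m(B_0)\leq 1$ and the right-hand side should be interpreted as $\infty$, or one simply notes the bound is trivial for $b\lesssim p^{5/2}$).
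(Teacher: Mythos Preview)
Your proposal is correct and is essentially the same Markov/first-moment argument as the paper's proof: both bound $b\cdot m(B_b)$ by $\int_{B_b\times[0,1]}U^2$, then use the pointwise inequality $U^2\leq(1+o(1))(\log\tfrac{1}{p})^{-1}I_p(W(\cdot,\cdot))$ from Lemma~\ref{momentlemma} together with (3) and (5) of Lemma~\ref{K23consequenceslemma} to conclude $\int U^2=O(p^{5/2})$. The only cosmetic difference is that the paper bounds the entropy integral over $B_b\times[0,1]$ directly before passing to the total entropy, while you enlarge to $[0,1]^2$ first; this makes no difference.
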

\begin{proof}
Integrating the definition of $B_b$,
\[\displaystyle\int_{B_b\times [0,1]}U(x,y)^2 dx dy\geq m(B_b)b.\]
As shown in the previous lemma, for all $x,y$, $I_p(W(x,y))\geq (1-o(1))U(x,y)^2\log\frac{1}{p}$. Thus
\begin{align*}
m(B_b)b & \leq\displaystyle\int_{B_b\times [0,1]}U(x,y)^2 dx dy \\ & \leq(1-o(1))\left(\log\frac{1}{p}\right)^{-1}\displaystyle\int_{B_b\times [0,1]}I_p(W(x,y))dx dy \\ & \leq (1-o(1))\left(\log\frac{1}{p}\right)^{-1}I_p(W) \\ & =O(p^{\frac{5}{2}}),
\end{align*}
with the last step by (5) of Lemma \ref{K23consequenceslemma}, finishing the proof.
\end{proof}
\begin{proof}[Proof of Proposition \ref{K23prop}]
We are given $\delta$ and $p\to 0$. Take $\epsilon=\left(\log\frac{1}{p}\right)^{-1}$ and $W$ as in Setup \ref{K23setup}. We are now in the situation of Setup \ref{K23setup}.

Further take some $b$ with $p^{\frac{5}{4}}\ll b\ll p$ and recall from Definition \ref{K23hubdef} the definition of $B_b$.

Combining the previous two lemmas, and using the fact that $I_p(x)\leq\log\frac{1}{p}$ for all $x\in [0,1]$,
\begin{align*}
I_p(W) & =\displaystyle\int_{[0,1]\times [0,1]}I_p(W(x,y)) dx dy \\ & \geq\displaystyle\int_{B_b\times [0,1]}I_p(W(x,y)) dx dy+\displaystyle\int_{B_b\times [0,1]}I_p(W(x,y)) dx dy-\displaystyle\int_{B_b\times B_b}I_p(W(x,y)) dx dy \\ & \geq (2\sqrt\delta-o(1))p^{\frac{5}{2}}\log\frac{1}{p}-m(B_b)^2\log\frac{1}{p} \\ & \geq (2\sqrt\delta-o(1))p^{\frac{5}{2}}\log\frac{1}{p}-O\left(b^{-2}p^5\log\frac{1}{p}\right) \\ & \geq(2\sqrt\delta-o(1))p^{\frac{5}{2}}\log\frac{1}{p},
\end{align*}
where the last step is because we chose $b\gg p^{\frac{5}{4}}$. By our choice of $W$ (as per Setup \ref{K23setup}), we know that
\[I_p(W)\leq (1+o(1))\displaystyle\inf_{\substack{\Hom(K_{2,3},W')\geq (1+\delta)p^6 \\ W\text{ }p\text{-regular}}}I_p(W').\]
Thus $\displaystyle\inf_{\substack{\Hom(K_{2,3},W')\geq (1+\delta)p^6 \\ W\text{ }p\text{-regular}}}I_p(W')\geq (2\sqrt{\delta}-o(1))p^{\frac{5}{2}}\log\frac{1}{p}$, and we are done.
\end{proof}
Notice the importance in this example of determining which vertices are in our `hub' $B_b$.
\section{Separating Out the Hub}\label{constantsection1}
In this and the next section, we prove the lower bound of Theorem \ref{varcorrectconstant}. We have the following setup, which we will use for the next two sections.
\begin{setup}\label{constantsetup}
Let $K$ be some nonforest graph none of whose contributing subgraphs have bad edges and whose $2$-core is not a disjoint union of cycles. Let $\gamma:=\gamma(K)$ and notice that $\gamma>0$, as if $H$ is the $2$-core of $K$, $e(H)>v(H)$. Fix a constant $\delta>0$, and take $p\to 0$.

Recall from Definition \ref{nicegraphonsdef} the definition of $\Gamma_{\epsilon}(K,\delta)$. We will let our $\epsilon$ be $1/\log(1/p)$ (really, we just need $\epsilon=o(1)$ and $\epsilon=p^{o(1)}$). By Lemma \ref{replacebyplemma}, for any $p<\frac{1}{e}$ we may find $W=W(p)\in\Gamma_{1/\log(1/p)}(K,\delta)$ such that
\[I_p(W)\leq (1+o(1))\displaystyle\inf_{\substack{\Hom(K,W')\geq (1+\delta)p^{e(K)} \\ W'\text{ }p\text{-regular}}}I_p(W').\]
Let $U=W-p$.
\end{setup}
Note that under this setup, since $I_p(W)\leq (1+o(1))\displaystyle\inf_{\substack{\Hom(K,W')\geq (1+\delta)p^{e(K)} \\ W'\text{ }p\text{-regular}}}I_p(W')$, to prove the upper bound of Theorem \ref{varcorrectconstant} it suffices to show that
\begin{equation}\label{desiredconstantequation}
I_p(W)\geq (2-o(1))\rho(K,\delta)p^{2+\gamma}\log\frac{1}{p}.
\end{equation}

We state some of the simple consequences of Setup \ref{constantsetup} in the following lemma, exactly paralleling Lemma \ref{K23consequenceslemma}.
\begin{lemma}\label{constantconsequenceslemma}
Under Setup \ref{constantsetup}, the following properties hold.
\begin{enumerate}
\item $\displaystyle\int_0^1 W(x_0,y) dy=(1+o(1))p$ for all $x_0\in [0,1]$
\item $\Hom(K,W)\geq (1+\delta-o(1))p^{e(K)}$.
\item $W$ takes no values in $[p-p^{1+o(1)},p+p^{1+o(1)}]\backslash p$.
\item $\Hom(H,|U|)\leq 2^{e(K)}(1+\delta)p^{e(H)}$ for all $H\subseteq K$.
\item $I_p(W)=O\left(p^{2+\gamma}\log\frac{1}{p}\right)$.
\item $\|U\|_1=O\left(p^{2+\gamma}\left(\log\frac{1}{p}\right)^2\right)$.
\end{enumerate}
\end{lemma}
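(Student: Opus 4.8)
The plan is to follow the proof of Lemma~\ref{K23consequenceslemma} essentially verbatim for items (1)--(5), so the only new work is item (6). For (1)--(4): Setup~\ref{constantsetup} places $W$ in $\Gamma_{1/\log(1/p)}(K,\delta)$, and with $\epsilon := 1/\log(1/p)$ one has both $\epsilon = o(1)$ and $\epsilon = p^{o(1)}$ (indeed $\epsilon p = p^{1+o(1)}$). Feeding this into the four clauses of Definition~\ref{nicegraphonsdef}: clause (1) gives $\int_0^1 W(x_0,y)\,dy \in [(1-\epsilon)p,(1+\epsilon)p] = (1+o(1))p$; clause (2) gives $\Hom(K,W)\ge (1-\epsilon)^{e(K)}(1+\delta)p^{e(K)} = (1+\delta-o(1))p^{e(K)}$ since $e(K) = O(1)$; clause (3) says $W$ omits $[(1-\epsilon)p,(1+\epsilon)p]\setminus\{p\} = [p-p^{1+o(1)},p+p^{1+o(1)}]\setminus\{p\}$; and clause (4) is literally item (4). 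For (5): the choice of $W$ in Setup~\ref{constantsetup} makes $I_p(W)$ at most $(1+o(1))$ times the variational infimum, and the upper bound half of Theorem~\ref{varcorrectexponent} bounds that infimum by $(2+o(1))\rho(K,\delta)p^{2+\gamma}\log\frac1p = O(p^{2+\gamma}\log\frac1p)$.

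The new ingredient is (6), and I would obtain it exactly as in the proof of Corollary~\ref{simpleholdercor}, arguing pointwise. By item (3), for a.e.\ $(x,y)$ either $U(x,y) = 0$, in which case $I_p(W(x,y)) = 0 = \epsilon|U(x,y)|$, or $|U(x,y)| > \epsilon p$, in which case Lemma~\ref{firstmomentlemma} (valid since $U(x,y)\in[-p,1-p]$ as $W$ is a graphon, and $\epsilon\in(0,1]$) gives $I_p(W(x,y)) = \Omega(\epsilon|U(x,y)|)$; either way $I_p(W(x,y)) = \Omega(\epsilon|U(x,y)|)$ with an absolute implied constant. Integrating over $[0,1]^2$ yields $I_p(W) = \Omega(\epsilon\|U\|_1)$, i.e.\ $\|U\|_1 = O(\epsilon^{-1}I_p(W)) = O\!\left(\log\tfrac1p\cdot p^{2+\gamma}\log\tfrac1p\right) = O\!\left(p^{2+\gamma}(\log\tfrac1p)^2\right)$, using item (5) and $\epsilon^{-1} = \log\frac1p$.

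I expect no real obstacle here; the mildest subtlety is the uniform pointwise lower bound $I_p(W(x,y))\gtrsim\epsilon|U(x,y)|$ extracted from the asymptotic Lemma~\ref{firstmomentlemma}, but this is precisely the manipulation already carried out (and justified) in the proof of Corollary~\ref{simpleholdercor}, so it transfers without change.
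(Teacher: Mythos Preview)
Your proposal is correct and follows essentially the same approach as the paper: items (1)--(5) are derived exactly as in Lemma~\ref{K23consequenceslemma}, and for (6) the paper likewise applies Lemma~\ref{firstmomentlemma} pointwise (using that $|U|$ takes no values in $(0,\epsilon p)$) to get $|U(x,y)|\le C\epsilon^{-1}I_p(W(x,y))$, then integrates and invokes (5).
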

\begin{proof}
Since $W\in\Gamma_{1/\log(1/p)}(K,\delta)$, (1) through (4) follow from recalling Definition \ref{nicegraphonsdef} and noting that $\epsilon:=\log\frac{1}{p}$ satisfies $\epsilon=o(1)$ and $\epsilon=p^{o(1)}$.

(5) follows from the fact that $I_p(W)\leq (1+o(1))\displaystyle\inf_{\substack{\Hom(K,W')\geq (1+\delta)p^{e(K)} \\ W'\text{ }p\text{-regular}}}I_p(W')$ and the upper bound of Theorem \ref{varcorrectexponent}.

To prove (6), we note more specifically that we chose $\epsilon=1/\log(1/p)$. Thus $|U|$ takes no values in $(0,p/\log(1/p))$ by (3) of Definition \ref{nicegraphonsdef}. Thus by Lemma \ref{firstmomentlemma} with $\epsilon=1/\log(1/p)$, there is some absolute constant $C$ such that $|U(x,y)|=C\log(1/p)I_p(W(x,y)))$. Integrating over $[0,1]^2$ and applying (5),
\[\|U\|_1\leq C\log\frac{1}{p}I_p(W)=O\left(p^{2+\gamma}\left(\log\frac{1}{p}\right)^2\right),\]
as desired.
\end{proof}
We proceed with the following simple lemma.
\begin{lemma}\label{subgraphbreakdownresult}
Under Setup \ref{constantsetup},
\[\displaystyle\sum_{H\subseteq K}p^{-e(H)}\Hom(H,U)\geq 1+\delta-o(1).\]
\end{lemma}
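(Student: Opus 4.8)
The plan is to combine two facts already in hand. First I would invoke Lemma \ref{subgraphbreakdown} with the symmetric measurable function $W$ and $U=W-p$, which gives the exact identity
\[
\Hom(K,W)=\sum_{H\subseteq K}p^{e(K)-e(H)}\Hom(H,U).
\]
Dividing through by $p^{e(K)}$ (valid since $p>0$) yields
\[
p^{-e(K)}\Hom(K,W)=\sum_{H\subseteq K}p^{-e(H)}\Hom(H,U).
\]

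Second, I would apply part (2) of Lemma \ref{constantconsequenceslemma}, which states that under Setup \ref{constantsetup} we have $\Hom(K,W)\geq (1+\delta-o(1))p^{e(K)}$, hence $p^{-e(K)}\Hom(K,W)\geq 1+\delta-o(1)$. Substituting this lower bound into the displayed identity gives exactly the claimed inequality $\sum_{H\subseteq K}p^{-e(H)}\Hom(H,U)\geq 1+\delta-o(1)$, completing the proof.

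There is no real obstacle here: the lemma is a bookkeeping step that simply restates the approximate homomorphism-count hypothesis of Setup \ref{constantsetup} in the ``subgraph-expanded'' form that subsequent arguments (which will bound the individual terms $p^{-e(H)}\Hom(H,U)$ via Corollary \ref{simpleholdercor} and the Hölder machinery) need as their starting point. The only things to be careful about are that $U$ need not be a graphon but is symmetric and measurable, so Lemma \ref{subgraphbreakdown} genuinely applies, and that the $o(1)$ term is inherited verbatim from Lemma \ref{constantconsequenceslemma}(2).
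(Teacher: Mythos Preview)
Your proof is correct and essentially identical to the paper's: both apply Lemma \ref{subgraphbreakdown} to rewrite $p^{-e(K)}\Hom(K,W)$ as the desired sum, then invoke part (2) of Lemma \ref{constantconsequenceslemma} to bound this quantity below by $1+\delta-o(1)$.
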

\begin{proof}
Recalling that $U=W-p$, Lemma \ref{subgraphbreakdown} implies that
\[p^{-e(K)}\Hom(K,W)=\displaystyle\sum_{H\subseteq K}p^{-e(H)}\Hom(H,U).\]
Now, by (2) of Lemma \ref{constantconsequenceslemma},
\[\Hom(K,W)\geq (1+\delta-o(1))p^{e(K)}.\]
Substituting yields the desired result.
\end{proof}
We would like to determine which of the terms on the left side of Lemma \ref{subgraphbreakdownresult} contribute non-negligibly to the sum. The following two lemmas combined restrict us to the contributing subgraphs.
\begin{lemma}\label{degree1lemma}
Under Setup \ref{constantsetup}, if $H\subseteq K$ has some vertex $v_0$ of degree $1$, then
\[\Hom(H,U)=o\left(p^{e(H)}\right).\]
\end{lemma}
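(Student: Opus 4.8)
The plan is to exploit the degree-$1$ vertex $v_0$ to peel off one integration and reduce to a smaller subgraph, then apply the Hölder machinery already developed (Theorem \ref{simpleholderresult} / Corollary \ref{simpleholdercor}) to the remainder. Write $H' = H \setminus v_0$ for the graph obtained by deleting $v_0$ and its unique incident edge $e_0 = v_0 u$. Expanding the homomorphism integral and performing the $x_{v_0}$ integration first,
\[
\Hom(H,|U|) = \int_{[0,1]^{V(H')}} \left( \int_0^1 |U(x_u, x_{v_0})|\, dx_{v_0} \right) \prod_{e \in E(H')} |U(x_v, x_{v'})| \prod_{v \in V(H')} dx_v.
\]
The inner integral is $\int_0^1 |U(x_u,y)|\,dy = \int_0^1 |W(x_u,y)-p|\,dy \le \int_0^1 (W(x_u,y)+p)\,dy \le (2+\epsilon)p = O(p)$, using condition (1) of Definition \ref{nicegraphonsdef} (equivalently (1) of Lemma \ref{constantconsequenceslemma}). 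Hence $\Hom(H,|U|) = O\!\left( p \cdot \Hom(H',|U|) \right)$.

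Next I apply Corollary \ref{simpleholdercor} to $H'$: since $W$ satisfies the degree condition and takes no values in $[(1-\epsilon)p,(1+\epsilon)p]\setminus\{p\}$ with $\epsilon = 1/\log(1/p) = p^{o(1)}$, we get $\Hom(H',|U|) = O\!\left( p^{o(1)} p^{v(H') - 2c(H')} I_p(W)^{c(H')} \right)$, and by (5) of Lemma \ref{constantconsequenceslemma}, $I_p(W) = O(p^{2+\gamma}\log\frac1p) = p^{2+\gamma-o(1)}$. Therefore
\[
\Hom(H,|U|) = O\!\left( p^{1 + v(H') - 2c(H') + (2+\gamma)c(H') - o(1)} \right) = O\!\left( p^{1 + v(H') + \gamma c(H') - o(1)} \right).
\]
Now I must compare the exponent $1 + v(H') + \gamma\, c(H')$ to $e(H)$. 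Since $v(H') = v(H) - 1$ and $e(H') = e(H) - 1$, this exponent equals $v(H) + \gamma\, c(H') - e(H') + e(H) - e(H) = e(H) + \big( v(H') - e(H') + \gamma c(H') \big)$. Because $\gamma = \gamma(K) = \max_{F\subseteq K}\frac{e(F)-v(F)}{c(F)} \ge \frac{e(H')-v(H')}{c(H')}$ (taking $F = H'$; if $H'$ has no edges this is handled trivially since then $\Hom(H',|U|)=1$ and the bound $O(p)=o(p^{e(H)})$ is immediate as $e(H)=1$ forces $H$ to be a single edge, excluded), we have $v(H') - e(H') + \gamma\, c(H') \ge 0$, so the exponent is at least $e(H)$. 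To get the \emph{strict} inequality $\Hom(H,|U|) = o(p^{e(H)})$ rather than merely $O(p^{e(H)})$, I note that the $p^{-o(1)}$ slack from $I_p(W) = p^{2+\gamma-o(1)}$ and from the $\epsilon^{-c(H')}$ factor in Corollary \ref{simpleholdercor} is more than absorbed by the genuine extra factor of $p^{1}$ we peeled off: even in the worst case $v(H') - e(H') + \gamma c(H') = 0$, the exponent is $e(H) - o(1)$ times nothing — wait, I should be careful here.

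Let me restate the last step cleanly. We have shown $\Hom(H,|U|) = O\!\left(p \cdot p^{o(1)} \cdot p^{e(H) - 1 - (v(H')-e(H')+\gamma c(H'))\cdot(-1)}\right)$ — to avoid sign confusion: $\Hom(H,|U|) \lesssim p^{\,e(H) + (v(H')-e(H')+\gamma c(H')) - o(1)}$ where the $-o(1)$ collects all the logarithmic factors. If $v(H') - e(H') + \gamma c(H') > 0$ then, since $c(H')$ is bounded and the quantity is a fixed positive rational (for each fixed $H'$, hence uniformly over the finitely many subgraphs), the $p^{-o(1)}$ is dwarfed and we get $o(p^{e(H)})$ as desired. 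The only borderline case is $v(H') - e(H') + \gamma c(H') = 0$, i.e.\ $H'$ itself achieves the maximum defining $\gamma$; here I should not use the full strength of Corollary \ref{simpleholdercor} (which bleeds a log factor) but instead argue directly. In that case I reuse condition (3) of Definition \ref{nicegraphonsdef} together with Lemma \ref{momentlemma}: since $|U|$ is bounded away from $0$ by $p^{1+o(1)}$ wherever nonzero, we actually have $\Hom(H',|U|) = O(p^{v(H')-2c(H')}(I_p(W)/\log\frac1p)^{c(H')})$ with \emph{no} $\epsilon^{-c(H')}$ loss when using the second moment, and combined with the extra genuine factor $p$ from the degree-$1$ peel this still yields $o(p^{e(H)})$. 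The main obstacle is exactly this borderline bookkeeping — ensuring the single peeled factor of $p$ strictly beats the accumulated polylogarithmic slack — but the factor $p = p^{1}$ is a genuine power of $p$ while the slack is only $p^{-o(1)}$, so the margin is ample once the constants are tracked.
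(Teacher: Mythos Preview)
Your approach has a genuine gap. You bound $\Hom(H,|U|)$ rather than $\Hom(H,U)$, and the peeled inner integral $\int_0^1 |U(x_u,y)|\,dy$ is only $O(p)$, not $o(p)$. In the borderline case where $H':=H\setminus v_0$ itself attains the maximum defining $\gamma$ (i.e.\ $e(H')-v(H')=\gamma\,c(H')$), your bound collapses to $\Hom(H,|U|)=O(p^{e(H)-o(1)})$, and the polylogarithmic slack from Corollary \ref{simpleholdercor} is not beaten by anything: the ``genuine factor of $p$'' you invoke has already been spent matching $e(H)=e(H')+1$. Your proposed fix via second moments does not help---Theorem \ref{simpleholderresult} controls $\Hom(H',|U|)$ by $\|U\|_1^{c(H')}$, and replacing $\|U\|_1$ by $\|U\|_2$ only weakens the bound. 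In fact the statement $\Hom(H,|U|)=o(p^{e(H)})$ is simply false for the construction graphons of Figure \ref{constantgraphonfigure}: if $H_0\subseteq K$ is contributing with valid set $A$, $u\in A$, and $H=H_0\cup\{v_0u\}$, then sending $v_0$ into the interval of length $\approx p$ puts the edge $v_0u$ in a block where $|U|\approx 1$, and one checks $\Hom(H,|U|)=\Omega(p^{e(H)})$.

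The paper's argument avoids this entirely by keeping the sign when peeling off $v_0$: one integrates $U$ (not $|U|$) over $x_{v_0}$ first and only then takes absolute values, obtaining
\[
|\Hom(H,U)|\le \int \Bigl|\int_0^1 U(x_{v_0},x_u)\,dx_{v_0}\Bigr|\cdot \prod_{e\in E(H_0)}|U|\,dx
\le \sup_{x}\Bigl|\int_0^1 U(x,y)\,dy\Bigr|\cdot \Hom(H_0,|U|).
\]
Approximate $p$-regularity (property (1) of Lemma \ref{constantconsequenceslemma}) gives $\bigl|\int_0^1 U(x,y)\,dy\bigr|=o(p)$, and property (4) gives $\Hom(H_0,|U|)=O(p^{e(H_0)})$ directly, with no appeal to Corollary \ref{simpleholdercor}. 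The product is $o(p^{e(H)})$. The missing idea in your attempt is precisely this use of the signed integral to exploit regularity.
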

\begin{proof}
By (1) and (4) of Lemma \ref{constantconsequenceslemma},
\[\left|\displaystyle\int_0^1 U(x_0,y)dy\right|=o(p)\]
for all $x_0\in [0,1]$, and
\[\Hom(H',|U|)=O(p^{e(H)})\]
for all $H'\subseteq K$. Therefore, letting $w_0$ be the unique neighbor of $v_0$ and $H_0$ be the graph given by removing $v_0$ and the edge $v_0w_0$ from $H$,
\begin{align*}
\Hom(H,U) & =\displaystyle\int_{[0,1]^{v(H)}}\displaystyle\prod_{vw\in E(H)}U(x_v,x_w)\displaystyle\prod_{v\in V(H)}dx_v \\ & =\displaystyle\int_{[0,1]^{v(H)-1}}\left(\displaystyle\prod_{vw\in E(H_0)}U(x_v,x_w)\right)\left(\displaystyle\int_0^1 U(x_{v_0},x_{w_0})dx_{v_0}\right)\displaystyle\prod_{v\in V(H_0)}dx_v \\ & \leq\displaystyle\int_{[0,1]^{v(H)-1}}\left(\displaystyle\prod_{vw\in E(H_0)}|U(x_v,x_w)|\right)\left|\displaystyle\int_0^1 U(x_{v_0},x_{w_0})dx_{v_0}\right|\displaystyle\prod_{v\in V(H_0)}dx_v \\ & =o\left(p\cdot \displaystyle\int_{[0,1]^{v(H)-1}}\displaystyle\prod_{vw\in E(H_0)}|U(x_v,x_w)|\displaystyle\prod_{v\in V(H_0)}dx_v\right) \\ & =o(p\cdot \Hom(H_0,|U|)) \\ & =o\left(p^{e(H_0)+1}\right) \\ & =o\left(p^{e(H)}\right).
\end{align*}
\end{proof}
\begin{lemma}\label{contributingsubgraphslemma}
Under Setup \ref{constantsetup}, if $H\subseteq K$ with $e(H)-v(H)<\gamma c(H)$, then
\[\Hom(H,U)=o\left(p^{e(H)}\right).\]
\end{lemma}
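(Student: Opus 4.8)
The plan is to bound $\Hom(H,U)$ directly by the generalized H\"older estimate of Corollary~\ref{simpleholdercor} and then feed in the entropy bound from Lemma~\ref{constantconsequenceslemma}(5). First I would observe that the hypothesis $e(H)-v(H)<\gamma c(H)$ forces $c(H)>0$, hence $H\neq\emptyset$ and $H$ has no isolated vertices, and that by parts (1) and (3) of Lemma~\ref{constantconsequenceslemma} (with $\epsilon=1/\log(1/p)$, as fixed in Setup~\ref{constantsetup}) the hypotheses of Corollary~\ref{simpleholdercor} are met. Since $|\Hom(H,U)|\leq\Hom(H,|U|)$, that corollary gives
\[\Hom(H,U)\leq\Hom(H,|U|)=O\left(\epsilon^{-c(H)}\,p^{v(H)-2c(H)}\,I_p(W)^{c(H)}\right).\]

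Next I would substitute the available estimates. Since $c(H)\leq v(H)=O(1)$, the factor $\epsilon^{-c(H)}=\left(\log\frac1p\right)^{c(H)}$ is $p^{-o(1)}$, and by Lemma~\ref{constantconsequenceslemma}(5) we have $I_p(W)=O\left(p^{2+\gamma}\log\frac1p\right)$, so $I_p(W)^{c(H)}=p^{(2+\gamma)c(H)-o(1)}$. Simplifying the exponent via $v(H)-2c(H)+(2+\gamma)c(H)=v(H)+\gamma c(H)$ yields $\Hom(H,U)=O\left(p^{v(H)+\gamma c(H)-o(1)}\right)$. Setting $\eta:=v(H)+\gamma c(H)-e(H)$, the hypothesis says $\eta>0$, and $\eta$ is a fixed positive constant because $H$ is one fixed subgraph of the fixed graph $K$; hence for $p$ small enough the exponent is at least $e(H)+\eta/2$, giving $\Hom(H,U)=o\left(p^{e(H)}\right)$, as desired.

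I do not anticipate a genuine obstacle: this is a straightforward bookkeeping argument of exactly the same flavour as the proof of the lower bound of Theorem~\ref{varcorrectexponent} and as Lemma~\ref{K23subgraphbreakdown}. The only point requiring care is tracking the various $p^{\pm o(1)}$ losses — from $\epsilon^{-c(H)}$ and from the logarithmic factors in $I_p(W)^{c(H)}$ — and confirming that the strict inequality $e(H)-v(H)<\gamma c(H)$ leaves enough room to absorb them; since there are only finitely many candidate subgraphs $H\subseteq K$, this is automatic. (One could equally well argue via Theorem~\ref{simpleholderresult} together with the bound $\|U\|_1=O(p^{2+\gamma}(\log\frac1p)^2)$ of Lemma~\ref{constantconsequenceslemma}(6), which gives the same conclusion.)
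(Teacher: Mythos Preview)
Your proposal is correct and follows essentially the same approach as the paper: apply Corollary~\ref{simpleholdercor} using parts (1), (3), and (5) of Lemma~\ref{constantconsequenceslemma}, collect the $p^{-o(1)}$ losses from $\epsilon^{-c(H)}$ and the logarithm in $I_p(W)$, and use the strict inequality $e(H)<v(H)+\gamma c(H)$ to absorb them. Your write-up is slightly more explicit about why the gap $\eta$ is a fixed positive constant and why $H\neq\emptyset$, but the argument is identical in substance.
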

\begin{proof}
By (1) and (3) of Lemma \ref{constantconsequenceslemma}, the conditions of Corollary \ref{simpleholdercor} are satisfied with $\epsilon=p^{o(1)}$. By (5) of Lemma \ref{constantconsequenceslemma}, $I_p(W)\leq p^{2+\gamma-o(1)}$. Thus by Corollary \ref{simpleholdercor}
\begin{align*}
\Hom(H,U) & =O\left(p^{v(H)-2c(H)-o(1)}I_p(W)^{c(H)}\right) \\ & =O\left(p^{v(H)-2c(H)+(2+\gamma)c(H)-o(1)}\right) \\ & =O\left(p^{v(H)+\gamma c(H)-o(1)}\right) \\ & =o\left(p^{e(H)}\right),
\end{align*}
as $v(H)+\gamma c(H)>e(H)$, finishing the proof.
\end{proof}
The previous two lemmas immediately imply the following corollary.
\begin{cor}\label{notcontributingcor}
Under Setup \ref{constantsetup}, if $H\subseteq K$ is not contributing, then
\[\Hom(H,U)=o(p^{e(H)}).\]
\end{cor}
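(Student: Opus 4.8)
The plan is to split into cases according to which clause in the definition of ``contributing'' fails for $H$, and to quote the two immediately preceding lemmas. Recall that $H\subseteq K$ is contributing exactly when $H$ has minimum degree at least $2$ \emph{and} $e(H)-v(H)=c(H)\gamma(K)$. By the convention of identifying subgraphs with edge-subsets and discarding isolated vertices, every vertex of a nonempty $H$ has degree at least $1$; moreover the empty subgraph is (vacuously) contributing, so the hypothesis that $H$ is not contributing forces $H$ to be nonempty, whence $c(H)\geq 1$ (any fractional vertex cover must put total weight at least $1$ on the endpoints of any edge of $H$).

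Next, since $H$ is not contributing, at least one of the following holds: (i) $H$ has a vertex of degree $1$; or (ii) $H$ has minimum degree at least $2$, and therefore---as $H$ is not contributing---$e(H)-v(H)\neq c(H)\gamma(K)$. In case (i), Lemma \ref{degree1lemma} applies verbatim and gives $\Hom(H,U)=o(p^{e(H)})$. In case (ii) I would first upgrade $\neq$ to the strict inequality needed by Lemma \ref{contributingsubgraphslemma}: from the definition $\gamma(K)=\max_{H'\subseteq K}\frac{e(H')-v(H')}{c(H')}$, taking $H'=H$ yields $\frac{e(H)-v(H)}{c(H)}\leq\gamma(K)$, hence $e(H)-v(H)\leq c(H)\gamma(K)$ (using $c(H)>0$), and this must be strict because equality is excluded. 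Thus $e(H)-v(H)<\gamma(K)c(H)$, Lemma \ref{contributingsubgraphslemma} applies, and again $\Hom(H,U)=o(p^{e(H)})$. Combining the two cases proves the corollary.

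Since both ingredients are already established, there is essentially no obstacle here; the only step requiring a moment's care is the passage from $\neq$ to $<$ in case (ii), which relies precisely on $\gamma(K)$ being defined as a maximum over \emph{all} subgraphs of $K$ (so that $\frac{e(H)-v(H)}{c(H)}$ can never exceed $\gamma(K)$) together with the positivity of $c(H)$ for nonempty $H$.
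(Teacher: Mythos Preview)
Your proof is correct and follows exactly the approach the paper intends: the paper simply states that Lemmas \ref{degree1lemma} and \ref{contributingsubgraphslemma} ``immediately imply'' the corollary, and your case split (degree-$1$ vertex versus $e(H)-v(H)<\gamma(K)c(H)$) is precisely the content of that implication. Your explicit justification that $\neq$ upgrades to $<$ via the maximality in the definition of $\gamma(K)$ is a detail the paper leaves implicit, but it is the right point to check.
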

Combining Corollary \ref{notcontributingcor} with Lemma \ref{subgraphbreakdownresult}, we obtain a breakdown into contributing graphs.
\begin{cor}\label{contributingbreakdowncor}
Under Setup \ref{constantsetup},
\[\displaystyle\sum_{\substack{H\subseteq K \\ H\text{ contributing}}}p^{-e(H)}\Hom(H,U)\geq 1+\delta-o(1).\]
\end{cor}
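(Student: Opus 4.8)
The plan is to derive this corollary directly from the two results immediately preceding it, namely Lemma \ref{subgraphbreakdownresult} and Corollary \ref{notcontributingcor}; together these already do essentially all the work, so the proof is a short bookkeeping argument. First I would apply Lemma \ref{subgraphbreakdownresult} to get
\[\displaystyle\sum_{H\subseteq K}p^{-e(H)}\Hom(H,U)\geq 1+\delta-o(1),\]
and then split the sum over subgraphs $H\subseteq K$ according to whether or not $H$ is contributing.

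For the non-contributing terms, I would invoke Corollary \ref{notcontributingcor}, which gives $\Hom(H,U)=o(p^{e(H)})$ for every non-contributing $H\subseteq K$; since the bounds feeding into it (Lemmas \ref{degree1lemma} and \ref{contributingsubgraphslemma}) actually control $\Hom(H,|U|)$, this is really $|\Hom(H,U)|=o(p^{e(H)})$, so no cancellation issues arise and each such term satisfies $p^{-e(H)}\Hom(H,U)=o(1)$. Because $K$ is a fixed graph, there are at most $2^{e(K)}=O(1)$ subgraphs of $K$, so the sum of all non-contributing terms is still $o(1)$. Subtracting this $o(1)$ from both sides of the displayed inequality leaves
\[\displaystyle\sum_{\substack{H\subseteq K \\ H\text{ contributing}}}p^{-e(H)}\Hom(H,U)\geq 1+\delta-o(1),\]
which is exactly the assertion of the corollary (the leading $1$ being supplied by the contributing subgraph $H=\emptyset$, whose term is identically $1$).

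I do not anticipate any real obstacle here: the only points requiring a moment's care are that the $o(1)$ error terms from the individual non-contributing subgraphs are aggregated over a collection whose size is a constant independent of $p$, and that Corollary \ref{notcontributingcor} bounds $|\Hom(H,U)|$ rather than merely $\Hom(H,U)$, so that replacing a sum of terms each of which is $o(1)$ in absolute value by a single $o(1)$ is legitimate.
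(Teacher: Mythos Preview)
Your proposal is correct and matches the paper's approach exactly: the paper simply states that combining Corollary~\ref{notcontributingcor} with Lemma~\ref{subgraphbreakdownresult} yields the result, and your argument spells out precisely that combination. Your added remarks that the number of subgraphs is $O(1)$ and that the underlying bounds control $|\Hom(H,U)|$ are the right points of care, though the paper leaves them implicit.
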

As in the example in the previous section, it is important to know which vertices are being sent into the `hub' of our graphon. We will define the hub slightly differently than in last section's definition, but in a functionally similar way.
\begin{definition}\label{hubdef}
Under Setup \ref{constantsetup}, we have the following further definitions. For any $b\geq 0$, let
\[B_b=B_b(W):=\{x\in [0,1]:\displaystyle\int_0^1 |U(x,y)|dy\geq b\}.\]
For $A\subseteq V(H)$, define
\[\Omega_b^A=\Omega_b^A(W):=\left\{(x_v)_{v\in V(H)}\in [0,1]^{v(H)}:x_v\in\begin{cases}B_b & v\in A \\ \overline{B_b} & v\notin A\end{cases}\right\}.\]
Let
\[\Hom_b^A(H,U):=\displaystyle\int_{\Omega_b^A}\displaystyle\prod_{vv'\in E(H)}U(x_v,x_{v'})dx\]
\end{definition}
\begin{remark}
If we think of $B_b$ as the hub of the graphon $U$, then $\Hom_b^A(H,U)$ counts those homomorphisms from $H$ to $U$ in which exactly the set $A\subseteq V(H)$ is sent into the hub $B_b$.
\end{remark}
Notice that since the $\Omega_b^A$ partition $[0,1]^{v(H)}$ as $A$ ranges over all subsets of $V(H)$, $\Hom(H,U)=\displaystyle\sum_{A\subseteq V(H)}\Hom_b^A(H,U)$ for any $b$. The following lemma shows that many of the terms in this sum are negligibly small.

Recall from Definition \ref{validdef} that a set $A\subseteq V(H)$ is \emph{valid} if there is some $\left\{0,\frac{1}{2},1\right\}$-valued minimum fractional vertex cover $(c_v)_{v\in V(H)}$ such that $c_v=1$ if and only if $v\in A$. The following proposition essentially says that the sets of vertices that we may send in to the hub are the valid subsets of $V(H)$.
\begin{prop}\label{multiplethresholdsprop}
Assume Setup \ref{constantsetup}. There exists $b=b(p)\geq p^{1+\frac{\gamma}{3e(K)}}$ such that for all contributing subgraphs $H\subseteq K$ and all invalid $A\subseteq V(H)$,
\[\Hom_b^A(H,U)=o\left(p^{e(H)}\right)\]
\end{prop}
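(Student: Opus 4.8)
The plan is to use an adaptive/multiple-threshold argument in the spirit of Section 5 of \cite{BGLZ}, combined systematically with our generalized H\"older inequality (Theorem \ref{graphholderthm}), where the choice of maximum fractional matching and minimum fractional edge cover is dictated by the set $A$ we wish to forbid. Fix a contributing subgraph $H \subseteq K$ and an invalid set $A \subseteq V(H)$. The goal is to bound $\Hom_b^A(H,U)$, the integral over the box $\Omega_b^A$, by $o(p^{e(H)})$ for a single threshold $b$ that works simultaneously for all $H$ and all invalid $A$; since there are finitely many such pairs $(H,A)$ it suffices to find, for each pair, \emph{some} $b$ in the allowed range $[p^{1+\gamma/3e(K)},\,p)$ and then take a common value (a geometric-mean-type choice, or a small finite union of thresholds handled by a union bound, will do — this is exactly the adaptive-thresholding trick).

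The core estimate goes as follows. Since $A$ is invalid, the characteristic-function certificate fails: there is no minimum fractional vertex cover of $H$ that is $\{0,\tfrac12,1\}$-valued with $1$-set exactly $A$. I would translate this combinatorially: complementary slackness says that in the linear program pairing minimum fractional vertex cover with maximum fractional matching, a vertex with cover-weight $1$ must be `saturated' and a vertex of cover-weight $0$ must be matched to zero; invalidity of $A$ therefore forces that in \emph{every} maximum fractional matching $(w_e)$, either some vertex of $A$ has $\sum_{e\ni v} w_e < 1$ (so $v$ is `slack' and lies in the set $S$ from the proof of Theorem \ref{graphholderthm}) or the weights on edges inside/touching $A$ cannot be pushed up to a $\{0,\tfrac12\}$-valued edge cover off $A$. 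Concretely, I want to produce a maximum fractional matching $(w_e)$ and a minimum fractional edge cover $(w'_e)$ with $w_e\le w'_e$ such that when we apply Theorem \ref{graphholderthm} over the boxes $B_v = B_b$ for $v\in A$ and $B_v=\overline{B_b}$ for $v\notin A$, the resulting bound carries a \emph{strictly larger} power of $p$ than $e(H)$. The terms $\|f_e\|_{v,1/w'_e}^{(w'_e-w_e)/w'_e}$ contribute factors that, for $v\in A$, are bounded by $((2+\epsilon)p)^{w'_e-w_e}$ exactly as in the proof of Theorem \ref{simpleholderresult}; the key new gain is that for vertices forced into $\overline{B_b}$ (i.e.\ $v\notin A$), the column $L^1$-norm is at most $b$ rather than $(2+\epsilon)p$, and for the $\|f_e\|_{1/w'_e}^{w_e/w'_e}$ terms we use $\|U\|_1 \le p^{2+\gamma-o(1)} \cdot (\log(1/p))^{O(1)}$ from (6) of Lemma \ref{constantconsequenceslemma} together with $1$-boundedness. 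Since $H$ is contributing, $e(H) = v(H) + \gamma c(H)$ and $\sum_e w_e = c(H)$, so the `baseline' application (all boxes $=[0,1]$) gives exactly $p^{e(H)}$ up to $p^{o(1)}$; the invalidity of $A$ guarantees at least one vertex is forced into the small set $B_b$ when it `shouldn't' be, or into $\overline{B_b}$ when it `shouldn't', each of which costs a factor of $(b/p)^{\Omega(1)} = p^{\Omega(1)}$ relative to the baseline. Choosing $b$ as large as $p^{1+\gamma/3e(K)}$ still leaves $b/p = p^{\gamma/3e(K)}$, a genuine power saving, which beats the $p^{o(1)}$ losses from $\epsilon = 1/\log(1/p)$ and from Lemma \ref{constantconsequenceslemma}(6).

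The main obstacle — and the step I would spend the most care on — is the exact combinatorial lemma extracting, from the invalidity of $A$, the correct pair $(w_e), (w'_e)$ and the correct accounting that the H\"older bound improves by a definite power of $p$. This is where the hypothesis that \emph{no contributing subgraph has a bad edge} is used: a bad edge would force $w_e = w'_e = 1$, collapsing the $w'_e < 1$ regime we rely on to get powers of $b$ rather than just of $p$, and it is precisely the `no bad edge' condition (via Lemma \ref{edgeweightslemma2}, to be proved) that lets us assume $w'_e < 1$ for all $e$ and hence that the small-box savings are strict. I would organize this as: (i) a purely LP lemma characterizing valid $A$ via the structure of optimal matchings/covers; (ii) given invalid $A$, select $(w_e),(w'_e)$ so that the set $S$ (slack vertices) or the edge-cover overshoot set $S'$ interacts with $A$ in a way that misplaces at least one vertex relative to its box; (iii) run Theorem \ref{graphholderthm} and collect exponents, verifying the net exponent of $p$ exceeds $e(H)$ by $\Omega(\gamma/e(K))$; (iv) absorb the $p^{o(1)}$ and polylog errors and choose the common $b$. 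Steps (iii)–(iv) are routine bookkeeping once (i)–(ii) are in place; (i)–(ii) are the heart of the matter.
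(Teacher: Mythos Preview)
Your proposal diverges from the paper's proof in an essential way, and the point at which you diverge is exactly where the difficulty lies.

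The paper does \emph{not} try to show that a single threshold $b$ yields a Hölder bound with a strict power-of-$p$ saving for each invalid $(H,A)$. Instead it takes a family $b_1<\cdots<b_m$ of thresholds satisfying the strong separation $\frac{b_{i-1}}{p}\le\bigl(\frac{b_i}{p}\bigr)^{v(K)}(\log\tfrac{1}{p})^{-\omega(1)}$, and splits $\Omega_{b_i}^A$ into $\Omega_{b_i}^A\setminus\Omega_{b_{i-1}}^A$ and $\Omega_{b_i}^A\cap\Omega_{b_{i-1}}^A$. The first piece is handled by a \emph{disjointness/pigeonhole} argument (the sets $\Omega_{b_i}^A\setminus\Omega_{b_{i-1}}^A$ are disjoint and their total contribution is at most $\Hom(H,|U|)=O(p^{e(H)})$, so at most $O(1/\epsilon)$ of them can exceed $\epsilon p^{e(H)}$). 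The second piece is bounded using \emph{two different} thresholds simultaneously: vertices in $A$ are confined to $B_{b_i}$ (giving a measure bound via $m(B_{b_i})$), while vertices outside $A$ are confined to $\overline{B_{b_{i-1}}}$ (giving a column-norm bound $\le b_{i-1}$). After bookkeeping, the bound becomes (up to polylog) $(b_{i-1}/p)^{v(H')-2c(H')}(b_i/p)^{-|A|}p^{\gamma(|A|+c(H')-c(H))}$, and the separation between $b_{i-1}$ and $b_i$ is precisely what kills this when $|A|+c(H')=c(H)$ and $v(H')-2c(H')\ge 1$. With a single threshold $b=b_{i-1}=b_i$ the exponent of $b/p$ collapses to $v(H')-2c(H')-|A|$, which can be $\le 0$, and then no choice of $b$ gives $o(1)$.

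Your sketch omits the disjointness argument entirely, and your description of ``adaptive thresholding'' as ``find one $b$ per pair, then take a common value'' is not what the technique does. You also misplace the role of the no-bad-edge hypothesis: the paper's proof of this proposition never invokes it (only Lemma~\ref{edgeweightslemma} is used, not Lemma~\ref{edgeweightslemma2}); that hypothesis enters later, in Proposition~\ref{monomialboundprop}. Your steps (i)--(ii), which you yourself identify as the heart of the matter, would have to establish that for $H$ with no bad edges and $A$ invalid one can always choose $(w_e),(w'_e)$ so that the direct Hölder bound on $\Omega_b^A$ beats $p^{e(H)}$ by a genuine power of $p$; this is not obvious, and you have not supplied the argument.
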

The proof of Proposition \ref{multiplethresholdsprop} is quite technical and will be the focus of the next section. For now, we obtain a refinement of Lemma \ref{subgraphbreakdownresult}.
\begin{cor}\label{validbreakdowncor}
Assuming Setup \ref{constantsetup}, there exists $b=b(p)\geq p^{1+\frac{\gamma}{3e(K)}}$ such that
\[\displaystyle\sum_{\substack{H\subseteq K \\ H\text{ contributing}}}\displaystyle\sum_{\text{valid }A\subseteq V(H)}p^{-e(H)}\Hom_b^A(H,U)\geq 1+\delta-o(1).\]
\end{cor}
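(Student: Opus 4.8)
The plan is to deduce this directly from Corollary \ref{contributingbreakdowncor} together with Proposition \ref{multiplethresholdsprop}, which between them do essentially all of the work. First I would fix $b = b(p) \geq p^{1+\frac{\gamma}{3e(K)}}$ to be the threshold furnished by Proposition \ref{multiplethresholdsprop}, so that $\Hom_b^A(H,U) = o(p^{e(H)})$ for every contributing subgraph $H \subseteq K$ and every \emph{invalid} $A \subseteq V(H)$.

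Next, for each contributing $H \subseteq K$ I would use the fact (noted just after Definition \ref{hubdef}) that the sets $\Omega_b^A$ partition $[0,1]^{v(H)}$ as $A$ ranges over subsets of $V(H)$, and hence $\Hom(H,U) = \sum_{A \subseteq V(H)} \Hom_b^A(H,U)$, to split
\[\Hom(H,U) = \sum_{\text{valid } A \subseteq V(H)} \Hom_b^A(H,U) + \sum_{\text{invalid } A \subseteq V(H)} \Hom_b^A(H,U).\]
Since $K$ is a fixed graph, the vertex set $V(H)$ has a bounded number of subsets, so the second sum has $O(1)$ terms, each of size $o(p^{e(H)})$ by our choice of $b$; hence that sum is $o(p^{e(H)})$, and therefore $p^{-e(H)} \sum_{\text{valid } A} \Hom_b^A(H,U) = p^{-e(H)}\Hom(H,U) - o(1)$.

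Finally I would sum this identity over all contributing subgraphs $H \subseteq K$. There are only $O(1)$ such subgraphs (again since $K$ is fixed), so the accumulated error term is still $o(1)$, and Corollary \ref{contributingbreakdowncor} supplies $\sum_{H \text{ contributing}} p^{-e(H)}\Hom(H,U) \geq 1+\delta-o(1)$, which yields the stated bound. I do not expect any genuine obstacle within this corollary itself: the entire difficulty has been isolated into Proposition \ref{multiplethresholdsprop} (the adaptive-thresholding argument carried out in the next section). The only points requiring any care here are that the number of contributing subgraphs of $K$, and the number of subsets of each $V(H)$, are bounded independently of $p$ — which holds because $K$ is fixed — so that finitely many $o(p^{e(H)})$ errors can be absorbed without loss.
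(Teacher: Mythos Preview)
Your proposal is correct and follows essentially the same approach as the paper: partition $\Hom(H,U)$ over all $A\subseteq V(H)$ via the $\Omega_b^A$, apply Corollary \ref{contributingbreakdowncor}, and then use Proposition \ref{multiplethresholdsprop} to discard the invalid-$A$ terms as $o(1)$. The only difference is cosmetic ordering (you fix $b$ first, the paper fixes it after writing the partition), and you make explicit the finiteness-of-$K$ reasoning that the paper leaves implicit.
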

\begin{proof}

Since for any $b$ the $\Omega_b^A$ partition $[0,1]^{v(H)}$ as $A$ ranges over all subsets of $V(H)$,
\[\displaystyle\sum_{A\subseteq V(H)}\Hom_b^A(H,U)=\Hom(H,U)\]
for any $b$. Substituting into Corollary \ref{contributingbreakdowncor},
\[\displaystyle\sum_{\substack{H\subseteq K \\ H\text{ contributing}}}\displaystyle\sum_{A\subseteq V(H)}p^{-e(H)}\Hom_b^A(H,U)\geq 1+\delta-o(1)\]
for any $b$. Taking $b$ to be the value guaranteed by Proposition \ref{multiplethresholdsprop}, for all contributing $H\subseteq K$ and all invalid $A\subseteq V(H)$, $\Hom_b^A(H,U)=o(p^{e(H)})$. Thus all terms where $A$ is invalid are negligible and in fact
\[\displaystyle\sum_{\substack{H\subseteq K \\ H\text{ contributing}}}\displaystyle\sum_{\text{valid }A\subseteq V(H)}p^{-e(H)}\Hom_b^A(H,U)\geq 1+\delta-o(1),\]
as desired.
\end{proof}
We now bound the remaining terms of the sum in Corollary \ref{validbreakdowncor}.
\begin{definition}\label{zwdef}
Assume Setup \ref{constantsetup}.

For any $b$, define $z=z(b,p)$, $w=w(b,p)$, such that
\[zp^{2+\gamma}\log\frac{1}{p}=\displaystyle\int_{B_b\times\overline{B_b}}I_p(W(x,y))dx dy\]
and
\[wp^{2+\gamma}\log\frac{1}{p}=\displaystyle\int_{\overline{B_b}\times \overline{B_b}}I_p(W(x,y))dx dy,\]
where $B_b$ is defined as in Definition \ref{hubdef}.
\end{definition}
\begin{prop}\label{monomialboundprop}
Assume Setup \ref{constantsetup}. Take $H\subseteq K$ contributing and let $A\subseteq V(H)$ be valid. Then for any $b\geq 0$,
\[\Hom_b^A(H,U)\leq (1+o(1))p^{e(H)}z^{|A|}w^{c(H)-|A|},\]
where $\Hom_b^A$ is defined as in Definition \ref{hubdef} and $z=z(b,p)$ and $w=w(b,p)$ are defined as in Definition \ref{zwdef}.
\end{prop}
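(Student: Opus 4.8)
The plan is to bound $\Hom_b^A(H,U)$ by applying the generalized Hölder inequality of Theorem \ref{graphholderthm} with the domains $B_v$ chosen according to $A$, and then to estimate the resulting norms: the ``pulled-out'' norms via the (approximate) column-regularity of $W$, and the remaining norms via Lemma \ref{momentlemma} together with the definitions of $z,w$.

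First I would record the combinatorial structure. Since $A$ is valid, fix a $\{0,\frac12,1\}$-valued minimum fractional vertex cover $(c_v)$ with $A=\{v:c_v=1\}$, and set $B=\{v:c_v=\frac12\}$, $C=\{v:c_v=0\}$; then $|A|+\frac12|B|=c(H)$, so $c(H)-|A|=\frac12|B|$, and (because $c_v+c_{v'}\ge1$ on every edge) each edge of $H$ is of type $A$-$A$, $A$-$B$, $A$-$C$, or $B$-$B$. Next pick a maximum fractional matching $(w_e)$ and a minimum fractional edge cover $(w'_e)$ with $w_e\le w'_e$; since $H$ has no bad edges these can be taken with $w'_e<1$ for all $e$ (see Lemma \ref{edgeweightslemma2}). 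This is the only use of the no-bad-edges hypothesis, and it is essential: it lets us apply Lemma \ref{momentlemma} (with its $\log\frac1p$ gain) rather than the logarithmically weaker Lemma \ref{firstmomentlemma}. By linear-programming complementary slackness, $\sum_{e\ni v}w_e=1$ for $v\in A\cup B$ and $\sum_{e\ni v}w'_e=1$ for $v\in B\cup C$, and $w_e>0$ only on $A$-$C$ and $B$-$B$ edges. Hence the sets $S=\{v:\sum_{e\ni v}w_e<1\}$ and $S'=\{v:\sum_{e\ni v}w'_e>1\}$ appearing in Theorem \ref{graphholderthm} satisfy $S\subseteq C$ and $S'\subseteq A$, and by Claim \ref{weightssameclaim} $w'_e=w_e$ except on $A$-$C$ edges joining $S$ to $S'$; in particular $A$-$A$ and $A$-$B$ edges have $w_e=w'_e=0$.

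Now apply Theorem \ref{graphholderthm} to $H$ with $f_e=|U|$ and $B_v=B_b$ for $v\in A$, $B_v=\overline{B_b}$ for $v\notin A$; the left side is then at least $\Hom_b^A(H,U)$. In the first (``pulled-out'') product, a nontrivial factor comes from an edge $e=\{v,v'\}$ with $v\in S'\subseteq A$ and $v'\in S\subseteq C$, and equals $\|f_e\|_{v,1/w'_e}^{(w'_e-w_e)/w'_e}$. Using $\int_0^1 W(x_0,y)\,dy=(1+o(1))p$ uniformly (Lemma \ref{constantconsequenceslemma}(1)) and $W\le1$, one checks that for any fixed $q>1$ and all $x_0$, $\int_0^1|U(x_0,y)|^q\,dy\le(1+o(1))p$ (splitting according to the sign of $W-p$; the contribution of $\{W<p\}$ is at most $p^q=o(p)$ since $q>1$). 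With $q=1/w'_e>1$ this gives $\|f_e\|_{v,1/w'_e}\le(1+o(1))p^{w'_e}$, so the factor is $\le(1+o(1))p^{w'_e-w_e}$. In the second product, each $A$-$C$ edge $e$ contributes $\|f_e\|_{1/w'_e}^{w_e/w'_e}$, a $w_e$-th power of $\int_{B_b\times\overline{B_b}}|U|^{1/w'_e}$; by condition (3) of Lemma \ref{constantconsequenceslemma} the values of $|U|$ avoid a neighborhood of $0$, so Lemma \ref{momentlemma} (with $c=1/w'_e-1$) yields $\int_{B_b\times\overline{B_b}}|U|^{1/w'_e}\le(1+o(1))(\log\tfrac1p)^{-1}\int_{B_b\times\overline{B_b}}I_p(W)=(1+o(1))zp^{2+\gamma}$ by Definition \ref{zwdef}, so the factor is $(1+o(1))(zp^{2+\gamma})^{w_e}$. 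Likewise each $B$-$B$ edge contributes $(1+o(1))(wp^{2+\gamma})^{w_e}$, and $A$-$A$/$A$-$B$ edges contribute at most $1$.

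Finally I would add up exponents. The power of $z$ is $\sum_{e\ A\text{-}C}w_e$, which equals $|A|$ since each $v\in A$ satisfies $\sum_{e\ni v}w_e=1$ with weight only on its $A$-$C$ edges; the power of $w$ is $\sum_{e\ B\text{-}B}w_e=\frac12|B|=c(H)-|A|$; and the power of $p$ is $(2+\gamma)\big(|A|+\tfrac12|B|\big)+\sum_{e}(w'_e-w_e)=(2+\gamma)c(H)+\big((v(H)-c(H))-c(H)\big)=v(H)+\gamma c(H)=e(H)$, using Lemma \ref{edgeweightslemma}(3)--(4) and that $H$ is contributing. This gives $\Hom_b^A(H,U)\le(1+o(1))p^{e(H)}z^{|A|}w^{c(H)-|A|}$, as claimed. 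The delicate points — and what makes the constant, not merely the exponent, come out right — are securing $w'_e<1$ for every edge from the no-bad-edges hypothesis, and the uniform column estimate $\int_0^1|U(x_0,y)|^q\,dy\le(1+o(1))p$ for $q>1$, which fails (a factor of $2$ intrudes) precisely at $q=1$.
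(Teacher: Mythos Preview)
Your proof is correct and follows essentially the same approach as the paper: apply Theorem \ref{graphholderthm} with domains chosen according to $A$ and weights from Lemma \ref{edgeweightslemma2}, bound the pulled-out column norms by $(1+o(1))p^{w'_e-w_e}$ using approximate regularity, and bound the remaining norms via Lemma \ref{momentlemma}. The only differences are presentational: you front-load the complementary-slackness combinatorics (deriving $S\subseteq C$, $S'\subseteq A$, and that $w_e$ lives on $A$-$C$ and $B$-$B$ edges) rather than packaging them as the paper's Lemma \ref{aweightslemma}, and your sign-splitting argument for $\int_0^1|U(x_0,y)|^q\,dy\le(1+o(1))p$ is a clean alternative to the paper's convexity-smoothing bound.
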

The proof of Proposition \ref{monomialboundprop} is where we will use the condition that contributing subgraphs of $K$ do not have bad edges. We will apply Theorem \ref{graphholderthm}, with weightings guaranteed by the following lemma (a refinement of Lemma \ref{edgeweightslemma}).

\begin{lemma}\label{edgeweightslemma2}
If $H$ is a graph with $\delta(H)\geq 2$ and no bad edges, it has is a maximum fractional matching $(w_e)_{e\in E(H)}$ and a minimum fractional edge cover $(w'_e)_{e\in E(H)}$ such that $w_e\leq w'_e<1$ for all $e\in E(H)$.
\end{lemma}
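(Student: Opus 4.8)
The plan is to construct the matching and the cover in two stages: first I would use the `no bad edges' hypothesis to average away every edge that a maximum fractional matching could be forced to weight $1$ on, and then feed the resulting matching into the construction behind Lemma~\ref{edgeweightslemma} and check, via a one-line estimate, that the dominating minimum edge cover it produces also stays strictly below $1$.

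For the first stage: since no edge of $H$ is bad, for each $e_0\in E(H)$ there is a maximum fractional matching $w^{(e_0)}=(w^{(e_0)}_e)_{e\in E(H)}$ with $w^{(e_0)}_{e_0}<1$ (and $w^{(e_0)}_e\le 1$ for every $e$, as always holds for a fractional matching). By Lemma~\ref{edgeweightslemma}(3) the fractional matching number equals $c(H)$, so the set of maximum fractional matchings is $\{w\ge 0:\ \sum_{e\ni v}w_e\le 1\text{ for all }v\in V(H),\ \sum_{e}w_e=c(H)\}$, an intersection of a polytope with a hyperplane and hence convex. Therefore $\tilde w:=|E(H)|^{-1}\sum_{e_0\in E(H)}w^{(e_0)}$ is again a maximum fractional matching, and for every $e$ the summand indexed by $e_0=e$ contributes $w^{(e)}_e<1$ while each of the remaining $|E(H)|-1$ summands contributes at most $1$, so $\tilde w_e<1$.

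For the second stage: apply the construction from the proof of Lemma~\ref{edgeweightslemma}(1) to $\tilde w$. Writing $s_v:=\sum_{e\ni v}\tilde w_e$ and $S:=\{v\in V(H):s_v<1\}$, that construction produces a minimum fractional edge cover $w'$ by increasing, for each $v\in S$, the weight of each edge at $v$ by $(1-s_v)/\deg_H(v)$; as shown there, $S$ is an independent set, so each edge is modified at most once and $w'\ge\tilde w$ pointwise. It remains only to verify $w'_e<1$ for every $e$. If $e$ has no endpoint in $S$, then $w'_e=\tilde w_e<1$ by the first stage. If $e=uv$ with $v\in S$, then using $\tilde w_e\le s_v$, $\deg_H(v)\ge 2$, and $s_v<1$,
\[
w'_e=\tilde w_e+\frac{1-s_v}{\deg_H(v)}\le s_v+\frac{1-s_v}{2}=\frac{1+s_v}{2}<1.
\]
Taking $(w_e):=(\tilde w_e)$ and $(w'_e)$ as constructed then proves the lemma.

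There is no serious obstacle beyond assembling these pieces; the one thing that requires care is strictness. Averaging over all of $E(H)$ in the first stage — rather than quoting a single $w^{(e_0)}$ — is exactly what upgrades `$\tilde w_e\le 1$' to the strict `$\tilde w_e<1$', and this strictness survives the second stage only because $\deg_H(v)\ge 2$ makes $(1-s_v)/\deg_H(v)\le(1-s_v)/2$: a degree-$1$ vertex in $S$ would leave open the possibility $w'_e=1$. This is where the hypothesis $\delta(H)\ge 2$ is used.
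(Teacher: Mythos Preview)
Your proof is correct and follows essentially the same approach as the paper: average maximum fractional matchings over all edges to get $\tilde w_e<1$ everywhere, then run the construction from Lemma~\ref{edgeweightslemma}(1) and use $\deg_H(v)\ge 2$ to keep the resulting cover below $1$. Your version is in fact slightly more explicit than the paper's, which simply asserts that the construction ``preserves edge weights being less than $1$'' without writing out the inequality $w'_e\le s_v+(1-s_v)/2<1$ that you supply.
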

\begin{proof}[Proof of Lemma \ref{edgeweightslemma2}]
Since $H$ has no bad edges, for each $e_0\in E(H)$ there is some maximum fractional matching $(w_e)$ with $w_{e_0}<1$. Taking such a matching for each $e_0\in E(H)$ and averaging, we obtain a maximum fractional matching $w_e$ with $w_e<1$ for all $e\in E(H)$.

Now, by Lemma \ref{edgeweightslemma}, there is some minimum fractional edge cover $(w'_e)_{e\in E(H)}$ such that $w'_e\geq w_e$ for all $e\in E(H)$. Notice that in the proof of Lemma \ref{edgeweightslemma}, we constructed this fractional edge cover by taking vertices $v$ such that $\displaystyle\sum_{e\ni v}w_e<1$ and increasing the weights of all adjacent edges by $\frac{1}{\deg(v)}\left(1-\displaystyle\sum_{e\ni v}w_e\right)$. Since $\deg(v)\geq 2$ for all $v\in E(H)$ (since $H$ is contributing), notice that this preserves edge weights being less than $1$. So we may actually guarantee that $w'_e<1$ for all $e\in E(H)$, proving the lemma.
\end{proof}

\begin{proof}[Proof of Proposition \ref{monomialboundprop}]
We are given $H\subseteq K$ contributing and $A\subseteq V(H)$ valid. Since $H$ is contributing, $\delta(H)\geq 2$, and since contributing subgraphs of $K$ do not have bad edges by assumption, $H$ has no bad edges. Thus by Lemma \ref{edgeweightslemma2}, $H$ has a maximum fractional matching $(w_e)_{e\in E(H)}$ and a minimum fractional edge cover $(w'_e)_{e\in E(H)}$ such that $w_e\leq w'_e<1$ for all $e\in E(H)$.

We apply Theorem \ref{graphholderthm} with our graph $H$. We will set the remaining parameters as follows. For all $e\in E(H)$, set $f_e=U$. We set $B_v=B_b$ when $b\in A$ and $B_v=[0,1]\backslash B_b$ when $v\in V(H)\backslash A$. Finally, we set $(w_e)$ and $(w'_e)$ to be the maximum fractional matching and minimum fractional edge cover discussed in the last paragraph, so that $w_e\leq w'_e<1$ for all $e\in E(H)$.

Under these assumptions, the left side of (\ref{graphholder}) becomes $\Hom_b^A(H,U)$. We just must bound the terms on the right hand side.

For any $vv'=e\in E(H)$, $v\in S'$, since $f_e=U$, the $\| f_e\|_{v,\frac{1}{w'_e}}^{\frac{w'_e-w_e}{w'_e}}$ term becomes
\[\esssup_{x_v\in B_v}\left(\displaystyle\int_{B_{v'}}|U(x_v,x_{v'})|^{\frac{1}{w'_e}}dx_{v'}\right)^{w'_e-w_e},\]
which is simply upper bounded by
\[\left(\sup_{x_v\in [0,1]}\displaystyle\int_0^1 |U(x_v,x_{v'})|^{\frac{1}{w'_e}}dx_{v'}\right)^{w'_e-w_e}.\]
Now, since $|x|^{\frac{1}{w'_e}}$ is a convex function, if we fix $\displaystyle\int_0^1 U(x_v,x_{v'}) dx_{v'}$, then $\displaystyle\int_0^1 |U(x_v,x_{v'})|^{\frac{1}{w'_e}}dx_{v'}$ is maximized when the values of $U$ are smoothed outward as far as possible. Since $U$ only takes values in $[-p,1-p]$, and $\displaystyle\int_0^1 U(x_v,x_{v'}) dx_{v'}=o(p)$ by (1) of Lemma \ref{constantconsequenceslemma}, this means that our integral is maximized when $U(x_v,x_{v'})=1-p$ when $x_{v'}$ is on a set of measure $p+o(p)$ and $-p$ on a set of measure $1+o(1)$. Therefore,
\[\displaystyle\int_0^1 |U(x_v,x_{v'})|^{\frac{1}{w'_e}}dx_{v'}\leq (1-p)^{\frac{1}{w'_e}}(p+o(p))+p^{\frac{1}{w'_e}}(1+o(1))=(1+o(1))\left(p+p^{\frac{1}{w'_e}}\right).\]
Since $w'_e<1$, we simply obtain the upper bound $p+o(p)$. Thus our $\| f_e\|_{v,\frac{1}{w'_e}}^{\frac{w'_e-w_e}{w'_e}}$ term is upper bounded by
\[(1+o(1))p^{w'_e-w_e}.\]
(We implicitly assumed that $w'_e\neq 0$, but when $w'_e=w_e=0$ this bound is also trivially true.)

So the first product on the right hand side of (\ref{graphholder}) becomes
\[(1+o(1))p^{\displaystyle\sum_{v\in S'}\displaystyle\sum_{e\ni v}(w'_e-w_e)}.\]
Just as in Section \ref{simpleholderproofsection}, the sum in the exponent counts each edge where $w'_e>w_e$ exactly once, so this bound is equal to
\[(1+o(1))p^{\displaystyle\sum_{e\in E(H)}(w'_e-w_e)}=(1+o(1))p^{v(H)-2c(H)},\]
by (3) and (4) of Lemma \ref{edgeweightslemma}.

Now we must bound the $\| f_e\|_{\frac{1}{w'_e}}^{\frac{w_e}{w'_e}}$ terms. Set $e=vv'\in E(H)$. Again, since $f_e=U$, and noting again that these norms are taken over $B_v\times B_{v'}$, this term is equal to
\[\left(\displaystyle\int_{\substack{x_v\in B_v \\ x_{v'}\in B_{v'}}}|U(x_v,x_{v'})|^{\frac{1}{w'_e}}dx_vdx_{v'}\right)^{w_e}.\]
By (3) of Lemma \ref{constantconsequenceslemma}, for all $x,y\in [0,1]$, either $U(x,y)=0$ or $|U(x,y)|\geq p^{1+o(1)}$. Thus by Lemma \ref{momentlemma}, since $w'_e<1$ for all $e\in E(H)$,
\[|U(x,y)|^{\frac{1}{w'_e}}\leq (1+o(1))\left(\log\frac{1}{p}\right)^{-1}I_p(W(x,y)),\]
for all $x,y\in [0,1]$. Thus
\[\left(\displaystyle\int_{\substack{x_v\in B_v \\ x_{v'}\in B_{v'}}}|U(x_v,x_{v'})|^{\frac{1}{w'_e}}dx_vdx_{v'}\right)^{w_e}\leq (1+o(1))\left(\displaystyle\int_{\substack{x_v\in B_v \\ x_{v'}\in B_{v'}}}\frac{I_p(W(x_v,x_{v'}))}{\log\frac{1}{p}}dx_vdx_{v'}\right)^{w_e}.\]
When exactly one of $v,v'\in A$, the right hand integral is over $B_b\times\overline{B_b}$, and thus equals $zp^{2+\gamma}$. When both $v,v'$ are not in $A$, the right hand integral equals $wp^{2+\gamma}$. If $v,v'\in A$, we may simply bound the integral above by $1$, since $I_p$ is upper bounded by $\log\frac{1}{p}$.

Thus letting $E_1(H)$ be the set of edges in $H$ with exactly one vertex in $A$ and $E_0(H)$ be the set of edges in $H$ with no vertices in $A$, the $\displaystyle\prod_{e\in E(H)}\| f_e\|_{\frac{1}{w'_e}}^{\frac{w_e}{w'_e}}$ term in the right side of (\ref{graphholder}) is bounded above by
\[(1+o(1))(zp^{2+\gamma})^{\displaystyle\sum_{e\in E_1(H)}w_e}(wp^{2+\gamma})^{\displaystyle\sum_{e\in E_0(H)}w_e}.\]
Putting our bounds together,
\begin{align*}
\Hom_b^A(H,U) & \leq (1+o(1))p^{v(H)-2c(H)}(zp^{2+\gamma})^{\displaystyle\sum_{e\in E_1(H)}w_e}(wp^{2+\gamma})^{\displaystyle\sum_{e\in E_0(H)}w_e} \\ & =(1+o(1))z^{\displaystyle\sum_{e\in E_1(H)}w_e}w^{\displaystyle\sum_{e\in E_0(H)}w_e}p^{(2+\gamma)\left(\displaystyle\sum_{e\in E_0(H)\cup E_1(H)}w_e\right)+v(H)-2c(H)}.
\end{align*}
If we can show that $\displaystyle\sum_{e\in E_1(H)}w_e=|A|$ and $\displaystyle\sum_{e\in E_0(H)}w_e=c(H)-|A|$, then this bound will become
\[(1+o(1))z^{|A|}w^{c(H)-|A|}p^{(2+\gamma)c(H)+v(H)-2c(H)}=(1+o(1))z^{|A|}w^{c(H)-|A|}p^{e(H)},\]
as $H$ is contributing so $\gamma c(H)+v(H)=p(H)$. Thus we have reduced Proposition \ref{monomialboundprop} to the following lemma.

\begin{lemma}\label{aweightslemma}
Let $H$ be any graph and let $A\subseteq V(H)$ be a valid subset. Let $E_1(H)$ be the set of edges $e\in E(H)$ with exactly one vertex in $A$ and let $E_0(H)$ be the set of edges $e\in E(H)$ with no vertices in $A$. Then for any maximum fractional matching $(w_e)_{e\in E(H)}$,
\[\displaystyle\sum_{e\in E_1(H)}w_e=|A|\]
and
\[\displaystyle\sum_{e\in E_0(H)}w_e=c(H)-|A|\]
\end{lemma}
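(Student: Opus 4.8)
The plan is to recover complementary slackness from the pairing between the minimum fractional vertex cover that witnesses the validity of $A$ and the given maximum fractional matching, and then to read off both identities from the resulting structure of the support of the matching.

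First I would fix, using that $A$ is valid, a minimum fractional vertex cover $(c_v)_{v\in V(H)}$ with $c_v=1$ for $v\in A$ and $c_v\in\{0,\tfrac12\}$ otherwise, and write $B=\{v:c_v=\tfrac12\}$, $C=\{v:c_v=0\}$, so that $V(H)=A\sqcup B\sqcup C$ and $|A|+\tfrac12|B|=c(H)$. Let $(w_e)_{e\in E(H)}$ be any maximum fractional matching; by Lemma \ref{edgeweightslemma}(3), $\sum_{e\in E(H)}w_e=c(H)$. The key step is the chain
\[
c(H)=\sum_{v\in V(H)}c_v\ \ge\ \sum_{v\in V(H)}c_v\sum_{e\ni v}w_e\ =\ \sum_{e=vw\in E(H)}w_e(c_v+c_w)\ \ge\ \sum_{e\in E(H)}w_e\ =\ c(H),
\]
where the first inequality uses $c_v\ge 0$ and $\sum_{e\ni v}w_e\le 1$, the middle equality swaps the order of summation, and the last inequality uses $c_v+c_w\ge 1$ with $w_e\ge 0$. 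Hence every inequality is an equality, which forces (i) $\sum_{e\ni v}w_e=1$ whenever $c_v>0$, i.e.\ for all $v\in A\cup B$, and (ii) $c_v+c_w=1$ for every edge $e=vw$ with $w_e>0$.

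Next I would pin down the support of $(w_e)$. By (ii), a positively weighted edge $e=vw$ has $c_v+c_w=1$; checking the possible unordered pairs of blocks containing $v$ and $w$, the only options are one endpoint in $A$ and the other in $C$, or both endpoints in $B$. In particular every positively weighted edge lies in $E(A,C)$ or in $E(B)$, and every edge with both endpoints in $A$ carries weight $0$. Now $E_1(H)$ consists of the edges with exactly one endpoint in $A$, and among positively weighted edges those are precisely the $A$--$C$ edges; since every positively weighted edge meeting a vertex $v\in A$ is such an $A$--$C$ edge, each counted once at its unique $A$-endpoint,
\[
\sum_{e\in E_1(H)}w_e=\sum_{v\in A}\ \sum_{e\ni v}w_e=|A|
\]
by (i). For the second identity, partition $E(H)$ into edges with $0$, $1$, or $2$ endpoints in $A$; the last class has $w_e=0$, so $\sum_{e\in E_0(H)}w_e=\sum_{e\in E(H)}w_e-\sum_{e\in E_1(H)}w_e=c(H)-|A|$, which is the claim.

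The argument is essentially routine once the equality chain is exploited; the only place requiring care is the case analysis identifying the support of the matching, and this is exactly where the specific $\{0,\tfrac12,1\}$-structure of the validity-witnessing cover is used (it is what guarantees that no positively weighted edge has two endpoints in $A$ and that the $B$--$B$ edges absorb the leftover weight $c(H)-|A|=\tfrac12|B|$).
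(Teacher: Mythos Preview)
Your proof is correct and follows essentially the same approach as the paper's: both hinge on complementary slackness to show that edges with both endpoints in $A$ carry zero weight and that $\sum_{e\ni v}w_e=1$ for every $v\in A$, then compute $\sum_{e\in E_1(H)}w_e=\sum_{v\in A}\sum_{e\ni v}w_e=|A|$ and obtain the second identity by subtraction. The only difference is cosmetic: the paper invokes complementary slackness by name from LP duality, whereas you derive it explicitly via the equality chain $c(H)=\sum_v c_v\ge\sum_e w_e(c_v+c_w)\ge\sum_e w_e=c(H)$, making your argument slightly more self-contained.
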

\begin{proof}[Proof of Lemma \ref{aweightslemma}]
Since maximum fractional matching and minimum fractional vertex cover are dual systems, by complementary slackness we know that if $e=uu'$ with $w_e>0$, then for any minimum fractional vertex cover $(c_v)_{v\in V(H)}$, $c_u+c_{u'}=1$. If $e=uu'$ with $u,u'\in A$, we know that since $A$ is valid there is some minimum fractional vertex cover $(c_v)$ with $c_u=c_{u'}=1$, so $c_u+c_{u'}=2$, and this implies that $w_e=0$. Therefore,
\[\displaystyle\sum_{e\in E_1(H)}w_e+\displaystyle\sum_{e\in E_0(H)}w_e=\displaystyle\sum_{e\in E(H)}w_e=c(H),\]
so it suffices to show just one of the two identities in the lemma.

We may write
\[\displaystyle\sum_{e\in E_1(H)}w_e=\displaystyle\sum_{v\in A}\displaystyle\sum_{e\ni v}w_e,\]
because for any $e\ni v$ with $e\not\in E_1(H)$, $e$ has two vertices in $A$ and thus $w_e=0$ by the argument earlier. Thus we just must show that
\[\displaystyle\sum_{e\ni v}w_e=1\]
for all $v\in A$. But this again follows by complementary slackness, as if equality does not hold in the inequality $\displaystyle\sum_{e\ni v}w_e\leq 1$, then for all minimum fractional vertex covers $(c_w)_{w\in V(H)}$ we must have $c_v=0$. But $c_v=1$ in at least one minimum fractional vertex cover for all $v\in A$ by the validity of $A$, a contradiction. This completes the proof of Lemma \ref{aweightslemma} and thus the proof of Proposition \ref{monomialboundprop}.
\end{proof}
\end{proof}
To bound the entropy of $W$ we will need the following lemma.
\begin{lemma}\label{2z+wlemma}
Assume Setup \ref{constantsetup}. Fix $b\geq 0$ and define $z=z(b,p)$, $w=w(b,p)$ as in Definition \ref{zwdef}.  Then
\[I_p(W)\geq (2z+w)p^{2+\gamma}\log\frac{1}{p}.\]
\end{lemma}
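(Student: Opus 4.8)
The plan is to simply decompose the entropy integral over $[0,1]^2$ according to membership in the hub $B_b$ (as defined in Definition \ref{hubdef}), and then exploit symmetry together with nonnegativity of $I_p$. Concretely, partition $[0,1]^2$ into the four measurable sets $B_b\times B_b$, $B_b\times\overline{B_b}$, $\overline{B_b}\times B_b$, and $\overline{B_b}\times\overline{B_b}$, so that
\[
I_p(W)=\int_{B_b\times B_b}I_p(W(x,y))\,dx\,dy+\int_{B_b\times\overline{B_b}}I_p(W(x,y))\,dx\,dy+\int_{\overline{B_b}\times B_b}I_p(W(x,y))\,dx\,dy+\int_{\overline{B_b}\times\overline{B_b}}I_p(W(x,y))\,dx\,dy.
\]

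Next I would observe that since $W$ is a graphon it is symmetric, hence the function $(x,y)\mapsto I_p(W(x,y))$ is symmetric, and therefore the integral over $B_b\times\overline{B_b}$ equals the integral over $\overline{B_b}\times B_b$. By Definition \ref{zwdef}, each of these two equals $zp^{2+\gamma}\log\frac1p$, and the integral over $\overline{B_b}\times\overline{B_b}$ equals $wp^{2+\gamma}\log\frac1p$. Substituting gives
\[
I_p(W)=\int_{B_b\times B_b}I_p(W(x,y))\,dx\,dy+(2z+w)p^{2+\gamma}\log\frac1p.
\]

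Finally, since $I_p(t)\geq 0$ for all $t\in[0,1]$ (as $I_p$ is minimized at $t=p$ with value $0$), the remaining integral over $B_b\times B_b$ is nonnegative, and the claimed inequality $I_p(W)\geq(2z+w)p^{2+\gamma}\log\frac1p$ follows immediately. There is no real obstacle here: the only substantive ingredient is the symmetry of $W$, which is exactly the phenomenon highlighted in Section \ref{K23section} — the hub is forced to contribute to the entropy \emph{twice}, once through $B_b\times\overline{B_b}$ and once through $\overline{B_b}\times B_b$, which is where the factor of $2$ in front of $z$ (and hence ultimately the factor of $2$ in $(2+o(1))\rho(K,\delta)$) comes from. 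The genuine work of matching $2z+w$ against $\rho(K,\delta)=\min(z+w/2)$ over $P_K(z,w)\geq 1+\delta$ is deferred to the next section, where Corollary \ref{validbreakdowncor} and Proposition \ref{monomialboundprop} force $P_K(z,w)\geq 1+\delta-o(1)$.
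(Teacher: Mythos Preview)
Your proof is correct and follows essentially the same approach as the paper: partition $[0,1]^2$ into the four rectangles determined by $B_b$ and $\overline{B_b}$, drop the nonnegative $B_b\times B_b$ contribution, and use symmetry of $W$ to identify the two cross terms with $zp^{2+\gamma}\log\frac{1}{p}$ each. The paper's proof is line-for-line the same computation, without the surrounding commentary.
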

\begin{proof}
Breaking apart the integral, we have
\begin{align*}
I_p(W) & =\displaystyle\int_{[0,1]^2}I_p(W(x,y)) dx dy \\ & =\displaystyle\int_{B_b\times B_b}I_p(W(x,y)) dx dy+\displaystyle\int_{B_b\times\overline{B_b}}I_p(W(x,y)) dx dy+\displaystyle\int_{\overline{B_b}\times B_b}I_p(W(x,y)) dx dy+\displaystyle\int_{\overline{B_b}\times\overline{B_b}}I_p(W(x,y)) dx dy \\ & \geq\displaystyle\int_{B_b\times\overline{B_b}}I_p(W(x,y)) dx dy+\displaystyle\int_{\overline{B_b}\times B_b}I_p(W(x,y)) dx dy+\displaystyle\int_{\overline{B_b}\times\overline{B_b}}I_p(W(x,y)) dx dy \\ & =(2z+w)p^{2+\gamma}\log\frac{1}{p},
\end{align*}
by the symmetry of $W$.
\end{proof}
\begin{proof}[Proof of Lower Bound of Theorem \ref{varcorrectconstant}]
By the discussion preceding Lemma \ref{constantconsequenceslemma}, it suffices to show that under Setup \ref{constantsetup},
\[I_p(W)\geq (2-o(1))\rho(K,\delta)p^{2+\gamma}\log\frac{1}{p}.\]
Take $b$ to be the value guaranteed by Corollary \ref{validbreakdowncor}, so that
\[\displaystyle\sum_{\substack{H\subseteq K \\ H\text{ contributing}}}\displaystyle\sum_{\text{valid }A\subseteq V(H)}p^{-e(H)}\Hom_b^A(H,U)\geq 1+\delta-o(1).\]
Define $z=z(b,p)$ and $w=w(b,p)$ as in Definition \ref{zwdef}. By Proposition \ref{monomialboundprop}, $\Hom_b^A(H,U)\leq (1+o(1))p^{e(H)}z^{|A|}w^{c(H)-|A|}$ for all valid $A\subseteq V(H)$, so
\[\displaystyle\sum_{\substack{H\subseteq K \\ H\text{ contributing}}}\displaystyle\sum_{\text{valid }A\subseteq V(H)}z^{|A|}w^{c(H)-|A|}\geq 1+\delta-o(1).\]
The left side of the expression above is simply the definition of $P_K(z,w)$, so
\[P_K(z,w)\geq 1+\delta-o(1).\]
By Lemma \ref{2z+wlemma}, we know that $I_p(W)\geq (2z+w)p^{2+\gamma}\log\frac{1}{p}$, so
\begin{align*}
I_p(W) & \geq\left(\displaystyle\min_{\substack{z,w\geq 0 \\ P_K(z,w)\geq 1+\delta-o(1)}}(2z+w)\right)p^{2+\gamma}\log\frac{1}{p} \\ & =2\rho(K,\delta-o(1))p^{2+\gamma}\log\frac{1}{p}.
\end{align*}
By the same argument as in Section \ref{constructionssection}, $\rho(K,\delta-o(1))=(1-o(1))\rho(K,\delta)$, so
\[I_p(W)\geq (2-o(1))\rho(K,\delta)p^{2+\gamma}\log\frac{1}{p},\]
completing the proof of the lower bound and thus the proof of Theorem \ref{varcorrectconstant}.
\end{proof}
\section{Proof of Proposition \ref{multiplethresholdsprop}}\label{constantsection2}
The only remaining loose end in our proof of Theorem \ref{varcorrectconstant} (except for Proposition \ref{thesegraphonsworkprop}) is the proof of Proposition \ref{multiplethresholdsprop}, which we will prove here. Our argument is inspired by the adaptive-thresholding argument in Lemma 5.2 of \cite{BGLZ}. The idea is to have many slightly-separated possible thresholds $b_1,b_2,\ldots,b_m$, and prove that at least one has the desired properties.

To this end, we use the following technical lemma.
\begin{lemma}\label{multiplethresholdslemma}
Assume Setup \ref{constantsetup}, and take $H\subseteq K$ contributing and $A\subseteq V(H)$ invalid. Take $\epsilon=\epsilon(p)\geq\left(\log\frac{1}{p}\right)^{-O(1)}$ and $m=m(p)\in\mathbb{Z}^+$. Take any $b_1,\ldots,b_m$ (functions of $p$) with $p^{1+\frac{\gamma}{3v(H)}}\leq b_1<\cdots <b_m\leq p$ and $\frac{b_i}{p}\leq\left(\frac{b_{i+1}}{p}\right)^{v(K)}\left(\log\frac{1}{p}\right)^{-\omega(1)}$ for all $i\in [m-1]$. Then defining $\Hom_{b_i}^A(H,U)$ as in Definition \ref{hubdef},
\[\left|\Hom_{b_i}^A(H,U)\right|\leq (1+o(1))\epsilon p^{e(H)}\]
for all but at most $\frac{(1+\delta)2^{e(K)}}{\epsilon}+1$ values of $i\in [m]$.
\end{lemma}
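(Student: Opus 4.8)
The plan is to fix a contributing subgraph $H\subseteq K$ and an invalid $A\subseteq V(H)$, and to show that $\Hom_{b_i}^A(H,U)$ is small for all but boundedly many $i$. The main tool is Theorem~\ref{graphholderthm} applied to $H$ with $f_e=U$ for every edge $e$, with the domain blocks $B_v=B_{b_i}$ for $v\in A$ and $B_v=\overline{B_{b_i}}$ for $v\notin A$. Since $A$ is \emph{invalid}, there is no $\{0,\tfrac12,1\}$-valued minimum fractional vertex cover with $A$ as its $1$-set; I expect this to translate, via LP duality/complementary slackness, into the statement that any maximum fractional matching $(w_e)$ and compatible minimum fractional edge cover $(w'_e)$ with $w_e\le w'_e<1$ must have \emph{strict} inequality $\sum_{e\in E_1(H)}w_e<|A|$ (or the analogous deficiency), where $E_1(H)$ is the set of edges with exactly one endpoint in $A$. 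This strict deficiency, call its size $\eta>0$ (a positive rational depending only on $K$), is what will make $\Hom_{b_i}^A(H,U)$ gain an extra power of $b_i/p$ relative to the valid case.

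The key steps, in order: (1) run the same computation as in the proof of Proposition~\ref{monomialboundprop}, bounding the $\|f_e\|_{v,1/w'_e}$ factors by $(1+o(1))p^{w'_e-w_e}$ using the degree condition $\int_0^1 W(x_0,y)\,dy=(1+o(1))p$, contributing $p^{v(H)-2c(H)}$ overall as before; (2) bound each $\|f_e\|_{1/w'_e}^{w_e/w'_e}$ factor, but now distinguishing the edges by which endpoints lie in $A$: for edges into $A$ (at least one endpoint in $A$) use the crude bound $\Big(\int_{B_{b_i}\times(\cdot)}|U|^{1/w'_e}\Big)^{w_e}\le$ (something involving $m(B_{b_i})$ or the column-$L^1$ bound $b_i$) rather than the entropy bound — this is exactly where the hub definition $B_{b_i}=\{x:\int|U(x,y)|\,dy\ge b_i\}$ and its complementary bound on $\overline{B_{b_i}}$ give the gain; for edges disjoint from $A$ use Lemma~\ref{momentlemma} to pass to $I_p(W)$ and hence to $O(p^{2+\gamma-o(1)})$ as in Setup~\ref{constantsetup}; (3) combine, using Lemma~\ref{aweightslemma} for the exponent bookkeeping plus the deficiency $\eta$, to get $|\Hom_{b_i}^A(H,U)|\le (1+o(1))\,p^{e(H)}\,(b_i/p)^{\eta}\cdot(\text{correction})$ — more precisely a bound of the form $p^{e(H)}$ times a product over the $A$-incident edges of quantities controlled by $b_i$ and by the total mass $\|U\|_1=O(p^{2+\gamma}(\log\tfrac1p)^2)$.

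The crux — and the reason for the adaptive-thresholding setup with $m$ candidate thresholds — is step~(4): a single threshold $b$ need not make $\Hom_b^A(H,U)$ small, because the mass could be concentrated on columns whose $\int|U(x,y)|\,dy$ sits just above $b$. To handle this I would argue by counting: for each $i$, if $|\Hom_{b_i}^A(H,U)|>(1+o(1))\epsilon p^{e(H)}$, the bound from step~(3) forces a substantial amount of the homomorphism mass of $|U|^{\otimes E(H)}$ (equivalently, by the gap between $b_i$ and $b_{i+1}^{v(K)}(\log\tfrac1p)^{-\omega(1)}$, a substantial amount of $\Hom(H',|U|)$ for some $H'\subseteq K$) to live in the "annulus" of columns with $L^1$-norm between roughly $b_i$ and $b_{i+1}$; these annuli are disjoint across $i$, and Condition~(4) of Definition~\ref{nicegraphonsdef} (i.e.\ (4) of Lemma~\ref{constantconsequenceslemma}) caps the total at $(1+\delta)2^{e(K)}p^{e(H')}$. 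A pigeonhole over the disjoint annuli then shows at most $\frac{(1+\delta)2^{e(K)}}{\epsilon}+1$ indices $i$ can be "bad", which is exactly the claimed bound. The separation hypothesis $b_i/p\le (b_{i+1}/p)^{v(K)}(\log\tfrac1p)^{-\omega(1)}$ is precisely what guarantees the annuli are genuinely disjoint and that the $(\log\tfrac1p)^{O(1)}$ slack in $\epsilon$ and in the step-(3) estimates is absorbed.

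I expect the main obstacle to be step~(4): making rigorous the "disjoint annuli + bounded total mass $\Rightarrow$ pigeonhole" argument, i.e.\ correctly identifying which subgraph $H'\subseteq K$ (likely $H$ with the $A$-incident edges deleted, or a suitable contraction) carries the relevant mass, verifying that the estimate of step~(3) really localizes that mass to one annulus per bad index, and checking that all the $(\log\tfrac1p)^{\pm O(1)}$ factors — from $\epsilon\ge(\log\tfrac1p)^{-O(1)}$, from Lemma~\ref{momentlemma}'s $(1-o(1))$, and from the threshold separation — combine in the right direction so the final count is $\frac{(1+\delta)2^{e(K)}}{\epsilon}+1$ rather than something weaker. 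The algebraic identification of the deficiency $\eta>0$ from invalidity of $A$ (step~(1)–(3) bookkeeping) is routine LP duality given Lemma~\ref{aweightslemma}, so I do not anticipate difficulty there.
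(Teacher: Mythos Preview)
Your proposal has the right ingredients but misses the key structural decomposition, and the way invalidity enters is not what you describe. The paper does not try to bound $\Hom_{b_i}^A(H,U)$ directly for a single threshold and then pigeonhole; instead, for each $i\ge 2$ it splits the domain as $\Omega_{b_i}^A=(\Omega_{b_i}^A\setminus\Omega_{b_{i-1}}^A)\cup(\Omega_{b_i}^A\cap\Omega_{b_{i-1}}^A)$. The sets $\Omega_{b_i}^A\setminus\Omega_{b_{i-1}}^A$ are disjoint across $i$, and the total integral of $\prod|U|$ over their union is bounded by $\Hom(H,|U|)\le(1+\delta)2^{e(K)}p^{e(H)}$ (condition~(4) of Lemma~\ref{constantconsequenceslemma}); this gives the pigeonhole and accounts for all of the $\frac{(1+\delta)2^{e(K)}}{\epsilon}$ exceptional indices. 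This part uses neither invalidity nor the threshold separation. The intersection $\Omega_{b_i}^A\cap\Omega_{b_{i-1}}^A$ is where invalidity enters, but crucially via \emph{two} thresholds simultaneously: on this set $x_v\in B_{b_i}$ for $v\in A$ while $x_v\in\overline{B_{b_{i-1}}}$ for $v\notin A$. The paper then drops the edges from $A$ to $V(H')$, where $H'$ is the restriction of $H$ to $V(H)\setminus(A\cup C(A))$ and $C(A)$ is the set of vertices outside $A$ adjacent only to $A$; it bounds the $A\cup C(A)$ part using $m(B_{b_i})=O(p^{2+\gamma}(\log\tfrac1p)^2/b_i)$ together with a Cauchy--Schwarz bound of $p+o(p)$ for each $v\in C(A)$, and applies Theorem~\ref{graphholderthm} only to $H'$, exploiting the column bound $\int_{\overline{B_{b_{i-1}}}}|U(x,y)|\,dy\le b_{i-1}$. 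Applying the H\"older inequality to all of $H$ as you propose would not let you use $b_i$ and $b_{i-1}$ in the same estimate.

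Second, invalidity is not used as an ``LP deficiency $\eta>0$'' inside the H\"older bound. The combined estimate on the intersection piece has the shape
\[
(\log\tfrac1p)^{O(1)}\left(\tfrac{b_{i-1}}{p}\right)^{v(H')-2c(H')}\left(\tfrac{b_i}{p}\right)^{-|A|}p^{\gamma(|A|+c(H')-c(H))}\cdot p^{e(H)},
\]
and the argument is by contradiction: if this is not $o(\epsilon p^{e(H)})$, then first the bound $b_i\ge p^{1+\gamma/(3v(H))}$ forces $|A|+c(H')-c(H)\le\tfrac13$, hence $=0$ by half-integrality; then $v(H')-2c(H')\ge 1$ would, via the separation hypothesis $b_{i-1}/p\le(b_i/p)^{v(K)}(\log\tfrac1p)^{-\omega(1)}$, make the expression $o(1)$, so in fact $v(H')=2c(H')$; together these two equalities produce a $\{0,\tfrac12,1\}$-valued minimum fractional vertex cover of $H$ with $1$-set exactly $A$, contradicting invalidity. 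Note finally that if your steps~(1)--(3) really gave a bound $\lesssim p^{e(H)}(b_i/p)^\eta$ with $\eta>0$, the lemma would hold for \emph{every} $i$ with no exceptions, so the exceptional indices cannot arise from the H\"older estimate at all; they come entirely from the disjoint-difference pieces, which your outline does not isolate.
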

We first prove Proposition \ref{multiplethresholdsprop} given this lemma, and then prove the lemma itself.
\begin{proof}[Deduction of Proposition \ref{multiplethresholdsprop} from Lemma \ref{multiplethresholdslemma}]
Assume Setup \ref{constantsetup}. Notice that it is possible to take $m=\log\log\log\frac{1}{p}$ and $b_1,\cdots,b_m$ satisfying $p^{1+\frac{\gamma}{3v(K)}}\leq b_1<\cdots<b_m\leq p$ and $\frac{b_i}{p}\leq\left(\frac{b_{i+1}}{p}\right)^{v(K)}\left(\log\frac{1}{p}\right)^{-\omega(1)}$ for all $i\in [m-1]$. For example, we can take
\[b_{m-i}=p\cdot\exp\left(-(v(K)+1)^i\sqrt{\log\frac{1}{p}}\right),\]
in which case $-\log\frac{b_i}{p}+v(K)\log\frac{b_{i+1}}{p}=(v(K)+1)^{m-i-1}\sqrt{\log\frac{1}{p}}\gg\log\log\frac{1}{p}$.
This is valid as long as $b_1\geq p^{1+\frac{\gamma}{3v(K)}}$, which will be true since $m\ll\log\log\frac{1}{p}$ and $\gamma>0$.

Now, take $\epsilon=\left(\log\log\log\frac{1}{p}\right)^{-\frac{1}{2}}$. By Lemma \ref{multiplethresholdslemma}, for any contributing $H\subseteq K$ and any invalid $A\subseteq V(H)$, we have $|\Hom_{b_i}^A(H,U)|\leq (1+o(1))\epsilon p^{e(H)}$ for all but at most $O\left(\frac{1}{\epsilon}\right)=O\left(\sqrt{\log\log\log\frac{1}{p}}\right)$ values of $b_i$. Since we have $\log\log\log\frac{1}{p}$ total values of $b_i$ and each $(H,A)$ pair (of which there are $O(1)$) only invalidates $O\left(\sqrt{\log\log\log\frac{1}{p}}\right)$ of them, we may find some $b_i\geq p^{1+\frac{\gamma}{3v(K)}}$ such that
\[|\Hom_{b_i}^A(H,U)|\leq (1+o(1))\epsilon p^{e(H)}\]
for all contributing $H$ and invalid $A\subseteq V(H)$. Since $\epsilon=o(1)$, Proposition \ref{multiplethresholdsprop} follows.
\end{proof}
The remainder of this section will be devoted to the proof of Lemma \ref{multiplethresholdslemma}. Suppose the conditions of Lemma \ref{multiplethresholdslemma} hold for some $H\subseteq K$ contributing, $A\subseteq V(H)$ invalid, and $b_i$, $1\leq i\leq m$. Now, for all $i$, $2\leq i\leq m$, we have the decomposition
\begin{align*}
|\Hom_{b_i}^A(H,U)| & =\left|\displaystyle\int_{\Omega_{b_i}^A}\displaystyle\prod_{vv'\in E(H)}U(x_v,x_{v'})dx\right| \\ & \leq \displaystyle\int_{\Omega_{b_i}^A}\displaystyle\prod_{vv'\in E(H)}|U(x_v,x_{v'})|dx \\ & \leq \displaystyle\int_{\Omega_{b_i}^A\backslash\Omega_{b_{i-1}}^A}\displaystyle\prod_{vv'\in E(H)}|U(x_v,x_{v'})|dx+\displaystyle\int_{\Omega_{b_i}^A\cap\Omega_{b_{i-1}}^A}\displaystyle\prod_{vv'\in E(H)}|U(x_v,x_{v'})|dx,
\end{align*}
recalling from Definition \ref{hubdef} the definition of $\Omega_b^A$. Thus (ignoring $b_1$ altogether) Lemma \ref{multiplethresholdslemma} will follow from the following two statements.
\begin{lemma}\label{disjointlemma}
Under the conditions of Lemma \ref{multiplethresholdslemma}, for all but at most $\frac{(1+\delta)2^{e(K)}}{\epsilon}$ values of $i$, $2\leq i\leq m$,
\[\displaystyle\int_{\Omega_{b_i}^A\backslash\Omega_{b_{i-1}}^A}\displaystyle\prod_{vv'\in E(H)}|U(x_v,x_{v'})|dx\leq\epsilon p^{e(H)}.\]
\end{lemma}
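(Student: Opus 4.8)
The plan is to exploit the near-disjointness of the sets $\Omega_{b_i}^A \backslash \Omega_{b_{i-1}}^A$ for distinct $i$ and the fact that the total mass $\Hom(H,|U|)$ is bounded by (4) of Lemma \ref{constantconsequenceslemma}. First I observe that since $A$ is invalid, there is at least one vertex $v_0 \in A$ that cannot be assigned weight $1$ in any $\{0,\tfrac12,1\}$-valued minimum fractional vertex cover; equivalently, in the LP sense, if $x_{v_0} \in B_{b_i} \backslash B_{b_{i-1}}$ then the ``column integral'' $\int_0^1 |U(x_{v_0},y)|\,dy$ lies in the window $[b_{i-1},b_i)$. The key point is that as $i$ ranges over $\{2,\dots,m\}$, the windows $[b_{i-1},b_i)$ are disjoint, so a fixed point $x_{v_0}$ can lie in $B_{b_i}\backslash B_{b_{i-1}}$ for at most one value of $i$. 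More carefully, I would split the region $\Omega_{b_i}^A \backslash \Omega_{b_{i-1}}^A$ according to which coordinate ``drops'': it is contained in the union over $v \in V(H)$ of the set where $x_v \in B_{b_i} \backslash B_{b_{i-1}}$ (for $v\in A$, this uses $x_v$ leaving the hub as the threshold rises; for $v\notin A$ a symmetric statement) while all other coordinates satisfy the $\Omega_{b_i}^A$ membership. Each such ``drop region'' $D_{i,v}$ is disjoint from $D_{i',v}$ for $i \neq i'$, since the defining constraint on $x_v$ pins it to the disjoint window $[b_{i-1},b_i)$.

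Next I would bound $\int_{D_{i,v}} \prod_{vv'\in E(H)}|U|\,dx$ by $\Hom(H,|U|)$ restricted to a measurable subset, and sum over $i$. Because the $D_{i,v}$ are pairwise disjoint (in $i$) for each fixed $v$, we get
\[
\sum_{i=2}^m \int_{D_{i,v}} \prod_{vv'\in E(H)}|U(x_v,x_{v'})|\,dx \le \Hom(H,|U|) \le 2^{e(K)}(1+\delta)p^{e(H)}
\]
using (4) of Lemma \ref{constantconsequenceslemma}. Summing the $O(1)$ contributions over $v \in V(H)$ (absorbing the constant $v(H)$ into the $2^{e(K)}$-type bound, or just carrying it), the total is $O(p^{e(H)})$; for cleanliness I would state the disjointness more sharply so the constant comes out as exactly $(1+\delta)2^{e(K)}$. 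Then a Markov-type / pigeonhole argument finishes: if $\int_{\Omega_{b_i}^A \backslash \Omega_{b_{i-1}}^A} \prod |U| \, dx > \epsilon p^{e(H)}$ for more than $\frac{(1+\delta)2^{e(K)}}{\epsilon}$ values of $i$, the sum of these contributions would exceed $(1+\delta)2^{e(K)} p^{e(H)}$, contradicting the displayed bound.

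The main obstacle I anticipate is making the ``drop region'' decomposition genuinely disjoint in $i$ while still covering all of $\Omega_{b_i}^A \backslash \Omega_{b_{i-1}}^A$. The subtlety is that $\Omega_{b_i}^A \backslash \Omega_{b_{i-1}}^A$ consists of tuples that are in the $b_i$-configuration but not the $b_{i-1}$-configuration, and this can happen because several coordinates straddle the gap $(b_{i-1},b_i]$ simultaneously; I need to attribute each tuple to (at least) one witnessing coordinate $v$ in a way that is consistent across all $i$, so that for fixed $v$ the witnessing regions over different $i$ are disjoint. The clean way is: a tuple in $\Omega_{b_i}^A \backslash \Omega_{b_{i-1}}^A$ must have some coordinate $x_v$ with $v\in A$ and $\int|U(x_v,\cdot)| \in [b_{i-1}, b_i)$ (it cannot have $x_v \notin B_{b_i}$ for $v\in A$, nor $x_v \in B_{b_{i-1}} \supseteq B_{b_i}$-complement issues for $v\notin A$ — careful bookkeeping of which containment fails is needed here). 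Once the correct witnessing rule is pinned down, disjointness in $i$ is immediate from the disjointness of the intervals $[b_{i-1},b_i)$, and the rest is the routine summation and pigeonhole above. I do not expect the invalidity of $A$ to be needed for this particular lemma — it is Lemma \ref{containedlemma}-type statement about the overlap regions $\Omega_{b_i}^A \cap \Omega_{b_{i-1}}^A$ (the companion to this lemma) where invalidity enters — so here I would keep the argument purely combinatorial/measure-theoretic.
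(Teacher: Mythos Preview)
Your overall strategy --- bound the sum over $i$ of the integrals by $\Hom(H,|U|) \leq 2^{e(K)}(1+\delta)p^{e(H)}$ via (4) of Lemma~\ref{constantconsequenceslemma}, then pigeonhole --- is exactly the paper's. However, you are working much harder than necessary on the disjointness step, and your route as written would only yield the stated bound with an extra factor of $v(H)$ (harmless for the application, but not the constant in the lemma). The paper simply observes that the full sets $\Omega_{b_i}^A \backslash \Omega_{b_{i-1}}^A$, $2 \leq i \leq m$, are already pairwise disjoint --- no decomposition into ``drop regions'' $D_{i,v}$ is needed at all. The check is two lines: if $i<j$ and $x \in \Omega_{b_j}^A$, then every $v \in A$ has $x_v \in B_{b_j} \subseteq B_{b_{j-1}}$, so $x \notin \Omega_{b_{j-1}}^A$ forces some $v \notin A$ with $x_v \in B_{b_{j-1}} \subseteq B_{b_i}$, contradicting $x \in \Omega_{b_i}^A$. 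Thus the ``main obstacle'' you anticipate simply does not arise. (Incidentally, your own witnessing argument has the roles of $A$ and $V(H)\setminus A$ reversed: for $v \in A$ the membership $x_v \in B_{b_i}$ already gives $\int|U(x_v,\cdot)| \geq b_i$, so the coordinate landing in the window $[b_{i-1},b_i)$ must come from $V(H)\setminus A$.) You are right that invalidity of $A$ plays no role in this lemma.
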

\begin{lemma}\label{intersectionlemma}
Under the conditions of Lemma \ref{multiplethresholdslemma}, for all $i$, $2\leq i\leq m$,
\[\displaystyle\int_{\Omega_{b_i}^A\cap\Omega_{b_{i-1}}^A}\displaystyle\prod_{vv'\in E(H)}|U(x_v,x_{v'})|dx=o(\epsilon p^{e(H)}).\]
\end{lemma}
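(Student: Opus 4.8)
The plan is to estimate the left‑hand integral with the graph Hölder inequality of Theorem~\ref{graphholderthm}, applied to $H$ with edge functions $f_e=|U|$ and domains $B_v=B_{b_i}$ for $v\in A$ and $B_v=\overline{B_{b_{i-1}}}$ for $v\in V(H)\setminus A$. Since the $b_j$ increase we have $B_{b_i}\subseteq B_{b_{i-1}}$, hence $\overline{B_{b_{i-1}}}\subseteq\overline{B_{b_i}}$, so $\Omega_{b_i}^A\cap\Omega_{b_{i-1}}^A$ is exactly the product of these domains and the left‑hand side is at most the right‑hand side of (\ref{graphholder}). As $H$ is contributing it has $\delta(H)\ge 2$, and it has no bad edges by hypothesis, so Lemma~\ref{edgeweightslemma2} supplies weights $w_e\le w'_e<1$ for use in Theorem~\ref{graphholderthm} (I will in fact pick the maximum fractional matching $(w_e)$ with some care depending on $A$). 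Every factor on the right of (\ref{graphholder}) is an integral of $|U|^{1/w'_e}\le|U|$ over a product of two of these domains, or an essential supremum of such a column integral, and I estimate these using only four facts available under Setup~\ref{constantsetup}: (i) $\int_0^1|U(x,y)|\,dy\le(2+o(1))p$ for every $x$ (Lemma~\ref{constantconsequenceslemma}(1)); (ii) $\int_0^1|U(x,y)|\,dy< b_{i-1}$ for $x\in\overline{B_{b_{i-1}}}$ (the definition of $B_{b_{i-1}}$); (iii) $m(B_{b_i})\le\|U\|_1/b_i$, since $b_i\,m(B_{b_i})\le\int_{B_{b_i}}\int_0^1|U|\,dy\,dx\le\|U\|_1$; and (iv) $\|U\|_1=O\big(p^{2+\gamma}(\log\tfrac{1}{p})^2\big)$ (Lemma~\ref{constantconsequenceslemma}(6)). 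As in the proof of Proposition~\ref{monomialboundprop}, Lemma~\ref{momentlemma} lets one pass from $\int|U|^{1/w'_e}$ over a region to $(1+o(1))(\log\tfrac{1}{p})^{-1}\int I_p(W)$ over that region whenever that is convenient.

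Using the weights of Lemma~\ref{edgeweightslemma2} and bounding each second‑product factor by $\|f_e\|_{1/w'_e}^{w_e/w'_e}=\big(\int_{B_v\times B_{v'}}|U|^{1/w'_e}\big)^{w_e}\le\|U\|_1^{w_e}$, the very same computation as in Proposition~\ref{monomialboundprop} gives
\[
\int_{\Omega_{b_i}^A\cap\Omega_{b_{i-1}}^A}\prod_{vv'\in E(H)}|U(x_v,x_{v'})|\,dx\ \le\ (1+o(1))\,p^{\,v(H)-2c(H)}\Big(\tfrac{b_{i-1}}{p}\Big)^{\theta}\|U\|_1^{c(H)},
\]
where I have used the identity $\sum_e(w'_e-w_e)=v(H)-2c(H)$ from Lemma~\ref{edgeweightslemma} and facts (i)--(ii) to estimate the first product of (\ref{graphholder}), and where $\theta$ denotes the total surplus $\sum(w'_e-w_e)$ over the edges whose $S'$-endpoint lies outside $A$ (each such edge, estimated with (ii), contributes $b_{i-1}^{w'_e-w_e}$ rather than $p^{w'_e-w_e}$). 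Since $H$ is contributing, $e(H)=v(H)+\gamma c(H)$, so $p^{\,v(H)-2c(H)}\|U\|_1^{c(H)}=O\big(p^{e(H)}(\log\tfrac{1}{p})^{O(1)}\big)$, and the displayed bound is
\[
\le\ \Big(\tfrac{b_{i-1}}{p}\Big)^{\theta}\cdot O\big(p^{e(H)}(\log\tfrac{1}{p})^{O(1)}\big).
\]
If one can arrange $\theta>0$ then, $\theta$ being a positive rational depending only on the finitely many subgraphs of the fixed $K$, it is bounded below by a constant $c_0=c_0(K)>0$; the gap hypothesis gives $b_{i-1}/p\le(b_i/p)^{v(K)}(\log\tfrac{1}{p})^{-\omega(1)}\le(\log\tfrac{1}{p})^{-\omega(1)}$ (using $b_i\le p$), so the right‑hand side is $(\log\tfrac{1}{p})^{-\omega(1)}p^{e(H)}=o(\epsilon p^{e(H)})$ since $\epsilon\ge(\log\tfrac{1}{p})^{-O(1)}$ --- which is the assertion. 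The case $A=\emptyset$ is of this kind: $A=\emptyset$ being invalid forces $v(H)>2c(H)$, whence every maximum matching leaves some (necessarily non‑isolated) vertex unsaturated, so $\theta=v(H)-2c(H)\ge 1$.

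The combinatorial core is thus to obtain the surplus $\theta>0$, or a comparable saving, from the invalidity of $A$. Writing $T$ for the union, over all maximum matchings with $w_e<1$, of the sets $S=\{v:\sum_{e\ni v}w_e<1\}$, one checks (by averaging matchings, and by tracing the construction in Lemma~\ref{edgeweightslemma}) that $T$ is independent, that $c_v=0$ on $T$ for every minimum fractional vertex cover, and that the induced minimum edge cover has $S'=N(S)$. When $N(T)\not\subseteq A$, a maximum matching realizing an unsaturated vertex adjacent to $V(H)\setminus A$ makes $\theta>0$ and the previous paragraph applies. When $N(T)\subseteq A$ --- which, since $N(T)=\emptyset$ would force $v(H)=2c(H)$ and hence $A=\emptyset$ valid, forces $A\ne\emptyset$ --- the surplus edges all have their $S'$-endpoint in $A$, so one extracts the saving instead from the smallness of $m(B_{b_i})$: either by integrating hub coordinates out early, or by noting that a first‑product factor at an edge with both endpoints in $B_{b_i}$ is bounded by $\big(\int_{B_{b_i}}|U|\big)^{w'_e-w_e}\le m(B_{b_i})^{w'_e-w_e}$; combined with (iii)--(iv) and $b_1\ge p^{1+\gamma/(3v(K))}$ this again beats $p^{e(H)}$ by a factor that is $(\log\tfrac{1}{p})^{-\omega(1)}$. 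The main obstacle I anticipate is making this last case fully uniform: showing that for every invalid $A$ and every contributing $H$ --- including the $H$ carrying a fractional perfect matching, for which there is no surplus $\theta$ at all and one relies entirely on $m(B_{b_i})$ and $\|U\|_1$ being small --- the resulting exponent genuinely exceeds $e(H)$. This requires careful bookkeeping of how the invalidity of $A$ constrains the fractional matching / vertex cover structure of $H[V(H)\setminus A]$ and its edges to $A$; complementary slackness for the matching/cover linear programs and the description of valid sets as $1$-sets of half‑integral minimum vertex covers (Definition~\ref{validdef}) are the relevant tools. All remaining estimates are routine given Lemmas~\ref{constantconsequenceslemma}, \ref{momentlemma}, \ref{edgeweightslemma}, \ref{edgeweightslemma2} and Theorems~\ref{simpleholderresult} and~\ref{graphholderthm}.
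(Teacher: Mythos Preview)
Your approach is genuinely different from the paper's, and the part you carry out is correct, but the proof is incomplete in exactly the place you flag.

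You apply Theorem~\ref{graphholderthm} to all of $H$, with domains $B_v=B_{b_i}$ for $v\in A$ and $B_v=\overline{B_{b_{i-1}}}$ otherwise, and then try to extract a gain either from the first-product surplus factors (bounded by $b_{i-1}^{w'_e-w_e}$ when the $S'$-endpoint is outside $A$) or, failing that, from the smallness of $m(B_{b_i})$. The first case is fine. But in the second case --- in particular when $H$ carries a fractional perfect matching, so $v(H)=2c(H)$ and there is no surplus at all --- your sketch does not produce a bound. Your second-product factors are estimated by the global $\|U\|_1^{w_e}$, throwing away the restriction of the integration domain to $B_{b_i}\times B_{b_i}$ or $B_{b_i}\times\overline{B_{b_{i-1}}}$, and your suggested fix (bounding a column integral over $B_{b_i}$ by $m(B_{b_i})$) only applies to first-product factors, of which there may be none. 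The ``careful bookkeeping'' you anticipate needing is not carried out, and it is not clear it can be made to work uniformly from within a single application of Theorem~\ref{graphholderthm} to $H$.

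The paper avoids this by first splitting $V(H)=A\cup C(A)\cup V(H')$, where $C(A)$ is the set of vertices outside $A$ with all neighbours in $A$ and $H'=H[V(H)\setminus(A\cup C(A))]$. It drops the $A$--$V(H')$ edges, integrates out each $v\in C(A)$ by the Cauchy--Schwarz bound $\int\prod_{v'\in N(v)}|U|\le p+o(p)$, integrates out the $A$-coordinates using $m(B_{b_i})=O(p^{2+\gamma}(\log\tfrac1p)^2/b_i)$, and applies Theorem~\ref{graphholderthm} only to $H'$ (with the generic weights of Lemma~\ref{edgeweightslemma} --- the ``no bad edges'' hypothesis is not needed here). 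This yields a bound of the shape
\[
(\log\tfrac1p)^{O(1)}\,b_{i-1}^{\,v(H')-2c(H')}\,b_i^{-|A|}\,p^{(2+\gamma)(|A|+c(H'))+|C(A)|},
\]
and the point is that both sources of gain --- $m(B_{b_i})$ through the $b_i^{-|A|}$, and the H\"older surplus through $b_{i-1}^{v(H')-2c(H')}$ --- appear simultaneously. One then argues by contradiction: if this is not $o(\epsilon p^{e(H)})$, the bound $b_i\ge p^{1+\gamma/(3v(K))}$ forces the half-integer $|A|+c(H')-c(H)$ to vanish, and then the gap condition $b_{i-1}/p\le(b_i/p)^{v(K)}(\log\tfrac1p)^{-\omega(1)}$ forces the integer $v(H')-2c(H')$ to vanish; together these say that weighting $A$ by $1$, $V(H')$ by $\tfrac12$, and $C(A)$ by $0$ is a minimum fractional vertex cover, i.e.\ $A$ is valid --- contradiction. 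This is the combinatorial step you were missing, and it comes for free once the decomposition is set up.
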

\begin{proof}[Proof of Lemma \ref{disjointlemma}]
Noting that the $\Omega_{b_i}^A\backslash\Omega_{b_{i-1}}^A$, $2\leq i\leq m$ are disjoint, we have that
\begin{align*}
\displaystyle\sum_{i=2}^m\displaystyle\int_{\Omega_{b_i}^A\backslash\Omega_{b_{i-1}}^A}\displaystyle\prod_{vv'\in E(H)}|U(x_v,x_{v'})|dx & \leq\displaystyle\int_{[0,1]^{v(H)}}\displaystyle\prod_{vv'\in E(H)}|U(x_v,x_{v'})|dx \\ & =\Hom(H,|U|) \\ & \leq 2^{e(K)}(1+\delta)p^{e(H)},
\end{align*}
where the last step is by (4) of Lemma \ref{constantconsequenceslemma}. Thus at most $\frac{(1+\delta)2^{e(K)}}{\epsilon}$ terms of the sum are at least $\epsilon p^{e(H)}$, proving the lemma.
\end{proof}
\begin{proof}[Proof of Lemma \ref{intersectionlemma}]
Suppose the conditions of Lemma \ref{multiplethresholdslemma} hold, with some $H\subseteq K$ contributing, $A\subseteq V(H)$ invalid. Let $C(A)$ be the set of vertices in $V(H)\backslash A$ that are only adjacent to vertices in $A$, and let $H'$ be the restriction of $H$ to $V(H)\backslash (A\cup C(A))$. Dropping the edges between $A$ and $V(H')$ (and letting the integral over the vertices in $C(A)$ range from $0$ to $1$) only increases the value of the desired integral, and thus we have the bound
\begin{align}
\label{ACAbreakdown}\displaystyle\int_{\Omega_{b_i}^A\cap\Omega_{b_{i-1}}^A}\displaystyle\prod_{vv'\in E(H)}|U(x_v,x_{v'})|dx & \leq\left(\displaystyle\int_{B_{b_i}^{|A|}}\displaystyle\prod_{v\in C(A)}\left(\displaystyle\int_0^1 \displaystyle\prod_{v'\in N_H(v)}|U(x_v,x_{v'})|dx_v\right)\displaystyle\prod_{v\in A}dx_v\right) \\ & \nonumber\cdot\left(\displaystyle\int_{\overline{B_{b_{i-1}}}^{|V(H')|}}\displaystyle\prod_{vv'\in E(H')}|U(x_v,x_{v'})|\displaystyle\prod_{v\in V(H')}dx_v\right).
\end{align}
\setcounter{claimcounter}{0}
The following two claims will help bound the first factor of this product.
\begin{claim}\label{plemma}
For any $v\in C(A)$ and fixed $(x_{v'})_{v'\in N(v)}\in [0,1]^{\deg(v)}$,
\[\displaystyle\int_0^1 \displaystyle\prod_{v'\in N_H(v)}|U(x_v,x_{v'})|dx_v\leq p+o(p).\]
\end{claim}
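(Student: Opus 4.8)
The plan is to exploit the fact that, since $H$ is contributing, $\delta(H)\ge 2$, so every vertex of $H$ — and in particular our vertex $v$ — has at least two neighbours in $H$. Fix two distinct neighbours $v_1,v_2\in N_H(v)$. Because $U=W-p$ takes values in $[-p,1-p]$, it is $1$-bounded, so we may discard all the factors of the product other than the two corresponding to $v_1,v_2$:
\[\prod_{v'\in N_H(v)}|U(x_v,x_{v'})|\le |U(x_v,x_{v_1})|\,|U(x_v,x_{v_2})|.\]
Integrating over $x_v$ and applying Cauchy--Schwarz gives
\[\int_0^1\prod_{v'\in N_H(v)}|U(x_v,x_{v'})|\,dx_v\le\left(\int_0^1 U(x_v,x_{v_1})^2\,dx_v\right)^{1/2}\left(\int_0^1 U(x_v,x_{v_2})^2\,dx_v\right)^{1/2}.\]

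The remaining step is to bound each of these one-variable second moments by $(1+o(1))p$. Using $0\le W\le 1$ (hence $W^2\le W$ pointwise) together with the approximate regularity (1) of Lemma \ref{constantconsequenceslemma} and the symmetry of $W$ (which gives $\int_0^1 W(x_v,x_{v'})\,dx_v=(1+o(1))p$ for any fixed $x_{v'}$), one computes
\[\int_0^1 U(x_v,x_{v'})^2\,dx_v=\int_0^1 W(x_v,x_{v'})^2\,dx_v-2p\int_0^1 W(x_v,x_{v'})\,dx_v+p^2\le (1-2p)(1+o(1))p+p^2=(1+o(1))p.\]
Plugging this in, the left-hand side of the claim is at most $\big((1+o(1))p\big)^{1/2}\big((1+o(1))p\big)^{1/2}=p+o(p)$, uniformly over all choices of the fixed coordinates $(x_{v'})_{v'\in N(v)}$, which is exactly the assertion.

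The only subtle point — and essentially the whole content of the claim — is the use of $\deg_H(v)\ge 2$. If one kept only a single factor $|U(x_v,x_{v_1})|$ and bounded the rest by $1$, one would get $\int_0^1|W(x_v,x_{v_1})-p|\,dx_v\le\int_0^1(W(x_v,x_{v_1})+p)\,dx_v=(2+o(1))p$, which is off by a constant factor and would not suffice. Keeping two factors lets us replace the crude bound $|W-p|\le W+p$ by a Cauchy--Schwarz argument on the second moment, where the saving $W^2\le W$ is available. Everything else is a routine expansion, so I do not expect any real obstacle here.
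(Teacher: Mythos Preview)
Your proof is correct and follows essentially the same route as the paper: use $\delta(H)\ge 2$ to keep two factors, apply Cauchy--Schwarz, and bound each second moment via $W^2\le W$ together with the approximate regularity $\int_0^1 W(x_v,x_{v'})\,dx_v=(1+o(1))p$. Your explicit mention of the symmetry of $W$ (to justify integrating in the first slot) is a nice touch that the paper leaves implicit.
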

\begin{proof}[Proof of Claim \ref{plemma}]
Since $H$ is contributing, $\deg_H v\geq 2$. Let $w,w'$ be two distinct neighbors of $v$. Then
\begin{align*}
\displaystyle\int_0^1 \displaystyle\prod_{v'\in N_H(v)}|U(x_v,x_{v'})|dx_v & \leq\displaystyle\int_0^1 |U(x_v,x_{w})U(x_v,x_{w'})|dx_v \\ & \leq\left(\displaystyle\int_0^1 U(x_v,x_w)^2 dx_v\right)^{\frac{1}{2}}\left(\displaystyle\int_0^1 U(x_v,x_{w'})^2 dx_v\right)^{\frac{1}{2}}.
\end{align*}
But since $W$ is $1$-bounded,
\begin{align*}
\displaystyle\int_0^1 U(x_v,x_w)^2 dx_v & =\displaystyle\int_0^1 (W(x_v,x_w)-p)^2 dx_v \\ & =\displaystyle\int_0^1 (W(x_v,x_w)^2-2pW(x_v,x_w)+p^2) dx_v \\ & \leq\displaystyle\int_0^1 (1-2p)W(x_v,x_w) dx_v+p^2 \\ & =p+o(p),
\end{align*}
by (1) of Lemma \ref{constantconsequenceslemma}. The same holds for $\displaystyle\int_0^1 U(x_v,x_{w'})^2 dx_v$, so
\[\left(\displaystyle\int_0^1 U(x_v,x_w)^2 dx_v\right)^{\frac{1}{2}}\left(\displaystyle\int_0^1 U(x_v,x_{w'})^2 dx_v\right)^{\frac{1}{2}}\leq p+o(p),\]
proving Claim \ref{plemma}.
\end{proof}
\begin{claim}\label{mbblemma}
\[m(B_{b_i})=O\left(\frac{p^{2+\gamma}\left(\log\frac{1}{p}\right)^2}{b_i}\right)\]
\end{claim}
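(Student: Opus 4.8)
The plan is to bound the measure of the hub $B_{b_i}$ by relating the $L^1$-mass of $U$ on $B_{b_i}\times[0,1]$ to the total entropy $I_p(W)$, exactly as in the $K_{2,3}$ case (the unnumbered lemma just before the proof of Proposition \ref{K23prop}), but now using the $L^1$-style definition of $B_b$ from Definition \ref{hubdef} and the bound on $\|U\|_1$ from (6) of Lemma \ref{constantconsequenceslemma}. First I would integrate the defining inequality of $B_{b_i}$: since $\int_0^1|U(x,y)|\,dy\geq b_i$ for every $x\in B_{b_i}$, integrating over $x\in B_{b_i}$ gives
\[
m(B_{b_i})\,b_i\leq\displaystyle\int_{B_{b_i}\times[0,1]}|U(x,y)|\,dx\,dy\leq\|U\|_1.
\]

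Next I would invoke (6) of Lemma \ref{constantconsequenceslemma}, which gives $\|U\|_1=O\big(p^{2+\gamma}(\log\frac1p)^2\big)$. (Recall that (6) itself comes from Lemma \ref{firstmomentlemma} applied with $\epsilon=1/\log(1/p)$, using condition (3) of Definition \ref{nicegraphonsdef} to ensure $|U(x,y)|$ is never in $(0,p/\log(1/p))$, together with the entropy bound (5).) Combining the two displayed facts and dividing by $b_i$ yields
\[
m(B_{b_i})\leq\frac{\|U\|_1}{b_i}=O\!\left(\frac{p^{2+\gamma}\left(\log\frac{1}{p}\right)^2}{b_i}\right),
\]
which is precisely the claim.

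There is essentially no obstacle here: the only thing to be careful about is that the hub in Definition \ref{hubdef} is defined via the first moment $\int_0^1|U(x,y)|\,dy$ rather than the second moment used in Section \ref{K23section}, so I must use the $\|U\|_1$ estimate (6) rather than the $\int U^2$ estimate. Once that is matched up, the argument is a one-line application of Markov's inequality on the hub. The work has already been front-loaded into establishing (6) of Lemma \ref{constantconsequenceslemma}.
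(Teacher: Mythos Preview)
Your proof is correct and takes essentially the same approach as the paper: integrate the defining inequality of $B_{b_i}$ to get $m(B_{b_i})b_i\leq\|U\|_1$, then apply (6) of Lemma \ref{constantconsequenceslemma}. Your remark about using the first-moment definition of the hub (Definition \ref{hubdef}) rather than the second-moment version from Section \ref{K23section} is exactly the point the paper's proof implicitly relies on.
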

\begin{proof}[Proof of Claim \ref{mbblemma}]
By the definition of $B_{b_i}$, for all $x\in B_{b_i}$, $\displaystyle\int_0^1 |U(x,y)|dy\geq b_i$. Therefore,
\begin{align*}
m(B_i)b_i & \leq\displaystyle\int_{B_b\times [0,1]}|U(x,y)|dx dy \\ & \leq\displaystyle\int_{[0,1]^2}|U(x,y)|dx dy \\ & =\|U\|_1 \\ & =O\left(p^{2+\gamma}\left(\log\frac{1}{p}\right)^2\right)
\end{align*}
by (6) of Lemma \ref{constantconsequenceslemma}, proving Claim \ref{mbblemma}.
\end{proof}
Using Claims \ref{plemma} and \ref{mbblemma},
\begin{align}
\label{ACAfirstbound}\left(\displaystyle\int_{B_{b_i}^{|A|}}\displaystyle\prod_{v\in C(A)}\left(\displaystyle\int_0^1 \displaystyle\prod_{v'\in N(v)}|U(x_v,x_{v'})|dx_v\right)\displaystyle\prod_{v\in A}dx_v\right) & \leq (1+o(1))\left(\displaystyle\int_{B_{b_i}^{|A|}}p^{|C(A)|}\displaystyle\prod_{v\in A}dx_v\right) \\ & \nonumber =(1+o(1))p^{|C(A)|}m(B_{b_i})^{|A|} \\ & \nonumber =O\left(\left(\log\frac{1}{p}\right)^{2|A|}b_i^{-|A|}p^{(2+\gamma)|A|+|C(A)|}\right)
\end{align}
We now bound the second term in the product (\ref{ACAbreakdown}), with the following claim.
\begin{claim}\label{ACAsecondbound}
\[\displaystyle\int_{\overline{B_{b_{i-1}}}^{|V(H')|}}\displaystyle\prod_{v,v'\in V(H')}|U(x_v,x_{v'})|\displaystyle\prod_{v\in V(H')}dx_v=O\left(b_{i-1}^{v(H')-2c(H')}\left(p^{2+\gamma}\left(\log\frac{1}{p}\right)^2\right)^{c(H')}\right).\]
\end{claim}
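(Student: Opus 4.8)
The plan is to derive Claim \ref{ACAsecondbound} as a direct application of the generalized H\"older inequality of Theorem \ref{graphholderthm} to the graph $H'$, in essentially the same way as the proof of Theorem \ref{simpleholderresult} in Section \ref{simpleholderproofsection}, but now with the H\"older domains cut down to $\overline{B_{b_{i-1}}}$ and with the bound on $\|U\|_1$ from part (6) of Lemma \ref{constantconsequenceslemma} replacing the crude $1$-boundedness. First I would dispose of the degenerate case $V(H')=\emptyset$, where the integral is $1$ and $v(H')=c(H')=0$. Otherwise I would observe that $H'$ has no isolated vertices: every $v\in V(H')=V(H)\setminus(A\cup C(A))$ has, by the definition of $C(A)$, some neighbour $w\notin A$, and such a $w$ cannot lie in $C(A)$ (it is adjacent to $v\notin A$), so $w\in V(H')$ and $vw\in E(H')$.

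Next I would apply Theorem \ref{graphholderthm} to $H'$ with $B_v=\overline{B_{b_{i-1}}}$ for all $v\in V(H')$, $f_e=|U|$ for all $e\in E(H')$, and $(w_e)$, $(w'_e)$ a maximum fractional matching and a minimum fractional edge cover of $H'$ with $w_e\le w'_e$ for all $e$ (these exist by part (1) of Lemma \ref{edgeweightslemma}). With these choices the left side of (\ref{graphholder}) is precisely the integral in the claim, so it remains to estimate the two products on its right side.

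For the first product $\prod_{v\in S'}\prod_{e\ni v}\|f_e\|_{v,1/w'_e}^{(w'_e-w_e)/w'_e}$: since a minimum fractional edge cover has all weights $\le 1$, we have $|U|^{1/w'_e}\le|U|$ pointwise, so for $x_v\in\overline{B_{b_{i-1}}}$ the inner integral $\int_{\overline{B_{b_{i-1}}}}|U(x_v,x_{v'})|^{1/w'_e}\,dx_{v'}$ is at most $\int_0^1|U(x_v,y)|\,dy< b_{i-1}$ by the definition of $B_{b_{i-1}}$ in Definition \ref{hubdef}; hence each factor is at most $b_{i-1}^{w'_e-w_e}$. Exactly as in Section \ref{simpleholderproofsection}, Claims \ref{SS'claim} and \ref{weightssameclaim} of the proof of Theorem \ref{graphholderthm} give $\sum_{v\in S'}\sum_{e\ni v}(w'_e-w_e)=\sum_{e\in E(H')}(w'_e-w_e)=v(H')-2c(H')$ by parts (3) and (4) of Lemma \ref{edgeweightslemma}, so the first product is at most $b_{i-1}^{v(H')-2c(H')}$ (with nonnegative exponent, since $c(H')\le v(H')/2$). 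For the second product $\prod_{e\in E(H')}\|f_e\|_{1/w'_e}^{w_e/w'_e}$: again using $w'_e\le 1$ and $1$-boundedness, each factor $\big(\int_{\overline{B_{b_{i-1}}}\times\overline{B_{b_{i-1}}}}|U|^{1/w'_e}\big)^{w_e}$ is at most $\|U\|_1^{w_e}=O\big((p^{2+\gamma}(\log\tfrac{1}{p})^2)^{w_e}\big)$ by part (6) of Lemma \ref{constantconsequenceslemma} (the implied constant stays bounded since $w_e\in[0,1]$), so multiplying over $E(H')$ and using $\sum_{e\in E(H')}w_e=c(H')$ (part (3) of Lemma \ref{edgeweightslemma}) gives $O\big((p^{2+\gamma}(\log\tfrac{1}{p})^2)^{c(H')}\big)$. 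Multiplying the two bounds proves the claim.

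This argument is entirely routine once Theorem \ref{graphholderthm} is in hand; the only points requiring a little care are checking that $H'$ has no isolated vertices so that the inequality applies, noting that restricting the H\"older domains to $\overline{B_{b_{i-1}}}$ is exactly what produces the $b_{i-1}^{v(H')-2c(H')}$ factor, and handling the $O(\cdot)$ uniformly over the bounded exponents $w_e,w'_e\in[0,1]$. I would therefore not expect any genuine obstacle here.
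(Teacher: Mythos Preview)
Your proposal is correct and follows essentially the same approach as the paper: apply Theorem \ref{graphholderthm} to $H'$ with $B_v=\overline{B_{b_{i-1}}}$ and $f_e=|U|$, bound the first product via the definition of $B_{b_{i-1}}$ to get $b_{i-1}^{v(H')-2c(H')}$, and bound the second product via $\|U\|_1=O(p^{2+\gamma}(\log\tfrac1p)^2)$. Your treatment is slightly more careful than the paper's in that you explicitly dispose of the case $V(H')=\emptyset$ and verify that $H'$ has no isolated vertices (a hypothesis of Theorem \ref{graphholderthm} that the paper uses without comment).
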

\begin{proof}[Proof of Claim \ref{ACAsecondbound}]
We apply Theorem \ref{graphholderthm}, with the following parameters. Our graph will be $H'$, and for each $e\in E(H')$ we will let $f_e=|U|$. For all $v\in V(H')$, let $B_v=\overline{B_{b_{i-1}}}$. Let $(w_e)_{e\in E(H')}$ be any maximum fractional matching and let $(w'_e)_{e\in E(H')}$ be a minimum fractional edge cover with $w_e\leq w'_e$ for all $e\in E(H)$, the existence of which is guaranteed by Lemma \ref{edgeweightslemma}.

Under these conditions, the left side of (\ref{graphholder}) becomes exactly the left side of Claim \ref{ACAsecondbound}. We now bound the right side of (\ref{graphholder}).

We may write the $\|f_e\|_{v,\frac{1}{w'_e}}^{\frac{w'_e-w_e}{w'_e}}$ term as
\[\displaystyle\esssup_{x\in\overline{B_{b_{i-1}}}}\left(\displaystyle\int_{\overline{B_{b_{i-1}}}}|U(x,y)|^{\frac{1}{w'_e}}\right)^{w'_e-w_e}.\]
Now, for any $x\in\overline{B_{b_{i-1}}}$, by the definition of $B_{b_{i-1}}$ and using the fact that $U$ is $1$-bounded and $w'_e\leq 1$
\begin{align*}
\displaystyle\int_{\overline{B_{b_{i-1}}}}|U(x,y)|^{\frac{1}{w'_e}}dy & \leq\displaystyle\int_{\overline{B_{b_{i-1}}}}|U(x,y)|dy \\ & \leq b_{i-1}.
\end{align*}
(Note that this still holds when $w'_e=w_e=0$.) So the first product on the right side of (\ref{graphholderthm}) is upper bounded by
\[b_{i-1}^{\displaystyle\sum_{v\in S'}\displaystyle\sum_{e\ni v}(w'_e-w_e)},\]
which by the same argument as in Section \ref{simpleholderproofsection} is equal to $b_{i-1}^{v(H')-2c(H')}$.

Furthermore, the $\|f_e\|_{\frac{1}{w'_e}}^{\frac{w_e}{w'_e}}$ term equals
\[\left(\displaystyle\int_{\overline{B_{b_{i-1}}}^2}|U(x,y)|^{\frac{1}{w'_e}} dx dy\right)^{w_e}\leq\left(\displaystyle\int_{[0,1]^2}|U(x,y)| dx dy\right)^{w_e}\]
By the same argument as in the proof of Claim \ref{mbblemma}, $|U(x,y)|=O\left(I_p(W(x,y))\log\frac{1}{p}\right)$. Thus we may bound this term by
\[O\left(\left(I_p(W)\log\frac{1}{p}\right)^{w_e}\right),\]
and so the second product on the right side of (\ref{graphholder}) is upper bounded by
\[O\left(\left(I_p(W)\log\frac{1}{p}\right)^{c(H')}\right)=O\left(\left(p^{2+\gamma}\left(\log\frac{1}{p}\right)^2\right)^{c(H')}\right).\]
Multiplying our two bounds yields the desired bound of
\[O\left(b_{i-1}^{v(H')-2c(H')}\left(p^{2+\gamma}\left(\log\frac{1}{p}\right)^2\right)^{c(H')}\right).\]
\end{proof}
By (\ref{ACAbreakdown}), (\ref{ACAfirstbound}), and Claim \ref{ACAsecondbound},
\begin{equation}\label{bigasymptoticbound}
\displaystyle\int_{\Omega_{b_i}^A\cap\Omega_{b_{i-1}}^A}\displaystyle\prod_{vv'\in E(H)}|U(x_v,x_{v'})|dx=O\left(\left(\log\frac{1}{p}\right)^{2|A|+2c(H)}b_{i-1}^{v(H')-2c(H')}b_i^{-|A|}p^{(2+\gamma)(|A|+c(H'))+|C(A)|}\right).
\end{equation}
To complete the proof of Lemma \ref{intersectionlemma} and thus the proof of Lemma \ref{multiplethresholdslemma}, we need only show that the right side of (\ref{bigasymptoticbound}) is $o(\epsilon p^{e(H)})$. Thus since $\epsilon\geq\left(\log\frac{1}{p}\right)^{-O(1)}$ by the conditions of Lemma \ref{multiplethresholdslemma}, it suffices to show the following claim.
\begin{claim}\label{bigasymptoticlemma}
Under the conditions of Lemma \ref{multiplethresholdslemma}, if $C(A)$ is the set of vertices in $V(H)\backslash A$ that are only adjacent to vertices in $A$, then
\[\left(\log\frac{1}{p}\right)^{O(1)}b_{i-1}^{v(H')-2c(H')}b_i^{-|A|}p^{(2+\gamma)(|A|+c(H'))+|C(A)|-e(H)}=o(1).\]
\end{claim}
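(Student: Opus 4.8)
The plan is to turn the displayed product into a single power of $p$ times factors depending on $\beta_j:=b_j/p$, and then to case on whether $c(H)=|A|+c(H')$ or not.

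First I would write $b_{i-1}=p\beta_{i-1}$ and $b_i=p\beta_i$, use that $H$ is contributing so $e(H)=v(H)+\gamma c(H)$, and use the identity $v(H')+|A|+|C(A)|=v(H)$. (The latter holds because $A$, $C(A)$, $V(H')$ partition $V(H)$: every $v\in V(H)\backslash(A\cup C(A))$ has a neighbour $u\notin A$, and $u$ cannot lie in $C(A)$, so $u\in V(H')$ and $v$ is non-isolated in $H'$.) Collecting the $p$-exponents coming both from the explicit factor $p^{(2+\gamma)(|A|+c(H'))+|C(A)|-e(H)}$ and from the $p$'s hidden in $b_{i-1}^{v(H')-2c(H')}b_i^{-|A|}$, one checks that the exponent $(v(H')-2c(H'))-|A|+(2+\gamma)(|A|+c(H'))+|C(A)|-e(H)$ collapses to $\gamma(|A|+c(H')-c(H))$, so the whole expression equals
\[\Bigl(\log\tfrac1p\Bigr)^{O(1)}\,\beta_{i-1}^{\,v(H')-2c(H')}\,\beta_i^{\,-|A|}\,p^{\,\gamma(|A|+c(H')-c(H))}.\]
Two elementary facts keep the explicit factors under control: (a) $c(H)\le|A|+c(H')$, obtained by extending a minimum fractional vertex cover of $H'$ by $1$ on $A$ and $0$ on $C(A)$ (this covers all of $E(H)$ since $C(A)$ is independent and joined only to $A$); and (b) $v(H')-2c(H')\ge 0$, since the fractional matching number of any graph is at most half its order. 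Hence $p^{\gamma(|A|+c(H')-c(H))}\le 1$ and $\beta_{i-1}^{v(H')-2c(H')}\le 1$ (using $\beta_{i-1}\le 1$).

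If $c(H)<|A|+c(H')$, then by half-integrality of the fractional-vertex-cover number $|A|+c(H')-c(H)\ge\tfrac12$, so $p^{\gamma(|A|+c(H')-c(H))}\le p^{\gamma/2}$; combined with $\beta_i\ge b_1/p\ge p^{\gamma/(3v(H))}$ and $|A|\le v(H)$, which give $\beta_i^{-|A|}\le p^{-\gamma/3}$, the expression is at most $(\log\frac1p)^{O(1)}p^{\gamma/6}=o(1)$ since $\gamma>0$. If instead $c(H)=|A|+c(H')$, the $p$-power is $1$ and I would invoke the invalidity of $A$ through the following structural step: were $v(H')=2c(H')$ as well, the all-$\tfrac12$ assignment would be a minimum (hence $\{0,\tfrac12\}$-valued) fractional vertex cover of $H'$, and extending it by $1$ on $A$ and $0$ on $C(A)$ would give a minimum fractional vertex cover of $H$ that is $1$ exactly on $A$ and $\{0,\tfrac12\}$-valued elsewhere, so $A$ would be valid — a contradiction. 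Thus $v(H')-2c(H')\ge 1$, so $\beta_{i-1}^{v(H')-2c(H')}\le\beta_{i-1}$, and the well-separation hypothesis $\beta_{i-1}\le\beta_i^{\,v(K)}(\log\frac1p)^{-\omega(1)}$ together with $|A|\le v(H)\le v(K)$ and $\beta_i\le 1$ yields
\[\beta_{i-1}^{\,v(H')-2c(H')}\,\beta_i^{\,-|A|}\le\beta_i^{\,v(K)-|A|}\Bigl(\log\tfrac1p\Bigr)^{-\omega(1)}\le\Bigl(\log\tfrac1p\Bigr)^{-\omega(1)},\]
which overwhelms the $(\log\frac1p)^{O(1)}$ prefactor, so the expression is again $o(1)$.

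I expect the only genuinely non-routine point to be the structural claim just used — that $c(H)=|A|+c(H')$ and $v(H')=2c(H')$ force $A$ to be valid. This is precisely where the hypothesis "$A$ invalid" enters, and it rests on half-integrality of the vertex-cover polytope together with the remark that a graph with no isolated vertices has the all-$\tfrac12$ cover as its only $\{0,\tfrac12\}$-valued fractional vertex cover (a value-$0$ vertex would leave an incident edge uncovered). Everything else is bookkeeping with the exponents and the two inequalities (a) and (b).
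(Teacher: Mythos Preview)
Your proof is correct and follows essentially the same route as the paper's: rewrite the product in terms of $\beta_j=b_j/p$ to isolate the exponent $\gamma(|A|+c(H')-c(H))$, use half-integrality plus the extension-of-cover argument to force $c(H)=|A|+c(H')$, and then use the threshold separation to force $v(H')=2c(H')$, contradicting invalidity of $A$. The paper phrases this as a single proof by contradiction (assume the expression is $\Omega(1)$ and derive both equalities in turn), whereas you split into the two cases $c(H)<|A|+c(H')$ and $c(H)=|A|+c(H')$; the underlying computations and structural observations are identical. One minor remark: your parenthetical that the all-$\tfrac12$ assignment is ``the only $\{0,\tfrac12\}$-valued fractional vertex cover'' of $H'$ is not quite what you need or use --- what matters (and what you correctly invoke) is simply that when $v(H')=2c(H')$ the all-$\tfrac12$ cover has total weight $c(H')$ and is therefore minimum.
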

\begin{proof}[Proof of Claim \ref{bigasymptoticlemma}]
We rewrite our expression as
\[\left(\log\frac{1}{p}\right)^{O(1)}\left(\frac{b_{i-1}}{p}\right)^{v(H')-2c(H')}\left(\frac{b_i}{p}\right)^{-|A|}p^{\gamma(|A|+c(H'))+|A|+|C(A)|+v(H')-e(H)}.\]
Now, since $H$ is contributing, $e(H)=v(H)+\gamma c(H)=|A|+|C(A)|+v(H')+\gamma c(H)$, so substituting, we obtain
\[\left(\log\frac{1}{p}\right)^{O(1)}\left(\frac{b_{i-1}}{p}\right)^{v(H')-2c(H')}\left(\frac{b_i}{p}\right)^{-|A|}p^{\gamma(|A|+c(H')-c(H))}.\]
The idea is that the exponential with base $p$ should dominate unless the exponent is $0$, in which case the exponential with base $\frac{b_{i-1}}{p}$ should dominate.

Suppose for the sake of contradiction that this expression is $\Omega(1)$.
Since $b_i\geq p^{1+\frac{\gamma}{3v(H)}}$ and $b_{i-1}\leq p$, and $v(H')\geq 2c(H')$ (since setting all weights $\frac{1}{2}$ is always a fractional vertex cover)
\[\left(\log\frac{1}{p}\right)^{O(1)}\left(\frac{b_{i-1}}{p}\right)^{v(H')-2c(H')}\left(\frac{b_i}{p}\right)^{-|A|}\leq p^{-\frac{\gamma|A|}{3v(H)}-o(1)}\leq p^{-\frac{\gamma}{3}-o(1)},\]
so 
$|A|+c(H')-c(H)\leq\frac{1}{3}$. Now, $|A|+c(H')-c(H)$ is a half integer, as the vertices of the fractional vertex cover polytope all have half-integer coordinates (see for example Theorem 64.11 of \cite{Sch}). Furthermore, $|A|+c(H')-c(H)\geq 0$, as any fractional vertex cover of $H'$ can be extended to a fractional vertex cover of $H$ by giving the vertices in $A$ weight $1$ and the vertices in $C(A)$ weight $0$. The only half-integer in $\left[0,\frac{1}{3}\right]$ is $0$, so we must have
\begin{equation}\label{cHcH'}
c(H)=|A|+c(H').
\end{equation}
Subsituting, we have that by assumption
\[\left(\log\frac{1}{p}\right)^{O(1)}\left(\frac{b_{i-1}}{p}\right)^{v(H')-2c(H')}\left(\frac{b_i}{p}\right)^{-|A|}=\Omega(1).\]
Now, $v(H')-2c(H')$ is an integer, so if it is not $0$ it is at least $1$ and our expression is at most
\begin{align*}
\left(\log\frac{1}{p}\right)^{O(1)}\frac{b_{i-1}}{p}\left(\frac{b_i}{p}\right)^{-|A|} & \leq\left(\log\frac{1}{p}\right)^{O(1)}\frac{b_{i-1}}{p}\left(\frac{b_i}{p}\right)^{-v(K)}\\ & \ll 1,
\end{align*}
as by the conditions of Lemma \ref{multiplethresholdslemma} $\frac{b_i}{p}\leq\left(\frac{b_{i+1}}{p}\right)^{v(K)}\left(\log\frac{1}{p}\right)^{-\omega(1)}$ for all $i$. This is a contradiction, and thus
\begin{equation}\label{vH'cH'}
v(H')=2c(H').
\end{equation}
Combining (\ref{cHcH'}) and (\ref{vH'cH'}),
\[c(H)=|A|+\frac{v(H')}{2}.\]
But this implies that the fractional vertex cover of $H$ given by giving all vertices in $A$ weight $1$, all vertices in $v(H')$ weight $\frac{1}{2}$, and all vertices in $C(A)$ weight $0$ is minimum. (This is a fractional vertex cover as vertices in $C(A)$ are only adjacent to vertices in $A$ by definition.) Thus $A$ is valid, a contradiction, so our assumption was false and the lemma follows.
\end{proof}
This completes the proof of Lemma \ref{intersectionlemma} and thus the proof of Lemma \ref{multiplethresholdslemma}.
\end{proof}
\section{$K_{2,4}$ Plus an Edge}\label{K24section}
In this section, we prove the lower bound of Theorem \ref{K24correctconstant}. Let $K=K_0$ be the graph in Figure \ref{K24plusanedgefigure}, given by adding an edge to $K_{2,4}$.
\subsection{Eliminating Subgraphs}
We first determine which subgraphs are contributing.

\begin{lemma}\label{K0contributingsubgraphslemma}
$\gamma(K_0)=1$, and the contributing subgraphs of $K_0$ are $\emptyset$, $K_{2,4}$, and $K_0$ itself.
\end{lemma}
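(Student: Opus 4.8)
The plan is to compute $\gamma(K_0)$ by a direct finite case analysis, exploiting that $K_0$ has only six vertices, and then read off the contributing subgraphs as the equality cases. Label the two degree-$4$ vertices $v_1,v_2$, the two degree-$3$ vertices $w_1,w_2$ (so that $w_1w_2$ is the edge added to $K_{2,4}$), and the two degree-$2$ vertices $w_3,w_4$. The easy half is the lower bound: taking $H=K_{2,4}$ gives $\frac{e(H)-v(H)}{c(H)}=\frac{8-6}{2}=1$, where $c(K_{2,4})=2$ is witnessed by the vertex cover $c_{v_1}=c_{v_2}=1$ together with the matching $\{v_1w_1,v_2w_2\}$; hence $\gamma(K_0)\geq 1$. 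So it suffices to prove $e(H)-v(H)\leq c(H)$ for every $H\subseteq K_0$, with equality together with $\delta(H)\geq 2$ occurring exactly for $H\in\{\emptyset,K_{2,4},K_0\}$. Since deleting a vertex of degree $\leq 1$ from $H$ weakly increases $\frac{e(H)-v(H)}{c(H)}$ (it decreases $e(H)$ and $v(H)$ by the same amount and does not increase $c(H)$), and since contributing subgraphs are required to have $\delta(H)\geq 2$ anyway, I would restrict throughout to subgraphs $H$ with $\delta(H)\geq 2$.

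I would then split according to how many of $v_1,v_2$ lie in $V(H)$ and whether $w_1w_2\in E(H)$. If at most one of $v_1,v_2$ is in $H$, the only subgraphs with $\delta(H)\geq 2$ are $\emptyset$ and the triangles $v_jw_1w_2$, all with $\frac{e(H)-v(H)}{c(H)}\leq 0<1$. If both $v_1,v_2\in V(H)$ and $w_1w_2\notin E(H)$, then $\delta(H)\geq 2$ forces each included $w_i$ to keep both of its edges, so $H=K_{2,k}$ with $k\in\{2,3,4\}$, of ratios $0,\tfrac12,1$ respectively — only $K_{2,4}$ attains $1$. If both $v_1,v_2\in V(H)$ and $w_1w_2\in E(H)$, I would parametrize by $q=|\{w_3,w_4\}\cap V(H)|\in\{0,1,2\}$ and by $p_i\in\{1,2\}$, the number of edges from $w_i$ to $\{v_1,v_2\}$ present ($i=1,2$); then $e(H)-v(H)=p_1+p_2+q-3$, and the claim becomes $c(H)>p_1+p_2+q-3$ except when $(p_1,p_2,q)=(2,2,2)$, which is $H=K_0$ itself, where equality holds with $c(K_0)=3$ (witnessed by the matching $\{v_1w_3,v_2w_4,w_1w_2\}$). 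The strict inequality is obtained by exhibiting a concrete matching or fractional matching inside the particular $H$: when $q=2$ the matching $\{w_1w_2,v_1w_3,v_2w_4\}$ already gives $c(H)\geq 3\geq e(H)-v(H)$; when $q\le 1$ one checks directly that $c(H)\geq 2$ (and $c(H)\geq\tfrac52$ in the single borderline case $p_1=p_2=2,q=1$, where I would write out the weight-$\tfrac52$ fractional matching), which dominates $e(H)-v(H)\leq 1$ (resp. $\leq 2$).

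The main obstacle is purely bookkeeping: ensuring the case split is genuinely exhaustive (including the non-isomorphic choices of which $p_i$-edges are present) and that in each borderline case the claimed matching actually lives inside the specific $H$ under consideration rather than merely inside $K_0$. All quantities involved are small, so this is routine, but it must be carried out carefully. Once the inequality $e(H)-v(H)\le c(H)$ and its equality cases are established, $\gamma(K_0)=1$ follows, and the contributing subgraphs are exactly the equality cases with $\delta(H)\ge 2$, namely $\emptyset$, $K_{2,4}$, and $K_0$; the set $\emptyset$ qualifies since $e(\emptyset)-v(\emptyset)=0=c(\emptyset)\gamma(K_0)$ trivially.
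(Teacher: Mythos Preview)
Your proof is correct. Both your argument and the paper's are finite case analyses proving $e(H)-v(H)\le c(H)$ for all $H\subseteq K_0$ and identifying the equality cases, but they are organized differently. The paper sorts subgraphs by $v(H)$: it first observes that $c(H)\le 1$ forces $H$ to be a star (so $e(H)<v(H)$), which immediately disposes of all $H$ with $v(H)\le 4$; it then checks the two remaining cases $v(H)=5$ and $v(H)=6$ directly. You instead first reduce to $\delta(H)\ge 2$ by passing to the $2$-core, and then enumerate according to which of $v_1,v_2$ lie in $H$ and whether $w_1w_2\in E(H)$, parametrizing the remaining case by $(p_1,p_2,q)$. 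Both approaches converge on the same genuinely nontrivial borderline case---$K_{2,3}$ with the extra edge, your $(p_1,p_2,q)=(2,2,1)$---and both dispatch it by exhibiting a weight-$\tfrac12$ fractional matching on a $5$-cycle to get $c(H)=\tfrac52$. The paper's organization is a bit slicker because the $c(H)\le 1\Rightarrow$ star observation kills many cases at once; your enumeration is more hands-on but equally valid. One small point worth making explicit in your write-up: the constraints $\deg_H(v_1),\deg_H(v_2)\ge 2$ restrict which $(p_1,p_2,q)$ configurations actually occur (e.g.\ for $q=0$ they force $p_1=p_2=2$), and you rely on this implicitly when asserting $c(H)\ge 2$ in the $q\le 1$ cases.
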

\begin{proof}
If we can prove that $e(H)\leq c(H)+v(H)$ for all $H\subseteq K$ with equality only when $H\in\{\emptyset,K_{2,4},K_0\}$, then we will simultaneously prove that $\gamma(K_0)=1$ and that the only contributing subgraphs are the desired ones.

If $H\subseteq K_0$ with $c(H)\leq 1$ and $H$ having at least one edge, all weight in any minimum fractional vertex cover of $H$ must be concentrated along one edge, so all vertices not on that edge have weight $0$ and thus must only be connected to a single vertex with weight $1$, $H$ is a star. But then $e(H)<v(H)$, so it impossible that $e(H)=v(H)+c(H)$. Thus if $H\neq\emptyset$ and $e(H)=v(H)+c(H)$, we must have $e(H)\geq v(H)+2$. This immediately rules out all nonempty $H$ with at most $4$ vertices, as $K_0$ contains no copies of $K_4$.

Thus we only need consider $H$ with $5$ or $6$ vertices. If $v(H)=5$, then since we need $e(H)\geq v(H)+2$, $e(H)\geq 7$. But there are only $9$ edges in total in $K_0$ and each vertex has degree $2$, so we must obtain $H$ from $K_0$ by removing a vertex of degree $2$. Thus $H$ is $K_{2,3}$ with an extra edge on the side with $3$ vertices. However, in this case, $c(H)=\frac{5}{2}$, which we can see as $H$ has a fractional perfect matching given by taking a $5$-cycle (which does appear as a subgraph of $H$) and weighting all edges in it $\frac{1}{2}$. Since a fractional perfect matching is both a fractional matching and a fractional edge cover, if $H$ is any graph with a fractional perfect matching then $c(H)\geq v(H)-c(H)$, by Lemma \ref{edgeweightslemma}, so $c(H)\geq\frac{v(H)}{2}$, and the reverse equality holds since weighting all vertices $\frac{1}{2}$ is a fractional vertex cover. Therefore, for this choice of $H$, $e(H)<v(H)+c(H)$, and we may rule this option out as well.

So besides $H=\emptyset$, we need only consider when $v(H)=6$ and $e(H)\in\{8,9\}$. When $e(H)=9$ we must have $H=K_0$, and since $K_0$ has a perfect matching (and thus a fractional perfect matching) we must have $c(K_0)=\frac{v(K_0)}{2}=3$. Thus $e(K_0)=c(K_0)+v(K_0)$.

We only now need check when $v(H)=6$ and $e(H)=8$; that is, $H$ is obtained by removing a single edge from $K_0$. If $H$ contains a perfect matching, then $c(H)=3$, so $e(H)<v(H)+c(H)$. But the intersection of all perfect matchings of $K_0$ is a single edge, and removing this edge yields $K_{2,4}$. Thus the only option in this case is $H=K_{2,4}$. Since $H$ has a matching of size $2$, $c(H)\geq 2$, and since it has a vertex cover of size $2$, $c(H)=2$. So when $H=K_{2,4}$, $e(H)=v(H)+c(H)$ as well. This proves the lemma.
\end{proof}
We will use the following setup, which is simply Setup \ref{constantsetup} applied to our graph $K_0$.
\begin{setup}\label{K24setup}
Let $K=K_0$ be the graph in Figure \ref{K24plusanedgefigure} given by adding an edge to $K_{2,4}$. Fix a constant $\delta>0$, and take $p\to 0$.

Recall from Definition \ref{nicegraphonsdef} the definition of $\Gamma_{\epsilon}(K_0,\delta)$. By Lemma \ref{replacebyplemma}, for any $p<\frac{1}{e}$ we may find $W=W(p)\in\Gamma_{1/\log(1/p)}(K_0,\delta)$ such that
\[I_p(W)\leq (1+o(1))\displaystyle\inf_{\substack{\Hom(K_0,W')\geq (1+\delta)p^9 \\ W'\text{ }p\text{-regular}}}I_p(W').\]
Let $U=W-p$.
\end{setup}
The analogue of Lemma \ref{constantconsequenceslemma} is the following.
\begin{lemma}\label{K24consequenceslemma}
Under Setup \ref{K24setup}, the following properties hold.
\begin{enumerate}
\item $\displaystyle\int_0^1 W(x_0,y) dy=(1+o(1))p$ for all $x_0\in [0,1]$
\item $\Hom(K_0,W)\geq (1+\delta-o(1))p^9$.
\item $W$ takes no values in $[p-p^{1+o(1)},p+p^{1+o(1)}]\backslash p$.
\item $\Hom(H,|U|)\leq 512(1+\delta)p^{e(H)}$ for all $H\subseteq K_0$
\item $I_p(W)\ll p^3\log\frac{1}{p}$
\item $\|U\|_1\ll p^3\left(\log\frac{1}{p}\right)^2$
\item $\|U\|_2\ll p^{\frac{3}{2}}$
\end{enumerate}
\end{lemma}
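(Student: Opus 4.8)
The plan is to follow the template of Lemma \ref{constantconsequenceslemma}, since Setup \ref{K24setup} is exactly Setup \ref{constantsetup} specialized to $K=K_0$, for which $\gamma(K_0)=1$ by Lemma \ref{K0contributingsubgraphslemma}. Parts (1)--(4) are immediate from the membership $W\in\Gamma_{1/\log(1/p)}(K_0,\delta)$: I would recall Definition \ref{nicegraphonsdef}, note that $\epsilon:=1/\log(1/p)$ satisfies $\epsilon=o(1)$ and $\epsilon=p^{o(1)}$, and observe that $2^{e(K_0)}=2^9=512$ produces the constant in (4).

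Part (5) is the one place where the statement is genuinely stronger than its analogue in Lemma \ref{constantconsequenceslemma} (we claim $\ll$, not $O$), so we cannot use the weaker upper bound of Theorem \ref{varcorrectexponent}, which only gives $O(p^3\log\frac1p)$. Instead I would use that Setup \ref{K24setup} guarantees $I_p(W)\leq(1+o(1))\inf\{I_p(W'):\Hom(K_0,W')\geq(1+\delta)p^9,\ W'\text{ }p\text{-regular}\}$, together with the upper bound of Theorem \ref{K24correctconstant} already proved in Section \ref{constructionssection} (via the construction of Figure \ref{K24graphonfigure}), which bounds that infimum by $O\!\left(p^3(\log\frac1p)^{2/3}(\log\log\frac1p)^{1/3}\right)$. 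Since $(\log\frac1p)^{2/3}(\log\log\frac1p)^{1/3}\ll\log\frac1p$, this gives $I_p(W)\ll p^3\log\frac1p$, which is (5). Part (6) is then verbatim the argument of Lemma \ref{constantconsequenceslemma}(6): condition (3) of Definition \ref{nicegraphonsdef} with $\epsilon=1/\log(1/p)$ forces $|U|$ to avoid $(0,p/\log(1/p))$, so Lemma \ref{firstmomentlemma} (applied with $\epsilon=1/\log(1/p)$) yields a pointwise bound $|U(x,y)|\leq C\log(1/p)\,I_p(W(x,y))$ for an absolute constant $C$; integrating over $[0,1]^2$ and invoking part (5) gives $\|U\|_1\leq C\log(1/p)\,I_p(W)\ll p^3(\log\frac1p)^2$.

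Part (7) is the only genuinely new item. Here I would apply Lemma \ref{momentlemma} with $c=1$: there is $\epsilon_0>0$ such that $I_p(p+x)\geq(1-o(1))x^2\log(1/p)$ whenever $|x|\geq p^{1+\epsilon_0}$. The short thing to verify is that condition (3) of Definition \ref{nicegraphonsdef} is strong enough to feed this lemma: $|U|$ avoids $(0,p/\log(1/p))$, and $p/\log(1/p)\geq p^{1+\epsilon_0}$ for $p$ small, because $p^{\epsilon_0}=e^{-\epsilon_0\log(1/p)}\ll 1/\log(1/p)$; hence every nonzero value of $|U|$ is at least $p^{1+\epsilon_0}$. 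Therefore $I_p(W(x,y))\geq(1-o(1))U(x,y)^2\log(1/p)$ pointwise (trivially where $U=0$), and integrating gives $I_p(W)\geq(1-o(1))\log(1/p)\,\|U\|_2^2$, so $\|U\|_2^2\leq(1+o(1))I_p(W)/\log(1/p)\ll p^3$ by part (5), i.e. $\|U\|_2\ll p^{3/2}$.

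I do not expect any real obstacle: the lemma parallels Lemma \ref{constantconsequenceslemma} almost line by line, and the only genuinely fresh step is (7), where one must confirm that the threshold in condition (3) of Definition \ref{nicegraphonsdef} clears the $p^{1+\epsilon_0}$ demanded by Lemma \ref{momentlemma} with $c=1$. Everything else is bookkeeping together with invoking the already-established $K_0$-specific upper bound to get the improved $\ll$ in (5)--(7).
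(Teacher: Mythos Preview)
Your proposal is correct and matches the paper's own proof essentially line by line: (1)--(4) via the reduction to Lemma \ref{constantconsequenceslemma} with $e(K_0)=9$, (5) via the already-proved upper bound of Theorem \ref{K24correctconstant}, (6) from (5) exactly as in Lemma \ref{constantconsequenceslemma}, and (7) via Lemma \ref{momentlemma} with $c=1$ combined with (3) and (5). The only difference is that you spell out the threshold check for Lemma \ref{momentlemma} in (7), which the paper leaves implicit.
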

\begin{proof}
Since Setup \ref{K24setup} implies Setup \ref{constantsetup} with $K=K_0$, noting that $e(K_0)=9$ we obtain (1) through (4) from Lemma \ref{constantconsequenceslemma}.

(5) comes from the fact that $I_p(W)\leq (1+o(1))\displaystyle\inf_{\substack{\Hom(K,W')\geq (1+\delta)p^{e(K)} \\ W'\text{ }p\text{-regular}}}I_p(W')$ and the upper bound of Theorem \ref{K24correctconstant} (proven in Section \ref{constructionssection}).

We may derive (6) from (5) in the same way as in Lemma \ref{constantconsequenceslemma}.

Note that by (3) and Lemma \ref{momentlemma}, $I_p(W(x,y))\geq (1-o(1))U(x,y)^2\log\frac{1}{p}$ for all $x,y$. Integrating and applying (5) yields (7).
\end{proof}
As in Section \ref{constantsection1}, we will break down into terms of the form $p^{-e(H)}\Hom(H,U)$.
\begin{lemma}\label{K0eliminatingsubgraphslemma}
Under Setup \ref{K24setup}, 
\[p^{-8}\Hom(K_{2,4},U)+p^{-9}\Hom(K_0,U)\geq\delta-o(1)\]
\end{lemma}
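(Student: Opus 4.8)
The plan is to mirror the proof of Lemma~\ref{K23subgraphbreakdown} from Section~\ref{K23section} (and its generalization Corollary~\ref{contributingbreakdowncor}), specialized to $K=K_0$. First I would invoke Lemma~\ref{subgraphbreakdown}, which gives the exact identity
\[p^{-9}\Hom(K_0,W)=\displaystyle\sum_{H\subseteq K_0}p^{-e(H)}\Hom(H,U),\]
and combine it with part~(2) of Lemma~\ref{K24consequenceslemma}, so that the left-hand side is at least $1+\delta-o(1)$. The $H=\emptyset$ term contributes exactly $1$, so it suffices to show that all terms $p^{-e(H)}\Hom(H,U)$ with $H\notin\{\emptyset,K_{2,4},K_0\}$ are $o(1)$.

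The key observation is that by Lemma~\ref{K0contributingsubgraphslemma}, the only contributing subgraphs of $K_0$ are $\emptyset$, $K_{2,4}$, and $K_0$ itself, so every other $H\subseteq K_0$ is non-contributing. Thus I would simply apply Corollary~\ref{notcontributingcor} (which was proved under Setup~\ref{constantsetup}, and Setup~\ref{K24setup} is exactly that setup with $K=K_0$): for every non-contributing $H\subseteq K_0$, $\Hom(H,U)=o(p^{e(H)})$, hence $p^{-e(H)}\Hom(H,U)=o(1)$. Since there are only $O(1)$ such subgraphs, the sum of all these terms is still $o(1)$. Plugging back in,
\[1+\delta-o(1)\le 1+o(1)+p^{-8}\Hom(K_{2,4},U)+p^{-9}\Hom(K_0,U),\]
and rearranging gives the claimed bound $p^{-8}\Hom(K_{2,4},U)+p^{-9}\Hom(K_0,U)\geq\delta-o(1)$.

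There is essentially no obstacle here: the lemma is a direct corollary of machinery already established. The only thing to double-check is that Corollary~\ref{notcontributingcor} (via Lemmas~\ref{degree1lemma} and \ref{contributingsubgraphslemma}) genuinely applies, which requires Setup~\ref{constantsetup}'s hypotheses on $K$ — namely that $K_0$ is a nonforest whose $2$-core is not a disjoint union of cycles and none of whose contributing subgraphs have bad edges. The last condition is \emph{false} for $K_0$ (the added edge is bad), but a careful reading shows that Corollary~\ref{notcontributingcor} and its ingredients, Lemmas~\ref{degree1lemma} and~\ref{contributingsubgraphslemma}, only use parts (1), (3), (4), (5) of Lemma~\ref{constantconsequenceslemma} — i.e. only Corollary~\ref{simpleholdercor} and the degree/homomorphism bounds — and not the no-bad-edges hypothesis (which is only needed later, in Proposition~\ref{monomialboundprop}). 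So if I wanted to be fully rigorous I would restate the relevant consequences directly from Lemma~\ref{K24consequenceslemma} rather than cite Setup~\ref{constantsetup} wholesale: apply Corollary~\ref{simpleholdercor} with $\epsilon=(\log(1/p))^{-1}$ to each non-contributing $H\subseteq K_0$, using $I_p(W)\ll p^{3}\log(1/p)=p^{2+\gamma(K_0)-o(1)}$ from part~(5) and $\gamma(K_0)=1$ from Lemma~\ref{K0contributingsubgraphslemma}, to get $\Hom(H,U)=O(p^{v(H)+c(H)-o(1)})=o(p^{e(H)})$ whenever $e(H)<v(H)+c(H)$, i.e. whenever $H$ is not contributing; handle degree-$1$ vertices via Lemma~\ref{degree1lemma} exactly as before. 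That is the one bookkeeping point worth spelling out; everything else is immediate.
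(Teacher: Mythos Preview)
Your proposal is correct and follows essentially the same approach as the paper, which simply invokes Corollary~\ref{contributingbreakdowncor} (itself built from Lemma~\ref{subgraphbreakdown}, Corollary~\ref{notcontributingcor}, and the entropy bound) and then applies Lemma~\ref{K0contributingsubgraphslemma} to identify the contributing subgraphs. Your extra care about the no-bad-edges hypothesis is well placed: the paper asserts that Setup~\ref{K24setup} implies Setup~\ref{constantsetup}, which is literally false for $K_0$, but as you correctly observe, the ingredients of Corollary~\ref{contributingbreakdowncor} never use that hypothesis, so the argument goes through.
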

\begin{proof}
Since Setup \ref{K24setup} implies Setup \ref{constantsetup} with $K=K_0$, we may apply Corollary \ref{contributingbreakdowncor} to obtain that
\[\displaystyle\sum_{\substack{H\subseteq K_0 \\ H\text{ contributing}}}p^{-e(H)}\Hom(H,U)\geq 1+\delta-o(1).\]
By Lemma \ref{K0contributingsubgraphslemma}, the only contributing subgraphs of $K_0$ are $\emptyset$, $K_{2,4}$, and $K_0$ itself. Since $p^{-e(\emptyset)}\Hom(\emptyset,U)=1$, the lemma follows.
\end{proof}
It turns out that the $K_{2,4}$ term is also negligible.
\begin{lemma}\label{eliminatingK24lemma}
Under Setup \ref{K24setup},
\[\Hom(K_{2,4},U)=o(p^8).\]
\end{lemma}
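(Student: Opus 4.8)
The goal is to show $\Hom(K_{2,4},U) = o(p^8)$ under Setup \ref{K24setup}. Since $K_{2,4}$ is bipartite with parts of size $2$ and $4$, and $c(K_{2,4}) = 2$, $v(K_{2,4}) = 6$, we have $v(K_{2,4}) - 2c(K_{2,4}) = 2$. A direct application of Theorem \ref{simpleholderresult} (or Corollary \ref{simpleholdercor}) gives $\Hom(K_{2,4},|U|) = O(p^{v-2c}\mathbb{E}(|U|)^c) = O(p^2 \|U\|_1^2)$; plugging in the bound $\|U\|_1 = O(p^3(\log\frac1p)^2)$ from (6) of Lemma \ref{K24consequenceslemma} yields only $O(p^8(\log\frac1p)^4)$, which is too weak by polylog factors. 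So the plan is to run the Hölder argument more carefully using the $L^2$ bound on $U$ rather than $L^1$, exactly as in the improved argument of Section \ref{K23section}.

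First I would write $\Hom(K_{2,4},|U|)$ as in Example \ref{K23example}: label the two degree-$4$ vertices $v_1,v_2$ and the four degree-$2$ vertices $w_1,\dots,w_4$, so that
\[
\Hom(K_{2,4},|U|)=\int_{[0,1]^2}\prod_{j=1}^4\left(\int_0^1|U(x_{v_1},x_{w_j})U(x_{v_2},x_{w_j})|\,dx_{w_j}\right)dx_{v_1}dx_{v_2}.
\]
Applying Cauchy--Schwarz to each inner integral gives the bound $\prod_{j=1}^4 S_2(x_{v_1})S_2(x_{v_2}) = S_2(x_{v_1})^4 S_2(x_{v_2})^4$, where $S_2(x) = (\int_0^1 U(x,y)^2\,dy)^{1/2}$. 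As computed in Section \ref{K23section}, $\int_0^1 U(x,y)^2\,dy \le (1+o(1))p$ for all $x$ by (1) of Lemma \ref{K24consequenceslemma}, so $S_2(x) \le (1+o(1))\sqrt{p}$. Pulling out two factors of $S_2(x_{v_i})^2 \le (1+o(1))p$ from each of $x_{v_1},x_{v_2}$, we get
\[
\Hom(K_{2,4},|U|)\le (1+o(1))p^2\int_0^1 S_2(x_{v_1})^2\,dx_{v_1}\int_0^1 S_2(x_{v_2})^2\,dx_{v_2}=(1+o(1))p^2\|U\|_2^4.
\]
Then by (7) of Lemma \ref{K24consequenceslemma}, $\|U\|_2 \ll p^{3/2}$, so $\|U\|_2^4 \ll p^6$, giving $\Hom(K_{2,4},|U|) \ll p^8$, hence $\Hom(K_{2,4},U) = o(p^8)$ as desired.

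The main subtlety — and the reason this works where the crude Hölder bound fails — is that we must use the strict $o$ in $\|U\|_2 \ll p^{3/2}$ (which in turn comes from $I_p(W) \ll p^3\log\frac1p$, i.e. the fact that $K_0$'s variational problem has a strictly smaller entropy than $p^3\log\frac1p$), rather than the lossy $\|U\|_\infty \le 1$ step that converts $\|U\|_2$ into $\|U\|_1^{1/2}$. There is no real obstacle here beyond bookkeeping; the one point to be careful about is that we have four degree-$2$ vertices rather than three (as in $K_{2,3}$), so at each $v_i$ there are four copies of $S_2$, of which we pull out two and keep two — this is exactly why we land on $\|U\|_2^4$ rather than $\|U\|_2^6$, and it is the extra factor $S_2^2 = O(p)$ gained from the fourth $w$-vertex that makes the bound $o(p^8)$ instead of merely $O(p^8\,\mathrm{polylog})$.
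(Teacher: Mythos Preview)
Your proof is correct and follows essentially the same route as the paper's: factor $\Hom(K_{2,4},|U|)$ over the two degree-$4$ vertices, apply Cauchy--Schwarz at each $w_j$ to get $\left(\int_0^1 S_2(x)^4\,dx\right)^2$, then use the pointwise bound $S_2(x)^2\le(1+o(1))p$ to reduce this to $(1+o(1))p^2\|U\|_2^4$ and conclude via (7) of Lemma~\ref{K24consequenceslemma}. The only remark is that your phrase ``pulling out two factors of $S_2(x_{v_i})^2$ from each of $x_{v_1},x_{v_2}$'' is ambiguously worded---you mean one factor of $S_2^2$ from each (two in total), as your displayed formula and closing explanation make clear.
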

\begin{proof}
By the triangle inequality and Cauchy-Schwarz,
\begin{align*}
|\Hom(K_{2,4},U)| & \leq\Hom(K_{2,4},|U|) \\ & =\displaystyle\int_{[0,1]^2}\left(\displaystyle\int_0^1 |U(x,z)U(y,z)|dz\right)^4 dx dy \\ & \leq\displaystyle\int_{[0,1]^2}\left(S_2(x)S_2(y)\right)^4 dx dy \\ & =\left(\displaystyle\int_0^1 S_2(x)^4dx\right)^2,
\end{align*}
where $S_2(x)=\left(\displaystyle\int_0^1 U(x,y)^2 dy\right)^{\frac{1}{2}}$.
Now, for all $x\in [0,1]$,
\begin{align*}
\displaystyle\int_0^1 U(x,y)^2 dy & =\displaystyle\int_0^1 (W(x,y)^2-2p W(x,y)+p^2) dy \\ & \leq (1+o(1))\displaystyle\int_0^1 W(x,y) dy+p^2 \\ & =p+o(p)
\end{align*}
for all $x$, by (1) of Lemma \ref{K24consequenceslemma}. Thus $S_2(x)\leq (1+o(1))\sqrt{p}$ for all $x$, so
\begin{align*}
|\Hom(K_{2,4},U)| & \leq\left(\displaystyle\int_0^1 S_2(x)^4dx\right)^2 \\ & \leq (1+o(1))p^2\left(\displaystyle\int_0^1 S_2(x)^2 dx\right)^2 \\ & =(1+o(1))p^2\|U\|_2^4.
\end{align*}
Since $\|U\|_2\ll p^{\frac{3}{2}}$ by (7) of Lemma \ref{K24consequenceslemma}, we are done.
\end{proof}
Lemmas \ref{K0eliminatingsubgraphslemma} and \ref{eliminatingK24lemma} together imply the following.
\begin{cor}\label{K0onlylemma}
Under Setup \ref{K24setup},
\[\Hom(K_0,U)\geq (\delta-o(1))p^9\]
\end{cor}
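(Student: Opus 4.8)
The plan is to simply chain together the two results that immediately precede the corollary. Lemma~\ref{K0eliminatingsubgraphslemma} provides the inequality
\[
p^{-8}\Hom(K_{2,4},U) + p^{-9}\Hom(K_0,U) \geq \delta - o(1),
\]
which itself comes from applying the contributing-subgraph breakdown (Corollary~\ref{contributingbreakdowncor}) to $K = K_0$, then invoking Lemma~\ref{K0contributingsubgraphslemma} to see that the only contributing subgraphs of $K_0$ are $\emptyset$, $K_{2,4}$, and $K_0$ itself, with the $\emptyset$-term contributing the $1$ that cancels against the $1+\delta-o(1)$ coming from the homomorphism lower bound in (2) of Lemma~\ref{K24consequenceslemma}.

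The one substantive step is then to discard the $K_{2,4}$ term. By Lemma~\ref{eliminatingK24lemma} we have $\Hom(K_{2,4},U) = o(p^8)$, so $p^{-8}\Hom(K_{2,4},U) = o(1)$. Substituting this into the displayed inequality gives $p^{-9}\Hom(K_0,U) \geq \delta - o(1)$, and multiplying through by $p^9$ yields the claim $\Hom(K_0,U) \geq (\delta - o(1))p^9$.

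There is no obstacle internal to the corollary; all the work has already been done in the two cited lemmas. (For context: the elimination of $K_{2,4}$ in Lemma~\ref{eliminatingK24lemma} rests on the Cauchy--Schwarz / column-$L^2$-norm argument of the type used in Example~\ref{K23example}, combined with the second-moment bound $\|U\|_2 \ll p^{3/2}$ from part~(7) of Lemma~\ref{K24consequenceslemma}, which in turn follows from Lemma~\ref{momentlemma} and the entropy bound $I_p(W) \ll p^3\log\frac1p$.) The resulting lower bound $\Hom(K_0,U) \geq (\delta-o(1))p^9$ is exactly the input needed to begin the analysis of the hub structure for $K_0$ in the remainder of Section~\ref{K24section}.
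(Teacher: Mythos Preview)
Your proposal is correct and matches the paper's approach exactly: the paper simply states that the corollary follows immediately from Lemmas~\ref{K0eliminatingsubgraphslemma} and~\ref{eliminatingK24lemma}, and your write-up spells out precisely that two-line deduction (with accurate surrounding context).
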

\subsection{Breaking into High and Low Values}
In our integral expression for $\Hom(K_0,U)$, we would like to show that some copies of U (corresponding to certain edges of $K_0$) generally take high values of $U$ and some generally take low values. We thus make the following definition.
\begin{definition}\label{Uidef}
Taking $U$ as in Setup \ref{K24setup}, define symmetric and measurable $U_1,U_2:[0,1]^2\to [0,1]$ as follows.
\[U_1(x,y)=\begin{cases}U(x,y) & |U(x,y)|\geq p^{\frac{7}{8}} \\ 0 & |U(x,y)|\leq p^{\frac{7}{8}}\end{cases}\]
\[U_2(x,y)=\begin{cases}0 & |U(x,y)|\geq p^{\frac{7}{8}} \\ U(x,y) & |U(x,y)|\leq p^{\frac{7}{8}}\end{cases}\]
\end{definition}
\begin{remark}
Any threshold that looks like $p^a$ for $a$ sufficiently close to $1$ will work for our purposes.
\end{remark}
\begin{definition}
Label the vertices of $K_0$ as $v_1,v_2,w_1,w_2,w_3,w_4$ such that $v_1,v_2$ are the two vertices of degree $4$ and $w_1,w_2$ the two vertices of degree $3$ (so $w_1w_2$ is the `extra' edge added to $K_{2,4}$ to obtain $K_0$).
\end{definition}
\begin{remark}
In essence, the goal of this subsection is to show that the main contribution to $\Hom(K_0,U)$ must come when the edges $v_1w_3$, $v_1w_4$, $v_2w_3$, and $v_2w_4$ are sent into high values of $U$ (appearing in $U_1$), the edge $w_1w_2$ is sent into low values of $U$ (appearing in $U_2$), and the vertices $v_1$ and $v_2$ are sent into the hub of $U$. The first two items will be accomplished by the next lemma.
\end{remark}

Notice that $U=U_1+U_2$. Thus in our expression
\[\Hom(K_0,U)=\displaystyle\int_{[0,1]^6}\displaystyle\prod_{vw\in E(K_0)}U(x_v,x_w)\displaystyle\prod_{v\in V(K_0)}dx_v,\]
we may substitute $U_1+U_2$ where each copy of $U$ appears, breaking the integral into $512$ terms. Most of these terms turn out to be negligible.
\begin{lemma}\label{highlowlemma}
Assume Setup \ref{K24setup}. Take $f:E(K_0)\to\{1,2\}$. Then
\[\displaystyle\int_{[0,1]^6}\displaystyle\prod_{vv'\in E(K_0)}U_{f(vv')}(x_v,x_v')\displaystyle\prod_{v\in V(K_0)}dx_v=o(p^9)\]
unless $f(v_1w_3)=f(v_1w_4)=f(v_2w_3)=f(v_2w_4)=1$ and $f(w_1w_2)=2$, where the $U_i$ are defined as in Definition \ref{Uidef}.
\end{lemma}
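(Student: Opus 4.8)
\emph{Plan.} Pass to absolute values and control each of the $2^9$ terms in the expansion of $\Hom(K_0,U)$ by a single application of the generalized Hölder inequality of Theorem \ref{graphholderthm}, choosing for each term a pair (maximum fractional matching, minimum fractional edge cover) of $K_0$ tailored to $f$. Write $C_4 := \{v_1w_3,v_1w_4,v_2w_3,v_2w_4\}$ (the edge set of the $4$-cycle $v_1w_3v_2w_4$) and $D := \{v_1w_1,v_1w_2,v_2w_1,v_2w_2\}$, so $E(K_0) = \{w_1w_2\}\sqcup C_4\sqcup D$. I will use three weightings of $E(K_0)$: the two perfect matchings $M_1 = \{v_1w_3,v_2w_4,w_1w_2\}$ and $M_2 = \{v_1w_4,v_2w_3,w_1w_2\}$ (these are the only perfect matchings of $K_0$, and both contain $w_1w_2$), and the fractional perfect matching $\mu$ with $\mu(e)=\tfrac12$ for $e\in C_4$, $\mu(w_1w_2)=1$, and $\mu(e)=0$ for $e\in D$. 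Each of these saturates every vertex of $K_0$, so when fed into Theorem \ref{graphholderthm} as both $(w_e)$ and $(w'_e)$ one has $S=S'=\varnothing$ there, and the inequality (\ref{graphholder}) collapses to $\int_{[0,1]^6}\prod_{e}f_e \le \prod_{e}\|f_e\|_{1/w'_e}$ (with $\|f_e\|_\infty$ on edges of weight $0$).

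\emph{Preliminary bounds.} I will use $\|U\|_2 \ll p^{3/2}$ and $\|U\|_1 \ll p^3(\log\tfrac1p)^2$ from Lemma \ref{K24consequenceslemma}, the trivial $\|U_2\|_\infty\le p^{7/8}$ (Definition \ref{Uidef}) and $\|U_i\|_q\le\|U\|_q$, together with one extra estimate, $\|U_1\|_1 = O(p^3)$. For the last, note that wherever $U_1\ne 0$ we have $U=U_1\ge p^{7/8}$, hence $U_1/p\to\infty$, so Lemma \ref{entropyapproxlemma} gives $I_p(W(x,y)) = (1+o(1))U_1(x,y)\log\frac{U_1(x,y)}{p}\ge \tfrac1{16}U_1(x,y)\log\tfrac1p$ on $\operatorname{supp}U_1$ once $p$ is small; integrating over $[0,1]^2$ and using $I_p(W)\ll p^3\log\tfrac1p$ (Lemma \ref{K24consequenceslemma}) yields $\|U_1\|_1 \le 16\,I_p(W)/\log\tfrac1p = O(p^3)$. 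Also $\|U\|_2^4 = o(p^6)$.

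\emph{The cases.} A map $f$ violates the stated condition iff either (i) $f(w_1w_2)=1$, or (ii) $f(w_1w_2)=2$ but $f(e)=2$ for some $e\in C_4$. Put $f_e := |U_{f(e)}|$ (the edge function of Theorem \ref{graphholderthm}) and $B_v:=[0,1]$; by the triangle inequality the modulus of the $f$-term is at most $\int_{[0,1]^6}\prod_{e\in E(K_0)}f_e$. In case (i) apply Theorem \ref{graphholderthm} with the weighting $\mu$; since $\mu$ assigns weight $1$ to $w_1w_2$, weight $\tfrac12$ to each edge of $C_4$, and weight $0$ to each edge of $D$, this gives
$$\int_{[0,1]^6}\prod_{e}f_e \;\le\; \|f_{w_1w_2}\|_1\prod_{e\in C_4}\|f_e\|_2\prod_{e\in D}\|f_e\|_\infty \;\le\; \|U_1\|_1\,\|U\|_2^4 \;=\; O(p^3)\cdot o(p^6) \;=\; o(p^9),$$
using $f_{w_1w_2}=|U_1|$ in case (i) and $\|f_e\|_\infty\le 1$. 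In case (ii), choose the perfect matching $M$ that misses the distinguished low edge $e$ — namely $M_2$ if $e\in\{v_1w_3,v_2w_4\}$ and $M_1$ if $e\in\{v_1w_4,v_2w_3\}$ — and apply Theorem \ref{graphholderthm} with $M$ as both weightings; this gives
$$\int_{[0,1]^6}\prod_{e'}f_{e'} \;\le\; \prod_{e'\in M}\|f_{e'}\|_1\prod_{e'\notin M}\|f_{e'}\|_\infty \;\le\; \|U\|_1^3\,\|U_2\|_\infty \;\le\; \bigl(p^3(\log\tfrac1p)^2\bigr)^3 p^{7/8} \;=\; o(p^9),$$
because $e\notin M$ contributes the factor $\|f_e\|_\infty=\|U_2\|_\infty\le p^{7/8}$ while every other non-matching edge contributes a factor $\le 1$. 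Since cases (i) and (ii) exhaust the maps failing the stated condition, the lemma follows.

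\emph{Main obstacle.} The delicate case is (i). Because $w_1w_2$ is a \emph{bad} edge of $K_0$, every maximum fractional matching forces weight $1$ on it, so Hölder must always produce a full factor $\|f_{w_1w_2}\|_1$; the naive choice of a perfect matching then also pins down two further $\|U\|_1$ factors, giving only $\|U\|_1^3 = O\bigl(p^9(\log\tfrac1p)^6\bigr)$, which is \emph{not} $o(p^9)$. The remedy is to spend the remaining fractional matching budget on the $4$-cycle $v_1w_3v_2w_4$ as an $L^2$-weighting, trading $\|U\|_1^2$ for $\|U\|_2^4=o(p^6)$. Checking that $\mu$ is simultaneously a maximum fractional matching and a minimum fractional edge cover of $K_0$, and carrying out the short entropy computation that sharpens $\|U_1\|_1$ from the generic $O(p^3(\log\tfrac1p)^2)$ down to $O(p^3)$, are then the only nonroutine ingredients; in case (ii) no bad edge among the relevant edges intervenes, and the single factor $p^{7/8}$ supplied by the low edge is already amply sufficient.
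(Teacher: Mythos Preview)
Your proof is correct. The decisive case (i), where $f(w_1w_2)=1$, is handled exactly as in the paper: both apply H\"older with the fractional perfect matching $\mu$ (the paper phrases it as ``drop the four $D$-edges and Cauchy--Schwarz on the $4$-cycle'') and then invoke the entropy estimate $\|U_1\|_1=O(p^3)$, which is the only nontrivial ingredient. Your treatment of case (ii) differs slightly: the paper keeps the single weighting $\mu$ and proves a second entropy bound $\|U_2\|_2^2\le p^{3+\Omega(1)}$ to rule out a low $C_4$-edge, whereas you switch to an integer perfect matching that avoids the low edge and pick up the crude factor $\|U_2\|_\infty\le p^{7/8}$ directly. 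This variant is a mild simplification---it avoids the second entropy computation---while the paper's version has the virtue of using one H\"older inequality for all $f$ and then reading off both constraints from it.
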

This lemma in essence says that we must only consider when the $4$-cycle $v_1w_3v_2w_4$ uses the high values of $U$, and $w_1w_2$ uses the low values.
\begin{proof}
We crudely bound by noting that $U$ is $1$-bounded, ignoring all edges except the ones appearing in the lemma, and applying Cauchy-Schwarz. Suppose our desired integral is $\Omega(p^9)$. Then, by dropping the edges $v_1w_1$, $v_1w_2$, $v_2w_1$, and $v_2w_2$ and applying Cauchy-Schwarz on the resulting $4$-cycle,
\begin{align*}
\Omega(p^9) & =\displaystyle\int_{[0,1]^6}\displaystyle\prod_{vv'\in E(K_0)}U_{f(vv')}(x_v,x_v')\displaystyle\prod_{v\in V(K_0)}dx_v \\ & \leq\displaystyle\int_{[0,1]^6}U_{f(w_1w_2)}(x_{w_1},x_{w_2})\displaystyle\prod_{\substack{i\in\{1,2\} \\ j\in \{3,4\}}}U_{f(v_iw_j)}(x_{v_i},x_{w_j})\displaystyle\prod_{v\in V(K_0)}dx_v \\ & =\left(\displaystyle\int_{[0,1]^2}U_{f(w_1w_2)}(x,y)dx dy\right)\left(\displaystyle\int_{[0,1]^4}\displaystyle\prod_{\substack{i\in\{1,2\} \\ j\in \{3,4\}}}U_{f(v_iw_j)}(x_{v_i},x_{w_j})dx_{v_1}dx_{v_2}dx_{w_3}dx_{w_4}\right) \\ & \leq \|U_{f(w_1w_2)}\|_1\|U_{f(v_1w_3)}\|_2\|U_{f(v_1w_4)}\|_2\|U_{f(v_2w_3)}\|_2\|U_{f(v_2w_4)}\|_2.
\end{align*}

The idea of our argument will be that the equality case of Lemma \ref{momentlemma} occurs for large values, and the equality case of Lemma \ref{firstmomentlemma} occurs for small values, so for the expression above to be large, the desired values of $f$ must be what we want.

Note that $\| U_i\|_1\leq \|U\|_1\leq p^{3-o(1)}$ and $\|U_i\|_2\leq \|U\|_2\ll p^{\frac{3}{2}}$ for $i\in\{1,2\}$, by (6) and (7) of Lemma \ref{K24consequenceslemma}.

Therefore,
\[\Omega(p^9)=\|U_{f(w_1w_2)}\|_1\|U_{f(v_1w_3)}\|_2\|U_{f(v_1w_4)}\|_2\|U_{f(v_2w_3)}\|_2\|U_{f(v_2w_4)}\|_2\leq p^{\frac{15}{2}-o(1)}\|U_{f(v_1w_3)}\|_2.\]
Thus $\|U_{f(v_1w_3)}\|_2\geq p^{\frac{3}{2}+o(1)}$, so
\begin{equation}\label{v1w3equation}
\displaystyle\int_{[0,1]^2}\left(U_{f(v_1w_3)}(x,y)\right)^2 dx dy\geq p^{3+o(1)}.
\end{equation}
But for all $a$ with $|a|\leq p^{\frac{7}{8}}$, $a^2=p^{\Omega(1)}I_p(p+a)$. This is because by Lemma \ref{entropyapproxlemma}, $I_p(p+a)=\Theta\left(\frac{a^2}{p}\right)$ when $a=O(p)$, and when $a\gg p$, $I_p(p+a)=(1+o(1))a\log\frac{a}{p}\geq a\geq p^{-\frac{7}{8}}a^2$, as $a\leq p^{\frac{7}{8}}$. Thus since $U_2$ is $p^{\frac{7}{8}}$-bounded,
\[\displaystyle\int_{[0,1]^2}U_2(x,y)^2 dx dy\leq p^{\Omega(1)}I_p(W)=p^{3+\Omega(1)}.\]
Combining with (\ref{v1w3equation}) we see that we cannot have $f(v_1w_3)=2$, so $f(v_1w_3)=1$. The same logic holds for the edges $v_1w_4,v_2w_3,v_2w_4$, so we have proven that
\[f(v_1w_3)=f(v_1w_4)=f(v_2w_3)=f(v_2w_4)=1.\]
We return to the statement
\[\|U_{f(w_1w_2)}\|_1\|U_{f(v_1w_3)}\|_2\|U_{f(v_1w_4)}\|_2\|U_{f(v_2w_3)}\|_2\|U_{f(v_2w_4)}\|_2=\Omega(p^9)\]
which now becomes
\[\|U_{f(w_1w_2)}\|_1\|U_1\|_2^4=\Omega(p^9)\]
Since $\|U_1\|_2\leq\|U\|_2\ll p^{\frac{3}{2}}$ by (7) of Lemma \ref{K24consequenceslemma},
\begin{equation}\label{w1w2equation}
\|U_{f(w_1w_2)}\|_1\gg p^3.
\end{equation}
But for all $a$ with $a\geq p^{\frac{7}{8}}$, by Lemma \ref{entropyapproxlemma},
\[I_p(p+a)=(1+o(1))a\log\frac{a}{p}\geq (1+o(1))a\log p^{-\frac{1}{8}}=\Omega\left(a\log\frac{1}{p}\right),\]
so since $U_1$ only takes values either $0$ or at least $p^{\frac{7}{8}}$,
\begin{align*}
\|U_1\|_1 & =\displaystyle\int_{[0,1]^2}U_1(x,y)dx dy \\ & =O\left(\left(\log\frac{1}{p}\right)^{-1}\displaystyle\int_{[0,1]^2}I_p(W(x,y))dx dy\right) \\ & =O\left(\left(\log\frac{1}{p}\right)^{-1}I_p(W)\right) \\ & \ll p^3
\end{align*}
by (5) of Lemma \ref{K24consequenceslemma}. Combining with (\ref{w1w2equation}) yields that $f(w_1w_2)\neq 1$, so $f(w_1w_2)=2$, as desired.
\end{proof}
We have the following corollary, where we use $dx$ as a shorthand for $\displaystyle\prod_{v\in V(K_0)}dx_v$.
\begin{cor}\label{highlowcor}
Under Setup \ref{K24setup},
\[\displaystyle\int_{[0,1]^6}U_2(x_{w_1},x_{w_2})\left(\displaystyle\prod_{\substack{i\in\{1,2\} \\ j\in\{1,2\}}}U(x_{v_i},x_{w_j})\right)\left(\displaystyle\prod_{\substack{i\in\{1,2\} \\ j\in\{3,4\}}}U_1(x_{v_i},x_{w_j})\right)dx\geq (\delta-o(1))p^9,\]
with $U_i$ as in Definition \ref{Uidef}.
\end{cor}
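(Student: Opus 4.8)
The plan is to expand $\Hom(K_0,U)$ into $2^{e(K_0)}=2^9=512$ pieces by writing $U=U_1+U_2$ at each of the nine edges of $K_0$, discard the negligible pieces via Lemma~\ref{highlowlemma}, recombine the surviving pieces, and then invoke Corollary~\ref{K0onlylemma}. Concretely, since $U=U_1+U_2$,
\[
\Hom(K_0,U)=\displaystyle\int_{[0,1]^6}\displaystyle\prod_{vv'\in E(K_0)}\bigl(U_1(x_v,x_{v'})+U_2(x_v,x_{v'})\bigr)\displaystyle\prod_{v\in V(K_0)}dx_v=\displaystyle\sum_{f:E(K_0)\to\{1,2\}}I_f,
\]
where $I_f:=\displaystyle\int_{[0,1]^6}\displaystyle\prod_{vv'\in E(K_0)}U_{f(vv')}(x_v,x_{v'})\,dx$.

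First I would split this sum according to the dichotomy in Lemma~\ref{highlowlemma}: call $f$ \emph{good} if $f(v_1w_3)=f(v_1w_4)=f(v_2w_3)=f(v_2w_4)=1$ and $f(w_1w_2)=2$, and \emph{bad} otherwise. By Lemma~\ref{highlowlemma}, $I_f=o(p^9)$ for every bad $f$, and since there are at most $512=O(1)$ of them, $\sum_{f\text{ bad}}I_f=o(p^9)$. For the good $f$, the values $f(v_1w_1),f(v_1w_2),f(v_2w_1),f(v_2w_2)$ are unconstrained, so the good $f$ are parametrized by $g:\{v_1w_1,v_1w_2,v_2w_1,v_2w_2\}\to\{1,2\}$, and
\[
\displaystyle\sum_{f\text{ good}}I_f=\displaystyle\int_{[0,1]^6}U_2(x_{w_1},x_{w_2})\Bigl(\displaystyle\sum_{g}\displaystyle\prod_{i,j\in\{1,2\}}U_{g(v_iw_j)}(x_{v_i},x_{w_j})\Bigr)\Bigl(\displaystyle\prod_{\substack{i\in\{1,2\}\\ j\in\{3,4\}}}U_1(x_{v_i},x_{w_j})\Bigr)\,dx,
\]
where moving the finite sum inside the integral is justified since all the $U_i$ are bounded. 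Finally, $\sum_{g}\prod_{i,j\in\{1,2\}}U_{g(v_iw_j)}(x_{v_i},x_{w_j})=\prod_{i,j\in\{1,2\}}\bigl(U_1(x_{v_i},x_{w_j})+U_2(x_{v_i},x_{w_j})\bigr)=\prod_{i,j\in\{1,2\}}U(x_{v_i},x_{w_j})$, so $\sum_{f\text{ good}}I_f$ is exactly the integral $J$ appearing in Corollary~\ref{highlowcor}. Hence $\Hom(K_0,U)=J+o(p^9)$, and by Corollary~\ref{K0onlylemma} we get $J=\Hom(K_0,U)-o(p^9)\geq(\delta-o(1))p^9$, which is the claim.

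This argument is entirely bookkeeping, so there is no serious obstacle; the only points requiring mild care are that Lemma~\ref{highlowlemma} controls the signed integrals $I_f$ directly (so sign cancellation among bad terms is not an issue — each is individually $o(p^9)$, and there are $O(1)$ of them), and that the recombination step legitimately reconstructs $\prod_{i,j\in\{1,2\}}U$ from the sum over $g$.
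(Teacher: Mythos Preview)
Your proposal is correct and follows essentially the same approach as the paper: expand $\Hom(K_0,U)$ into $512$ terms via $U=U_1+U_2$, kill the bad terms with Lemma~\ref{highlowlemma}, recombine the remaining $16$ good terms into the displayed integral, and apply Corollary~\ref{K0onlylemma}. Your write-up is just a more detailed version of the paper's one-paragraph argument.
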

\begin{proof}
By Corollary \ref{K0onlylemma}, $\Hom(K_0,U)\geq (\delta-o(1))p^9$. Expanding out the integral $\Hom(K_0,U)$ by substituting $U=U_1+U_2$, we obtain $512$ terms. The terms appearing in the left hand side of Corollary \ref{highlowcor} are exactly those where we chose $U_2$ for the $w_1w_2$ edge and $U_1$ for the edges in the $4$-cycle $v_1w_3v_2w_4$. But all other terms are $o(p^9)$ by Lemma \ref{highlowlemma}, so the conclusion follows.
\end{proof}
We have now shown the first two specifications in the remark at the beginning of the section--that is, that $v_1w_3$, $v_1w_4$, $v_2w_3$, and $v_2w_4$ use high values of $U$ and that $w_1w_2$ uses low values of $U$. We now turn our attention to the remaining specification: that $v_1$ and $v_2$ generally are sent into the hub of $U$. We define a hub similarly to in Section \ref{K23section}.

\begin{definition}\label{K24hubdef}
Under Setup \ref{K24setup}, for any $b=b(p)\geq 0$, define
\[B_b:=\left\{x\in [0,1]:\displaystyle\int_0^1 U(x,y)^2 dy\geq b\right\}.\]
\end{definition}
We show that very little of the contribution to the left side of Corollary \ref{highlowcor} comes when $v_1\notin B_b$.
\begin{lemma}\label{highlowlemma2}
Under Setup \ref{K24setup}, if $b\ll p^{\frac{9}{8}}$, then
\[\displaystyle\int_{\substack{v_1\in\overline{B_b} \\ v_2,w_1,w_2,w_3,w_4\in [0,1]}}\left|U_2(x_{w_1},x_{w_2})\left(\displaystyle\prod_{\substack{i\in\{1,2\} \\ j\in\{1,2\}}}U(x_{v_i},x_{w_j})\right)\left(\displaystyle\prod_{\substack{i\in\{1,2\} \\ j\in\{3,4\}}}U_1(x_{v_i},x_{w_j})\right)\right|dx=o(p^9).\]
\end{lemma}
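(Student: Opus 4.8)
The plan is to bound the integral in Lemma~\ref{highlowlemma2} by integrating out the four degree-two vertices $w_1,w_2,w_3,w_4$ one at a time with Cauchy--Schwarz, and then to extract smallness from two places simultaneously: the uniform bound $|U_2|\le p^{7/8}$ supplied by Definition~\ref{Uidef}, and the bound $\int_0^1 U(x_{v_1},y)^2\,dy<b$ coming from the restriction $x_{v_1}\in\overline{B_b}$. As in earlier sections write $S_2(x):=\left(\int_0^1 U(x,y)^2\,dy\right)^{1/2}$; we will use repeatedly that $S_2(x)\le(1+o(1))\sqrt p$ for every $x\in[0,1]$ (this is the computation already carried out inside the proof of Lemma~\ref{eliminatingK24lemma}, using only (1) of Lemma~\ref{K24consequenceslemma}), and that $\left(\int_0^1 U_1(x,y)^2\,dy\right)^{1/2}\le S_2(x)$ since $|U_1|\le|U|$ pointwise.

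First I would replace the factor $|U_2(x_{w_1},x_{w_2})|$ by its uniform bound $p^{7/8}$ and pull it outside the integral. In the integrand that remains, each of $w_1,w_2,w_3,w_4$ carries exactly two factors, one joining it to $x_{v_1}$ and one joining it to $x_{v_2}$, and these four pairs involve disjoint integration variables, so the integral factors over $w_1,\dots,w_4$. Cauchy--Schwarz in each variable gives $\int_0^1|U(x_{v_1},x_{w_j})U(x_{v_2},x_{w_j})|\,dx_{w_j}\le S_2(x_{v_1})S_2(x_{v_2})$ for $j=1,2$ and $\int_0^1|U_1(x_{v_1},x_{w_j})U_1(x_{v_2},x_{w_j})|\,dx_{w_j}\le S_2(x_{v_1})S_2(x_{v_2})$ for $j=3,4$, so the integral in Lemma~\ref{highlowlemma2} is at most
\[p^{7/8}\int_{\overline{B_b}\times[0,1]}S_2(x_{v_1})^4\,S_2(x_{v_2})^4\,dx_{v_1}\,dx_{v_2}.\]
Next I would use $S_2(x_{v_1})^4\le b\,S_2(x_{v_1})^2$ (because $x_{v_1}\in\overline{B_b}$) and $S_2(x_{v_2})^4\le(1+o(1))p\,S_2(x_{v_2})^2$ (the uniform bound), which reduces the right side to
\[(1+o(1))\,b\,p^{15/8}\left(\int_{\overline{B_b}}S_2(x_{v_1})^2\,dx_{v_1}\right)\left(\int_0^1 S_2(x_{v_2})^2\,dx_{v_2}\right)\le(1+o(1))\,b\,p^{15/8}\,\|U\|_2^4.\]
Finally, since $\|U\|_2\ll p^{3/2}$ by (7) of Lemma~\ref{K24consequenceslemma}, this is $o(b\,p^{63/8})$, which is $o(p^9)$ precisely because $b\ll p^{9/8}$ by hypothesis.

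There is no genuine obstacle here; the argument is a short estimate. The one thing worth checking is that the exponents match with no slack to spare: the threshold $p^{7/8}$ in Definition~\ref{Uidef}, the factor $p$ from the uniform bound on $S_2(x_{v_2})$, and the bound $\|U\|_2^4\ll p^6$ combine to give total exponent $\tfrac{7}{8}+1+6=\tfrac{63}{8}=9-\tfrac{9}{8}$, so the hypothesis $b\ll p^{9/8}$ is exactly what is required. One could also phrase the estimate as an application of Theorem~\ref{graphholderthm} to $K_{2,4}$, after bounding the $w_1w_2$ factor by $p^{7/8}$ beforehand, with $B_{v_1}=\overline{B_b}$ and an appropriate maximum fractional matching, but the direct computation above is shorter.
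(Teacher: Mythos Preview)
Your proof is correct and takes essentially the same approach as the paper: both replace $|U_2|$ by its sup-norm $p^{7/8}$, use Cauchy--Schwarz to reduce to $S_2(x_{v_1})$ and $S_2(x_{v_2})$, extract the factor $b$ from the constraint $x_{v_1}\in\overline{B_b}$ and the factor $p$ from the uniform bound on $S_2$, and finish with $\|U\|_2^4\ll p^6$. The only cosmetic difference is that the paper integrates out $w_1,w_2$ and $w_3,w_4$ in separate stages (bounding the $w_1,w_2$ piece by $(1+o(1))bp$ directly, then applying Cauchy--Schwarz to the remaining $4$-cycle), whereas you apply Cauchy--Schwarz to all four $w_j$ at once before extracting the scalar factors; the resulting bound $(1+o(1))\,b\,p^{15/8}\|U\|_2^4$ is identical.
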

\begin{proof}
We replace the copy of $U_2$ by its maximum to yield
\begin{align*}
\displaystyle\int_{\substack{v_1\in\overline{B_b} \\ v_2,w_1,w_2,w_3,w_4\in [0,1]}}\left|U_2(x_{w_1},x_{w_2})\left(\displaystyle\prod_{\substack{i\in\{1,2\} \\ j\in\{1,2\}}}U(x_{v_i},x_{w_j})\right)\left(\displaystyle\prod_{\substack{i\in\{1,2\} \\ j\in\{3,4\}}}U_1(x_{v_i},x_{w_j})\right)\right|dx & \\ \leq\|U_2\|_{\infty}\displaystyle\int_{\substack{v_1\in\overline{B_b} \\ v_2,w_1,w_2,w_3,w_4\in [0,1]}}\left|\left(\displaystyle\prod_{\substack{i\in\{1,2\} \\ j\in\{1,2\}}}U(x_{v_i},x_{w_j})\right)\left(\displaystyle\prod_{\substack{i\in\{1,2\} \\ j\in\{3,4\}}}U_1(x_{v_i},x_{w_j})\right)\right|dx & \\ =\|U_2\|_{\infty}\displaystyle\int_{\substack{v_1\in\overline{B_b} \\ v_2,w_3,w_4\in [0,1]}}\left(\displaystyle\int_0^1|U(x_{v_1},y)U(x_{v_2},y)|dy\right)^2\left|\displaystyle\prod_{\substack{i\in\{1,2\} \\ j\in\{3,4\}}}U_1(x_{v_i},x_{w_j})\right|dx_{v_1}dx_{v_2}dx_{w_3}dx_{w_4}.&
\end{align*}
Now, by Cauchy-Schwarz and (1) of Lemma \ref{K24consequenceslemma}, whenever $v_1\in\overline{B_b}$,
\begin{align*}
\left(\displaystyle\int_0^1|U(x_{v_1},y)U(x_{v_2},y)|dy\right)^2 & \leq\left(\displaystyle\int_0^1 U(x_{v_1},y)^2 dy\right)\left(\displaystyle\int_0^1 U(x_{v_2},y)^2 dy\right) \\ & \leq b\displaystyle\int_0^1\left(W(x_{v_2},y)^2-2pW(x_{v_2},y)+p^2\right)dy \\ & \leq b\left((1-2p)\displaystyle\int_0^1 W(x_{v_2},y)dy+p^2\right) \\ & =(1+o(1))bp,
\end{align*}
as $\displaystyle\int_0^1 U(x_{v_1},y)^2\leq b$ by the definition of $B_b$.

Therefore our desired integral is bounded above by
\[(1+o(1))bp\|U_2\|_{\infty}\displaystyle\int_{\substack{v_1\in\overline{B_b} \\ v_2,w_3,w_4\in [0,1]}}\left|\displaystyle\prod_{\substack{i\in\{1,2\} \\ j\in\{3,4\}}}U_1(x_{v_i},x_{w_j})\right|dx_{v_1}dx_{v_2}dx_{w_3}dx_{w_4}.\]
Now allowing $v_1$ to range over $[0,1]$ and applying Cauchy-Schwarz repeatedly, we have the upper bound
\[(1+o(1))bp\|U_2\|_{\infty}\|U_1\|_2^4.\]
By (7) of Lemma \ref{K24consequenceslemma}, $\|U_1\|_2\leq \|U\|_2\ll p^{\frac{3}{2}}$, so we may bound the left side of the lemma as
\[o\left(bp^7\|U_2\|_{\infty}\right).\]
But $b\|U_2\|_{\infty}\ll p^{\frac{9}{8}}p^{\frac{7}{8}}=p^2$, so our bound is $o(p^9)$ and the lemma is proven.
\end{proof}
\begin{cor}\label{highlowcor3}
Assume Setup \ref{K24setup} and recall from Definitions \ref{Uidef} and \ref{K24hubdef} the definitions of $U_1$, $U_2$, and $B_b$. If $b\ll p^{\frac{9}{8}}$, then
\[\displaystyle\int_{\substack{v_1,v_2\in B_b \\ w_1,w_2,w_3,w_4\in [0,1]}}U_2(x_{w_1},x_{w_2})\left(\displaystyle\prod_{\substack{i\in\{1,2\} \\ j\in\{1,2\}}}U(x_{v_i},x_{w_j})\right)\left(\displaystyle\prod_{\substack{i\in\{1,2\} \\ j\in\{3,4\}}}U_1(x_{v_i},x_{w_j})\right)dx\geq (\delta-o(1))p^9.\]
\end{cor}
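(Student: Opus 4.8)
The plan is to deduce Corollary \ref{highlowcor3} directly from Corollary \ref{highlowcor} and Lemma \ref{highlowlemma2} by a domain decomposition, with essentially no new analysis. Write $F(x)$ for the common integrand,
\[
F(x)=U_2(x_{w_1},x_{w_2})\left(\prod_{\substack{i\in\{1,2\}\\ j\in\{1,2\}}}U(x_{v_i},x_{w_j})\right)\left(\prod_{\substack{i\in\{1,2\}\\ j\in\{3,4\}}}U_1(x_{v_i},x_{w_j})\right),
\]
so that Corollary \ref{highlowcor} reads $\int_{[0,1]^6}F(x)\,dx\geq(\delta-o(1))p^9$. First I would partition the cube $[0,1]^6$ according to whether $x_{v_1}$ and $x_{v_2}$ lie in $B_b$, obtaining
\[
\int_{[0,1]^6}F=\int_{x_{v_1},x_{v_2}\in B_b}F+\int_{x_{v_1}\in\overline{B_b}}F+\int_{x_{v_1}\in B_b,\ x_{v_2}\in\overline{B_b}}F,
\]
where in the last two integrals the unmentioned coordinates range over $[0,1]$.

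Since $b\ll p^{9/8}$, Lemma \ref{highlowlemma2} bounds the second term in absolute value by $\int_{x_{v_1}\in\overline{B_b}}|F|=o(p^9)$. For the third term I would use that $F$ is invariant under the swap $x_{v_1}\leftrightarrow x_{v_2}$: the factor $U_2(x_{w_1},x_{w_2})$ is untouched, and each of the two products over $i\in\{1,2\}$ is merely reindexed. Hence applying Lemma \ref{highlowlemma2} with the roles of $v_1$ and $v_2$ interchanged gives $\int_{x_{v_2}\in\overline{B_b}}|F|=o(p^9)$, and the third term is dominated in absolute value by this quantity (after letting $x_{v_1}$ range over all of $[0,1]$). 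The one mild point of care is that $F$ is not sign-definite — it involves $U$ and $U_2$, which take negative values — so one must control the two ``bad'' pieces in absolute value rather than simply discard them; this is precisely why Lemma \ref{highlowlemma2} was formulated with absolute values inside the integral.

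Combining the three estimates yields
\[
\int_{x_{v_1},x_{v_2}\in B_b}F=\int_{[0,1]^6}F-o(p^9)\geq(\delta-o(1))p^9,
\]
which is the assertion of the corollary. I do not expect any genuine obstacle: the substantive work has already been carried out in Corollary \ref{highlowcor} (isolating the surviving high/low term) and Lemma \ref{highlowlemma2} (forcing $x_{v_1}$, and by symmetry $x_{v_2}$, into the hub $B_b$), and Corollary \ref{highlowcor3} is just the bookkeeping that assembles them. The only things to verify carefully while writing it out are that the $v_1\leftrightarrow v_2$ symmetry genuinely holds for this specific integrand and that the hypothesis $b\ll p^{9/8}$ is exactly what is needed for both invocations of Lemma \ref{highlowlemma2}.
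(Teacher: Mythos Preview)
Your proposal is correct and follows essentially the same approach as the paper's proof: define $F$, decompose the integral over $[0,1]^6$ according to whether $x_{v_1},x_{v_2}$ lie in $B_b$, apply Lemma \ref{highlowlemma2} (and its $v_1\leftrightarrow v_2$ symmetric version) to bound the two off-hub pieces in absolute value by $o(p^9)$, and subtract from Corollary \ref{highlowcor}. The paper's write-up is nearly identical, including the explicit observation that $F$ is symmetric in $x_{v_1},x_{v_2}$ and the use of absolute values to handle the non-sign-definite integrand.
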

\begin{proof}
With $x=(x_v)_{v\in V(K_0)}$, let $F(x)=U_2(x_{w_1},x_{w_2})\left(\displaystyle\prod_{\substack{i\in\{1,2\} \\ j\in\{1,2\}}}U(x_{v_i},x_{w_j})\right)\left(\displaystyle\prod_{\substack{i\in\{1,2\} \\ j\in\{3,4\}}}U_1(x_{v_i},x_{w_j})\right)$. By Corollary \ref{highlowcor},
\[\displaystyle\int_{[0,1]^6}F(x)dx\geq (\delta-o(1))p^9,\]
and by Lemma \ref{highlowlemma2},
\[\displaystyle\int_{\substack{v_1\in\overline{B_b} \\ v_2,w_1,w_2,w_3,w_4\in [0,1]}}|F(x)|dx=o(p^9).\]
Thus (since $F$ is symmetric in $x_{v_1}$ and $x_{v_2}$),
\begin{align*}
\displaystyle\int_{\substack{v_1,v_2\in B_b \\ w_1,w_2,w_3,w_4\in [0,1]}}F(x)dx & =\displaystyle\int_{[0,1]^6}F(x)dx-\displaystyle\int_{\substack{v_1\in\overline{B_b} \\ v_2,w_1,w_2,w_3,w_4\in [0,1]}}F(x)dx-\displaystyle\int_{\substack{v_1\in B_b,v_2\in\overline{B_b} \\ w_1,w_2,w_3,w_4\in [0,1]}}F(x)dx \\ & \geq\displaystyle\int_{[0,1]^6}F(x)dx-\displaystyle\int_{\substack{v_1\in\overline{B_b} \\ v_2,w_1,w_2,w_3,w_4\in [0,1]}}|F(x)|dx-\displaystyle\int_{\substack{v_2\in\overline{B_b} \\ v_1,w_1,w_2,w_3,w_4\in [0,1]}}|F(x)|dx \\ & \geq (\delta-o(1))p^9-o(p^9)-o(p^9) \\ & =(\delta-o(1))p^9,
\end{align*}
as desired.
\end{proof}
We have in essence restricted to homomorphisms where $v_1$ and $v_2$ are sent into the hub $B_b$, $w_1w_2$ uses the low values of $U$, and the $4$-cycle $v_1w_3v_2w_4$ uses the high values of $U$. To conclude the subsection, we will manipulate the bound from the previous corollary into a more manageable form.
\begin{cor}\label{highlowcor2}
Assume Setup \ref{K24setup}. Take $0\leq b\ll p^{\frac{9}{8}}$. Recall from Definitions \ref{Uidef} and \ref{K24hubdef} the definitions of $U_1$, $U_2$, and $B_b$ and let $U_1^b$ be the restriction of $U_1$ to $B_b\times [0,1]$. Then
\[\|U_1^b\|_2^4\displaystyle\sup_{x_{v_1},x_{v_2}\in [0,1]}\left|\displaystyle\int_{[0,1]^2}U_2(x_{w_1},x_{w_2})\displaystyle\prod_{\substack{i\in\{1,2\} \\ j\in\{1,2\}}}U(x_{v_i},x_{w_j})dx_{w_1}dx_{w_2}\right|\geq (\delta-o(1))p^9.\]
\end{cor}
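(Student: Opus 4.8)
The plan is to start from Corollary~\ref{highlowcor3}, factor the inner integral over the $w$-variables for fixed $x_{v_1},x_{v_2}$, bound the $w_1,w_2$-part by its supremum, and estimate the remaining $w_3,w_4$-part via Cauchy--Schwarz. Fix $x_{v_1},x_{v_2}\in B_b$. Since in $K_0$ the pair $\{w_1,w_2\}$ and the pair $\{w_3,w_4\}$ span disjoint edge sets once $v_1,v_2$ are fixed, the inner fourfold integral in Corollary~\ref{highlowcor3} factors (by Fubini, all integrands being bounded and measurable) as the product of
\[G(x_{v_1},x_{v_2}):=\displaystyle\int_{[0,1]^2}U_2(x_{w_1},x_{w_2})\displaystyle\prod_{\substack{i\in\{1,2\}\\ j\in\{1,2\}}}U(x_{v_i},x_{w_j})\,dx_{w_1}dx_{w_2}\]
and
\[\displaystyle\int_{[0,1]^2}\displaystyle\prod_{\substack{i\in\{1,2\}\\ j\in\{3,4\}}}U_1(x_{v_i},x_{w_j})\,dx_{w_3}dx_{w_4}=\left(\displaystyle\int_0^1 U_1(x_{v_1},z)U_1(x_{v_2},z)\,dz\right)^2,\]
where the last equality uses that $w_3$ and $w_4$ play identical roles. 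Thus Corollary~\ref{highlowcor3} reads
\[(\delta-o(1))p^9\leq\displaystyle\int_{B_b\times B_b}G(x_{v_1},x_{v_2})\left(\displaystyle\int_0^1 U_1(x_{v_1},z)U_1(x_{v_2},z)\,dz\right)^2 dx_{v_1}dx_{v_2}.\]

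Next let $M:=\displaystyle\sup_{x_{v_1},x_{v_2}\in[0,1]}|G(x_{v_1},x_{v_2})|$, which is finite since $U$ and $U_2$ are bounded. The key observation is that the squared factor above is nonnegative, so we may legitimately replace $G(x_{v_1},x_{v_2})$ by the upper bound $M$ (even though $G$ need not be sign-definite), obtaining
\[(\delta-o(1))p^9\leq M\displaystyle\int_{B_b\times B_b}\left(\displaystyle\int_0^1 U_1(x_{v_1},z)U_1(x_{v_2},z)\,dz\right)^2 dx_{v_1}dx_{v_2}.\]
Now apply Cauchy--Schwarz in $z$ to bound the inner square by $\left(\int_0^1 U_1(x_{v_1},z)^2 dz\right)\left(\int_0^1 U_1(x_{v_2},z)^2 dz\right)$; the double integral over $B_b\times B_b$ then splits as a product of two equal factors, each equal to $\int_{B_b}\int_0^1 U_1(x,z)^2\,dz\,dx=\|U_1^b\|_2^2$, since $U_1^b$ is by definition the restriction of $U_1$ to $B_b\times[0,1]$. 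This yields $(\delta-o(1))p^9\leq M\|U_1^b\|_2^4$, which is precisely the assertion of Corollary~\ref{highlowcor2}.

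There is no genuine obstacle in this argument; it is a bookkeeping consequence of the structure of $K_0$ together with the positivity of the relevant factor. The only points meriting care are: the factorization of the $w$-integral is exact because $\{w_1,w_2\}$ and $\{w_3,w_4\}$ span disjoint edge sets once $x_{v_1},x_{v_2}$ are fixed; the $w_3,w_4$-integral is literally a perfect square, which is what allows the non-sign-definite factor $G$ to be controlled by its global supremum; and restricting the $x_{v_1},x_{v_2}$-integration to $B_b\times B_b$ (as provided by Corollary~\ref{highlowcor3}) is exactly what produces $\|U_1^b\|_2$ rather than $\|U_1\|_2$ after Cauchy--Schwarz.
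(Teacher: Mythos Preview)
Your proof is correct and follows essentially the same route as the paper: factor the inner integral over $\{w_1,w_2\}$ and $\{w_3,w_4\}$, bound the $G$-factor by its supremum, and finish with Cauchy--Schwarz to obtain $\|U_1^b\|_2^4$. Your explicit observation that the $w_3,w_4$-integral is literally a perfect square (hence nonnegative) is exactly what justifies replacing $G$ by its supremum; the paper handles this step by first inserting absolute values via the triangle inequality, which amounts to the same thing.
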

\begin{proof}
We will bound the left side of Corollary \ref{highlowcor3} above. We have
\begingroup
\allowdisplaybreaks
\begin{align*}
(\delta-o(1))p^9 & \leq\displaystyle\int_{\substack{v_1,v_2\in B_b \\ w_1,w_2,w_3,w_4\in [0,1]}}U_2(x_{w_1},x_{w_2})\left(\displaystyle\prod_{\substack{i\in\{1,2\} \\ j\in\{1,2\}}}U(x_{v_i},x_{w_j})\right)\left(\displaystyle\prod_{\substack{i\in\{1,2\} \\ j\in\{3,4\}}}U_1(x_{v_i},x_{w_j})\right)dx \\ & \leq\displaystyle\int_{B_b^2}\left|\displaystyle\int_{[0,1]^2}\displaystyle\prod_{\substack{i\in\{1,2\} \\ j\in\{3,4\}}}U_1(x_{v_i},x_{w_j})dx_{w_3}dx_{w_4}\right| \\ & \cdot\left|\displaystyle\int_{[0,1]^2}U_2(x_{w_1},x_{w_2})\displaystyle\prod_{\substack{i\in\{1,2\} \\ j\in\{1,2\}}}U(x_{v_i},x_{w_j})dx_{w_1}dx_{w_2}\right|dx_{v_1}dx_{v_2} \\ & \leq\displaystyle\int_{B_b^2}\left|\displaystyle\int_{[0,1]^2}\displaystyle\prod_{\substack{i\in\{1,2\} \\ j\in\{3,4\}}}U_1(x_{v_i},x_{w_j})dx_{w_3}dx_{w_4}\right|dx_{v_1}dx_{v_2}\\ & \cdot\displaystyle\sup_{x_{v_1},x_{v_2}\in [0,1]}\left|\displaystyle\int_{[0,1]^2}U_2(x_{w_1},x_{w_2})\displaystyle\prod_{\substack{i\in\{1,2\} \\ j\in\{1,2\}}}U(x_{v_i},x_{w_j})dx_{w_1}dx_{w_2}\right| \\ & \leq\|U_1^b\|_2^4\displaystyle\sup_{x_{v_1},x_{v_2}\in [0,1]}\left|\displaystyle\int_{[0,1]^2}U_2(x_{w_1},x_{w_2})\displaystyle\prod_{\substack{i\in\{1,2\} \\ j\in\{1,2\}}}U(x_{v_i},x_{w_j})dx_{w_1}dx_{w_2}\right|,
\end{align*}
\endgroup
where the first inequality is simply the triangle inequality, the second is by replacing the function $A(v_1,v_2):=\left|\displaystyle\int_{[0,1]^2}U_2(x_{w_1},x_{w_2})\displaystyle\prod_{\substack{i\in\{1,2\} \\ j\in\{1,2\}}}U(x_{v_i},x_{w_j})dx_{w_1}dx_{w_2}\right|$ by its supremum everywhere, and the third is by Cauchy-Schwarz.
\end{proof}
\subsection{Bounding by Entropy}
For the rest of Section \ref{K24section}, we use the following extended setup.
\begin{setup}\label{K24extendedsetup}
Take $K_0$, $\delta$, $p\to 0$, $W$, and $U=W-p$ as in Setup \ref{K24setup}. For $b=b(p)\geq 0$, define
\[B_b:=\left\{x\in [0,1]:\displaystyle\int_0^1 U(x,y)^2 dy\geq b\right\}\]
as in Definition \ref{K24hubdef}. Define $U_1,U_2$ with $U_1+U_2=U$ as in Definition \ref{Uidef}. Let $U_1^b$ be the restriction of $U$ to $B_b\times [0,1]$.

Finally, set
\[c_1=p^{-3}\|U_1^b\|_2^2\]
and
\[c_2=p^{-3}\displaystyle\sup_{x_{v_1},x_{v_2}\in [0,1]}\left|\displaystyle\int_{[0,1]^2}U_2(x_{w_1},x_{w_2})\displaystyle\prod_{\substack{i\in\{1,2\} \\ j\in\{1,2\}}}U(x_{v_i},x_{w_j})dx_{w_1}dx_{w_2}\right|.\]
Note that $c_1$ depends on $b$.
\end{setup}
We may restate Corollary \ref{highlowcor2} given these new definitions.
\begin{highlowcor2}[Restated]
Under Setup \ref{K24extendedsetup}, if $b\ll p^{\frac{9}{8}}$, then
\[c_1^2c_2\geq\delta-o(1).\]
\end{highlowcor2}
We will now bound $c_1$ and $c_2$ in terms of the respective entropies $I_p(p+U_1)$ and $I_p(p+U_2)$. To bound $c_2$, the idea is to rewrite
\[\displaystyle\int_{[0,1]^2}U_2(x_{w_1},x_{w_2})\displaystyle\prod_{\substack{i\in\{1,2\} \\ j\in\{1,2\}}}U(x_{v_i},x_{w_j})dx_{w_1}dx_{w_2}\]
as $\displaystyle\int U_2(x,y)a(x)a(y)dx dy$, where $a(x)=\displaystyle\int_0^1 U(x_{v_1},x)U(x_{v_2},x) dx$. Now, if $a$ is $\{0,1\}$-valued, we are integrating $U_2$ over some box, and since $I_p$ is convex, $I_p(p+U_2)$ should be minimized when $U_2$ is constant on that box and zero outside it by a smoothing argument. Of course, $a$ is not necessarily $\{0,1\}$-valued, but a similar argument will work nonetheless.

We apply this process via the following two lemmas, the first of which restricts the function $a$ and the second of which applies the smoothing argument.
\begin{lemma}\label{easycauchylemma}
Under Setup \ref{K24setup}, for all $x,y\in [0,1]$,
\[\displaystyle\int_0^1 |U(x,z)U(y,z)|dz\leq p+o(p).\]
\end{lemma}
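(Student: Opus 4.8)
The plan is to apply the Cauchy--Schwarz inequality in the variable $z$ and then control the resulting column $L^2$ norms of $U$ using the approximate regularity of $W$, exactly as in the proofs of Lemmas \ref{eliminatingK24lemma} and \ref{highlowlemma2}. This is a routine estimate; the only subtlety is keeping all bounds uniform in $x$ and $y$.

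First I would write, for fixed $x,y\in[0,1]$,
\[\int_0^1 |U(x,z)U(y,z)|\,dz \leq \left(\int_0^1 U(x,z)^2\,dz\right)^{1/2}\left(\int_0^1 U(y,z)^2\,dz\right)^{1/2} = S_2(x)S_2(y),\]
where $S_2(x) := \left(\int_0^1 U(x,z)^2\,dz\right)^{1/2}$.

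Next I would bound $S_2(x)^2$ uniformly in $x$. Since $U = W-p$ and $W$ takes values in $[0,1]$, so that $W^2 \le W$ pointwise,
\[S_2(x)^2 = \int_0^1 (W(x,z)-p)^2\,dz = \int_0^1 W(x,z)^2\,dz - 2p\int_0^1 W(x,z)\,dz + p^2 \leq (1-2p)\int_0^1 W(x,z)\,dz + p^2.\]
By (1) of Lemma \ref{K24consequenceslemma}, $\int_0^1 W(x,z)\,dz = (1+o(1))p$ uniformly in $x$, so $S_2(x)^2 \le p+o(p)$, i.e.\ $S_2(x) \le (1+o(1))\sqrt{p}$, with the implied error uniform over all $x\in[0,1]$. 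Combining the two displays gives $\int_0^1 |U(x,z)U(y,z)|\,dz \le S_2(x)S_2(y) \le (1+o(1))p = p+o(p)$ for all $x,y$, which is the claim.

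There is no genuine obstacle here; the one point worth stating carefully in the write-up is that the error term in $\int_0^1 W(x,z)\,dz = (1+o(1))p$ is uniform in $x$ (it is part of the defining conditions of $\Gamma_\epsilon(K_0,\delta)$ via Lemma \ref{K24consequenceslemma}), so that the resulting bound $p+o(p)$ holds simultaneously for every pair $x,y\in[0,1]$ rather than merely pointwise with an $x$-dependent rate.
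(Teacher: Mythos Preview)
Your proof is correct and is essentially identical to the paper's own argument: Cauchy--Schwarz in $z$ followed by the bound $\int_0^1 U(x,z)^2\,dz \le (1-2p)\int_0^1 W(x,z)\,dz + p^2 = p+o(p)$ via (1) of Lemma \ref{K24consequenceslemma}. The paper also notes this is the same computation as in Claim \ref{plemma} of Lemma \ref{intersectionlemma}.
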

\begin{proof}
The proof is exactly that given in the proof of Claim \ref{plemma} of Lemma \ref{intersectionlemma}. That is, by Cauchy-Schwarz,
\[\displaystyle\int_0^1 |U(x,z)U(y,z)|dz\leq\left(\displaystyle\int_0^1 U(x,z)^2 dz\right)^{\frac{1}{2}}\left(\displaystyle\int_0^1 U(y,z)^2 dz\right)^{\frac{1}{2}},\]
and for all $x\in [0,1]$,
\begin{align*}
\displaystyle\int_0^1 U(x,z)^2 dz & =\displaystyle\int_0^1 W(x,z)^2 dz-2p\displaystyle\int_0^1 W(x,z) dz+p^2 \\ & \leq (1-2p)\displaystyle\int_0^1 W(x,z) dz+p^2 \\ & =p+o(p),
\end{align*}
as $\displaystyle\int_0^1 W(x,z) dz=p+o(p)$ by (1) of Lemma \ref{K24consequenceslemma}.
\end{proof}
\begin{lemma}\label{smoothinglemma}
Assume Setup \ref{K24setup} and let $a,b:[0,1]\to [0,1]$ be measurable with $\|a\|_1,\|b\|_1\leq p+o(p)$.
\[I_p\left(p+p^{-2}\displaystyle\int_{[0,1]^2}a(x)b(y)|U_2(x,y)| dx dy\right)\leq (1+o(1))p^{-2}I_p(p+U_2)\]
\end{lemma}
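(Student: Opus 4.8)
\emph{Proof proposal.} The plan is to pass to the nonnegative function $V:=|U_2|$, which is symmetric, measurable, and $[0,p^{7/8}]$-valued, and to work with $J(x):=I_p(p+x)$, a convex function on $[0,1-p)$ with $J(0)=0$ and $J\ge 0$. Set $t:=\int_{[0,1]^2}a(x)b(y)V(x,y)\,dx\,dy$; since $0\le a,b\le 1$ and $\|a\|_1,\|b\|_1\le (1+o(1))p$, we get $t\le (1+o(1))p^{2}\cdot p^{7/8}$, so $p^{-2}t\le (1+o(1))p^{7/8}=o(1)$ lies in the domain of $J$ and $J(p^{-2}t)$ is well defined. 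The first step is to reduce the right side to $V$: for every $u\in[-p,p^{7/8}]$ one has $J(|u|)\le J(u)$ (equality for $u\ge 0$; for $u\in[-p,0)$ this is $I_p(p+s)\le I_p(p-s)$ with $s:=|u|\in(0,p]$, which holds because $g(s):=I_p(p-s)-I_p(p+s)$ satisfies $g(0)=0$, $g'(0)=0$, and $g''(s)=I_p''(p-s)-I_p''(p+s)>0$ on $(0,p]$, as $I_p''(x)=\tfrac1x+\tfrac1{1-x}$ is decreasing on $(0,\tfrac12)$ and $p+s<\tfrac12$ for small $p$). Hence $p^{-2}I_p(p+U_2)=p^{-2}\int_{[0,1]^2}J(U_2)\ge p^{-2}\int_{[0,1]^2}J(V)$, and it suffices to bound $J(p^{-2}t)$ by $(1+o(1))p^{-2}\int_{[0,1]^2}J(V)$.

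If $m:=\|a\|_1\|b\|_1=0$ then $t=0$ and the claim is trivial, so assume $m>0$ and let $\nu:=m^{-1}a(x)b(y)\,dx\,dy$ be a probability measure on $[0,1]^2$. Dropping the factors $a,b\le 1$ and applying Jensen's inequality to the convex $J$ gives
\[
p^{-2}\!\int_{[0,1]^2}\!J(V)\;\ge\;p^{-2}\!\int_{[0,1]^2}\!a(x)b(y)J(V(x,y))\,dx\,dy\;=\;p^{-2}m\!\int J(V)\,d\nu\;\ge\;p^{-2}m\,J(s),
\]
where $s:=\int V\,d\nu=t/m\in[0,p^{7/8}]$. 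Since $p^{-2}t=\lambda s$ with $\lambda:=m/p^{2}\le 1+o(1)$, everything reduces to the scaling estimate $J(\lambda s)\le (1+o(1))\lambda J(s)$. For $\lambda\le 1$ this is immediate from convexity and $J(0)=0$, which give $J(\lambda s)\le\lambda J(s)$. For $1<\lambda=1+o(1)$ it reduces to showing $J(\lambda s)=(1+o(1))J(s)$ uniformly over $s\in[0,p^{7/8}]$.

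This last claim is the only real content, and I would prove it by the case split used in the proof of Lemma \ref{entropyapproxlemma} (passing to subsequences along which $s/p$ converges in $[0,+\infty]$). When $s\ll p$, the quadratic Taylor expansion of $I_p$ at $p$ gives $J(s)=(1+o(1))\tfrac{s^2}{2p}$, hence $J(\lambda s)/J(s)\to\lambda^2=1+o(1)$; when $s=\Theta(p)$, writing $s=ap$ with $a=\Theta(1)$, Lemma \ref{entropyapproxlemma} gives $J(s)=(f(a)+o(1))p$ with $f(a):=(1+a)\log(1+a)-a$ bounded away from $0$, so $J(\lambda s)/J(s)\to f(\lambda a)/f(a)=1+o(1)$ by continuity of $f$; and when $s\gg p$, $J(s)=(1+o(1))s\log\tfrac{s}{p}$, so $J(\lambda s)/J(s)=\lambda\bigl(1+O(\tfrac{\log\lambda}{\log(s/p)})\bigr)=1+o(1)$ since $\log\tfrac{s}{p}\to\infty$. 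Chaining the inequalities,
\[
I_p(p+p^{-2}t)=J(\lambda s)\le (1+o(1))\lambda J(s)=(1+o(1))p^{-2}m\,J(s)\le (1+o(1))p^{-2}\!\int_{[0,1]^2}\!J(V)\le (1+o(1))p^{-2}I_p(p+U_2),
\]
which is the lemma. The point requiring care—and the main obstacle—is that the scaling claim must hold with an $o(1)$ uniform in $s$ and $\lambda$; the subsequence argument from Lemma \ref{entropyapproxlemma} is exactly what makes this rigorous.
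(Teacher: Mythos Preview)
Your proof is correct and rests on the same three ingredients as the paper's: the comparison $I_p(p+|u|)\le I_p(p+u)$ for $u\in[-p,1-p]$, Jensen's inequality applied to the convex $J(x)=I_p(p+x)$, and the fact that multiplying the argument of $J$ by $1+o(1)$ changes $J$ by only a $1+o(1)$ factor. The one organizational difference is that the paper first maximizes the left side over $a,b$ (via a rearrangement argument) to reduce to indicator functions $1_A,1_B$ with $m(A),m(B)=p+o(p)$, and then applies Jensen against the uniform measure on $A\times B$; you instead apply Jensen directly against the weighted probability measure $\nu=m^{-1}a(x)b(y)\,dx\,dy$ and absorb the ratio $\lambda=m/p^2\le 1+o(1)$ afterwards, splitting into $\lambda\le 1$ (pure convexity) and $\lambda\in(1,1+o(1)]$ (your scaling estimate). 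Your route is slightly more direct, bypassing the rearrangement step entirely; the paper's reduction has the small advantage that the scaling factor is always exactly $1+o(1)$, so no case split on $\lambda$ is needed.
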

\begin{proof}
We use a smoothing-type argument.

We will first choose $a,b:[0,1]\to [0,1]$ so that the left side is maximized. Since $I_p$ is increasing on $[p,1]$, this occurs when $\displaystyle\int_{[0,1]^2}a(x)b(y)|U_2(x,y)| dx dy$ is maximized.

Suppose we fix $\|a\|_1$, and fix $b$. When we set $c(x)=\displaystyle\int_0^1 b(y)|U_2(x,y)|dy$, 
\begin{align*}
\displaystyle\int_{[0,1]^2}a(x)b(y)|U_2(x,y)| dx dy & =\displaystyle\int_0^1 a(x)c(x) dx \\ & \leq\displaystyle\int_0^{\|a\|_1}c^*(x)dx
\end{align*}
by the Rearrangement Inequality, where $c^*$ is the symmetric decreasing rearrangement of $c$. Furthermore, this maximum is attainable with $a$ only taking values in $\{0,1\}$, as if $c^*(\|a\|_1)=t$, then $m((c^*)^{-1}((t,1]))\leq t\leq m((c^*)^{-1}([t,1]))$, so there is some $A$ of measure $t$ such that $(c^*)^{-1}((t,1])\subseteq A\subseteq (c^*)^{-1}([t,1])$, and letting $a=1$ on $A$ and $a=0$ otherwise, $a$ and $c$ are similarly sorted by definition, so
\begin{align*}
\displaystyle\int_{[0,1]^2}a(x)b(y)|U_2(x,y)| dx dy & =\displaystyle\int_0^1 a(x)c(x) dx \\ & =\displaystyle\int_0^1 a^*(x)c^*(x) dx \\ & =\displaystyle\int_0^{\|a\|_1}c^*(x)dx.
\end{align*}
Thus $\displaystyle\int_{[0,1]^2}a(x)b(y)|U_2(x,y)| dx dy$ is maximized when $a$ only takes values in $\{0,1\}$ and similarly for $b$. Take $a,b$ attaining this maximum with values in $\{0,1\}$ and let $A$ and $B$ be the sets on which $a=1$ and $b=1$, respectively. Then $m(A),m(B)\leq p+o(p)$ by the lemma conditions. Then the left hand side of the lemma becomes
\[I_p\left(p+p^{-2}\displaystyle\int_{A\times B}|U_2(x,y)| dx dy\right).\]

Now, notice that $I_p'''(x)=\frac{-1}{x^2}+\frac{1}{(1-x)^2}\leq 0$ on $[0,2p]$. Thus $I_p''(p-x)\geq I_p''(p+x)$ for all $x\in [0,p]$. Using that $I_p(p)=I_p'(p)=0$ and integrating twice, we see that $I_p(p-x)\geq I_p(p+x)$ for all $x\in [0,p]$. Thus $I_p(p+U_2)\geq I_p(p+|U_2|)$.

So to finish the proof of the lemma, it suffices to show that for all $A,B$ with $m(A),m(B)=p+o(p)$,
\[I_p\left(p+p^{-2}\displaystyle\int_{A\times B}|U_2(x,y)| dx dy\right)\leq (1+o(1))p^{-2}I_p(p+|U_2|).\]
But $I_p(p+x)$ is convex, so by Jensen's inequality,
\begin{align*}
I_p\left(p+p^{-2}\displaystyle\int_{A\times B}|U_2(x,y)| dx dy\right) & =I_p\left(p+\frac{\displaystyle\int_{A\times B}(1+o(1))|U_2(x,y)| dx dy}{|A|||B|}\right) \\ & \leq\frac{1}{|A||B|}\displaystyle\int_{A\times B}I_p(p+(1+o(1))|U_2(x,y)|)dx dy \\ & \leq (1+o(1))p^{-2}I_p(p+(1+o(1))|U_2|) \\ & =(1+o(1))p^{-2}I_p(p+|U_2|),
\end{align*}
since multiplying $x$ by $(1+o(1))$ only changes $I_p(p+x)$ by a factor of $1+o(1)$ (as $I_p$ is continuous and nonzero unless $x=0$). The lemma follows.
\end{proof}
We now may obtain our bound on $c_2$.
\begin{cor}\label{b2boundcor}
Under Setup \ref{K24extendedsetup},
\[I_p(p+U_2)\geq (1-o(1))p^2I_p(p+pc_2).\]
\end{cor}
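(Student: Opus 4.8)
The plan is to read the inner double integral defining $c_2$ as an integral of $|U_2|$ against a rank‑one kernel, apply the smoothing estimate of Lemma \ref{smoothinglemma} for each fixed pair of hub coordinates, and then pass to the supremum using only monotonicity and continuity of $I_p$.

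First I would fix $x_{v_1},x_{v_2}\in[0,1]$ and set $g_{x_{v_1},x_{v_2}}(x):=U(x_{v_1},x)\,U(x_{v_2},x)$. Since $U$ is $1$‑bounded, $\bigl|g_{x_{v_1},x_{v_2}}\bigr|$ takes values in $[0,1]$, and by Lemma \ref{easycauchylemma} we have $\bigl\|g_{x_{v_1},x_{v_2}}\bigr\|_1=\int_0^1|U(x_{v_1},z)U(x_{v_2},z)|\,dz\le p+o(p)$, with the $o(p)$ uniform in $x_{v_1},x_{v_2}$. Rewriting the defining integral,
\[
\int_{[0,1]^2}U_2(x_{w_1},x_{w_2})\prod_{i,j\in\{1,2\}}U(x_{v_i},x_{w_j})\,dx_{w_1}dx_{w_2}=\int_{[0,1]^2}U_2(x,y)\,g_{x_{v_1},x_{v_2}}(x)\,g_{x_{v_1},x_{v_2}}(y)\,dx\,dy,
\]
so its absolute value is at most $\int_{[0,1]^2}\bigl|g_{x_{v_1},x_{v_2}}(x)\bigr|\,\bigl|g_{x_{v_1},x_{v_2}}(y)\bigr|\,|U_2(x,y)|\,dx\,dy$.

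Next I would apply Lemma \ref{smoothinglemma} with $a=b=\bigl|g_{x_{v_1},x_{v_2}}\bigr|$, which is admissible by the previous paragraph, obtaining
\[
I_p\!\left(p+p^{-2}\int_{[0,1]^2}\bigl|g_{x_{v_1},x_{v_2}}(x)\bigr|\,\bigl|g_{x_{v_1},x_{v_2}}(y)\bigr|\,|U_2(x,y)|\,dx\,dy\right)\le(1+o(1))\,p^{-2}I_p(p+U_2),
\]
where the $o(1)$ is uniform in $x_{v_1},x_{v_2}$ (it depends only on the uniform bound $\|g_{x_{v_1},x_{v_2}}\|_1\le p+o(p)$ and on the continuity of $I_p$). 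Since $\|U_2\|_\infty\le p^{7/8}$ and $\|g_{x_{v_1},x_{v_2}}\|_1\le p+o(p)$, the argument of $I_p$ on the left is at most $p+(1+o(1))p^{7/8}<1$ for $p$ small, so it lies in the range $[p,1)$ where $I_p$ is continuous and nondecreasing. Taking the supremum over $x_{v_1},x_{v_2}$ and using $\sup_{x_{v_1},x_{v_2}}I_p\bigl(p+h(x_{v_1},x_{v_2})\bigr)=I_p\bigl(p+\sup_{x_{v_1},x_{v_2}}h(x_{v_1},x_{v_2})\bigr)$ (valid by continuity and monotonicity of $I_p$ on $[p,1)$), together with $\sup_{x_{v_1},x_{v_2}}p^{-2}\int|g||g||U_2|\ge p^{-2}\sup_{x_{v_1},x_{v_2}}\bigl|\int U_2\prod U\bigr|=pc_2$, yields $I_p(p+pc_2)\le(1+o(1))\,p^{-2}I_p(p+U_2)$, which rearranges to the claimed bound $I_p(p+U_2)\ge(1-o(1))\,p^2\,I_p(p+pc_2)$.

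The main obstacle here is bookkeeping rather than conceptual: one must check that the $o(1)$ produced by Lemma \ref{smoothinglemma} does not depend on the choice of $x_{v_1},x_{v_2}$, so that it survives the passage to the supremum, and justify interchanging the supremum with $I_p$, which requires the argument of $I_p$ to stay in the monotone regime $[p,1)$ — guaranteed by the crude bound $p^3c_2\le\|U_2\|_\infty\|g_{x_{v_1},x_{v_2}}\|_1^2\le(1+o(1))p^{2+7/8}$.
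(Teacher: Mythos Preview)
Your proof is correct and follows essentially the same approach as the paper: define $a(x)=|U(x_{v_1},x)U(x_{v_2},x)|$, use Lemma~\ref{easycauchylemma} to bound $\|a\|_1$, and feed the result into Lemma~\ref{smoothinglemma}. The only cosmetic difference is that the paper first fixes a near-optimizing pair $(x_{v_1},x_{v_2})$ within a factor $1+\epsilon_0=1+o(1)$ of the supremum and then applies Lemma~\ref{smoothinglemma} once, whereas you apply the lemma for every pair and then take the supremum, explicitly tracking uniformity of the $o(1)$; both routes are valid and equivalent in content.
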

\begin{proof}
By the definition of supremum, for any $\epsilon_0=\epsilon_0(p)>0$ we may take $x_{v_1},x_{v_2}$ that attain within a $1+\epsilon_0$ factor of the supremum in the definition of $c_2$, so that
\[c_2\leq (1+\epsilon_0)p^{-3}\left|\displaystyle\int_{[0,1]^2}U_2(x_{w_1},x_{w_2})\displaystyle\prod_{\substack{i\in\{1,2\} \\ j\in\{1,2\}}}U(x_{v_i},x_{w_j})dx_{w_1}dx_{w_2}\right|.\]
Let $a:[0,1]\to [0,1]$ be given by $a(x)=\sqrt{1+\epsilon_0}|U(x_{v_1},x)U(x_{v_2},x)|$. Then taking $\epsilon_0=o(1)$, $\|a\|_1\leq p+o(p)$ by Lemma \ref{easycauchylemma}. By the triangle inequality,
\begin{align*}
c_2 & \leq (1+\epsilon_0)p^{-3}\displaystyle\int_{[0,1]^2}|U_2(x_{w_1},x_{w_2})|\displaystyle\prod_{\substack{i\in\{1,2\} \\ j\in\{1,2\}}}|U(x_{v_i},x_{w_j})|dx_{w_1}dx_{w_2} \\ & =p^{-3}\displaystyle\int_{[0,1]^2}a(x_{w_1})a(x_{w_2})|U_2(x_{w_1},x_{w_2})|dx_{w_1}dx_{w_2}.
\end{align*}
Therefore, by Lemma \ref{smoothinglemma},
\begin{align*}
I_p(p+pc_2) & \leq I_p\left(p+p^{-2}\displaystyle\int_{[0,1]^2}a(x_{w_1})a(x_{w_2})|U_2(x_{w_1},x_{w_2})|dx_{w_1}dx_{w_2}\right) \\ & \leq (1+o(1))p^{-2}I_p(p+U_2),
\end{align*}
which easily rearranges to the corollary.
\end{proof}
The bound on $c_1$ in terms of entropy is much easier to show.
\begin{lemma}\label{b1boundlemma}
Under Setup \ref{K24extendedsetup}, if $b=p^{\frac{3}{2}-\Omega(1)}$, then
\[I_p(p+U_1)\geq (2-o(1))\left(p^3\log\frac{1}{p}\right)c_1-o(p^{3+\Omega(1)}).\]
\end{lemma}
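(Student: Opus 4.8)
The plan is to bound $I_p(p+U_1)$ from below pointwise by a constant multiple of $U_1^2\log\frac1p$, integrate this over $[0,1]^2$, and then exploit the symmetry of $U_1$ together with the smallness of the hub $B_b$ --- this is precisely the ``contributes twice'' mechanism that already appeared for $K_{2,3}$ in Section \ref{K23section}.

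For the pointwise bound, note first that by Definition \ref{Uidef}, whenever $U_1(x,y)\neq 0$ one has $|U_1(x,y)|\geq p^{7/8}$, which exceeds the threshold $p^{1+\epsilon}$ in Lemma \ref{momentlemma} corresponding to the exponent $c=1$. That lemma therefore gives $I_p(p+U_1(x,y))\geq (1-o(1))\,U_1(x,y)^2\log\frac1p$, uniformly in $(x,y)$ (as is visible from the proof of Lemma \ref{momentlemma}); when $U_1(x,y)=0$ the inequality is trivial. Integrating over $[0,1]^2$ yields
\[I_p(p+U_1)\ \geq\ (1-o(1))\Big(\log\tfrac1p\Big)\|U_1\|_2^2.\]

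Next I would split off the hub. Since $U_1$ is symmetric, $\int_{[0,1]\times B_b}U_1^2=\int_{B_b\times[0,1]}U_1^2=\|U_1^b\|_2^2$, so inclusion--exclusion on $[0,1]^2\supseteq (B_b\times[0,1])\cup([0,1]\times B_b)$ gives $\|U_1\|_2^2\geq 2\|U_1^b\|_2^2-\int_{B_b\times B_b}U_1^2$. For the overlap term, $|U_1|\leq 1$ gives $\int_{B_b\times B_b}U_1^2\leq m(B_b)^2$, while the definition of $B_b$ (Definition \ref{K24hubdef}) together with (7) of Lemma \ref{K24consequenceslemma} gives
\[b\cdot m(B_b)\ \leq\ \int_{B_b\times[0,1]}U(x,y)^2\,dx\,dy\ \leq\ \|U\|_2^2\ \ll\ p^3,\]
so $m(B_b)\ll p^3/b$. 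Invoking the hypothesis $b=p^{3/2-\Omega(1)}$ we get $m(B_b)\ll p^{3/2+\Omega(1)}$, hence $\int_{B_b\times B_b}U_1^2\ll p^{3+\Omega(1)}$; since $\log\frac1p=p^{-o(1)}$, also $\big(\log\frac1p\big)\int_{B_b\times B_b}U_1^2=o(p^{3+\Omega(1)})$. Combining the two displays and recalling $\|U_1^b\|_2^2=p^3c_1$ from Setup \ref{K24extendedsetup},
\[I_p(p+U_1)\ \geq\ (2-o(1))\Big(p^3\log\tfrac1p\Big)c_1\ -\ o(p^{3+\Omega(1)}),\]
which is exactly the statement of the lemma.

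I do not expect any real difficulty here: the argument is a direct transcription of the $K_{2,3}$ computation from Section \ref{K23section}. The two points that need care are (i) checking that $U_1$ stays bounded away from $p$, which is exactly why the threshold $p^{7/8}$ was built into Definition \ref{Uidef}, and which is what lets us use the quadratic bound of Lemma \ref{momentlemma} (exponent $c=1$) rather than the lossier Lemma \ref{firstmomentlemma}; and (ii) the bookkeeping of the error term, where the quantitative lower bound $b\geq p^{3/2-\Omega(1)}$ is precisely what forces $B_b$ small enough that the overlap $B_b\times B_b$ costs only $o(p^{3+\Omega(1)})$ rather than a full term of size $p^3\log\frac1p$.
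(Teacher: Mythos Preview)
Your proof is correct and follows essentially the same approach as the paper: apply Lemma \ref{momentlemma} with $c=1$ pointwise (using the $p^{7/8}$ threshold built into $U_1$), integrate, split off the hub via inclusion--exclusion and symmetry, bound the overlap $\int_{B_b^2}U_1^2\leq m(B_b)^2$, and control $m(B_b)\ll p^3/b$ via the definition of $B_b$ and (7) of Lemma \ref{K24consequenceslemma}. The bookkeeping of the error term and the role of the hypothesis $b=p^{3/2-\Omega(1)}$ are handled identically.
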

\begin{proof}
By Lemma \ref{momentlemma} with $c=1$, since $U_1$ takes no values smaller than $p^{\frac{7}{8}}$, $I_p(p+U_1(x,y))\geq (1-o(1))U_1(x,y)^2\log\frac{1}{p}$ for all $x,y\in [0,1]$. Integrating,
\begin{align}
\nonumber I_p(p+U_1) & \geq (1-o(1))\log\frac{1}{p}\displaystyle\int_{[0,1]^2} U_1(x,y)^2 dx dy \\ & \nonumber\geq (1-o(1))\log\frac{1}{p}\left(\displaystyle\int_{B_b\times [0,1]} U_1(x,y)^2 dx dy+\displaystyle\int_{[0,1]\times B_b} U_1(x,y)^2 dx dy-\displaystyle\int_{B_b^2} U_1(x,y)^2 dx dy\right) \\ & \nonumber\geq (1-o(1))\log\frac{1}{p}\left(2\|U_1^b\|_2^2-m(B_b)^2\right) \\ & \label{finallyfactorof2} =(2-o(1))p^3\log\frac{1}{p}c_1-(1+o(1))\log\frac{1}{p}m(B_b)^2,
\end{align}
by the definition of $c_1$.

Now, by the definition of $B_b$ and (7) of Lemma \ref{K24consequenceslemma},
\begin{align*}
bm(B_b) & \leq\displaystyle\int_{B_b\times [0,1]}U(x,y)^2 dx dy \\ & \leq \|U\|_2^2 \\ & \ll p^3.
\end{align*}Thus $m(B_b)\ll\frac{p^3}{b}$, so substituting into (\ref{finallyfactorof2}) yields
\[I_p(p+U_1)\geq (2-o(1))p^3\log\frac{1}{p}c_1-o\left(\frac{p^6\log\frac{1}{p}}{b^2}\right).\]
Substituting $b=p^{\frac{3}{2}-\Omega(1)}$ and noting that $\log\frac{1}{p}=p^{-o(1)}$, the lemma follows.
\end{proof}
We combine our bounds into the following bound on the entropy of $W$.

\begin{lemma}\label{optimizationproblemprelemma}
Assume Setup \ref{K24extendedsetup}. If $b=p^{\frac{5}{4}}$, then
\[I_p(W)\geq (1-o(1))\left(\left(2p^3\log\frac{1}{p}\right)c_1+p^2I_p(p+pc_2)\right).\]
\end{lemma}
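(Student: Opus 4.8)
The plan is to split the entropy of $W$ according to the decomposition $U=U_1+U_2$ and then feed the two pieces to Lemmas \ref{b1boundlemma} and \ref{b2boundcor}, using the constraint $c_1^2c_2\geq\delta-o(1)$ only at the very end to absorb an error term.

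First I would record the key structural observation: by Definition \ref{Uidef} the functions $U_1$ and $U_2$ have disjoint supports, i.e.\ at every point $(x,y)$ at most one of $U_1(x,y),U_2(x,y)$ is nonzero, and $U_1+U_2=U$. Since $I_p(p)=0$, this gives the pointwise identity $I_p(W(x,y))=I_p(p+U_1(x,y))+I_p(p+U_2(x,y))$ — if $U_1(x,y)\neq 0$ then $U_2(x,y)=0$, so $W(x,y)=p+U_1(x,y)$ and both sides equal $I_p(W(x,y))$, and symmetrically if $U_2(x,y)\neq 0$, and trivially if both vanish. Integrating over $[0,1]^2$ yields
\[
I_p(W)=I_p(p+U_1)+I_p(p+U_2),
\]
which is the bridge between the target and the two available bounds.

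Next I would apply those bounds. Since $b=p^{5/4}=p^{3/2-1/4}$ is of the form $p^{3/2-\Omega(1)}$, Lemma \ref{b1boundlemma} applies and gives $I_p(p+U_1)\geq (2-o(1))\left(p^3\log\frac{1}{p}\right)c_1-o(p^{3+\Omega(1)})$, while Corollary \ref{b2boundcor} gives $I_p(p+U_2)\geq (1-o(1))p^2I_p(p+pc_2)$ with no restriction on $b$. Adding these to the identity above,
\[
I_p(W)\geq (2-o(1))\left(p^3\log\frac{1}{p}\right)c_1+(1-o(1))p^2I_p(p+pc_2)-o(p^{3+\Omega(1)}).
\]
It then remains to show that the additive error $o(p^{3+\Omega(1)})$ is negligible compared to the two nonnegative main terms. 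To see this, note $b=p^{5/4}\ll p^{9/8}$, so the restated Corollary \ref{highlowcor2} gives $c_1^2c_2\geq\delta-o(1)$; also $\|U\|_2\ll p^{3/2}$ by (7) of Lemma \ref{K24consequenceslemma}, hence $c_1=p^{-3}\|U_1^b\|_2^2\leq p^{-3}\|U\|_2^2=o(1)$, which forces $c_2\geq(\delta-o(1))c_1^{-2}\to\infty$. In particular $pc_2\gg p$, so Lemma \ref{firstmomentlemma} (with $x=pc_2$, $\epsilon=1$) gives $I_p(p+pc_2)=\Omega(pc_2)=\omega(p)$ and therefore $p^2I_p(p+pc_2)=\omega(p^3)$. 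Since $p^{3+\Omega(1)}=o(p^3)$, the error is $o\!\left(p^2I_p(p+pc_2)\right)$. Writing $A=\left(p^3\log\frac{1}{p}\right)c_1\geq 0$ and $B=p^2I_p(p+pc_2)>0$, the last display reads $I_p(W)\geq 2A+B-o(A)-o(B)=(2A+B)(1-o(1))\geq(1-o(1))(2A+B)$, which is exactly the claim.

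The only genuine obstacle is this final step: the additive error from Lemma \ref{b1boundlemma} is not automatically smaller than the target quantity, and making it negligible requires precisely the lower bound $c_1^2c_2\geq\delta-o(1)$ (through $c_1=o(1)$) to guarantee that the $p^2I_p(p+pc_2)$ term has order strictly larger than $p^3$. Everything else — the entropy splitting and the application of the two prior bounds — is bookkeeping.
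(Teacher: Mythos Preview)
Your proof is correct and follows essentially the same approach as the paper: split $I_p(W)=I_p(p+U_1)+I_p(p+U_2)$ via the disjoint supports, apply Lemma~\ref{b1boundlemma} and Corollary~\ref{b2boundcor}, then absorb the $o(p^{3+\Omega(1)})$ error using Corollary~\ref{highlowcor2}. The only difference is cosmetic: the paper argues that $c_1^2c_2\geq\delta-o(1)$ forces either $c_1=\Omega(1)$ or $c_2=\Omega(1)$, and in either case one of the main terms is $p^{3+o(1)}$; you observe directly (via (7) of Lemma~\ref{K24consequenceslemma}) that $c_1=o(1)$ always, so only the second case occurs and $c_2\to\infty$, which is a slightly cleaner way to reach the same conclusion.
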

\begin{proof}
Notice that $I_p(W)=I_p(p+U_1)+I_p(p+U_2)$, since at each point either $I_p(p+U_1)$ or $I_p(p+U_2)$ is $0$ and the other is $I_p(W)$. Thus since $b=p^{\frac{5}{4}}=p^{\frac{3}{2}-\Omega(1)}$, Corollary \ref{b2boundcor} and Lemma \ref{b1boundlemma} together imply that
\[I_p(W)\geq (1-o(1))\left(\left(2p^3\log\frac{1}{p}\right)c_1+p^2I_p(p+pc_2)\right)-o\left(p^{3+\Omega(1)}\right).\]

By Corollary \ref{highlowcor2}, since $b=p^{\frac{5}{4}}\ll p^{\frac{9}{8}}$, $c_1^2c_2\geq\delta-o(1)$. Thus either $c_1=\Omega(1)$ or $c_2=\Omega(1)$. In the first case, $\left(2p^3\log\frac{1}{p}\right)c_1\geq p^{3+o(1)}$, and in the latter, $p^2I_p(p+pc_2)\geq p^{3+o(1)}$ by Lemma \ref{entropyapproxlemma}. Thus the $o(p^{3+\Omega(1)})$ term is negligible and we have proven the lemma.
\end{proof}
Since we chose $W$ in Setup \ref{K24extendedsetup} such that
\[I_p(W)\leq (1+o(1))\displaystyle\inf_{\substack{\Hom(K_0,W')\geq (1+\delta)p^9 \\ W'\text{ }p\text{-regular}}}I_p(W'),\]
to show Theorem \ref{K24correctconstant} it suffices to show that under the setup,
\[I_p(W)\geq (1-o(1))(18\delta)^{\frac{1}{3}}p^3\left(\log\frac{1}{p}\right)^{\frac{2}{3}}\left(\log\log\frac{1}{p}\right)^{\frac{1}{3}}.\]
We have, taking $b=p^{\frac{5}{4}}$ and using Corollary \ref{highlowcor2} and Lemma \ref{optimizationproblemprelemma}, constructed $c_1$ and $c_2$ such that $c_1^2c_2\geq\delta-o(1)$ and $I_p(W)\geq (1-o(1))\left(\left(2p^3\log\frac{1}{p}\right)c_1+p^2I_p(p+pc_2)\right)$. Thus (mulitplying $c_1$ by a $1+o(1)$ factor so that $c_1^2c_2\geq\delta$), to finish the proof of the lower bound of Theorem \ref{K24correctconstant}, it only remains to show the following lemma.
\begin{lemma}\label{optimizationproblemlemma}
Let $\delta>0$ be a constant and let $p\to 0$. Then
\[\displaystyle\min_{c_1^2c_2\geq\delta}\left(\left(2p^3\log\frac{1}{p}\right)c_1+p^2I_p(p+pc_2)\right)=(1-o(1))(18\delta)^{\frac{1}{3}}p^3\left(\log\frac{1}{p}\right)^{\frac{2}{3}}\left(\log\log\frac{1}{p}\right)^{\frac{1}{3}}.\]
\end{lemma}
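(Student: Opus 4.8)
The plan is to turn this two-variable minimization into a one-variable calculus problem. Write $L:=\log\frac1p$, so the target quantity is $T:=(18\delta)^{\frac13}p^3 L^{\frac23}(\log L)^{\frac13}$, and the objective is $\Psi(c_1,c_2):=\big(2p^3\log\frac1p\big)c_1+p^2 I_p(p+pc_2)=2p^3Lc_1+p^2 I_p(p+pc_2)$. Since $I_p$ is convex with $I_p(p)=0$, it is nondecreasing on $[p,1]$, so for fixed $c_1$ the objective is minimized over the feasible set $\{c_1^2c_2\ge\delta\}$ at $c_2=\delta/c_1^2$; hence it suffices to study $\psi(c_1):=2p^3Lc_1+p^2 I_p\!\big(p+p\delta/c_1^2\big)$ over $c_1>0$. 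Throughout I will invoke Lemma~\ref{entropyapproxlemma}: when $c_2\to\infty$ it gives $I_p(p+pc_2)=(1+o(1))\,pc_2\log c_2$, and one checks from the cited computation that this holds uniformly as long as $c_2\to\infty$.

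\emph{Upper bound.} I would evaluate $\psi$ at $c_1^\ast=(2\delta/3)^{\frac13}L^{-\frac13}(\log L)^{\frac13}$, so that $c_2^\ast=\delta/(c_1^\ast)^2=(9\delta/4)^{\frac13}L^{\frac23}(\log L)^{-\frac23}\to\infty$ and $\log c_2^\ast=(\tfrac23+o(1))\log L$. Then $p^2 I_p(p+pc_2^\ast)=(1+o(1))p^3c_2^\ast\log c_2^\ast=(1+o(1))\tfrac23(9\delta/4)^{\frac13}p^3L^{\frac23}(\log L)^{\frac13}=(1+o(1))(2\delta/3)^{\frac13}p^3L^{\frac23}(\log L)^{\frac13}$, using $\tfrac23(9\delta/4)^{\frac13}=(2\delta/3)^{\frac13}$, while $2p^3Lc_1^\ast=2(2\delta/3)^{\frac13}p^3L^{\frac23}(\log L)^{\frac13}$. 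Adding and using $3(2\delta/3)^{\frac13}=(18\delta)^{\frac13}$ gives $\psi(c_1^\ast)=(1+o(1))T$, so the minimum is at most $(1+o(1))T$.

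\emph{Lower bound.} Fix any feasible $(c_1,c_2)$ and split on $c_1^\circ:=(9\delta/4)^{\frac13}L^{-\frac13}(\log L)^{\frac13}$, chosen so that $2p^3Lc_1^\circ=T$ (here $2(9\delta/4)^{\frac13}=(18\delta)^{\frac13}$). If $c_1\ge c_1^\circ$ then already $\Psi\ge 2p^3Lc_1\ge T$. If $c_1<c_1^\circ$ then $c_2\ge\delta/c_1^2>c''L^{\frac23}(\log L)^{-\frac23}$ with the absolute constant $c''=\delta^{\frac13}(4/9)^{\frac23}>0$; in particular $c_2\to\infty$ uniformly over this range, so $p^2 I_p(p+pc_2)\ge(1-o(1))p^3c_2\log c_2$ by Lemma~\ref{entropyapproxlemma}, and since also $\log c_2\ge\log(\delta/c_1^2)\ge(\tfrac23-o(1))\log L$ uniformly (again because $c_1<c_1^\circ$), combining with $c_2\ge\delta/c_1^2$ yields
\[\Psi\ \ge\ 2p^3Lc_1+(1-o(1))\tfrac{2\delta}{3}\,\frac{p^3\log L}{c_1^2}.\]
The right-hand side has the form $Bc_1+Ac_1^{-2}$ with $B=2p^3L$ and $A=(1-o(1))\tfrac{2\delta}{3}p^3\log L$, whose minimum over $c_1>0$ equals $3\cdot2^{-2/3}A^{1/3}B^{2/3}=(1-o(1))\,3(2\delta/3)^{\frac13}p^3L^{\frac23}(\log L)^{\frac13}=(1-o(1))T$. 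Combining the two cases gives $\Psi\ge(1-o(1))T$ for all feasible $(c_1,c_2)$, which together with the upper bound proves the lemma.

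\emph{Main obstacle.} The only delicate point is the uniformity of the $o(1)$ errors produced by Lemma~\ref{entropyapproxlemma}: the asymptotics $I_p(p+pc_2)=(1+o(1))pc_2\log c_2$ and $\log c_2=(\tfrac23+o(1))\log L$ are available only when $c_2$ (respectively $\delta/c_1^2$) genuinely tends to infinity, while $c_1$ ranges over a continuum. This is resolved exactly as above: the case split $c_1<c_1^\circ$ forces $c_2\ge c''L^{\frac23}(\log L)^{-\frac23}$ with $c''$ an absolute constant, so all error terms depend on $p$ only. (Values of $c_2$ with $p+pc_2>1$, where $I_p$ is not finite, give objective $\gg T$ and never occur at or near the minimum, so they may be discarded.) Everything else is elementary single-variable optimization.
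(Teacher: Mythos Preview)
Your proof is correct and follows essentially the same strategy as the paper: restrict attention to the regime $c_2\to\infty$, apply Lemma~\ref{entropyapproxlemma} to replace $I_p(p+pc_2)$ by $(1+o(1))pc_2\log c_2$, and then optimize. The paper carries out the final optimization by normalizing to variables $d_1,d_2=\Theta(1)$ and applying AM--GM to $2d_1+\tfrac23 d_2$ under $d_1^2d_2\ge\delta$, whereas you use an explicit threshold $c_1^\circ$ and one-variable calculus on $Bc_1+Ac_1^{-2}$; these are equivalent computations, and your case split has the mild advantage of making the uniformity of the $o(1)$ terms explicit rather than reasoning about ``any $c_1,c_2$ that optimize.''
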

\begin{remark}
Notice how the optimization problem in Lemma \ref{optimizationproblemlemma} tracks with the graphon in Figure \ref{K24graphonfigure}, where we set $a(p)=p^2c_1$ and $b(p)=p+pc_2$.
\end{remark}
\subsection{Optimizing}
We finish by proving Lemma \ref{optimizationproblemlemma}. Fix $\delta$ and let $p\to 0$.

If we take $c_1=\Theta\left(\left(\log\log\frac{1}{p}\right)^{-1}\right)$ and $c_2=\Theta\left(\left(\log\log\frac{1}{p}\right)^2\right)$ such that $c_1^2c_2\geq\delta$, then clearly $\left(2p^3\log\frac{1}{p}\right)c_1=o\left(p^3\log\frac{1}{p}\right)$. Furthermore, since $c_2\gg 1$, $I_p(p+pc_2)=\Theta(pc_2\log c_2)$ by Lemma \ref{entropyapproxlemma}, so $p^2I_p(p+pc_2)=\Theta(p^3c_2\log c_2)\ll p^3\log\frac{1}{p}$ as well for our choice of $c_2$. Thus our minimum
\begin{equation}\label{optimizeequation}
\displaystyle\min_{c_1^2c_2\geq\delta}\left(\left(2p^3\log\frac{1}{p}\right)c_1+p^2I_p(p+pc_2)\right)
\end{equation}
is $o\left(p^3\log\frac{1}{p}\right)$.

So for any $c_1,c_2$ that optimize (\ref{optimizeequation}), we must have $c_1\ll 1$, so $c_2\gg 1$. Thus we may apply Lemma \ref{entropyapproxlemma} to show that
\begin{equation}\label{optimizeequation2}
\displaystyle\min_{c_1^2c_2\geq\delta}\left(\left(2p^3\log\frac{1}{p}\right)c_1+p^2I_p(p+pc_2)\right)\geq (1-o(1))\displaystyle\min_{c_1^2c_2\geq\delta}\left(\left(2p^3\log\frac{1}{p}\right)c_1+p^3c_2\log c_2\right).
\end{equation}
We would like to normalize $c_1$ and $c_2$ to be constants. To that end, let
\[c_1=\left(\frac{\log\frac{1}{p}}{\log\log\frac{1}{p}}\right)^{-\frac{1}{3}}d_1\]
and
\[c_2=\left(\frac{\log\frac{1}{p}}{\log\log\frac{1}{p}}\right)^{\frac{2}{3}}d_2\]
Then the right side of (\ref{optimizeequation2}) becomes
\begin{align}
& \nonumber(1-o(1))\displaystyle\min_{d_1^2d_2\geq\delta}\left(2p^3\left(\log\frac{1}{p}\right)^{\frac{2}{3}}\left(\log\log\frac{1}{p}\right)^{\frac{1}{3}}d_1\right. \\ & \nonumber \left.+p^3\left(\log\frac{1}{p}\right)^{\frac{2}{3}}\left(\log\log\frac{1}{p}\right)^{-\frac{2}{3}}\left(\frac{2}{3}\log\log\frac{1}{p}-\frac{2}{3}\log\log\log\frac{1}{p}+\log d_2\right)d_2\right) \\ & \label{optimizeequation3}=(1-o(1))p^3\left(\log\frac{1}{p}\right)^{\frac{2}{3}}\left(\log\log\frac{1}{p}\right)^{\frac{1}{3}}\displaystyle\min_{d_1^2d_2\geq\delta}\left(2d_1+\frac{2}{3}d_2+d_2\frac{\log d_2}{\log\log\frac{1}{p}}\right),
\end{align}
as the $\log\log\log\frac{1}{p}$ term is negligible.

Since $d_2\log d_2\geq -\frac{1}{e}$ for all $d_2\in (0,\infty]$ and $2d_1+\frac{2}{3}d_2=\Omega(1)$ whenever $d_1^2d_2\geq\delta$, we may drop the $d_2\frac{d_2}{\log\log\frac{1}{p}}$ term altogether, so (\ref{optimizeequation3}) is lower bounded by
\[(1-o(1))p^3\left(\log\frac{1}{p}\right)^{\frac{2}{3}}\left(\log\log\frac{1}{p}\right)^{\frac{1}{3}}\displaystyle\min_{d_1^2d_2\geq\delta}\left(2d_1+\frac{2}{3}d_2\right).\]
Thus to finish the proof of Lemma \ref{optimizationproblemlemma}, it suffices to show that
\begin{equation}\label{amgmequation}
\displaystyle\min_{d_1^2d_2\geq\delta}\left(2d_1+\frac{2}{3}d_2\right)=(18\delta)^{\frac{1}{3}}.
\end{equation}
But $2d_1+\frac{2}{3}d_2=\frac{3d_1+3d_1+2d_2}{3}\geq (18d_1^2d_2)^{\frac{1}{3}}\geq (18\delta)^{\frac{1}{3}}$ by the AM-GM inequality, and when $d_1=\left(\frac{2\delta}{3}\right)^{\frac{1}{3}}$ and $d_2=\left(\frac{9\delta}{4}\right)^{\frac{1}{3}}$ we obtain
\[2d_1+\frac{2}{3}d_2=2\left(\frac{2\delta}{3}\right)^{\frac{1}{3}}+\left(\frac{2\delta}{3}\right)^{\frac{1}{3}}=3\left(\frac{2\delta}{3}\right)^{\frac{1}{3}}=(18\delta)^{\frac{1}{3}}.\]
So (\ref{amgmequation}) holds, and we have proven Lemma \ref{optimizationproblemlemma} and thus the lower bound of Theorem \ref{K24correctconstant} as well.
\section{Proof of Theorem \ref{ldpupperboundthm}}\label{ldpupperboundsection}
We modify the proof from Section 2.2 of \cite{BD}, itself a slight modification of the proof of Theorem 1.1 from \cite{CD}. The main reason why the proof does not apply verbatim is that $a_{n,p}:=p^{\Delta(K)}n^2$ is no longer the order of $\Phi_n^d(K,t)$. Thus we must verify that in the use of Corollary 2.2 of \cite{CD}, we may modify our construction to have $\exp\left(o\left(\Phi_n^d(K,t)\right)\right)$ convex sets and exceptional set $\mathcal{E}$ satisfying $\mu_p(\mathcal{E})\leq\exp\left(-(1+o(1))\Phi_n^d(K,t)\right)$.

In applying Theorem 3.4 of \cite{CD} to do this, we may take the same choice $\delta_0=\frac{\epsilon_0}{4C_{\star}}p^{\Delta_{\star}(K)}$. Choosing $L,\Delta$ so that $Lp^{\Delta}=\frac{\Phi_n^d(K,t)}{n^2\log\frac{1}{p}}$ makes the exceptional set $\mathcal{E}_0$ sufficiently small, and taking $k=\left\lceil L(p^{\Delta}/\delta_0^2)\log\frac{1}{p}\right\rceil$, we have that $k=\frac{\Phi_n^d(K,t)}{n^2\delta_0^2}+O(1)$, so
\[\log N=O\left(\left(\frac{\Phi_n^d(K,t)}{n\delta_0^2}+n\right)\log\frac{3n}{\delta_0}\right)=O\left(\frac{\Phi_n^d(K,t)\log n}{n\delta_0^2}+n\log n\right).\]
(We use the letter $L$ instead of $K$ as in \cite{CD} as we have already used $K$ as the name of our graph.) Thus we have few enough convex sets as long as $\delta_0^2\gg n^{-1}\log n$ and $\Phi_n^d(K,t)\gg n\log n$. The former is true by our bound on $p$. To prove the latter, note that$\Phi_n^d(K,t)=\Omega(n^2p^{\delta(K)}\log\frac{1}{p})$ by Lemma 6.1 of \cite{CD} and the fact that our variational problem takes a minimum over a smaller set (over only regular weighted graphs instead of graphs) and thus must have a larger solution. Since $\delta(K)< 2\Delta_{\star}(K)$ by definition (take an edge adjacent to a vertex of maximum degree) and $p\ll 1$, $\phi_n^d(K,t)\gg n^2p^{2\Delta_{\star}(K)}\log\frac{1}{p}\gg n\log n$ by our bound on $p$.

The only remaining thing that must be verified from the proof in \cite{CD} is that the induction preserves the exceptional set (6.16) being sufficiently small. We have shown that $\mathcal{E}_0$ is sufficiently small. Paralleling the argument up to (6.23), it suffices to show that for all subgraphs $F\subseteq K$ and for all $t>1$, there exists $L>1$ such that
\[\Phi_n^d(F,L)\geq\Phi_n^d(K,t).\]
Take $L>2^{e(K)}t$ and suppose we have a weighted graph $X\in\mathcal{X}_n^d$ with $\Hom(F,X)\geq Kn^{v(F)}p^{e(F)}$.

Note that $\frac{X+p}{2}\in\mathcal{X}_n^d$, where by $p$ we mean the weighted graph with weight $p$ on each edge. But
\[\Hom\left(K,\frac{X+p}{2}\right)\geq\frac{(1-o(1))}{2^{e(K)}}\Hom(F,X)n^{e(K)-e(F)}p^{e(K)-e(F)},\]
as after choosing the positions of the vertices and edges in $F$, there are $(1-o(1))n$ ways to choose the positions of the remaining vertices of $K$ so that they do not coincide with the previous choices, and then each edge not in $F$ has weight at least $\frac{p}{2}$. Therefore, $\Hom\left(K,\frac{X+p}{2}\right)\geq tn^{v(K)}p^{e(K)}$. But $\frac{X+p}{2}$ has at most the same entropy as $X$, so we have proven that $\Phi_n^d(F,L)\geq\Phi_n^d(K,t)$, finishing the proof of Theorem \ref{ldpupperboundthm}.
\section{Introduction to Theorem \ref{ldplowerboundthm}}\label{ldplowerboundsection1}
This section and the next four will together prove Theorem \ref{ldplowerboundthm}.

A \emph{block graphon} is a graphon $W:[0,1]^2\to [0,1]$ such that there exists $k\in\mathbb{Z}^+$ and a partition $[0,1]=S_1\cup\cdots\cup S_k$ into intervals $S_i$ such that for all $i,j\in [k]$, $W$ is constant on $S_i\times S_j$.

For convenience, we now recall the statement of Theorem \ref{ldplowerboundthm}.
\begin{ldplowerboundthm}
Suppose $n^{-1}\log\log n\ll p\ll 1$, and let $W=W(n)$ be a block graphon on some constant $k=k(n)=O(1)$ number of blocks satisfying Conditions \ref{lowerboundconditions}. Then
\[-\log\left(\Pr\left[\Hom(K,G_n^d)\geq (1-o(1))\Hom(K,W)n^{v(K)}\right]\right)\leq \left(\frac{1}{2}+o(1)\right)I_p(W)n^2.\]
\end{ldplowerboundthm}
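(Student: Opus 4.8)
The plan is to prove Theorem \ref{ldplowerboundthm} by a planting/change-of-measure argument in the spirit of Section 2.3 of \cite{BD}, adapted to handle block graphons $W$ whose entries lie strictly between $p$ and $1$. Write the block partition as $[0,1]=S_1\cup\cdots\cup S_k$, let $W_{ij}$ be the value of $W$ on $S_i\times S_j$, and note that $p$-regularity of $W$ says $\sum_j W_{ij}|S_j|=p$ for every $i$. On the vertex side we split $[n]=V_1\cup\cdots\cup V_k$ with $|V_i|=(1+o(1))|S_i|n$, and the auxiliary event $\mathcal{A}_n^{\deg}$ (defined in Section \ref{ldplowerboundsection1}) will be, roughly, that for every vertex $v\in V_i$ and every $j$ the number of neighbours of $v$ in $V_j$ is $(1+o(1))W_{ij}|V_j|$. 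This is a \emph{local}, per-vertex version of the ``macroscopic'' edge-profile condition $e(V_i,V_j)\approx W_{ij}|V_i||V_j|$; its local nature is precisely what will later deliver concentration of the homomorphism count, and is the reason $\mathcal{A}_n^{\deg}$ cannot be discarded after changing measure. First I would record the elementary consequences of $\mathcal{A}_n^{\deg}$ and of Conditions \ref{lowerboundconditions} (that $\mathcal{A}_n^{\deg}$ is compatible with $d$-regularity, that no block is too small, that $W$ is genuinely the expected density profile between blocks).

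Second I would lower-bound $\Pr[\mathcal{A}_n^{\deg}]$. Working in the configuration model on $dn=pn^2$ half-edges — and afterwards paying the $\exp(o(n))$ correction for simplicity, which is negligible since $p\gg n^{-1}\log\log n$ forces $I_p(W)n^2\gg n\log n$ — one builds a $d$-regular graph with the prescribed block-degree profile by choosing, for each vertex, how its $d$ half-edges split among blocks (a product of multinomials contributing only $O(n\log n)$ to the log, hence negligible) and then matching half-edges block by block. Counting these configurations against the total $(dn-1)!!$ and applying Stirling, the logarithm of the ratio is $-(\tfrac12+o(1))\sum_{i,j}|S_i||S_j|n^2\,I_p(W_{ij})$, which because $W$ is $p$-regular is exactly $-(\tfrac12+o(1))I_p(W)n^2$. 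Hence
\[\Pr[\mathcal{A}_n^{\deg}]\ \ge\ \exp\!\big(-(\tfrac12+o(1))\,I_p(W)\,n^2\big),\]
the ``$\mathcal{B}$ is fairly probable'' half of the Bhattacharya--Dembo scheme (equivalently: construct a tilted measure $\mathbb{P}_\star$ supported near $\mathcal{A}_n^{\deg}$ whose relative entropy against uniform is $(\tfrac12+o(1))I_p(W)n^2$).

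Third, and this is the novel part, I would bound $\Pr[\mathcal{A}_n^{\deg}\cap\neg\mathcal{H}]$, where $\mathcal{H}=\{\Hom(K,G_n^d)\ge(1-o(1))\Hom(K,W)n^{v(K)}\}$. Because the relevant graphon takes values well below $1$, one cannot change measure to $\mathbb{P}_\star$ and then forget $\mathcal{A}_n^{\deg}$: $\mathbb{P}_\star(\neg\mathcal{H})$ is genuinely not $o(1)$. Instead I would keep $\mathcal{A}_n^{\deg}$ and show directly that conditioning on it forces many homomorphisms with overwhelming probability. Conditioned on $\mathcal{A}_n^{\deg}$, an edge slot between $V_i$ and $V_j$ is present with conditional probability $(1+o(1))W_{ij}$, so summing over the block-assignments of $V(K)$ gives $\mathbb{E}[\Hom(K,G_n^d)\mid\mathcal{A}_n^{\deg}]=(1+o(1))\Hom(K,W)n^{v(K)}$; then, since altering a single adjacency (through a switching that remains inside $\mathcal{A}_n^{\deg}$, up to a further negligible event) perturbs $\Hom(K,G_n^d)$ by only $O(n^{v(K)-2})$, a bounded-difference/Azuma estimate along the revealed matching shows
\[\Pr\!\big[\,\Hom(K,G_n^d)<(1-o(1))\Hom(K,W)n^{v(K)}\ \big|\ \mathcal{A}_n^{\deg}\,\big]\ \longrightarrow\ 0,\]
indeed superpolynomially fast in the relevant range of $p$, so $\Pr[\mathcal{A}_n^{\deg}\cap\neg\mathcal{H}]=o(\Pr[\mathcal{A}_n^{\deg}])$. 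Combining with the previous step,
\[\Pr[\mathcal{H}]\ \ge\ \Pr[\mathcal{A}_n^{\deg}]-\Pr[\mathcal{A}_n^{\deg}\cap\neg\mathcal{H}]\ \ge\ (1-o(1))\Pr[\mathcal{A}_n^{\deg}]\ \ge\ \exp\!\big(-(\tfrac12+o(1))I_p(W)n^2\big),\]
which is the claimed bound.

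The main obstacle I anticipate is the third step: making the conditional concentration rigorous in the $d$-regular model, where edges are dependent and the clean bounded-difference structure has to be recovered via switchings that stay inside the degree event — exactly the point flagged in the introduction, that $\mathcal{A}_n^{\deg}$ must be used a second time before being dropped (as opposed to (2.46) of \cite{BD}). A secondary nuisance is bookkeeping all the $o(1)$ slacks — block sizes, the per-vertex degree windows, and the simplicity correction — so that each is dominated by $I_p(W)n^2$; this is where the hypotheses $n^{-1}\log\log n\ll p$ and $k=O(1)$ are used.
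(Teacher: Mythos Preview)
Your high-level two-term decomposition matches the paper's, but both halves diverge substantially in method, and the first half contains a genuine gap.

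\textbf{The configuration-model step does not survive the simplicity correction.} You assert that the correction for simplicity is $\exp(o(n))$ and hence $o(I_p(W)n^2)$. In the regime of interest one has $d=pn$ with $p=n^{-o(1)}$, so $d^2=p^2n^2$; the configuration model is simple with probability $\exp(-\Theta(d^2))$, and this is \emph{larger} than the target entropy: since $\gamma(K)>0$ we have $I_p(W)=\Theta(p^{2+\gamma}\log(1/p))=o(p^2)$, so $p^2n^2\gg I_p(W)n^2$. The simplicity penalty therefore swamps the main term and your lower bound on $\Pr[\mathcal{A}_n^{\deg}]$ collapses. The paper never passes through the configuration model; instead it compares $\mathbb{P}_{G_n^d}(\mathcal{B}^S)$ to $\mathbb{P}_{G_n^d}(\mathcal{B}^{S\cup e})$ directly by a three-edge switching inside $V_k$ (Lemma~\ref{swappinglemma}), getting a ratio $(1+o(1))p$ up and $(1-o(1))(\log\log\tfrac1p)^{-2}p$ down, and then transfers to $\mathbb{P}_p$ and to $\mathbb{P}_\star$ by Radon--Nikodym (Lemmas~\ref{Gndvsplemma}--\ref{starlargelemma}). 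Making the swap ratio close to $p$ on both sides is exactly why the paper first strengthens condition~(2) to $m(S_k)\ge 1-p+p(\log\log\tfrac1p)^{-1}$ via Lemma~\ref{stronger2lemma}; your outline has no analogue of this step.

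\textbf{The second half is handled differently, and your description of $\mathcal{A}_n^{\deg}$ is not quite the paper's.} In the paper, $\mathcal{A}_n^{\deg}$ constrains only the \emph{important} blocks: exact edge counts $a_{ij}$ there, together with a one-sided degree cap $|N(v)\cap V_j|\le 2a_{ij}/|V_i|$. The paper does not attempt conditional concentration via switchings/Azuma. It changes measure to $\mathbb{P}_\star$ (Lemma~\ref{changeofmeasurelemma}), then further conditions on the \emph{exact} edge set $S\subseteq Imp_n$ (Lemma~\ref{easyproblemma}); under $\mathbb{P}_S$ the edges in unimportant blocks are independent Bernoulli$(p)$, so $\Hom_B(K,G_S)$ is a sum of indicators and Janson's inequality (Theorem~\ref{Jansoninequality}) gives the lower-tail bound (Proposition~\ref{lowertailboundprop}). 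The degree cap in $\mathcal{A}_n^{\deg}$ is used precisely once: to control the $\sum_{\phi\sim\phi'}$ term in Janson when two homomorphisms overlap in a somewhat-important block, via the bound $|N_S(u')\cap V_j|\le 2a_{ij}/|V_i|$. Conditions~(7), (8), (9), (10) are each invoked at specific points of that computation; an Azuma argument would not obviously isolate the role of ``somewhat important'' versus ``unimportant'' blocks in the same way, and the anticipated obstacle you flag (switchings that stay inside $\mathcal{A}_n^{\deg}$) is real and unaddressed.
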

To state the conditions of Theorem \ref{ldplowerboundthm} efficiently, we will need the following definition.
\begin{definition}
For a block graphon $W$ on the partition $S_1\cup\cdots S_k$, a $K$-block of $W$ is an assignment $B:V(K)\to [k]$, which we may think of as assigning to each vertex $v\in V(K)$ one of the intervals $S_i$, $1\leq i\leq k$.

If $B:V(K)\to [k]$ is a $K$-block of $W$, we define
\[\Hom_B(K,W)=\displaystyle\int_{\substack{x_v\in S_{B(v)} \\ \forall v\in V(K)}}\displaystyle\prod_{vw\in E(K)}W(x_v,x_w)\displaystyle\prod_{v\in V(K)}dx_v.\]

Call a $K$-block $B$ negligible if $\Hom_B(K,W)=o(p^{e(K)})$, and non-negligible otherwise.
\end{definition}
\begin{remark}
Notice that $\Hom_B(K,W)$ can be thought of as the contribution to $\Hom(K,W)$ from homomorphisms where the interval that each $v\in V(K)$ is sent into is determined by $B(v)$. Thus $\displaystyle\sum_{B}\Hom_B(K,W)=\Hom(K,W)$, where the sum ranges over all functions $B:V(K)\to [k]$.
\end{remark}
We are now able to state the conditions of Theorem \ref{ldplowerboundthm}.
\begin{conditions}\label{lowerboundconditions}
The following conditions are for $W$ a block graphon on the partition $[0,1]=S_1\cup\cdots\cup S_k$ into intervals, taking value $w_{ij}$ on $S_i\times S_j$.
\begin{enumerate}
\item (Regularity) $W$ is $p$-regular; that is, $\displaystyle\int_0^1 W(x_0,y)dy=p$ for all $x_0$.
\item (One Block Dominates in Size) $m(S_k)\geq 1-p$.
\item (Many Copies of $K$) $\Hom(K,W)\geq (1+\Omega(1))p^{e(K)}$.
\item (Bounded Entropy) $n^{-1}\log n\ll I_p(W)\ll p^{2e(K)}n$.
\item (Blocks Are Not Too Small) $m(S_i)\gg n^{-1}$ for all $i$.
\item (Dichotomy on Small Blocks) For all $i,j\leq k-1$, $w_{ij}\geq p$ and at least one of the following holds.
\begin{itemize}
\item $w_{ij}=p$
\item $m(S_i)m(S_j)w_{ij}\ll \left(\log\log\log\frac{1}{p}\right)^{-1}I_p(W)$.
\end{itemize}
Call a block $(i,j)$ \emph{important} if the latter case holds and \emph{unimportant} otherwise; that is, if $m(S_i)m(S_j)w_{ij}\not\ll\left(\log\log\log\frac{1}{p}\right)^{-1}I_p(W)$ or one of $i$ or $j$ equal $k$. Further call an important block $(i,j)$ \emph{somewhat important} if $w_{ij}<1$ or \emph{very important} if $w_{ij}=1$.
\item (Somewhat Important Blocks) Recall that a $K$-block $B$ is \emph{negligible} if $\Hom_B(K,W)=o(p^{e(K)})$, and \emph{non-negligible} otherwise. If $B$ is a non-negligible $K$-block, then all edges that $B$ sends into somewhat important blocks are disjoint (i.e. form a matching).
\item (Unimportant Blocks are Large) If $B$ is a non-negligible $K$-block and $(i,j)$ is an unimportant block containing the image of at least one edge under $B$, then $m(S_i)m(S_j)\geq (1+o(1))p^{-1}I_p(W)$ and $w_{ij}=p+o(p)$.
\item (High Degrees within Important Blocks) $w_{ij}m(S_i)\gg n^{-1}\log n$ for all $(i,j)$ important.
\item (Not Too Many Copies of $K$) $\Hom(K,W+p)=O(p^{e(K)}).$
\end{enumerate}
\end{conditions}
\begin{remark}
The upper bound $I_p(W)\ll p^{2e(K)}n$ of condition (4) is the strictest bound that we must deal with, in the sense that our ranges on $p$ in the main theorems come from having to satisfy this bound. Thus it is likely that one could improve our bounds on $p$ by proving a stronger version of Theorem \ref{ldplowerboundthm} that loosens this condition.
\end{remark}
For technical reasons related to the proof of Lemma \ref{swappinglemma} below, we would like to show that we can replace condition (2) with a stronger condition. We do so via the following lemma.
\begin{lemma}\label{stronger2lemma}
Let $W$ satisfy Conditions \ref{lowerboundconditions}. Then there is a block graphon $W'$ on the same number of parts as $W$ satisfying Conditions \ref{lowerboundconditions} such that
\begin{itemize}
\item $\Hom(K,W')\geq (1-o(1))\Hom(K,W)$
\item $I_p(W')\leq I_p(W)$
\item At least one interval of the block partition of $W'$ is of length at least $1-p+p\left(\log\log\frac{1}{p}\right)^{-1}$.
\end{itemize}
\end{lemma}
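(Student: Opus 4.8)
The plan is an explicit surgery on $W$: shrink every block except the dominant one by a common small factor, move the freed length into the dominant block, and restore $p$-regularity by a tiny correction to the weights incident to the dominant block. Let $S_k$ be the block given by condition (2), so $m(S_k)\ge 1-p$ and $\sum_{i<k}m(S_i)\le p$, and put $\rho:=1-2(\log\log\frac1p)^{-1}$. If $m(S_k)\ge 1-p+p(\log\log\frac1p)^{-1}$ already, take $W'=W$. Otherwise $1-m(S_k)=\sum_{i<k}m(S_i)\in\bigl(p-p(\log\log\frac1p)^{-1},\,p\bigr]$, and I would define $W'$ on $k$ intervals $S_1',\dots,S_k'$ with $m(S_i')=\rho\,m(S_i)$ for $i<k$ and $m(S_k')=m(S_k)+(1-\rho)\sum_{i<k}m(S_i)$; the displayed bound gives $m(S_k')\ge(1-p)+(1-\rho)\bigl(p-p(\log\log\frac1p)^{-1}\bigr)\ge 1-p+p(\log\log\frac1p)^{-1}$ for small $p$, which is the strengthened conclusion. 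For the entries: keep $W'=w_{ij}$ on $S_i'\times S_j'$ for $i,j<k$, and choose the entries $w_{ik}'$ (and then $w_{kk}'$) minimally to restore $p$-regularity. Since the row-sum over $x_0\in S_i$ ($i<k$) moves only by $(1-\rho)\sum_{\ell<k}m(S_\ell)(w_{ik}-w_{i\ell})=O\bigl(p(\log\log\frac1p)^{-1}\bigr)$, and likewise for the $S_k$ row, each correction has size $O\bigl(p(\log\log\frac1p)^{-1}\bigr)=o(p)$.

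The first two bullets are then routine. $p$-regularity holds by construction. For the homomorphism count, $W'$ is block-constant, so $\Hom_B(K,W')=\prod_v m(S_{B(v)}')\cdot\prod_e w_{B(e)}'$ for every $K$-block $B$; the measure product loses only a factor $\rho^{v(K)}=1-o(1)$, the weights inside small blocks are unchanged, and any weight $w_{ik}$ carrying the image of an edge of a non-negligible $B$ equals $p+o(p)$ by condition (8) and so stays $(1+o(1))$-close after an $o(p)$ correction; hence every non-negligible $B$ keeps $(1-o(1))$ of its count, the $O(1)$ negligible blocks stay $o(p^{e(K)})$, and $\Hom(K,W')\ge(1-o(1))\Hom(K,W)$. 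For $I_p(W')\le I_p(W)$ the key is structural: $p$-regularity together with $m(S_k)\ge1-p$ forces $w_{kk}\le p/m(S_k)=p(1+O(p))$ and $w_{ik}\le p(1+O(p))$ for every $i<k$, so enlarging $S_k$ and perturbing these entries by $o(p)$ changes the relevant terms $I_p(w)m(S_i)m(S_j)$ by only $O\bigl(p^2(\log\log\frac1p)^{-1}\bigr)$ (Lemma \ref{entropyapproxlemma} gives $I_p(p+x)=\Theta(x^2/p)$ for $|x|=O(p)$), while shrinking the small blocks multiplies every entropy term carrying a block index $<k$ by a factor $\le\rho<1$ and hence decreases $I_p$ by $\Theta\bigl((\log\log\frac1p)^{-1}E_{\mathrm{small}}\bigr)$, where $E_{\mathrm{small}}$ is the portion of $I_p(W)$ from such terms; and again by $p$-regularity $E_{\mathrm{small}}$ dominates the $O(p^2)$-scale increase, because if some $w_{ik}$ or $w_{kk}$ were bounded away from $p$ then $\sum_{i<k}w_{ik}m(S_i)=p-w_{kk}m(S_k)$ forces $E_{\mathrm{small}}$ to be of order $p\log\frac1p\gg p^2$. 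Hence the net change is negative.

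Finally, each of Conditions \ref{lowerboundconditions}(1)--(10) transfers to $W'$: (1) is built in; (2) is the strengthened third bullet; (3) and (10) follow from the homomorphism estimates; (4) holds since $I_p(W')=(1-o(1))I_p(W)$ and $m(S_i')=(1-o(1))m(S_i)$ preserve both bounds; (5) since $m(S_i')=\rho\,m(S_i)\gg n^{-1}$; and (6)--(9) since the negligible/non-negligible classification and the important/unimportant/somewhat important/very important dichotomies are defined through comparisons with slowly varying quantities ($\log\log\frac1p$, $\log\log\log\frac1p$, $I_p(W)$, $n^{-1}\log n$) that are insensitive to the $1+o(1)$ rescalings performed, and the block structure of $W'$ — which blocks exist and where each $K$-block sends its edges — is literally that of $W$. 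The main obstacle is obtaining the entropy inequality in the exact form $I_p(W')\le I_p(W)$ rather than $\le(1+o(1))I_p(W)$: this forces one to use $p$-regularity twice — to see that the dominant block and its incident weights never exceed $p(1+O(p))$, so enlarging it is cheap, and to see that whenever one of those weights is not $p+o(p)$ the small blocks must carry a correspondingly large slice of the entropy, which the shrinking recovers. A secondary, fiddlier point is that the $p$-regularity correction must be routed through weights incident to $S_k$ — whose multiplier $m(S_k)\approx1$ is largest — since correcting through a weight incident only to small blocks could cost far more entropy.
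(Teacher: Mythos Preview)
Your construction is identical to the paper's: shrink each $S_i$ for $i<k$ by a factor close to~$1$, enlarge $S_k$ accordingly, keep $w'_{ij}=w_{ij}$ for $i,j<k$, and choose the $w'_{ik}$ to restore $p$-regularity. The homomorphism estimate and the verification that Conditions~\ref{lowerboundconditions} transfer are likewise in the same spirit.

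The gap is in the entropy inequality. You bound the \emph{magnitude} of the change in the last-row/column terms by $O\bigl(p^2(\log\log\frac1p)^{-1}\bigr)$ and then argue that the savings from shrinking the small blocks, of order $(\log\log\frac1p)^{-1}E_{\mathrm{small}}$, dominates. But $E_{\mathrm{small}}$ is essentially $I_p(W)$ (cf.\ Lemma~\ref{importantblocklemma}), and in the graphons one actually applies this lemma to one has $I_p(W)=\Theta\bigl(p^{2+\gamma}\log\frac1p\bigr)\ll p^2$ whenever $\gamma>0$. So the comparison $E_{\mathrm{small}}\gtrsim p^2$ fails outright. Your fallback claim --- that if some $w_{ik}$ is bounded away from $p$ then row-$i$ regularity forces $E_{\mathrm{small}}$ to be of order $p\log\frac1p$ --- is pointed in the right direction but is not sharp enough as stated: you would need to match each such $i$ with a specific contribution to $\sum_{j<k}m(S_i)m(S_j)I_p(w_{ij})$ that dominates the change in the $(i,k)$ term, rather than compare totals.

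The paper bypasses the whole savings-versus-cost accounting. From $p$-regularity one computes
\[
m(S_k')(w'_{ik}-p)=\rho\,m(S_k)(w_{ik}-p),
\]
and then applies the convexity inequality $a\,I_p(p+x)\le I_p(p+ax)$ for $a\ge1$ (valid since $I_p(p)=0$ and $I_p$ is convex) with $a=m(S_k')/m(S_k)$ and $x=w'_{ik}-p$ to obtain $m(S_k')I_p(w'_{ik})\le m(S_k)I_p(w_{ik})$. Combined with $m(S_i')\le m(S_i)$ for $i<k$ (and an analogous computation for the $(k,k)$ block), this gives
\[
m(S_i')m(S_j')I_p(w'_{ij})\le m(S_i)m(S_j)I_p(w_{ij})\qquad\text{for every }(i,j),
\]
so $I_p(W')\le I_p(W)$ term by term, with no comparison needed. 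This monotonicity is the missing idea.
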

The proof of Lemma \ref{stronger2lemma} is quite technical and relatively unenlightening, and a reader looking to understand the broad strokes of the proof of Theorem \ref{ldplowerboundthm} may wish to skip it. The idea is that we slightly shrink the intervals $S_i$, $1\leq i\leq k-1$, and go through and check that all the desired conditions hold.
\begin{proof}[Proof of Lemma \ref{stronger2lemma}]
We will obtain $W'$ by shrinking all intervals $S_i$, $1\leq i\leq k-1$ by a factor of $1-\left(\log\log\frac{1}{p}\right)^{-1}$ to form intervals $S_i'$. Then take $S_k'=[0,1]\backslash (S_1'\cup\cdots\cup S_{k-1}')$.

Let $W'$ take value $w'_{ij}$ on $S_i'\times S_j'$, with $w'_{ij}=w_{ij}$ for $1\leq i,j\leq {k-1}$ and $w'_{ik}=w'_{ki}$ is the unique value such that
\[\displaystyle\sum_{i=1}^k w'_{ij}S_i'=p\]
for all $j\in [k]$. Notice that this is equivalent to (1) of Conditions \ref{lowerboundconditions} holding.

We now prove the three bullet points of the lemma, after which we will proceed to showing that $W'$ satisfies Conditions \ref{lowerboundconditions}.
\setcounter{claimcounter}{0}
\begin{claim}
$m(S_k)\geq 1-p+p\left(\log\log\frac{1}{p}\right)^{-1}$
\end{claim}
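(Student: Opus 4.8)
The plan is simply to unwind the construction of $W'$ and invoke Condition (2) of Conditions \ref{lowerboundconditions}. By the definition of the new intervals, for each $i\in[k-1]$ we have $m(S_i')=\left(1-\left(\log\log\frac{1}{p}\right)^{-1}\right)m(S_i)$ (which makes sense for $p$ small, since then $\log\log\frac{1}{p}>1$ so the scaling factor lies in $(0,1)$). Summing over $i\in[k-1]$,
\[
\sum_{i=1}^{k-1}m(S_i')=\left(1-\left(\log\log\frac{1}{p}\right)^{-1}\right)\sum_{i=1}^{k-1}m(S_i).
\]
By Condition (2), $m(S_k)\geq 1-p$, so $\sum_{i=1}^{k-1}m(S_i)=1-m(S_k)\leq p$; substituting gives $\sum_{i=1}^{k-1}m(S_i')\leq p-p\left(\log\log\frac{1}{p}\right)^{-1}$. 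Since $S_k'=[0,1]\setminus\bigcup_{i=1}^{k-1}S_i'$, we conclude
\[
m(S_k')=1-\sum_{i=1}^{k-1}m(S_i')\geq 1-p+p\left(\log\log\frac{1}{p}\right)^{-1},
\]
which is the claim (with $S_k'$, the last interval of the block partition of $W'$, playing the role of the distinguished interval).

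The only things needing care are bookkeeping, not genuine obstacles: one should note that the $S_i'$ can be taken to be honest intervals with $S_k'$ itself an interval — this is arranged by first relabelling the blocks and applying a measure-preserving transformation of $[0,1]$ so that $S_1,\dots,S_{k-1}$ occupy an initial segment and $S_k$ the final one, then shrinking each $S_i$ ($i<k$) from one end so that the leftover $S_k'$ is a single interval — and that the computation above uses nothing beyond $m(S_k)\geq 1-p$ and additivity of Lebesgue measure. Since this claim is the first of several needed to show that $W'$ still satisfies all of Conditions \ref{lowerboundconditions}, I expect the genuinely delicate verifications (re-establishing exact $p$-regularity for the new off-diagonal-$k$ values $w'_{ik}$, re-checking the dichotomy on small blocks, and confirming the important/unimportant classification survives the reparametrization) to appear in the later claims of this proof, not here.
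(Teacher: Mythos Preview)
Your proof is correct and follows exactly the same approach as the paper: use Condition (2) to bound $\sum_{i<k}m(S_i)\leq p$, scale by $1-(\log\log\frac{1}{p})^{-1}$, and take the complement. Your observation that the claim is really about $m(S_k')$ rather than $m(S_k)$ is right (the paper's statement contains this minor notational slip), and your side remarks about interval bookkeeping and deferring the harder verifications to later claims are accurate.
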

 Since $m(S_1)+\cdots+m(S_{k-1})\leq p$, $m(S_1')+\cdots+m(S_{k-1}')\leq p-p\left(\log\log\frac{1}{p}\right)^{-1}$, so $m(S_k)\geq 1-p+p\left(\log\log\frac{1}{p}\right)^{-1}$, proving the claim.

\begin{claim}\label{annoyingentropyclaim}
$I_p(W')\leq I_p(W)$
\end{claim}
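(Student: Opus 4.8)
The plan is to show that shrinking the small intervals $S_1,\dots,S_{k-1}$ by the factor $\lambda:=1-\left(\log\log\frac{1}{p}\right)^{-1}$ while keeping $W'=W$ on those blocks and adjusting the $k$-th row/column to restore $p$-regularity can only decrease the entropy. First I would set up notation: for $1\le i,j\le k-1$ the block $S_i'\times S_j'$ has the same value $w'_{ij}=w_{ij}$ but smaller area $\lambda^2 m(S_i)m(S_j)$, so those blocks' contribution to $I_p(W')$ is exactly $\lambda^2$ times their contribution to $I_p(W)$, hence already smaller. The only possible increase comes from the $k$-th row and column, i.e.\ the blocks $S_i'\times S_k'$ (and their transposes) and $S_k'\times S_k'$, where $w'_{ik}$ differs from $w_{ik}$; the whole point is to control these.

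The key step is a convexity/averaging argument. The new value $w'_{ik}$ is forced by the regularity equation $\sum_{i}w'_{ij}m(S_i')=p$; since $w'_{ij}=w_{ij}$ and $m(S_i')=\lambda m(S_i)$ for $i\le k-1$, one gets for each $j\le k-1$ that $w'_{kj}m(S_k')=p-\lambda\sum_{i\le k-1}w_{ij}m(S_i)=p-\lambda(p-w_{kj}m(S_k))$, i.e.\ $w'_{kj}m(S_k')=\lambda w_{kj}m(S_k)+(1-\lambda)p$, and similarly $w'_{kk}m(S_k')=\lambda w_{kk}m(S_k)+(1-\lambda)p$ after accounting for the identity row. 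In other words, the ``mass'' $w'_{kj}m(S_k')$ is a convex combination of $w_{kj}m(S_k)$ and $p$, with the second block of coefficient $(1-\lambda)$ going to the uniform value $p$ (where $I_p$ vanishes). Concretely, the total entropy contributed by the $k$-th row with column $j\le k-1$ is $2m(S_k')\,m(S_j')\,I_p(w'_{kj})$; I would write $w'_{kj}$ itself as a convex combination of $w_{kj}$ and $p$ (with the coefficient on $w_{kj}$ being $\frac{\lambda m(S_k)}{m(S_k')}\le 1$ since $m(S_k')\ge m(S_k)$), apply convexity of $I_p$ together with $I_p(p)=0$ to bound $I_p(w'_{kj})\le \frac{\lambda m(S_k)}{m(S_k')}I_p(w_{kj})$, and then note that $m(S_k')m(S_j')\cdot\frac{\lambda m(S_k)}{m(S_k')}=\lambda^2 m(S_k)m(S_j)\le m(S_k)m(S_j)$, so this contribution is at most the corresponding one for $W$. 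The $S_k'\times S_k'$ block is handled the same way (the regularity constraint there involves the identity term but the same convexity bound applies), as is the $S_i'\times S_k'$ transpose. Summing over all blocks gives $I_p(W')\le I_p(W)$.

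The main obstacle I anticipate is bookkeeping: making sure the convex-combination coefficients are correctly identified (in particular that $w_{kj}\ge p$ and $m(S_k')\ge m(S_k)$, so the coefficient on $w_{kj}$ is genuinely $\le 1$ and the combination is a genuine convex combination with the other point at $p$), and handling the $k$-th diagonal block where the regularity equation has a slightly different form. One also needs $w'_{kj}\ge p$ to stay in the regime where $I_p$ is increasing and the later conditions make sense, which follows from $w'_{kj}m(S_k')\ge (1-\lambda)p\cdot$(appropriate factor) combined with $m(S_k')\le 1$; I would verify this explicitly. Once the coefficient identification is pinned down, the inequality $I_p(W')\le I_p(W)$ is immediate from convexity of $I_p$ and $I_p(p)=0$, with no delicate estimates required — the $\lambda^2\le 1$ and $\lambda\le 1$ factors do all the work.
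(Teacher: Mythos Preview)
Your approach is essentially the same as the paper's: both show the blockwise inequality $m(S_i')m(S_j')I_p(w'_{ij})\le m(S_i)m(S_j)I_p(w_{ij})$ by expressing $w'_{ik}-p$ as a scalar multiple of $w_{ik}-p$ and invoking convexity of $I_p$ together with $I_p(p)=0$. Your phrasing of $w'_{kj}$ as a convex combination of $w_{kj}$ and $p$ with weight $\alpha=\lambda m(S_k)/m(S_k')\le 1$ is in fact slightly cleaner than the paper's equivalent use of $aI_p(p+x)\le I_p(p+ax)$ for $a\ge 1$.

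Two small points to correct. First, your stated identity for the diagonal block is off: working through the regularity equation gives $m(S_k')(w'_{kk}-p)=\dfrac{\lambda^2 m(S_k)^2}{m(S_k')}(w_{kk}-p)$, not $\lambda m(S_k)(w_{kk}-p)$; but this only helps, since then $w'_{kk}$ is a convex combination of $w_{kk}$ and $p$ with weight $\lambda^2 m(S_k)^2/m(S_k')^2\le 1$, and the same convexity step yields $m(S_k')^2 I_p(w'_{kk})\le \lambda^2 m(S_k)^2 I_p(w_{kk})$. Second, your anticipated obstacle ``$w_{kj}\ge p$'' is both unnecessary and false in general (the graphons in the paper's constructions have $w_{ik}=0$ on some blocks). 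The convex-combination argument does not care about the sign of $w_{kj}-p$: once $\alpha\in[0,1]$, the inequality $I_p(\alpha w_{kj}+(1-\alpha)p)\le \alpha I_p(w_{kj})$ holds regardless, and $\alpha\le 1$ follows solely from $m(S_k')\ge \lambda m(S_k)$.
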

For $1\leq i\leq k-1$,
\begin{align}
\label{Sk'Sk} m(S_k')(w'_{ik}-p) & =-\displaystyle\sum_{j=1}^{k-1}m(S_j')(w'_{ij}-p) \\ & \nonumber =-\left(1-\left(\log\log\frac{1}{p}\right)^{-1}\right)\displaystyle\sum_{j=1}^{k-1}m(S_j)(w_{ij}-p) \\ & \nonumber=\left(1-\left(\log\log\frac{1}{p}\right)^{-1}\right)m(S_k)(w_{ik}-p)
\end{align}
With $i=k$, we may apply a similar computation using (\ref{Sk'Sk}). In particular,
\begin{align}
\label{Skk'Skk} m(S_k')(w'_{kk}-p) & =-\displaystyle\sum_{j=1}^{k-1}m(S_j')(w'_{kj}-p) \\ & \nonumber =-\left(1-\left(\log\log\frac{1}{p}\right)^{-1}\right)\frac{m(S_k)}{m(S_k')}\displaystyle\sum_{j=1}^{k-1}m(S_j)(w_{kj}-p) \\ & \nonumber=\left(1-\left(\log\log\frac{1}{p}\right)^{-1}\right)\frac{m(S_k)^2}{m(S_k')}(w_{ik}-p) \\ & \nonumber=\left(1-\left(\log\log\frac{1}{p}\right)^{-1}\right)m(S_k)(w_{ik}-p)
\end{align}
since $m(S_k')\geq m(S_k)$.

Since $I_p(p+x)$ is a convex function with $I_p(p)=0$, we have that $a I_p(p+x)\leq I_p(p+ax)$ if $a\geq 1$. For $1\leq i\leq k-1$, letting $a=\frac{m(S_k')}{m(S_k)}$ and $x=w'_{ik}-p$, and applying (\ref{Sk'Sk}),
\[\frac{m(S_k')}{m(S_k)}I_p(w'_{ik})\leq I_p\left(p+\frac{m(S_k')}{m(S_k)}(w'_{ik}-p)\right) \leq I_p\left(p+\left(1-\left(\log\log\frac{1}{p}\right)^{-1}\right)w_{ik}\right) \leq I_p(w_{ik}).\]
If instead $a=\frac{m(S_k')^2}{m(S_k)^2}$ and $x=w'_{kk}-p$ and applying (\ref{Skk'Skk}),
\[\frac{m(S_k')^2}{m(S_k)^2}I_p(w'_{kk})\leq I_p\left(p+\frac{m(S_k')^2}{m(S_k)^2}(w'_{kk}-p)\right)\leq I_p\left(p+\left(1-\left(\log\log\frac{1}{p}\right)^{-1}\right)w_{kk}\right) \leq I_p(w_{kk}).\]
Since $m(S_i')\leq m(S_i)$ for all $1\leq i\leq k-1$, and $w'_{ij}=w_{ij}$ for $1\leq i,j\leq k-1$, the previous two equations yield
\[m(S_i')m(S_j')I_p(w'_{ij})\leq m(S_i)m(S_j)I_p(w_{ij})\]
for all $1\leq i,j\leq k$. Since $I_p(W)=\displaystyle\sum_{1\leq i,j\leq k}m(S_i)m(S_j)I_p(w_{ij})$ and similarly for $W'$, we have shown that $I_p(W')\leq I_p(W)$.
\begin{claim}\label{annoyinghomclaim}
$\Hom(K,W')\geq (1-o(1))\Hom(K,W)$
\end{claim}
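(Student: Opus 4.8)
The plan is to decompose both homomorphism counts over $K$-blocks and compare them block by block. Write $\Hom(K,W)=\sum_{B}\Hom_B(K,W)$ and $\Hom(K,W')=\sum_{B}\Hom_B(K,W')$, where $B$ ranges over the $k^{v(K)}=O(1)$ maps $V(K)\to[k]$, and recall that for a block graphon $\Hom_B(K,W)=\prod_{v\in V(K)}m(S_{B(v)})\cdot\prod_{vw\in E(K)}w_{B(v)B(w)}$, with the analogous identity for $W'$. Since there are only $O(1)$ blocks and each negligible one contributes $o(p^{e(K)})$ to $\Hom(K,W)$, while $\Hom(K,W)=\Omega(p^{e(K)})$ by condition (3), we get $\sum_{B\text{ non-negligible}}\Hom_B(K,W)\geq(1-o(1))\Hom(K,W)$. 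As $\Hom_B(K,W')\geq 0$ for every $B$, it thus suffices to prove $\Hom_B(K,W')\geq(1-o(1))\Hom_B(K,W)$ for each non-negligible $B$.

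Fix such a $B$ and put $\epsilon=(\log\log\tfrac1p)^{-1}$. For the vertex factors, $m(S'_i)=(1-\epsilon)m(S_i)$ for $i\leq k-1$ and $m(S'_k)\geq m(S_k)$, so every factor $m(S'_{B(v)})$ is at least $(1-\epsilon)m(S_{B(v)})$, giving $\prod_{v}m(S'_{B(v)})\geq(1-o(1))\prod_{v}m(S_{B(v)})$ since $v(K)=O(1)$. For the edge factors, $w'_{ij}=w_{ij}$ whenever $i,j\leq k-1$, so only edges that $B$ sends into a block $(i,k)$ or $(k,k)$ can change; every such block is unimportant by definition, so condition (8) (applicable since $B$ is non-negligible) forces $w_{ik}=p+o(p)$ for each such block carrying the image of an edge. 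On the other hand, the $p$-regularity of $W$ and $W'$ forces, via (\ref{Sk'Sk}) and (\ref{Skk'Skk}), an identity of the form $w'_{ik}-p=\lambda(w_{ik}-p)$ with $\lambda\in(0,1)$ (concretely $\lambda=(1-\epsilon)\tfrac{m(S_k)}{m(S'_k)}$, resp.\ $(1-\epsilon)\tfrac{m(S_k)^2}{m(S'_k)^2}$, both in $(0,1)$ because $m(S'_k)\geq m(S_k)$). Hence $|w'_{ik}-p|<|w_{ik}-p|=o(p)$, so $w'_{ik}=p(1+o(1))$; combined with $w_{ik}=p(1+o(1))$ this gives $w'_{ik}/w_{ik}=1+o(1)$. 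Therefore every edge ratio $w'_{B(v)B(w)}/w_{B(v)B(w)}$ equals $1+o(1)$, and multiplying over the $e(K)=O(1)$ edges yields $\prod_{vw}w'_{B(v)B(w)}\geq(1-o(1))\prod_{vw}w_{B(v)B(w)}$.

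Multiplying the vertex and edge bounds gives $\Hom_B(K,W')\geq(1-o(1))\Hom_B(K,W)$ for each non-negligible $B$; taking the (finitely many) $o(1)$'s uniform and summing,
\[
\Hom(K,W')\;\geq\;\sum_{B\text{ non-negligible}}\Hom_B(K,W')\;\geq\;(1-o(1))\sum_{B\text{ non-negligible}}\Hom_B(K,W)\;\geq\;(1-o(1))\Hom(K,W),
\]
which is the claim. The only step requiring genuine care is the control of the modified weights $w'_{ik}$: the key point is that condition (8) pins every block of $W$ touching index $k$ that actually carries an edge of a non-negligible $K$-block down to density $p(1+o(1))$, while the regularity constraint updates it to another value on the same side of $p$ but closer to $p$, hence still $p(1+o(1))$; everything else is a bounded product of $(1-o(1))$ factors.
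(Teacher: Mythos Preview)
Your proof is correct, but it takes a longer path than necessary. The paper's argument never restricts to non-negligible blocks and never invokes condition~(8); instead it shows directly that $w'_{ij}\geq(1-o(1))w_{ij}$ for \emph{every} pair $(i,j)$. For $i,j\leq k-1$ this is trivial ($w'_{ij}=w_{ij}$), and for the remaining blocks one uses (\ref{Sk'Sk}) and (\ref{Skk'Skk}) to write $w'_{ik}-p=\lambda(w_{ik}-p)$ with $\lambda=1-o(1)$; then if $w_{ik}\geq p$ one gets $w'_{ik}=\lambda w_{ik}+(1-\lambda)p\geq\lambda w_{ik}$, while if $w_{ik}<p$ one gets $w'_{ik}>w_{ik}$. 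Together with $m(S'_i)\geq(1-o(1))m(S_i)$ this gives $\Hom_B(K,W')\geq(1-o(1))\Hom_B(K,W)$ for \emph{all} $B$, and summing finishes.

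Your route---discard negligible blocks, then use condition~(8) to pin $w_{ik}$ to $p(1+o(1))$ on the remaining ones---is valid, but it imports an assumption (condition~(8)) that is not needed for this claim, and it requires the extra observation that negligible blocks only cost $o(\Hom(K,W))$. The direct inequality $w'_{ik}\geq(1-o(1))w_{ik}$ is both shorter and stronger, since it holds uniformly without any case split on negligibility.
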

By (\ref{Sk'Sk}) and (\ref{Skk'Skk}) and the fact that $m(S_k')\leq m(S_k)$,
\[|w'_{ik}-p|\leq |w_{ik}-p|.\]
Since $m(S_k')=(1+o(1))m(S_k)$, we also have $(w'_{ik}-p)=(1+o(1))(w_{ik}-p)$. Thus $w'_{ik}\geq (1-o(1))w_{ik}$ for all $i$.

Now, we have shown that $m(S_i')\geq (1-o(1))S_i$ for all $i$, and $w'_{ij}\geq (1-o(1))w_{ij}$ for all $i,j$. Since for each $B$, $\Hom_B(K,W)$ is a product of specific $m(S_i)$ and $w_{ij}$, and $\Hom_B(K,W')$ is the product of the corresponding $m(S_i')$ and $w'_{ij}$, $\Hom_B(K,W')\geq (1-o(1))\Hom_B(K,W)$ for all $B$. Summing,
\[\Hom(K,W')\geq (1-o(1))\Hom_B(K,W),\]
proving this claim as well.

Technically, we must show that $w'_{ij}\in [0,1]$ for all $i,j$ in order to verify that $W'$ is indeed a graphon. For $1\leq i,j\leq k$ this is clear, and by (\ref{Sk'Sk}) and (\ref{Skk'Skk}), $w'_{ik}-p$ has the same sign as and a smaller magnitude than $w_{ik}-p$ for all $i$, which proves that $w'_{ik}\in [0,1]$.

Now, we must show that $W'$ satisfies Conditions \ref{lowerboundconditions}. Condition (1) follows easily from the definition of $w'_{ik}$, and we in fact have our stronger version of condition (2) that $m(S_k')\geq 1-p+p\left(\log\log\frac{1}{p}\right)^{-1}$.

Conditions (5) and (9) follow easily from the definition of $W'$, and (3) follows from Claim \ref{annoyinghomclaim}.

Since $w'_{ij}-p=(1+o(1))(w_{ij}-p)$, $w'_{ij}-p=w_{ij}-p+o(|w_{ij}-p|)=w_{ij}-p+o(w_{ij}+p)$. Thus $w'_{ij}+p=(1+o(1))(w_{ij}+p)$. We also know that $m(S_i')=(1+o(1))m(S_i)$. Therefore, $\Hom_B(K,W'+p)=(1+o(1))\Hom_B(K,W+p)$ for all $B$ (each side is simply a product of terms of the form $m(S_i)$ and $w_{ij}+p$ or analogously for $W'$). Summing, $\Hom(K,W'+p)=(1+o(1))\Hom_B(K,W+p)$. This shows Condition (10).

Now, since $w'_{ik}-p=(1+o(1))(w_{ik}-p)$, $I_p(w'_{ik})=(1+o(1))I_p(w_{ik})$ (the $o(1)$ can be taken to be uniform as $I_p(p+x)$ behaves like $\frac{x^2}{2p}$ around $x=0$). Thus for all $i,j$, $m(S_i')m(S_j')I_p(w'_{ij})=(1+o(1))m(S_i)m(S_j)I_p(w_{ij})$, so summing,
\[I_p(W')=(1+o(1))I_p(W).\]
This proves Condition 4. Furthermore, since $w_{ij}$ is preserved for $i,j\leq k-1$ and $m(S_i')=(1+o(1))m(S_i)$ for all $i$, it also proves Condition (6). Now, we just must show conditions (7) and (8) still hold for $W'$. But $m(S_i')=(1+o(1))m(S_i)$ for all $i$ and $w'_{ij}-p=(1+o(1))(w_{ij}-p)$, so the properties we would like to show of non-negligible blocks are preserved when we go from $W$ to $W'$. Thus it suffices to show that $W$ and $W'$ have the exact same set of non-negligible blocks. This is accomplished via the following claim, which will therefore finish the proof of the lemma.

\begin{claim}
$\Hom_B(K,W)=\Hom_B(K,W')+o(p^{e(K)})$ for all $B$.
\end{claim}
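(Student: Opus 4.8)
The plan is to show that $\Hom_B(K,W)$ and $\Hom_B(K,W')$ differ by a negligible amount for \emph{every} $K$-block $B$, which combined with the observation that $\Hom_B(K,W')\geq(1-o(1))\Hom_B(K,W)$ for all $B$ (Claim \ref{annoyinghomclaim}) will force the two to agree to within $o(p^{e(K)})$. The key point is that both $\Hom_B(K,W)$ and $\Hom_B(K,W')$ are products: over the $v(K)$ vertices we pick up a factor $m(S_{B(v)})$ (resp. $m(S'_{B(v)})$), and over the $e(K)$ edges we pick up a factor $w_{B(u)B(v)}$ (resp. $w'_{B(u)B(v)}$). Since $m(S'_i)=(1-o(1))m(S_i)$ for all $i$ (the shrinking factor is $1-(\log\log\frac1p)^{-1}=1-o(1)$ for $1\le i\le k-1$, and $m(S'_k)\ge m(S_k)$ with $m(S'_k)=(1+o(1))m(S_k)$), and since $w'_{ij}=w_{ij}$ for $i,j\le k-1$ while $w'_{ik}=(1+o(1))w_{ik}$ as established via (\ref{Sk'Sk}) and (\ref{Skk'Skk}), each of the $v(K)+e(K)=O(1)$ factors changes only by a $(1+o(1))$ multiplicative factor. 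Hence $\Hom_B(K,W')=(1+o(1))\Hom_B(K,W)$ for every $B$.

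Now I would split into two cases according to whether $B$ is negligible for $W$. If $\Hom_B(K,W)=o(p^{e(K)})$, then since $\Hom_B(K,W')=(1+o(1))\Hom_B(K,W)$ we immediately get $\Hom_B(K,W')=o(p^{e(K)})$ as well, so $\Hom_B(K,W)-\Hom_B(K,W')=o(p^{e(K)})$ trivially. If instead $B$ is non-negligible for $W$, then $\Hom_B(K,W)=\Theta(p^{e(K)})$ (it is $O(p^{e(K)})$ because $\Hom(K,W)=O(p^{e(K)})$ would follow from condition (10), or more directly because $\Hom_B(K,W)\le\Hom(K,W+p)=O(p^{e(K)})$), and then $|\Hom_B(K,W')-\Hom_B(K,W)|=o(1)\cdot\Hom_B(K,W)=o(p^{e(K)})$. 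In either case $\Hom_B(K,W)=\Hom_B(K,W')+o(p^{e(K)})$.

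Finally, summing over the $O(1)$ many $K$-blocks $B$ (there are at most $k^{v(K)}=O(1)$ of them since $k=O(1)$), we get $\Hom(K,W)=\sum_B\Hom_B(K,W)=\sum_B\Hom_B(K,W')+O(1)\cdot o(p^{e(K)})=\Hom(K,W')+o(p^{e(K)})$, which is a slightly stronger statement than the per-block claim but certainly implies it; in any case the per-block claim is what we need. I do not anticipate a genuine obstacle here: the entire content is bookkeeping of $(1+o(1))$ factors through a product of boundedly many terms, and the only mild subtlety is making sure the $o(1)$ in ``$w'_{ik}=(1+o(1))w_{ik}$'' is uniform over $i$ — but that follows from $m(S'_k)=(1+o(1))m(S_k)$ together with identities (\ref{Sk'Sk}) and (\ref{Skk'Skk}), exactly as used in the proof of Claim \ref{annoyinghomclaim}. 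This completes the proof of Lemma \ref{stronger2lemma}.
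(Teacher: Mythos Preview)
Your argument has a genuine gap: the assertion $w'_{ik}=(1+o(1))w_{ik}$ is \emph{false} in general. What (\ref{Sk'Sk}) and (\ref{Skk'Skk}) give is $w'_{ik}-p=(1+o(1))(w_{ik}-p)$, which yields $w'_{ik}=(1+o(1))w_{ik}$ only when $w_{ik}=\Omega(p)$. If $w_{ik}\ll p$ (in particular if $w_{ik}=0$, which actually occurs in the constructions of Figure~\ref{constantgraphonfigure}), then $w'_{ik}=p+(1+o(1))(w_{ik}-p)$ can be of order $o(p)$ but nonzero, so $w'_{ik}/w_{ik}$ is not $1+o(1)$. Note that Claim~\ref{annoyinghomclaim} only establishes the one-sided bound $w'_{ik}\geq(1-o(1))w_{ik}$, not the two-sided equality you cite. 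Consequently your conclusion $\Hom_B(K,W')=(1+o(1))\Hom_B(K,W)$ for \emph{every} $B$ fails: when $B$ uses a block with $w_{ij}=0$ we have $\Hom_B(K,W)=0$ but $\Hom_B(K,W')$ need not vanish.

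The paper's proof handles precisely this issue by introducing a threshold $\epsilon>0$ and splitting according to whether every block used by $B$ has $w_{ij}>\epsilon p$. In the ``large'' case your multiplicative argument works and gives $|\Hom_B(K,W')-\Hom_B(K,W)|=o(p^{e(K)})$. In the ``small'' case one instead bounds both $\Hom_B(K,W)$ and $\Hom_B(K,W')$ individually by $O(\epsilon p^{e(K)})$ via comparison with $\Hom_B(K,W+p)$ and Condition~(10), and then lets $\epsilon\to 0$. Your proof is missing this second case entirely.
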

By Condition (10), there is some $C$ such that $\Hom(K,W+p),\Hom(K,W'+p)\leq Cp^{e(K)}$. Take any $K$-block $B$ and any $\epsilon>0$.

If there is some $1\leq i,j\leq k$ such that $B$ `uses' the block $(i,j)$ (in the sense that there is some edge $vw\in E(K)$ such that $B(v)=i$ and $B(w)=j$), then if $w_{ij}\leq\epsilon p$, then $\Hom_B(K,W)\leq\frac{w_{ij}}{w_{ij}+p}\Hom_B(K,W+p)\leq\epsilon\Hom_B(K,W+p)\leq\epsilon Cp^{e(K)}$.

Similarly, since $w_{ij}-p=(1+o(1))(w'_{ij}-p)$, $w'_{ij}\leq (\epsilon+o(1))p$, so $\Hom_B(K,W')\leq(\epsilon+o(1))Cp^{e(K)}$ by the same logic. Thus
\[\left|\Hom_B(K,W')-\Hom_B(K,W)\right|\leq (2\epsilon+o(1))Cp^{e(K)}\]
by the triangle inequality.\

If instead $w_{ij}>\epsilon p$ for every $(i,j)$ used by $B$, then $w'_{ij}=p+(1+o(1))(w_{ij}-p)=(1+o(1))w_{ij}+o(p)=(1+o(1))w_{ij}$ for each such $(i,j)$, since $w_{ij}=\Omega(p)$. Since $m(S_i')=(1+o(1))m(S_i)$ for each $i$, this implies that
\[\Hom_B(K,W')=(1+o(1))\Hom_B(K,W).\]
Since $\Hom_B(K,W)\leq\Hom_B(K,W+p)=O(p^{e(K)})$, we thus have
\[\left|\Hom_B(K,W')-\Hom_B(K,W)\right|=o(\Hom_B(K,W))=o(p^{e(K)})\]
in this case. Thus in all cases, $\left|\Hom_B(K,W')-\Hom_B(K,W)\right|\leq (2\epsilon+o(1))Cp^{e(K)}$, and since this holds for any constant $\epsilon>0$ we have the desired conclusion.
\end{proof}
\section{Overview of Proof of Theorem \ref{ldplowerboundthm}}\label{ldplowerboundsection2}
For the following four definitions, we will take $n\to\infty$, $d=d(n)$, $p=p(n):=\frac{d}{n}$. We will further take $W$ to be a block graphon on the partition $[0,1]=S_1\cup\cdots\cup S_k$ and taking value $w_{ij}$ on $S_i\times S_j$, satisfying Conditions \ref{lowerboundconditions} with the improved condition (2) that $m(S_k)\geq 1-p+p\left(\log\log\frac{1}{p}\right)^{-1}$. By Lemma \ref{stronger2lemma}, under these conditions, to prove Theorem \ref{ldplowerboundthm} it will suffice to show that
\begin{equation}\label{HomKGnd}
-\log\left(\Pr\left[\Hom(K,G_n^d)\geq (1-o(1))\Hom(K,W)n^{v(K)}\right]\right)\leq \left(\frac{1}{2}+o(1)\right)I_p(W)n^2.
\end{equation}
We will prove this statement over this and the next two sections. We first make several definitions.
\begin{definition}[Blocks]\label{blocksdef}
We call a pair $(i,j)$, $1\leq i,j\leq k$ a \emph{block} of $W$, and think of it as referring to the rectangle $S_i\times S_j$. Recall from Conditions \ref{lowerboundconditions} that we call a block $(i,j)$ \emph{important} if $i,j\leq k-1$ and $m(S_i)m(S_j)w_{ij}\ll \left(\log\log\log\frac{1}{p}\right)^{-1}I_p(W)$.

Define the sets $V_i$, $1\leq i\leq k$, by $V_i:=\mathbb{Z}^+\cap nS_i$; that is, the interval of $[n]$ corresponding to $S_i$.

(We will often refer to blocks $S_i\times S_j$ or $V_i\times V_j$ as important; this simply refers to the underlying pair $(i,j)$.)
\end{definition}
We now consider several subsets of the set of graphs on $[n]$, which we will use as events under various probability distributions on that set.
\begin{definition}[Events]\label{eventsdef}
Let $\mathcal{K}_{n,d}$ be the set of $d$-regular graphs on $[n]$.

For any constant $\delta'>0$, define
\[\mathcal{H}_{\delta'}=\left\{T\in\mathcal{K}_{n,d}:\Hom(K,T)\geq (1+\delta')p^{e(K)}n^{v(K)}\right\}.\]
\end{definition}
We now define the probability distributions that we will use. Recall from Conditions \ref{lowerboundconditions} that a block $(i,j)$ (thought of as specifying a block $S_i\times S_j$ of our graphon $W$) is called \emph{important} if $1\leq i,j\leq k-1$ and $m(S_i)m(S_j)w_{ij}\ll \left(\log\log\log\frac{1}{p}\right)^{-1}I_p(W)$.
\begin{definition}[Probability Distributions]\label{distributionsdef}
Let $\mathbb{P}_{G_n^d}$ be the probability measure of a random $d$-regular graph on $[n]$, and let $\mathbb{P}_p$ be the probability measure of the Erd\H{o}s-R\'{e}nyi random graph on $[n]$ with edge probability $p$.

Furthermore, let $\mathbb{P}_{\star}$ be the inhomogeneous Erd\H{o}s-R\'{e}nyi model where we sample edges with probability corresponding to $W$ in the important blocks of $W$, and probability $p$ otherwise. In particular, we sample edge $uv$ with probability $w_{ij}$ if $(u,v)$ is contained in some important $V_i\times V_j$, and probability $p$ otherwise.
\end{definition}

By Condition (3) of Conditions \ref{lowerboundconditions}, we may take $\delta>0$ such that $(1+\delta)p^{e(K)}=\Hom(K,W)$.

Using Definitions \ref{eventsdef} and \ref{distributionsdef}, we may rephrase (\ref{HomKGnd}) as stating that
\begin{equation}\label{pgndhomequation}
\mathbb{P}_{G_n^d}(\mathcal{H}_{\delta'})\geq\exp\left(-\left(\frac{1}{2}+o(1)\right)I_p(W)n^2\right)
\end{equation}
for all $0<\delta'<\delta$.

We largely parallel the proof in Section 2.3 of \cite{BD}, although substantial modification will be necessary. A main idea is to note that for any event $\mathcal{B}$,
\begin{align*}
\mathbb{P}_{G_n^d}(\mathcal{H}_{\delta'}) & \geq\mathbb{P}_{G_n^d}(\mathcal{B}\land\mathcal{H}_{\delta'}) \\ & =\mathbb{P}_{G_n^d}(\mathcal{B})-\mathbb{P}_{G_n^d}(\mathcal{B}\land\neg\mathcal{H}_{\delta'}).
\end{align*}
\begin{remark}
We would like to choose $\mathcal{B}$ such that both of these terms may be bounded in the appropriate direction. The idea is to make the event $\mathcal{B}$ (informally) the event that a graph `looks like' the graphon $W$. The point is that given the event $\mathcal{B}$, we should expect $\Hom(K,G_n^d)$ to be large, so $\neg\mathcal{H}_{\delta'}$ should be a lower-tail-type event whose probability can be bounded above using Janson's inequality. There is a tension between choosing $\mathcal{B}$ simple enough so that $\mathbb{P}_{G_n^d}(\mathcal{B})$ can be approximately computed easily and choosing it specific enough to close the `loopholes' by which $B\land\neg\mathcal{H}_{\delta'}$ might easily occur. These loopholes do not occur in \cite{BD}, so our choice of event $\mathcal{B}$ will be correspondingly more complicated.
\end{remark}
We now specify the event we will be using as the event $\mathcal{B}$ above.
\begin{definition}\label{andegdef}
For $i\neq j$, $1\leq i,j\leq k$, define $a_{ij}:=\left\lfloor w_{ij}|V_i||V_j|+\frac{1}{2}\right\rfloor$, and for $1\leq i\leq k$, define $a_{ii}:=2\left\lfloor w_{ii}\binom{|V_i|}{2}+\frac{1}{2}\right\rfloor$.

Let $\mathcal{A}_n$ be the set of graphs $T\in\mathcal{K}_{n,d}$ such that for all $1\leq i,j\leq k-1$ with $(i,j)$ important, there are exactly $a_{ij}$ pairs $(v_i,v_j)\in V_i\times V_j$ such that $uv$ is an edge of $T$.

Let $\mathcal{A}_n^{\deg}\subseteq\mathcal{A}_n$ be the set of graphs $T\subseteq\mathcal{A}_n$ such that for all $(i,j)$ important, then $v_i\in V_i$ has $|N_{T}(v_i)\cap V_j|\leq 2\frac{a_{ij}}{|V_i|}$.
\end{definition}
\begin{remark}
Note that $a_{ij}$ rounds $w_{ij}|V_i||V_j|$ to the nearest integer, or nearest even integer if $i=j$ (this is necessary because if $i=j$ each relevant edge is counted twice). Thus $\mathcal{A}_n$ may be thought of as stipulating that a graph has as close as possible to the appropriate density $w_{ij}$ in each important block $V_i\times V_j$.

The subset $\mathcal{A}_n^{\deg}$ can be thought of as additionally specifying that in each important block $V_i\times V_j$, the degrees are not too much more than expected.
\end{remark}
We will end up taking our event $\mathcal{B}$ to be $\mathcal{A}_n^{\deg}$. In summary, we have the following setup, which we will be using for the next three sections (and thus contains some further definitions that we will use in future sections).
\begin{setup}\label{ldpsetup}
Let $n\to\infty$ and take $d=d(n)\in\mathbb{Z}^+$, $p=\frac{d}{n}$ with $n^{-1}\log\log n\ll p\ll 1$. Let $k=k(n)=O(1)$ be a constant and let $W=W(n)$ be a block graphon on $k$ intervals $S_1,\ldots,S_k$ forming a partition of $[0,1]$, such that $W=w_{ij}$ on $S_i\times S_j$. (Note that $S_i$, $w_{ij}$ depend on $n$.)

Further suppose that $W$ satisfies Conditions \ref{lowerboundconditions} (with all asymptotics taken as $n\to\infty$), and satisfies a stronger version of condition (2) stating that $m(S_k)\geq 1-p+p\left(\log\log\frac{1}{p}\right)^{-1}$. Take $\delta=\delta(n)>0$ such that $\Hom(K,W)=(1+\delta)p^{e(K)}$.

Let $V_i=\mathbb{Z}^+\cap nS_i$ for all $i\in [k]$. Recall that a block $(i,j)$ is \emph{important} if $1\leq i,j\leq k-1$ and $m(S_i)m(S_j)w_{ij}\ll \left(\log\log\log\frac{1}{p}\right)^{-1}I_p(W)$ and define $a_{ij}$ for $1\leq i,j\leq k$, $\mathcal{A}_n$ and $\mathcal{A}_n^{\deg}$ as in Definition \ref{andegdef}. Further define $\mathcal{H}_{\delta'}$ as in Definition \ref{eventsdef} for any $\delta>0$.

Let $Imp_n=Imp_n(W)\subset E(K_n)$ be the set of edges that are contained in important blocks $V_i\times V_j$. For any subset $S\subseteq Imp_n$, let $\mathcal{B}^S=\mathcal{B}^S(n,W)$ be the set of all graphs $T$ on $[n]$ such that $E(T)\cup Imp_n=S$.
\end{setup}
We have effectively reduced Theorem \ref{ldplowerboundthm} to proving the following two propositions.
\begin{prop}\label{firsttermprop}
Under Setup \ref{ldpsetup},
\[\mathbb{P}_{G_n^d}(\mathcal{A}_n^{\deg})\geq\exp\left(-\left(\frac{1}{2}+o(1)\right)I_p(W)n^2\right).\]
\end{prop}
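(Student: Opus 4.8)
The plan is to compare with the Erd\H{o}s--R\'enyi model. Since every $d$-regular graph on $[n]$ has exactly $dn/2$ edges, $\mathbb{P}_p$ restricted to $\mathcal{K}_{n,d}$ is uniform, so $\mathbb{P}_{G_n^d}(\mathcal{A}_n^{\deg}) = \mathbb{P}_p(\mathcal{A}_n^{\deg})/\mathbb{P}_p(\mathcal{K}_{n,d})$. Write $\mathcal{A}_n^{\deg} = \mathcal{K}_{n,d}\cap\mathcal{C}$, where $\mathcal{C}$ is the event (depending only on the edges lying in important blocks) that each important block $(i,j)$ contains exactly $a_{ij}$ edges and that within each such block the degree bound of Definition \ref{andegdef} holds. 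Then
\[\mathbb{P}_{G_n^d}(\mathcal{A}_n^{\deg}) = \mathbb{P}_p(\mathcal{C})\cdot\frac{\mathbb{P}_p(\mathcal{K}_{n,d}\mid\mathcal{C})}{\mathbb{P}_p(\mathcal{K}_{n,d})},\]
and it suffices to bound the two factors separately.

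For the first factor, the important blocks occupy pairwise disjoint slot sets and $\mathbb{P}_p$ is a product measure, so $\mathbb{P}_p(\mathcal{C})$ factorizes over important blocks. For an important block $(i,j)$ the number of edges it contains is $\mathrm{Bin}(N_{ij},p)$ with $N_{ij}=|V_i||V_j|$ (or $\binom{|V_i|}{2}$ on the diagonal), so by Stirling and $a_{ij}/N_{ij}\to w_{ij}$ one has $\mathbb{P}_p(\text{exactly }a_{ij}\text{ edges}) = \binom{N_{ij}}{a_{ij}}p^{a_{ij}}(1-p)^{N_{ij}-a_{ij}} = \exp\!\big(-N_{ij}I_p(w_{ij}) - O(\log n)\big)$. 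Conditioned on this edge count, the within-block degrees are hypergeometric with mean of order $w_{ij}m(S_i)n$ (resp. $w_{ij}m(S_j)n$), which is $\omega(\log n)$ by Condition (9) of \ref{lowerboundconditions}; a Chernoff bound plus a union bound over the $O(n)$ relevant vertices then shows the degree bound fails with probability $o(1)$. Multiplying over the $O(1)$ important blocks and using $I_p(W)n^2\gg n\log n$ (Condition (4)) to absorb the $O(\log n)$ terms gives
\[\mathbb{P}_p(\mathcal{C}) \ge \exp\!\Big(-(1+o(1))\sum_{(i,j)\text{ important}}N_{ij}I_p(w_{ij})\Big)\ge \exp\!\Big(-\big(\tfrac12+o(1)\big)I_p(W)n^2\Big),\]
the last inequality because $\sum_{(i,j)\text{ important}}N_{ij}I_p(w_{ij})$ is at most half of $\sum_{i,j}|V_i||V_j|I_p(w_{ij}) = I_p(W)n^2$.

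The heart of the argument is the second factor: I must show that conditioning on the important-block structure costs only a subexponential factor, i.e.
\[\mathbb{P}_p(\mathcal{K}_{n,d}\mid\mathcal{C}) \ge \exp\big(-o(I_p(W)n^2)\big)\,\mathbb{P}_p(\mathcal{K}_{n,d}).\]
Given a valid configuration $T_{\mathrm{imp}}$ of the important-block edges (all such are equiprobable under $\mathbb{P}_p(\cdot\mid\mathcal{C})$), the non-important edges form an independent $\mathbb{P}_p$-sample, and $\mathcal{K}_{n,d}$ holds exactly when this sample realizes the degree sequence $d-\vec d^{\,\mathrm{imp}}(T_{\mathrm{imp}})$ on the non-important slots. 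By the strengthened Condition (2) this degree sequence differs from the constant sequence $d$ only at the at most $pn$ vertices of $V_1\cup\dots\cup V_{k-1}$, each coordinate moving by at most $d$; furthermore the total number of important-block edges is $\ll (\log\log\log\tfrac1p)^{-1}I_p(W)n^2$ by the very definition of ``important'', the somewhat important edges incident to any single image of $K$ form a matching (Condition (7)), and the unimportant blocks used by non-negligible $K$-blocks are large with density $p+o(p)$ (Condition (8)). The plan is to run an asymptotic-enumeration / edge-switching argument comparing the number of $d$-regular graphs whose important-block restriction is $T_{\mathrm{imp}}$ to the number whose restriction is a $G(n,p)$-typical one, showing that this completion count is stable up to a factor $\exp(o(I_p(W)n^2))$ under the perturbation; what makes this possible is that the entire important region is supported on $o(n)$ vertices, so the bulk of the regular-graph count is unchanged. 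The main obstacle is making this stability estimate sharp enough: Condition (4) only forces $I_p(W)\gg n^{-1}\log n$, while the relevant edge-count perturbations, although $\ll I_p(W)n^2$, can exceed the square root of the relevant variance, so crude Gaussian or Chernoff estimates on the non-important degree sequence are too lossy; one must instead follow the enumeration carefully, tracking how the various important blocks interact through the shared degree constraints, which is where Conditions (5), (7), (8), and (9) are consumed.
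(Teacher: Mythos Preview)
Your decomposition $\mathbb{P}_{G_n^d}(\mathcal{A}_n^{\deg}) = \mathbb{P}_p(\mathcal{C})\cdot\frac{\mathbb{P}_p(\mathcal{K}_{n,d}\mid\mathcal{C})}{\mathbb{P}_p(\mathcal{K}_{n,d})}$ is correct and is essentially the paper's three-lemma split rewritten: your first factor is the content of Lemmas \ref{pvsstarlemma} and \ref{starlargelemma} (your direct Stirling computation and the paper's change-of-measure to $\mathbb{P}_\star$ are equivalent), and your second factor, after rewriting as $\mathbb{P}_{G_n^d}(\mathcal{C})/\mathbb{P}_p(\mathcal{C})$, is exactly Lemma \ref{Gndvsplemma}. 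Your treatment of the first factor is fine.

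The second factor, however, is where the real work lies, and your discussion of it is incomplete and partly misdirected. You correctly name edge-switching as the tool and correctly note that the total important-block edge count is $\ll(\log\log\log\tfrac1p)^{-1}I_p(W)n^2$. But two things go wrong. First, Conditions (7) and (8) are \emph{not} used here at all; they concern $K$-homomorphisms and enter only in Proposition \ref{secondtermprop}. There is no ``interaction through shared degree constraints'' to track: the paper's switching (Lemma \ref{swappinglemma}) moves one important-block edge at a time via a three-edge rerouting entirely inside $V_k$, oblivious to which important block that edge came from. Second, you understate the role of the strengthened Condition (2). Saying the important region sits on ``at most $pn$ vertices'' is not enough: if $|V_1\cup\dots\cup V_{k-1}|$ were $d$, a vertex could have all of its neighbours inside that set and no switching room would exist. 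What makes the switching go is the extra slack $|V_k|\ge n-d+d(\log\log\tfrac1p)^{-1}$, which guarantees every vertex has at least $(1+o(1))d(\log\log\tfrac1p)^{-1}$ neighbours in $V_k$ available for rerouting. This is exactly why the per-edge switching ratio in Lemma \ref{swappinglemma} is $p$ up to a factor $(\log\log\tfrac1p)^{2}$, hence a $\log$-cost of $O(\log\log\log\tfrac1p)$ per edge; multiplying by $|S|\ll(\log\log\log\tfrac1p)^{-1}I_p(W)n^2$ edges gives the required $o(I_p(W)n^2)$. Your remarks about Gaussian or Chernoff estimates being too lossy are beside the point: the argument is a direct ratio-of-counts computation, not a concentration bound.
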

\begin{prop}\label{secondtermprop}
Under Setup \ref{ldpsetup}, for any $\delta'\leq\delta-\Omega(1)$,
\[\mathbb{P}_{G_n^d}(\mathcal{A}_n^{\deg}\land\neg\mathcal{H}_{\delta'})\leq\exp\left(-\left(\frac{1}{2}+\Omega(1)\right)I_p(W)n^2\right),\]
recalling the definition of $\mathcal{H}_{\delta'}$ from Definition \ref{eventsdef}.
\end{prop}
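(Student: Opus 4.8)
The plan is to transfer the estimate to the independent-edge model $\mathbb{P}_\star$ and then prove an exponentially small lower-tail bound for $\Hom(K,G)$ under $\mathbb{P}_\star$, once we have conditioned on the block-statistics event $\mathcal{A}_n^{\deg}$. Since every $d$-regular graph carries the same $\mathbb{P}_p$-weight, $G_n^d$ is exactly $G(n,p)$ conditioned on being $d$-regular, so
\[
\mathbb{P}_{G_n^d}(\mathcal{A}_n^{\deg}\land\neg\mathcal{H}_{\delta'})=\frac{\mathbb{P}_p(\mathcal{A}_n^{\deg}\land\neg\mathcal{H}_{\delta'})}{\mathbb{P}_p(d\text{-regular})}.
\]
Standard estimates on the degree sequence of $G(n,p)$ give $\mathbb{P}_p(d\text{-regular})\ge\exp(-O(n\log n))$, which is $\exp(-o(I_p(W)n^2))$ by condition (4) of Conditions \ref{lowerboundconditions}. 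For the numerator I would compute the Radon--Nikodym derivative $d\mathbb{P}_p/d\mathbb{P}_\star$: both measures are products that are uniform within each block $V_i\times V_j$, and a graph in $\mathcal{A}_n$ has exactly $a_{ij}=(1+o(1))w_{ij}|V_i||V_j|$ edges in each important block, so on $\mathcal{A}_n$ this derivative is the constant
\[
\exp\!\left(-(1+o(1))\!\!\sum_{(i,j)\ \mathrm{important}}\!\!|V_i||V_j|\,I_p(w_{ij})\right)=\exp\!\left(-\left(\tfrac12-o(1)\right)I_p(W)n^2\right),
\]
where we used that the unimportant blocks have value $p$ or $p+o(p)$ (conditions (6), (8)) and hence carry a negligible fraction of the entropy, and that $\sum_{i<j}|V_i||V_j|I_p(w_{ij})+\sum_i\binom{|V_i|}{2}I_p(w_{ii})=(1+o(1))\tfrac{n^2}{2}I_p(W)$. (Equivalently, the same computation shows $\mathbb{P}_{G_n^d}(\mathcal{A}_n^{\deg})\le\exp(-(\tfrac12-o(1))I_p(W)n^2)$, matching Proposition \ref{firsttermprop}, and $\mathbb{P}_{G_n^d}(\neg\mathcal{H}_{\delta'}\mid\mathcal{A}_n^{\deg})=\mathbb{P}_\star(\neg\mathcal{H}_{\delta'}\mid\mathcal{A}_n^{\deg})$ by constancy of the likelihood ratio on $\mathcal{A}_n$.) Either way, it suffices to prove $\mathbb{P}_\star(\mathcal{A}_n^{\deg}\land\neg\mathcal{H}_{\delta'})\le\exp(-\Omega(I_p(W)n^2))$.

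This last bound is the heart of the matter, and it is where conditioning on $\mathcal{A}_n^{\deg}$ rather than working with the bare $\mathbb{P}_\star$ is essential: a block $(i,j)$ is \emph{important} precisely because $m(S_i)m(S_j)w_{ij}=o(I_p(W))$, so any concentration estimate for $\Hom(K,G)$ whose exponent scales with the edge count $a_{ij}$ of a somewhat important block is too weak by a super-constant factor, and indeed $\mathbb{P}_\star(\neg\mathcal{H}_{\delta'})$ need not be $\exp(-\Omega(I_p(W)n^2))$. Conditioning on $\mathcal{A}_n$ freezes each $a_{ij}$ exactly, and the degree cap in $\mathcal{A}_n^{\deg}$ then controls the only remaining fluctuation in the somewhat important part of the count, namely the placement of those edges. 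Concretely, I would write $\Hom(K,G)=\sum_B\Hom_B(K,G)$ over $K$-blocks $B$; the $O(1)$ negligible $B$ contribute $o(p^{e(K)}n^{v(K)})$ deterministically, and for each non-negligible $B$ one checks, using condition (7), condition (10), and the near-independence of a bounded matching of edges under the uniform-size conditioning, that $\mathbb{E}_\star[\Hom_B(K,G)\mid\mathcal{A}_n]=(1+o(1))\Hom_B(K,W)n^{v(K)}$, so by condition (3) the conditional mean of $\Hom(K,G)$ is $(1+\delta-o(1))p^{e(K)}n^{v(K)}$. As $\delta'\le\delta-\Omega(1)$, it then suffices to bound, for each of the $O(1)$ non-negligible $B$, the probability that $\Hom_B(K,G)$ falls below $(1-o(1))$ times its conditional mean.

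For a fixed non-negligible $B$ I would classify the edges of $K$ by the kind of block they are sent into. Edges into \emph{very important} blocks ($w_{ij}=1$) are present under $\mathbb{P}_\star$ with probability one and so contribute a deterministic factor. Edges into \emph{somewhat important} blocks form a matching by condition (7); since $\mathcal{A}_n$ fixes the edge count in each such block and $\mathcal{A}_n^{\deg}$ caps the degrees there, and since the remaining vertices of $K$ attach to the two endpoints of each matching edge only through very important (hence forced) or unimportant (hence well concentrated) edges, the contribution of this part of $\Hom_B$ is pinned to $(1+o(1))$ times its mean up to an additive error of $o(p^{e(K)}n^{v(K)})$, whose only randomness is that of the unimportant blocks. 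Edges into \emph{unimportant} blocks land, by condition (8), in blocks with $m(S_i)m(S_j)\ge(1+o(1))p^{-1}I_p(W)$ and value $p+o(p)$; the corresponding partial homomorphism count is a bounded-degree polynomial in $\Theta(p\,m(S_i)m(S_j)n^2)\ge(1+o(1))I_p(W)n^2$ independent $\mathrm{Bernoulli}(p+o(p))$ edge-indicators, so Janson's inequality (or a bounded-differences martingale estimate, condition (9) bounding the influence of a single edge) shows this count drops below $(1-o(1))$ of its mean with probability at most $\exp(-\Omega(p\,m(S_i)m(S_j)n^2))=\exp(-\Omega(I_p(W)n^2))$. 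A union bound over the $O(1)$ non-negligible $B$ and the $O(1)$ relevant blocks then yields the desired $\exp(-\Omega(I_p(W)n^2))$ bound.

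I expect the delicate step to be the somewhat important case. Because we may not pay anything proportional to $a_{ij}$ there, we cannot use concentration of the edge count and must instead leverage that $\mathcal{A}_n$ makes it exact and that $\mathcal{A}_n^{\deg}$, together with the matching structure of condition (7), makes the dependence of $\Hom_B$ on the placement genuinely $o(p^{e(K)}n^{v(K)})$ --- all while cleanly separating out the residual dependence on the unimportant-block edges, for which we \emph{are} allowed the $\exp(-\Omega(I_p(W)n^2))$ loss. Propagating the degree bound through the (possibly several) factors of $\Hom_B$ hanging off an endpoint of a somewhat important matching edge, and isolating that residual randomness, is precisely the place where, as anticipated in the introduction, one must extract additional use out of $\mathcal{A}_n^{\deg}$ before dropping it.
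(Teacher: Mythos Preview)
Your overall strategy matches the paper's: change measure from $\mathbb{P}_{G_n^d}$ to $\mathbb{P}_\star$ (losing exactly the factor $\exp(-(\tfrac12+o(1))I_p(W)n^2)$), then prove $\mathbb{P}_\star(\mathcal{A}_n^{\deg}\land\neg\mathcal{H}_{\delta'})\le\exp(-\Omega(I_p(W)n^2))$ via a lower-tail bound on $\Hom_B(K,\cdot)$ for each non-negligible $K$-block $B$, with Janson handling the unimportant edges. That skeleton is correct.

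Where you diverge from the paper is in how you neutralise the somewhat important blocks. You propose to stay conditioned on $\mathcal{A}_n^{\deg}$ and argue that the somewhat important contribution is ``pinned'' because $\mathcal{A}_n$ fixes the edge counts and the degree cap controls placement. The paper instead conditions further on the \emph{exact} edge set $S=E(G)\cap Imp_n$ (Lemma~\ref{easyproblemma}): since $\mathcal{A}_n^{\deg}$ is a disjoint union of the $\mathcal{B}^S$ over allowed $S$, one has $\mathbb{P}_\star(\mathcal{A}_n^{\deg}\land\neg\mathcal{H}_{\delta'})\le\max_{S\text{ allowed}}\mathbb{P}_S(\neg\mathcal{H}_{\delta'})$. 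Under $\mathbb{P}_S$ the important-block edges are now deterministic and only the unimportant edges are random i.i.d.\ $\mathrm{Bernoulli}(p)$, so there is nothing to ``pin'': Lemma~\ref{GSexpectationlemma} shows (using only the matching structure of condition~(7) and the exact counts $a_{ij}$, not the degree cap) that $\mathbb{E}_S[\Hom_B(K,G_S)]\ge(1-o(1))\Hom_B(K,W)n^{v(K)}$ for every allowed $S$, and Janson is then applied purely to the unimportant randomness. Your pinning claim, taken literally under the weaker conditioning $\mathcal{A}_n^{\deg}$, would require controlling how $\Hom_B$ varies as the edge placement in a somewhat important block changes while the count stays fixed; this coupling with the unimportant edges incident to those endpoints is exactly what the further conditioning on $S$ sidesteps.

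Your placement of the degree cap is also off. In the paper the placement is frozen by conditioning on $S$, and the degree cap enters only inside the Janson $\Delta$-term of Proposition~\ref{lowertailboundprop}. When two homomorphisms $\phi,\phi'$ share an \emph{unimportant} edge, the somewhat important edges of $\text{Im}(\phi)\cup\text{Im}(\phi')$ form a matching plus up to two copies of $K_{1,2}$ (one at each shared vertex of the overlap edge); the bound $|N_S(u')\cap V_j|\le 2a_{ij}/|V_i|$ is precisely what controls the embedding count of these $K_{1,2}$'s into $S$, yielding $\Delta=O(n^{2v(K)-2}p^{2e(K)}/I_p(W))$ and hence Janson exponent $\Omega(\mu^2/\Delta)=\Omega(I_p(W)n^2)$. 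Without the degree cap a single high-degree vertex in an allowed $S$ could blow up $\Delta$. So the degree condition is consumed inside the concentration estimate, not in reducing to one.
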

\begin{proof}[Proof of Theorem \ref{ldplowerboundthm} given Propositions \ref{firsttermprop} and \ref{secondtermprop}]
Take $p=p(n)$ with $n^{-1}\log\log n\ll p\ll 1$, and take $W=W(n)$ a block graphon on $k=k(n)=O(1)$ parts satisfying Conditions \ref{lowerboundconditions}.

By Lemma \ref{stronger2lemma}, there is some $W'=W'(n)$ a block graphon on $k$ parts satisfying Conditions \ref{lowerboundconditions} with $\Hom(K,W')\geq (1-o(1))\Hom(K,W)$, $I_p(W')\leq I_p(W)$. Furthermore, $W'$ has at least one interval in its partition of length at least $1-p+p\left(\log\log\frac{1}{p}\right)^{-1}$.

Since $\Hom(K,W)=(1+\Omega(1))p^{e(K)}$ by (3) of Conditions \ref{lowerboundconditions}, $\Hom(K,W')\geq (1-o(1))\Hom(K,W)=(1+\Omega(1))p^{e(K)}$. Thus we may take $\delta=\delta(n)>0$ such that $\Hom(K,W')=(1+\delta)p^{e(K)}$.

We are now in the situation of Setup \ref{ldpsetup}, with $W'$ taking the place of the graphon $W$. Take any constant $\epsilon>0$ independent of $n$ and set $\delta'=\delta-\epsilon$. By Propositions \ref{firsttermprop} and \ref{secondtermprop},
\begin{align*}
\mathbb{P}_{G_n^d}(\mathcal{H}_{\delta'}) & \geq\mathbb{P}_{G_n^d}(\mathcal{A}_n^{\deg}\land\mathcal{H}_{\delta'}) \\ & =\mathbb{P}_{G_n^d}(\mathcal{A}_n^{\deg})-\mathbb{P}_{G_n^d}(\mathcal{A}_n^{\deg}\land\neg\mathcal{H}_{\delta'}) \\ & \geq\exp\left(-\left(\frac{1}{2}+o(1)\right)I_p(W)n^2\right).
\end{align*}
Unrolling the definition of $\mathbb{P}_{G_n^d}(\mathcal{H}_{\delta'})$, we see that it is equal to
\[\Pr[\Hom(K,G_n^d)\geq (1+\delta')p^{e(K)}n^{v(K)}],\]
and since $1+\delta'=1+\delta-\epsilon=\frac{1+\delta-\epsilon}{1+\delta}p^{-e(K)}\Hom(K,W')\geq (1-o(1))\frac{1+\delta-\epsilon}{1+\delta}p^{-e(K)}\Hom(K,W)$, we see that
\begin{align*}
\Pr\left[\Hom(K,G_n^d)\geq (1-o(1))\frac{1+\delta-\epsilon}{1+\delta}\Hom(K,W)n^{v(K)}\right] & \geq\exp\left(-\left(\frac{1}{2}+o(1)\right)I_p(W')n^2\right) \\ & \geq\exp\left(-\left(\frac{1}{2}+o(1)\right)I_p(W)n^2\right),
\end{align*}
as $I_p(W')\leq I_p(W)$.

Since this holds for any $\epsilon>0$, we may take $\epsilon\to 0$ sufficiently slowly that this lower bound still holds, so
\[\Pr\left[\Hom(K,G_n^d)\geq (1-o(1))\Hom(K,W)n^{v(K)}\right]\geq\exp\left(-\left(\frac{1}{2}+o(1)\right)I_p(W)n^2\right).\]
Taking logarithms,
\[-\log\left(\Pr\left[\Hom(K,G_n^d)\geq (1-o(1))\Hom(K,W)n^{v(K)}\right]\right)\leq\left(\frac{1}{2}+o(1)\right)I_p(W)n^2,\]
as desired.
\end{proof}
We will prove these Propositions \ref{firsttermprop} and \ref{secondtermprop} in the following two sections. We finish this section with several useful observations.
\begin{lemma}\label{conditionsdeductionlemma}
Under Setup \ref{ldpsetup}, the following statements hold.
\begin{itemize}
\item $|V_i|=m(S_i)n+O(1)=(1+o(1))m(S_i)n\gg 1$ for all $i\in [k]$.
\item $a_{ij}=w_{ij}|V_i||V_j|+O(1)$ for all $i,j\in [k]$.
\item $a_{ij}=(1+o(1))w_{ij}|V_i||V_j|$ if $(i,j)$ is important.
\end{itemize}
\end{lemma}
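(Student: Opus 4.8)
The plan is to verify the three bullets by directly unwinding Definitions~\ref{blocksdef} and~\ref{andegdef} and invoking Conditions~(5) and~(9) of Conditions~\ref{lowerboundconditions}. There is no substantive obstacle; the only point requiring care is the diagonal case $i=j$, where the doubling in the definition of $a_{ii}$ and the gap between $\binom{|V_i|}{2}$ and $|V_i|^2/2$ must be tracked and the resulting lower-order terms shown to be dominated.

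For the first bullet, $V_i=\mathbb{Z}^+\cap nS_i$ is the set of positive integers in an interval of length $n\,m(S_i)$, so $|V_i|=n\,m(S_i)+O(1)$; Condition~(5) gives $m(S_i)\gg n^{-1}$, hence $n\,m(S_i)\to\infty$, so the $O(1)$ is absorbed into a $(1+o(1))$ factor and $|V_i|=(1+o(1))n\,m(S_i)\gg1$. For the second bullet, when $i\neq j$ we have $a_{ij}=\lfloor w_{ij}|V_i||V_j|+\tfrac12\rfloor$, within $1$ of $w_{ij}|V_i||V_j|$; when $i=j$, $a_{ii}=2\lfloor w_{ii}\binom{|V_i|}{2}+\tfrac12\rfloor=w_{ii}|V_i|(|V_i|-1)+O(1)=w_{ii}|V_i|^2+O(|V_i|)$, using $w_{ii}\le1$, and by the first bullet $O(|V_i|)=O(n\,m(S_i))$, which is of strictly lower order than $w_{ij}|V_i||V_j|$ everywhere the estimate is applied (see the next paragraph), hence harmless.

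For the third bullet, let $(i,j)$ be important. Condition~(9) gives $w_{ij}m(S_i)\gg n^{-1}\log n$, so by the first bullet $w_{ij}|V_i|=(1+o(1))w_{ij}\,n\,m(S_i)\gg\log n\to\infty$; since also $|V_j|\gg1$, this forces $w_{ij}|V_i||V_j|\to\infty$. Consequently the additive error from the second bullet --- which is $O(1)$ when $i\neq j$, and $O(|V_i|)=o(w_{ij}|V_i|^2)=o(w_{ij}|V_i||V_j|)$ when $i=j$ (using $w_{ij}|V_i|\to\infty$) --- is $o(w_{ij}|V_i||V_j|)$, so $a_{ij}=(1+o(1))w_{ij}|V_i||V_j|$, completing the lemma.
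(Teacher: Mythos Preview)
Your argument follows the same route as the paper's: count integers in an interval for the first bullet, unwind the rounding in Definition~\ref{andegdef} for the second, and invoke Condition~(9) to show $w_{ij}|V_i||V_j|\to\infty$ for the third. The first and third bullets are handled essentially identically to the paper.

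For the second bullet you are in fact \emph{more} careful than the paper. The paper's proof asserts that ``$a_{ij}$ is $w_{ij}|V_i||V_j|$ rounded to the nearest (or nearest even if $i=j$) integer,'' but this is not what Definition~\ref{andegdef} says: for $i=j$ one rounds $w_{ii}\binom{|V_i|}{2}$ and doubles, giving $a_{ii}=w_{ii}|V_i|(|V_i|-1)+O(1)=w_{ii}|V_i|^2+O(w_{ii}|V_i|)$, exactly as you write. So the lemma as stated (with a uniform $O(1)$) is slightly inaccurate on the diagonal, and you have caught this. Your remedy --- recording the error as $O(|V_i|)$ and observing that it is $o(w_{ij}|V_i||V_j|)$ once $w_{ij}|V_i|\to\infty$, hence absorbed in the third bullet --- is the right fix, and it is all that is ever needed downstream in the paper.
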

\begin{proof}
To prove the first statement, note that since $nS_i$ is an interval of length $m(S_i)n$, it contains between $m(S_i)n-1$ and $m(S_i)n+1$ positive integers. Since $V_i=nS_i\cap\mathbb{Z}^+$, we must have $|V_i|=m(S_i)n+O(1)$. By (5) of Conditions \ref{lowerboundconditions}, $m(S_i)n\gg 1$. Thus $|V_i|=(1+o(1))m(S_i)n\gg 1$.

To prove the second statement, note that $a_{ij}$ is $w_{ij}|V_i||V_j|$ rounded to the nearest (or nearest even if $i=j$) integer, so it differs from $w_{ij}|V_i||V_j|$ by $O(1)$.

To prove the third statement, given the second statement it suffices to show that $w_{ij}|V_i||V_j|\gg 1$ when $(i,j)$ is important. Applying the first statement along with (9) of Conditions \ref{lowerboundconditions} yields
\[w_{ij}|V_i||V_j|=(1+o(1))(w_{ij}m(S_i)n)|V_j|\gg (1+o(1))|V_j|\log n\gg 1,\]
exactly what we wanted to show.
\end{proof}
The following lemma shows that important blocks dominate the entropy.
\begin{lemma}\label{importantblocklemma}
Under Setup \ref{ldpsetup},
\[I_p(W)=(1+o(1))n^{-2}\displaystyle\sum_{\substack{1\leq i,j\leq k \\ (i,j)\text{ important}}}a_{ij}\log\frac{w_{ij}}{p}.\]
\end{lemma}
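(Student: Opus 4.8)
The plan is to start from the exact decomposition $I_p(W)=\sum_{1\le i,j\le k} m(S_i)m(S_j) I_p(w_{ij})$ and argue that all the entropy concentrates on the important blocks, then replace each surviving summand by its asymptotic value. First I would dispose of the blocks touching index $k$: by the stronger form of condition (2), $m(S_k)\ge 1-p+p(\log\log\tfrac1p)^{-1}$, so $m(S_i)\le p$ for $i\le k-1$, and condition (6) forces every block $(i,j)$ with $i,j\le k-1$ to be either an important block or to have $w_{ij}=p$ (hence zero entropy). For the blocks with $i=k$ or $j=k$: condition (1) ($p$-regularity) together with $w_{kk}\ge p$ (which follows since all other $w_{kj}\ge p$ and the weighted average of the $k$-th row equals $p$) pins down $w_{kk}=p+O(p\cdot\text{stuff})$; more carefully, regularity of row $k$ gives $m(S_k)(w_{kk}-p)=-\sum_{j<k}m(S_j)(w_{kj}-p)$, so $|w_{kk}-p|\lesssim \sum_{j<k}m(S_j)|w_{kj}-p|$, and since each $m(S_j)\le p$ this is $O(p^2\cdot k)=o(p)$; similarly $|w_{ik}-p|\lesssim \sum_{j<k} m(S_j)|w_{ij}-p| = o(p)$ for each fixed $i$. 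Then Lemma \ref{entropyapproxlemma} (the $|x|=O(p)$ case) gives $I_p(w_{ik})=\Theta((w_{ik}-p)^2/p)$, and one checks that $m(S_i)m(S_k)I_p(w_{ik})$ summed over $i$ is dominated by $I_p(W)$ — this uses condition (4)'s lower bound $I_p(W)\gg n^{-1}\log n$ only indirectly; the real input is that the $k$-row entropy is of strictly smaller order than the important-block entropy, which I would extract by comparing $(w_{ik}-p)^2/p \lesssim p^3$ against the fact (condition (3), (10)) that the important blocks contribute a bounded-below multiple of $I_p(W)$.

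Next I would handle the important blocks themselves. For an important block $(i,j)$, by Lemma \ref{conditionsdeductionlemma} we have $|V_i|=(1+o(1))m(S_i)n$ and $a_{ij}=(1+o(1))w_{ij}|V_i||V_j|=(1+o(1))w_{ij}m(S_i)m(S_j)n^2$. Since $w_{ij}\ge p$ for such blocks and (by condition (6), important case) $w_{ij}$ is bounded away from $p$ unless it equals $p$ — actually I must be careful: an important block could still have $w_{ij}$ close to (but larger than) $p$. Here I would invoke the dichotomy differently: if $w_{ij}=p$ exactly the block contributes nothing to either side (both $I_p(w_{ij})=0$ and $\log(w_{ij}/p)=0$), so assume $w_{ij}>p$, hence $w_{ij}\ge p+p^{1+o(1)}$ by condition (3) of Definition \ref{nicegraphonsdef}-type reasoning — wait, that definition isn't assumed here; instead I would argue: condition (6)'s important case says $m(S_i)m(S_j)w_{ij}\ll(\log\log\log\tfrac1p)^{-1}I_p(W)$, and combined with the block contributing positively to $I_p(W)$, one deduces $w_{ij}$ is not of the form $p+o(p)$ among important blocks with positive entropy, so Lemma \ref{entropyapproxlemma}'s second case applies: $m(S_i)m(S_j)I_p(w_{ij})=(1+o(1))m(S_i)m(S_j)w_{ij}\log(w_{ij}/p)$. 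Multiplying by $n^2$ and using $a_{ij}=(1+o(1))w_{ij}m(S_i)m(S_j)n^2$ converts this into $(1+o(1))n^{-2}a_{ij}\log(w_{ij}/p)$, as desired.

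The main obstacle I anticipate is the borderline case: an important block $(i,j)$ with $w_{ij}$ only slightly larger than $p$, say $w_{ij}=p(1+\epsilon)$ with $\epsilon=\epsilon(p)\to 0$ — there Lemma \ref{entropyapproxlemma} gives $I_p(w_{ij})=\Theta(p\epsilon^2)$ rather than $p\epsilon\log\epsilon$-type behavior, and one needs to verify that such blocks either don't arise (ruled out by condition (6)'s quantitative gap together with the entropy lower bounds) or contribute negligibly to both sums in a matched way. I would resolve this by showing that condition (6), in the important case, together with condition (4), forces every important block with $w_{ij}\ne p$ to satisfy $w_{ij}/p \to \infty$ or at least $w_{ij}\ge p^{1-\Omega(1)}$ — more precisely, I expect the cleanest route is to show directly that $n^{-2}\sum_{(i,j)\text{ imp}} a_{ij}\log(w_{ij}/p)$ and $I_p(W)$ differ by a factor $1+o(1)$ by sandwiching: the $k$-row and the $w_{ij}=p$ blocks are negligible (shown above), and on the remaining blocks Lemma \ref{entropyapproxlemma} applies uniformly because $w_{ij}=\Omega(p)$ always and, where $w_{ij}/p$ fails to blow up, both $I_p(w_{ij})\asymp (w_{ij}-p)^2/p$ and $w_{ij}\log(w_{ij}/p)\asymp (w_{ij}-p)^2/p$ agree up to constants — but that only gives $\Theta$, not $1+o(1)$, so I genuinely need the gap in condition (6) to exclude this regime, which I believe it does once one tracks that the total important-block mass is $\ll (\log\log\log\tfrac1p)^{-1}I_p(W)$ forces $\log(w_{ij}/p)\gg \log\log\log\tfrac1p\to\infty$ on any important block carrying a constant fraction of the entropy.
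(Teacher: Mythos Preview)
Your handling of the important blocks is broadly right, and your final paragraph correctly identifies why the borderline case $w_{ij}=(1+o(1))p$ causes no trouble: such a block contributes $o(I_p(W))$ to both sides by the mass condition in (6), while blocks with $w_{ij}\gg p$ enjoy termwise $(1+o(1))$ equality via Lemma~\ref{entropyapproxlemma}.

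The genuine gap is in your treatment of the $k$-row and $k$-column. First, the arithmetic is off: from regularity you get $|w_{ik}-p|\lesssim\sum_{j<k}m(S_j)|w_{ij}-p|$, but $|w_{ij}-p|$ is \emph{not} $O(p)$ in general --- it can be $1-p$ (e.g.\ on a hub block where $w_{ij}=1$). So the bound is only $|w_{ik}-p|=O(p)$, not $o(p)$, and certainly not $O(p^2)$. Second, and more seriously, even the correct bound $|w_{ik}-p|=O(p)$ gives only
\[
\sum_{i}m(S_i)m(S_k)I_p(w_{ik})=O\Bigl(p\sum_{i<k}m(S_i)\Bigr)=O(p^2),
\]
and $O(p^2)$ is \emph{not} $o(I_p(W))$ under Setup~\ref{ldpsetup}: for the graphons of Figure~\ref{constantgraphonfigure} one has $I_p(W)=\Theta(p^{2+\gamma}\log\tfrac1p)$ with $\gamma>0$, which is $\ll p^2$. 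An absolute bound on the $k$-row entropy cannot work here, because you have no matching lower bound on $I_p(W)$ at the scale $p^2$.

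The paper sidesteps this by comparing row-by-row rather than absolutely. For each fixed $x_0\in S_i$, regularity says the average of $W(x_0,\cdot)-p$ over $[0,1]\setminus S_k$ equals $-\frac{m(S_k)}{1-m(S_k)}(w_{ik}-p)$, and Jensen applied to the convex function $f(x)=I_p(p+x)$ gives
\[
\int_{[0,1]\setminus S_k}I_p(W(x_0,y))\,dy\;\ge\;(1-m(S_k))\,f\!\left(-\tfrac{m(S_k)}{1-m(S_k)}(w_{ik}-p)\right).
\]
With $q:=\tfrac{m(S_k)}{1-m(S_k)}\gtrsim p^{-1}$, one then checks from the asymptotics of $f$ that $qf(x)\ll f(-qx)$ uniformly for $x\in[-p,1-p]$, so the $S_k$-part of each row is $o(\cdot)$ of the complementary part. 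Integrating over $x_0$ gives $\int_{[0,1]\times S_k}I_p(W)=o(I_p(W))$ directly, with no appeal to an absolute scale. This Jensen step is the idea your argument is missing; once you have it, the rest of your proposal goes through.
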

\begin{proof}
By (1) of Conditions \ref{lowerboundconditions}, $W$ is $p$-regular, so
\[\displaystyle\int_{S_k}(W(x_0,y)-p)dy=-\displaystyle\int_{[0,1]\backslash S_k}(W(x_0,y)-p)dy\]
for all $x_0$. Take $i$ such that $x_0\in S_i$. Now, $W(x_0,y)-p=w_{ij}-p$ when $y\in S_j$, so we may write the left side as $m(S_k)(w_{ik}-p)$.

Now, let $f(x)=I_p(p+x)$. The function $f$ is convex, as $I_p$ is, so by Jensen's inequality,
\begin{align*}
\displaystyle\int_{[0,1]\backslash S_k}I_p(W(x_0,y))dy & =\displaystyle\int_{[0,1]\backslash S_k}f(W(x_0,y)-p)dy \\ & \geq (1-m(S_k))f\left(\frac{\displaystyle\int_{[0,1]\backslash S_k}(W(x_0,y)-p)dy}{(1-m(S_k))}\right) \\ & =(1-m(S_k))f\left(\frac{-\displaystyle\int_{S_k}(W(x_0,y)-p)dy}{(1-m(S_k))}\right) \\ & =(1-m(S_k))f\left(\frac{-m(S_k)}{(1-m(S_k))}(w_{ik}-p)\right).
\end{align*}
More simply, we also have
\[\displaystyle\int_{S_k}I_p(W(x_0,y))dy=\displaystyle\int_{S_k}f(w_{ik}-p)dy=m(S_k)f(w_{ik}-p).\]
Thus letting $q=\frac{m(S_k)}{1-m(S_k)}$, we see that
\[\frac{\displaystyle\int_{S_k}I_p(W(x_0,y))dy}{\displaystyle\int_{[0,1]\backslash S_k}I_p(W(x_0,y))dy}\leq \frac{q\cdot f(w_{ik}-p)}{f(q\cdot (w_{ik}-p))}.\]
Notice that $q\geq\frac{1-p}{p}\geq\frac{1}{2p}$ by condition (2) of Conditions \ref{lowerboundconditions}. Since $f(x)=\Theta\left(x\log\frac{x}{p}\right)$ for $p\ll |x|\leq 1$ and $f(x)=\Theta\left(\frac{x^2}{p}\right)$ for $|x|=O(p)$, it is easy to check that for $q=\Omega(p^{-1})$ and all $x$ in $[-p,1-p]$, $f(x)\ll \frac{f(qx)}{q}$.

Therefore, we see that
\[\displaystyle\int_{[0,1]\backslash S_k}I_p(W(x_0,y))dy\gg\displaystyle\int_{S_k}I_p(W(x_0,y))dy\]
for all $x_0$, so we must have
\begin{equation}\label{eliminatesk}
\displaystyle\int_{[0,1]^2}I_p(W(x,y))dx dy=(1+o(1))\displaystyle\int_{[0,1]\times [0,1]\backslash S_k}I_p(W(x,y))dx dy=(1+o(1))\displaystyle\int_{([0,1]\backslash S_k)^2}I_p(W(x,y))dx dy.
\end{equation}
Finally, by condition (6) of Conditions \ref{lowerboundconditions}, if some block $S_i\times S_j\subset ([0,1]\backslash S_k)^2$ is not important, then $w_{ij}=p$ and so the integral of $I_p(W)$ over that block vanishes. Thus in the right hand side of (\ref{eliminatesk}) we may simply integrate over important blocks, yielding
\begin{align*}
I_p(W) & =(1+o(1))\displaystyle\sum_{\substack{1\leq i,j\leq k \\ (i,j)\text{ important}}}\displaystyle\int_{S_i\times S_j}I_p(W(x,y))dx dy \\ & =(1+o(1))\displaystyle\sum_{\substack{1\leq i,j\leq k \\ (i,j)\text{ important}}}m(S_i)m(S_j) I_p(w_{ij}).
\end{align*}
Multiplying by $n^2$, and applying Lemma \ref{conditionsdeductionlemma}, we obtain
\begin{align*}
n^2I_p(W) & =(1+o(1))\displaystyle\sum_{\substack{1\leq i,j\leq k \\ (i,j)\text{ important}}}(m(S_i)n)(m(S_j)n)I_p(w_{ij}) \\ & =(1+o(1))\displaystyle\sum_{\substack{1\leq i,j\leq k \\ (i,j)\text{ important}}}|V_i||V_j|I_p(w_{ij}).
\end{align*}
Now, for important $(i,j)$, $w_{ij}\gg p$, so $I_p(w_{ij})=(1+o(1))w_{ij}\log\frac{w_{ij}}{p}$ by Lemma \ref{entropyapproxlemma}. Applying Lemma \ref{conditionsdeductionlemma}, $a_{ij}=(1+o(1))|V_i||V_j|w_{ij}$ for $(i,j)$ important, and thus
\[n^2I_p(W)=(1+o(1))\displaystyle\sum_{\substack{1\leq i,j\leq k \\ (i,j)\text{ important}}}a_{ij}\log\frac{w_{ij}}{p},\]
as desired.
\end{proof}
\section{Proof of Proposition \ref{firsttermprop}}\label{firsttermsection}
We will prove three lemmas that together easily imply the Proposition.
\begin{lemma}\label{Gndvsplemma}
Under Setup \ref{ldpsetup},
\[\mathbb{P}_{G_n^d}(\mathcal{A}_n^{\deg})\geq\exp\left(-o(I_p(W)n^2)\right)\mathbb{P}_{p}(\mathcal{A}_n^{\deg}).\]
\end{lemma}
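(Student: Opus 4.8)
The plan is to exploit the fact that $\mathcal{A}_n^{\deg}$ consists only of $d$-regular graphs, so that the two measures $\mathbb{P}_{G_n^d}$ and $\mathbb{P}_p$ agree on $\mathcal{K}_{n,d}$ up to a single overall normalization. First I would record, from Definition \ref{andegdef}, the inclusions $\mathcal{A}_n^{\deg}\subseteq\mathcal{A}_n\subseteq\mathcal{K}_{n,d}$, and observe that every $d$-regular graph on $[n]$ has exactly $dn/2$ edges. Hence under $\mathbb{P}_p$ every such graph receives the same weight $\pi:=p^{dn/2}(1-p)^{\binom{n}{2}-dn/2}$, so $\mathbb{P}_p(\mathcal{E})=|\mathcal{E}|\,\pi$ for any $\mathcal{E}\subseteq\mathcal{K}_{n,d}$, while $\mathbb{P}_{G_n^d}(\mathcal{E})=|\mathcal{E}|/|\mathcal{K}_{n,d}|$ since $\mathbb{P}_{G_n^d}$ is uniform on $\mathcal{K}_{n,d}$. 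In other words, conditioning $G(n,p)$ on $d$-regularity produces exactly $G_n^d$, and applying this with $\mathcal{E}=\mathcal{A}_n^{\deg}$ gives
\[
\mathbb{P}_{G_n^d}(\mathcal{A}_n^{\deg})=\frac{\mathbb{P}_p(\mathcal{A}_n^{\deg})}{|\mathcal{K}_{n,d}|\,\pi}=\frac{\mathbb{P}_p(\mathcal{A}_n^{\deg})}{\mathbb{P}_p(\mathcal{K}_{n,d})}.
\]

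Next I would bound the denominator from above. The crudest bound $\mathbb{P}_p(\mathcal{K}_{n,d})\leq 1$ already yields $\mathbb{P}_{G_n^d}(\mathcal{A}_n^{\deg})\geq\mathbb{P}_p(\mathcal{A}_n^{\deg})$, which is enough for the stated inequality (the factor $\exp(-o(I_p(W)n^2))$ can be taken to be at most $1$). If instead one wants the transparent two-sided estimate $1/\mathbb{P}_p(\mathcal{K}_{n,d})=\exp(o(I_p(W)n^2))$ that composes cleanly with Lemmas to follow, I would invoke the standard asymptotic-enumeration fact that $-\log\mathbb{P}_p\big(G(n,p)\text{ is }d\text{-regular}\big)=O(n\log n)$ throughout the range $n^{-1}\log\log n\ll p\ll1$ (via e.g. a configuration-model lower bound on the number of $d$-regular graphs, or McKay–Wormald-type formulas, using only $d\leq n$ so that $\log d\leq\log n$), and then use condition (4) of Conditions \ref{lowerboundconditions}, which gives $I_p(W)\gg n^{-1}\log n$ and hence $n\log n=o(I_p(W)n^2)$. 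Either way the conclusion follows.

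I do not expect any real obstacle in this lemma: its only content is the restriction/conditioning identity relating $\mathbb{P}_p$ and $\mathbb{P}_{G_n^d}$ on $d$-regular graphs, after which $\mathbb{P}_p(\mathcal{K}_{n,d})\leq1$ suffices. The single mildly nontrivial ingredient, and only if one insists on the symmetric $\exp(\pm o(I_p(W)n^2))$ formulation, is the classical $O(n\log n)$ bound on $-\log\mathbb{P}_p(\mathcal{K}_{n,d})$, which is far from tight for our purposes.
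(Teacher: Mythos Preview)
Your argument is correct as written: under the literal Definition~\ref{andegdef} (where $\mathcal{A}_n^{\deg}\subseteq\mathcal{A}_n\subseteq\mathcal{K}_{n,d}$) the conditioning identity $\mathbb{P}_{G_n^d}(\mathcal{A}_n^{\deg})=\mathbb{P}_p(\mathcal{A}_n^{\deg})/\mathbb{P}_p(\mathcal{K}_{n,d})\geq\mathbb{P}_p(\mathcal{A}_n^{\deg})$ already gives the lemma with factor~$1$. The paper itself uses exactly this identity, together with the estimate $\log\mathbb{P}_p(\mathcal{K}_{n,d})\sim-\tfrac{1}{2}n\log d=o(I_p(W)n^2)$, later in the proof of Lemma~\ref{changeofmeasurelemma}.

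The paper, however, takes a genuinely different route here, and the reason is worth noting. In its proof the paper writes $\mathcal{A}_n^{\deg}$ as the disjoint union $\bigcup_{\text{allowed }S}\mathcal{B}^S$, i.e.\ as an event depending \emph{only} on the edges in important blocks, with no $d$-regularity constraint. It then uses an edge-by-edge switching argument (Lemmas~\ref{swappinglemma} and~\ref{emptysetlemma}, Corollary~\ref{Bsprobcor}) to compare $\mathbb{P}_{G_n^d}(\mathcal{B}^S)$ directly with $\mathbb{P}_p(\mathcal{B}^S)$, incurring an error of size $\exp\bigl(O(|S|\log\log\log\tfrac{1}{p}+p|Imp_n|)\bigr)$, which is $\exp(o(I_p(W)n^2))$ by the definition of ``important''. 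This reading of $\mathcal{A}_n^{\deg}$ is inconsistent with the letter of Definition~\ref{andegdef} but is the one the paper actually uses throughout Section~\ref{firsttermsection}: the proof of Lemma~\ref{starlargelemma} (via Lemmas~\ref{P*An} and~\ref{P*Andeg}) checks only the edge-count and degree conditions under $\mathbb{P}_{\star}$ and never addresses $d$-regularity, so it is really bounding $\mathbb{P}_{\star}\bigl(\bigcup_{\text{allowed }S}\mathcal{B}^S\bigr)$.

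So your shortcut proves the lemma as literally stated, but shifts the work downstream. If you keep the literal definition and your proof here, then Lemma~\ref{starlargelemma} must be upgraded to show $\mathbb{P}_{\star}(\mathcal{A}_n^{\deg}\cap\mathcal{K}_{n,d})\geq\exp(-o(I_p(W)n^2))$; this is no longer a product-measure binomial computation because $d$-regularity entangles the important and unimportant blocks, and handling it essentially requires the same switching comparison the paper performs here. The paper's choice is to do that comparison once, at this point, obtaining the stronger per-$S$ bound, after which Lemma~\ref{starlargelemma} becomes routine.
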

\begin{lemma}\label{pvsstarlemma}
Under setup \ref{ldpsetup}, the Radon-Nikodym derivative $\frac{d\mathbb{P_*}}{d\mathbb{P}_p}$ is constant on $\mathcal{A}_n$ and equals
\[\exp\left(-\left(\frac{1}{2}+o(1)\right)I_p(W)n^2\right).\]
\end{lemma}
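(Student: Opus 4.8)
The plan is to compute the density $\frac{d\mathbb{P}_\star}{d\mathbb{P}_p}$ by hand. Both $\mathbb{P}_p$ and $\mathbb{P}_\star$ are product measures over the $\binom n2$ potential edges of $[n]$, and by construction they assign identical marginals (probability $p$ to present, $1-p$ to absent) to every pair not lying inside an important block. Hence at any graph $T$ the density telescopes to a product over the important blocks only,
\[
\frac{d\mathbb{P}_\star}{d\mathbb{P}_p}(T)=\prod_{(i,j)\text{ important}}\Bigl(\tfrac{w_{ij}}{p}\Bigr)^{e_{ij}(T)}\Bigl(\tfrac{1-w_{ij}}{1-p}\Bigr)^{n_{ij}-e_{ij}(T)},
\]
where $(i,j)$ ranges over unordered important blocks, $n_{ij}=|V_i||V_j|$ off the diagonal and $\binom{|V_i|}{2}$ on it, and $e_{ij}(T)$ is the number of edges of $T$ inside $V_i\times V_j$. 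By Definition~\ref{andegdef} every $T\in\mathcal A_n$ has $e_{ij}(T)$ equal to the fixed value $a_{ij}$ (off-diagonal) or $a_{ii}/2$ (diagonal), so the right-hand side is the same constant for every $T\in\mathcal A_n$; this settles the ``constant on $\mathcal A_n$'' half of the lemma.

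To identify the constant I would take logarithms and match against Lemma~\ref{importantblocklemma}. Splitting $\log\frac{d\mathbb{P}_\star}{d\mathbb{P}_p}=\sum_{(i,j)\text{ imp}}e_{ij}\log\tfrac{w_{ij}}{p}+\sum_{(i,j)\text{ imp}}(n_{ij}-e_{ij})\log\tfrac{1-w_{ij}}{1-p}$, the sums over unordered important blocks, the first (``edge'') sum is exactly $\tfrac12\sum_{(i,j)\text{ imp}}a_{ij}\log\tfrac{w_{ij}}{p}$ with the sum now over ordered blocks — each off-diagonal block counted twice there and each diagonal block once, while $e_{ij}=a_{ij}$ resp.\ $a_{ii}/2$ — and Lemma~\ref{importantblocklemma} evaluates that to $\bigl(\tfrac12+o(1)\bigr)I_p(W)n^2$. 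So the whole lemma reduces to showing the second (``non-edge'') sum is $o\bigl(I_p(W)n^2\bigr)$; granting that, the density on $\mathcal A_n$ is $\exp\!\bigl((\tfrac12+o(1))I_p(W)n^2\bigr)$, equivalently $\frac{d\mathbb{P}_p}{d\mathbb{P}_\star}=\exp\!\bigl(-(\tfrac12+o(1))I_p(W)n^2\bigr)$ there, which is the form in which the estimate is used in Section~\ref{firsttermsection}.

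The step I expect to be the real obstacle is bounding that non-edge sum, not the preceding algebra. Using Lemma~\ref{conditionsdeductionlemma} to write $e_{ij}=w_{ij}n_{ij}+O(1)$ and $n_{ij}-e_{ij}=(1-w_{ij})n_{ij}+O(1)$, and the elementary bound $\log\tfrac{1-p}{1-w_{ij}}\le\tfrac{w_{ij}-p}{1-w_{ij}}$ (valid since $w_{ij}\ge p$ by condition~(6) of Conditions~\ref{lowerboundconditions}), the $(i,j)$-term of the non-edge sum has absolute value at most $w_{ij}n_{ij}+O\!\bigl(\tfrac{1}{1-w_{ij}}\bigr)=(1+o(1))a_{ij}+O\!\bigl(\tfrac{1}{1-w_{ij}}\bigr)$. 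I would then run the same trichotomy on $w_{ij}$ used in the proof of Lemma~\ref{importantblocklemma}: either $w_{ij}=p$ exactly (the block contributes nothing to either sum), or $w_{ij}=1$ (then $a_{ij}=n_{ij}$ and the non-edge factor is literally $0$), or $w_{ij}\gg p$ with $w_{ij}<1$ — in which case $(1+o(1))a_{ij}=o\bigl(a_{ij}\log\tfrac{w_{ij}}{p}\bigr)$ is $o$ of the block's edge contribution because $\log\tfrac{w_{ij}}{p}\to\infty$, and the $O\!\bigl(\tfrac{1}{1-w_{ij}}\bigr)$ error is negligible as soon as $1-w_{ij}$ is bounded below by an inverse polynomial in $n$. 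Summing over the $O(k^2)=O(1)$ important blocks then gives the non-edge sum as $o\bigl(I_p(W)n^2\bigr)$, completing the proof. The one genuinely delicate point is justifying the lower bound on $1-w_{ij}$ on somewhat-important blocks within the full generality of Conditions~\ref{lowerboundconditions} — most likely by using the $p$-regularity of $W$ to bound the size of any block on which $W$ is near $1$ — and this is where I would expect most of the work to go.
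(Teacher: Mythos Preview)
Your approach is the paper's: write the density as a product over important blocks, observe it depends only on the per-block edge counts and is therefore constant on $\mathcal{A}_n$, take logarithms, and split into an edge sum identified by Lemma~\ref{importantblocklemma} plus a non-edge remainder to be shown $o(I_p(W)n^2)$. You also correctly note that the density $d\mathbb{P}_\star/d\mathbb{P}_p$ on $\mathcal{A}_n$ carries the \emph{positive} exponent; the minus sign in the lemma statement really belongs to the reciprocal, which is how it is used downstream.

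The concern you flag at the end --- controlling $1/(1-w_{ij})$ via $p$-regularity --- is a phantom obstacle created by your particular choice of inequality. The paper bounds the non-edge contribution of block $(i,j)$ by $O(p|V_i||V_j|)$ and then kills it using $m(S_i)m(S_j)\ll p^{-1}I_p(W)$, which follows from $I_p(W)\ge m(S_i)m(S_j)I_p(w_{ij})$ and $I_p(w_{ij})\gg p$. Your route can be fixed just as cleanly: replace $\log\tfrac{1-p}{1-w_{ij}}\le\tfrac{w_{ij}-p}{1-w_{ij}}$ by the pair of elementary bounds $n_{ij}-e_{ij}\le 2(1-w_{ij})n_{ij}$ (since $n_{ij}-e_{ij}$ is the rounding of $(1-w_{ij})n_{ij}$) and $(1-w_{ij})\log\tfrac{1}{1-w_{ij}}\le w_{ij}$, giving the non-edge term at most $2w_{ij}n_{ij}=(2+o(1))a_{ij}$, which is $o(I_p(W)n^2)$ directly from the importance bound in condition~(6). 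No $1/(1-w_{ij})$ factor and no regularity argument are needed.
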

\begin{lemma}\label{starlargelemma}
Under setup \ref{ldpsetup},
\[\mathbb{P}_{\star}(\mathcal{A}_n^{\deg})\geq\exp\left(-o(I_p(W)n^2)\right).\]
\end{lemma}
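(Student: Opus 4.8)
The plan is to bound $\mathbb{P}_{\star}(\mathcal{A}_n^{\deg})$ directly by splitting the event. Write $\mathcal{C}$ for the event that every important block $V_i\times V_j$ contains exactly $a_{ij}$ edges, $\mathcal{B}$ for the in-block degree conditions (each $v\in V_i$ has at most $2a_{ij}/|V_i|$ neighbours in $V_j$ for every important $(i,j)$), and $\mathcal{D}$ for $d$-regularity, so that $\mathcal{A}_n^{\deg}=\mathcal{D}\cap\mathcal{C}\cap\mathcal{B}$ (recall $\mathcal{A}_n^{\deg}\subseteq\mathcal{K}_{n,d}$). I will show $\mathbb{P}_{\star}(\mathcal{C}\cap\mathcal{B})\ge\exp(-o(I_p(W)n^2))$ and $\mathbb{P}_{\star}(\mathcal{D}\mid\mathcal{C}\cap\mathcal{B})\ge\exp(-o(I_p(W)n^2))$, which together give the lemma.

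For $\mathbb{P}_{\star}(\mathcal{C}\cap\mathcal{B})$: under $\mathbb{P}_{\star}$ the indicators of the slots inside a fixed important block are i.i.d.\ $\mathrm{Bernoulli}(w_{ij})$, and slots in distinct blocks are independent, so $\mathbb{P}_{\star}(\mathcal{C})=\prod_{(i,j)\ \mathrm{important}}\Pr[\mathrm{Bin}(N_{ij},w_{ij})=a_{ij}]$ with $N_{ij}$ the number of slots in block $(i,j)$. By Lemma~\ref{conditionsdeductionlemma} each $a_{ij}$ is $w_{ij}N_{ij}$ rounded to the nearest (even) integer and $a_{ij}=(1+o(1))w_{ij}N_{ij}\gg 1$, so a local limit estimate for the binomial (Stirling applied to the binomial coefficient) gives $\Pr[\mathrm{Bin}(N_{ij},w_{ij})=a_{ij}]=\Omega(n^{-1})$; since there are $O(1)$ important blocks, $\mathbb{P}_{\star}(\mathcal{C})\ge n^{-O(1)}$. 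Conditioned on $\mathcal{C}$, the edge set inside each important block is uniform among all $a_{ij}$-edge sets, and these are independent across blocks, so the number of neighbours of $v\in V_i$ in $V_j$ is hypergeometric with mean $a_{ij}/|V_i|$, which by Lemma~\ref{conditionsdeductionlemma} and condition (9) of Conditions~\ref{lowerboundconditions} equals $(1+o(1))w_{ij}m(S_j)n\gg\log n$; a Chernoff bound then gives failure probability $\exp(-\Omega(a_{ij}/|V_i|))=\exp(-\omega(\log n))$ per vertex, and a union bound over the $O(n)$ relevant vertices and $O(1)$ blocks yields $\mathbb{P}_{\star}(\mathcal{B}\mid\mathcal{C})=1-o(1)$. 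Hence $\mathbb{P}_{\star}(\mathcal{C}\cap\mathcal{B})\ge n^{-O(1)}(1-o(1))$, which is $\exp(-o(I_p(W)n^2))$ because $I_p(W)n^2\gg n\log n\gg\log n$ by condition (4) of Conditions~\ref{lowerboundconditions}.

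For $\mathbb{P}_{\star}(\mathcal{D}\mid\mathcal{C}\cap\mathcal{B})$: conditioned on $\mathcal{C}\cap\mathcal{B}$ the slots outside important blocks are still independent $\mathrm{Bernoulli}(p)$, and $\mathcal{D}$ requires the non-important degree of each $v\in V_i$ to equal $d-d_I(v)$, where the important degree $d_I(v)$ is now a fixed quantity that $\mathcal{B}$ controls. Since $W$ is $p$-regular we have $\sum_j p|V_j|=(1+o(1))d$, so each of these targets lies within a controlled window of the mean of the corresponding sum of independent Bernoullis, the discrepancy being governed by the ``importance budget'' $m(S_i)m(S_j)w_{ij}\ll(\log\log\log(1/p))^{-1}I_p(W)$ from condition (6) together with the entropy identity of Lemma~\ref{importantblocklemma}. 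A local limit estimate for the degree sequence of an almost homogeneous independent-edge random graph then gives $\mathbb{P}_{\star}(\mathcal{D}\mid\mathcal{C}\cap\mathcal{B})\ge\exp(-\Theta(n\log d)-o(I_p(W)n^2))$; as $n\log d\le n\log n\ll I_p(W)n^2$ by condition (4), and the $(\log\log\log(1/p))^{-1}$ slack makes the budget-driven term $o(I_p(W)n^2)$, this is $\exp(-o(I_p(W)n^2))$. Multiplying the two bounds proves the lemma.

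I expect the third paragraph to be the main obstacle: one has to control, uniformly over the conditioning $\mathcal{C}\cap\mathcal{B}$, how far the required non-important degree sequence is displaced from the means of the matching independent sums, and then invoke a sufficiently precise local limit theorem for degree sequences, so that the exponential cost of $d$-regularity is only $o(I_p(W)n^2)$ rather than a constant multiple of $I_p(W)n^2$; this is exactly where conditions (4), (6) and (9) and the definition of ``important'' (with its $\log\log\log$ buffer) enter. The estimates in the second paragraph, by contrast, are routine.
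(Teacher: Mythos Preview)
Your second paragraph is essentially the paper's proof. The paper splits $\mathbb{P}_{\star}(\mathcal{A}_n^{\deg})=\mathbb{P}_{\star}(\mathcal{A}_n)-\mathbb{P}_{\star}(\mathcal{A}_n\setminus\mathcal{A}_n^{\deg})$, bounds the first term below by $n^{-O(1)}$ via binomial point probabilities at the rounded mean (exactly your estimate for $\mathcal{C}$), bounds the second above by $n^{-\omega(1)}$ via a Chernoff bound on the in-block degrees under $\mathbb{P}_\star$ (your $\mathcal{B}$ step, except the paper does it unconditionally rather than conditioning on $\mathcal{C}$ and using a hypergeometric tail), and then invokes condition~(4). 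So modulo the cosmetic difference between ``subtract'' and ``condition then multiply'', your $\mathcal{C}\cap\mathcal{B}$ analysis and the paper's coincide.

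Your third paragraph, however, is attacking a phantom. Despite the literal wording of Definition~\ref{andegdef}, the events $\mathcal{A}_n$ and $\mathcal{A}_n^{\deg}$ are used throughout Section~\ref{firsttermsection} \emph{without} the $d$-regularity constraint: the proof of Lemma~\ref{Gndvsplemma} asserts that $\mathcal{A}_n^{\deg}$ is a disjoint union of sets $\mathcal{B}^S$ (which are certainly not contained in $\mathcal{K}_{n,d}$), and the proofs of Lemmas~\ref{P*An} and~\ref{P*Andeg} compute $\mathbb{P}_\star(\mathcal{A}_n)$ purely from the important-block edge counts with no regularity input. The ``$T\in\mathcal{K}_{n,d}$'' in the definition is a slip; the passage from $\mathbb{P}_\star$ back to $d$-regular graphs is handled entirely by Lemma~\ref{Gndvsplemma} (the swapping argument), not here. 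So your event $\mathcal{D}$ should simply be dropped, and with it the whole third paragraph.

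This matters because the argument you sketch for $\mathbb{P}_\star(\mathcal{D}\mid\mathcal{C}\cap\mathcal{B})$ is not close to complete: you would need a local limit theorem for the \emph{entire} degree sequence of an inhomogeneous Bernoulli graph at a prescribed target vector displaced from the mean by the conditioned important degrees, uniformly over all realisations of $\mathcal{C}\cap\mathcal{B}$, and to show that the displacement cost is $o(I_p(W)n^2)$. That is a substantial statement in its own right (and is, in effect, what the swapping argument of Lemma~\ref{swappinglemma} is engineered to avoid). Fortunately it is not needed.
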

\begin{proof}[Deduction of Proposition \ref{firsttermprop} from Lemmas \ref{Gndvsplemma}, \ref{pvsstarlemma}, and \ref{starlargelemma}]
Since $\mathcal{A}_n^{\deg}\subseteq\mathcal{A}_n$, Lemma \ref{pvsstarlemma} implies that
\[\mathbb{P}_p(\mathcal{A}_n^{\deg})=\exp\left(-\left(\frac{1}{2}+o(1)\right)I_p(W)n^2\right)\mathbb{P}_{\star}(\mathcal{A}_n^{\deg}).\]
Thus by Lemmas \ref{Gndvsplemma} and \ref{starlargelemma},
\begin{align*}
\mathbb{P}_{G_n^d}(\mathcal{A}_n^{\deg}) & \geq\exp\left(-o(I_p(W)n^2)\right)\mathbb{P}_{p}(\mathcal{A}_n^{\deg}) \\ & =\exp\left(-\left(\frac{1}{2}+o(1)\right)I_p(W)n^2\right)\mathbb{P}_{\star}(\mathcal{A}_n^{\deg}) \\ & \geq\exp\left(-\left(\frac{1}{2}+o(1)\right)I_p(W)n^2\right).
\end{align*}
\end{proof}
We prove the three lemmas over the next three subsections.
\subsection{Proof of Lemma \ref{Gndvsplemma}}
This bound is the most involved of the three. The main idea is the swapping argument used in Lemma 2.5 of \cite{BD}.

Recall from Setup \ref{ldpsetup} the definition of $Imp_n$ and $\mathcal{B}_S$. Notice that the $\mathcal{B}_S$ partition the set of all graphs on $[n]$ as $S$ ranges over all subsets of $Imp_n$. Furthermore, since $\mathcal{A}_n^{\deg}$ only imposes restrictions on edges in important blocks, it can be expressed as the disjoint union of certain $\mathcal{B}_S$.

We would therefore like to show that $\mathbb{P}_{G_n^d}(\mathcal{B}^S)$ is (almost) at least as probable as $\mathbb{P}_p(\mathcal{B}^S)$ for all $S$. We accomplish this by showing that adding a single edge to $S$ multiplies $\mathbb{P}_{G_n^d}(\mathcal{B}^S)$ by approximately $p$.
\begin{lemma}\label{swappinglemma}
Assume Setup \ref{ldpsetup}. Take some $S\subseteq Imp_n$, and let $e\in Imp_n\backslash S$ be an edge. Then
\[(1-o(1))\left(\log\log\frac{1}{p}\right)^{-2}p\leq\frac{\mathbb{P}_{G_n^d}[\mathcal{B}^{S\cup e}]}{\mathbb{P}_{G_n^d}[\mathcal{B}^S]}\leq(1+o(1))p.\]
\end{lemma}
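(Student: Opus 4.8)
\textbf{Proof proposal for Lemma \ref{swappinglemma}.}

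The plan is to use a bijective swapping argument in the spirit of Lemma 2.5 of \cite{BD}, comparing the two events edge-by-edge within the random $d$-regular model. Fix $S \subseteq Imp_n$ and $e = xy \in Imp_n \setminus S$, where $x \in V_i$, $y \in V_j$ for some important block $(i,j)$. First I would set up the switching: given a graph $T \in \mathcal{B}^{S \cup e}$ (so $xy \in T$), I want to produce graphs in $\mathcal{B}^S$ by deleting $xy$ and reconnecting $x$ and $y$ elsewhere so as to preserve $d$-regularity while not altering the edge set inside $Imp_n$ outside of removing $xy$. The standard move is: pick an edge $uv \in T$ with $u,v$ both outside all important blocks (equivalently $uv \notin Imp_n$ and not adjacent to forbidden pairs), with $xu \notin T$ and $yv \notin T$, and replace $\{xy, uv\}$ by $\{xu, yv\}$. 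This yields a graph in $\mathcal{B}^S$ since inside $Imp_n$ we have only removed the edge $xy$ (the new edges $xu,yv$ lie outside important blocks, using that $m(S_k) \geq 1 - p + p(\log\log\tfrac1p)^{-1}$ so that the `bulk' block $V_k$ is large and most of $x$'s and $y$'s potential new neighbors land there). Counting the number of valid switches in each direction and taking the ratio gives the factor of $p$ up to lower-order corrections.

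The key steps in order: (1) count, for $T \in \mathcal{B}^{S\cup e}$, the number of valid switch partners $uv$ — this is $(1 + o(1)) \cdot \tfrac{1}{2} dn$ up to corrections, since the number of edges is $\tfrac12 dn$ and only an $o(1)$ fraction are disqualified by lying in $Imp_n$, being adjacent to $x$ or $y$, or creating a multi-edge (here one uses condition (9), $w_{ij}m(S_i) \gg n^{-1}\log n$, and the degree bound built into $\mathcal{A}_n^{\deg}$ to control how many neighbors are excluded, though since this lemma is stated for all $S$, not just those defining $\mathcal{A}_n^{\deg}$, one must be slightly more careful and use only the global edge count and $|V_i|$ bounds from Lemma \ref{conditionsdeductionlemma}); (2) count, for $T' \in \mathcal{B}^S$, the number of valid reverse switches creating the edge $xy$ — a pair $(xu, yv) \in T'$ with $u,v$ outside important blocks, $xy \notin T'$, $uv \notin T'$ — which is $(1+o(1)) d^2$ up to corrections, essentially $\deg(x)\deg(y)$ restricted to the bulk, minus $o(d^2)$ disqualified pairs; (3) observe each switch is reversible and the map is `$c$-to-$c'$' with $c = (1+o(1))\tfrac12 dn$ and $c' = (1+o(1))d^2$ (more precisely one must track that different $T$ can map to the same $T'$); (4) conclude $\mathbb{P}_{G_n^d}[\mathcal{B}^{S\cup e}] / \mathbb{P}_{G_n^d}[\mathcal{B}^S] = (1+o(1)) d^2 / (\tfrac12 dn) \cdot (\text{combinatorial correction})$, which after bookkeeping is $(1+o(1)) d/n \cdot (\dots) = (1 \pm o(1))p \cdot (\dots)$. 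The lower-order correction factor is where the $(\log\log\tfrac1p)^{-2}$ slack in the lower bound comes from: in the worst case the excluded-pair counts can be as large as a $\left(\log\log\tfrac1p\right)$-type factor times the main term, because the number of edges inside important blocks incident to a given vertex is controlled only by $\sum_j w_{ij}|V_i||V_j| \lesssim (\log\log\log\tfrac1p)^{-1} I_p(W) n^2 \cdot (\dots)$ via Lemma \ref{importantblocklemma}, forcing us to absorb a polylog-in-$\log\log$ loss rather than a clean $1+o(1)$.

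The main obstacle I expect is controlling the reverse-switch count $c'$ uniformly over \emph{all} $S \subseteq Imp_n$ (not just the `nice' ones in $\mathcal{A}_n^{\deg}$), since for an arbitrary $S$ the vertices $x$ and $y$ could a priori have many neighbors inside important blocks, shrinking the number of bulk neighbors available for switching. The resolution is to note that $|Imp_n|$ itself is small: by Lemma \ref{importantblocklemma} and the definition of important blocks, $\sum_{(i,j) \text{ imp}} |V_i||V_j| w_{ij} = O\!\left(\left(\log\log\log\tfrac1p\right)^{-1} I_p(W) n^2 / \log\tfrac1p\right)$ which is $\ll d n$, so even in the worst case the number of edges of any graph inside important blocks is a vanishing fraction of $\tfrac12 dn$; hence for all but a $\left(\log\log\tfrac1p\right)^{-1}$-fraction of choices the degrees of $x,y$ within important blocks are small, and the switch counts are as claimed up to the stated slack. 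I would then assemble the ratio, being careful that the multiplicativity over successively added edges is not needed here (the lemma is about a single edge addition) but that the $o(1)$ and polylog factors are uniform in $S$ and $e$, which they are since all the estimates depend only on $n$, $p$, $W$, and the block structure, not on $S$. Taking the product of the bounds over the (at most $|Imp_n| = o(n^2)$) edges one adds to pass between a given $\mathcal{B}^S$ and the all-important-edges-absent event would, if needed elsewhere, accumulate only an $\exp(o(I_p(W)n^2))$ factor, which is exactly the tolerance in Lemma \ref{Gndvsplemma}.
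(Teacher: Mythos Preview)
Your overall switching strategy is the right one, and the forward-switch count you sketch does give the upper bound $(1+o(1))p$. The gap is in the lower bound: the simple two-edge switch (delete $\{xy,uv\}$, insert $\{xu,yv\}$) does not work. For the reverse direction you need, given $G_0\in\mathcal{B}^S$, pairs $(u,v)$ with $u\in N_{G_0}(x)\cap V_k$, $v\in N_{G_0}(y)\cap V_k$, and $uv\notin E(G_0)$. You are right that $|N(x)\cap V_k|,\,|N(y)\cap V_k|\ge d(\log\log\tfrac1p)^{-1}$ (this is exactly why the strengthened condition~(2) from Lemma~\ref{stronger2lemma} is invoked, and is the actual source of the $(\log\log\tfrac1p)^{-2}$ loss, not the $|Imp_n|$ estimate you cite). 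But the non-edge constraint $uv\notin E(G_0)$ is \emph{not} lower order: nothing forbids a $d$-regular $G_0\in\mathcal{B}^S$ in which $N(x)\cap V_k$ and $N(y)\cap V_k$ span a complete bipartite graph inside $V_k$ (these sets have size at most $d$, so this is compatible with $d$-regularity, and the constraint $E(G_0)\cap Imp_n=S$ says nothing about edges inside $V_k$). For such a $G_0$ the reverse-switch count is zero, and your lower bound on $\min_{G_0}(\text{reverse switches})$ collapses. Your claim that the disqualified pairs are ``$o(d^2)$'' is therefore unjustified.

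The paper avoids this by using a \emph{three}-edge switch with four auxiliary vertices $u_1,w_1,u_2,w_2\in V_k$: replace $\{uw,\,u_1w_1,\,u_2w_2\}$ by $\{u_1w,\,u_2w_1,\,uw_2\}$. Now the reverse switch chooses $u_1\in N(w)\cap V_k$ and $w_2\in N(u)\cap V_k$ (each giving the $d(\log\log\tfrac1p)^{-1}$ factor), \emph{and} a free edge $u_2w_1\subset V_k$ (giving $\sim dn$ choices). The forbidden-edge conditions become $u_1w_1\notin E$ and $u_2w_2\notin E$, and the bad quadruples number at most $O(d^4)$, which is genuinely negligible against the main term $d^3n(\log\log\tfrac1p)^{-2}$ since $p(\log\log\tfrac1p)^2\ll 1$. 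This extra edge is the decoupling device that your two-edge switch lacks.
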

\begin{remark}
It will be vital to the proof of Lemma \ref{swappinglemma} that $Imp_n$ spans few vertices; namely, it only spans those in $V_1\cup\cdots\cup V_{k-1}$. Since $m(S_k)\geq 1-p+p\left(\log\log\frac{1}{p}\right)^{-1}$, we expect that $|V_1\cup\cdots\cup V_{k-1}$ should be approximately at most $d\left(1-\log\log\frac{1}{p}\right)^{-1}$. That this set is at most of size $d$ is clearly important, as otherwise we might be able to choose $S$ to force some vertex of degree greater than $d$. However, the extra $1-\left(\log\log\frac{1}{p}\right)^{-1}$ factor that we gained by proving Lemma \ref{stronger2lemma} will also be important in order for us to obtain good bounds. This is the primary location in the proof that we use this stronger bound on $m(S_k)$.
\end{remark}
\begin{proof}[Proof of Lemma \ref{swappinglemma}]
Paralleling the proof in Lemma 2.5 of \cite{BD}, let $\mathcal{C}_1$ and $\mathcal{C}_0$ be the collections of $d$-regular graphs on $[n]$ that satisfy $\mathcal{B}^{S\cup e}$ and $\mathcal{B}^S$, respectively. We would like to bound $\frac{|\mathcal{C}_1|}{|\mathcal{C}_0|}$. Let $e=uw$, $u,w\in [n]$.

Suppose $G_1\in\mathcal{C}_1$. Any set of four vertices $u_1,w_1,u_2,w_2$ such that $u_1,w_1,u_2,w_2\in V_k$ and $u_1w_1,u_2w_2\in E(G_1)$ but $u_1w,u_2w_1,uw_2\notin E(G_1)$ define a `forward switching' wherein the edges $uw,u_1w_1,u_2w_2$ are replaced with the edges $u_1w,u_2w_1,uw_2$ to yield a graph. This resulting graph is in $\mathcal{C}_0$, because as blocks involving $S_k$ cannot be important, the only change to the important blocks is that the edge $e=uw$ is removed.

Similarly, in the other direction, given $G_0\in\mathcal{C}_0$, any four vertices $u_1,w_1,u_2,w_2$ such that $u\in V_i$, $w\in V_j$, $u_1,w_1,u_2,w_2\in V_k$ and $u_1w,u_2w_1,uw_2\in E(G_0)$ but $u_1w_1,u_2w_2\notin E(G_0)$ define a `reverse switching' wherein the edges $u_1w,u_2w_1,uw_2$ are replaced with the edges $uw,u_1w_1,u_2w_2$ to yield a graph in $\mathcal{C}_1$.

It is clear that forward switching and reverse switching are inverses of each other, so by counting the total number of switchings in two ways, we see that
\begin{equation}\label{swappingbound}\frac{\displaystyle\min_{G_0\in\mathcal{C}_0}(\#\text{ reverse switchings from }G_0)}{\displaystyle\max_{G_1\in\mathcal{C}_1}(\#\text{ forward switchings from }G_1)}\leq\frac{|\mathcal{C}_1|}{|\mathcal{C}_0|}\leq\frac{\displaystyle\max_{G_0\in\mathcal{C}_0}(\#\text{ reverse switchings from }G_0)}{\displaystyle\min_{G_1\in\mathcal{C}_1}(\#\text{ forward switchings from }G_1)}.\end{equation}
Bounding the maximums is quite easy. The number of forward switchings from a particular $G_1$ is at most $d^2n^2$, as there are at most $dn$ ways to choose each of the edges $u_1w_1$ and $u_2w_2$. Similarly, the number of reverse switchings from a particular $G_0$ is at most $d^3n$. This is because there are at most $d$ choices of $u_1\in V\backslash S$ adjacent to $w$, and similarly for $w_2$, and at most $dn$ choices for the edge $u_2w_1$.

We now bound the minimums. Consider reverse switchings from some $G_0\in\mathcal{C}_0$. Since $u_1$ is only restricted to be an element of $S_k$ adjacent to $w$ in $G_0$, the number of potential possibilities for it is at least $d-|V_1|-\cdots-|V_{k-1}|=|V_k|+d-n$, but $|V_k|\geq n\cdot m(S_k)-1\geq n-d+d\left(\log\log\frac{1}{p}\right)^{-1}-1$, so there are at least $d\left(\log\log\frac{1}{p}\right)^{-1}-1$ choices. Similarly, there are at least $d\left(\log\log\frac{1}{p}\right)^{-1}-2$ possibilities for $w_2$ (we must also avoid having $w_2=u_1$).

By the conditions of Setup \ref{ldpsetup}, $p\gg n^{-1}\log\log n$, so $d=pn\gg\log\log n\geq\log\log\frac{1}{p}$, so the expressions in the last paragraph are both $(1+o(1))\frac{d}{\log\log\frac{1}{p}}$. Given $u_1$ and $w_2$, we may choose $u_2w_1$ to be any edge contained in $V_k\backslash\{u_1,w_2\}$. Since there are at most $2(n-|V_k|+2)d\leq 2d^2$ choices that do not satisfy this, we get at least $d(n-2d)$ possibilities for $u_2$ and $w_1$. The only remaining restriction is that neither $u_1w_1$ nor $u_2w_2$ may be edges in $G_0$. But there are only at most $d^4$ possilibities when $u_1w_1$ is also an edge of $G_0$, since we have at most $d$ choices for $u_1$ (as it is adjacent to $w$), and then at most $d$ choices for $w_1$ (adjacent to $u_1$), at most $d$ choices for $u_2$ (adjacent to $w_1$), and at most $d$ choices for $w_2$ (adjacent to $u$). The same analysis occurs when $u_2w_2$ is an edge of $G_0$, so we obtain at least
\[\frac{d^3(n-2d)}{\left(\log\log\frac{1}{p}\right)^2}-2d^4=(1+o(1))d^3n\left(\log\log\frac{1}{p}\right)^{-2}.\]
reverse switchings, as $\left(\log\log\frac{1}{p}\right)^2\ll \frac{1}{p}=\frac{n}{d}$.

The case of minimizing forward switchings from $G_1$ is similar. First we choose $u_1w_1$ to be some edge in $S_k\times S_k$. There are $dn$ edges in the graph (double-counting each edge because we can switch $u_1$ and $w_1$), of which at most $2d(n-|V_k|)\leq 2d^2$ are not contained in $V_k\times V_k$, so again we obtain at least $d(n-2d)$ choices for $u_1$ and $w_1$, and then similarly at least $d(n-2d)$ ways to choose $u_2w_2$ contained in $S_k\backslash\{u_1,w_1\}$. Now, we only must check that $u_1w$, $u_2w_1$, and $uw_2$ are not edges. There are at most $d^3n$ possibilities where $u_1w\in E(G_1)$, as there are at most $d$ choices for each of $u_1$ and $w_1$ and at most $dn$ choices for the edge $u_2w_2$. The same holds for $uw_2$. If $u_2w_1\in G_1$, then $u_1u_2w_1w_2$ is a path of length $3$, so there are again at most $d^3n$ choices by a similar argument. Thus the minimum number of forward switchings from $G_1$ is at least
\[d^2(n-2d)^2-3d^3n=(1+o(1))d^2n^2.\]
Substituting into (\ref{swappingbound}), we obtain that
\[(1+o(1))\frac{d}{\left(\log\log\frac{1}{p}\right)^2n}\leq\frac{|\mathcal{C}_1|}{|\mathcal{C}_0|}\leq (1+o(1))\frac{d}{n},\]
and noting that $p=\frac{d}{n}$ we yield the statement of the Lemma.
\end{proof}
We will also need a bound on $\mathbb{P}_{G_n^d}(\mathcal{B}^{\emptyset})$, after which we will be able to add edges one at a time using Lemma \ref{swappinglemma}.
\begin{lemma}\label{emptysetlemma}
Under Setup \ref{ldpsetup},
\[\mathbb{P}_{G_n^d}(\mathcal{B}^{\emptyset})\geq (1-p-o(p))^{|Imp_n|}.\]
\end{lemma}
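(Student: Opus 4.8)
The plan is to isolate $\mathbb{P}_{G_n^d}(\mathcal{B}^{\emptyset})$ by combining the fact that the events $\mathcal{B}^S$ ($S\subseteq Imp_n$) partition the space of graphs on $[n]$ with the one-sided estimate supplied by Lemma \ref{swappinglemma}. First I would record that, since every graph $T$ on $[n]$ lies in exactly one $\mathcal{B}^S$ (namely the one with $S = E(T)\cap Imp_n$), we have $\sum_{S\subseteq Imp_n}\mathbb{P}_{G_n^d}(\mathcal{B}^S) = 1$. Next, observing that the $o(1)$ in the upper bound of Lemma \ref{swappinglemma} is uniform over all choices of $S$ and $e$ (the bounds in its proof depend only on $n$ and $d$), fix a single function $\eta = \eta(n) = o(1)$ with $\mathbb{P}_{G_n^d}(\mathcal{B}^{S\cup e}) \le (1+\eta)p\,\mathbb{P}_{G_n^d}(\mathcal{B}^S)$ for every $S\subseteq Imp_n$ and every $e\in Imp_n\setminus S$. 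Peeling off the edges of an arbitrary $S$ one at a time — applying Lemma \ref{swappinglemma} with the partial set $\{e_1,\dots,e_{i-1}\}$ and edge $e_i$ at step $i$ — then gives $\mathbb{P}_{G_n^d}(\mathcal{B}^S) \le \big((1+\eta)p\big)^{|S|}\,\mathbb{P}_{G_n^d}(\mathcal{B}^{\emptyset})$ for all $S$.

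I would then substitute this into the partition identity and group subsets of $Imp_n$ by size via the binomial theorem:
\[ 1 \;=\; \sum_{S\subseteq Imp_n}\mathbb{P}_{G_n^d}(\mathcal{B}^S) \;\le\; \mathbb{P}_{G_n^d}(\mathcal{B}^{\emptyset})\sum_{j=0}^{|Imp_n|}\binom{|Imp_n|}{j}\big((1+\eta)p\big)^j \;=\; \mathbb{P}_{G_n^d}(\mathcal{B}^{\emptyset})\,\big(1+(1+\eta)p\big)^{|Imp_n|}. \]
Rearranging and using $(1+x)^{-1}\ge 1-x$ for $x\ge 0$ yields
\[ \mathbb{P}_{G_n^d}(\mathcal{B}^{\emptyset}) \;\ge\; \big(1+(1+\eta)p\big)^{-|Imp_n|} \;\ge\; \big(1-(1+\eta)p\big)^{|Imp_n|} \;=\; \big(1-p-o(p)\big)^{|Imp_n|}, \]
which is exactly the claimed bound (here $\eta p = o(p)$ supplies the stated error).

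Since all the genuine work lies inside Lemma \ref{swappinglemma}, there is no serious obstacle in this lemma; the only point needing a little care is confirming that the error factor in Lemma \ref{swappinglemma} may be taken uniformly in $S$ and $e$, so that iterating it up to $|Imp_n|$ times (and $|Imp_n|$ can be as large as $\Theta(p^2 n^2)$) does not accumulate badly. Fortunately this error multiplies $p$ rather than appearing as a base raised to the power $|Imp_n|$, so even a crude uniform bound is enough, and the $o(p)$ in the statement absorbs it.
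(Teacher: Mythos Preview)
Your proof is correct and follows essentially the same approach as the paper: iterate the upper bound of Lemma \ref{swappinglemma} to bound each $\mathbb{P}_{G_n^d}(\mathcal{B}^S)$ by $(p+o(p))^{|S|}\mathbb{P}_{G_n^d}(\mathcal{B}^{\emptyset})$, sum over all $S\subseteq Imp_n$ via the partition identity and the binomial theorem, and invert. Your explicit remark on the uniformity of the $o(1)$ factor is a welcome addition that the paper leaves implicit.
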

\begin{proof}
Adding one edge at a time to the empty set and applying the upper bound of Lemma \ref{swappinglemma} repeatedly yields that
\[\mathbb{P}_{G_n^d}(\mathcal{B}^S)\leq (p+o(p))^{|S|}\mathbb{P}_{G_n^d}(\mathcal{B}^{\emptyset}).\]
The idea is to use this bound while summing over all $S$. We have that
\begin{align*}
1 & =\displaystyle\sum_{S\subseteq Imp_n}Pr[G_n^d\cap Imp_n=S] \\ & =\displaystyle\sum_{S\subseteq Imp_n}\mathbb{P}_{G_n^d}(\mathcal{B}^S) \\ & \leq\displaystyle\sum_{S\subseteq Imp_n}(p+o(p))^{|S|}\mathbb{P}_{G_n^d}(\mathcal{B}^{\emptyset}) \\ & =\displaystyle\sum_{i=0}^{|Imp_n|}\binom{|Imp_n|}{i}(p+o(p))^i\mathbb{P}_{G_n^d}(\mathcal{B}^{\emptyset}) \\ & =(1+p+o(p))^{|Imp_n|}\mathbb{P}_{G_n^d}(\mathcal{B}^{\emptyset}).
\end{align*}
Since $\frac{1}{1+p+o(p)}=1-p+o(p)$, the Lemma follows.
\end{proof}
These two lemmas together imply a lower bound on $\mathbb{P}_{G_n^d}(\mathcal{B}^S)$ for all $S$.
\begin{cor}\label{Bsprobcor}
Under Setup \ref{ldpsetup}, for any $S\subseteq Imp_n$,
\[\mathbb{P}_{G_n^d}(\mathcal{B}^S)\geq \exp\left(-O\left(|S|\log\log\log\frac{1}{p}+p|Imp_n|\right)\right)\mathbb{P}_{p}(\mathcal{B}^S).\]
\end{cor}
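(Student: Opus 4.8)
The plan is to assemble $\mathcal{B}^S$ from $\mathcal{B}^{\emptyset}$ one edge at a time, using Lemma \ref{swappinglemma} at each step and Lemma \ref{emptysetlemma} as a base case, and then to divide by the exact value of $\mathbb{P}_p(\mathcal{B}^S)$. First I would record that under $\mathbb{P}_p$ the edges lying in $Imp_n$ appear independently with probability $p$, so the event $\mathcal{B}^S$, which prescribes exactly which of those edges are present, has probability exactly $\mathbb{P}_p(\mathcal{B}^S)=p^{|S|}(1-p)^{|Imp_n|-|S|}$.

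Next I would iterate Lemma \ref{swappinglemma}: starting from the empty subset and adjoining the $|S|$ edges of $S$ in any order, each step adds an edge of $Imp_n$ not yet in the current subset, so its lower bound applies and multiplies $\mathbb{P}_{G_n^d}(\mathcal{B}^{\bullet})$ by at least $(1-o(1))\left(\log\log\tfrac1p\right)^{-2}p$, the $o(1)$ being uniform in the choice of subset and new edge. Combining with $\mathbb{P}_{G_n^d}(\mathcal{B}^{\emptyset})\geq (1-p-o(p))^{|Imp_n|}$ from Lemma \ref{emptysetlemma} gives
\[\mathbb{P}_{G_n^d}(\mathcal{B}^S)\geq\left((1-o(1))\left(\log\log\tfrac1p\right)^{-2}p\right)^{|S|}(1-p-o(p))^{|Imp_n|}.\]
Dividing this by the exact expression for $\mathbb{P}_p(\mathcal{B}^S)$, the factors $p^{|S|}$ cancel; writing $1-p-o(p)=(1-p)(1-o(p))$ (valid since $p\to 0$) converts the leftover $(1-p)$-powers into $(1-p)^{|S|}(1-o(p))^{|Imp_n|}$, so that
\[\log\frac{\mathbb{P}_{G_n^d}(\mathcal{B}^S)}{\mathbb{P}_p(\mathcal{B}^S)}\geq |S|\log(1-o(1))-2|S|\log\log\log\tfrac1p+|S|\log(1-p)+|Imp_n|\log(1-o(p)).\]
Each term on the right is bounded below by $-O\!\left(|S|\log\log\log\tfrac1p\right)$ or $-O(p|Imp_n|)$: the first two using $\log\log\log\tfrac1p\to\infty$ and the elementary estimate $-o(|S|)\ge -|S|\log\log\log\tfrac1p$ for $n$ large, and the last two using $\log(1-x)=-x+O(x^2)$ together with $p|S|\le p|Imp_n|$. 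This yields exactly the claimed bound.

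There is no real obstacle: the corollary is purely a bookkeeping consequence of Lemmas \ref{swappinglemma} and \ref{emptysetlemma}. The only point requiring a moment's care is that the $o(1)$ in Lemma \ref{swappinglemma} must be uniform over subsets and edges, so that its $|S|$-fold product is still $(1-o(1))^{|S|}=\exp(-o(|S|))$; once that is in hand, the divergence of $\log\log\log\tfrac1p$ lets this stray factor be swallowed into the $\exp\!\left(-O(|S|\log\log\log\tfrac1p)\right)$ term, and the rest is straightforward arithmetic with logarithms.
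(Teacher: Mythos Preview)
Your proof is correct and follows essentially the same approach as the paper: iterate the lower bound of Lemma \ref{swappinglemma} from $\mathcal{B}^{\emptyset}$, invoke Lemma \ref{emptysetlemma} for the base case, divide by the exact value $\mathbb{P}_p(\mathcal{B}^S)=p^{|S|}(1-p)^{|Imp_n|-|S|}$, and absorb the resulting $o(|S|)$ and $O(p|S|)$ terms into the stated $O$-bound. Your explicit remark about the uniformity of the $o(1)$ in Lemma \ref{swappinglemma} is a point the paper leaves implicit but is indeed needed.
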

\begin{proof}
Adding one edge at a time to the empty set and applying Lemmas \ref{swappinglemma} and \ref{emptysetlemma}, we see that for all $S\subseteq Imp_n$,
\[\mathbb{P}_{G_n^d}(\mathcal{B}^{S})\geq \left((1-o(1))\left(\log\log\frac{1}{p}\right)^{-2}p\right)^{|S|}(1-p-o(p))^{|Imp_n|}.\]
We may also easily compute that
\[\mathbb{P}_{p}(\mathcal{B}^{S})=p^{|S|}(1-p)^{|Imp_n|-|S|},\]
so
\begin{align*}
\frac{\mathbb{P}_{G_n^d}(\mathcal{B}^{S})}{\mathbb{P}_{p}(\mathcal{B}^{S})} & \geq\left((1-o(1))\left(\log\log\frac{1}{p}\right)^{-2}\right)^{|S|}(1-o(p))^{|Imp_n|} \\ & =\exp\left(-\left(2|S|\log\log\log\frac{1}{p}+o(|S|)+o(p|Imp_n|)\right)\right),
\end{align*}
so we have proven the Corollary.
\end{proof}
\begin{proof}[Proof of Lemma \ref{Gndvsplemma}]
As mentioned, $\mathcal{A}_n^{\deg}$ is a union of $\mathcal{B}^S$ over the $S$ that satisfy the conditions of $\mathcal{A}_n^{\deg}$; in particular, for all important $(i,j)$, all such $S$ must have $a_{ij}$ elements in block $V_i\times V_j$ if $i\neq j$ and $\frac{a_{ij}}{2}$ if $i=j$ (if $i=j$ each edge is counted twice). Therefore, for all such $S$, $|S|=\displaystyle\sum_{(i,j)\text{ important}}\frac{a_{ij}}{2}$. By Corollary \ref{Bsprobcor}, for all such $S$ we have that
\[\mathbb{P}_{G_n^d}(\mathcal{B}^S)\geq \exp\left(-O\left(\log\log\log\frac{1}{p}\displaystyle\sum_{(i,j)\text{ important}}\frac{a_{ij}}{2}+p|Imp_n|\right)\right)\mathbb{P}_{p}(\mathcal{B}^S),\]
so to prove Lemma \ref{Gndvsplemma} it suffices to show that
\begin{equation}\label{logloglog1}
\log\log\log\frac{1}{p}\displaystyle\sum_{(i,j)\text{ important}}a_{ij}+p|Imp_n|=o(I_p(W)n^2).
\end{equation}
Now, we compute $|Imp_n|$. The contribution from the block $(i,j)$ is $|V_i||V_j|$ if $i=j$ and $\binom{|V_i|}{2}=\left(\frac{1}{2}+o(1)\right)|V_i||V_j|$ if $i\neq j$ (the last equality is because $|V_i|\gg 1$ by (5) of Conditions \ref{lowerboundconditions} and the definition of $V_i$). Since the edges in blocks $(i,j)$ and $(j,i)$ are the same edges (but there is no other overlap), we see that
\[|Imp_n|=(1+o(1))\displaystyle\sum_{(i,j)\text{ important}}\frac{|V_i||V_j|}{2}.\]
Thus substituting into (\ref{logloglog1}), we see that we must show
\begin{equation}\label{logloglog2}
a_{ij}\log\log\log\frac{1}{p}+p|V_i||V_j|=o\left(I_p(W)n^2\right)
\end{equation}
for all $(i,j)$ important.

Now, by condition (6) of Conditions \ref{lowerboundconditions}, $w_{ij}\geq p$ if $(i,j)$ is important. Thus by Lemma \ref{conditionsdeductionlemma},
\[p|V_i||V_j|\leq w_{ij}|V_i||V_j|=(1+o(1))a_{ij}\ll a_{ij}\log\log\log\frac{1}{p}.\]
Finally, (6) of Conditions \ref{lowerboundconditions} and Lemma \ref{conditionsdeductionlemma} also together imply that for (i,j) important,
\[a_{ij}\log\log\log\frac{1}{p}=(1+o(1))w_{ij}m(S_i)m(S_j)n^2\log\log\log\frac{1}{p}\ll I_p(W)n^2.\]
So we have shown that $p|V_i||V_j|\ll a_{ij}\log\log\log\frac{1}{p}\ll I_p(W)n^2$ for $(i,j)$ important, so (\ref{logloglog2}) is proven. This completes the proof of Lemma \ref{Gndvsplemma}.
\end{proof}
\subsection{Proof of Lemma \ref{pvsstarlemma}}
The fact that the Radon-Nikodym derivative is constant on $\mathcal{A}_n$ is due to the fact that the edge probabilities in $\mathbb{P}_{\star}$ and $\mathbb{P}_p$ only differ in the important blocks, and are constant on each important block. Thus the Radon-Nikodym derivative only depends on the number of edges in each important block $(i,j)$, and on $\mathcal{A}_n$ this value is fixed at $a_{ij}$. We now compute the value of this derivative.

We multiply the contributions from each important block. If $(i,j)$ is important and $i\neq j$, we obtain the contribution
\[\left(\frac{w_{ij}}{p}\right)^{a_{ij}}\left(\frac{1-w_{ij}}{1-p}\right)^{|V_i||V_j|-a_{ij}}.\]
Since $|V_i||V_j|-a_{ij}$ is simply $(1-w_{ij})|V_i||V_j|$ rounded to the nearest integer, it is at most $2(1-w_{ij})|V_i||V_j|$. Therefore,
\begin{align*}
\left|\log\left(\frac{1-w_{ij}}{1-p}\right)^{|V_i||V_j|-a_{ij}}\right| & =(|V_i||V_j|-a_{ij})\log\frac{1-w_{ij}}{1-p} \\ & \leq \left(2(1-w_{ij})\log\frac{1-w_{ij}}{1-p}\right)|V_i||V_j| \\ & \leq \left(2\log\frac{1}{1-p}\right)|V_i||V_j|,
\end{align*}
as the function $x\log\frac{x}{1-p}$ is convex on $[0,1]$ and thus maximized on the interval $[0,1]$ at one of the endpoints. Since $p\ll 1$, $\log\frac{1}{1-p}<2p$, so this is bounded by $4p|V_i||V_j|$.

For an important diagonal block $(i,i)$, the contribution is
\[\left(\frac{w_{ii}}{p}\right)^{\frac{a_{ii}}{2}}\left(\frac{1-w_{ii}}{1-p}\right)^{\binom{|V_i|}{2}-\frac{a_{ii}}{2}},\]
and a similar argument shows that the logarithm of the second factor is bounded in absolute value by $4p\binom{|V_i|}{2}$.

Again, when multiplying these contributions we double count those not on the diagonal, so we must divide the exponents in those terms by two. Therefore (using that $|V_i|\gg 1$),
\begin{align*}
\frac{d\mathbb{P}_{\star}}{d\mathbb{P}_p} & =\exp\left(\displaystyle\sum_{\substack{1\leq i,j\leq k \\ (i,j)\text{ important}}}\frac{a_{ij}}{2}\log\frac{w_{ij}}{p}+O(p|V_i||V_j|)\right) \\ & =\exp\left(\left(\frac{1}{2}+o(1)\right)I_p(W)n^2+O(p)\displaystyle\sum_{\substack{1\leq i,j\leq k \\ (i,j)\text{ important}}}|V_i||V_j|\right)
\end{align*}
by Lemma \ref{importantblocklemma}.

So to prove Lemma \ref{pvsstarlemma}, it suffices to show that
$|V_i||V_j|\ll p^{-1}n^2I_p(W)$ for all $(i,j)$ important.

But $|V_i|=m(S_i)n+O(1)=(1+o(1))m(S_i)n$, by (5) of Conditions \ref{lowerboundconditions}. So we must show that
\[m(S_i)m(S_j)\ll p^{-1}I_p(W).\]
But $I_p(W)$ is at least the entropy on the block $S_i\times S_j$, which is equal to $m(S_i)m(S_j)I_p(w_{ij})$. It thus suffices to observe that $I_p(w_{ij})\gg p$, which is true as $w_{ij}\gg p$ by the definition of an important block. Thus Lemma \ref{pvsstarlemma} is proven.
\subsection{Proof of Lemma \ref{starlargelemma}}
We begin by writing $\mathbb{P}_{\star}(\mathcal{A}_n^{\deg})=\mathbb{P}_{\star}(\mathcal{A}_n)-\mathbb{P}_{\star}(\mathcal{A}_n\backslash\mathcal{A}_n^{\deg})$. We will show that the former probability is large and the latter is small, via the following two lemmas.
\begin{lemma}\label{P*An}
Under Setup \ref{ldpsetup},
\[\mathbb{P}_{\star}(\mathcal{A}_n)\geq n^{-O(1)}.\]
\end{lemma}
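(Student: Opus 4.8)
\textbf{Proof proposal for Lemma \ref{P*An}.}

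The plan is to reduce the statement to a product, over important blocks, of lower bounds on the probability that a binomial random variable hits a specific value near its mean. Under $\mathbb{P}_{\star}$, the edges inside distinct important blocks $V_i \times V_j$ (with $i \le j$) are mutually independent, and the number of edges present in block $(i,j)$ is a binomial random variable: $\mathrm{Bin}(|V_i||V_j|, w_{ij})$ when $i \ne j$, and $\mathrm{Bin}\left(\binom{|V_i|}{2}, w_{ii}\right)$ when $i = j$. Since $\mathcal{A}_n$ only constrains the edge counts in important blocks, and there are $O(1)$ such blocks, we have
\[
\mathbb{P}_{\star}(\mathcal{A}_n) = \prod_{\substack{1 \le i \le j \le k \\ (i,j)\text{ important}}} \Pr\left[X_{ij} = m_{ij}\right],
\]
where $X_{ij}$ is the relevant binomial and $m_{ij}$ is $a_{ij}$ (resp.\ $a_{ij}/2$) — the nearest integer to the mean $w_{ij}|V_i||V_j|$ (resp.\ $w_{ij}\binom{|V_i|}{2}$). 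So it suffices to show each factor is at least $n^{-O(1)}$.

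For each important block, I would invoke a standard local central limit / anticoncentration estimate: for a binomial $\mathrm{Bin}(N,q)$ with mean $\mu = Nq$, the probability of hitting the nearest integer to $\mu$ is $\Theta\left((Nq(1-q))^{-1/2}\right)$, provided $Nq(1-q) \to \infty$ (and more crudely is always $\ge c/(\text{something polynomial in } N)$ whenever $N q \ge 1$). By Lemma \ref{conditionsdeductionlemma}, $w_{ij}|V_i||V_j| = (1+o(1))a_{ij} \gg 1$ for important $(i,j)$, and $w_{ij} \le 1$, so $N q (1-q) \ge \frac{1}{2} w_{ij}|V_i||V_j| \gg 1$ when $w_{ij} \le 1/2$; when $w_{ij} > 1/2$ one works instead with the number of \emph{non}-edges, whose count $|V_i||V_j| - a_{ij}$ is the nearest integer to $(1-w_{ij})|V_i||V_j|$, and one needs $(1-w_{ij})|V_i||V_j| \gg 1$ — this follows from (9) of Conditions \ref{lowerboundconditions} giving $w_{ij}m(S_i) \gg n^{-1}\log n$ together with the fact that $w_{ij} = 1$ blocks are handled trivially (the probability of all edges present is $1$ in the $\mathbb{P}_\star$-measure on that block... wait, $w_{ij}=1$ means the block is deterministically complete under $\mathbb{P}_\star$, contributing factor $1$, and $a_{ij}$ is exactly $|V_i||V_j|$, so that factor is literally $1$). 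For $w_{ij} < 1$, the anticoncentration bound gives each factor $\ge c \, (w_{ij}|V_i||V_j|)^{-1/2} \ge c \, (|V_i||V_j|)^{-1/2} \ge c' n^{-1}$. Taking the product over the $O(1)$ important blocks yields $\mathbb{P}_\star(\mathcal{A}_n) \ge n^{-O(1)}$.

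The main obstacle — really the only non-bookkeeping point — is making the anticoncentration bound fully rigorous in the regime where $q = w_{ij}$ is either very close to $1$ or the block is small; one must verify in each case that the relevant variance parameter ($N q$ or $N(1-q)$) tends to infinity, using exactly Conditions \ref{lowerboundconditions}(5), (6), and (9) via Lemma \ref{conditionsdeductionlemma}, so that a local CLT applies; in the borderline case where $Nq(1-q)$ is bounded one instead notes $Nq = O(1)$ is impossible for important blocks (since $w_{ij}|V_i||V_j| \gg 1$) and $N(1-q) = O(1)$ forces, after rounding, $a_{ij} = |V_i||V_j| - O(1)$, for which the probability of the complement count being that exact $O(1)$ value is still $\Omega(1)$ by a direct binomial estimate. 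Either way each factor is bounded below by an inverse polynomial in $n$, and since $k = O(1)$ there are only $O(1)$ factors, completing the proof.
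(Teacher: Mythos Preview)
Your proposal is correct and takes essentially the same approach as the paper: decompose $\mathbb{P}_{\star}(\mathcal{A}_n)$ as a product over the $O(1)$ important blocks of binomial point probabilities at (approximately) the mean, then lower bound each factor by an inverse polynomial in $n$.

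The only difference is the tool used for the point-probability lower bound. The paper avoids any local CLT entirely, instead arguing crudely: the mode of $\mathrm{Bin}(N,q)$ has probability at least $1/(N+1)$, and $a_{ij}$ is within one step of the mode, so a single ratio of consecutive binomial terms (bounded below by $w_{ij}(1-w_{ij})/N^2$) gives each factor $\ge \Omega(N^{-5}) \ge \Omega(n^{-10})$; the boundary case $1-w_{ij} < 1/(2N)$ is handled separately by noting $a_{ij}=N$ and computing $w_{ij}^N = \Omega(1)$. Your approach via the local CLT yields the sharper $\Omega(n^{-1})$ per block and is perfectly valid, but you do have to be careful, as you note, when $Nq(1-q)$ stays bounded; the paper's cruder bound sidesteps this case analysis at the cost of a worse (but still adequate) exponent. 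Either route gives $n^{-O(1)}$ after multiplying $O(1)$ factors.
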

\begin{proof}
Let $G_*$ be a graph sampled from $\mathbb{P}_{\star}$. To compute $\mathbb{P}_{\star}(\mathcal{A}_n)$, we would like to find the probability that for all $(i,j)$ important, $|E(G_*)\cap (V_i\times V_j)|=\begin{cases}a_{ij} & i\neq j \\ \frac{a_{ij}}{2} & i=j\end{cases}$

These events for different blocks are independent, as long as we are not comparing $(i,j)$ and $(j,i)$, and the probability for block $(i,j)$ is
\[\binom{|V_i||V_j|}{a_{ij}}w_{ij}^{a_{ij}}(1-w_{ij})^{|V_i||V_j|-a_{ij}}\]
when $i\neq j$ and
\[\binom{\binom{|V_i|}{2}}{a_{ij}}w_{ij}^{\frac{a_{ij}}{2}}(1-w_{ij})^{\binom{|V_i|}{2}-\frac{a_{ij}}{2}}.\]
For $(i,j)$ important and $i\neq j$ we know that $a_{ij}$ is the closest integer to $w_{ij}|V_i||V_j|$, so it is within $1$ of being the optimum of
\[\binom{|V_i||V_j|}{a}w_{ij}^{a}(1-w_{ij})^{|V_i||V_j|-a}\]
over all integers $a$, $0\leq a\leq |V_i||V_j|$. This maximum value must be at least $\frac{1}{|V_i||V_j|+1}$ (as summing this expression over all $|V_i||V_j|+1$ values of $a$ gives $1$), and changing $a$ by $1$ multiplies the result by at least $\frac{w_{ij}(1-w_{ij})}{(a+1)(|V_i||V_j|-a+1)}$. Thus
\[\binom{|V_i||V_j|}{a_{ij}}w_{ij}^{a_{ij}}(1-w_{ij})^{|V_i||V_j|-a_{ij}}\geq\frac{w_{ij}(1-w_{ij})}{(a+1)(|V_i||V_j|-a+1)(|V_i||V_j|+1)}\]
Now, $w_{ij}\gg n^{-1}/m(S_i)=(1+o(1))/|V_i|$ by (9) of Conditions \ref{lowerboundconditions} and Lemma \ref{conditionsdeductionlemma}, so as long as $1-w_{ij}\geq\frac{1}{2|V_i||V_j|}$ we have that $\binom{|V_i||V_j|}{a_{ij}}w_{ij}^{a_{ij}}(1-w_{ij})^{|V_i||V_j|-a_{ij}}\geq\Omega\left((|V_i||V_j|)^{-5}\right)$.

If instead $1-w_{ij}<\frac{1}{2|V_i||V_j|}$, then $w_{ij}|V_i||V_j|>|V_i||V_j|-\frac{1}{2}$, so since $a_{ij}$ is the closest integer to $w_{ij}|V_i||V_j$ we must have $a_{ij}=|V_i||V_j|$. In this case,
\[\binom{|V_i||V_j|}{a_{ij}}w_{ij}^{a_{ij}}(1-w_{ij})^{|V_i||V_j|-a_{ij}}=w_{ij}^{|V_i||V_j|}\geq\left(1-\frac{1}{2|V_i||V_j|}\right)^{|V_i||V_j|}=e^{-1/2}+o(1)=\Omega(1),\]
as $|V_i|,|V_j|\gg 1$ by (5) of Conditions \ref{lowerboundconditions}.

Thus in both cases, we have obtained a lower bound of $\Omega((|V_i||V_j|)^{-5})=\Omega(n^{-10})$ when $i\neq j$. When $i=j$ a similar argument yields the same bound. Since there are fewer than $k^2$ important blocks, we have that
\[\mathbb{P}_{\star}(\mathcal{A}_n)=\Omega\left(n^{-10k^2}\right)=\Omega\left(n^{O(1)}\right),\]
as desired.
\end{proof}
\begin{lemma}\label{P*Andeg}
Under Setup \ref{ldpsetup},
\[\mathbb{P}_{\star}(\mathcal{A}_n\backslash\mathcal{A}_n^{\deg})\leq n^{-\omega(1)}.\]
\end{lemma}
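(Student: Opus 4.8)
\textbf{Proof plan for Lemma \ref{P*Andeg}.} The plan is a direct Chernoff-plus-union-bound estimate. Under $\mathbb{P}_\star$ all edges are sampled independently, and every potential edge inside an important block $V_i\times V_j$ is present with probability $w_{ij}$. So for a fixed important block $(i,j)$ and a fixed vertex $v_i\in V_i$, the degree $X:=|N_{G_\star}(v_i)\cap V_j|$ is a sum of independent indicator variables — $|V_j|$ of them if $i\neq j$, and $|V_i|-1$ of them if $i=j$ — with mean $\mu:=\mathbb{E}X=(1+o(1))w_{ij}|V_j|$. First I would record, using Lemma \ref{conditionsdeductionlemma} (which gives $a_{ij}=(1+o(1))w_{ij}|V_i||V_j|$ for important blocks, since there $w_{ij}|V_i||V_j|\gg1$) and $|V_i|\gg1$, that $a_{ij}/|V_i|=(1+o(1))\mu$. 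Hence the defining inequality of $\mathcal{A}_n^{\deg}$ at $(i,j,v_i)$ is violated only when $X>2a_{ij}/|V_i|=(2+o(1))\mu$, in particular only when $X\geq\tfrac{3}{2}\mu$. By the standard multiplicative Chernoff bound this has probability at most $\exp(-\Omega(\mu))$.

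Next I would union bound. The event $\mathcal{A}_n\setminus\mathcal{A}_n^{\deg}$ is contained in the event that $X>2a_{ij}/|V_i|$ for some important block $(i,j)$ and some $v_i\in V_i$; since there are at most $k^2=O(1)$ important blocks and at most $n$ choices of $v_i$, we get $\mathbb{P}_\star(\mathcal{A}_n\setminus\mathcal{A}_n^{\deg})\leq k^2 n\cdot\exp(-\Omega(\mu_{\min}))$, where $\mu_{\min}$ is the smallest of the relevant means. To finish I need $\mu\gg\log n$ for every important block, which turns $\exp(-\Omega(\mu))$ into $n^{-\omega(1)}$ and absorbs the $k^2n$ factor. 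This is where condition (9) of Conditions \ref{lowerboundconditions} enters: it states $w_{ij}m(S_i)\gg n^{-1}\log n$ for important $(i,j)$, and since importance is symmetric in $i$ and $j$ the block $(j,i)$ is also important, so $w_{ij}m(S_j)=w_{ji}m(S_j)\gg n^{-1}\log n$; multiplying by $n$ and using $|V_j|=(1+o(1))m(S_j)n$ from Lemma \ref{conditionsdeductionlemma} gives $w_{ij}|V_j|\gg\log n$, hence $\mu\gg\log n$ (the $i=j$ case follows from condition (9) directly). Therefore $\mathbb{P}_\star(\mathcal{A}_n\setminus\mathcal{A}_n^{\deg})\leq n\cdot n^{-\omega(1)}=n^{-\omega(1)}$.

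There is no serious obstacle here — it is a routine concentration estimate — and the only points requiring a little care are the translation of condition (9) into the bound $\mu\gg\log n$ via the symmetry of the importance condition, and checking via Lemma \ref{conditionsdeductionlemma} that the threshold $2a_{ij}/|V_i|$ is asymptotically twice the mean $\mu$, so that a constant-factor Chernoff bound applies.
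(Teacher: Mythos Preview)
Your proof is correct and follows essentially the same approach as the paper: a Chernoff bound on the degree $|N(v_i)\cap V_j|$ (a sum of independent Bernoullis with mean $(1+o(1))w_{ij}|V_j|$), followed by a union bound over the $O(k^2 n)$ choices of important block and vertex, with condition (9) supplying $w_{ij}|V_j|\gg\log n$. Your explicit use of the symmetry of importance to derive $w_{ij}m(S_j)\gg n^{-1}\log n$ from condition (9) is a nice clarification that the paper leaves implicit.
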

\begin{proof}
Again, let $G_*$ be a graph sampled from $\mathbb{P}_{\star}$. If $G_*\in\mathcal{A}_n\backslash\mathcal{A}_n^{\deg}$, one of the degree conditions given in $\mathcal{A}_n^{\deg}$ must not hold. In particular, there is an important block $V_i\times V_j$ and a vertex $v\in V_i$ such that $|N_{V_j}(v)|\geq 2\frac{a_{ij}}{|V_i|}$. By Lemma \ref{conditionsdeductionlemma}, $2\frac{a_{ij}}{|V_i|}=(2+o(1))w_{ij}|V_j|$.

But $|N_{V_j}(v)|$, as a random variable under $\mathbb{P}_{\star}$, is simply a sum of $|V_j|$ (or $|V_j|-1$ if $i=j$) independent Bernoulli random variables with probability $w_{ij}$. Therefore, since the sum has mean $(1+o(1))w_{ij}|V_j|$ we may apply a Chernoff bound to see that
\[\text{Pr}\left[|N_{V_j}(v)|\geq (2+o(1))w_{ij}|V_j|\right]\leq\exp(-\Omega(w_{ij}|V_j|)).\]
which is $n^{-\omega(1)}$, as $w_{ij}|V_j|\gg\log n$ by (9) of Conditions \ref{lowerboundconditions}.

Initially we chose some important block $(i,j)$ and vertex $v$. There are at most $k^2n$ such choices, so a union bound yields
\[\mathbb{P}_{\star}(\mathcal{A}_n\backslash\mathcal{A}_n^{\deg})\leq k^2n\cdot n^{-\omega(1)}=n^{-\omega(1)}.\]
\end{proof}
Now it is easy to complete the proof of Lemma \ref{starlargelemma}.
\begin{proof}[Proof of Lemma \ref{starlargelemma}]
Lemmas \ref{P*An} and \ref{P*Andeg} together show that
\[\mathbb{P}_{\star}(\mathcal{A}_n^{\deg})\geq n^{-O(1)}=\exp(-O(\log n)).\]
Since $I_p(W)\gg n^{-1}\log n\gg n^{-2}\log n$ by (4) of Conditions \ref{lowerboundconditions}
\[\mathbb{P}_{\star}(\mathcal{A}_n^{\deg})=\exp\left(-o(I_p(W)n^2)\right),\]
as desired.
\end{proof}
\section{Proof of Proposition \ref{secondtermprop}}\label{secondtermsection}
The general idea of this section is to reduce the proposition to proving a lower tail bound, given by Proposition \ref{lowertailboundprop}. We will then prove Proposition \ref{lowertailboundprop}. There are several difficulties not faced in \cite{BD}, which we will address.

\subsection{Reduction to Lower Tail Bound}
We begin with a change of measure to $\mathbb{P}_{\star}$. This is accomplished by the following lemma.
\begin{lemma}\label{changeofmeasurelemma}
Under Setup \ref{ldpsetup},
\[\mathbb{P}_{G_n^d}(\mathcal{A}_n^{\deg}\land\neg\mathcal{H}_{\delta'})\leq\exp\left(-\left(\frac{1}{2}+o(1)\right)I_p(W)n^2\right)\mathbb{P}_{\star}(\mathcal{A}_n^{\deg}\land\neg\mathcal{H}_{\delta'}).\]
\end{lemma}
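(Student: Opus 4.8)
\textbf{Proof proposal for Lemma \ref{changeofmeasurelemma}.}

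The plan is to bound the Radon--Nikodym derivative $\frac{d\mathbb{P}_{G_n^d}}{d\mathbb{P}_{\star}}$ on the event $\mathcal{A}_n^{\deg}$, and then integrate this bound over $\mathcal{A}_n^{\deg}\land\neg\mathcal{H}_{\delta'}$. Since the target inequality involves only a one-sided bound (an upper bound on $\mathbb{P}_{G_n^d}(\mathcal{A}_n^{\deg}\land\neg\mathcal{H}_{\delta'})$), it suffices to show that for every graph $T\in\mathcal{A}_n^{\deg}$ we have $\mathbb{P}_{G_n^d}(T)\leq\exp\left(-\left(\frac{1}{2}+o(1)\right)I_p(W)n^2\right)\mathbb{P}_{\star}(T)$, where the $o(1)$ is uniform over $T\in\mathcal{A}_n^{\deg}$. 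Integrating this pointwise bound over the subset $\mathcal{A}_n^{\deg}\land\neg\mathcal{H}_{\delta'}$ of $\mathcal{A}_n^{\deg}$ then immediately yields the lemma.

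To prove the pointwise bound, I would first note that $\mathbb{P}_{\star}$, restricted to the event $\mathcal{B}^S$ (i.e.\ to graphs whose intersection with $Imp_n$ is a fixed set $S$), is proportional to $\mathbb{P}_p$ restricted to $\mathcal{B}^S$, with a proportionality constant that depends only on $S$ through $|S\cap(V_i\times V_j)|$ for important $(i,j)$. On $\mathcal{A}_n$ these edge counts are all fixed at $a_{ij}$ (or $a_{ij}/2$ on the diagonal), so by exactly the computation in the proof of Lemma \ref{pvsstarlemma} we have $\frac{d\mathbb{P}_{\star}}{d\mathbb{P}_p}=\exp\left(\left(\frac{1}{2}+o(1)\right)I_p(W)n^2\right)$ on $\mathcal{A}_n\supseteq\mathcal{A}_n^{\deg}$. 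Thus it remains to compare $\mathbb{P}_{G_n^d}$ with $\mathbb{P}_p$ on $\mathcal{A}_n^{\deg}$; equivalently, to compare $\mathbb{P}_{G_n^d}(\mathcal{B}^S)$ with $\mathbb{P}_p(\mathcal{B}^S)$ for the sets $S$ compatible with $\mathcal{A}_n^{\deg}$, together with the fact that $\mathbb{P}_{G_n^d}$ is uniform on $\mathcal{K}_{n,d}$ (so within a fixed $\mathcal{B}^S$, each $d$-regular graph gets the same mass) whereas $\mathbb{P}_p$ is also uniform within $\mathcal{B}^S$ on graphs with the same total edge count --- but here the relevant observation is cleaner: for $T\in\mathcal{A}_n^{\deg}\subseteq\mathcal{K}_{n,d}$, $\mathbb{P}_{G_n^d}(T)=\frac{1}{|\mathcal{K}_{n,d}|}$, and $\mathbb{P}_p(T)=p^{dn/2}(1-p)^{\binom{n}{2}-dn/2}$ is the same for all $d$-regular $T$. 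So $\frac{d\mathbb{P}_{G_n^d}}{d\mathbb{P}_p}$ is constant on $\mathcal{K}_{n,d}$, hence on $\mathcal{A}_n^{\deg}$, and I only need an \emph{upper} bound on this constant, i.e.\ a \emph{lower} bound on $|\mathcal{K}_{n,d}|\cdot p^{dn/2}(1-p)^{\binom{n}{2}-dn/2}=\mathbb{P}_p(\mathcal{K}_{n,d})$.

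The cleanest route is therefore to invoke the standard estimate that $\mathbb{P}_p(G(n,p)\text{ is }d\text{-regular})=\exp(-O(n\log n))$ (this follows, e.g., from the fact that the number of $d$-regular graphs on $[n]$ is $\exp\left(\binom{n}{2}H(p)-O(n\log n)\right)$ via the McKay--Wormald type asymptotics, or more simply from a local central limit heuristic on the degree sequence; since only a crude lower bound is needed, an elementary argument suffices). Combined with $\frac{d\mathbb{P}_{\star}}{d\mathbb{P}_p}=\exp\left(\left(\frac{1}{2}+o(1)\right)I_p(W)n^2\right)$ on $\mathcal{A}_n^{\deg}$, this gives, for $T\in\mathcal{A}_n^{\deg}$,
\[
\frac{\mathbb{P}_{G_n^d}(T)}{\mathbb{P}_{\star}(T)}=\frac{d\mathbb{P}_{G_n^d}}{d\mathbb{P}_p}(T)\cdot\frac{d\mathbb{P}_p}{d\mathbb{P}_{\star}}(T)\leq\exp\left(O(n\log n)\right)\exp\left(-\left(\frac{1}{2}+o(1)\right)I_p(W)n^2\right).
\]
Since $I_p(W)\gg n^{-1}\log n$ by condition (4) of Conditions \ref{lowerboundconditions}, we have $n\log n=o(I_p(W)n^2)$, so the $\exp(O(n\log n))$ factor is absorbed into the $\exp\left(-\left(\frac{1}{2}+o(1)\right)I_p(W)n^2\right)$ term, yielding the pointwise bound and hence the lemma. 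The main obstacle I anticipate is making the lower bound $\mathbb{P}_p(\mathcal{K}_{n,d})\geq\exp(-O(n\log n))$ rigorous and self-contained with the sparsity $p=d/n$ in the allowed range $n^{-1}\log\log n\ll p\ll 1$; one can cite a known enumeration result for regular graphs, being careful that it applies uniformly in this range of $d$, rather than reproving it.
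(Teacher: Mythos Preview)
Your proposal is correct and follows essentially the same route as the paper: both factor $\mathbb{P}_{G_n^d}$ through $\mathbb{P}_p$ via the identity $\mathbb{P}_{G_n^d}(\cdot)=\mathbb{P}_p(\cdot\cap\mathcal{K}_{n,d})/\mathbb{P}_p(\mathcal{K}_{n,d})$, invoke Lemma~\ref{pvsstarlemma} to pass from $\mathbb{P}_p$ to $\mathbb{P}_{\star}$ on $\mathcal{A}_n$, and absorb the normalizing factor $\mathbb{P}_p(\mathcal{K}_{n,d})^{-1}$ using condition~(4). For the obstacle you flag, the paper simply cites the estimate $\log\mathbb{P}_p(\mathcal{K}_{n,d})\sim-\tfrac{1}{2}n\log d$ from (2.38) of \cite{BD}, valid for $1\ll d\ll n$, which is exactly the bound you need.
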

\begin{proof}
First, recall the definition of $\mathcal{K}_{n,d}$ from Definition \ref{eventsdef}. Note that since $\mathbb{P}_{G_n^d}$ is simply $\mathbb{P}_p$ restricted to $\mathcal{K}_{n,d}$ and renormalized,
\begin{equation}\label{Ppfirst}
\mathbb{P}_{G_n^d}(\mathcal{A}_n^{\deg}\land\neg\mathcal{H}_{\delta'})=\frac{\mathbb{P}_{p}(\mathcal{K}_{n,d}\land\mathcal{A}_n^{\deg}\land\neg\mathcal{H}_{\delta'})}{\mathbb{P}_p(\mathcal{K}_{n,d})}.
\end{equation}
Now, (2.38) of \cite{BD} states that
\[\log\mathbb{P}_p(\mathcal{K}_{n,d})\sim-\frac{1}{2}n\log d\]
whenever $1\ll d\ll n$, which occurs by as $p\gg n^{-1}$. Also, by (4) of Conditions \ref{lowerboundconditions}, $n\log d=o(I_p(W)n^2)$, so $\mathbb{P}_p(\mathcal{K}_{n,d})=\exp(-o(I_p(W)n^2))$. Subsituting into (\ref{Ppfirst}),
\[\mathbb{P}_{G_n^d}(\mathcal{A}_n^{\deg}\land\neg\mathcal{H}_{\delta'})=\exp(-o(I_p(W)n^2))\mathbb{P}_{p}(\mathcal{K}_{n,d}\land\mathcal{A}_n^{\deg}\land\neg\mathcal{H}_{\delta'}).\]
Now, Lemma \ref{pvsstarlemma} allows us to change measure to $\mathbb{P}_{\star}$, showing that
\[\mathbb{P}_{p}(\mathcal{K}_{n,d}\land\mathcal{A}_n^{\deg}\land\neg\mathcal{H}_{\delta'})=\exp\left(-\left(\frac{1}{2}+o(1)\right)I_p(W)n^2\right)\mathbb{P}_{\star}(\mathcal{K}_{n,d}\land\mathcal{A}_n^{\deg}\land\neg\mathcal{H}_{\delta'}),\]
since $\mathcal{K}_{n,d}\land\mathcal{A}_n^{\deg}\land\neg\mathcal{H}_{\delta'}\subseteq\mathcal{A}_n$.
Thus
\begin{align*}
\mathbb{P}_{G_n^d}(\mathcal{A}_n^{\deg}\land\neg\mathcal{H}_{\delta'}) & =\exp\left(-\left(\frac{1}{2}+o(1)\right)I_p(W)n^2\right)\mathbb{P}_{\star}(\mathcal{K}_{n,d}\land\mathcal{A}_n^{\deg}\land\neg\mathcal{H}_{\delta'}) \\ & \leq\exp\left(-\left(\frac{1}{2}+o(1)\right)I_p(W)n^2\right)\mathbb{P}_{\star}(\mathcal{A}_n^{\deg}\land\neg\mathcal{H}_{\delta'}).
\end{align*}
\end{proof}
Lemma \ref{changeofmeasurelemma} means that it suffices to show that
\begin{equation}\label{P*secondtermequation}
\mathbb{P}_{\star}(\mathcal{A}_n^{\deg}\land\neg\mathcal{H}_{\delta'})=\exp(-\Omega(I_p(W)n^2)).
\end{equation}
\begin{remark}
In \cite{BD}, at this point the bound was shown by dropping the $\mathcal{A}_n^{\deg}$ altogether and showing that $\mathbb{P}_{\star}(\neg\mathcal{H}_{\delta'})=\exp(-\Omega(I_p(W)n^2))$. However, for us this statement will not necessarily even be true. When $K=K_0$ is our graph formed by adding an edge to $K_{2,4}$ and $W$ is the optimum graphon from Figure \ref{K24graphonfigure}, $I_p(W)\gtrsim p^3 \left(\log\frac{1}{p}\right)^{\frac{2}{3}}\left(\log\log\frac{1}{p}\right)^{\frac{1}{3}}$, but we should expect that $\mathbb{P}_{\star}(\neg\mathcal{H}_{\delta'})\geq\exp\left(-O\left(p^3\left(\log\frac{1}{p}\right)^{\frac{2}{3}}\left(\log\log\frac{1}{p}\right)^{-\frac{2}{3}}\right)\right)$. Informally, this is because if we sample a graph $G_*$ according to $\mathbb{P}_{\star}$, this is the probability that it has many fewer edges than expected in the unique somewhat important block. We get around the problem that somewhat important blocks could have many fewer edges than expected with too high a probability by enforcing the event $\mathcal{A}_n^{\deg}$, which prevents that from happening.

In general, as this example shows, the important blocks are small enough that to allow them to be sampled from randomly would raise the probability of a lower tail event too high. However, (8) and (9) of Conditions \ref{lowerboundconditions} will guarantee that the unimportant blocks are large in exactly the way we need to properly apply Janson's inequality. We thus would like to fix the set of edges in important blocks while choosing the edges in unimportant blocks at random. It turns out that (7) of Conditions \ref{lowerboundconditions} will imply that no matter how we fix the set of edges in the important blocks, the expected number of homomorphisms from $K$ stays constant.
\end{remark}
Recall that $\mathcal{B}^S$ is the set of graphs $T$ with $E(T)\cap Imp_n=S$.
\begin{definition}\label{psdef}
Under Setup \ref{ldpsetup}, for any $S\subset Imp_n$, let $\mathbb{P}_S$ be the probability distribution given by restricting $\mathbb{P}_{\star}$ (or $\mathbb{P}_p$) to $\mathcal{B}^S$ and renormalizing. In other words, it is the distribution given by taking a graph where we take each edge in $S$ with probability $1$, each edge in $Imp_n\backslash S$ with probability $0$, and each edge not in $Imp_n$ with probability $p$.

Call an $S\subseteq Imp_n$ \emph{allowed} if $\mathcal{B}^S\subseteq\mathbb{A}_n^{\deg}$; that is, if for all important $(i,j)$ there are $a_{ij}$ pairs $(v_i,v_j)$ such that $v_i\in V_i$, $v_j\in V_j$, and $v_iv_j\in S$, and for all $v_i\in V-i$, $|N_S(v_i)\cap V_j|\leq 2\frac{a_{ij}}{|V_i|}$.

We now change measure again to $\mathbb{P}_S$.
\end{definition}
\begin{lemma}\label{easyproblemma}
Assuming Setup \ref{ldpsetup},
\[\mathbb{P}_{\star}(\mathcal{A}_n^{\deg}\land\neg\mathcal{H}_{\delta'})\leq\displaystyle\max_{\text{allowed }S}\mathbb{P}_S(\neg\mathcal{H}_{\delta'}).\]
\end{lemma}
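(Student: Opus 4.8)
The plan is to prove this by a straightforward conditioning (averaging) argument: decompose the event $\mathcal{A}_n^{\deg}$ according to the configuration of edges inside the important blocks, and then observe that the lower-tail event $\neg\mathcal{H}_{\delta'}$ only has to be controlled \emph{after} that configuration is fixed. Thus the lemma should follow with no new probabilistic input, purely from the definitions of $\mathcal{A}_n^{\deg}$, $\mathcal{B}^S$, and $\mathbb{P}_S$.

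First I would record the decomposition $\mathcal{A}_n^{\deg}=\bigsqcup_{\text{allowed }S}\mathcal{B}^S$. By Setup \ref{ldpsetup} the events $\mathcal{B}^S$, as $S$ ranges over subsets of $Imp_n$, partition the set of all graphs on $[n]$ according to which edges in important blocks are present. Since $\mathcal{A}_n^{\deg}$ (Definition \ref{andegdef}) imposes constraints only on edges inside important blocks $V_i\times V_j$ — the exact counts $a_{ij}$ together with the degree bounds $2a_{ij}/|V_i|$ — each $\mathcal{B}^S$ is either entirely contained in $\mathcal{A}_n^{\deg}$ or entirely disjoint from it, and by Definition \ref{psdef} the former occurs precisely when $S$ is allowed. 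This gives the disjoint union above.

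Next, intersecting with $\neg\mathcal{H}_{\delta'}$ and using countable additivity,
\[
\mathbb{P}_{\star}(\mathcal{A}_n^{\deg}\land\neg\mathcal{H}_{\delta'})=\sum_{\text{allowed }S}\mathbb{P}_{\star}(\mathcal{B}^S\land\neg\mathcal{H}_{\delta'}).
\]
For every allowed $S$ with $\mathbb{P}_{\star}(\mathcal{B}^S)>0$, the definition of $\mathbb{P}_S$ as $\mathbb{P}_{\star}$ restricted to $\mathcal{B}^S$ and renormalized gives $\mathbb{P}_{\star}(\mathcal{B}^S\land\neg\mathcal{H}_{\delta'})=\mathbb{P}_{\star}(\mathcal{B}^S)\,\mathbb{P}_S(\neg\mathcal{H}_{\delta'})$, while terms with $\mathbb{P}_{\star}(\mathcal{B}^S)=0$ vanish and can be dropped. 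Hence
\[
\mathbb{P}_{\star}(\mathcal{A}_n^{\deg}\land\neg\mathcal{H}_{\delta'})=\sum_{\text{allowed }S}\mathbb{P}_{\star}(\mathcal{B}^S)\,\mathbb{P}_S(\neg\mathcal{H}_{\delta'})\le\Big(\max_{\text{allowed }S}\mathbb{P}_S(\neg\mathcal{H}_{\delta'})\Big)\sum_{\text{allowed }S}\mathbb{P}_{\star}(\mathcal{B}^S),
\]
and since the $\mathcal{B}^S$ are pairwise disjoint, $\sum_{\text{allowed }S}\mathbb{P}_{\star}(\mathcal{B}^S)\le\sum_{S\subseteq Imp_n}\mathbb{P}_{\star}(\mathcal{B}^S)=1$, which is exactly the claimed bound.

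There is essentially no obstacle in the argument itself; the only care needed is the bookkeeping in the decomposition step (verifying from Definitions \ref{andegdef} and \ref{psdef} that $\mathcal{A}_n^{\deg}$ is genuinely a union of whole blocks $\mathcal{B}^S$ indexed exactly by the allowed $S$) and handling the degenerate case $\mathbb{P}_{\star}(\mathcal{B}^S)=0$. The real difficulty is deferred to the next step, namely proving that $\mathbb{P}_S(\neg\mathcal{H}_{\delta'})=\exp(-\Omega(I_p(W)n^2))$ \emph{uniformly} over allowed $S$ — a lower-tail (Janson-type) estimate, where conditions (7), (8), and (9) of Conditions \ref{lowerboundconditions} are used to ensure that fixing the important-block edges neither changes the expected homomorphism count nor destroys the large unimportant blocks needed to apply Janson's inequality — but that lies beyond the scope of the present lemma.
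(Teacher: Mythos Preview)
Your proof is correct and follows essentially the same approach as the paper: decompose $\mathcal{A}_n^{\deg}$ as the disjoint union of $\mathcal{B}^S$ over allowed $S$, factor each term via the definition of $\mathbb{P}_S$, pull out the maximum, and bound the remaining sum of $\mathbb{P}_\star(\mathcal{B}^S)$ by $1$. The paper identifies that sum as $\mathbb{P}_\star(\mathcal{A}_n^{\deg})$ before bounding it by $1$, whereas you bound it by the full sum over all $S\subseteq Imp_n$, but this is an immaterial difference.
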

\begin{proof}
Since $\mathcal{P}_S$ is the restriction of $\mathbb{P}_{\star}$ to $\mathcal{B}_S$, we must have that for any $S$ and any event $\mathcal{A}$,
\[\mathbb{P}_{\star}(\mathcal{B}_S\land\mathcal{A})=\mathbb{P}_{\star}(\mathcal{B}_S)\mathbb{P}_{\star}((\mathcal{B}_S\land\mathcal{A})|\mathcal{B}_S)=\mathbb{P}_{\star}(\mathcal{B}_S)\mathbb{P}_S(\mathcal{A}).\]

By the definition of $\mathcal{A}_n^{\deg}$ and allowed $S$,
\[\mathcal{A}_n^{\deg}=\displaystyle\bigvee_{\text{allowed }S}\mathcal{B}_S.\]
Since the $\mathcal{B}_S$ are disjoint,
\begin{align*}
\mathbb{P}_{\star}(\mathcal{A}_n^{\deg}\land\neg\mathcal{H}_{\delta'}) & =\displaystyle\sum_{\text{allowed }S}\mathbb{P}_{\star}(\mathcal{B}_S\land\neg\mathcal{H}_{\delta'}) \\ & =\displaystyle\sum_{\text{allowed }S}\mathbb{P}_{\star}(\mathcal{B}_S)\mathbb{P}_S(\neg\mathcal{H}_{\delta'}) \\ & \leq\displaystyle\max_{\text{allowed }S}\mathbb{P}_S(\neg\mathcal{H}_{\delta'})\displaystyle\sum_{\text{allowed }S}\mathbb{P}_{\star}(\mathcal{B}_S) \\ & =\mathbb{P}_{\star}(\mathcal{A}_n^{\deg})\displaystyle\max_{\text{allowed }S}\mathbb{P}_S(\neg\mathcal{H}_{\delta'}) \\ & \leq\displaystyle\max_{\text{allowed }S}\mathbb{P}_S(\neg\mathcal{H}_{\delta'}).
\end{align*}
\end{proof}
The importance of Lemma \ref{easyproblemma} is that it allows us to fix the set of edges we are choosing from important blocks. For each allowed $S\subseteq Imp_n$, we now would like to bound above $\mathbb{P}_S(\mathcal{H}_{\delta'})$.

The following definition and lemma will be useful.
\begin{definition}\label{hombgtdef}
Take $V_1,\ldots,V_k$ to be the partition of $[n]$ from Setup \ref{ldpsetup}. Let $T$ be any graph on $[n]$. For any $B:V(K)\to k$ we define $\Hom_B(K,T)$ to be the number of homomorphisms from $K$ to $T$ where each $v\in V(K)$ is sent into $V_{B(v)}\subseteq [n]$.
\end{definition}
\begin{lemma}\label{GSexpectationlemma}
Assume Setup \ref{ldpsetup}. Let $\epsilon>0$ be any constant (independent of $n$). Take some allowed $S\in\text{Imp}_n$ and and a $K$-block $B$ with $\Hom_B(K,W)\geq\epsilon p^{e(K)}$. Recall the definition of $\mathbb{P}_S$ from Definition \ref{psdef} and let $G_S$ be a random graph sampled from $\mathbb{P}_S$. Then
\[\mathbb{E}(\Hom_B(K,G_S))\geq (1-o(1))\Hom_B(K,W)n^{v(K)}.\]
\end{lemma}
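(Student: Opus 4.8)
\textbf{Proof plan for Lemma \ref{GSexpectationlemma}.} The goal is to show that fixing the edges in important blocks according to an allowed set $S$ does not decrease the expected number of $B$-homomorphisms from $K$ below $(1-o(1))\Hom_B(K,W)n^{v(K)}$. The plan is to directly expand $\mathbb{E}(\Hom_B(K,G_S))$ as a product over edges of $K$, splitting the edges according to which type of block (in the sense of Conditions \ref{lowerboundconditions}(6)) their image under $B$ lies in: unimportant blocks (including those meeting $S_k$), very important blocks (where $w_{ij}=1$), and somewhat important blocks (where $p < w_{ij} < 1$). Under $\mathbb{P}_S$, an edge $uv$ of $K$ with $B(u)=i$, $B(v)=j$ contributes a factor of: $p$ if $(i,j)$ is unimportant (since such edges are sampled with probability $p$), exactly $1$ if $uv$ is mapped to a pair in $S$ with $(i,j)$ very important, and either $0$ or $1$ depending on membership in $S$ if $(i,j)$ is somewhat important. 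The subtlety is that $\mathbb{P}_S$ is not a product measure over the vertex placements, because two edges of $K$ could land in the same important block; but Condition (7) says that for a non-negligible $B$ (and $\Hom_B(K,W)\ge\epsilon p^{e(K)}$ makes $B$ non-negligible), the edges sent into somewhat important blocks form a matching, so those factors decouple across distinct vertex choices.

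First I would write $\mathbb{E}(\Hom_B(K,G_S)) = \sum_{\varphi} \prod_{uv\in E(K)} \Pr_{\mathbb{P}_S}[\varphi(u)\varphi(v)\in E(G_S)]$, where $\varphi$ ranges over all maps $V(K)\to[n]$ with $\varphi(v)\in V_{B(v)}$. For the edges landing in unimportant blocks the factor is exactly $p$ regardless of $\varphi$. For edges landing in very important blocks, by the matching-type structure and Condition (8)/(9) (which guarantee the relevant blocks are large and have high degrees) I would lower-bound the number of vertex tuples $\varphi$ for which all very-important edges are realized in $S$: this is where the allowed condition $|N_S(v_i)\cap V_j|\le 2a_{ij}/|V_i|$ combined with the total count $a_{ij}=(1+o(1))w_{ij}|V_i||V_j|$ comes in. Because $a_{ij}$ edges out of $|V_i||V_j|$ possible are present and the per-vertex degree is at most twice the average, a standard counting argument (of the same flavor as the construction in Section \ref{constructionssection}, or Lemma 2.6 of \cite{BD}) shows that the number of valid extensions is $(1-o(1))$ times the ``expected'' number $\prod w_{ij}^{(\text{multiplicity})}$ — this uses that $w_{ij}|V_i|\gg n^{-1}\log n$ from Condition (9), which prevents degeneracies. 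For edges landing in somewhat important blocks, since they form a matching and each such block has $a_{ij}=(1+o(1))w_{ij}|V_i||V_j|$ edges out of $|V_i||V_j|$, summing over $\varphi$ gives a factor $(1-o(1))w_{ij}$ per such edge. Multiplying all these contributions reconstructs $(1-o(1))\Hom_B(K,W)n^{v(K)}$, since $\Hom_B(K,W)$ is precisely the product of the $m(S_i)$'s (contributing the $n^{v(K)}$ after scaling) times the $w_{ij}$'s over edges.

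The main obstacle I expect is the very-important-block step: handling the correlations introduced by conditioning on the exact edge set $S$ there while simultaneously all edges of $K$ mapped into that block must be realized. Because $w_{ij}=1$ nominally but $S$ only contains $a_{ij}=|V_i||V_j|-O(1)$ (or in the $w_{ij}<1$ regime, fewer) of the pairs, a small fraction of vertex tuples will fail to have all required edges; the degree bound from the allowed condition is exactly what lets one show this failure fraction is $o(1)$ uniformly. I would handle this by induction on the number of edges of $K$ mapped to very important blocks, extending the partial embedding one edge at a time and using at each step that the number of ``bad'' endpoints (those missing the needed $S$-edge) is a vanishing fraction of $V_j$ — which follows from $\sum_{v_i}(|V_j| - |N_S(v_i)\cap V_j|) = |V_i||V_j| - a_{ij} = O(1)$ together with $|V_j|\gg 1$. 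Everything else — the unimportant-block factors and the matching structure for somewhat important blocks — is bookkeeping using Conditions (7), (8), and Lemma \ref{conditionsdeductionlemma}. Finally, I note that the hypothesis $\Hom_B(K,W)\ge\epsilon p^{e(K)}$ is used only to invoke non-negligibility of $B$ so that Conditions (7) and (8) apply; the constant $\epsilon$ itself plays no quantitative role.
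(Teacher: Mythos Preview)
Your approach is essentially the paper's: split edges of $K$ by block type, use Condition (7) to treat somewhat-important edges as a matching (so the number of placements for each such edge $uv$ is $a_{B(u)B(v)}-O(|V_{B(u)}|+|V_{B(v)}|)=(1+o(1))w_{B(u)B(v)}|V_{B(u)}||V_{B(v)}|$), give each unimportant edge a factor $p$, and invoke Condition (8) to replace $p$ by $(1+o(1))w_{ij}$ at the end.

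However, you misidentify the ``main obstacle.'' For a very important block $(i,j)$ one has $w_{ij}=1$, and then the definition $a_{ij}=\lfloor w_{ij}|V_i||V_j|+\tfrac12\rfloor$ (or the analogous diagonal formula) gives $a_{ij}=|V_i||V_j|$ \emph{exactly}, not $|V_i||V_j|-O(1)$. Hence $S$ contains every pair in that block, and any injective placement of vertices automatically realizes all very-important edges; the paper dispatches this in a single sentence. No induction and no degree bound are needed here. In fact the degree constraint in the definition of ``allowed'' is not used anywhere in the proof of this lemma---it is saved for the Janson-inequality step (Proposition \ref{lowertailboundprop}), where overlapping somewhat-important edges in $\text{Im}(\phi)\cup\text{Im}(\phi')$ must be controlled.
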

\begin{proof}
Note that $\Hom_B(K,W)=\displaystyle\prod_{v\in V(K)}m(S_{B(v)})\displaystyle\prod_{uv\in E(K)}w_{B(u)B(v)}$ by definition. Since that $|V_i|=(1+o(1))m(S_i)n$ for all $i$ by Lemma \ref{conditionsdeductionlemma}, it suffices to show that
\begin{equation}\label{wequation}
\mathbb{E}(\Hom_B(K,G_S))\geq (1-o(1))\displaystyle\prod_{v\in V(K)}|V_{B(v)}|\displaystyle\prod_{uv\in E(K)}w_{B(u)B(v)}.
\end{equation}

We use (7) of Conditions \ref{lowerboundconditions}, which implies that all edges of $K$ that $B$ sends to somewhat important blocks are disjoint. Let $D\subseteq E(K)$ be this set of disjoint edges, and let $V(D)$ be the set of their vertices.

We count homomorphisms where all vertices in $K$ have distinct image, and we will count them by the position of the vertices in $V(D)$. These are uniquely determined by the (ordered) edges that we choose to send the edges in $D$ to, as $D$ is simply a union of disjoint edges. The number of choices for the image of some edge $uv\in D$ is at least $a_{B(u)B(v)}-e(K)(|V_{B(u)}|+|V_{B(v)}|)$, as $(B(u),B(v))$ must be important by the definition of $D$ and $S$ has exactly $a_{ij}$ (ordered) edges in each important block $V_i\times V_j$, and there are at most $e(K)(|V_{B(u)}|+|V_{B(v)}|)$ edges of the block that intersect an edge already chosen.

After this, send the vertices in $V(K)\backslash V(D)$ to arbitrary distinct vertices of the appropriate block. Now, notice that in any very important block $(i,j)$, $w_{ij}=1$ by definition, so we have that $S|_{V_i\times V_j}=K(V_i,V_j)$ for all valid $S$ (as $a_{ij}=|V_i||V_j|$ if $i\neq j$ and $\binom{|V_i|}{2}$ if $i=j$). Therefore, the image of any edge of $K$ sent to a very important block will always be contained in $G_S$.

It suffices to deal with the unimportant blocks. Let $uv$ be an edge of $K$ sent to an unimportant block by our proposed homomorphism. Its image is contained in $G_S$ with probability $p$ (as unimportant blocks are sampled with probability $p$). Since we are sending all vertices of $K$ to distinct vertices, all of these probabilities from the edges of $K$ are independent. Thus the probability that this is a valid homomorphism to $G_S$ is
\[p^{|\{uv\in E(K): (B(u),B(v))\text{ unimportant}\}|}.\]
By linearity of expectation, since there were $\displaystyle\prod_{uv\in D}(a_{B(u)B(v)}-e(K)(|V_{B(u)}|+|V_{B(v)}|))$ choices of where to send the vertices in $V(D)$ and at least $|V_{B(v)}|-v(K)$ choices of where to send the vertices in $V(K)\backslash V(D)$ (similarly, we subtract the $v(K)$ to account for the fact that we are choosing the images to be distinct), we have that $\mathbb{E}(\Hom_B(K,G_S))$ is at least

\[p^{|\{uv\in E(K): (B(u),B(v))\text{ unimportant}\}|}\displaystyle\prod_{uv\in D}(a_{B(u)B(v)}-e(K)(|V_{B(u)}|+|V_{B(v)}|))\displaystyle\prod_{v\in V(K)\backslash V(D)}(|V_{B(v)}|-v(K)).\]
Now, $|V_i|\gg 1$ by Lemma \ref{conditionsdeductionlemma}, so we may write $|V_{B(v)}|-v(K)=(1+o(1))|V_{B(v)}|$. Similarly, we would like to bound the factors of the first product. For $uv\in D$, $(B(u),B(v))$ is (somewhat) important, so we may apply Lemma \ref{conditionsdeductionlemma} and (9) of Conditions \ref{lowerboundconditions} to obtain
\[w_{B(u)B(v)}|V_{B(u)}|=(1+o(1))w_{B(u)B(v)}m(S_{B_u})n\gg 1\]
Therefore, since $a_{B(u)B(v)}$ is the closest integer to $w_{B(u)B(v)}|V_{B(u)}||V_{B(v)}|\gg 1$, we have that $a_{B(u)B(v)}\gg |V_{B(u)}|$ and thus
\[a_{B(u)B(v)}-e(K)(|V_{B(u)}|+|V_{B(v)}|)=(1+o(1))w_{B(u)B(v)}|V_{B(u)}||V_{B(v)}|.\]
Substituting in, we see that
\begin{align}
\mathbb{E}(\Hom_B(K,G_S)) & \geq (1+o(1))p^{|\{uv\in E(K): (B(u),B(v))\text{ unimportant}\}|}\displaystyle\prod_{v\in V(K)}|V_{B(v)}|\displaystyle\prod_{uv\in D}w_{B(u)B(v)} \\ & \label{w'equation} =(1+o(1))\displaystyle\prod_{v\in V(K)}|V_{B(v)}|\displaystyle\prod_{uv\in E(K)}w'_{B(u)B(v)},
\end{align}
where we define $w'_{ij}=w_{ij}$ if $(i,j)$ is important and $w'_{ij}=p$ otherwise. The last equality follows from the fact that $w'_{ij}=1$ for all very important blocks, and that the edges in $D$ are the only ones sent into somewhat important blocks.
Comparing (\ref{wequation}) with (\ref{w'equation}), we see that it suffices to show that $w'_{ij}\geq (1+o(1))w_{ij}$ for all $1\leq i,j\leq k$.

But when $(i,j)$ is important then $w'_{ij}=w_{ij}$, and when $(i,j)$ is unimportant, then $w'_{ij}=p$ by definition and $w_{ij}=p+o(p)$ by (8) of Condition \ref{lowerboundconditions}. This completes the proof.
\end{proof}
Using the lemma above, we can bound $\neg\mathcal{H}_{\delta'}$ by something that looks like a lower tail event.
\begin{cor}\label{lowertailcor}
Assume Setup \ref{ldpsetup}. Let $S\in\text{Imp}_n$ be allowed and let $G_S$ be a random graph sampled from $\mathbb{P}_S$, with $\mathbb{P}_S$ as in Definition \ref{psdef}. Take $\delta'=\delta'(n)$ with $\delta'\leq\delta-\Omega(1)$. There exists $\epsilon>0$ (not depending on $n$ or $S$) such that for all sufficiently large $n$, if $G_S\in\neg\mathcal{H}_{\delta'}$, there exists some $K$-block $B$ such that $\Hom_B(K,W)\geq\epsilon p^{e(K)}$ and
\[\Hom_B(K,G_S)\leq (1-\epsilon)\mathbb{E}(\Hom_B(K,G_S)),\]
where $\Hom_B(K,G_S)$ is defined as in Definition \ref{hombgtdef}.
\end{cor}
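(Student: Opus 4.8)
The plan is to argue by contradiction. Suppose $G_S\in\neg\mathcal{H}_{\delta'}$, so that $\Hom(K,G_S)<(1+\delta')p^{e(K)}n^{v(K)}$, and suppose — contrary to the claim — that \emph{every} $K$-block $B$ with $\Hom_B(K,W)\geq\epsilon p^{e(K)}$ satisfies $\Hom_B(K,G_S)>(1-\epsilon)\mathbb{E}(\Hom_B(K,G_S))$. We will see that for a suitable absolute constant $\epsilon>0$ this forces $\Hom(K,G_S)$ to be too large once $n$ is large, a contradiction.

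First I would record the bookkeeping. Since $k=O(1)$, there are only $N:=k^{v(K)}=O(1)$ distinct $K$-blocks, and $\sum_{B}\Hom_B(K,W)=\Hom(K,W)=(1+\delta)p^{e(K)}$, where $\delta$ lies between two positive absolute constants: $\delta=\Omega(1)$ by condition (3) of Conditions \ref{lowerboundconditions}, and $\delta=O(1)$ since $\Hom(K,W)\leq\Hom(K,W+p)=O(p^{e(K)})$ by condition (10). Writing $\mathcal{G}$ for the set of $K$-blocks $B$ with $\Hom_B(K,W)\geq\epsilon p^{e(K)}$, the remaining ("small") blocks contribute less than $N\epsilon p^{e(K)}$ in total (each of the at most $N$ of them contributing less than $\epsilon p^{e(K)}$, all terms being nonnegative), so $\sum_{B\in\mathcal{G}}\Hom_B(K,W)>(1+\delta-N\epsilon)p^{e(K)}$. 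Also, by hypothesis $\delta'\leq\delta-c$ for some constant $c>0$ independent of $n$ (and $S$).

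Next, for each $B\in\mathcal{G}$ the hypothesis of Lemma \ref{GSexpectationlemma} holds with the constant $\epsilon$, so $\mathbb{E}(\Hom_B(K,G_S))\geq(1-o(1))\Hom_B(K,W)n^{v(K)}$, the $o(1)$ being uniform over allowed $S$. Under the contradiction hypothesis, and using $\Hom(K,G_S)\geq\sum_{B\in\mathcal{G}}\Hom_B(K,G_S)$ (all summands nonnegative), we get
\[\Hom(K,G_S)>(1-\epsilon)(1-o(1))\sum_{B\in\mathcal{G}}\Hom_B(K,W)\,n^{v(K)}>(1-\epsilon)(1-o(1))(1+\delta-N\epsilon)p^{e(K)}n^{v(K)}.\]
Since $(1-\epsilon)(1+\delta-N\epsilon)\geq 1+\delta-\epsilon(N+1+\delta)$ and both $N$ and $\delta$ are bounded by absolute constants, we may fix $\epsilon>0$ (independent of $n$ and $S$), small enough that $(1-\epsilon)(1+\delta-N\epsilon)\geq 1+\delta-c/2$ and small enough to serve simultaneously as the size threshold defining $\mathcal{G}$, the multiplicative slack $(1-\epsilon)$, and the constant in Lemma \ref{GSexpectationlemma}. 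Then for all large $n$ the factor $(1-o(1))$ costs at most a further $c/2$, so $\Hom(K,G_S)>(1+\delta-c)p^{e(K)}n^{v(K)}\geq(1+\delta')p^{e(K)}n^{v(K)}$, contradicting $G_S\in\neg\mathcal{H}_{\delta'}$. Hence some $B\in\mathcal{G}$ must violate the contradiction hypothesis, which is exactly the assertion of the corollary; note $\mathcal{G}\neq\emptyset$ for this $\epsilon$ because $\sum_{B\in\mathcal{G}}\Hom_B(K,W)>0$.

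The argument is essentially bookkeeping given Lemma \ref{GSexpectationlemma}; the only real point of care — and the mild "obstacle" — is that $\epsilon$ must be chosen uniformly in $n$ and $S$. This works because the number of $K$-blocks and the range of $\delta$ are controlled by absolute constants and the $o(1)$ in Lemma \ref{GSexpectationlemma} does not depend on $S$, so shrinking a single $\epsilon$ to handle all three roles is harmless.
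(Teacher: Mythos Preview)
Your proof is correct and uses the same ingredients as the paper's argument: Lemma~\ref{GSexpectationlemma}, the bound $\Hom(K,W)\leq\Hom(K,W+p)=O(p^{e(K)})$ from condition~(10), the finiteness $k^{v(K)}=O(1)$ of the number of $K$-blocks, and the gap $\delta-\delta'=\Omega(1)$. The only difference is organizational: the paper argues directly by pigeonhole on the differences $\Hom_B(K,G_S)-\Hom_B(K,W)n^{v(K)}$ to locate a single block $B$ with a large deficit (and then observes that this $B$ automatically satisfies $\Hom_B(K,W)\geq\epsilon p^{e(K)}$ since $\Hom_B(K,G_S)\geq 0$), whereas you fix the set $\mathcal{G}$ of large blocks in advance and run the contrapositive by summing. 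These are dual formulations of the same counting argument; neither buys anything the other does not.
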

\begin{proof}
Since $\Hom(K,W)=(1+\delta)p^{e(K)}$ by the definintion of $\delta$ in Setup \ref{ldpsetup}, if $G_S\in\neg\mathcal{H}_{\delta'}$, we have that
\begin{align*}
\displaystyle\sum_{B\text{ a }K\text{-block}}\Hom_B(K,G_S) & =\Hom(K,G_S) \\ & \leq (1+\delta')p^{e(K)}n^{v(K)} \\ & \leq\Hom(K,W)n^{v(K)}-(\delta-\delta')p^{e(K)}n^{v(K)} \\ & \leq\displaystyle\sum_{B\text{ a }K\text{-block}}\Hom_B(K,W)n^{v(K)}-(\delta-\delta')p^{e(K)}n^{v(K)}.
\end{align*}
Since there are at most $k^{v(K)}$ $K$-blocks, there is some $K$-block $B$ with
\begin{equation}\label{Kblockboundequation}
\Hom_B(K,G_S)\leq\Hom_B(K,W)n^{v(K)}-\frac{\delta-\delta'}{k^{v(K)}}p^{e(K)}n^{v(K)}.
\end{equation}
Thus we must have $\Hom_B(K,W)\geq\frac{\delta-\delta'}{k^{v(K)}}p^{e(K)}$, so for any choice of $\epsilon\leq\frac{\delta-\delta'}{k^{v(K)}}=\Omega(1)$ we have satisfied the desired condition $\Hom_B(K,W)\geq\epsilon p^{e(K)}$.

Furthermore, $\Hom_B(K,W)\leq\Hom(K,W+p)=O(p^{e(K)})$ by (10) of Conditions \ref{lowerboundconditions}, so (\ref{Kblockboundequation}) implies that
\[\Hom_B(K,G_S)\leq (1-\Omega(1))\Hom_B(K,W)n^{v(K)}\leq (1-2\epsilon)\Hom_B(K,W)n^{v(K)}\]
for some sufficiently small $\epsilon$. Thus by Lemma \ref{GSexpectationlemma},
\[\Hom_B(K,G_S)\leq (1-2\epsilon+o(1))\mathbb{E}(\Hom_B(K,G_S))\leq (1-\epsilon)\mathbb{E}(\Hom_B(K,G_S))\]
for sufficiently large $n$. Since our choice of $\epsilon$ in no way depended on $S$, this proves the corollary.
\end{proof}
The following proposition will bound the desired lower tail event.
\begin{prop}\label{lowertailboundprop}
Assume Setup \ref{ldpsetup}. Let $\epsilon>0$ be any constant (not depending on $n$). There exists a constant $C>0$ such that for all sufficiently large $n$ and any allowed $S\subset\text{Imp}_n$, and for all $K$-blocks $B$ with $\Hom_B(K,W)\geq\epsilon p^{e(K)}$,
\[\text{Pr}\left[\Hom_B(K,G_S)\leq (1-\epsilon)\mathbb{E}\left(\Hom_B(K,G_S)\right)\right]\leq\exp\left(-C\cdot I_p(W)n^2\right),\]
where $\mathbb{P}_S$ is defined as in Definition \ref{psdef}, $G_S$ is a random graph sampled from $\mathbb{P}_S$, and $\Hom_B(K,G_S)$ is defined as in Definition \ref{hombgtdef}.
\end{prop}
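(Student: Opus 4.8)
The statement is a lower-tail large deviation bound for $\Hom_B(K,G_S)$, so the natural tool is Janson's inequality applied to the collection of ``potential homomorphism copies'' of $K$ (with the vertex-block structure fixed by $B$). First I would set up the indicator random variables: for each injective map $\varphi\colon V(K)\to[n]$ respecting the block assignment $B$, let $X_\varphi$ be the indicator that $\varphi$ maps $E(K)$ into $G_S$. Since the edges in important blocks are frozen by the choice of $S$ (each present edge contributes a factor $1$, each absent one kills the copy), the only randomness in $X_\varphi$ comes from the edges of $K$ that $\varphi$ sends into \emph{unimportant} blocks; these are sampled independently with probability $p$ under $\mathbb{P}_S$. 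Janson's lower-tail inequality then gives
\[
\Pr\!\left[\Hom_B(K,G_S)\le (1-\epsilon)\mathbb{E}(\Hom_B(K,G_S))\right]\le \exp\!\left(-\frac{\epsilon^2\,\mu^2}{2\Delta}\right),
\]
where $\mu=\mathbb{E}(\Hom_B(K,G_S))$ (after discarding non-injective contributions, which are lower order) and $\Delta=\sum_{\varphi\sim\varphi'}\mathbb{E}(X_\varphi X_{\varphi'})$, the sum over ordered pairs of copies sharing at least one random (unimportant-block) edge.

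\textbf{Key steps.} The heart of the argument is the bound $\frac{\mu^2}{\Delta}\gtrsim I_p(W)n^2$. I would estimate $\mu$ from below using Lemma \ref{GSexpectationlemma}: since $\Hom_B(K,W)\ge\epsilon p^{e(K)}=\Omega(p^{e(K)})$, we get $\mu\ge(1-o(1))\Hom_B(K,W)n^{v(K)}=\Omega(p^{e(K)}n^{v(K)})$. For $\Delta$, the standard decomposition writes $\Delta=\sum_{\emptyset\ne J}\Delta_J$ where $J$ ranges over the ``overlap graphs'' — isomorphism types of the intersection $F$ of two copies in at least one random edge — and $\Delta_J$ is of order (number of ways to place two overlapping copies) $\times$ (edge-probability factor $p^{\#\text{unimportant random edges across both copies}}$). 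The crucial gain over $G(n,p)$ is that the overlap must occur in an \emph{unimportant} block, and condition (8) of Conditions \ref{lowerboundconditions} tells us that each such block $(i,j)$ has $m(S_i)m(S_j)\ge(1+o(1))p^{-1}I_p(W)$ with $w_{ij}=p+o(p)$; this means every unimportant block is ``large'', with area at least $\asymp p^{-1}I_p(W)$ relative to $n^2$. When two copies $\varphi,\varphi'$ share a subgraph $F$ that includes at least one edge $e_0$ lying in an unimportant block, collapsing that shared edge costs us a factor of roughly $\frac{1}{n^2 \cdot \text{(area of the block)}}\lesssim \frac{p}{I_p(W) n^2}$ in the count relative to $\mu^2/\mu$, i.e.\ each unit of overlap in an unimportant block gains a factor $\gtrsim \frac{I_p(W)n^2}{p}\cdot\frac{1}{p}= \frac{I_p(W)n^2}{p^2}$... more carefully, one shows $\Delta_J\le \mu^2\big/(c\,I_p(W)n^2)$ for every nonempty overlap type $J$, by checking that the ratio $\Delta_J/\mu^2$ is a product over the shared structure of factors each of size $O\!\big((I_p(W)n^2)^{-1}\big)$ coming from the forced coincidence of at least one endpoint pair inside a large unimportant block, while the factors coming from important-block edges and from very-important ($w_{ij}=1$) blocks are $O(1)$ and harmless. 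Summing over the $O(1)$ overlap types $J$ then gives $\Delta\lesssim \mu^2/(I_p(W)n^2)$, hence $\mu^2/\Delta\gtrsim I_p(W)n^2$, and Janson yields the claimed bound with $C=\Omega(\epsilon^2)$.

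\textbf{Main obstacle.} The delicate point is the case analysis on overlap types $J$ to prove $\Delta_J=O\!\big(\mu^2/(I_p(W)n^2)\big)$ uniformly. One must handle overlaps that include both important-block and unimportant-block edges, overlaps using very important blocks (where $w_{ij}=1$ so there is no probability factor but also no ``size'' help — here one relies on condition (2), that $S_k$ dominates, together with condition (7) forcing somewhat-important-block edges to form a matching, to control the combinatorics), and one must make sure the non-injective homomorphisms discarded at the start are genuinely lower order (this uses $|V_i|\gg1$ from Lemma \ref{conditionsdeductionlemma}). A secondary subtlety: condition (9), $w_{ij}|V_i|\gg n^{-1}\log n$ — more precisely $w_{ij}m(S_i)n\gg\log n$ — guarantees that when we restrict a shared edge to lie in an important block the available count $a_{ij}$ is still $\gg$ the relevant linear sizes, so the ``$-e(K)(|V_{B(u)}|+|V_{B(v)}|)$'' corrections are absorbed into $(1+o(1))$ as in the proof of Lemma \ref{GSexpectationlemma}. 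Once these bookkeeping points are dispatched, the inequality $\frac{\epsilon^2\mu^2}{2\Delta}\ge C\,I_p(W)n^2$ follows and the proposition is proved; the proof also needs the hypothesis $I_p(W)\gg n^{-1}\log n$ from condition (4) only implicitly, to know $I_p(W)n^2\to\infty$ so the bound is nontrivial.
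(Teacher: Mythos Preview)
Your overall strategy---Janson's inequality applied to the possible $B$-homomorphisms, with randomness coming only from the unimportant-block edges, and the target inequality $\mu^2/\Delta\gtrsim I_p(W)n^2$---is exactly the paper's approach. Two points, however, diverge from the actual argument in ways that matter.

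First, the case analysis on overlap types is far simpler than you suggest, and you mischaracterize the role of condition (4). The paper splits into only two cases according to $v(\mathrm{Im}(\phi)\cup\mathrm{Im}(\phi'))$. When this equals $2v(K)-2$ (the two copies share a single unimportant edge and nothing else), a careful count is done. For \emph{every} other overlap type the paper uses the trivial bound $n^{2v(K)-3}$ on the number of pairs, and this is $\le C'\,n^{2v(K)-2}p^{2e(K)}/I_p(W)$ precisely because condition (4) gives $I_p(W)\ll p^{2e(K)}n$. So condition (4)'s upper bound is not ``implicit''---it is the entire content of the larger-overlap case and is what lets you avoid the general $\Delta_J$ analysis you sketch.

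Second, you miss the one genuinely delicate ingredient in the single-edge-overlap case: the degree bound encoded in ``allowed $S$'' (equivalently, $\mathcal{A}_n^{\deg}$). When you glue two copies of $K$ along an unimportant edge $u_0v_0$, the somewhat-important edges of the glued graph $H$ form a union of two matchings sharing vertices only at $u_0,v_0$---hence a matching together with up to two copies of $K_{1,2}$. Counting embeddings of an isolated somewhat-important edge gives $a_{ij}$, but for a $K_{1,2}$ with center $u$ you must bound the number of $S$-neighbors of $u$ in $V_j$, and this is where the allowed-$S$ condition $|N_S(u)\cap V_j|\le 2a_{ij}/|V_i|$ is used. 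Condition (7) alone (matchings within each copy) does not suffice; without the degree bound the $K_{1,2}$ count could blow up. After this, condition (8) on the size of the shared unimportant block gives the factor $(m(S_{B(u_0)})m(S_{B(v_0)})w_{B(u_0)B(v_0)})^{-1}\le (1+o(1))I_p(W)^{-1}$, which is the source of the $I_p(W)n^2$ in the exponent.
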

We will prove this proposition in the next subsection. For now, we conclude this section with the proof of Proposition \ref{secondtermprop}.
\begin{proof}[Proof of Proposition \ref{secondtermprop} given Proposition \ref{lowertailboundprop}]
Take $\delta'\leq\delta-\Omega(1)$. By Lemma \ref{changeofmeasurelemma}, Lemma \ref{easyproblemma}, and Corollary \ref{lowertailcor},
\begin{align*}
\mathbb{P}_{G_n^d}(\mathcal{A}_n^{\deg}\land\neg\mathcal{H}_{\delta'}) & \leq\exp\left(-\left(\frac{1}{2}+o(1)\right)I_p(W)n^2\right)\mathbb{P}_{\star}(\mathcal{A}_n^{\deg}\land\neg\mathcal{H}_{\delta'}) \\ & \leq\exp\left(-\left(\frac{1}{2}+o(1)\right)I_p(W)n^2\right)\displaystyle\max_{\text{allowed }S}\mathbb{P}_S(\neg\mathcal{H}_{\delta'}) \\ & \leq\exp\left(-\left(\frac{1}{2}+o(1)\right)I_p(W)n^2\right) \\ & \cdot\displaystyle\max_{\text{allowed }S}\displaystyle\sum_{\substack{B\text{ a }K\text{-block} \\ \Hom_B(K,W)\geq\epsilon p^{e(K)}}}\left(\Pr\left[\Hom_B(K,G_S)\leq (1-\epsilon')\mathbb{E}(\Hom_B(K,G_S))\right]\right)
\end{align*}
for all sufficiently large $n$ and some $\epsilon'>0$ not depending on $n$. But by Proposition \ref{lowertailboundprop}, there exists $C>0$ such that
\[\Pr\left[\Hom_B(K,G_S)\leq (1-\epsilon')\mathbb{E}(\Hom_B(K,G_S))\right]\leq\exp(-C\cdot I_p(W)n^2)\]
for all $S\subseteq Imp_n$, all $K$-blocks $B$ with $\Hom_B(K,W)\geq\epsilon p^{e(K)}$, and all sufficiently large $n$. Thus (since the number of $K$-blocks is $O(1)$),
\[\mathbb{P}_{G_n^d}(\mathcal{A}_n^{\deg}\land\neg\mathcal{H}_{\delta'})\leq\exp\left(\left(-\frac{1}{2}+\Omega(1)\right)I_p(W)n^2\right),\]
so Proposition \ref{secondtermprop} is proven.
\end{proof}
\subsection{Bounding the Lower Tail}
The goal of this subsection is to prove Proposition \ref{lowertailboundprop}, whereupon the work done in the previous section implies Proposition \ref{secondtermprop}. We will need to use Janson's inequality, which we now state.
\begin{thm}[Janson \cite{Janson}]\label{Jansoninequality}
Let $N,m\in\mathbb{Z}^+$. Let $J\subset [N]$ be a random subset where each element is chosen independently at random. Let $Q_i$, $1\leq i\leq m$ be fixed (not necesarily distinct) subsets of $[N]$, and define the random indicator variable $D_i:=1_{Q_i\in J}$ and $D:=\displaystyle\sum_{i=1}^m D_i$. Then
\[\Pr[D\leq (1-\epsilon)\mu]\leq\exp\left(-\frac{\epsilon^2\mu^2}{2\left(\mu+\displaystyle\sum_{\substack{1\leq i,j\leq m \\ i\sim j}}\mathbb{E}(D_iD_j)\right)}\right),\]
where $\mu=\mathbb{E}D$ and we say that $i\sim j$ if $i\neq j$ and $Q_i\cap Q_j\neq\emptyset$ (in other words, $D_i$ and $D_j$ are correlated).
\end{thm}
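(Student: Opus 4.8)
The plan is to prove this via the exponential-moment (Laplace transform) method adapted to the lower tail. Write $p_i := \mathbb{E}(D_i) = \Pr[Q_i \subseteq J]$, so that $\mu = \sum_{i=1}^m p_i$, and abbreviate $\Delta := \sum_{i\sim j}\mathbb{E}(D_iD_j)$ (the quantity in the denominator). The first and only elementary step is the Markov bound: for every $t \ge 0$,
\[\Pr[D \le (1-\epsilon)\mu] = \Pr\!\left[e^{-tD} \ge e^{-t(1-\epsilon)\mu}\right] \le e^{t(1-\epsilon)\mu}\,\mathbb{E}(e^{-tD}).\]
Everything then rests on a sufficiently strong upper bound for the Laplace transform $\mathbb{E}(e^{-tD})$; once we have it we substitute and optimize over $t$.

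The key estimate I would establish is that, with $u := 1-e^{-t}\in[0,1)$,
\[\mathbb{E}(e^{-tD}) \le \exp\!\left(-u\mu + \tfrac{u^2}{2}\,\Delta\right)\qquad\text{for all }t\ge 0.\]
This is the heart of the matter — it is exactly the content of Janson's inequality — and I would prove it by a telescoping argument. Since each $D_i$ takes values in $\{0,1\}$ we have the exact identity $e^{-tD_i} = 1 - uD_i$, hence $e^{-tD} = \prod_{i=1}^m(1-uD_i)$. Writing $D^{(k)} = \sum_{i\le k}D_i$ and letting $\mathbb{E}_{k-1}(\cdot)$ be expectation under the law tilted by $e^{-tD^{(k-1)}}$, the product telescopes:
\[\mathbb{E}(e^{-tD}) = \prod_{k=1}^{m}\frac{\mathbb{E}(e^{-tD^{(k)}})}{\mathbb{E}(e^{-tD^{(k-1)}})} = \prod_{k=1}^{m}\bigl(1 - u\,\mathbb{E}_{k-1}(D_k)\bigr)\le\exp\!\Bigl(-u\sum_{k=1}^m\mathbb{E}_{k-1}(D_k)\Bigr),\]
using $1-x\le e^{-x}$. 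Thus it suffices to prove the per-index lower bound $\mathbb{E}_{k-1}(D_k)\ge p_k - u\sum_{i<k,\ i\sim k}\mathbb{E}(D_iD_k)$ and sum over $k$, noting that the pairs $(i,k)$ with $i<k$ and $i\sim k$ contribute exactly $\Delta/2$, so $\sum_k\mathbb{E}_{k-1}(D_k)\ge\mu-\tfrac{u}{2}\Delta$.

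Granting the key estimate, the Markov bound becomes $\Pr[D\le(1-\epsilon)\mu]\le\exp\!\bigl(t(1-\epsilon)\mu - u\mu + \tfrac{u^2}{2}\Delta\bigr)$, and I would finish by differentiating in $t$ (using $\tfrac{du}{dt}=e^{-t}=1-u$): the stationarity condition is $u(\mu+\Delta)-u^2\Delta=\epsilon\mu$, so the optimal $u$ is close to $\epsilon\mu/(\mu+\Delta)$, and plugging back in collapses the exponent to $-\epsilon^2\mu^2/(2(\mu+\Delta))$ — the factor $2$ in the denominator being the usual by-product of a quadratic optimization. The main obstacle is the per-index estimate $\mathbb{E}_{k-1}(D_k)\ge p_k - u\sum_{i<k,\,i\sim k}\mathbb{E}(D_iD_k)$: here one splits $D^{(k-1)}=A_k+B_k$ with $A_k=\sum_{i<k,\,i\sim k}D_i$ the part whose ground-set elements meet $Q_k$ and $B_k=\sum_{i<k,\,i\not\sim k}D_i$ the part supported on elements disjoint from $Q_k$; the independence of $\{Q_k\subseteq J\}$ from the $B_k$-coordinates together with the Harris (FKG) inequality for the product measure $\mathbb{P}$ controls the $B_k$-contribution in the favorable direction, and bounding $\prod_{i\sim k}(1-uD_i)\ge 1-uA_k$ before taking expectations produces the advertised slack term. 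Making this separation fully rigorous — in particular dealing with the fact that the partial product $D_k\prod_{i<k}(1-uD_i)$ is not itself monotone in $J$, and that the supports of $A_k$ and $B_k$ may overlap — is the delicate point; everything else is routine calculus and bookkeeping.
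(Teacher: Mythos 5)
The paper does not prove this theorem; it is imported verbatim from Janson's 1990 paper \cite{Janson}, so there is nothing internal to compare against. Your proposal reconstructs Janson's own argument — the Laplace transform bound $\mathbb{E}(e^{-tD})\le\exp(-u\mu+\tfrac{u^2}{2}\Delta)$ with $u=1-e^{-t}$ via telescoping, the FKG step for the per-index bound, and the final Chernoff optimization — and it is correct in substance.

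The two ``delicate points'' you flag at the end are not, in fact, obstacles. First, you never need $D_k\prod_{i<k}(1-uD_i)$ to be monotone in $J$: you condition on $\{D_k=1\}$ \emph{before} invoking any correlation inequality, writing $\mathbb{E}(D_kAB)=p_k\,\mathbb{E}(AB\mid D_k=1)$ with $A=\prod_{i<k,\,i\sim k}(1-uD_i)$ and $B=\prod_{i<k,\,i\not\sim k}(1-uD_i)$. Under the conditioned product measure both $A$ and $B$ are decreasing functions of $J$, so Harris/FKG gives $\mathbb{E}(AB\mid D_k=1)\ge\mathbb{E}(A\mid D_k=1)\,\mathbb{E}(B\mid D_k=1)=\mathbb{E}(A\mid D_k=1)\,\mathbb{E}(B)$, the last equality because $B$ is a function of $J\setminus Q_k$ only. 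Second, overlap between the supports of $A$ and $B$ is irrelevant: Harris/FKG applies to \emph{any} pair of monotone functions on a product space, with no disjointness requirement. To finish the per-index bound you still need the matching denominator estimate $\mathbb{E}(AB)\le\mathbb{E}(B)$, which is immediate from $A\le 1$; then $\mathbb{E}_{k-1}(D_k)\ge p_k\,\mathbb{E}(A\mid D_k=1)\ge p_k-u\sum_{i<k,\,i\sim k}\mathbb{E}(D_iD_k)$ via $\prod(1-x_i)\ge 1-\sum x_i$. For the closing calculus, the explicit choice $u=\epsilon\mu/(\mu+\Delta)$ together with the bound $-\log(1-u)\le u+\frac{u^2}{2(1-u)}$ reduces, after simplification, to the true inequality $\mu\le\mu+\Delta$, yielding exactly $-\epsilon^2\mu^2/(2(\mu+\Delta))$, so no hand-waving is needed there.
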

We now prove the proposition.
\begin{proof}[Proof of Proposition \ref{lowertailboundprop}]
Assume Setup \ref{ldpsetup}. Take some absolute constant $\epsilon>0$. Further take some $S\subseteq Imp_n$ and some $K$-block $B$ with $\Hom_B(K,W)\geq\epsilon p^{e(K)}$. We must show that $\text{Pr}\left[\Hom_B(K,G_S)\leq (1-\epsilon)\mathbb{E}\left(\Hom_B(K,G_S)\right)\right]\leq\exp\left(-C\cdot I_p(W)n^2\right)$.

Call a homomorphism from $K$ to $K_n$ a $B$-homomorphism if every $v\in K$ is sent into $V_{B(v)}\subseteq [n]$. Call a $B$-homomorphism $\phi$ from $K$ to $K_n$ \emph{possible} if its image does not intersect $Imp_n\backslash S$ (as then it can never be a valid homomorphism), or in other words, if it is a homomorphism from $K$ to $S\cup Unimp_n$. 

We apply Theorem \ref{Jansoninequality} with $[N]\subset E(K_n)$ being the set of edges in unimportant blocks $E(K_n)\backslash Imp_n$, which we call $Unimp_n$. For each possible $B$-homomorphism $\phi$ we create a subset $Q_{\phi}=\text{Im}(\phi)\backslash S\subset Unimp_n$. (Note that many of the $Q_{\phi}$ may be the same, as the image of several homomorphisms may differ only within $S$.)

Then if we sample a set $J\subseteq Unimp_n$ where each element is selected with probability $p$, we are in essence sampling our random graph $G_S$ (which will have edge set $J\cup S$), and a possible homomorphism $\phi$ from $K$ to $G_S$ is valid if and only if $Q_{\phi}\subseteq J$. Thus letting $D_{\phi}=1_{Q_{\phi}\subseteq J}$, then $\displaystyle\sum_{\phi}D_{\phi}=\Hom_B(G,G_S)$. Thus Theorem \ref{Jansoninequality} states that
\[\Pr[\Hom_B(K,G_S)\leq (1-\epsilon)\mathbb{E}(\Hom_B(K,G_S))]\leq\exp\left(\frac{-\epsilon^2(\mathbb{E}(\Hom_B(K,G_S)))^2}{2\left(\displaystyle\sum_{\substack{\phi,\phi'\\ \phi\sim \phi'}}\Pr\left(\text{Im}(\phi)\cup\text{Im}(\phi')\subseteq G_S\right)\right)}\right),\]
where the sum runs over pairs of possible $B$-homomorphisms from $K$ to $G_S$, and $\phi\sim\phi'$ if $\text{Im}(\phi)$ and $\text{Im}(\phi')$ intersect in at least one edge in an unimportant block. (The $\mu$ was eliminated from the denominator of Theorem \ref{Jansoninequality} because we redefined $\sim$ to be possible when $\phi=\phi'$, and under the new definition we always have $\phi\sim\phi$ because $K$ contains at least one edge sent by $B$ into an unimportant block.)

The fact that we only consider pairs $\phi,\phi'$ which share an edge in an \emph{unimportant} block (as opposed to any edge) is the point of fixing the edge set $S$.
Now, by Lemma \ref{GSexpectationlemma}, since $\Hom_B(K,W)\geq\epsilon p^{e(K)}$, we have that $\mathbb{E}(\Hom_B(K,G_S))\geq (1-o(1))\Hom_B(K,W)n^{v(K)}\geq\frac{\epsilon}{2}p^{e(K)}n^{v(K)}$ for sufficiently large $n$, so to prove Proposition \ref{lowertailboundprop} it suffices to show that
\begin{equation}\label{necessarydeltabound}
\displaystyle\sum_{\substack{\phi,\phi'\text{ possible }B\text{-homomorphisms}\\ \phi\sim \phi'}}\Pr\left(\text{Im}(\phi)\cup\text{Im}(\phi')\subseteq G_S\right)\leq C'\cdot\frac{n^{2v(K)-2}p^{2e(K)}}{I_p(W)}
\end{equation}
for some $C'>0$ not depending on $S$ and all sufficiently large $n$.

We bound the left side of (\ref{necessarydeltabound}) by breaking into sums based on the isomorphism class of $\text{Im}(\phi)\cup\text{Im}(\phi')$.

First suppose $v(\text{Im}(\phi)\cup\text{Im}(\phi')=2v(K)-2$. Then since $\phi\sim\phi'$, $\text{Im}(\phi)$ and $\text{Im}(\phi')$ must be injective copies of $K$ joined along a single unimportant edge; say $uv,u'v'\in E(K)$ with $\phi(u)=\phi'(u')$ and $\phi(v)=\phi'(v')$.

The sum of all terms in the sum where $\phi$ and $\phi'$ satisfy these relations---that $\phi(u)=\phi'(u')$ and $\phi(v)=\phi'(v')$, $\phi$ and $\phi'$ are injective, and there are no other overlaps in the values of $\phi$ and $\phi'$---is simply equal to the expected number of injective $B$-homomorphisms from $H$ to $G_S$, where $H$ is the graph given by joining $K$ to itself by identifying $u$ and $v$ in one copy of $K$ with $u'$ and $v'$ in the other, respectively. (Note that there is a natural definition of a $B$-homomorphism from $H$ to $G_S$, as there is a natural map $V(H)\to [k]$ induced by the two maps $V(K)\to [k]$ on both copies of $K$ that form $H$. They must agree on the overlap or else this sum is trivially $0$.)

We upper bound the number of such injective $B$-homomorphisms. Call edges of $H$ very important/somewhat important/unimportant if their image blocks under $B$ are. First, we determine where we are sending the somewhat important edges of $H$. Let $H_{SI}$ be the subgraph of $H$ given by the somewhat important edges. Since the edge that the two copies of $K$ making up $H$ intersect in is unimportant, and the somewhat important edges in each copy of $K$ form a matching by (7) of Conditions \ref{lowerboundconditions}, $H_{SI}$ is the union of a matching and up to two copies of $K_{1,2}$.

For each edge $e=uv\in E(H_{SI})$ that is not part of a $K_{1,2}$, since we must send $u$ into $V_{B(u)}$ and $v$ into $V_{B(v)}$, there are $a_{B(u)B(v)}$ choices for where to send the pair $(u,v)$ so that $uv$ maps into an edge, by the definition of $a_{ij}$. By Lemma \ref{conditionsdeductionlemma}, $a_{B(u)B(v)}=(1+o(1)) w_{B(u)B(v)}|V_{B(u)}||V_{B(v)}|$ since $(B(u),B(v))$ is (somewhat) important by assumption.

Now, consider a $K_{1,2}$ formed by $u,v,w\in V(H_{SI})$, $uv,uw\in E(H_{SI})$. We use the degree condition in the validity of $S$. In particular, for any $u'\in V_{B(u)}$, if our homomorphism sends $u$ to $u'$, we must send $v$ into $N_S(u')\cap V_{B(v)}$, which by the validity of $S$ (which in turn relies on the definition of $\mathcal{A}_n^{\deg}$) has cardinality at most $2\frac{a_{B(u)B(v)}}{|V_{B(u)}|}$. Similarly, we have at most $2\frac{a_{B(u)B(w)}}{|V_{B(u)}|}$ choices for where to send $w$. Since we have $|V_{B(u)}|$ choices for where to send $u$ initially, our total number of choices is at most
\begin{align*}
4\frac{a_{B(u)B(v)}}{|V_{B(u)}|}\frac{a_{B(u)B(w)}}{|V_{B(u)}|}|V_{B(u)}| & =4\frac{a_{B(u)B(v)}a_{B(u)B(w)}}{|V_{B(u)}|} \\ & =(4+o(1))w_{B(u)B(v)}w_{B(u)B(w)}|V_{B(u)}||V_{B(v)}||V_{B(w)}|
\end{align*}

Thus the total number of ways to embed $H_{SI}$ is at most
\[(16+o(1))\displaystyle\prod_{v\in V(H_{SI})}|V_{B(v)}|\displaystyle\prod_{uv\in E(H_{SI})}w_{B(u)B(v)}.\]

Embedding the rest of the vertices arbitrarily in the appropriate $V_i$ (and sending them to distinct vertices), the image of each very important edge of $H$ is in $G_S$ with probability $1$ (since $S\in G_S$ covers the entirety of all very important blocks) and the image of each unimportant edge of $H$ is in $G_S$ with probability $p$. Since all vertices of $H$ are sent to distinct vertices in $[n]$, all of these probabilities are independent. Thus letting $w'_{ij}=w_{ij}$ if $(i,j)$ is important and $w'_{ij}=p$, the expected number of $B$-homomorphisms from $H$ is at most
\[(16+o(1))\displaystyle\prod_{v\in V(H)}|V_{B(v)}|\displaystyle\prod_{uv\in V(H)}w'_{B(u)B(v)}.\]
By (8) of Conditions \ref{lowerboundconditions}, $w'_{ij}=(1+o(1))w_{ij}$ for all pairs $(i,j)$ appearing in our product (since all blocks appearing must be images of some edge in $H$ and thus the image of some edge in $K$). So noting that $|V_i|=(1+o(1))m(S_i)n$ by Lemma \ref{conditionsdeductionlemma}, our upper bound becomes
\begin{align*}
(16+o(1))n^{v(H)}\displaystyle\prod_{v\in V(H)}m(S_{B(v)})\displaystyle\prod_{uv\in V(H)}w_{B(u)B(v)} & =(16+o(1))n^{2v(K)-2}\Hom_B(H,W) \\ & =(16+o(1))n^{2v(K)-2}\frac{\Hom_B(K,W)^2}{m(S_{B(u_0)})m(S_{B(v_0)})w_{B(u_0)B(v_0)}},
\end{align*}
where $u_0v_0$ is the edge of $H$ where the two copies of $K$ overlap. By (10) of Conditions \ref{lowerboundconditions}, $\Hom_B(K,W)\leq\Hom(K,W)\leq\Hom(K,W+p)=O(p^{e(K)})$, so we have obtained an upper bound of
\[O\left(\frac{n^{2v(K)-2}p^{2e(K)}}{m(S_{B(u_0)})m(S_{B(v_0)})w_{B(u_0)B(v_0)}}\right).\]
By (8) of Conditions \ref{lowerboundconditions}, since $(B(u_0),B(v_0))$ is unimportant, $m(S_{B(u_0)})m(S_{B(v_0)})w_{B(u_0)B(v_0)}\geq (1+o(1))I_p(W)$, so we have proven the desired $O\left(\frac{n^{2v(K)-2}p^{2e(K)}}{I_p(W)}\right)$ bound for the terms where $v(\text{Im}(\phi)\cup\text{Im}(\phi'))=2v(K)-2$.

We now tackle the terms where $v(\text{Im}(\phi)\cup\text{Im}(\phi'))<2v(K)-2$. (We must have $v(\text{Im}(\phi)\cup\text{Im}(\phi'))\leq 2v(K)-2$ as the two homomorphisms $\phi$ and $\phi'$ overlap on an edge.) However, here there is a trivial bound of $n^{2v(K)-3}$ for the number of homomorphisms, so since $n\gg p^{-2e(K)}I_p(W)$ by (4) of Conditions \ref{lowerboundconditions} we have the desired bound in this case too, proving Proposition \ref{lowertailboundprop}.
\end{proof}
We have now completed the proof of Theorem \ref{ldplowerboundthm}. The only remaining loose ends, besides the proofs of the main theorems, are the proofs of Proposition \ref{thesegraphonsworkprop}, Proposition \ref{mostgraphsprop}, and Theorem \ref{cycleunionthm} as well as the deduction of Corollary \ref{correctexponentcor} from Theorem \ref{correctexponent}. We will deal with these over the next three sections.
\section{Proof of Proposition \ref{thesegraphonsworkprop}}\label{thesegraphonsworksection}
Take $n\to\infty$ and $p=p(n)\ll 1$.

We must check all conditions of Theorem \ref{ldplowerboundthm} in both of the following two cases.
\begin{case}
Let $K$ be an arbitrary graph with $\gamma=\gamma(K)>0$. Let $W=W(n)$ be set to any of the graphons in Figures \ref{constantgraphonfigure}, and suppose that $(n^{-1}\log n)^{\frac{1}{2e(K)-2-\gamma}}\ll p\ll 1$.
\end{case}
\begin{case}
Let $K=K_0$ be the graph from Figure \ref{K24plusanedgefigure}. Let $W=W(n)$ be the graphon from Figure \ref{K24graphonfigure} with $a(p)=\Theta\left(p^2\left(\log\frac{1}{p}\right)^{-\frac{1}{3}}\left(\log\log\frac{1}{p}\right)^{\frac{1}{3}}\right)$ and $b(p)=\Theta\left(p\left(\log\frac{1}{p}\right)^{\frac{2}{3}}\left(\log\log\frac{1}{p}\right)^{-\frac{2}{3}}\right)$, and suppose that $(n^{-1}\log n)^{\frac{1}{15}}\ll p\ll 1$.
\end{case}
 In both cases, it is clear that $W$ is a block graphon on a fixed number of blocks. Furthermore, $n^{-1}\log\log n\ll p\ll 1$, because for all graphs $K$ with $e(K)\geq 3$ we may take some $H\subseteq K$ such that $2e(K)-2-\gamma=2e(K)-2-\frac{e(H)-v(H)}{c(H)}\geq 2e(K)-2-e(H)\geq e(K)-2\geq 1$, and if $e(K)\leq 2$ $K$ must be a forest.

What remains is to check that all of conditions (1) through (10) of Conditions \ref{lowerboundconditions} hold in both cases.

Condition (1) holds as we constructed our graphons to be regular. (2) holds by inspection. (6) holds for any block $(i,j)$ with $\log\frac{w_{ij}}{p}\gg\log\log\log\frac{1}{p}$ as then by Lemma \ref{entropyapproxlemma}, $I_p(W)\geq (1-o(1))m(S_i)m(S_j)w_{ij}\log\frac{w_{ij}}{p}\gg m(S_i)m(S_j)w_{ij}\log\log\log\frac{1}{p}$. This clearly holds for all blocks $(i,j)$ with $w_{ij}=1$, so (6) easily holds in Case 1, and in Case 2 it suffices to note that $\log\frac{b(p)}{p}\gg\log\log\log\frac{1}{p}$. (3) holds by Lemma \ref{constructionenoughhoms} and the argument in the proof of the upper bound of Theorem \ref{K24correctconstant}.

The remaining conditions are (4), (5), (7), (8), (9), and (10).
\subsection{Proof of (4), (5), (7), (8), (9), and (10) for the Graphons in Case $1$}\label{checkingfigure2subsection}
Notice that for any $H\subseteq K$, $c(H)\leq\frac{v(H)}{2}$, as setting all vertices to have weight $\frac{1}{2}$ is a valid fractional cover. Thus $2+\frac{e(H)-v(H)}{c(H)}\leq\frac{e(H)}{c(H)}\leq e(H)\leq e(K)$ for all nonempty $H\subseteq K$, so $2+\gamma\leq e(K)$.

Therefore $2e(K)-2-\gamma\geq 2+\gamma$, so $p\gg (n^{-1}\log n)^{\frac{1}{2+\gamma}}$.

Since $w_{ij}\geq p$ for all important blocks $(i,j)$, (5) and (9) are satisfied as long as $p\cdot m(S_i)\gg n^{-1}\log n$ for all $i$. Since the smallest $S_i$ is of size $\Theta(p^{1+\gamma})$, and $p\gg (n^{-1}\log n)^{\frac{1}{2+\gamma}}$, this holds.

By Lemma \ref{constructionlowentropy}, $I_p(W)=(1+o(1))(2z+w)p^{2+\gamma}\log\frac{1}{p}$ (with $z=0$ or $w=0$ if we are looking at one of the first two graphons from the figure). Thus $I_p(W)=\Theta\left(p^{2+\gamma}\log\frac{1}{p}\right)$.

So since $(n^{-1}\log n)^{\frac{1}{2+\gamma}}\ll p$, $I_p(W)\gg n^{-1}\log n$, proving the left inequality of (4). For the right inequality, since $\log\frac{1}{p}\leq\log n$, we must just show that $n^{-1}\log n\ll p^{2e(K)-2-\gamma}$, which follows from our conditions on $p$.

Notice that all blocks with value $0$ must be negligible by definition. Thus all the non-negligible blocks $(i,k)$ in the last column (or last row) of the graphons in Figure \ref{constantgraphonfigure} have $m(S_i)m(S_k)=\Omega\left(p^{1+\frac{\gamma}{2}}\right)\gg p^{1+\gamma}\log\frac{1}{p}=\Theta(p^{-1}I_p(W))$, since $\gamma>0$.

So to prove (8) (by using (6)) it suffices to show that all blocks $(i,j)$ with $1\leq i,j\leq k-1$, $w_{ij}=p$ are either important or have $m(S_i)m(S_j)\geq p^{1+\gamma}\log\frac{1}{p}$. Notice that due to the structure of $W$, all such blocks have $m(S_i)m(S_j)=\Theta(p^c)$ for some constant $c$. If $c<1+\gamma$, we are done, as $\log\frac{1}{p}=p^{-o(1)}$. Otherwise, $m(S_i)m(S_j)w_{ij}=p^{c+1}\leq p^{2+\gamma}\ll\left(\log\log\log\frac{1}{p}\right)^{-1}I_p(W)$, as $I_p(W)=\Theta\left(p^{2+\gamma}\log\frac{1}{p}\right)$, so $(i,j)$ is important and we are also done in this case, proving (8).

We now show the more difficult statements (7) and (10). First we will assume (10) and prove (7), and then prove (10).

For (7), note that the second graphon in Figure \ref{constantgraphonfigure} has no somewhat important blocks. Thus we may assume we are in the first or third case, where there is a `hub' of size $\Theta(p^{1+\gamma})$. Suppose for the sake of contradiction that there is some non-negligible $K$-block $B$ that sends two non-disjoint edges into somewhat important blocks of $W$. Then there is some $u,v,w\in V(K)$ such that $uv$ and $uw$ are edges of $K$, and $B$ sends $uv$ and $uw$ to somewhat important blocks. Let $B(u)=i$, $B(v)=j_1$, and $B(w)=j_2$.

The only somewhat important blocks have value $p$, so $w_{ij_1}=w_{ij_2}=p$. Therefore, since $(i,j_1)$ and $(i,j_2)$ are somewhat important, $m(S_i)m(S_{j_1})\ll p^{-1}\left(\log\log\log\frac{1}{p}\right)^{-1}I_p(W)$, and similarly for $j_2$. By the construction of $W$, $m(S_i)m(S_{j_1})$ must be $\Theta(p^c)$ for some constant $c$, and since $I_p(W)=\Theta\left(p^{2+\gamma}\log\frac{1}{p}\right)$, this implies that
\[m(S_i)m(S_{j_1})=O(p^{1+\gamma}),\]
and similarly for $S_{j_2}$. Since the only blocks with value $p$ have area at least $p^{2+\frac{\gamma}{2}}$, this implies that $\gamma\leq 2$.

Now, suppose we modify $B$ to some $K$-block $B'$, where $u$ is now sent into the hub $S_1$ instead of $S_i$. We further modify so that all vertices adjacent to $u$ in $K$ that were sent into the large interval (of size $1-p$) are now sent into the interval of size $p+o(p)$. We compute $\frac{\Hom_{B'}(K,W)}{\Hom_B(K,W)}$.

In going from $B$ to $B'$, we lose a factor of $p$ for both the edges $uv$ and $uw$, as those are now sent into a block with value $1$. We also gain a factor of $\Theta\left(\frac{p^{1+\gamma}}{m(S_i)}\right)$, as we are sending $u$ into a smaller block.

When moving each neighbor of $u$ (for example, call one of them $u_0$) that was originally sent into the large block, we gained a factor of $p+o(p)$ from sending $u_0$ into a $p+o(p)$-times smaller block, but lost at least a factor of $p+o(p)$ as now $uu_0$ is sent into a block with value $1$ instead of $p+o(p)$. Thus
\[\frac{\Hom_{B'}(K,W)}{\Hom_B(K,W)}=\Omega\left(\frac{p^{\gamma-1}}{m(S_i)}\right).\]
By (10), $\Hom_{B'}(K,W)\leq\Hom(K,W+p)=O(p^{e(K)})$. Thus if $m(S_i)\ll p^{\gamma-1}$, $\Hom_B(K,W)\ll p^{e(K)}$ and $B$ is negligible, a contradiction. Thus since $m(S_i)=\Omega(p^c)$ (by inspection of $W$), $m(S_i)=\Omega(p^{\gamma-1})$.

Looking at $W$, we see that we must have $m(S_i)\leq p$. Thus $\gamma-1\geq 1$ and $\gamma\geq 2$. Combining this with our argument earlier, we must have that $\gamma=2$, and so $S_i$ must be the interval of size $p-o(p)$. Since we must have $m(S_i)m(S_{j_1})=O(p^{1+\gamma})=O(p^3)$, $S_{j_1}$ must be the interval of size $(1+o(1))p^{1+\gamma/2}=(1+o(1))p^2$ in the third graphon of Figure \ref{constantgraphonfigure}.

In summary, we have shown that $\gamma=2$ and (labelling the four intervals of this third graphon $S_1$ through $S_4$ in the natural way) that $i=3$ and $j_1=j_2=2$. However, this will again cause an issue. Create a new $K$-block $B''$ which is identical to $B$ except that it sends $u$ into $S_2$ instead of $S_3$. This gains one factor of $p+o(p)$ (since $m(S_2)=(p+o(p))m(S_3)$ but loses two factors of $p$ since $uv$ and $uw$ are now sent into blocks with value $1$. (There are several irrelevant factors of $1+o(1)$ coming from edges $uu_0$ where $u_0$ was sent into $S_4$.) Thus $\Hom_B(K,W)\leq (p+o(p))\Hom_{B'}(K,W)=O(p^{e(K)+1})$, again by (10). Thus $B$ is negligible, again a contradiction. This proves (7) given (10).

For the graphons in Figure \ref{constantgraphonfigure}, it only remains to prove (10); that is, that $\Hom(K,W+p)=O(p^{e(K)})$. Since $W+p\leq 2\max(W,p)$, it suffices to show that $\Hom(K,\max(W,p))=O(p^{e(K)})$.

We show that for any $K$-block $B$, $\Hom_B(K,\max(W,p))=O(p^{e(K)})$. Call the intervals in the third diagram of Figure \ref{constantgraphonfigure} $S_1,S_2,S_3,S_4$ in that order. (If we are in one of the other two diagrams, one of $S_1$ or $S_2$ may be empty.) Let $A_i=B^{-1}(i)$, so that all vertices in $A_i$ are sent into $S_i$.

If $H\subseteq K$ is the subgraph with $E(H)=E(A_1,A_1\cup A_2\cup A_3)\cup E(A_2)$, so that the edges of $H$ are exactly those sent into blocks of value $1$, we can easily see that
\[\Hom_B(K,\max(W,p))=O\left(p^{(1+\gamma)|A_1|+(1+\gamma/2)|A_2|+|A_3|}+e(K)-e(H)\right).\]
Since $|A_1|+|A_2|+|A_3|=v(H)$, we thus must show that $e(H)\leq v(H)+\gamma(|A_1|+|A_2|/2)$. But $e(H)\leq v(H)+\gamma c(H)$ by the definition of $\gamma$, and $|A_1|+|A_2|/2\geq c(H)$ because giving all vertices in $A_1$ weight $1$, all vertices in $A_2$ weight $\frac{1}{2}$, and all vertices in $A_3$ weight $0$ is a fractional vertex cover of $H$. This completes the proof of (10) and the proof of the first half of Proposition \ref{thesegraphonsworkprop}.
\subsection{Proof of (4), (5), (7), (8), (9), and (10) in Case 2}
We first show (4). We know that $I_p(W)=\Theta\left(p^3\left(\log\frac{1}{p}\right)^{\frac{2}{3}}\left(\log\log\frac{1}{p}\right)^{\frac{1}{3}}\right)$ by the argument in the proof of the upper bound of Theorem \ref{K24correctconstant}. Since in this case $e(K_0)=9$, we must show $n^{-1}\ll I_p(W)\ll p^{18}n$. These both hold as long as
\[n^{-1}\ll p^{15}\left(\log\frac{1}{p}\right)^{-\frac{2}{3}}\left(\log\log\frac{1}{p}\right)^{-\frac{1}{3}},\]
which holds since $p\gg (n^{-1}\log n)^{\frac{1}{15}}$ by the conditions given.

We now show (5) and (9). We must show that $a(p),pb(p)\gg n^{-1}\log n$. Since $a(p)\lesssim pb(p)$, we just must show $n^{-1}\log n\ll a(p)$. But by the given bounds on $p$, $n^{-1}\log n\ll p^{15}\ll a(p)$, since $a(p)=\Theta\left(p^2\left(\log\frac{1}{p}\right)^{-\frac{1}{3}}\left(\log\log\frac{1}{p}\right)^{\frac{1}{3}}\right)$.

To show (8), first note (as before) that it is impossible for any non-negligible $K$-block to send any edge to a block with value $0$. Since the only unimportant blocks with nonzero value have area $p+o(p)\gg p^{-1}I_p(W)$, (8) is proven.

We are left to show (7) and (10). Label the intervals $S_1$, $S_2$, and $S_3$ in that order, so that $m(S_1)=a(p)$, $m(S_2)=p-a(p)$, and $m(S_3)=1-p$. The only somewhat important block is $(2,2)$. Suppose for the sake of contradiction that some non-negligible $K_0$-block $B$ sends two adjacent edges $uv,uw\in E(K_0)$ into $(2,2)$; that is, $B(u)=B(v)=B(w)=2$. Then modifying $B$ by sending $u$ to $S_1$ and sending all vertices adjacent to $u$ originally sent to $S_3$ into $S_2$, by a similar analysis to the last subsection, we have that $\Hom_B(K_0,W)\leq\frac{b(p)^2}{p}\Hom_{B'}(K_0,W)$. Assuming (10), $\Hom_{B'}(K_0,W)\leq\Hom(K_0,W+p)=O(p^{e(K_0)})$, so since $b(p)^2\ll p$, $\Hom_B(K_0,W)\ll p^{e(K_0)}$, so $B$ is negligible, a contradiction. Thus it now only suffices to show (10).

Similarly to the last subsection, to prove (10) it suffices to show that for all $K_0$-blocks $B$, $\Hom_B(K_0,\max(W,p))=O(p^{e(K_0)})$. Let $A_i=B^{-1}(i)$ and $H=E(A_1,A_1\cup A_2)$. Then since $a(p)=p^{2+o(1)}$ and $b(p)=p^{1+o(1)}$, and since $e(K_0)=9$, it is easy to see that
\[\Hom_B(K_0,\max(W,p))=p^{(1+o(1))(2|A_1|+|A_2|+9-e(H))}.\]
Thus if $2|A_1|+|A_2|>e(H)$, we are done.

We now consider when we can have $2|A_1|+|A_2|\leq e(H)$, while keeping in mind that $E(H)=E(A_1,A_1\cup A_2)$. We must have that $A_1$ is a vertex cover of $H$, so $|A_1|\geq c(H)$. Since $|A_1|+|A_2|\geq v(H)$, we have $e(H)\geq c(H)+v(H)$. By the argument in Lemma \ref{K0contributingsubgraphslemma}, this means that $H\subseteq\{\emptyset,K_{2,4},K_0\}$, and we also must have $A_1\cup A_2=V(H)$. Since $A_1$ must be a minimum vertex cover of $H$, in the case $H=K_{2,4}$ we must have that $A_1$ consists of the two vertices of degree $4$, and in the case $H=K_0$ we must have that $A_1$ consists of the two vertices of degree $4$ and one vertex of degree $3$.

In the case where $H=\emptyset$, since $A_1\cup A_2=V(H)$, $A_1=A_2=\emptyset$. Thus $B(v)=3$ for all $v\in V(K)$, so $\Hom_B(K_0,\max(W,p))=(1+o(1))p^9$. In the case where $H=K_0$ , all edges are sent into blocks of value $1$. Since three vertices are sent into each of $S_1$ and $S_2$, $\Hom_B(K_0,\max(W,p))=(1+o(1))p^3a(p)^3\ll p^9$.

This only leaves the case where $H=K_{2,4}$. This case corresponds to when the two vertices of degree $4$ are sent into $S_1$ and the other four are sent into $S_2$. In this case, $\Hom_B(K,\max(W,p))=(1+o(1))p^4a(p)^2b(p)=\Theta(p^9)$, by the definitions of $a(p)$ and $b(p)$. This proves (10), and thus completes this section and the proof of Proposition \ref{thesegraphonsworkprop}.
\section{Finishing the Log Gap}\label{correctexponentcorsection}
In this section, we will show how to modify the argument of \cite{BD} to prove Theorem \ref{cycleunionthm}, and then deduce Corollary \ref{correctexponentcor} from Theorems \ref{correctexponent} and \ref{cycleunionthm}.
\begin{proof}[Proof of Theorem \ref{cycleunionthm}]
Technically, the argument of Bhattacharya and Dembo \cite{BD} only deals with the case where the $2$-core of $K$ is a single cycle, instead of a disjoint union of cycles. However, the same proof goes through in the disjoint union of cycles case almost identically. We will largely just describe the slight changes that must be made in the Bhattacharya-Dembo proof in order for it to apply here.

Notice that if we remove a leaf from $K$, then both $\Hom(K,G_n^d)$ and $n^{v(K)}p^{e(K)}$ change by a factor of $d=np$, so $-\log\Pr\left[\Hom(K,G_n^d)\geq (1+\delta)p^{e(K)}n^{v(K)}\right]$ stays constant. Thus we may assume without loss of generality that $K$ itself (not just its $2$-core) is a disjoint union of cycles.

Recall that $c(i_1,\ldots,i_{\ell};\delta)$ is the positive value of $c$ such that $\displaystyle\prod_{j=1}^{\ell}(1+\lfloor c\rfloor+\{c\}^{i_j/2})=1+\delta$. This exists because $\displaystyle\prod_{j=1}^{\ell}(1+\lfloor x\rfloor+\{x\}^{i_j/2})$ is a continuous increasing function on $(0,\infty)$, so it has a well-defined inverse on $(1,\infty)$.

Now, since $K=\bigcup_{1\leq j\leq\ell}C_{i_j}$,
\[\Hom(K,G)=\displaystyle\prod_{j=1}^{\ell}\Hom(C_{i_j},G)\]
for any graph $G$. Therefore, if $\Hom(K,G_n^d)\geq (1+\delta)n^{v(K)}p^{e(K)}$, there must be some $j$, $1\leq j\leq\ell$, such that $\Hom(C_{i_j},G_n^d)\geq (1+\lfloor c\rfloor+\{c\}^{i_j/2})n^{i_j}p^{i_j}$. This gives a bound
\[\Pr\left[\Hom(K,G_n^d)\geq (1+\delta)p^{e(K)}n^{v(K)}\right]\leq\displaystyle\sum_{j=1}^{\ell}\Pr\left[\Hom(C_{i_j},G_n^d)\geq (1+\lfloor c\rfloor+\{c\}^{i_j/2})n^{i_j}p^{i_j}\right].\]
So to prove the upper bound of Theorem \ref{cycleunionthm}, it suffices to show that
\[\Pr\left[\Hom(C_{i_j},G_n^d)\geq (1+\lfloor c\rfloor+\{c\}^{i_j/2})n^{i_j}p^{i_j}\right]\leq\exp\left(-\left(\frac{c}{2}-o(1)\right)n^2p^2\log\frac{1}{p}\right).\]
Having reduced to a single cycle, noting that $\lfloor x\rfloor+\{x\}^{i_j/2}$ and $\lfloor x\rfloor+\{x\}^{2/i_j}$ are inverse functions on $[0,\infty)$, this follows from Theorems 1.5 (a) and 1.1 of \cite{BD}.

We now show the lower bound of Theorem \ref{cycleunionthm}. We parallel the cycle argument in Section 2.3 of \cite{BD}. Define $X_n^{\star}$ as in (2.2) of \cite{BD}, with $\lfloor c\rfloor$ cliques of size $d+1$ and one clique of size $s_1\sim\{c\}^{\frac{1}{2}}d$, and define $P_{\star}$ in the same way. It is easy to compute that $I_p(X_n)=(c+o(1))d^2\log\frac{1}{p}=(c+o(1))n^2p^2\log\frac{1}{p}$, so we have the correct entropy and
\[\frac{d\mathbb{P}_p}{d\mathbb{P}_{\star}}=\exp\left(-\left(\frac{c}{2}+o(1)\right)n^2p^2\log\frac{1}{p}\right).\]
Following the argument in \cite{BD} up to (2.46), we must show that
\[\mathbb{P}_{\star}(\Hom(K,G_n)\leq (1+\delta-\Omega(1))n^{v(K)}p^{e(K)})\ll\mathbb{P}_{\star}(\mathcal{K}_n^d)=\exp(-o(n^2p^2\log(1/p))),\]
where $\mathcal{K}_n^d$ is the event that a graph is $d$-regular. The second bound is already proved for this $\mathbb{P}_{\star}$ in \cite{BD} (see the analysis of Case 1 after (2.51)).

To finish the proof, we upper bound $\mathbb{P}_{\star}(\Hom(K,G_n)\leq (1+\delta-\Omega(1))n^{v(K)}p^{e(K)})$. Since
\[1+\delta=\displaystyle\prod_{j=1}^{\ell}(1+\lfloor c\rfloor+\{c\}^{i_j/2}),\]
it suffices to upper bound
\[\mathbb{P}_{\star}(\Hom(C_{i_j},G_n)\leq (1+\lfloor c\rfloor+\{c\}^{i_j/2}-\Omega(1))n^{i_j}p^{i_j}).\]
But there are $(1+o(1))(\lfloor c\rfloor+\{c\}^{i_j/2})n^{i_j}p^{i_j}$ homomorphisms from $C_{i_j}$ into the $\lceil c\rceil$ planted cliques, so again we are looking at a lower tail probability for the number of homomorphisms of a cycle into $G(n-o(n),p)$, and the $\exp(-\Theta(n^2p))$ upper bound holds as in \cite{BD}. Since $n^2p\gg n^2p^2\log(1/p)$ for our range of $p$, we are done.
\end{proof}
\begin{proof}[Deduction of \ref{correctexponentcor} from Theorems \ref{correctexponent} and \ref{cycleunionthm}]
\setcounter{casecounter}{0}
Take any nonforest graph $K$ and fix $\delta>0$.
\begin{case}
The $2$-core of $K$ is not a disjoint union of cycles.
\end{case}
In this case, by Theorem \ref{correctexponent} it suffices to show that $\rho(K,\delta):=\displaystyle\min_{\substack{z,w\geq 0 \\ P_K(z,w)\geq 1+\delta}}\left(z+\frac{w}{2}\right)$ is not $\infty$. In other words, we must show that $P_K$ contains some nonconstant monomial (since all coefficients of $P_K$ are positive by definition). Thus we must show that $K$ has at least one contributing subgraph $H$ with a valid subset $A\subseteq V(H)$. But the vertices of the fractional vertex cover of any $H$ have half-integer coordinates (see for example Theorem 64.11 of \cite{Sch}), so there is some half-integer-valued minimum fractional vertex cover of $H$. Thus any $H$ has at least one valid subset. So it suffices to show that $K$ has at last one contributing subgraph $H$.

The $2$-core of $K$ is not a disjoint union of cycles, so it has more edges than vertices. Thus $\gamma(K)>0$. Take any $H$ such that $\frac{e(H)-v(H)}{c(H)}=\gamma(K)$. $H$ cannot be a forest, since we must have $e(H)-v(H)=\gamma(K)c(H)>0$.

If $H$ has a leaf, we may remove it and its single edge and keep $e(H)-v(H)$ constant while not increasing $c(H)$. Thus the new graph $H'$ given by removing the leaf also has $\frac{e(H')-v(H')}{c(H')}=\gamma(K)$. Thus repeatedly removing the leaves one by one, we arrive at the $2$-core of $H$, call it $H_2$, and we have shown that $\frac{e(H_2)-v(H_2)}{c(H_2)}=\gamma(K)$. Since $\delta(H_2)\geq 2$, $H_2$ is a contributing subgraph of $K$ and we are done.
\begin{case}
The $2$-core of $K$ is a disjoint union of cycles.
\end{case}
Since $K$ is not a forest, the $2$-core of $K$ contains at least one cycle. Let $i_1,\ldots,i_{\ell}$ be the cycle lengths in the $2$-core of $K$. Recall the definition of $c(i_1,\ldots,i_{\ell};\delta)$ from Theorem \ref{cycleunionthm}. We can take $c$ sufficiently large such that $\lfloor c\rfloor\geq\delta$, and since $\ell\neq 0$ we thus have
\[\displaystyle\prod_{j=1}^{\ell}(1+\lfloor c\rfloor+\{c\}^{i_j/2})\geq 1+\delta.\]
Thus $c(i_1,\ldots,i_{\ell};\delta)\neq\infty$. Since $\delta>0$, we also have $c(i_1,\ldots,i_{\ell};\delta)>0$. Thus by Theorem \ref{cycleunionthm}
\[-\log\Pr\left[\Hom(K,G_n^d)\geq (1+\delta)p^{e(K)}n^{v(K)}\right]=\Theta\left(n^2p^2\log\frac{1}{p}\right).\]

Note that $\gamma(K)=0$. This is because the $2$-core $H\subseteq K$ is a disjoint union of cycles and thus $\frac{e(H)-v(H)}{c(H)}=0$, and for any $H'\subseteq K$ the $2$-core of $H'$ must also be a (possibly empty) disjoint union of cycles and thus $e(H')<v(H')$.

Thus the only thing remaining to prove is that we have covered the entire desired range of $p$. In other words, we must show that
\[n^{-\frac{1}{3}}\lesssim (n^{-1}\log n)^{\frac{1}{2e(K)-2-\gamma(K)}},\]
so it suffices to show that $2e(K)-2-\gamma(K)\geq 3$. But $\gamma(K)=0$, and since $K$ contains at least one cycle, $e(K)\geq 3$, so $2e(K)-2-\gamma(K)\geq 4$. This finishes the proof of Corollary \ref{correctexponentcor}.
\end{proof}
\section{Proof of Proposition \ref{mostgraphsprop}}\label{mostgraphssection}
Let $K$ be any graph and suppose $\gamma=\gamma(K)>2$. We prove the conditions of Theorem \ref{correctconstant} hold, namely that $K$ is not a forest and that no contributing subgraphs of $K$ have bad edges.

Firstly, if $K$ is a forest, then $e(H)<v(H)$ for all nonempty subgraphs $H\subseteq K$, so $\gamma<0$, a contradiction. Thus $K$ is not a forest.

Take any contributing subgraph $H\subseteq G$. By the definition of contributing, $\frac{e(H)-v(H)}{c(H)}=\gamma>2$. Take some edge $e_0\in H$ and let $H'=H\backslash e_0$. We must have $\frac{e(H)-v(H)-1}{c(H')}\leq\frac{e(H')-v(H')}{c(H')}\leq\gamma=\frac{e(H)-v(H)}{c(H)}$. Using the simple inequality that $\frac{a+b}{c+d}$ is between $\frac{a}{b}$ and $\frac{c}{d}$ for $a,c$ nonnegative and $b,d$ positive, we must have that $\frac{1}{c(H)-c(H')}\geq\gamma>2$ or $c(H)=c(H')$. Thus $c(H)-c(H')<\frac{1}{2}$.

But $c(H)$ and $c(H')$ are both half-integer, as the vertices of the fractional vertex cover polytope all have half-integer coordinates (as in the previous section). Thus $c(H)=c(H')$.

Since the minimum fractional vertex cover number is the same as the maximum fractional matching number by (3) of Lemma \ref{edgeweightslemma}, we may construct a maximum fractional matching $(w_e)_{e\in E(H')}$ of $H'$ with $\sum w_e=c(H')=c(H)$. Setting $w_{e_0}=0$, we thus obtain a maximum fractional matching on $H$ with $e_0$ having weight $0$. Thus $e_0$ is not a bad edge. Since $e_0$ was arbitrary, we have proven that $H$ has no bad edges, and we have proven the conditions of Theorem \ref{correctconstant}.

To complete the proof of the first part of Proposition \ref{mostgraphsprop}, we must show that $\gamma>2$ as long as $K$ or any subgraph of $K$ has average degree greater than $4$. But since giving all vertices weight $\frac{1}{2}$ is always a valid way to generate a fractional vertex cover of any graph, $c(H)\leq\frac{v(H)}{2}$ for any $H\subseteq K$. Thus if $H\subseteq K$ has average degree greater than $4$, then
\[2<\frac{e(H)}{v(H)}=1+\frac{e(H)-v(H)}{v(H)}\leq 1+\frac{e(H)-v(H)}{2c(H)},\]
so $\gamma\geq\frac{e(H)-v(H)}{c(H)}>2$, as desired.

To prove the second part, now take a nonforest $K$ with $v(K)\leq 5$. Take any $H\subseteq K$ contributing. Then $v(H)\geq 5$ and $H$ has minimum degree at least $2$. We show that $H$ has no bad edges.

If $H$ has a Hamiltonian cycle, $H$ has no bad edges, because assigning all edges in the Hamiltonian cycle weight $\frac{1}{2}$ is a fractional perfect matching and thus a fractional vertex cover. The only graphs $H$ on at most $5$ vertices with $\delta(H)\geq 2$ and no Hamiltonian cycle are $K_{2,3}$, $K_{1,1,3}$, and the butterfly graph (two triangles joined at a vertex as in Example \ref{butterflyexample}). For the butterfly graph, we may obtain a fractional perfect matching by assigning all edges containing the vertex of degree $4$ weight $\frac{1}{4}$, and the other two edges weight $\frac{1}{2}$ (this is the same matching we used in Example \ref{butterflyexample}).

Both $H=K_{2,3}$ and $H=K_{1,1,3}$ have $c(H)=2$, so we may take a fractional matching by giving all edges in $K_{2,3}$ weight $\frac{1}{3}$ (and similarly for $K_{1,1,3}$, as it contains a copy of $K_{2,3}$). Thus for each of the three graphs we have constructed a maximum fractional matching with no edges of weight $1$. Thus none of them have any bad edges. This completes the proof of the Proposition.
\section{Proofs of Main Theorems}\label{maintheoremsection}
In this section, we prove Theorems \ref{correctexponent}, \ref{correctconstant}, and \ref{K24thm}.
\begin{proof}[Proof of Upper Bound of Theorems \ref{correctexponent} and \ref{correctconstant}]
Take any nonforest graph $K$ with the $2$-core of $K$ not a disjoint union of cycles. Then the $2$-core of $K$ must have more edges than vertices, and thus $\gamma:=\gamma(K)>0$. Take $n\to\infty$ and $d=d(n)$, $p:=\frac{d}{n}$ with $(n^{-1}\log n)^{\frac{1}{2e(K)-2-\gamma(K)}}\ll p\ll 1$. Finally, fix some constant $\delta>0$.

By the upper bound of Theorem \ref{varcorrectexponent}, since $p\to 0$, there exists $W=W(n)$ such that $\Hom(K,W)\geq (1+\delta)p^{e(K)}$, $I_p(W)\leq (2+o(1))\rho(K,\delta)p^{2+\gamma(K)}\log\frac{1}{p}$, and $W$ satisfies the conditions of Theorem \ref{ldplowerboundthm}.

Thus applying Theorem \ref{ldplowerboundthm},
\[-\log\left(\Pr\left[\Hom(K,G_n^d)\geq (1-o(1))\Hom(K,W)n^{v(K)}\right]\right)\leq\left(\frac{1}{2}+o(1)\right)I_p(W)n^2.\]
Substituting our bounds on $\Hom(K,W)$ and $I_p(W)$, we see that
\[-\log\left(\Pr\left[\Hom(K,G_n^d)\geq (1+\delta-o(1))p^{e(K)}n^{v(K)}\right]\right)\leq\left(1+o(1)\right)\rho(K,\delta)n^2p^{2+\gamma(K)}\left(\log\frac{1}{p}\right).\]
This is exactly the statement of the upper bounds of Theorems \ref{correctexponent} and \ref{correctconstant} except that $1+\delta$ is replaced by $1+\delta-o(1)$ on the left side. However, since $\rho(K,(1+o(1))\delta)=(1+o(1))\rho(K,\delta)$ (as we showed in the proof of the upper bound of Theorem \ref{varcorrectexponent} in Section \ref{constructionssection}), we may absorb this $o(1)$ into the $1+o(1)$ factor on the right hand side by increasing $\delta$ by $o(1)$, finishing the proof.
\end{proof}
\begin{proof}[Proof of Lower Bound of Theorems \ref{correctexponent} and \ref{correctconstant}]
Fix any nonforest graph $K$ with $2$-core not a disjoint union of cycles. We have that $\gamma:=\gamma(K)>0$ for the same reason as in the previous proof. Take $n\to\infty$, $d=d(n)$ and $p:=\frac{d}{n}$ with $(n^{-1}\log n)^{\frac{1}{2e(K)-2-\gamma(K)}}\ll p\ll 1$. Fix a constant $\delta>0$.

Recall the definition of $\Phi_n^d(K,t)$ from Definition \ref{phidef}. By the lower bounds of Theorems \ref{varcorrectexponent} and \ref{varcorrectconstant},
\[\Phi_n^d(K,1+\delta)\gtrsim p^{2+\gamma}n^2,\]
and if no contributing subgraphs of $K$ have bad edges,
\[\Phi_n^d(K,1+\delta)\geq (1-o(1))\rho(K,\delta)n^2p^{2+\gamma(K)}\log\frac{1}{p}.\]
Thus we may apply Theorem \ref{ldpupperboundthm} to finish the argument (applying the same method as the previous proof to remove the $o(1)$ from the $1+\delta+o(1)$ in the result), provided that we can show that
\[(n^{-1}\log n)^{\frac{1}{2e(K)-2-\gamma(K)}}\geq (n^{-1}\log n)^{\frac{1}{2\Delta_*(K)}},\]
or in other words, that
\[2\Delta_*(K)\leq 2e(K)-2-\gamma(K).\]
Recalling from Definition \ref{deltastardef} the definition of $\Delta_*$, $2\Delta_*(K)=\displaystyle\max_{vw\in E(K)}(\deg v+\deg w)$. Here all edges of $K$ are counted at most once except $vw$, which is counted twice. So $2\Delta_*(K)=e(K)+1$, so it suffices to prove that
\[\gamma(K)\leq e(K)-3.\]
For any $H$ with $v(H)<3$, $e(H)-v(H)<0$, so since $\gamma>0$, when computing $\gamma$ we may take the maximum only over subgraphs with at least $3$ vertices. But for any $H$ with $v(H)\geq 3$, $\frac{e(H)-v(H)}{c(H)}\leq e(H)-v(H)\leq e(H)-3$. Thus $\gamma(K)\leq e(K)-3$, finishing the proof.
\end{proof}
\begin{proof}[Proof of Theorem \ref{K24thm}]
Let $K=K_0$ be the graph from Theorem \ref{K24thm} (appearing in Figure \ref{K24plusanedgefigure}). Take $n\to\infty$, $d=d(n)$ and $p:=\frac{d}{n}$ with $(n^{-1}\log n)^{\frac{1}{15}}\ll p\ll 1$. Fix a constant $\delta>0$.

Let $W=W(n)$ be a $p$-regular graphon with $\Hom(K_0,W)\geq (1+\delta)p^9$ and $I_p(W)=(1+o(1))(18\delta)^{\frac{1}{3}}p^3\left(\log\frac{1}{p}\right)^{\frac{2}{3}}\left(\log\log\frac{1}{p}\right)^{\frac{1}{3}}$ that satisfies the conditions of Theorem \ref{ldplowerboundthm}, as guaranteed by Theorem \ref{K24correctconstant}.

Applying Theorem \ref{ldplowerboundthm} to $W$, we have that
\begin{align*}
& -\log\left(\Pr\left[\Hom(K_0,G_n^d)\geq (1+\delta-o(1))p^9n^6\right]\right) \\ & \leq (1+o(1))\frac{(18\delta)^{\frac{1}{3}}}{2}n^2p^3\left(\log\frac{1}{p}\right)^{\frac{2}{3}}\left(\log\log\frac{1}{p}\right)^{\frac{1}{3}}.
\end{align*}
Multiplying $\delta$ by a $1+o(1)$ factor to cancel the $o(1)$ inside the probability, we have proven the upper bound of Theorem \ref{K24thm}.

For the lower bound, Theorem \ref{K24correctconstant} implies that
\[\Phi_n^d(K_0,1+\delta)\geq (1+o(1))\frac{(18\delta)^{\frac{1}{3}}}{2}n^2p^3\left(\log\frac{1}{p}\right)^{\frac{2}{3}}\left(\log\log\frac{1}{p}\right)^{\frac{1}{3}}.\]
We may easily compute $2\Delta_*(K_0)=7<15$, so $p\gg (n^{-1}\log n)^{\frac{1}{2\Delta_*(K_0)}}$. Thus Theorem \ref{ldpupperboundthm} implies that
\begin{align*}
& -\log\left(\Pr\left[\Hom(K_0,G_n^d)\geq (1+\delta+o(1))p^9n^6\right]\right) \\ & \geq (1+o(1))\frac{(18\delta)^{\frac{1}{3}}}{2}n^2p^3\left(\log\frac{1}{p}\right)^{\frac{2}{3}}\left(\log\log\frac{1}{p}\right)^{\frac{1}{3}}.
\end{align*}
Multiplying the $\delta$ by $(1-o(1))$ to cancel out the $o(1)$ inside the probability, we have proven the lower bound of Theorem \ref{K24thm}. This finishes the proof.
\end{proof}
\section{Acknowledgements}
The author would like to thank his PhD advisor Yufei Zhao for introducing him to this problem, as well as helpful input and advice throughout the process. The author would also like to thank Nick Cook, Lutz Warnke, Xiaoyu He, and Wojtek Samotij for their helpful comments on earlier drafts.


\begin{thebibliography}{99}
\bibitem{Aug} Fanny Augeri, \emph{Nonlinear Large Deviation Bounds with Applications to Wigner Matrices and Sparse Erd\H{o}s-R\'{e}nyi Graphs}, \url{https://arxiv.org/abs/1810.01558}, 2018.
\bibitem{BB} Anirban Basak and Riddhipratim Basu, \emph{Upper Tail Large Deviations of Regular Subgraph Counts in Erdős-Rényi Graphs in the Full Localized Regime}, \url{https://arxiv.org/abs/1912.11410}, 2019.
\bibitem{BGLZ} Bhaswar Bhattacharya, Shirshendu Ganguly, Eyal Lubetzky, and Yufei Zhao, \emph{Upper Tails and Independence Polynomials in Random Graphs}, Advances in Mathematics, Vol. 319, pp. 313-347, 2017.
\bibitem{BD} Sohom Bhattacharya and Amir Dembo, \emph{Upper Tail for Homomorphism Counts in Constrained Sparse Random Graphs}, \url{https://arxiv.org/abs/1909.03045}, 2019.
\bibitem{C} Sourav Chatterjee, \emph{The Missing Log for Large Deviations for Triangle Counts}, Random Structures and Algorithms, Vol. 40, Iss. 4, pp. 437-451, 2012.
\bibitem{CDembo} Sourav Chatterjee and Amir Dembo, \emph{Nonlinear Large Deviations}, Advances in Mathematics, Vol. 299, pp. 396-450, 2016.
\bibitem{CV} Sourav Chatterjee and S. R. S. Varadhan, \emph{The Large Deviation Principle for the Erd\H{o}s-R\'{e}nyi Random Graph}, European Journal of Combinatorics, Vol. 32, Iss. 7, pp. 1000-1017, 2011.
\bibitem{CD} Nicholas Cook and Amir Dembo, \emph{Large Deviations of Subgraph Counts for Sparse Erd\H{o}s-R\'{e}nyi Graphs}, Advances in Mathematics, Vol. 373, No. 107289, 2020.
\bibitem{DK} Bobby DeMarco and Jeff Kahn, \emph{Upper Tails for Triangles}, Random Structures and Algorithms, Vol. 40, Iss. 4, pp. 452-459, 2012.
\bibitem{E} Ronen Eldan, \emph{Gaussian-Width Gradient Complexity, Reverse Log-Sobolev Inequalities and Nonlinear Large Deviations}, Geometric and Functional Analysis, Vol. 28, Iss. 6, pp. 1548-1596, 2018.
\bibitem{Holder} Helmut Finner, \emph{A Generalization of H\"{o}lder's Inequality and Some Probability Inequalities}, The Annals of Probability, Vol. 20, Iss. 4, pp. 1893-1901, 1992.
\bibitem{HMS}  Matan Harel, Frank Mousset, and Wojciech Samotij, \emph{Upper Tails Via High Moments and Entropic Stability}, \url{https://arxiv.org/abs/1904.08212}, 2019.
\bibitem{Janson} Svante Janson, \emph{Poisson Approximation for Large Deviations}, Random Structures and Algorithms, Vol. 1, Iss. 2, pp. 221-229, 1990.
\bibitem{LiuZhao} Yang Liu and Yufei Zhao, \emph{On the Upper Tail Problem for Random Hypergraphs}, Random Structures and Algorithms, Vol. 58, Iss. 2, pp. 179-220, 2020.
\bibitem{KV} J. H. Kim and Van Vu, \emph{Divide and Conquer Martingales and the Number of Triangles in a Random Graph}, Random Structures and Algorithms, Vol. 24, Iss. 2, pp. 166-174, 2004.
\bibitem{LZ} Eyal Lubetzky and Yufei Zhao, \emph{On the Variational Problem for Upper Tails in Sparse Random Graphs}, Random Structures and Algorithms, Vol. 50, Iss. 3, pp. 420-436, 2017.
\bibitem{Sch} Alexander Schriver, \emph{Combinatorial Optimization: Polyhedra and Efficiency}, Springer-Verlag Berlin Heidelberg, 2003.
\bibitem{SW} Matas \v{S}ileikis and Lutz Warnke, \emph{A Counterexample to the DeMarco-Kahn Upper Tail Conjecture}, Random Structures and Algorithms, Vol. 55, Iss. 4, pp. 775-794, 2019.
\end{thebibliography}
\end{document}